\newcommand{\speck}{\boldsymbol{k}}
\renewcommand{\speck}{\mathsf{k}}
  \let\dutchcalmathcal\mathcal 
\renewcommand{\speck}{\dutchcalmathcal k}
  \let\amsfontsmathcal\mathcal
\newcommand\moc{\upomega}
\newcommand{\ip}{\mathord{\mskip\halfthinmuskip\cdot\mskip\halfthinmuskip}}
\DeclareMathAlphabet{\mathup}{OT1}{\familydefault}{m}{n}
\newcommand{\dx}[1]{\mathop{}\!\mathup{d} #1}
\newcommand{\dxx}[2]{\mathop{}\!\mathup{d} #1\mathup{d} #2}
\newcommand{\dxxx}[3]{\mathop{}\!\mathup{d} #1\mathup{d} #2\mathup{d} #3}
\newcommand{\pderiv}[3][]{\frac{\mathop{}\!\mathup{d}^{#1} #2}{\mathop{}\!\mathup{d} #3^{#1}}}
\DeclarePairedDelimiter{\abs}{\lvert}{\rvert}
\DeclarePairedDelimiter{\norm}{\lVert}{\rVert}
\DeclarePairedDelimiter{\bra}{(}{)}
\DeclarePairedDelimiter{\pra}{[}{]}
\DeclarePairedDelimiter{\set}{\{}{\}}
\DeclarePairedDelimiter{\skp}{\langle}{\rangle}
\newcommand{\customlabel}[2]{%
   \protected@write \@auxout {}{\string \newlabel {#1}{{#2}{\thepage}{#2}{#1}{}} }%
   \hypertarget{#1}{}
}
\numberwithin{figure}{section}
\def\calD{{\mathcal D}} \def\calE{{\mathcal E}} \def\calF{{\mathcal F}}
\def\calG{{\mathcal G}} \def\calH{{\mathcal H}} \def\calI{{\mathcal I}}
\def\calM{{\mathcal M}} \def\calN{{\mathcal N}} 
\def\calP{{\mathcal P}}  \def\calR{{\mathcal R}}
\def\calY{{\mathcal Y}} \def\calZ{{\mathcal Z}}
\def\rmd{{\mathrm d}} \def\rme{{\mathrm e}} 
\def\rmj{{\mathrm j}}
\def\rmD{{\mathrm D}}
\def\rmY{{\mathrm Y}} 
\def\sfa{{\mathsf a}} \def\sfb{{\mathsf b}} \def\sfc{{\mathsf c}}
\def\sfm{{\mathsf m}}  
  \def\sfr{{\mathsf r}}
\def\sfv{{\mathsf v}} \def\sfw{{\mathsf w}} \def\sfx{{\mathsf x}}
\def\sfy{{\mathsf y}} \def\sfz{{\mathsf z}}
  \def\sfC{{\mathsf C}}
 \def\sfE{{\mathsf E}} \def\sfF{{\mathsf F}}
 \def\sfH{{\mathsf H}} 
  \def\sfL{{\mathsf L}}
\def\sfM{{\mathsf M}} \def\sfN{{\mathsf N}} 
  \def\sfR{{\mathsf R}}
\def\sfS{{\mathsf S}}  
\def\sfV{{\mathsf V}}  \def\sfX{{\mathsf X}}
 \def\sfZ{{\mathsf Z}}
\def\scrD{{\mathscr  D}} \def\scrE{{\mathscr  E}} 
 \def\scrH{{\mathscr  H}} \def\scrI{{\mathscr  I}}
 \def\scrN{{\mathscr  N}} 
  \def\scrR{{\mathscr  R}}
\def\scrV{{\mathscr  V}}
 \def\bbE{{\mathbb E}}
\def\bbP{{\mathbb P}}  
\def\bbS{{\mathbb S}}
\newcommand{\Expectation}{\bbE}
\newcommand\wt{\widetilde}
\def\dd{\mathrm{d}}
\let\e\varepsilon
\let\eps\varepsilon
\newcommand{\CB}{\mathrm{C}_{\mathrm{b}}}
\newcommand\hj{\wh \jmath}
\newcommand\hu{\wh u}
\newcommand\hnodes{\wh \nodes}
\newcommand\hnodesT{\wh \nodes_{T}}
\newcommand\hpi{\wh \pi}
\newcommand\htheta{\wh \theta}
\newcommand\wh{\widehat}
\newcommand\hF{\wh F}
\newcommand\hp{\wh p}
\newcommand{\hUpsilone}{\wh \Upsilon_\e}
\newcommand{\hUpsilon}{\wh \Upsilon}
\newcommand{\ol}[1]{\overline{#1}}
\newcommand{\R}{\mathbb{R}}
\newcommand{\N}{\mathbb{N}}
\renewcommand\div{\mathop{\mathrm{div}}\nolimits}
\newcommand\dual[2]{\langle #1,#2 \rangle}
\newcommand\Dual[2]{\left\langle #1,#2 \right\rangle}
\newcommand{\CE}{\mathrm{CE}}
\newcommand{\hrho}{\wh \rho}
\newcommand\cev[1]{\overleftarrow{#1}}
\let\ds\displaystyle
\def\longrightharpoonup{\relbar\joinrel\rightharpoonup}
\newcommand{\ee}{\rme}
\newcommand{\fast}{\mathup{fast}}
\newcommand{\term}{\mathup{term}}
\newcommand{\Chain}{\mathup{Chain}}
\newcommand{\Series}{\mathup{Series}}
\newcommand{\weight}{\gamma}
\newcommand{\vol}{\mathup{vol}}
\newcommand{\Pot}{\mathup{Pot}}
\newcommand{\Upwind}{\mathup{Upwind}}
\newcommand{\SG}{\mathup{SG}}
\newcommand{\PartSum}{\mathup{Z}}
\newcommand{\ona}{\dnabla}
\newcommand{\odiv}[1]{\mathop{\overline{\mathrm{div}}}\nolimits#1}
\newcommand{\dnabla}{\overline\nabla}
\newcommand\nodes{\sfV}
\newcommand\edges{\sfE}
\newcommand\adiv{\mathop{\mathsf{div}}\nolimits}
\newcommand{\anabla}{{\mathpalette\a@nabla\relax}}
\newcommand\a@nabla[2]{%
	\setbox\z@=\hbox{$\m@th#1\bigtriangledown$}%
	\ht\z@.7\ht\z@
	\raise\dp\z@\box\z@
}
\newcommand{\gdiv}{\odiv}
\newcommand{\gnabla}{\ona}
\newcommand{\weakto}{\rightharpoonup}
\newcommand{\weaksto}{\stackrel{*}{\rightharpoonup}} 
\newcommand{\longweaksto}{\stackrel{*}{\longrightharpoonup}} 
\DeclareMathOperator{\law}{law}
\DeclareMathOperator*{\argmin}{arg\,min}
\DeclareMathOperator{\arsinh}{arsinh}
\DeclareMathOperator{\ProbMeas}{\calP}
\DeclareMathOperator\Prob{Prob}
\DeclareMathOperator{\diag}{diag}
\DeclareMathOperator{\sign}{sign}
\DeclareMathOperator{\supp}{supp}
\newcommand\KR{{\mathsf K\kern -1.3 pt\mathsf R}}
\DeclareMathOperator{\Int}{Int}
\DeclareMathOperator{\capacity}{cap}
\DeclareMathOperator{\dom}{dom}
\DeclareMathOperator{\RelEnt}{\mathcal H}
\newcommand{\GenEnt}{\sfH}
\DeclareMathOperator{\CCs}{\mathcal{C}}
\DeclareMathOperator{\Contact}{\mathscr{C}}
\DeclareMathOperator{\Generator}{\mathscr{A}}
\DeclareMathOperator{\RateFunc}{\mathscr{L}}
\newcommand{\Redge}{\sfR}
\DeclareMathOperator{\redge}{\sfr}
\newcounter{step}
\newlength{\leftstackrelawd}
\newlength{\leftstackrelbwd}
\def\leftstackrel#1#2{\settowidth{\leftstackrelawd}%
{${{}^{#1}}$}\settowidth{\leftstackrelbwd}{$#2$}%
\addtolength{\leftstackrelawd}{-\leftstackrelbwd}%
\leavevmode\ifthenelse{\lengthtest{\leftstackrelawd>0pt}}%
{\kern-.5\leftstackrelawd}{}\mathrel{\mathop{#2}\limits^{#1}}}
\newcommand{\bONE}{\mathds{1}}
\DeclareRobustCommand{\cev}[1]{%
  {\mathpalette\do@cev{#1}}%
}
\newcommand{\do@cev}[2]{%
  \vbox{\offinterlineskip
    \sbox\z@{$\m@th#1 x$}%
    \ialign{##\cr
      \hidewidth\reflectbox{$\m@th#1\vec{}\mkern4mu$}\hidewidth\cr
      \noalign{\kern-\ht\z@}
      $\m@th#1#2$\cr
    }%
  }%
}
\newtheorem{theorem}{Theorem}[section]
\newtheorem{prop}[theorem]{Proposition}
\newtheorem{cor}[theorem]{Corollary}
\newtheorem{lemma}[theorem]{Lemma}
\newtheorem{Ftheorem}[theorem]{Formal Theorem}
\newtheorem{assumption}[theorem]{Assumption}
\theoremstyle{definition}
\newtheorem{definition}[theorem]{Definition}
\newtheorem{property}[theorem]{Property}
\newenvironment{remark}
{\pushQED{\qed}\remarkx} 
{\popQED\endremarkx}
\newenvironment{example}
{\pushQED{\qed}\examplex} 
{\popQED\endexamplex}
\numberwithin{equation}{section}
\newcounter{ex}
\def\theex{\Alph{ex}}
\newcommand{\NEW}[1]{{#1}}
\begin{document}

\title{Cosh gradient systems and tilting}
\author{Mark A. Peletier\thanks{\href{mailto:m.a.peletier@tue.nl}{m.a.peletier@tue.nl}}}
\affil{Department of Mathematics and Computer Science\\
Institute for Complex Molecular Systems (ICMS)\\
Eindhoven University of Technology}
\author{Andr\'e Schlichting\thanks{\href{mailto:a.schlichting@uni-muenster.de}{a.schlichting@uni-muenster.de}}}
\affil{Institute for Analysis and Numerics\\
University of M\"unster}
\date{\empty}
\maketitle

\begin{abstract}	
We review a class of gradient systems with dissipation potentials of hyperbolic-cosine type. We show how such dissipation potentials emerge in large deviations of jump processes, multi-scale limits of diffusion processes, and more. We show how the exponential nature of the cosh derives from the exponential scaling of large deviations and arises implicitly in cell problems in multi-scale limits. 

We discuss in-depth the role of \emph{tilting} of gradient systems. Certain classes of gradient systems are \emph{tilt-independent}, which means that changing the driving functional does not lead to changes of the dissipation potential. Such tilt-independence separates the driving functional from the dissipation potential, guarantees a clear modelling interpretation, and gives rise to strong notions of gradient-system convergence. 

We show that although in general many gradient systems are tilt-independent, certain cosh-type systems are not. We also show that this is inevitable, by studying in detail the classical example of the Kramers high-activation-energy limit, in which a diffusion converges to a jump process and the Wasserstein gradient system converges to a cosh-type system. We show and explain how the tilt-independence of the pre-limit system is lost in the limit system. 
This same lack of independence can be recognized in classical theories of chemical reaction rates in the chemical-engineering literature. 

We illustrate a similar lack of tilt-independence in a discrete setting. For a class of `two-terminal' fast subnetworks, we give a complete characterization of the dependence on the tilting, which strongly resembles the classical theory of equivalent electrical networks. 
\end{abstract}


\setcounter{tocdepth}{2}
\tableofcontents

\bigskip
\noindent\textbf{Notation.}
\nobreak
\begin{small}
\begin{longtable}{lll}
$\nodes, \edges$ & abstract topological spaces of nodes and edges & Sec.~\ref{ss:intro-evol-eqs} \\
$\sfC(\cdot)$, $\sfC^*(\cdot)$ & Legendre pair of cosh-type functions &\eqref{eqdef:C-C*}\\
$\sfC(\cdot|\cdot)$ & perspective function of $\sfC$ & Sec.~\ref{ss:perspective-function}\\
$\capacity_{\sfa\sfb}$ & effective capacity of terminal graph & Sec. \ref{sss:structure-of-tilt-dependence}, ~\eqref{eqdef:effective:capacity} \\
$\CCs$ & combination of $\sfC$ and $\sfC^*$ & \eqref{eqdef:CCs-intro}, Sec.~\ref{ss:cell-formula}\\
$\CE(0,T)$ & pairs $(\rho,j)$ satisfying the continuity equation & Sec.~\ref{sss:ct-eq}\\
$\anabla$, $\nabla$, $\ona$ & (abstract, classical, graph) gradient & Sec.~\ref{ss:intro-evol-eqs} \\
$\scrD_{\nodes}, \scrD_{\edges}$ & abstract domain and codomain of $\anabla$  & Sec.~\ref{ss:intro-evol-eqs} \\
$\calD^T(\rho,j)$ & dissipation functional & Def.~\ref{def:EDP:sol}\\
$\adiv$, $\div$, $\odiv$ & (abstract, classical, graph) divergence & Sec.~\ref{ss:intro-evol-eqs} \\
$\rmD_2$, $\partial_2$ 
  & derivative and subdiff. with respect to $2^{\mathrm {nd}}$ arg. 
  & \eqref{eq:flux-GS-intro} \\
$\calE$ & driving energy function of gradient system & Def.~\ref{def:GradSystCE} \\
$\sfF$ & class of tilts & Def.~\ref{def:GS-with-tilting}, \eqref{eq:Kramers:tilts}\\
$\sfF_{\mathrm{Pot}}$ & class of \emph{potential} tilts of the form $\calF(\rho) = \int F\rho$ & \eqref{eqdef:MCs:potential-tilts}\\
$\eta(\cdot|\cdot)$ & relative-entropy density & \eqref{eqdef:eta}\\
$\speck_{\sfx\sfy},k_{\sfx\sfy}$ & edge weights  & Ex.~\ref{ex:heat-flow}\\
$\RelEnt(\cdot|\cdot)$ & relative entropy & \eqref{eqdef:RelEnt}\\
$\calI^T(\rho,j)$ & energy-dissipation functional & Def.~\ref{def:EDP:sol} \\
$\RateFunc(\rho,j)$ & ``level-2.5'' flux large-deviation rate function & Sec.~\ref{sss:cosh-from-ldp-intro}, \ref{ss:ldp-GS}, \ref{ss:tilting-LDPs}\\
$\sfL$ & rate function density & \eqref{eqdef:L} \\
$\Lambda(\cdot,\cdot),\Lambda_{-1}(\cdot,\cdot)$ &(harmonic) logarithmic mean & \eqref{eqdef:log-mean}, \eqref{eqdef:HarmLogMean}\\
$\calM(X)$, $\calM_{\geq0}(X)$  & finite (non-negative) Borel measures on $X$   & \\
$\calM_{\mathrm{loc}}(X)$, $\calM_{\mathrm{loc},\geq0}(X)$  & locally finite (non-negative) Borel measures on $X$   & Sec.~\ref{sss:ct-eq} \\
$\ProbMeas^+(\Omega)$, $\ProbMeas(\Omega)$ & (strictly positive) probability measures in $\Omega$ \\
$\calR(\rho,j)$ & dissipation potential as functions of $(\rho,j)$  & Def.~\ref{def:GradSystCE} \\
$\scrR(\rho,\dot\rho)$ & dissipation potential as functions of $(\rho,\dot\rho)$ & Rem.~\ref{rem:non-ct-eq-GS}\\
$\Redge$ & dissipation potential edge density & Rem.~\ref{rem:unitlting:tilt-GS}, Sec.~\ref{ss:char-tilt-indep-GS}\\
$T_\# \mu$ & push-forward of $\mu$ under $T$, defined as $\mu\circ T^{-1}$\\
$\sfx\sfy\in\edges$ & edge from $\sfx\in\nodes$ to $\sfy\in\nodes$ & Sec.~\ref{ss:intro-evol-eqs}\\
$X_T$  & $[0,T]\times X$\\
$\calY$ & $\calY = H^{-1}(0,1)\times \R\times\R$ & Sec.~\ref{ss:cell-formula}\\
$\PartSum$ & partition sum normalizing probability measure \\
\end{longtable}
\end{small}

\newpage

\section{Introduction}
\label{s:intro}

A \emph{gradient system} is an evolution equation with an additional variational structure. The gradient systems of this paper involve evolution equations that can formally be written as 
\begin{equation}
\label{eq:intro-GF}
\dot \rho = \rmD_\xi \scrR^*(\rho,-\rmD_\rho \calE(\rho)),
\end{equation}
which  describe the time evolution of the state $\rho\in \sfZ$ in terms of a driving functional~$\calE:\sfZ \to \R$ (typically an energy or an entropy) and a \emph{dissipation potential} $\scrR^* = \scrR^*(\rho,\xi)$. Here $\rmD_\xi$ and $\rmD_\rho$ denote derivatives with respect to $\xi$ and $\rho$. Such a gradient system is characterized by the triple $(\sfZ,\calE,\scrR^*)$.

Gradient structures are important aspects of evolution equations for a number of reasons. In a mathematical sense, when an evolution equation has a gradient structure, then this can be used to construct weak-solution concepts~\cite{MielkeTheilLevitas02,Dal-MasoDeSimoneMora06,AmbrosioGigliSavare08,MielkeRoubicek15,PeletierRossiSavareTse22}, well-posedness results (ibidem), \emph{a priori} and \emph{a posteriori} estimates~\cite{OttoVillani2000,BakryGentilLedoux14}, methods to prove stability and convergence~\cite{SandierSerfaty04,Serfaty11,Mielke16a}, and many other tools and properties. 

Often the components $\calE$ and $\scrR^*$ also have a clear physical interpretation. As illustrated by~\eqref{eq:intro-GF}, the derivative $\rmD_\xi \scrR^*(\rho,\,\cdot\,)$ maps the `force' $-\rmD_\rho \calE(\rho)$ to the `rate of change' $\dot\rho$. This force--to--rate map is similar in spirit to Fourier's law that maps temperature gradient to heat flux, or viscosity relations that map stress to strain rate (see Section~\ref{ss:KineticRelations}). This interpretation gives rise to the modelling of systems via the gradient structure~\cite{Doi11,PeletierVarMod14TR} to guarantee thermodynamic consistency.

\medskip

Historically, dissipations $\scrR$ were quadratic functions of $\xi$, which generate linear force--to--rate maps; subsequent extensions allowed for rate-independent systems, for which $\scrR^*$ is a singular, $\{0,\infty\}$-valued convex function of $\xi$.

In contrast, this paper revolves around a class of dissipation potentials $\scrR^*$ for which the relation $\xi\mapsto \scrR^*(\rho,\xi)$ is of the \emph{hyperbolic cosine} type. These dissipations all have as main building block the Legendre pair of functions $\sfC,\sfC^*:\R\to\R$, 
\begin{equation}
\label{eqdef:C-C*}\sfC(s) := 2s\log \frac{s+ \sqrt{s^2+4}}{2} - 2\sqrt{s^2+4} + 4, 
\qquad
\sfC^*(\xi) := 4\Bigl(\cosh\frac\xi 2 - 1\Bigr).
\end{equation}
Both functions are smooth, strictly convex, even, and superlinear at infinity, and their derivatives are each other's inverse:
\begin{align}\label{eq:C'}
	\sfC'(s) = 2\arsinh\frac s2, \qquad (\sfC^*)'(\xi) = 2\sinh\frac\xi2.
\end{align}

The first purpose of this paper is to present a number of examples and properties of gradient systems with such `cosh-type' dissipations. In doing so we also aim to explain how this type of structure arises in multi-scale limits and large-deviation results. 

The second purpose arises from modelling considerations. In the modelling of gradient systems there is a common assumption that the driving functional and dissipation can be chosen independently, or put differently, that changing the energy (by `tilting' it) does not change the dissipation. This assumption, called `tilt-independence', is essential in the modelling of gradient systems, since it allows the modeller to make independent choices for `energetics' and `kinetics'. It has been observed~\cite{FrenzelLiero21,MielkeMontefuscoPeletier21} that such tilt-independence need not be conserved through singular limits. In this paper we study tilt-independence and tilt-dependence in  more detail; we show how for cosh-type gradient structures tilt-\emph{dependence} is natural. This has major {consequences} for the modelling of gradient systems.

\paragraph{Outline.}

The structure of this paper is  unconventional. Section~\ref{s:intro} is a high-level review of gradient systems with cosh-type dissipation, together with a detailed description of tilting. 
All the gradient systems in this paper are evolution equations for \emph{measures}, and we start in Section~\ref{ss:intro-evol-eqs} by describing the `continuity-equation' structure of such equations. In Section~\ref{ss:flux-GS-intro} we define the corresponding cosh-type gradient structures. In these two sections we also introduce a number of example systems that serve as illustrations. 

In Section~\ref{ss:contraction-intro} we explain how the functions $\sfC$ and $\sfC^*$ arise in multi-scale limits and in large-deviation principles; this clarifies the origin of these functions. In Section~\ref{s:ModellingTilting} we discuss tilting, tilt-independence, and tilt-dependence. In Section~\ref{ss:partial_conclusion} we provide a partial conclusion of the review part and in Section~\ref{ss:history} we give some pointers to the history of gradient structures in general and cosh dissipations in particular.

\medskip

The rest of the paper can be considered as appendices for Section~\ref{s:intro}, in which we give details, explore specific issues, and deepen the discussion. 

In Section~\ref{s:GS} we recall a number of concepts in the theory of gradient systems,  formalize the concept of a continuity equation, and define various types of gradient-system convergence. In Section~\ref{s:props-of-CCstar} we prove some properties of the dual pair $(\sfC,\sfC^*)$. Section~\ref{s:Kramers} is devoted to the first main example, the high-activation-energy limit of Kramers' equation; we give a full proof of this result, for a very broad class of tilting functions, in order to illustrate the impact of tilt-dependence. In Section~\ref{s:thin-membrane} we revisit the example of a thin membrane~\cite{LieroMielkePeletierRenger17,FrenzelLiero21} and similarly investigate the tilt-dependence of the limiting system.

In Section~\ref{s:tilting} we further explore the various meanings of \emph{tilting}: besides the tilting of energies and gradient systems mentioned in Section~\ref{s:ModellingTilting} we also discuss tilting of random variables and Markov processes in Section~\ref{ss:tilting:MCs}, and of sequences of these with a large-deviation principle in Section~\ref{ss:tilting-LDPs}.
In the case of simple jump processes (Example~\ref{ex:heat-flow} below) we first characterize all possible detailed balance jump rates in Section~\ref{ss:mod-jump-rates}.
In this setting, we observe that chemical reactions lead to tilt-dependent gradient systems in Section~\ref{ss:case-study-chem-reactions-tilt-dependent}.
The main result in Section~\ref{ss:char-tilt-indep-GS} characterizes and discusses tilt-independent gradient structures for a large range of detailed-balance jump rates. 

Finally, in Section~\ref{s:ha} we study a class of networks in a fast-reaction limit; in the limit these networks behave as a two-terminal subnetwork, with quasistatic equilibration inside the subnetwork. We show how the dynamics of such subnetworks are described by gradient systems, and that the limit process makes these gradient systems tilt-dependent.

%
%
%
%
%

\subsection{Evolution equations for measures}
\label{ss:intro-evol-eqs}

The evolution equations of this paper all are evolution equations for \emph{measures}. Such equations appear in many models. One class of examples describes the evolution of concentrations of molecules, animals, people, agents, or other objects; such concentrations are non-negative and often conserved. These are naturally represented by non-negative  measures. 

A second class of measure-valued evolutions describes the evolution in time of the law of a Markov process. This law is a non-negative measure on the state space, and this measure satisfies the Forward-Kolmogorov evolution equation. 

\medskip

Evolution equations for measures $\rho$ can often be written in the abstract `continuity-equation' form
\begin{equation}
\label{eq:ct-eq-abstract-intro}
\partial_t \rho + \adiv j = 0.
\end{equation}
Here $j$ is a `flux', and the operator $\adiv$ depends on the context; it can be the usual divergence on $\R^d$, but may also be a more general operator. 
This equation characterizes the admissible evolutions of $\rho$ as those that are generated by fluxes $j$ through the divergence $\adiv$. 

In this paper we will use a terminology suggested by the graph context. 
We  consider a set of `nodes' or `vertices' $\nodes$ and a set of `edges' $\edges$.
The unknown $\rho$ will always be a non-negative measure on $\nodes$, and $j$ a (possibly signed) measure on $\edges$. 
The sets $\nodes$ and $\edges$  are connected by a gradient operator $\anabla: \scrD_\nodes \to \scrD_\edges$, where $\scrD_\nodes$ and $\scrD_\edges$ are suitable topological spaces of functions on nodes $\nodes$ and edges $\edges$. The negative dual of $\anabla$ is $\adiv: \scrD_\edges' \to \scrD_\nodes'$, which maps elements of the dual of $\scrD_\edges$ to the dual of $\scrD_\nodes$, i.e.
\begin{equation*}
{}_{\scrD_\nodes}\dual{f}{\adiv j}_{\scrD_\nodes'} = -{}_{\scrD_\edges}\dual{\anabla f}{j}_{\scrD_\edges'}, 
\qquad\text{for } f\in \scrD_\nodes , \ j\in \scrD_\edges'.
\end{equation*}
This allows us to treat local, discrete, and nonlocal equations with the same structure. 
\NEW{Note that the duality structure of the abstract gradient and divergence implies `natural' or `no-flux' boundary conditions.}

In the terminology of port-Hamiltonian systems, the combination of $\adiv$ and $\anabla$ defines a \emph{Dirac structure} of the `transformer' type (see e.g.~\cite[Ch.~XV]{Paynter61} or~\cite[\S 2.2.1]{Van-Der-SchaftJeltsema14}).
In the terminology of Variational Modelling~\cite{PeletierVarMod14TR}, the flux $j$ is an element of the `process space', and the operator $\adiv$ is the corresponding `process--to--tangent' map.

\begin{table}[!ht]
\footnotesize\centering
	\begin{tabular}{c|cccc}\toprule
		& $\nodes$ & $\edges$ & $\anabla f$ & $\adiv j$ \\
		\midrule 
		\textbf{\ref{ex:heat-flow}} &finite set 
		  & $\nodes \times \nodes$
		  & $(\gnabla f)_{\sfx\sfy} := f_{\sfy} - f_{\sfx}$ 
		  & $\ds\gdiv j(\sfx) := \sum_{\sfy:\, \sfx\sfy\in \edges } j_{\sfx\sfy}-\sum_{\sfy:\, \sfy\sfx\in \edges}j_{\sfy\sfx}$ \\[5\jot]
		\textbf{\ref{ex:FP}} & $\R^d$ & $\R^d \times \set{1,\dots,d}$ 
		  & $(\partial_{i}f(x))_{i=1}^d $ 
		  & $\ds \div j(x) := \sum_{i=1}^d \partial_{i} j(x)$\\[5\jot]
		\textbf{\ref{ex:chem-reactions}} & finite set 
		  & $\R_{\geq0}^\nodes \times \R_{\geq0}^\nodes$
		  & $\ds (\gnabla f)_{\alpha\beta} :=\sum_{\sfy\in \nodes} \beta_\sfy f_{\sfy} - \sum_{\sfx \in \nodes} \alpha_\sfx f_{\sfx}$ 
		  & $\ds\gdiv j(\sfx):= 
		    \sum_{\alpha\beta\in \edges} ( \alpha_{\sfx} -  \beta_{\sfx}) j_{\alpha\beta}$ \\[5\jot]
		\textbf{\ref{ex:Boltzmann}} & $\R^d$ & $\R^{2d} \times \R^{2d}$ 
		  & $\left\{\hbox{\begin{tabular}{@{}l@{}}
		    $(\gnabla f)_{(\sfv\sfv_*)(\sfv'\sfv'_*)} :=$\\
		       $\quad f_{\sfv'} + f_{\sfv'_*} - f_{\sfv} - f_{\sfv_*}$
		       \end{tabular}}\right\}$
		  & $\ds\gdiv j(\sfv) := \iiint \sum_{i=0}^3 j^{\sigma_i} (\sfv\dx\sfv_*,\dxx{\sfv'}{\sfv'_*})$\\
		\bottomrule
	\end{tabular}
	\caption{Examples of continuity equations in various settings.  In row  \textbf{\ref{ex:Boltzmann}}, we consider the involutions $\set*{\sigma_i}_{i=0}^3$ ($\sigma_0$ being the identity) of $(\sfv\sfv^*)(\sfv'\sfv'_*)$ together with a suitable choice of the sign (see Example~\ref{ex:Boltzmann} on the Boltzmann equation below).}
	\label{table:divs}
\end{table}

\bigskip

\noindent
\textbf{\textit{Examples.}}
In this paper we use a number of examples to illustrate various aspects of cosh gradient systems, including the continuity-equation structure. We start below by defining the evolution equations and the corresponding gradient and divergence operators. The gradient structure of each of these systems will then be introduced in Section~\ref{ss:flux-GS-intro}.

The reader who wishes to get a quick overview  might study only Examples~\ref{ex:heat-flow} and~\ref{ex:FP}, and leave the others to a later stage.

\bigskip

\noindent

\refstepcounter{ex}
\subsubsection*{Example~\theex. Heat flow on a graph.}\label{ex:heat-flow}
Let $\nodes$ be a finite set; we denote its elements by $\sfx$ and $\sfy$, and for a function $f:\nodes\to\R$ we write both $f(\sfx)$ and $f_\sfx$ for the value at $\sfx$. We set $\edges := \nodes\times\nodes$ for the set of all directed edges, and write $\sfx\sfy$ for the edge from $\sfx$ to $\sfy$. Given a set of non-negative \emph{rates} $\kappa_{\sfx\sfy}$ of transitions from $\sfx$ to $\sfy$, the \emph{heat flow} on $\nodes$ is the equation for the measure $\rho\in \calM_{\geq0}(\nodes)$,
\begin{equation}
\label{eq:heat-flow-intro}
\partial_t \rho_\sfx  = \sum_{\sfy\in\nodes} \bigl[ \rho_\sfy\kappa_{\sfy\sfx} - \rho_\sfx \kappa_{\sfx\sfy}\bigr].
\end{equation}
This equation is of the form~\eqref{eq:ct-eq-abstract-intro}, where $\adiv$ is  the \emph{graph divergence} $\odiv$,
\begin{equation*}
\adiv := \gdiv, \qquad (\gdiv j)(\sfx) := 
\sum_{\sfy:\, \sfx\sfy\in \edges } j_{\sfx\sfy}-\sum_{\sfy:\, \sfy\sfx\in \edges}j_{\sfy\sfx},
\end{equation*}
and the corresponding \emph{graph gradient} operator $\gnabla$ is given  by
\begin{equation}
\label{eqdef:gnabla}
(\gnabla f)_{\sfx\sfy} := f_{\sfy} - f_\sfx,
\qquad\text{for } f:\nodes \to \R.
\end{equation}
This choice of gradient and divergence is given in row~\ref{ex:heat-flow} of Table~\ref{table:divs}.

Equation~\eqref{eq:heat-flow-intro} is recovered by choosing the flux $j=j(\rho)$ 
as
\[ 
j_{\sfx\sfy} := \rho_\sfx \kappa_{\sfx\sfy},
\]
which represent unidirectional fluxes from $\sfx$ to $\sfy$. 
Alternatively, the same equation also is recovered by choosing $j$ as the skew-symmetrization of the unidirectional fluxes, that is
\begin{equation}
\label{eq:heat-flow-flux-intro}
j_{\sfx\sfy} := \frac{\rho_\sfx \kappa_{\sfx\sfy} - \rho_\sfy\kappa_{\sfy\sfx}}{2}.
\end{equation}
Both choices lead to the same divergence $\odiv j$, and therefore to the same equation~\eqref{eq:heat-flow-intro}.

Equation~\eqref{eq:heat-flow-intro} can also be interpreted as the evolution equation for the law of a Markov jump process $X_t$ on $\nodes$ with jump rates $\kappa$ (see Section~\ref{sss:cosh-from-ldp-intro}) or a simple monomolecular reaction network (see~Example~\ref{ex:chem-reactions}). 

Although we assume above that $\nodes$ is a finite set, `heat flows' can be defined on general topological spaces, since they only require a well-defined kernel. Such equations and their various gradient structures are investigated in more detail in~\cite{Erbar2014,PeletierRossiSavareTse22}.

\refstepcounter{ex}
\subsubsection*{Example~\theex. The Fokker-Planck equation.}\label{ex:FP}
Fix  $D\in \R^{d\times d}$ and $V:\R^d\to\R$. The diffusion-advection equation
\begin{equation}
	\label{eq:FP-intro}
	\partial_t \rho(x) - \div \bigl[D(\nabla \rho(x) + \rho(x)\nabla V(x))\bigr] =0 \qquad \text{in }\R^d
\end{equation}
can be written as
\begin{equation}
	\label{eq:FP-rho-j}
	\partial_t \rho + \div j=0, \qquad
	j = -D(\nabla \rho + \rho \nabla V).
\end{equation}
Equation~\eqref{eq:FP-intro} has many interpretations, such as the evolution of concentrations of diffusing chemical species or of the law of a diffusion $X_t$ on $\R^d$ (see~\cite{Risken84} and Section~\ref{s:Kramers}).

\emph{Notation.} A gradient $\nabla f$ is usually considered to be a vector-valued function, and similarly `$\div$' is usually defined on vector-valued functions. For later comparison we slightly abuse notation by considering $\nabla$ and $\div $ as operating with scalar-valued objects, by setting
\[
\nabla f(x,i) := (\nabla f(x))_i = \partial_{x_i} f(x)
\qquad\text{and}\qquad
(\div j)(x) := \sum_{i=1}^n \partial_{x_i}j(x,i).
\]
Defining $\edges := \R^d\times\{1,\dots,d\}$, it follows that $\nabla$ maps a scalar function on  $\nodes := \R^d$ to a scalar function on~$\edges$, and similarly $\div$ maps a scalar function on $\edges$ to a scalar function on~$\nodes$. 
Following the same convention the matrix $D\in\R^{d\times d}$ maps  a vector field $v = v(x,i)$  (a scalar function on~$\edges$) to a new vector field as 
\[
(Dv)(x,i) := \sum_{j=1}^d D_{ij} v(x,j).
\]
With these conventions, the formulation~\eqref{eq:FP-rho-j} can be interpreted both in its classical meaning and in the modified meaning above, and the two interpretations are  equivalent.

\refstepcounter{ex}
\subsubsection*{Example~\theex. Chemical reactions.}\label{ex:chem-reactions}
Let $\nodes = \{1,\dots,n\}$ represent a set of species $X_1,\dots,X_n$, and consider  reactions between these species of the form 
\begin{equation}
\label{eq:reaction-equation}
\alpha_1 X_1 + \dots + \alpha_nX_n
\xleftrightharpoons{\quad}
\beta_1 X_1 + \dots + \beta_n X_n \qquad\text{for } \alpha\beta\in \edges .
\end{equation}
Here $\alpha,\beta \in \R_{\geq0}^\nodes$ are stoichiometric coefficients and the set of all reactions of this type is a finite subset $\edges\subset \R_{\geq0}^\nodes \times \R_{\geq0}^\nodes$.  

To each reaction $\alpha\beta\in \edges$ we associate fluxes $j_{\alpha\beta}$. The change in the concentration $\rho_\sfx$ of the species $\sfx\in \nodes$ is given in terms of the continuity equation
\begin{equation}
	\label{eq:cont-eq-chem-reactions}
 	\partial_t \rho_\sfx + (\adiv j)(\sfx) := \partial _t \rho_\sfx + \sum_{\alpha\beta\in\edges} ( \alpha_{\sfx} -  \beta_{\sfx}) j_{\alpha\beta}  = 0.
\end{equation}
The corresponding gradient is 
\[
(\anabla f)_{\alpha\beta} =  (\gnabla f)_{\alpha\beta} :=\sum_{\sfy\in \nodes} \beta_\sfy f_{\sfy} - \sum_{\sfx \in \nodes} \alpha_\sfx f_{\sfx}.
\]

When all reactions are mono-molecular, i.e.\ both sides in~\eqref{eq:reaction-equation} have exactly one non-zero term, the divergence in~\eqref{eq:cont-eq-chem-reactions} coincides with the graph divergence of Example~\ref{ex:heat-flow}. Hence, one can also view the reaction network~\eqref{eq:reaction-equation} in the general case as a directed weighted hypergraph~\cite{Berge1989,GalloLongoPallottino1993,KlamtHausTheis2009,FlammStadlerStadler2015,JostMulas2019}.
Other interpretations of the reaction mechanism~\eqref{eq:reaction-equation} are coagulation-fragmentation mechanism as found in the Smoluchowski equation~\cite{Smoluchowski1916}, the Becker-D\"oring model~\cite{BD1935,BCP86} or exchange-driven growth models~\cite{NK2003,S-EDG2018}, where the growth of clusters is described by some physical mechanism of recombination and split-up.

\subsubsection*{Example~\ref{ex:FP}+\ref{ex:heat-flow}. Fokker-Planck equations with internal state changes.}
\customlabel{ex:FP-heat-flow}{\ref{ex:FP}+\ref{ex:heat-flow}}

This example highlights how two of the previous examples can be combined, as for instance in the case of Fokker-Planck equations (Example~\ref{ex:FP}) with linear reactions (Example~\ref{ex:heat-flow}). The set $\nodes$ consists of pairs $(x,\sfx)\in \R^d\times \ol\nodes$, where $\ol\nodes$ is a finite set (for instance corresponding to species or internal states), and the set of edges is the disjoint union of `continuous' and `discrete' edges from the two setups:
\begin{equation}\label{defn:Ex:B+A:edges}
	\edges := \edges^{\mathrm c} \sqcup \edges^{\mathrm d} ,
\qquad\edges^{\mathrm c} := \underbrace{\R^d\times \{1,\dots,d\}}_{\text{continuous edges}}\times \ol \nodes, 
\qquad
\edges^{\mathrm d} := \R^d \times \underbrace{\ol\nodes\times \ol\nodes}_{\text{discrete edges}}.
\end{equation}
Note that the node set of the `other' variable is added to the edge set, and plays the role of a parameter.

The gradient operator is the union of the two gradients, 
\begin{equation*}
\anabla f(e) := \begin{cases}
	(\partial_{x_i}f)(x,i,\sfx)  & \text{if } e = (x,i,\sfx)\in \edges^{\mathrm c}	,\\
	(\ona f)(x,\sfx\sfy) & \text{if }e= (x,\sfx,\sfy)\in \edges^{\mathrm d}.
\end{cases}
\end{equation*}
Fluxes defined on $\edges = \edges^{\mathrm c}\sqcup \edges^{\mathrm d}$ similarly take two types of arguments:
\[
j: \edges\to\R, \qquad
j(e) = \begin{cases}
	j^{\mathrm c}(e) & \text{if }e\in \edges^{\mathrm c},\\
	j^{\mathrm d}(e) & \text{if }e\in \edges^{\mathrm d},	
\end{cases}
\]
and the corresponding divergence is the sum of the two parts
\[
(\adiv j)(x,\sfx) = (\div j^{\mathrm c} )(x,\sfx) + (\odiv j^{\mathrm d})(x,\sfx).
\]
The continuity equation then reads
\begin{equation}
	\label{eqdef:div-RD}
	\partial_t \rho(x,\sfx) + 
		(\div j^{\mathrm c})(x,\sfx) + (\odiv j^{\mathrm d})(x,\sfx)
	=0.
\end{equation}
A typical example, which we will also revisit below, is the case of\/ $\ol\nodes = \{\sfa,\sfb\}$:
\begin{subequations}
	\label{eq:RD-intro}	
	\begin{align}
		\partial_t \rho_\sfa (x) &= \div \bigl[D(\nabla \rho_\sfa(x) + \rho_\sfa(x)\nabla V_\sfa(x))\bigr] + \kappa_{\sfb\sfa}\rho_\sfb(x) - \kappa_{\sfa\sfb} \rho_\sfa(x),\\
		\partial_t \rho_\sfb (x) &= \div \bigl[D(\nabla \rho_\sfb(x) + \rho_\sfb(x)\nabla V_\sfb(x))\bigr] + \kappa_{\sfa\sfb}\rho_\sfa(x) - \kappa_{\sfb\sfa} \rho_\sfb(x).
	\end{align}
\end{subequations}

\subsubsection*{Example~\ref{ex:FP}+\ref{ex:chem-reactions}. Fokker-Planck equation with chemical reactions.}
\customlabel{ex:FP+chem-reactions}{\ref{ex:FP}+\ref{ex:chem-reactions}}
A similar combination can be made of Fokker-Planck equations with chemical reactions (Example~\ref{ex:chem-reactions}). 

The set $\nodes$ again consists of pairs $(x,\sfx)\in \R^d\times \ol\nodes$, where $\ol\nodes$ is the set of species; the set of edges now is the disjoint union with chemical-reaction edges:
\begin{equation}\label{eqdef:edges:B+C}
\edges := \edges^{\mathrm c} \sqcup \edges^{\mathrm d} ,
\qquad\edges^{\mathrm c} := \underbrace{\R^d\times \{1,\dots,d\}}_{\text{continuous edges}}\times \ol \nodes, 
\qquad
\edges^{\mathrm d} := \R^d \times \underbrace{\R_{\geq0}^{\ol\nodes} \times \R_{\geq0}^{\ol\nodes}}_{\text{chem-reaction edges}}.
\end{equation}
The gradient operator is the union of the two gradients, 
\[
\anabla f(e) := \begin{cases}
	(\partial_{x_i}f)(x,i,\sfx)  & \text{if } e = (x,i,\sfx)\in \edges^{\mathrm c}	,\\
	(\ona f)(x,\alpha\beta) & \text{if }e= (x,\alpha\beta)\in \edges^{\mathrm d}.
\end{cases}
\]
Fluxes $j\in \scrD_\edges'$ consist of two components $j^{\mathrm c}\in\scrD_{\edges^{\mathrm c}}$ and $j^{\mathrm d}\in\scrD_{\edges^{\mathrm d}}$, and the corresponding divergence is the sum of the two parts
\[
(\adiv j)(x,\sfx) = (\div j^{\mathrm c} )(x,\sfx) + (\odiv j^{\mathrm d})(x,\sfx) := \sum_{i=1}^d \partial_{x_i} j^{\mathrm c}((x,i),\sfx) + \sum_{\alpha\beta\in \edges^{\mathrm d}} \bra*{\alpha_{\sfx} - \beta_{\sfx}} j(x,\alpha\beta).
\]
The continuity equation takes the same form as~\eqref{eqdef:div-RD} and reads
\begin{equation*}
	\partial_t \rho(x,\sfx) + 
		(\div j^{\mathrm c})(x,\sfx) + (\odiv j^{\mathrm d})(x,\sfx)
	=0.
\end{equation*}

\refstepcounter{ex}
\subsubsection*{Example~\theex. The spatially homogeneous Boltzmann equation.}\label{ex:Boltzmann}

The spatially homogeneous Boltzmann equation describes the evolution of a density $f(v)$ of particles with velocity $v\in \nodes := \R^d$. The density evolves by collisions that exchange energy and momentum according to 
\begin{equation}
\label{eq:Boltzmann}
\partial_t f(v) = Q(f,f)(v) := \iint\limits_{\R^d\times\bbS^{d-1}} \bigl[f(v')f(v_*')\kappa(v',v_*',-n) - f(v)f(v_*)\kappa(v,v_*,n)\big]  \dxx n{v_*}.
\end{equation}
Here the pair $(v',v_*')$ is determined by $(v,v_*,n)\in \R^d\times\R^d\times\bbS^{d-1}$ through the perfect elastic collision law
\begin{equation}
\label{eq:collision-law}
\NEW{v'= v'(v,v_*,n) := v - \bra*{(v-v_*)\cdot n}_+ n, \qquad v_*' = v'_*(v,v_*,n):= v_* + \bra*{(v-v_*)\cdot n}_+ n,}
\end{equation}
where $n$ is the normal of the plane of specular reflection parametrized by the  vector pointing from  particle $v$ to particle $v_*$ and $(\cdot)_+$ is the positive part. In particular, we consider an elastic collision only to happen if $(v-v_*) \cdot n >0$. Note, that this implies the involutionary identities
\begin{equation}\label{eq:Boltz:involution}
	v'\bigl(v'(v,v_*,n),v'_*(v,v_*,n),-n\bigr) = v\quad\text{and}\quad 	v'_*\bigl(v'(v,v_*,n),v'_*(v,v_*,n),-n\bigr) = v_* ,
\end{equation}
that express the fact that after switching the roles of incoming and outgoing velocities we arrive at the same configuration.
The collision law~\eqref{eq:collision-law} gives rise to the conservation of velocity and momentum
\begin{equation}\label{eq:Boltz:conservation:law}
	v + v_* = v' + v'_*  \qquad\text{and}\qquad \abs{v}^2 + \abs{v_*}^2 = \abs{v'}^2 + \abs{v'_*}^2 . 
\end{equation}
In particular, the mass, center of mass, and second moment of $f$ are preserved along the Boltzmann evolution:
\begin{equation}\label{eq:Boltz:conservations}
	\pderiv{}{t} \set*{\int f(v)\dx v, \int v f(v) \dx v, \int \abs{v}^2 f(v) \dx v} = 0 .
\end{equation}

The collision kernel $\kappa(v,v_*,n)$ characterizes the rate at which a pair of particles with incoming velocities $v$ and $v_*$ transforms into a pair of particles with outgoing velocities $v'$ and $v_*'$ undergoing an elastic collision with angle parameter $n$. A typical expression is $\kappa(v,v_*,n)=\bra*{(v-v_*)\cdot n}_+$, which models hard-sphere collisions~\cite{Boltzmann1872,Boltzmann1964}.
By indistinguishability of the particles, we demand the symmetry condition $\kappa(v_*,v,n) = \kappa(v,v_*,-n)$.

We use the slightly uncommon form of the flux in~\eqref{eq:Boltzmann} and collision law~\eqref{eq:collision-law} for two reasons. First, we want to highlight how the Boltzmann equation resembles an uncountable set of chemical reactions of the form
\begin{equation*}
	\set*{v} + \set*{v_*} \xrightleftharpoons[\kappa(v',v'_*,-n)]{\kappa(v,v_*,n)} \set*{v'} + \set*{v'_*} \qquad\qquad (v,v_*,n) \in \R^d\times \R^d \times \bbS^{d-1},
\end{equation*}
where the backward reaction is thanks to~\eqref{eq:Boltz:involution}.
Secondly, we want to stress that the  classical form of the collision term 
\[
  Q(f,f)= \iint\limits_{\R^d\times\bbS^{d-1}} \bigl[f(v')f(v_*') - f(v)f(v_*)\big] \kappa(v,v_*,n)  \dxx n{v_*}
\]
follows from an assumption  that the collision kernel $\kappa$ satisfies  the symmetry property
\begin{equation}\label{eq:Boltz:kapppa-symmetry}
\kappa(v,v_*,n) = \kappa(v',v_*',-n) \qquad\text{for all } (v,v_*,n) \in \R^d\times \R^d \times \bbS^{d-1}.
\end{equation}
This property can be considered an assumption of \emph{detailed balance}; see the continuation of Example~\ref{ex:Boltzmann} in the next section. 

We now construct two continuity-equation structures for the Boltzmann equation, which each has their advantages. 
The collisions can be interpreted as jumps from one point in $\nodes\times\nodes$ to another, where $\nodes := \R^d$. This suggests a first formulation close to that of Example~\ref{ex:heat-flow}, with edge space $\edges := (\nodes\times\nodes)\times (\nodes\times\nodes)$ and edge elements $(v_1v_2)(v_3v_4)$, where we for notational ease also write $v_1v_2\;v_3 v_4\in \edges$. A second continuity-equation structure follows instead from the suggestion that is implicit  in~\eqref{eq:Boltzmann}, to parametrize the edges by $\wh \edges := \nodes\times\nodes\times\bbS^{d-1}$, with edge elements $vv_*n\in\wh\edges$.

For the first formulation, in terms of the edge set $\edges := (\nodes\times\nodes)\times (\nodes\times\nodes)$, we use a measure-valued framework and set $\scrD_\nodes=C_{\mathrm b}(\nodes)$ and $\scrD_\edges=C_{\mathrm b}(\edges)$. The `graph' gradient $\ona: C_{\mathrm b}(\nodes)\to C_{\mathrm b}(\edges)$ and divergence $\odiv:\calM(\edges)\to \calM(\nodes)$ are given by
\begin{subequations}
\begin{align}
\label{eqdef:grad-Boltzmann}
(\ona g)(v_1v_2\;v_3v_4)  &:= g(v_3) + g(v_4) - g(v_1)-g(v_2) ,
&& \text{for } v_1v_2\;v_3v_4\in \edges;\\
\int_\nodes g(v)(\gdiv j)(\dx v) &:= \iiiint_\edges g(v) \sum_{k=0}^3 (-1)^k j^{\sigma_k} (\dxx{v}{v_2}\,\dxx {v_3}{v_4}), &&\text{for } g \in C_{\mathrm b}(\nodes) .
\label{eqdef:div-Boltzmann}
\end{align}
\end{subequations}
Here $j^\sigma$ is the push-forward of $j$ under the permutation $\sigma$ of four elements, defined by 
\[
\dual{j^\sigma}g := \dual j {g\circ \sigma} \qquad\text{for any }g.
\]
The $\sigma_k$ in~\eqref{eqdef:div-Boltzmann} are the four permutations of a pair of pairs:
\[
\sigma_0(ab\;cd) = ab\;cd,\quad 
\sigma_1(ab\;cd) = cd\;ab,\quad 
\sigma_2(ab\;cd) = ba\;cd,\quad 
\sigma_3(ab\;cd) = cd\;ba.
\]

For the second formulation, in terms of the edge set $\wh \edges := \nodes\times\nodes\times\bbS^{d-1}$, note that the collision law~\eqref{eq:collision-law} defines a map $T:\widehat\edges\to \edges$   by
\begin{equation}\label{eqdef:Boltzmann-map-T}
T(v,v_*,n ) := \bigl(v,v_*,v'(v,v_*,n),v_*(v,v_*,n)\bigr)=
\bigl( v, v_*, v-n \cdot (v-v_*)\, n , v_* + n \cdot (v-v_*) \,  n \bigr)  .
\end{equation}
Again using a measure-valued framework, with $\scrD_\nodes=C_{\mathrm b}(\nodes)$ and $\scrD_{\wh\edges}=C_{\mathrm b}(\wh\edges)$, we obtain the gradient $\wh\nabla$ by pulling back the  gradient $\gnabla$ in~\eqref{eqdef:grad-Boltzmann} under the map $T$,
\begin{align*}
		(\widehat\nabla g)(v v_*\;n)  &:= g(v'(v,v_*,n)) + g(v_*'(v,v_*,n)) - g(v)-g(v_*) ,
		&& \text{for }(v, v_*,n)\in \widehat\edges 
\end{align*}
The divergence $\wh\div$ of any $\widehat\jmath\in \calM(\widehat \edges)$ is obtained as negative dual of $\widehat\nabla$ and can be characterized by using suitable push-forwards of $\widehat\jmath$ under the symmetries $\zeta: v \leftrightarrow v_*$ and switching from incoming to outgoing velocities $\tau:(v,v_*) \leftrightarrow (v',v_*')$. The result is
\begin{align*}
(\wh\div \wh j)(v) &= \iint\limits _{\R^d\times \bbS^{d-1}} (\wh \jmath^{,\mathrm{bw}} - \wh \jmath ^{\mathrm{\,fw}})(v,\dxx {v_*} n), \qquad\text{with}\\
\wh \jmath^{\mathrm{\,fw}} &= \wh\jmath + \zeta_\#\wh\jmath, \qquad
\wh \jmath^{\mathrm{\,bw}} = \tau_\# \wh \jmath^{\mathrm{\,fw}}.
\end{align*}

There is a fundamental difference between the two continuity equations: the first one allows jumps from any $v_1v_2\in \nodes\times\nodes$ to any $v_3v_4\in \nodes\times\nodes$, while the second formulation restricts the jumps  to those of the form~\eqref{eq:collision-law}; in particular, the first continuity equation may violate conservation of mass and energy~\eqref{eq:Boltz:conservation:law} while the second preserves both.

\medskip
Equation~\eqref{eq:Boltzmann} can be written in both forms,  
\begin{equation*}
\partial_t f + \odiv j = 0 \qquad\text{or}\qquad
\partial_t f + \wh\div \wh\jmath = 0.
\end{equation*}
Equation~\eqref{eq:Boltzmann} reduces to the second form above with the choice
\begin{equation}
\label{eq:choice-wh-jmath-Boltzmann}
\wh\jmath (\dxxx v {v_*} n) = f(v)f(v_*) \kappa(v,v_*,n) \dxxx v {v_*} n.
\end{equation}
Note that by~\eqref{eq:collision-law}  collisions only happen if $(v-v_*)\cdot n >0$; for each $(v,v_*,n)$ the expression 
\[
\wh \jmath^{,\mathrm{bw}} - \wh \jmath ^{\mathrm{\,fw}}
=   \tau_\# \wh\jmath + \tau_\#(\zeta_\#\wh\jmath) - \wh\jmath - \zeta_\#\wh\jmath
\]
therefore reduces to $\tau_\# \wh\jmath - \wh\jmath  $ if $(v-v_*)\cdot n>0$ and to $\zeta_\#\wh\jmath -  \zeta_\#\wh\jmath$ if $(v-v_*)\cdot n<0$; this yields the expression~\eqref{eq:Boltzmann}. The first form of the continuity equation is then obtained by taking $j := T_\# \wh \jmath$ for the $\wh\jmath$ given in~\eqref{eq:choice-wh-jmath-Boltzmann}.

There is a massive literature on the Boltzmann equation; see e.g. the monographs~\cite{Cercignani88,CercignaniIllnerPulvirenti1994,Saint-Raymond09}.

%
%
%

\subsection{Fluxes generated by dissipation potentials}
\label{ss:flux-GS-intro}

In this paper we focus on those evolution equations for measures that are generated by {gradient systems}. The following definition makes this precise.

\begin{definition}[Gradient system in continuity-equation format]\label{def:GradSystCE}
A gradient system in continuity-equation format is a quintuple $(\sfV,\sfE,\anabla,\calE,\calR)$ with the following properties:
\begin{enumerate}
\item $\sfV$ and $\sfE$ are topological spaces, and $\scrD_\nodes$ and $\scrD_\edges$ are topological spaces of functions on~$\nodes$ and~$\edges$;
\item $\anabla$ is a linear map from  $\scrD_\nodes$ to $\scrD_\edges$,  with negative dual $\adiv$;
\item $\calE:\calM_{\geq0}(\sfV)\to\R$;
\item \label{def:GradSystCE:DP}
$\calR$ is a \emph{dissipation potential}, which means that for each $\rho\in \calM_{\geq0}(\sfV)$, $j \mapsto \calR(\rho,j)$ is a convex lower semicontinuous functional on $\scrD'_\edges$ satisfying $\min \calR(\rho,\cdot) = \calR(\rho,0) = 0$. 
\end{enumerate}
\end{definition}
The dual dissipation potential $\calR^*$ is defined by duality,
\[
\calR^*(\rho,\Xi) := \sup_{j}\set*{ \dual j \Xi - \calR(\rho,j) },
\qquad \text{for } \Xi\in \scrD_\edges.
\]
and is again a convex lower semicontinuous function with $\min \calR^*(\rho,\cdot) = \calR^*(\rho,0) = 0$. Given $\calR^*$,  $\calR$ can similarly be reconstructed from $\calR^*$ by duality.

The subdifferentials $\partial_2\calR(\rho,j)$ and $\partial_2\calR^*(\rho,
\Xi)$ play an important role in this paper. We use $\partial_2$ to indicate the subdifferential in terms of the second variable, which is a set; when the function is differentiable and the subdifferential reduces to a single point, we write $\rmD_2$ instead. Therefore, if $\calR^*(\rho,\cdot)$ is differentiable, then  $\partial_2 \calR^*(\rho,\Xi) = \{\rmD_2\calR^*(\rho,\Xi)\}$.
We have the following equivalence:
\begin{equation}
\label{eq:subdiff-characterization}
j\in \partial_2 \calR^*(\rho,\Xi)
\quad\Longleftrightarrow\quad
\Xi\in \partial_2 \calR(\rho,j)
\quad\Longleftrightarrow\quad
\calR(\rho,j) + \calR^*(\rho,\Xi) = \dual j\Xi .
\end{equation}
This formulation requires that $\calE$ is differentiable; we discuss this in Section~\ref{ss:formal-rigorous}.

\medskip

The \emph{gradient flow in continuity equation format} defined by such a gradient system is the evolution equation obtained by the combination  
\begin{equation}
\label{eq:flux-GS-intro}	
\partial_t \rho + \adiv j = 0
\qquad \text{and}\qquad
j\in  \partial_2 \calR^*\bigl(\rho;\mathopen-\anabla \rmD\calE(\rho)\bigr).
\end{equation}

\begin{remark}[Relation with other gradient system concepts]
\label{rem:non-ct-eq-GS}
The gradient systems in continuity-equation format that we use in this paper may appear to be a subclass of more general concepts of gradient systems. In~\cite{Mielke16a,LieroMielkePeletierRenger17}, for instance, a gradient system is a triple $(\mathscr Z, \mathscr E,\mathscr R)$ consisting of a space~$\mathscr Z$, a functional $\mathscr E$ on~$\mathscr Z$, and a dissipation potential~$\mathscr R$; then $\mathscr R(z,\cdot)$ is a convex functional of `time derivatives' (tangents) $\dot z\in\mathscr Z$, and the dual $\mathscr R^*(z,\cdot)$ is a functional of `forces' (co-tangents) $\xi\in \mathscr Z^*$.

In fact, the continuity-equation structure is not more restrictive, but only more explicit. To see this, note first that the trivial choice $\edges=\nodes$, $\anabla = \mathrm{Id}$ turns a gradient system $(\mathscr Z, \mathscr E,\mathscr R)$ into a gradient system $(\mathscr Z, \mathscr Z, \mathrm{Id}, \mathscr E,\mathscr R)$ in continuity-equation format without actually changing it; this shows that imposing the continuity-equation format does not entail any loss of generality. 

More importantly, however, the continuity-equation structure often leads to simpler modelling arguments and more explicit expressions.
To understand this, note that a gradient system $(\sfV,\sfE,\anabla,\calE,\calR)$ in continuity-equation format can be reformulated in the more general form $(\mathscr Z, \mathscr E,\mathscr R)$ by setting
\begin{equation}
\label{eq:relation-classical-conteq}
\mathscr Z := \calM_{\geq0}(\sfV), \qquad \mathscr E(\rho) := \calE(\rho), \qquad \mathscr R^*(\rho,\xi) := \calR^*(\rho,\anabla \xi).
\end{equation}
Using well-known characterizations of duals of compositions (e.g.~\cite[11.23]{Rockafellar-Wets98}  or~\cite[Prop.~6.1]{MaasMielke20}) one formally finds
\begin{equation}
\label{eq:scrR-calR}
\mathscr R(\rho,\dot \rho) = \inf_{\adiv  j	 = -\dot \rho} \calR(\rho,j).
\end{equation}
This expression for $\scrR(\rho,\dot \rho)$ illustrates the main reason for adopting the continuity-equation structure: while $\calR(\rho,j)$ often is explicit, as in the many examples below, the  minimization in~\eqref{eq:scrR-calR} typically can not be performed analytically, and $\scrR(\rho,\dot\rho)$ only has the implicit definition~\eqref{eq:scrR-calR}.

Specifying $\calR(\rho,j)$ instead of $\scrR(\rho,\dot\rho)$ also simplifies modelling of gradient systems: the `flux' $j$ can be a general combination of the various processes that contribute to $\dot \rho$, such as diffusion and reaction in a reaction-diffusion equation (see Examples~\ref{ex:FP-heat-flow} and~\ref{ex:FP+chem-reactions}  below), with corresponding separate dissipation potentials $\calR$. The continuity-equation structure also corresponds to the `process space' approach introduced  in~\cite{PeletierVarMod14TR}.
\end{remark}

\begin{remark}[Freedom in choosing the set of edges $\edges$]
For a given node set $\nodes$, one can often choose different sets of edges $\edges$ while generating the same evolution equation. This is implicit in the formulation~\eqref{eq:relation-classical-conteq}, where $\calR^*(\rho,\anabla \xi)$ depends on the choice of $\edges$ and of $\anabla$, but the potential $\scrR^*(\rho,\xi)$ does not. Practically, given two sets of edges $\edges_{a,b}$ and corresponding operators $\anabla_{\!a,b}$ and $\adiv_{a,b}$, the following two evolutions are formally identical,
\begin{equation}
\label{eq:two-evolutions}
\partial_t \rho = \begin{cases}
 -\adiv_a \rmD_2\calR^*_a(\rho,-\anabla	_{\!a} \rmD\calE(\rho))\\
 -\adiv_b \rmD_2\calR^*_b(\rho,-\anabla	_{\!b} \rmD\calE(\rho)),
\end{cases}
\end{equation}
whenever $\calR_a^*(\rho,\anabla_{\!a} \varphi) = \calR_b^*(\rho,\anabla_{\!b}\varphi)$ for all $\varphi\in C_{\mathrm c}^\infty(\nodes)$. This follows by remarking that 
\[
\pderiv{}{t} \calR^*_a(\rho,\anabla_{\!a}(\varphi + t\psi))\Big|_{t=0}
= \Big\langle {\rmD_2\calR^*_a(\rho,\anabla_{\!a}\varphi)}\,,\,{\anabla_{\!a}\psi}\Big\rangle
= \Big\langle {-\adiv_a\rmD_2\calR^*_a(\rho,\anabla_{\!a}\varphi)}\,,\,{\psi}\Big\rangle,
\]
and therefore the two evolutions in~\eqref{eq:two-evolutions} coincide as soon as $\calR_a^*(\rho,\anabla_{\!a} \varphi) = \calR_b^*(\rho,\anabla_{\!b}\varphi)$. 

The symmetric appearance of the operators $\anabla_{\!a}$ and $\adiv_{a}$ in the map
\[ 
  \varphi \mapsto -\adiv_a\rmD_2\calR^*_a(\rho,\anabla_{\!a}\varphi)
\]
is very similar to the classical linear-algebra formula $O^T\!\!AO$ for the transformation of a matrix $A$ under an orthogonal coordinate change $x=Oy$. In fact, for symmetric positive $A$  one can recover $x\mapsto Ax$ as the derivative of $x\mapsto \tfrac 12 \|x\|_A^2$, and    $y \mapsto O^T\!\!AOy$ as the derivative of the transformed quadratic function $y\mapsto \tfrac12 \|Oy\|_A^2=\tfrac 12 \|x\|_A^2$. In this sense the joint appearance of $\anabla_{\!a}$ and $\adiv_a$ can be recognized as a direct consequence of the `coordinate transform' characterized by $\anabla_{\!a}$. \"Ottinger~\cite{Ottinger19} discusses this aspect in more detail.
\end{remark}

\begin{remark}[Freedom in scaling the energy $\calE$]	
\label{rem:scaling-energy-in-GS}
For any gradient system there is an inherent scale invariance of the following type: for $\lambda>0$ define
\[
\calE^\lambda := \lambda \calE, \qquad
\calR^\lambda(\rho,j) := \lambda \calR(\rho,j),
\qquad\text{and}\qquad
\calR^{\lambda,*}(\rho,\Xi) := \lambda \calR^*(\rho,\lambda^{-1} \Xi).
\]
Then $\calE^\lambda$, $\calR^\lambda$, and $\calR^{\lambda,*}$ generate the same evolution equation as $\calE$, $\calR$, and $\calR^{*}$.
\end{remark}

\bigskip
\noindent 
\textbf{\textit{Examples.}}
Each of the examples in the previous section can be written as a gradient system in continuity-equation format under a suitable detailed-balance condition.

\subsubsection*{Example~\ref{ex:heat-flow}. Heat flow on a graph (continued).} 

We fix a measure $\pi\in\ProbMeas^+(\nodes)$ and a edge function $\speck:\edges\to[0,\infty)$ with the symmetry $\speck_{\sfx\sfy} = \speck_{\sfy\sfx}$.
We then define the following gradient system: 
\begin{subequations}
	\label{eq:ER-heat-flow-intro}
	\begin{align}
		\calE(\rho) &:= \RelEnt(\rho|\pi), \label{eqdef:E-jump}\\
\calR^*(\rho,\Xi) &:= \sum_{\sfx\sfy\in \edges} \sigma_{\sfx\sfy}(\rho)\sfC^*(\Xi_{\sfx\sfy}) \qquad\text{with}\qquad
\sigma_{\sfx\sfy}(\rho) := \frac12 {\speck_{\sfx\sfy}}\sqrt {\rho_\sfx\rho_\sfy},
\label{eqdef:Rstar-jump}\\
\calR(\rho,j) &:=  \sum_{\sfx\sfy\in \edges} \sigma_{\sfx\sfy}(\rho)\sfC\Bigl(\frac{j_{\sfx\sfy}}{\sigma_{\sfx\sfy}(\rho)}\Bigr).
		\label{eqdef:R-jump}
	\end{align}
\end{subequations}
The Legendre pair of functions $\sfC$ and $\sfC^*$ was defined in~\eqref{eqdef:C-C*}.
Here and later in this paper the relative entropy $\RelEnt$ of two measures $\mu,\nu\in\calM_{\geq 0}(\Omega)$ on a set $\Omega$, e.g. a subset of $\nodes$, $\R^d$ or some discrete set, is defined as
\begin{equation}\label{eqdef:RelEnt}
	\RelEnt(\mu|\nu) := \begin{cases}
		\ds \int_\Omega \bra*{ u \log u - u + 1}\, d\nu  
		& \text{if }\mu \ll \nu \text{ and }\mu = u \nu,\\
		+\infty & \text{otherwise.}	
	\end{cases}
\end{equation}

We calculate the flux $j$ generated by $\calE$ and $\calR^*$ in~\eqref{eq:ER-heat-flow-intro}. 
From $\bigl(\rmD \calE(\rho)\bigr)_\sfx = \log(\rho_\sfx/\pi_\sfx)$ and   the definition of the graph gradient~\eqref{eqdef:gnabla} we have
%
%
\[
\bra*{\gnabla \rmD \calE(\rho)}_{\sfx\sfy} = \log \frac{\rho_\sfy}{\pi_\sfy} - \log \frac{\rho_\sfx}{\pi_\sfx}.
\]
Observing by~\eqref{eq:C'} that
\[
\bra[\big]{\rmD_2  \calR^*(\rho,\Xi)}_{\sfx\sfy} =  \speck_{\sfx\sfy} \sqrt{\rho_\sfx \rho_\sfy} \, \sinh\bra*{ \tfrac{1}{2}\Xi_{\sfx\sfy}} ,
\]
we find for the flux $j$ generated by $\calR^*$ and $\calE$,
\begin{align}
	\notag	
	j_{\sfx\sfy} = \bra[\big]{\rmD_2 \calR^*\bra*{\rho,-\gnabla\rmD\calE(\rho)}}_{\sfx\sfy} 
	&=  \speck_{\sfx\sfy} \,\sqrt{\rho_\sfx\rho_\sfy} \; \sinh\Bigl(-\frac12 \log\frac{\rho_\sfy}{\pi_\sfy} + \frac12 \log \frac{\rho_\sfx}{\pi_\sfx}\Bigr)\\
	&= \frac12  \speck_{\sfx\sfy} \sqrt{\rho_\sfx\rho_\sfy} \, \biggl( \sqrt{\frac{\rho_\sfx\pi_\sfy}{\rho_\sfy\pi_\sfx}} - \sqrt{\frac{\rho_\sfy\pi_\sfx}{\rho_\sfx\pi_\sfy}}\biggr)\notag\\
	&= \frac12  \speck_{\sfx\sfy} 
		\biggl( \rho_\sfx\sqrt{\frac{\pi_\sfy}{\pi_\sfx}}  
		- \rho_\sfy\sqrt{\frac{\pi_\sfx}{\pi_\sfy}}\biggr).
	\label{eq:deriv-eq-heatflow-intro}
\end{align}

Comparing~\eqref{eq:deriv-eq-heatflow-intro} with~\eqref{eq:heat-flow-flux-intro} we observe that the two coincide if the rates $\kappa$ satsisfy the relation
\begin{equation}
\label{eq:rel-kappa-speck}	
\kappa_{\sfx\sfy} = \speck_{\sfx\sfy} \sqrt{\frac{\pi_\sfy}{\pi_\sfx}} 
\qquad \text{for all }\sfx\sfy\in \edges.
\end{equation}
This leads to the following definition.
\begin{definition}[Detailed balance]\label{def:DBC}
The pair $(\kappa,\pi)$ with $\kappa:\edges\to[0,\infty)$ and $\pi\in\ProbMeas^+(\nodes)$  satisfies the condition of \emph{detailed balance} if there exists  a symmetric function $\speck:\edges\to[0,\infty)$ such that~\eqref{eq:rel-kappa-speck} holds, or equivalently, if 
\begin{equation}
\label{eq:def:graph:DBC}
\pi_{\sfx} \kappa_{\sfx\sfy} = \pi_{\sfy} \kappa_{\sfy\sfx}
\qquad \text{for all }\sfx\sfy\in \edges.
\end{equation}
For future use we set $k_{\sfx\sfy} := \pi_{\sfx} \kappa_{\sfx\sfy} = \pi_{\sfy} \kappa_{\sfy\sfx}$, and we have the relations
\[
\speck_{\sfx\sfy} = \sqrt{\kappa_{\sfx\sfy}\kappa_{\sfy\sfx}}
\qquad \text{and}\qquad
k_{\sfx\sfy} = \speck_{\sfx\sfy} \sqrt{\pi_\sfx\pi_\sfy}.
\]
\end{definition}

Consequently, if $(\kappa,\pi)$ satisfies detailed balance, then the gradient system~\eqref{eq:ER-heat-flow-intro} induces the equation~\eqref{eq:heat-flow-intro} with rates $\kappa$ given by~\eqref{eq:rel-kappa-speck}.
When equation~\eqref{eq:heat-flow-intro} is interpreted as evolution equation for the law of a Markov process~$X_t$ on $\nodes$, detailed balance is equivalent to the reversibility of the Markov process.  

\begin{remark}[Single and double directed edges]\label{rem:edges-single-double}
In the context of flows on graphs $(\nodes,\edges)$, there are two common choices for the set of edges $\edges$:
\begin{enumerate}
\item Double directed edges, which come in forward-backward pairs:\\
If $\sfx\sfy\in\edges$ then also $\sfy\sfx\in \edges$;
\item Single directed edges, which characterize the connection in terms of  one directed edge:\\
If $\sfx\sfy\in\edges$ then $\sfy\sfx$ is \emph{not} an element of $\edges$.
\end{enumerate}
See Figure~\ref{fig:double-single-edges} for an example.
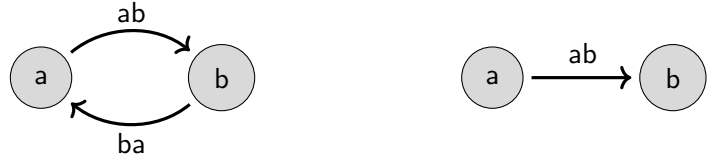
\begin{figure}[ht]
\centering
{\small
\begin{tikzpicture}[scale=0.6, shorten >=3pt, shorten <=3pt]
\tikzstyle{every node}=[draw,shape=circle,fill=black!15];
\node (vv1) at (10,0) {$\ \sfa\ $};
\node (vv3) at (14,0) {$\ \sfb\ $};
\tikzstyle{every node}=[];

\draw[->, very thick] (vv1) to [out=40, in=140] (vv3);
\draw[->, very thick] (vv3) to [out=220, in=-40] (vv1);

\node[above] at (12,1) {${\sfa\sfb}$};
\node[below] at (12,-1) {${\sfb\sfa}$};

\tikzstyle{every node}=[draw,shape=circle,fill=black!15];
\node (zv1) at (20,0) {$\ \sfa\ $};
\node (zv3) at (24,0) {$\ \sfb\ $};
\tikzstyle{every node}=[];

\draw[->, very thick] (zv1) to  (zv3);

\node[above] at (22,0.1) {${\sfa\sfb}$};
\end{tikzpicture}
}
\caption{Two versions of a simple two-state system, with two edges or a single edge.}
\label{fig:double-single-edges}
\end{figure}
In the first case of double directed edges sometimes the space of fluxes is limited  to either only \emph{non-negative} unidirectional fluxes ($j_{\sfx\sfy}\geq0$) or only \emph{skew-symmetric} fluxes ($j_{\sfx\sfy} = -j_{\sfy\sfx}$). Skew-symmetric fluxes can be considered as {net} flux over the edge, and can be obtained by considering the skew-symmetrization of the unidirectional fluxes.

The continuity-equation setup of this paper is meaningful for any choice of edges and any choice of flux, and in the developments below we use both single and double directed edges, and both non-negative and skew-symmetric fluxes. Our choices are determined by what is the most adequate environment in the given situation. 
\end{remark}

\begin{remark}[Conversion between single and double edges]\label{rem:edges-conversion}
Sometimes it is useful to be able to convert between single-edged and double-edged descriptions. Consider the simple two-state system of Figure~\ref{fig:double-single-edges}. Consider a gradient system for the double-edged setup on the left, 
\begin{alignat*}2
\edges = \{\sfa\sfb,\sfb\sfa\}, \qquad
\calR^*(\rho,\Xi) &:= \sum_{\sfx\sfy\in \{\sfa\sfb,\sfb\sfa\}} \sigma_{\sfx\sfy}(\rho)\sfC^*(\Xi_{\sfx\sfy}),
&\qquad&\text{for }\Xi_{\sfa\sfb},\Xi_{\sfb\sfa}\in \R,\\
\calR(\rho,j) &:=  \sum_{\sfx\sfy\in \{\sfa\sfb,\sfb\sfa\}} \sigma_{\sfx\sfy}(\rho)\sfC\Bigl(\frac{j_{\sfx\sfy}}{\sigma_{\sfx\sfy}(\rho)}\Bigr),
&\qquad&\text{for }j = (j_{\sfa\sfb},j_{\sfb\sfa})\in \R^2,
\end{alignat*}
and a similar system for the right-hand side, 
\begin{alignat*}2
\ol \edges = \{\sfa\sfb\}, \qquad
\ol \calR^*(\rho,\xi) &:= \ol\sigma(\rho)\sfC^*(\xi),
&\qquad&\text{for }\xi\in \R,\\
\ol\calR(\rho,\rmj) &:=  \ol\sigma(\rho)\sfC\Bigl(\frac{\rmj}{\sigma(\rho)}\Bigr),
&\qquad&\text{for }\rmj \in \R.
\end{alignat*}
The following lemma describes the recipe for converting one into the other.
\begin{lemma}
Assume that 
\[
\ol\sigma(\rho)  = \sigma_{\sfa\sfb}(\rho) + \sigma_{\sfb\sfa}(\rho).
\]
If\/ $\Xi_{\sfa\sfb} = -\Xi_{\sfb\sfa} = \xi$, then $\ol\calR^*(\rho,\xi) = \calR^*(\rho,\Xi)$. Similarly, if $j_{\sfa\sfb} = -j_{\sfb\sfa} = \rmj$, then $\ol\calR(\rho,\rmj)= \calR(\rho,j)$. 

In addition, if $(\rho,j)$ is a solution of the gradient flow generated by $\edges$ and $\calR$, then $(\rho,\rmj)$ is a solution of the gradient flow generated by $\ol\edges$ and $\ol\calR$, where $\rmj = j_{\sfa\sfb}$. 
\end{lemma}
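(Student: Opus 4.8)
The plan is to establish the three assertions separately. The first two are pointwise Fenchel-type identities between the two-edge and one-edge dissipation potentials; the third then follows from them together with the chain-rule characterisation~\eqref{eq:subdiff-characterization} of gradient-flow solutions. Throughout, fix $\rho$, and note that $\speck_{\sfa\sfb}=\speck_{\sfb\sfa}$ forces $\sigma_{\sfa\sfb}(\rho)=\sigma_{\sfb\sfa}(\rho)$ (recall $\sigma_{\sfx\sfy}(\rho)=\tfrac12\speck_{\sfx\sfy}\sqrt{\rho_\sfx\rho_\sfy}$ from~\eqref{eqdef:Rstar-jump}); write $\sigma(\rho)$ for this common value, so that the standing hypothesis becomes $\ol\sigma(\rho)=2\sigma(\rho)$.

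\textbf{The two algebraic identities.} For the $\calR^*$-identity, insert $\Xi_{\sfb\sfa}=-\Xi_{\sfa\sfb}=-\xi$ into the two-term sum and use that $\sfC^*$ is even (see~\eqref{eqdef:C-C*}):
\[
\calR^*(\rho,\Xi)=\sigma_{\sfa\sfb}(\rho)\sfC^*(\xi)+\sigma_{\sfb\sfa}(\rho)\sfC^*(-\xi)=\bigl(\sigma_{\sfa\sfb}(\rho)+\sigma_{\sfb\sfa}(\rho)\bigr)\sfC^*(\xi)=\ol\sigma(\rho)\sfC^*(\xi)=\ol\calR^*(\rho,\xi),
\]
which uses only evenness and the standing hypothesis. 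The $\calR$-identity is the mirror computation, now using that $\sfC$ is even and that both denominators equal $\sigma(\rho)$: with $j_{\sfb\sfa}=-\jmath$,
\[
\calR(\rho,j)=\sigma(\rho)\sfC\bigl(\jmath/\sigma(\rho)\bigr)+\sigma(\rho)\sfC\bigl(-\jmath/\sigma(\rho)\bigr)=2\sigma(\rho)\sfC\bigl(\jmath/\sigma(\rho)\bigr)=\ol\sigma(\rho)\sfC\bigl(\jmath/\sigma(\rho)\bigr)=\ol\calR(\rho,\jmath).
\]

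\textbf{Transfer of gradient-flow solutions.} I would first observe that on the two-edge graph the flux generated by~\eqref{eq:flux-GS-intro} is automatically skew-symmetric. Indeed, by~\eqref{eqdef:gnabla} the force $\Xi:=-\gnabla\rmD\calE(\rho)$ satisfies $\Xi_{\sfb\sfa}=-\Xi_{\sfa\sfb}=:-\xi$; since $\calR^*(\rho,\cdot)$ is a finite sum of smooth $\sfC^*$'s it is differentiable, so a solution obeys $j=\rmD_2\calR^*(\rho,\Xi)$, and because $(\sfC^*)'=2\sinh(\cdot/2)$ is odd (see~\eqref{eq:C'}) and $\sigma_{\sfa\sfb}=\sigma_{\sfb\sfa}$, this gives $j_{\sfb\sfa}=-j_{\sfa\sfb}$; hence $\jmath:=j_{\sfa\sfb}$ is well defined and $j$ is the skew extension of $\jmath$. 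Next I rewrite the force--flux relation via the Fenchel equality~\eqref{eq:subdiff-characterization}: $j=\rmD_2\calR^*(\rho,\Xi)$ is equivalent to $\calR(\rho,j)+\calR^*(\rho,\Xi)=\dual{j}{\Xi}$. The duality pairing over the double edge splits into two equal contributions, $\dual{j}{\Xi}=j_{\sfa\sfb}\Xi_{\sfa\sfb}+j_{\sfb\sfa}\Xi_{\sfb\sfa}=2\jmath\xi$, and this is exactly the pairing of $\jmath$ against the single-edge force $\xi$ once the edge $\sfa\sfb$ is counted with the multiplicity $2$ it carries as a stand-in for the pair $\{\sfa\sfb,\sfb\sfa\}$ (the single-edge force is again $\xi$, since the graph gradient has the same formula on the remaining edge). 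Feeding the two identities above into this balance turns it into $\ol\calR(\rho,\jmath)+\ol\calR^*(\rho,\xi)=\dual{\jmath}{\xi}$, i.e.\ $\jmath\in\partial_2\ol\calR^*(\rho,\xi)$ by~\eqref{eq:subdiff-characterization}, which (with $\xi$ the single component of the single-edge force $-\gnabla\rmD\calE(\rho)$) is the force--flux relation of the one-edge gradient flow. Finally the continuity equation carries over: for the skew extension $(\odiv j)(\sfa)=j_{\sfa\sfb}-j_{\sfb\sfa}=2\jmath$, which equals the value at $\sfa$ of the single-edge divergence of $\jmath$ (again with the edge counted twice), and similarly at $\sfb$; hence $\partial_t\rho+\odiv j=0$ is precisely the one-edge continuity equation for $\jmath$, and $(\rho,\jmath)$ solves the one-edge gradient flow.

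\textbf{Main obstacle.} The one genuinely delicate point is the bookkeeping of the duality pairings in the last step: one must be consistent about the fact that collapsing the forward--backward pair onto a single edge retains the connection's multiplicity $2$ — equivalently, that the single-edge divergence of $\jmath$ takes values $\pm2\jmath$ at the endpoints and that the single-edge pairing reads $\dual{\jmath}{\xi}=2\jmath\xi$. This is exactly the convention under which $\ol\calR^*=\ol\sigma\,\sfC^*(\cdot)$ and $\ol\calR=\ol\sigma\,\sfC(\cdot/\sigma)$ stay Fenchel duals, under which $\ol\sigma=2\sigma=\sigma_{\sfa\sfb}+\sigma_{\sfb\sfa}$, and under which $\jmath=j_{\sfa\sfb}$ — not the ``net flux'' $j_{\sfa\sfb}-j_{\sfb\sfa}=2j_{\sfa\sfb}$ — is the right identification. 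Establishing skew-symmetry of the generated flux at the outset is what removes the ambiguity; after that every claim reduces to the evenness of $\sfC$ and $\sfC^*$ and to~\eqref{eq:subdiff-characterization}.
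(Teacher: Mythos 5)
Your proof is correct and follows the route the paper itself indicates (the paper offers only the one-line remark that the verification is simple and that the solution correspondence rests on the skew-symmetry of $\Xi=-\gnabla\rmD\calE(\rho)$): evenness of $\sfC$ and $\sfC^*$ plus $\sigma_{\sfa\sfb}=\sigma_{\sfb\sfa}$ give the two identities, and the Fenchel characterization~\eqref{eq:subdiff-characterization} transfers solutions. Your ``main obstacle'' paragraph is the one genuinely valuable addition: the formula $\ol\calR(\rho,\jmath)=\ol\sigma(\rho)\,\sfC(\jmath/\sigma(\rho))$ is a Legendre dual of $\ol\calR^*=\ol\sigma\,\sfC^*$ only under the weighted pairing $\dual{\jmath}{\xi}=2\jmath\xi$ (equivalently $(\odiv\jmath)(\sfa)=2\jmath$), and it is exactly this convention that makes the identification $\jmath=j_{\sfa\sfb}$ --- rather than the net flux $j_{\sfa\sfb}-j_{\sfb\sfa}$ --- consistent, a point the paper leaves implicit.
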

\noindent
The proof is a simple verification; the correspondence between gradient-flow solutions arises because $\Xi = -\gnabla \rmD \calE(\rho)$ is by construction skew-symmetric.
\end{remark}

\subsubsection*{Example~\ref{ex:FP}. The  Fokker-Planck equation (continued).}
The stationary states of equation~\eqref{eq:FP-intro} are multiples of the probability measure
\begin{equation}\label{eq:def:FP:pi}
\pi (\dx x) := \frac1{\PartSum} \exp(-V(x)) \dx x\qquad\text{with}\qquad \PartSum:= \int \exp(-V(x)) \dx{x} .
\end{equation}
With this $\pi$, we set $\calE(\rho) := \RelEnt(\rho|\pi)$ similar to~\eqref{eqdef:E-jump} in Example~\ref{ex:heat-flow}.
Equation~\eqref{eq:FP-intro} itself takes the form of~\eqref{eq:flux-GS-intro} when we define
\begin{equation}
\calR^*(\rho,\Xi) := \frac12\int_{\R^d}  |\Xi(x)|_{D}^2\,\rho(\dx x), 
\text{where we write }|v|_D^2 := v^T D v.
\label{eqdef:R-FP}
\end{equation}
Indeed, if we assume that $\rho$ is Lebesgue absolutely continuous, then $u = \dx\rho/\dx\pi = \PartSum\ee^{V}\!\rho$, and writing 
\begin{equation}
\label{eq:ex-FP-Xi}	
\Xi = -\nabla \rmD\calE(\rho) = -\nabla \log u = -\nabla V - \nabla \rho/\rho,
\end{equation}
we find that~\eqref{eq:flux-GS-intro} reduces to 
\begin{equation}
\label{eq:deriv-eq-FP}
j =  \rmD_2 \calR^*(\rho,\Xi) = \rho D\Xi =  -D(\nabla\rho + \rho V).
\end{equation}
In this way the Fokker-Planck equation~\eqref{eq:FP-intro} can be written in the form of~\eqref{eq:flux-GS-intro}.
\NEW{Note that the detailed balance condition from Definition~\ref{def:DBC} in the present situation can be rephrased as the solution to the zero-flux equation $j[\rho] = 0$, which thanks to the non-degeneracy of $D$ is equivalent to $\nabla \rho = -\rho V$ and hence $\pi$ from~\eqref{eq:def:FP:pi} is the unique detailed balance probability measure in this case.}

\subsubsection*{Example~\ref{ex:chem-reactions}. Chemical reactions (continued).}

We follow the classical theory of chemical kinetics (see e.g.~\cite{Feinberg1972,Laidler1987,Connors1990,GorbanKarlinZmievskiiDymova2000,Grmela2010}) to specify the rates associated to the reaction~\eqref{eq:reaction-equation} following Arrhenius and assuming mass-action kinetics. For doing so, we associate to each species $\sfx\in\nodes$ a chemical energy $E_{\sfx}$ and to each reaction $\alpha\beta \in \edges\subset \R_{\geq0}^\nodes \times \R_{\geq0}^\nodes$ an absolute activation energy $E^{\mathrm{act}}_{\alpha\beta}$ and a kinetic pre-factor $D_{\alpha\beta}$. For reactions at temperature $T>0$, we obtain the net fluxes $j_{\alpha\beta}$ for $\alpha\beta\in\edges$ as skew-symmetrization of unidirectional fluxes $\vec\jmath$ given by 
\begin{equation}\label{eq:chem-reactions:flux}
	j_{\alpha\beta} := \frac{\vec\jmath_{\alpha\beta} - \vec \jmath_{\beta\alpha}}{2} \qquad\text{with}\qquad \vec\jmath_{\alpha\beta} := D_{\alpha\beta} \exp\bra[\big]{-\upbeta (E^{\mathrm{act}}_{\alpha\beta}-\alpha\ip E)} \rho^{\alpha} ,
\end{equation}
where we use the notation $\upbeta = 1/k_{\mathrm B}T$, $\rho^\alpha = \prod_{\sfx\in \nodes} \rho_\sfx^{\alpha_\sfx}$ and $\alpha\ip E= \sum_{\sfx\in \nodes} \alpha_\sfx E_\sfx$ for the total chemical potential energy of the complex specified by $\alpha\in \R_{\geq0}^\nodes$.

Under symmetry assumptions\footnote{The general conditions are also called \emph{Wegscheider conditions} after~\cite{Wegscheider1901} and are discussed for instance in~\cite[\S 2.2]{MaasMielke20}} on the activation energy $E^{\mathrm{act}}_{\alpha\beta}=E^{\mathrm{act}}_{\beta\alpha}$ and kinetic pre-factor $D_{\alpha\beta}=D_{\beta\alpha}$  the reaction rate system consisting of the continuity equation~\eqref{eq:cont-eq-chem-reactions} and the constitutive law for the fluxes~\eqref{eq:chem-reactions:flux} has a gradient structure. 
We first write the resulting system in a more convenient form. For doing so, we introduce the overall activity of a reaction $\alpha\beta\in \edges$ defined by
\begin{equation*}
	k_{\alpha\beta} :=  D_{\alpha\beta} \ee^{-\upbeta E^{\mathrm{act}}_{\alpha\beta}}.
\end{equation*}
Moreover, we define an equilibrium measure obtained from the chemical potential energy by
\begin{equation*}
 \pi_{\sfx} = \ee^{-\upbeta E_\sfx} .
\end{equation*}
With these definitions, the reaction rate equation consisting of \eqref{eq:cont-eq-chem-reactions} and~\eqref{eq:chem-reactions:flux} becomes
\begin{equation}\label{eq:chemical-reactions:RRE}
	\partial_t \rho_\sfx + \frac{1}{2}\sum_{\alpha\beta\in \edges} k_{\alpha\beta} \bra*{\frac{\rho^\alpha}{\pi^\alpha} -  \frac{\rho^\beta}{\pi^\beta}}( \alpha_{\sfx} -  \beta_{\sfx})  = 0 . 
\end{equation}

The driving energy is given by the Gibbs energy $\calE(\rho) := \RelEnt(\rho|\pi)$, as in~\eqref{eqdef:E-jump}. The dissipation potential for $\rho \in \calM_{\geq0}(\nodes)$, writing $\rho= u \pi$, and $\Xi : \edges \to \R$ is defined by
\begin{equation}\label{eqdef:chemical-reactions:R}
	\calR^*(\rho,\Xi) =  \frac{1}{2}\sum_{\alpha\beta\in \edges} k_{\alpha\beta}\sqrt{ \frac{\rho^\alpha \rho^\beta}{\pi^\alpha \pi^\beta}} \, \sfC^*(\Xi_{\alpha\beta}) . 
\end{equation}

The formal verification of the gradient flow property comes from similar algebraic manipulations as in Example~\ref{ex:heat-flow}. However, the kinetic rates are in general different from those in Example~\ref{ex:heat-flow}, which is reflected in the fact that even for monomolecular reactions, the prefactors of $\sfC^*$ in~\eqref{eqdef:R-jump} and~\eqref{eqdef:chemical-reactions:R} are different: in~\eqref{eqdef:chemical-reactions:R} the stationary measure $\pi$ appears in the prefactor. This appearance points ahead to the problem that we discuss in more detail in Section~\ref{s:ModellingTilting}: since the stationary measure depends on the energy $\calE$, a perturbation (tilting) of the energy changes not only the force $\Xi_{\alpha\beta}$ but also the prefactor $1/\sqrt{\pi^\alpha \pi^\beta}$. In Section~\ref{s:ModellingTilting} we will see that the situation is even worse: the `constant' $k_{\alpha\beta}$ also depends on tilting. 

We obtain from the definition of the gradient in row $\textbf{\ref{ex:chem-reactions}}$ of Table~\ref{table:divs} for a reaction $\alpha\beta\in \edges$ the expression
\begin{equation*}
	\bra*{ \gnabla \rmD \calE(\rho)}_{\alpha\beta} = \sum_{\sfx\in\nodes} \bra*{\beta_\sfx \log \frac{\rho_\sfx}{\pi_\sfx} - \alpha_\sfx \log \frac{\rho_\sfx}{\pi_\sfx}} = \log \frac{\rho^\beta}{\pi^\beta} - \log \frac{\rho^\alpha}{\pi^\alpha} . 
\end{equation*}
From here, the identification of~\eqref{eq:chemical-reactions:RRE} follows by calculating for a fixed $\alpha\beta\in\edges$
\begin{align*}
	\bra*{\rmD_2 \calR^*\bra*{\rho,-\gnabla \rmD\calE(\rho)}}_{\alpha\beta} &= \frac{1}{2} k_{\alpha\beta}\sqrt{ \frac{\rho^\alpha \rho^\beta}{\pi^\alpha \pi^\beta}} \bra*{ \sqrt{\frac{\rho^\alpha \pi^\beta}{\pi^\alpha \rho^\beta}} -  \sqrt{\frac{\rho^\beta \pi^\alpha}{\pi^\beta \rho^\alpha}}} = \frac{1}{2} k_{\alpha\beta} \bra*{ \frac{\rho^\alpha}{\pi^\alpha} -  \frac{\rho^\beta}{\pi^\beta}} . 
\end{align*} 
Applying the divergence from~\eqref{eq:cont-eq-chem-reactions} we obtain~\eqref{eq:chemical-reactions:RRE}.

\subsubsection*{Example~\ref{ex:FP}+\ref{ex:heat-flow}. Fokker-Planck equations with linear reactions (continued). }

Taking as an example the two-species reaction-diffusion equation~\eqref{eq:RD-intro}, the state space is $\nodes:= \R^d\times \{\sfa,\sfb\}$; given a potential $V:\nodes \to \R$ we define a similar stationary measure and driving functional, 
\[
\pi(\dx x,\sfx) := \frac1{\PartSum} \ee^{-V(x,\sfx)} \dx x,
\qquad\text{and}
\qquad
\calE(\rho) := \RelEnt(\rho|\pi).
\]
Recall that the edge space for Example~\ref{ex:FP}+\ref{ex:heat-flow} consists of continuous edges $\edges^{\mathrm c}$ and discrete edges $\edges^{\mathrm d}$~\eqref{defn:Ex:B+A:edges}, with corresponding functions $\Xi^{\mathrm c}$ and $\Xi^{\mathrm d}$. As dual dissipation potential one can take 
\begin{align}
	\calR^*(\rho,\Xi) &= \calR_{\mathrm {FP}}^*(\rho_\sfa,\Xi^{\mathrm c}_\sfa) 
	+ \calR_{\mathrm {FP}}^*(\rho_\sfb,\Xi^{\mathrm c}_\sfb) 
	+\calR_{\mathrm J}^*(\rho,\Xi^{\mathrm d}),
	\qquad \text{  for $\Xi = (\Xi^{\mathrm c}, \Xi^{\mathrm d})$},\notag\\
	\calR_{\mathrm J}^*(\rho,\Xi^{\mathrm d})
	&=\sum_{\sfx\sfy\in \{\sfa\sfb,\sfb\sfa\}}\int_{\R^d} \frac12 \speck\sqrt{\rho_\sfx\rho_\sfy}(\dx x) \sfC^*(\Xi_{\sfx\sfy}^d(x)).
	\label{eqdef:R*FP+reactions}
\end{align}
Here $\calR_{\mathrm {FP}}(\rho,\Xi^{\mathrm c})$ is the Fokker-Planck potential defined in~\eqref{eqdef:R-FP},  $\rho_{\sfa,\sfb}$ are the measures $\rho(\cdot,\sfa)$ and $\rho(\cdot,\sfb)$, and $\Xi^{\mathrm c}_{\sfa,\sfb}$ similarly are	 the values of $\Xi^{\mathrm c}$ at $\sfx=\sfa$ and $\sfx=\sfb$. 

The integral in~\eqref{eqdef:R*FP+reactions} can be understood as follows. Since $\rho_\sfx$ and $\rho_\sfy$ are non-negative measures on $\R^d$, the combination $\sqrt{\rho_\sfx\rho_\sfy}$ again is a non-negative measure\footnote{Indeed, if~$\rho_\sfx$ and~$\rho_\sfy$ have a density $v_{\sfx}$ and $v_{\sfy}$ with respect to some measure $\mu$, then $\sqrt{\rho_\sfx\rho_\sfy} := \sqrt{v_\sfx v_\sfy}\,\mu$. This construction is independent of the choice $\mu$.} on $\R^d$. If $\rho_\sfx$ and ~$\rho_\sfy$ each consist of a single Dirac at $x_0\in \R^d$ times a constant, then the integral coincides with $\calR^*(\rho,\Xi^{\mathrm d})$ as defined in~\eqref{eqdef:Rstar-jump}. The integral expression in~\eqref{eqdef:R*FP+reactions} can therefore be seen as a natural extension to reactions that are distributed in space, with the measure $\sqrt{\rho_\sfx\rho_\sfy}$ determining the local level of `activity' of the reaction.

\medskip
Again assuming for simplicity that $\rho$ is Lebesgue absolutely continuous, we write $\rho = u\pi$ and find
\[
\anabla \rmD\calE(\rho)(x,\sfx\sfy) = (\anabla \log u(x,\sfx\sfy))
= \biggl(\frac{\nabla_x u}u(x,\sfx) \,,\; 
(\ona \log u)(x,\sfx\sfy)\biggr).
\]
Setting $j = (j^{\mathrm c},j^{\mathrm d}) = \rmD_2\calR(\rho,-\anabla \rmD\calE(\rho))$, 
we find with calculations similar to those in~\eqref{eq:deriv-eq-FP} and~\eqref{eq:deriv-eq-heatflow-intro}
\begin{align}
	j^{\mathrm c}(x,\sfx) &= 	\rmD_2\calR_{\mathrm {FP}}^*\bra*{\rho_\sfx,  - \frac{\nabla u_\sfx}{u_\sfx}} 
	= D\bigl(\nabla_x\rho_\sfx + \rho_\sfx \nabla_x V_\sfx)(x),
	\qquad \sfx = \sfa,\sfb,\notag\\
	j^{\mathrm d}(\dx x,\sfa\sfb) &= \frac12\, \speck\, \sqrt{\rho_\sfa\rho_\sfb}(\dx x) 
	{\sfC^*}'\bra*{-\gnabla \log u(x,\sfa\sfb)} \notag\\
	&= \frac1{2Z} \,\speck \,\ee^{-\frac12V(x,\sfa)-\frac12 V(x,\sfb)}\sqrt{u_\sfa u_\sfb}(x) 
	{\sfC^*}'\bra[\big]{ \log u(x,\sfa)-\log u(x,\sfb)}\dx x\notag \\
	&=   \frac12 \bra*{\kappa_{\sfa\sfb}(x)\rho_\sfa(\dx x) - \kappa_{\sfb\sfa}(x)\rho_\sfb(\dx x)}
	\qquad\text{with } \kappa_{\sfx\sfy}(x) :=  \speck\, \ee^{\frac12(V(x,\sfx)-V(x,\sfy))}.
	\label{eq:flux-ex-B+A}
\end{align}
This shows that the equations $\partial_t\rho + \adiv j=0$ and $j = \rmD_2\calR(\rho,-\anabla \rmD\calE(\rho))$ reduce to a version of~\eqref{eq:RD-intro}.

\begin{remark}[`Wrong' gradient structures]
	While the choices of $\calE$ and $\calR^*$ above are intuitively appealing,  we show in Section~\ref{s:Kramers} that the coarse-graining procedure based on the Kramers high activation-energy limit leads to a different gradient structure. 
\end{remark}

\subsubsection*{Example~\ref{ex:FP}+\ref{ex:chem-reactions}. Fokker-Planck equation with chemical reactions (continued).}

Similarly to the previous example, upon combining the gradient structure of the Fokker-Planck equation (Example~\ref{ex:FP}) with that of the chemical reactions (Example~\ref{ex:chem-reactions}) one might postulate the following structure.

The state space is $\nodes = \R^d\times \ol\nodes$, where $\ol\nodes$ is a finite set of species,  and the edge set $\edges = \edges^{\mathrm c} \sqcup \edges^{\mathrm d}$ is defined in~\eqref{eqdef:edges:B+C}.
Again we choose a potential $V:\nodes\to\R$ and define a stationary measure and driving energy
\[
\pi(\dx x,\sfx) := \frac1{\PartSum} \ee^{-V(x,\sfx)} \dx x,
\qquad\text{and}
\qquad
\calE(\rho) := \RelEnt(\rho|\pi).
\]
As dual dissipation potential for $\Xi = (\Xi^{\mathrm c}, \Xi^{\mathrm d}): \edges^{\mathrm c}\times \edges^{\mathrm d} \to \R$, we take  (writing $\rho = u\pi$)
\begin{equation*}
	\calR^*(\rho,\Xi) = 
	  \sum_{\sfx\in\ol\nodes} \calR_{\mathrm {FP}}^*(\rho_\sfx,\Xi^{\mathrm c}_\sfx) 
	+ \int_{\R^d} \frac{1}{2}\sum_{\alpha\beta\in \edges^{\mathrm d}} k_{\alpha\beta}\sqrt{ u^\alpha(x) u^\beta(x)} \;
	 \sfC^*\bra*{\Xi^{\mathrm d}_{\alpha\beta}(x)} \dx x. 
\end{equation*}
Here $\calR_{\mathrm {FP}}(\rho,\Xi^{\mathrm c})$ is as defined in~\eqref{eqdef:R-FP}, 
$\rho_{\sfx}$ indicates the measure $\rho(\cdot,\sfx)$, and $\Xi^{\mathrm c}_\sfx$ and $\Xi^{\mathrm d}_{\alpha\beta}$ denote the reductions
\[
\Xi^{\mathrm c}_\sfx(x) := \Xi^{\mathrm c}(x,\sfx) \qquad \text{and}\qquad 
\Xi^{\mathrm d}_{\alpha\beta} (x) := \Xi^{\mathrm d} (x,\alpha\beta).
\]
Note that the prefactor of $\sfC^*$  is the density
\[
u^\alpha(x) u^\beta(x)
:= {\prod_{\sfx\in\ol\nodes}{u(x,\sfx)}^{\alpha_\sfx} }
   {\prod_{\sfy\in\ol\nodes}{u(x,\sfy)}^{\beta_\sfy} }.
\]

\begin{remark}[Again a `wrong' gradient structure]
Just as in the case of  Example~\ref{ex:FP}+\ref{ex:chem-reactions}, the intuitive choices of $\calE$ and $\calR$ above lead to dependence of the reaction rates on chemical potentials that is at odds with classical chemical-reaction modelling; see Section~\ref{ss:Kramers-activity-interpretation}. 
\end{remark}

\subsubsection*{Example~\ref{ex:Boltzmann}. The Boltzmann equation (continued).}

The Boltzmann equation~\eqref{eq:Boltzmann} with symmetry assumption~\eqref{eq:Boltz:kapppa-symmetry} has a three-parameter family of detailed-balance stationary measures given by the Maxwell distributions.
Indeed, if we also assume the non-degeneracy property $\kappa(v,v_*,n)>0$ for $v,v_*\in\R^d$ and $n\in \bbS^{d-1}$ such that $(v-v_*)\cdot n>0$,
then we can regard~\eqref{eq:Boltz:kapppa-symmetry} as an assumption of  \emph{detailed-balance} with respect to any stationary state $\pi \in \calM_{\geq 0}(\R^d)$ that satisfies
\begin{equation*}
  \pi(v) \pi(v_*) = \pi(v') \pi(v'_*) \qquad\text{for a.e. } v,v_* \in \R^d , n\in \bbS^{d-1}: (v-v_*) \cdot n >0 .
\end{equation*}
The stationary solutions in the class of $L^1$-densities
are the parametrized family of Maxwellian distributions (see~\cite[\S 3.1]{CercignaniIllnerPulvirenti1994} or~\cite[Theorem 1]{Villani2008}) given by
\begin{equation}\label{eqdef:BoltzmannMaxwell}
	\pi(v) = \frac{1}{\PartSum} \exp\bra*{ - \frac{\abs*{v-u}^2}{2 T}} .
\end{equation}
Hereby, the constants $\PartSum>0$, $u\in\R^d$, and $T>0$ correspond to the conserved quantities in~\eqref{eq:Boltz:conservations} and can be for instance chosen to match the initial datum.

Any of the Maxwellian distributions in~\eqref{eqdef:BoltzmannMaxwell} can be used as reference state for the relative entropy as driving functional for the gradient flow description. The usual choice is to consider the non-normalized relative entropy with respect to the Lebesgue measure, which can be understood as the limit $T\to \infty$ in~\eqref{eqdef:BoltzmannMaxwell}.
For any choice of $\pi$ in this class, the Boltzmann equation~\eqref{eq:Boltzmann} arises from the gradient structure that is given for $\Xi:\edges \to \R$ by 
\begin{subequations}
\label{eq:gradient-structure-Boltzmann}	
\begin{align}
\calE(f) &:= \RelEnt(f|\pi)\\
\calR^*(f,\Xi) &= \iiiint_\edges \sqrt{f(v_1)f(v_2)f(v_3)f(v_4)} 
\,\sfC^*\bigl(\Xi(v_1v_2\, v_3v_4)\bigr)
\,\overline \kappa(\dxx{v_1}{v_2}\,\dxx{v_3}{v_4})\\
&= \iiint_{\widehat\edges}
\sqrt{f(v)f(v_*)f(v')f(v_*')} \,\sfC^*\bigl(\Xi(vv_*\, v'v_*')\bigr)
\kappa(v,v_*,n)\dxx v{v_*}\,\dx n.
\label{eqdef:Boltzmann-Rstar}
\end{align}
\end{subequations}
Here, the two expressions above show  $\calR^*$  equivalently as an integral over $\widehat\edges$ against the kernel~$\kappa$ and as an integral over $\edges$ against the push-forward $\overline \kappa := T_\#\kappa$ of $\kappa$ under the collision map $T:\widehat\edges\to \edges$ in~\eqref{eqdef:Boltzmann-map-T}. In~\eqref{eqdef:Boltzmann-Rstar} the pair $v'v_*'$ is assumed to be characterized by $vv_*\,n\in\widehat\edges$ as in~\eqref{eq:collision-law}.

The crucial identity for the identification of the driving flux is obtained from a similar observation as in~\eqref{eq:deriv-eq-heatflow-intro} in Example~\ref{ex:heat-flow} from the identity
\begin{equation*}
  \sqrt{f(v)f(v_*)f(v')f(v_*')} \;{\sfC^*}'\bra*{\widehat\nabla \log f} = f(v')f(v_*') - f(v)f(v_*) 
\end{equation*}
implying that $\rmD_2 \calR^*\bigl(f,-\widehat\nabla\rmD\calE(f)\bigr) = \widehat\jmath$ in~\eqref{eq:choice-wh-jmath-Boltzmann}.
	
This particular gradient structure~\eqref{eq:gradient-structure-Boltzmann} for the Boltzmann equation seems to have been explicitly identified for the first time by Grmela~\cite[Eq.~(A7)]{Grmela93} (see also~\cite[Eq.~(23)]{Grmela2002} and~\cite[Eq.~(79)]{Grmela2010}); the corresponding large-deviation rate functions appeared in early results such as~\cite{Leonard95a}, but the complete rigorous proof for the LDP is currently still open (see also~\cite{Bouchet2020,BasileBenedettoBertiniOrrieri2021,Heydecker2021} for recent advances regarding the LDP and~\cite{BodineauGallagherSaint-RaymondSimonella20TR} for the inhomogeneous case). A gradient system with a quadratic dual dissipation potential was identified in~\cite{Ottinger97} and studied mathematically in~\cite{Erbar2016TR}. For the linear Boltzmann equation the work \cite{BasileBenedettoBertini2020} contains a gradient structure based on the linearization of the dissipation potential~$\calR^*$ in~\eqref{eqdef:Boltzmann-Rstar}.

\begin{remark}[Other gradient structures]
\label{rem:other-gradient-structures}
For several of the examples discussed above, other gradient structures have been identified and studied than the cosh-based ones presented here. A well-studied class of gradient structures on discrete spaces has a quadratic dependence on~$\Xi$, and is based on the `log-mean' discrete chain rule
\begin{equation}
\label{eqdef:log-mean}
a -b  = {\Lambda(a,b)} (\log a - \log b),
\qquad\text{with}\qquad
	\Lambda(a,b) := 
	\begin{cases}
		 \dfrac{a-b}{\log a - \log b} , & \text{for } a\ne b; \\
	 	a , & \text{for } a=b, 
	\end{cases}
\end{equation}
to formulate the linear expression $a-b$ as a linear function of the graph gradient  $\log a-\log b$  of a relative entropy. 
We already mentioned the quadratic structure identified by \"Ottinger for the Boltzmann equation~\cite{Ottinger97}, and Chow-Huang-Li-Zhou, Maas,  Mielke, and many others developed the mathematical theory for this type of gradient structure in the case of the discrete-space heat flow of Example~\ref{ex:heat-flow}~\cite{ChowHuangLiZhou12,Maas11,Mielke11}. Discrete analogues of porous-medium-type equations are studied in~\cite{ErbarMaas2014} and lead to more general mean functions than the `log-mean' above. Also the chemical reaction networks of Example~\ref{ex:chem-reactions} possess a quadratic gradient structure, which is investigated in~\cite{Mielke11,MaasMielke20}.

The elastic Boltzmann equation with restitution coefficient strictly smaller one has only Dirac measures as stationary states and therefore satisfies a trivial detailed balance condition. The work~\cite{EspositoGvalaniSchlichtingSchmidtchen2021TR} proposes a gradient structure for it and studies its relation to the aggregation equation.

Gradient structures on metric graphs, which can be seen inbetween those of Example~\ref{ex:heat-flow} and~\ref{ex:FP} are recently studied in~\cite{ErbarForkertMaasMugnolo2021,BurgerHumpertPietschmann2021}.
\end{remark}

\subsection{Cosh-type dissipations arise by minimization}
\label{ss:contraction-intro}

As mentioned in the first part of the introduction, most dissipation potentials in the literature are of quadratic type.   Such quadratic potentials have linear derivatives,  generate formal Riemannian  structures, and are linked to many powerful functional inequalities. Therefore they have several advantages over non-quadratic  dissipation potentials such as the cosh-type ones of this paper. 

Nonetheless, we argue in this paper that cosh-type dissipations are `natural', because they arise `naturally' in multi-scale limits and in large-deviation principles. In this section we give an overview, and the details are provided in Sections~\ref{s:Kramers} and~\ref{s:ha}.

\subsubsection{Contraction formulas involving \texorpdfstring{$\sfC$}{C} and \texorpdfstring{$\sfC^*$}{C*}}

The cosh-type dissipation functions $\sfC$ and $\sfC^*$ arise as the result of minimization. In the case of the high-activation-energy limit in Kramers' equation, the minimization problem resembles the cell problem in homogenization of systems with rapidly oscillating parameters~\cite{Hornung97,CioranescuDonato99}. The basic identity characterizes the particular combination 
\begin{equation}
\label{eqdef:CCs-intro}
\CCs(j;\alpha,\beta;k):= k\sqrt{\alpha\beta}  \bra*{\sfC\bra*{\frac{ j}{k\sqrt{\alpha\beta}}} 
   + \sfC^*\bra*{\log \frac\beta\alpha}}
\end{equation}
as a minimization problem over interpolations $w$ of boundary values $\alpha$ and $\beta$:
\begin{multline}
\CCs(j;\alpha,\beta;k) \\
=
\inf_w \biggl\{
   \int_0^1 \bra*{ \frac{ j^2}{2k w(x)} +2k \abs[\big]{\partial_x \sqrt{w(x)}}^2 }\dx x:
   \sqrt{w}\in H^1(0,1), \ w(0)=\alpha, \ w(1) = \beta
   \biggr\} .
\label{eq:CCs:cell-problem}
\end{multline}
In this formula the parameters $\alpha, \beta>0$ have the interpretation of rescaled densities at opposite ends of an edge in a graph, and $k>0$ is an overall jump rate along that edge. 
The integral on the right-hand side is a transformation of the ``$\calR + \calR^*$'' dissipation function for the one-dimensional Fokker-Planck equation, in which one recognizes the quadratic dependence on the flux~$j$ and on the spatial derivative $w'$. 
We give a rigorous definition of the function~$\CCs$ for a larger range of parameters in Section~\ref{ss:cell-formula}. 

While $\CCs$ and $\sfC$ depend on their arguments in a non-quadratic manner, the right-hand side of~\eqref{eq:CCs:cell-problem} shows that these non-quadratic functions do have a quadratic origin. 
This underlying quadratic nature of the functional is explored in Section~\ref{s:ha}, where we show a connection to the potential-theoretic capacity. This can be formally observed from the fact that the second term on the right-hand side of~\eqref{eq:CCs:cell-problem} containing the spatial derivative of $w$ is the Dirichlet form of $\sqrt{w}$. 
With this interpretation, the value of $\CCs(0;0,1;k)$ is proportional to the effective conductance of a conducting wire with conductance $k$. 
This interpretation manifests itself for instance in series and parallel laws satisfied by $\CCs$ (see Corollaries~\ref{cor:N:series} and~\ref{cor:N:parallel})
and a general connection to the effective conductivity under contractions in the application to two-terminal networks (see Section~\ref{sss:structure-of-tilt-dependence} and Section~\ref{ss:ha:EDP:discussion}). 
Interestingly, while it is  useful to have explicit formulas for $\sfC$ and $\sfC^*$, the above-mentioned properties of these functions can best be observed and also proved on the level of the cell problem on the right-hand side of~\eqref{eq:CCs:cell-problem}. 

A second minimization problem leading to cosh dissipations comes from the \emph{contraction principle} in the theory of large deviations (see~\cite[\S III.5]{DenHollander00} or~\cite[\S 4.2]{DemboZeitouni98}). In this context the basic identity is  (for $\alpha,\beta>0$)
\begin{equation}
\label{eq:formula-1}	
\frac{1}{2} \CCs\bra*{2j,\alpha,\beta;k} - j \log \frac{\beta}{\alpha}
= \inf\Bigl\{ \eta(a | \alpha k) + \eta(b|\beta k):\ a,b\geq 0, \ \frac{b-a}{2} = j\Bigr\}.
\end{equation}
Here the relative entropy density $\eta:\R_{\geq0}^2\to[0,\infty]$ is defined by
\begin{equation}
\label{eqdef:eta}
\eta(a|b) := \begin{cases}
	a\log \dfrac ab - a + b & \text{if $a,b>0$; }\\
	b &\text{if }a = 0, b\geq 0 ;\\
	+\infty &\text{if $a>0$, $b=0$.}
\end{cases}
\end{equation}
In~\eqref{eq:formula-1} the parameters $a$ and $b$ represent forward and backward unidirectional density fluxes over an edge,  $\alpha k$ and $\beta k$ are the reference fluxes, and the quantity $\eta(a|\alpha k) + \eta(b|\beta k)$ can be interpreted as a rate function for independent forward and backward jumps (see Theorem~\ref{t:ldp-intro} below). The identity~\eqref{eq:formula-1} then shows how the cosh structure arises from minimizing the rate function under constrained skew-symmetric flux~$j$.

\medskip

Both minimization problems can be interpreted as describing an edge in a graph, and optimizing a functional under constrained net flux over the edge. In the second case~\eqref{eq:formula-1} the edge is given; in the continuous-space setting of the first case~\eqref{eq:CCs:cell-problem}, the edge and the accompanying nodes are generated by the high-activation-energy limit, in which the measure concentrates onto the wells of the potential and the flux concentrates onto an `edge' between those wells. In this case mass conservation forces the flux to become constant along the edge, which is reflected in the fact that in~\eqref{eq:CCs:cell-problem} the flux $j$ does not depend on $x$. Here the cosh-structure results from constrained minimization of both the flux and the scaled density~$w$ along the edge.

\begin{remark}[Common origin]
Although we describe these two origins of the cosh structures as different, there is overlap at the mathematical level, since many large-deviation results can be considered to be theorems of homogenization; see e.g.\ the discussion in~\cite[Ch.~11]{FengKurtz06}. 
\end{remark}

\subsubsection{Cosh-type dissipations from multi-scale limits}
\label{sss:Kramers-intro}

We now describe the Kramers high-activation-energy limit in some detail. 
Consider the Fokker-Planck equation of the type~\eqref{eq:FP-intro} in one space dimension, 
\begin{equation}
\label{eq:FP-Arnrich}
\partial_t \rho = \tau_\e \partial_x \Bigl(\partial_x \rho + \frac1\e \rho\,\partial_x H\Bigr)
\qquad \text{on }\R.
\end{equation}
This equation describes the evolution of the law $\rho = \rho(t,\cdot)$ of a particle $X_t$ that diffuses in a potential landscape described by the potential $H/\e$. 
The function $H$ is assumed to be of the double-well type, with wells of equal depth at  $x=a$ and $x=b$ and a single local maximum at $x=c\in (a,b)$ (see Figure~\ref{fig:DoubleWell}). This system is a microscopic model for the simple reaction $A\leftrightharpoons B$. 

As described in Example~\ref{ex:FP} of Section~\ref{ss:flux-GS-intro}, this equation is a gradient system with $\nodes=\R$, $\edges=\R$, $\anabla=\nabla$,  driving functional  $\calE(\rho)=\RelEnt(\rho|\pi)$   with respect to the invariant probability measure $\pi_\e(\dd x) := \PartSum_\e^{-1} \ee^{-H(x)/\e}\, \dx x$, and  dissipation potential  
\[
\mathcal R^*_\e(\rho,\Xi) := \frac{\tau_\e}2 \int_\R | \Xi(x)|^2 \, \rho(\dd x).
\]
This model has two parameters. The parameter $\e$ characterizes the height of the mountain pass that separates the two wells, known as the \emph{activation energy}; in the limit $\e\to0$ this height is large, and transitions from one well to the other are exponentially rare. 
The parameter~$\tau_\e$ is the according exponential time scale at which such transitions happen; it converges to ~$+\infty$ as $\e\to0$, and  its appearance in~\eqref{eq:FP-Arnrich} causes the evolution to capture the behaviour at this slow time scale.

\medskip

In~\cite{LieroMielkePeletierRenger17} it was shown that as $\e\to0$ the gradient system \emph{EDP-converges} (see Section~\ref{ss:convergence-of-GS}) to a limit system $(\wt\nodes,\wt\edges,\gnabla, \wt\calE, \wt\calR)$, defined by
\begin{alignat*}{2}
&\wt \nodes := \set*{a,b} \text{ and } \wt \edges := \{ab\}, &\qquad &	\gnabla \varphi (ab) := \varphi(b)-\varphi(a),\\
&\wt\calE(\rho_0) := \RelEnt(\rho_0|\pi_0), && \pi_0 := \gamma^a \delta_a + \gamma^b \delta_b,\\
&\wt\calR^*(\rho_0,\Xi) := k \sqrt{u_a u_b} \,\sfC^*(\Xi) 
\quad \text{for }\Xi\in \R, && u_a := \frac{\dx \rho_0}{\dx \pi_0}(a) \text{ and } u_b := \frac{\dx \rho_0}{\dx \pi_0}(b) ,
\end{alignat*}
where $\gamma^a,\gamma^b>0$ with $\gamma^a+\gamma^b=1$ and the rate parameter $k$ are given in terms of the limit behaviour of $\pi_\eps$ and $\tau_\e$, respectively. Related variational convergence results were proved in~\cite{PeletierSavareVeneroni10, ArnrichMielkePeletierSavareVeneroni12, HerrmannNiethammer11} and non-variational convergence of the equations was proved in~\cite{EvansTabrizian16, SeoTabrizian20}, also in higher dimensions.

Since the measure $\pi_0$ is supported on the two wells $x=a,b$, a solution $\rho_0$ of the limit system also is concentrated on $x=a,b$. This limit system therefore is equivalent to the heat flow of Example~\ref{ex:heat-flow} on the two-point state space $\wt\nodes= \{a,b\}$, and the corresponding evolution equation for $\rho_0 = u_a\gamma^a \delta_a + u_b\gamma^b \delta_b$ takes the form
\[
\gamma^a \partial _t u_a = -\gamma^b \partial _t u_b = k(u_b - u_a).
\]

In the limit $\e\to0$  we therefore recover the `mass-action' model of the reaction $A\leftrightharpoons B$. This convergence  is a template for the derivation of rates of more general chemical reactions from more microscopic models; see e.g.~\cite[\S 14.4]{Nitzan06}, \cite[Ch.~16]{Peters17}, or Section~\ref{s:Kramers}.

\medskip
To summarize, in this limit the quadratic gradient structure $(\R,\R,\nabla,\calE_\e,\calR_\e)$ converges to a cosh-type gradient structure $(\{a,b\},\{ab\},\gnabla,\wt\calE,\wt\calR)$. The cosh functions $\sfC$ and $\sfC^*$ appear via `cell problems' of the form of~\eqref{eq:CCs:cell-problem}, leading to formal expressions for $\wt\calR$ and~$\wt\calR^*$,
\[
\wt\calR(\rho_0,j) + \wt\calR^*(\rho_0,-\ona \log u) 
= \CCs(j;u_a,u_b;k)
= k\sqrt{u_au_b} \bra*{\sfC\bra*{\frac{j}{k\sqrt{u_au_b}}}
   + \sfC^*\bra*{\log \frac{u_b}{u_a}}}.
\]
In Section~\ref{s:Kramers} we study this emergence of the cosh structure in detail, and also incorporate the effect of tilting of the gradient system. In Section~\ref{s:thin-membrane} we describe a very similar outcome of a thin-membrane limit.

\subsubsection{Cosh-type dissipations from large-deviation principles}
\label{sss:cosh-from-ldp-intro}

The gradient systems of this paper also have a strong connection to Markov jump processes; in this section we describe how the gradient system of Example~\ref{ex:heat-flow} arises in this way.

Consider $n$ i.i.d.\ particles $X^i$ jumping between the points of a finite set $\nodes$, with jump rates given by a kernel $\kappa:\edges:= \nodes\times \nodes\to [0,\infty)$. 
With probability one, a realization of each process has a countable number of jumps in the time interval $[0,\infty)$, and we write  $t^i_k$ for the $k^{\mathrm{th}}$ jump time of $X^i$. We can assume that $X^i$ is a c\`adl\`ag function of time.

We next define the empirical measure $\rho^n$ and the empirical one-way flux $j^n$ by
\begin{align*}
&\rho^n: [0,T]\to \calM^+(V), 
 &\rho^n_x(t) &:= \frac1n \sum_{i=1}^n \delta_{X^i_t}(x) = \frac1n \#\{i: X_t^i = x\},
\\
&j^n\in \calM^+((0,T)\times \edges), &\qquad
j^n_{xy}(\dd t)&:= \frac1n \sum_{i=1}^n \sum_{k=1}^\infty \delta_{t^i_k}(\dd t) \delta_{(X^i_{t-},X^i_{t})}(x,y),
\end{align*}
where 
$X^i_{t-}$ is the left limit (pre-jump state) of $X^i$ at time~$t$. Equivalently, $j^n$ is defined by
\[
\langle j^n, \varphi\rangle := \frac1n \sum_{i=1}^n \sum_{k=1}^\infty \varphi\bigl(t_k^i,X^i_{t_k^i-}X^i_{t_k^i}\bigr), 
\qquad \text{for }\varphi\in \CB([0,T]\times \edges).
\]

We assume that the process is irreducible. This implies that there is a unique invariant measure $\pi\in \ProbMeas(\nodes)$ for each $X_t^i$, which gives rise to a corresponding invariant measure $\varPi^n\in\ProbMeas(\ProbMeas(\nodes))$ for $\rho^n$.

The gradient system $(\nodes,\edges,\ona,\calE,\calR)$ in~\eqref{eqdef:E-jump} has its origin in the following large-deviation result. See also Section~\ref{ss:ldp-GS}.

\begin{theorem}[Large-deviation principles for the process $(\rho^n,j^n)$ (e.g.~{\cite[Th.~3.1]{Renger18}})]\label{t:ldp-intro}
Fix $\rho_\circ \in \ProbMeas(\nodes)$ and choose a sequence of empirical measures 
\[
\rho^n_\circ = \frac1n \sum_{i=1}^n \delta_{x^i_\circ}  \stackrel *\longrightharpoonup  \rho_\circ\qquad\text{as }n\to\infty.
\] 
Let $X_t^i$ be the Markov processes with initial datum $X^i_{t=0} = x_\circ^i$, and construct $\rho^n$ and $j^n$ as described above.
\begin{enumerate}
\item The invariant measures $\varPi^n$ of the process $\rho^n$ satisfy a large-deviation principle
\[
 \Prob\bra*{ \varPi^n \approx \rho} \sim \exp\bra*{ -n \calE(\rho)} \quad\text{as } n\to \infty , \qquad\text{with}\qquad  \calE(\rho) := \RelEnt(\rho|\pi) . 
\]
\item 
The random pair $(\rho^n,j^n)$ satisfies a large-deviation principle with rate function $\RateFunc$, i.e.
\[
\Prob\bra*{\,(\rho^n,j^n)\approx (\rho,j) \;\Big| \;\rho^n(0) = \rho_\circ^n}
\sim \exp\bra*{-n\RateFunc(\rho,j)} \qquad \text{as }n\to\infty,
\]
where 
\begin{equation}
\label{eqdef:J}
\RateFunc(\rho,j) := \begin{cases}
 \ds \int_0^T \sum_{\sfx\sfy\in \edges}
\eta\bra[\big]{j_{\sfx\sfy}(t)\,| \,\rho_{\sfx}(t)\kappa_{\sfx\sfy}}\dx t
    &\text{if $(\rho,j):$ $\partial_t \rho + \odiv j=0$, $\rho(t=0) =\rho_\circ$}, \\
  +\infty & \text{otherwise}.
\end{cases}
\end{equation}
\end{enumerate}
\end{theorem}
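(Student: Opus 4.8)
The plan is to treat the two statements separately: the stationary LDP~(1) follows from independence plus Sanov's theorem, while the dynamical level-2.5 LDP~(2) is obtained by a Girsanov-type change of measure for jump processes, following e.g.\ Renger~\cite{Renger18}.

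\emph{Part~(1).} Since the particles $X^i$ are independent and $\pi$ is invariant for each single-particle jump process, the unique invariant law of the $n$-particle jump process on $\nodes^n$ is the product $\pi^{\otimes n}$ (uniqueness follows from irreducibility of the product process, itself a consequence of the assumed irreducibility of each factor). Consequently $\varPi^n$ is exactly the law of the empirical measure of $n$ i.i.d.\ $\pi$-distributed samples, and Sanov's theorem on the finite set $\nodes$ — equivalently, on the compact simplex $\ProbMeas(\nodes)$ — yields the large-deviation principle with good rate function $\rho\mapsto\RelEnt(\rho|\pi)=\calE(\rho)$.

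\emph{Part~(2), upper bound.} Fix a bounded measurable $F=(F_{\sfx\sfy}(t))$ on $[0,T]\times\edges$ and let $\bbP^{n,F}=\bigotimes_{i=1}^n\bbP^{i,F}$ be the law under which each $X^i$ runs with time-inhomogeneous rates $\kappa_{\sfx\sfy}\ee^{F_{\sfx\sfy}(t)}$. A direct computation of the pathwise likelihood ratio gives the exponential-martingale identity
\[
\frac{\dx\bbP^{n,F}}{\dx\bbP^{n}}
=\exp\Bigl(n\dual{F}{j^n}-n\int_0^T\sum_{\sfx\sfy\in\edges}\rho^n_{\sfx}(t)\,\kappa_{\sfx\sfy}\bigl(\ee^{F_{\sfx\sfy}(t)}-1\bigr)\dx t\Bigr).
\]
Combining this with the exponential Chebyshev inequality, with exponential tightness of $(\rho^n,j^n)$ — which holds because bounded rates force the total jump count to have Poissonian tails — and with the law of large numbers $\rho^n\to\rho$, $j^n\to j$ along any putative limit, one obtains (after a minimax step to exchange the infimum over closed sets with the supremum over $F$) the upper bound with rate function
\[
\RateFunc(\rho,j)=\sup_{F}\Bigl\{\dual{F}{j}-\int_0^T\sum_{\sfx\sfy}\rho_{\sfx}(t)\kappa_{\sfx\sfy}\bigl(\ee^{F_{\sfx\sfy}(t)}-1\bigr)\dx t\Bigr\}
\]
on the set of pairs satisfying $\partial_t\rho+\odiv j=0$ with $\rho(0)=\rho_\circ$, and $+\infty$ otherwise. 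The continuity equation is automatic, since $(\rho^n,j^n)$ satisfies its discrete analogue exactly; the constraint $\rho(0)=\rho_\circ$ is forced because the initial configuration is deterministic (equivalently, disintegrating over the initial node, whose empirical law is precisely $\rho^n_\circ\weaksto\rho_\circ$). Evaluating the supremum pointwise in $(t,\sfx\sfy)$ via the elementary identity $\sup_{\xi\in\R}\{j\xi-c(\ee^\xi-1)\}=j\log\tfrac jc-j+c=\eta(j|c)$ (with the boundary cases $j=0$ and $c=0$ matching $\eta(0|c)=c$, $\eta(j|0)=+\infty$) yields exactly $\RateFunc(\rho,j)=\int_0^T\sum_{\sfx\sfy}\eta\bigl(j_{\sfx\sfy}(t)\,|\,\rho_{\sfx}(t)\kappa_{\sfx\sfy}\bigr)\dx t$, which is~\eqref{eqdef:J}.

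\emph{Part~(2), lower bound and obstacles.} For the matching lower bound, given $(\rho,j)$ with $\RateFunc(\rho,j)<\infty$ I would use the optimal tilt $F^{*}_{\sfx\sfy}(t)=\log\frac{j_{\sfx\sfy}(t)}{\rho_{\sfx}(t)\kappa_{\sfx\sfy}}$: under $\bbP^{n,F^{*}}$ the pair $(\rho^n,j^n)$ concentrates on $(\rho,j)$ by the law of large numbers, the Radon--Nikodym derivative above evaluated along the typical trajectory is $\ee^{-n\RateFunc(\rho,j)+o(n)}$, and a neighbourhood argument turns this into $\liminf\frac1n\log\bbP^n\bigl((\rho^n,j^n)\approx(\rho,j)\bigr)\ge-\RateFunc(\rho,j)$. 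I expect the main technical obstacle to be the lower bound at the boundary of the domain — pairs for which $\rho_{\sfx}(t)=0$ or $j_{\sfx\sfy}(t)=0$ on sets of positive measure, so that $F^{*}$ degenerates — which is handled by first approximating $(\rho,j)$ by strictly positive pairs, smooth in time, for which the argument applies verbatim, and then passing to the limit using convexity of $\eta(\cdot|c)$, lower semicontinuity, and the explicit form of $\RateFunc$. One also has to fix the functional-analytic set-up carefully (Skorokhod topology for $\rho^n$, narrow topology for $j^n$ as a measure on $[0,T]\times\edges$); I note in passing that the alternative route of contracting the path-space Sanov theorem for the empirical measure on $D([0,T];\nodes)$ is less convenient here, because the map sending that empirical measure to the flux $j$ is not weakly continuous, whereas the direct tilting argument above sidesteps the issue.
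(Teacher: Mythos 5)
The paper does not prove this theorem itself but cites it from Renger \cite[Th.~3.1]{Renger18}, so there is no in-paper proof to compare against. Your proposal is a correct sketch that matches the standard route taken in that reference: Sanov for part~(1), and for part~(2) a direct exponential tilting (Girsanov change of measure for jump processes) combined with exponential tightness, a minimax step for the upper bound, and a law-of-large-numbers argument under the optimally tilted measure for the lower bound, with the rate function recovered from the pointwise Legendre transform $\sup_{\xi}\{j\xi-c(\ee^\xi-1)\}=\eta(j|c)$. Your remark about why contracting the path-space Sanov theorem is awkward (the flux map is not weakly continuous) is also accurate and is one of the motivations for the direct tilting approach. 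The only places where the sketch glosses over real work are (i) the change-of-order-of-limits in the upper bound, where one needs the minimax argument to be made precise (e.g.\ via lower-semicontinuity and compactness of level sets, or via Varadhan-type arguments on compact approximations), and (ii) the lower-bound approximation of degenerate pairs $(\rho,j)$ with vanishing density or flux by strictly positive, time-regular ones, which requires some care to preserve the continuity-equation constraint and not increase $\RateFunc$ too much; you have correctly identified both issues and indicated the standard remedies.
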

\noindent

We now use the detailed-balance condition~\eqref{eq:def:graph:DBC} to rewrite the expression~\eqref{eqdef:J} of the large-deviation rate function $\RateFunc$. 
For a pair $(\sfx\sfy, \sfy\sfx)$ of forward and backward edges, we obtain from the contraction formula~\eqref{eq:formula-1}  that
\begin{align}
\notag
\inf\Bigl\{  &\eta(j_{\sfy\sfx}| \rho_\sfy\kappa_{\sfy\sfx}) + \eta(j_{\sfx\sfy}| \rho_\sfx\kappa_{\sfx\sfy})  :
     \frac{j_{\sfx\sfy} - j_{\sfy\sfx}}2 = \tilde\jmath_{\sfx\sfy}\Bigr\}\\
&= \frac12\sqrt{\rho_\sfx\rho_\sfy \kappa_{\sfx\sfy}\kappa_{\sfy\sfx}}
\biggl\{ \sfC\Bigl(\frac {2\tilde\jmath_{\sfx\sfy}} {\sqrt{\rho_\sfx\rho_\sfy \kappa_{\sfx\sfy}\kappa_{\sfy\sfx}}}\Bigr)
+ \sfC^*\Bigl(\log \frac{\rho_\sfx\kappa_{\sfx\sfy}}{\rho_\sfy\kappa_{\sfy\sfx}}\Bigr)\biggr\}
- {\tilde\jmath_{\sfx\sfy}} \log \frac{\rho_\sfx\kappa_{\sfx\sfy}}{\rho_\sfy\kappa_{\sfy\sfx}},
\notag
\end{align}
in terms of the `skew-symmetrized' flux $\tilde\jmath_{\sfx\sfy} = (j_{\sfx\sfy}-j_{\sfy\sfx})/2$. Note that skew-symmetrization preserves the divergence: for any flux $j:\edges\to\R$, we have $\odiv \tilde\jmath = \odiv j$. 

The detailed-balance condition~\eqref{eq:def:graph:DBC} states that there exists a symmetric edge function~$k:\edges\to [0,\infty)$ such that $\pi_\sfx\kappa_{\sfx\sfy} = k_{\sfx\sfy}$. This property leads to the simplifications 
\[
\sqrt{\rho_\sfx\rho_\sfy \kappa_{\sfx\sfy}\kappa_{\sfy\sfx}}
= k_{\sfx\sfy} \sqrt{\frac{\rho_\sfx}{\pi_\sfx}\frac{\rho_\sfy}{\pi_\sfy}} 
\qquad\text{and}\qquad
\frac{\rho_\sfx\kappa_{\sfx\sfy}}{\rho_\sfy\kappa_{\sfy\sfx}}
= \frac {u_\sfx}{u_\sfy} \quad \text{with} \quad u_\sfx = \frac{\rho_{\sfx}}{\pi_\sfx},
\]
Note that whenever $u_\sfx,u_\sfy>0$,  
\[
\log \frac{u_\sfx}{u_\sfy} = -\log \frac{u_\sfy}{u_\sfx}
= -\gnabla_{\sfx\sfy}\rmD \calE(\rho), \qquad\text{with}\qquad \calE(\rho) = \calH(\rho|\pi).
\]

The calculations above assume that $\rho_\sfx>0$ for each $\sfx$. When one or both of $(\rho_\sfx,\rho_\sfy)$ are zero, the expressions need to be adapted; the lemma below summarizes the situation. 
\begin{lemma}
Setting for $\alpha,\beta, k \geq 0$ and $j\in \R$ 
\begin{align*}
\sfL(j;\alpha,\beta;k) &:= \inf\Bigl\{ \eta(a|\alpha k) + \eta(b|\beta k):\ a,b\geq 0, \ \frac{b-a}2=j\Bigr\},
\end{align*}
we have 
\begin{align}
\sfL(j;\alpha,\beta;k)&=\begin{cases}
\ds\frac{k}{2} \sqrt{\alpha\beta}\Bigl\{\sfC\Bigl(\frac {2j}{k \sqrt{\alpha\beta}}\Bigr) + \sfC^*\bra[\Big]{\log \frac\beta\alpha} \Bigr\} - j \log	\frac{\beta}{\alpha} 
  & \text{if }\alpha,\beta>0\\
\eta(2j|\beta k) &\text{if }\alpha = 0, \ \beta \geq0, \ j\geq0	\\
\eta(-2j|\alpha k) &\text{if }\alpha \geq0, \ \beta =0	, \ j\leq 0\\
+\infty  &\text{otherwise}.
\end{cases}
\label{eqdef:L}
\end{align}
\end{lemma}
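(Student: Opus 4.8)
\medskip
\noindent\textbf{Proof plan.}
The plan is to read $\sfL(j;\alpha,\beta;k)$ as the value of a one-dimensional convex minimisation problem and to dispatch the non-degenerate and the degenerate regimes separately. Parametrising the feasible set by $a=t$, $b=t+2j$, the constraint $a,b\ge 0$ becomes $t\ge t_\circ:=\max\{0,-2j\}$, a non-empty interval for every $j\in\R$, so that
\[
\sfL(j;\alpha,\beta;k)=\inf_{t\ge t_\circ}g(t),\qquad g(t):=\eta(t\,|\,\alpha k)+\eta(t+2j\,|\,\beta k).
\]
Since $g$ is convex and lower semicontinuous, it suffices to locate its unique stationary point whenever the objective is finite on the interior of the feasible interval, and otherwise to conclude $\sfL=+\infty$.

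For the non-degenerate case $\alpha,\beta,k>0$ I would simply invoke the contraction identity~\eqref{eq:formula-1}: its right-hand side is, by definition, $\sfL(j;\alpha,\beta;k)$, and unfolding the definition~\eqref{eqdef:CCs-intro} of $\CCs$ turns its left-hand side into $\tfrac{k}{2}\sqrt{\alpha\beta}\{\sfC(2j/(k\sqrt{\alpha\beta}))+\sfC^*(\log(\beta/\alpha))\}-j\log(\beta/\alpha)$, which is the asserted value. To keep the argument self-contained one can instead differentiate directly: on $t>\max\{0,-2j\}$ one has $g'(t)=\log\bigl(t(t+2j)/(\alpha\beta k^2)\bigr)$, which is strictly increasing and vanishes exactly at $t_*=\sqrt{j^2+\alpha\beta k^2}-j>t_\circ$; this is therefore the minimiser, and the optimal pair $(a_*,b_*)=(t_*,t_*+2j)$ obeys $a_*b_*=\alpha\beta k^2$ and $a_*+b_*=2\sqrt{j^2+\alpha\beta k^2}$. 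Substituting into $g$ and using the elementary identities $\sfC^*(\log(\beta/\alpha))=2(\alpha+\beta)/\sqrt{\alpha\beta}-4$, $\ b_*/\sqrt{a_*b_*}=\sqrt{b_*/a_*}$ and $(s+\sqrt{s^2+4})/2=b_*/(k\sqrt{\alpha\beta})$ with $s:=2j/(k\sqrt{\alpha\beta})$, the $\pm2k\sqrt{\alpha\beta}$ contributions cancel and one is left with exactly $\tfrac{k}{2}\sqrt{\alpha\beta}\{\sfC(s)+\sfC^*(\log(\beta/\alpha))\}-j\log(\beta/\alpha)$.

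For the degenerate cases I would argue directly from the formula for $\eta$. If $\alpha=0$, then $\eta(a\,|\,0)=0$ when $a=0$ and $=+\infty$ when $a>0$, so the objective is finite only on $\{a=0\}$; intersecting with the constraint forces $b=2j$, which is admissible precisely when $j\ge 0$, and then the objective reduces to $\eta(2j\,|\,\beta k)$, giving the second line of~\eqref{eqdef:L}; for $j<0$ the feasible set misses the domain of $g$ and $\sfL=+\infty$. The case $\beta=0$ is symmetric and gives the third line, and when $\alpha=\beta=0$ both descriptions apply and agree (only $j=0$ is feasible, with value $0$). Finally, if $\alpha,\beta>0$ but $k=0$, then $\alpha k=\beta k=0$ forces $a=b=0$, hence $j=0$ and $\sfL=0$; this is consistent with the first line of~\eqref{eqdef:L} read with the convention $0\cdot\sfC(\pm\infty)=+\infty$ (the recession function of the superlinear $\sfC$), equivalently obtained by passing to the lower-semicontinuous limit $k\downarrow 0$ in the non-degenerate case. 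Assembling the four regimes yields~\eqref{eqdef:L}.

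I expect the only step with genuine content to be the algebraic reduction of $g(a_*,b_*)$ to the $\sfC$/$\sfC^*$ expression in the non-degenerate case — routine once the three identities above are recorded, but the place where a sign or a factor $2$ is easiest to lose; everything else is careful bookkeeping of the boundary behaviour of $\eta$ and of the $0\cdot\infty$ convention when $k=0$ or $\alpha\beta=0$.
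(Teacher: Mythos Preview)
Your proposal is correct and matches what the paper intends: the paper's own ``proof'' is the single sentence ``The proof is a straightforward calculation'', and your direct computation---parametrise by $t$, solve $g'(t)=0$ to get $t_*=\sqrt{j^2+\alpha\beta k^2}-j$, then reduce $g(t_*)$ using the identities you listed---is exactly that calculation, with the degenerate cases handled by reading off the boundary behaviour of~$\eta$. One small remark: invoking~\eqref{eq:formula-1} as a shortcut is circular, since this lemma is precisely what gives that identity content; but you already flag this and supply the self-contained differentiation argument, so the proof stands.
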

\noindent
The proof is a straightforward calculation,  and as a consequence we obtain the characterization of gradient systems obtained from large-deviation principles.
\begin{cor}
Assume that the detailed-balance condition~\eqref{eq:def:graph:DBC} holds. 
Then the contraction $\wt\RateFunc$ of $\RateFunc$ over fluxes on pairs of forward and backward edges, 
\begin{align*}
\wt \RateFunc	(\rho,j) := \inf_{\hat \jmath} \Bigl\{ \RateFunc(\rho,\hat \jmath\,): \hat\jmath_{\sfx\sfy} - \hat\jmath_{\sfy\sfx} = j_{\sfx\sfy} - j_{\sfy\sfx} \Bigr\}
\end{align*}
satisfies $\wt\RateFunc(\rho,j) = \RateFunc(\rho,\tilde\jmath\,)$ with $\tilde\jmath_{\sfx\sfy} = (j_{\sfx\sfy}-j_{\sfy\sfx})/2$, and it has the characterization
\begin{subequations}
\begin{equation}
\label{eq:l:char-ldp-gs:L}	
\RateFunc(\rho,\tilde\jmath\,) =  \frac12 \int_0^T \sum_{	\sfx\sfy\in \edges} \sfL\bra[\big]{\tilde\jmath_{\sfx\sfy}(t);u_{\sfx}(t),u_{\sfy}(t); k_{\sfx\sfy}}\dx t 
\end{equation}
\text{with}
\begin{equation*}
	u_\sfx(t) := \frac{\rho_\sfx(t)}{\pi_{\sfx}} 
	\qquad\text{and}\qquad
	k_{\sfx\sfy} =\speck_{\sfx\sfy} \sqrt{\pi_\sfx\pi_\sfy}.
\end{equation*}
Whenever $\rho_t(\sfx)>0$ for all $\sfx,t$ we have the alternative representation
\begin{equation}
\RateFunc(\rho,\tilde\jmath) = \frac12 \calE(\rho_T) - \frac12 \calE(\rho_0)
  + \frac12 \int_0^T \Bigl[ \calR(\rho_t,\tilde\jmath_t) 
     + \calR^*\bigl(\rho_t, -2\ona \rmD\tfrac12 \calE(\rho_t)\bigr)\Bigr]
     \dx t.
\label{eq:l:char-ldp-gs:ERR}
\end{equation}
\end{subequations}
Here $\calE$, $\calR$, and $\calR^*$ are as in Example~\ref{ex:heat-flow} in Section~\ref{ss:flux-GS-intro}. 

In expressions~\eqref{eq:l:char-ldp-gs:L} and~\eqref{eq:l:char-ldp-gs:ERR}, the pairs $(\rho,j)$ and $(\rho,\tilde\jmath)$ are assumed to satisfy the continuity relation $\partial_t \rho =-\odiv j = -\odiv\tilde\jmath$; if they do not, then $\RateFunc(\rho,j) = \RateFunc(\rho,\tilde\jmath\,) = +\infty$.
\end{cor}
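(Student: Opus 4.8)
The plan is to turn the contraction defining $\wt\RateFunc$ into a pointwise, per-edge minimisation, to identify that minimisation with the function $\sfL$ of the preceding lemma (using detailed balance), and finally --- in the regime where $\rho_t$ has full support --- to re-bundle the explicit formula for $\sfL$ into the $\calE$--$\calR$--$\calR^*$ shape of~\eqref{eq:l:char-ldp-gs:ERR}. Two reductions come first. Since the graph divergence satisfies $\odiv j = \odiv\tilde\jmath$ with $\tilde\jmath_{\sfx\sfy}:=(j_{\sfx\sfy}-j_{\sfy\sfx})/2$, the constraint set $\{\hat\jmath:\hat\jmath_{\sfx\sfy}-\hat\jmath_{\sfy\sfx}=j_{\sfx\sfy}-j_{\sfy\sfx}\}$ depends on $j$ only through $\tilde\jmath$; this is precisely the statement that $\wt\RateFunc(\rho,j)$ is a functional of $(\rho,\tilde\jmath)$, the object written $\RateFunc(\rho,\tilde\jmath)$. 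Likewise $(\rho,j)$ solves $\partial_t\rho+\odiv j=0$ if and only if $(\rho,\tilde\jmath)$ does; if it does not, both sides of~\eqref{eq:l:char-ldp-gs:L} are $+\infty$ by the stated convention, so we may assume it holds, and then --- since $\odiv\hat\jmath=\odiv\tilde\jmath$ for every admissible $\hat\jmath$ --- the only surviving constraint in the infimum is the algebraic, pointwise-in-$t$ identity $\hat\jmath_{\sfx\sfy}(t)-\hat\jmath_{\sfy\sfx}(t)=2\tilde\jmath_{\sfx\sfy}(t)$.

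\textbf{Decoupling and identification with $\sfL$.} By~\eqref{eqdef:J} we have $\RateFunc(\rho,\hat\jmath)=\int_0^T\sum_{\sfx\sfy\in\edges}\eta(\hat\jmath_{\sfx\sfy}(t)\,|\,\rho_\sfx(t)\kappa_{\sfx\sfy})\dx t$, and both this integrand and the constraint split over time $t$ and over \emph{unordered} pairs $\{\sfx,\sfy\}$, each pair term involving only $\hat\jmath_{\sfx\sfy},\hat\jmath_{\sfy\sfx}$ and the constraint coupling only these. A standard interchange of infimum with integration and summation (e.g.~\cite[Thm.~14.60]{Rockafellar-Wets98}) then yields $\RateFunc(\rho,\tilde\jmath)=\int_0^T\sum_{\{\sfx,\sfy\}}\inf\{\eta(a|\rho_\sfx(t)\kappa_{\sfx\sfy})+\eta(b|\rho_\sfy(t)\kappa_{\sfy\sfx}):a,b\ge0,\ (a-b)/2=\tilde\jmath_{\sfx\sfy}(t)\}\dx t$. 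Invoking the lemma defining $\sfL$ (equivalently the contraction formula~\eqref{eq:formula-1}) together with the detailed-balance relations $\rho_\sfx\kappa_{\sfx\sfy}=u_\sfx k_{\sfx\sfy}$ and $k_{\sfx\sfy}=\speck_{\sfx\sfy}\sqrt{\pi_\sfx\pi_\sfy}=k_{\sfy\sfx}$ from~\eqref{eq:def:graph:DBC}, the inner minimum equals $\sfL(\tilde\jmath_{\sfx\sfy}(t);u_\sfx(t),u_\sfy(t);k_{\sfx\sfy})$, the ordering of the two density slots being dictated by which one-way reference flux $\rho_\sfx\kappa_{\sfx\sfy}$ or $\rho_\sfy\kappa_{\sfy\sfx}$ is matched to the $\alpha$- and $\beta$-slot of $\sfL$. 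Writing the sum over unordered pairs as $\tfrac12$ of the sum over the ordered edge set --- legitimate since $\tilde\jmath_{\sfy\sfx}=-\tilde\jmath_{\sfx\sfy}$ and $\sfL(-j;\beta,\alpha;k)=\sfL(j;\alpha,\beta;k)$, so the two orientations of a pair contribute the same --- gives~\eqref{eq:l:char-ldp-gs:L}. The boundary cases $u_\sfx(t)=0$ or $k_{\sfx\sfy}=0$ need no separate argument: the alternatives in the definition of $\sfL$ are exactly what the minimisation returns, since $\eta(\,\cdot\,|0)$ forces the associated one-way flux to vanish.

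\textbf{The $\calE$--$\calR$--$\calR^*$ representation.} When $\rho_t(\sfx)>0$ for all $\sfx,t$, substitute $\sfL(j;\alpha,\beta;k)=\tfrac k2\sqrt{\alpha\beta}\bigl(\sfC(\tfrac{2j}{k\sqrt{\alpha\beta}})+\sfC^*(\log\tfrac\beta\alpha)\bigr)-j\log\tfrac\beta\alpha$ into~\eqref{eq:l:char-ldp-gs:L}, and use $\sigma_{\sfx\sfy}(\rho)=\tfrac12\speck_{\sfx\sfy}\sqrt{\rho_\sfx\rho_\sfy}=\tfrac12 k_{\sfx\sfy}\sqrt{u_\sfx u_\sfy}$ from~\eqref{eqdef:Rstar-jump} together with $(\ona\rmD\calE(\rho))_{\sfx\sfy}=\log(u_\sfy/u_\sfx)$. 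Summing over $\edges$, the $\sfC$-terms assemble (by evenness of $\sfC$) to $\tfrac12\calR(\rho,\tilde\jmath)$ as in~\eqref{eqdef:R-jump}; the $\sfC^*$-terms assemble (by evenness of $\sfC^*$) to $\tfrac12\calR^*(\rho,-\ona\rmD\calE(\rho))=\tfrac12\calR^*(\rho,-2\ona\rmD\tfrac12\calE(\rho))$; and the linear terms assemble to $\tfrac12\dual{\tilde\jmath}{\ona\rmD\calE(\rho)}$, whose time integral equals, by the duality $\dual{\ona f}{j}=-\dual{f}{\odiv j}$, the continuity relation $\odiv\tilde\jmath=-\partial_t\rho$, and the chain rule $\tfrac{\mathrm d}{\mathrm dt}\calE(\rho_t)=\dual{\rmD\calE(\rho_t)}{\partial_t\rho_t}$, exactly $\tfrac12\calE(\rho_T)-\tfrac12\calE(\rho_0)$. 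Collecting the three contributions gives~\eqref{eq:l:char-ldp-gs:ERR}.

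\textbf{Main obstacle.} Essentially everything is algebra once the pointwise picture is in place; the one point requiring genuine care is the interchange of the infimum with the time integral and the edge sum, i.e.\ exhibiting a \emph{measurable} minimiser of the per-pair problem and checking it is an admissible competitor for $\wt\RateFunc$. This is routine: the minimiser is explicit, $\hat\jmath_{\sfx\sfy}^*(t)=\tilde\jmath_{\sfx\sfy}(t)+\sqrt{\tilde\jmath_{\sfx\sfy}(t)^2+\rho_\sfx(t)\kappa_{\sfx\sfy}\,\rho_\sfy(t)\kappa_{\sfy\sfx}}$ and $\hat\jmath_{\sfy\sfx}^*(t)=\hat\jmath_{\sfx\sfy}^*(t)-2\tilde\jmath_{\sfx\sfy}(t)$, hence measurable in $t$ and of finite rate, so it lies in the class over which $\wt\RateFunc$ is taken; the reverse inequality is immediate from $\eta(\hat\jmath_{\sfx\sfy}|\,\cdot\,)+\eta(\hat\jmath_{\sfy\sfx}|\,\cdot\,)\ge\sfL(\tilde\jmath_{\sfx\sfy};\,\cdot\,)$ for every competitor.
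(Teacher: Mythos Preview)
Your proof is correct and follows essentially the same route as the paper's: reduce to a per-edge, per-time contraction, identify that contraction with the function $\sfL$ via detailed balance, pass from unordered pairs to $\tfrac12$ times the sum over ordered edges, and then---in the strictly positive case---recognise the linear term $-\tilde\jmath_{\sfx\sfy}\log(\beta/\alpha)$ as an exact differential through the chain rule and the continuity equation. The paper's proof is terser (it simply says ``follows from the arguments above'' for~\eqref{eq:l:char-ldp-gs:L} and writes out only the chain-rule computation), while you make the interchange of infimum with the time integral and edge sum explicit and exhibit a measurable minimiser; this extra care is a reasonable addition but not a different idea.
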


The expression~\eqref{eq:l:char-ldp-gs:ERR} is a rescaling of the `EDP functional'  in~\eqref{eqdef:EDP}; this rescaling is a version of the scale invariance mentioned in Remark~\ref{rem:scaling-energy-in-GS} with $\lambda=1/2$. Curves $(\rho,\wt \jmath\,)$ satisfying $\RateFunc(\rho,\wt\jmath\,)=0$ are solutions of the gradient-flow equations (see Section~\ref{ss:formal-rigorous})
\[
\partial_t \rho + \odiv \wt\jmath = 0 
\qquad\text{and}\qquad
\wt\jmath = \rmD_2\calR^*\bra*{\rho,-\gnabla\rmD\calE(\rho)}.
\]

\begin{proof}
The identity~\eqref{eq:l:char-ldp-gs:L} follows from the arguments above. The factor $1/2$ before the summation arises from the double counting of forward and backward edges.

In the case of positive densities $\rho_\sfx$, the final term $s\log (\beta/\alpha)$ in $\sfL$ becomes  an exact differential, since 
\begin{align*}
-\frac12 \sum_{\sfx\sfy\in\edges} \tilde\jmath_{\sfx\sfy}(t) \log \frac{u_\sfx(t)}{u_\sfy(t)}
&= \frac12 \sum_{\sfx\sfy\in\edges} \tilde\jmath_{\sfx\sfy}(t) \ona_{\sfx\sfy} \rmD\calE(\rho(t))
= -\frac12 \sum_{\sfx\in\nodes} \rmD\calE(\rho(t))(\sfx) \, \bigl(\odiv \tilde\jmath\bigr)(\sfx)\\
&= \frac12 \sum_{\sfx\in\nodes} \rmD\calE(\rho(t))(\sfx) \, \partial_t \rho_\sfx(t)
= \frac12 \frac{\dd}{\dx t} \calE(\rho(t)).
\end{align*}
After integration in time we find the expression~\eqref{eq:l:char-ldp-gs:ERR}.
\end{proof}

\bigskip
The appearance of the cosh dissipations $(\calR,\calR^*)$ can therefore be traced back to three ingredients:
\begin{itemize}
\item Independent forward and backward jump fluxes, with large-deviation behaviour characterized by $j_{\sfx\sfy}\mapsto \eta(j_{\sfx\sfy}|\rho_\sfx\kappa_{\sfx\sfy})$;
\item Contraction over forward and backward fluxes on the same edge;
\item Identification of the term $j\log \beta/\alpha$ in~\eqref{eqdef:L} as an exact differential, because of detailed balance.
	This is effectively a chain rule, and a one-sided version of this chain rule is essential for the variational characterization of gradient flows (see Property~\ref{property:ChainRuleLowerBound} below). 
\end{itemize}
Recently these observations have been generalized in various ways beyond detailed balance; see e.g.~\cite{KaiserJackZimmer18,PattersonRengerSharma21TR}.

\subsection{Modelling and tilting}
\label{s:ModellingTilting}

Gradient systems and `tilting' both have natural interpretations in the context of modelling, and these interpretations are intertwined. This is the topic of this section.

\subsubsection{Tilting of energies}
\label{sss:tilting:energies}

Before discussing the tilting of gradient structures, 
we  first consider static systems defined by a single `energy' functional~$\calE:\sfX\to\R$, and in which the `solutions of the system' are defined to be  the minimizers of the energy $\calE$.
In this setting, \emph{tilting} is a standard operation: `tilting' the system with an additional potential $\calF$ amounts to replacing $\calE$ by $\calE+\calF$, and a solution of the tilted system is by definition a minimizer of $\calE+\calF$. 
As an example, let $u\in \R$ be the displacement of a spring with energy $\calE(u) := ku^2/2$ and spring constant $k>0$. 
To this system, we apply an external load $f\in \R$ on the spring, thus generating a `tilt' $\calF(u) := -fu$. Then the combination of spring and external load is described by the functional 
\[
\calE(u) + \calF(u) = \frac k2 u^2 - fu .
\]
The constitutive relationship of the loaded system is obtained as the minimizer $u^\calF$ of the combined functional and satisfies 
\[
\calE'(u^\calF) = -\calF'(u^\calF)
\qquad\text{or}\qquad
ku^\calF = f,
\]
in which we recognize the usual force-displacement relation of the spring. 
In this way, the tilting induces a, possibly multi-valued, natural mapping 
\begin{equation}\label{e:tilt:energies:map}
	\calF \mapsto u^{\calF} := \argmin_u \set{\calE(u) + \calF(u)} .
\end{equation}
This principle extends to any system determined by an energy, and to any conservative force. 

\medskip

The mapping~\eqref{e:tilt:energies:map} also explains why tilting combines well with Gamma-convergence of the energy functionals.
If $\calE_n\stackrel\Gamma\longrightarrow \calE$, and if $\calF$ is continuous, then $\calE_n+\calF \stackrel\Gamma\longrightarrow \calE+\calF$.
Since Gamma-convergence implies convergence of minimizers to minimizers, this observation implies that the single proof of Gamma-convergence $\calE_n\stackrel\Gamma\longrightarrow \calE$ also guarantees convergence of solutions of all loaded systems $\calE_n+\calF$ and the associated mappings as defined in~\eqref{e:tilt:energies:map}.
This aspect is one of the main reasons for choosing Gamma-convergence when proving convergence of variational problems. 
Note that this observation extends to tilts $\calF_n$ that converge continuously~\cite[Prop.~6.20]{DalMaso93}. 
\medskip

Tilting of energies also interacts naturally with \emph{composition} of systems. 
If $\sfX_1$ and $\sfX_2$ are state spaces of systems described by energies $\calE_1$ and $\calE_2$, then the trivial, `non-interacting' composition of the two systems is given by the sum 
\[
\sfX := \sfX_1\times \sfX_2 \qquad\text{and}\qquad
\calE(x_1,x_2) := \calE_1(x_1) + \calE_2(x_2).
\]
Tilting $\calE_1$ and $\calE_2$ by $\calF_1$ and $\calF_2$ naturally transfers to tilting of $\calE$ by $\calF(x_1,x_2) := \calF_1(x_1) + \calF_2(x_2)$, and $\calE+\calF $ is minimized by the pair $(x_1^{\calF_1},x_2^{\calF_2})$. 

Less trivial, `interacting' compositions of the two systems can be generated by adding energies depending on both $x_1$ and $x_2$, 
\[
\calE(x_1,x_2) := \calE_1(x_1) + \calE_2(x_2) + \calE_{12}(x_1,x_2).
\]
For fixed $x_2$, the additional term $x_1\mapsto \calE_{12}(x_1,x_2)$ functions as a tilting of $\calE_1$, and this is reflected in the stationarity condition for $x_1$, 
\[
\rmD \calE_1(x_1) = -\rmD_1\calE_{12}(x_1,x_2).
\]
In this way, the combination of two systems can be naturally interpreted as a tilting of each of the two systems by the other.

\subsubsection{Kinetic relations}
\label{ss:KineticRelations}

Many gradient systems have strong connections to modelling: typically the driving functional~$\calE$ is an energy, entropy, or free energy, and also the dissipation potential~$\calR$ and its dual $\calR^*$ have a clear modelling interpretation, which are referred to as the \emph{energetics} and \emph{kinetics}, respectively.

To describe this interpretation of dissipation potentials we recall the concept of a \emph{kinetic relation}~\cite{MielkeMontefuscoPeletier21}. 
Mathematically, a kinetic relation $\KR_\rho$ at a point $\rho\in \calM_{\geq0}(\sfV)$ is a subset of a product space $\scrD'_\edges\times\scrD_\edges$. 
From the point of view of modelling, it characterizes the relationship between forces $\Xi\in \scrD_\edges$ and fluxes $j\in \scrD'_\edges$, in the sense that the pair $(j,\Xi)\in\scrD'_\edges\times \scrD_\edges$ is considered `admissible' if and only if $(j,\Xi)\in \KR_\rho$. 
In combination with an energy functional~$\calE$, the kinetic relation $\KR_\rho$ defines an evolution or even a possible set of evolutions by the three equations
\begin{equation}
\label{eq:evol-eq-KR}	
\partial_t \rho+\adiv j = 0, \qquad 
\Xi = -\anabla \rmD\calE(\rho), \qquad
(j,\Xi) \in \KR_\rho.
\end{equation}
Such kinetic relations appear throughout science, often under the name `constitutive relation'; for instance,
\begin{enumerate}
\item Fick's law $j_D = - D RT \nabla c =  D c (-RT \nabla \log c )$ relates the diffusive flux $j_D$ to the concentration gradient $\nabla c$. In the following, we pursue the thermodynamic point of view, in which  the chemical potential $ \mu$ is the driving force, which is the derivative of the Gibbs energy. In the case of simple diffusion, we have $\mu=RT  \log c$, where $R$ is the gas constant and $T$ the temperature~\cite[Ch.~4,5]{PeletierVarMod14TR}, but if the system contains additional phenomena such as electrostatic interaction or molecular crowding, this expression may be different. This can also be witnessed in the expression for $\Xi$ in~\eqref{eq:ex-FP-Xi}, which contains an additional term involving $V$.

In particular, note that by writing $j_D = -D c \nabla \mu$, the leading coefficient $D c$ depends on the current state $c$, which also is observed in the Fokker-Planck example in~\eqref{eqdef:R-FP}.
\item Fourier's law $j_H = -k \nabla T = k T^2 \nabla (1/T)$ relates the heat flux $j_H$ to the temperature gradient $\nabla T$. Again, a thermodynamic approach leads to the chemical potential $\mu = -(1/T)$~\cite{PeletierRedigVafayi14} with the state dependent coefficient $k T^2$.
\item A power-law viscosity relation $j_S = \eta |\Xi/\Xi_0|^p \sign(\Xi)$ relates the shear force $\Xi$ to the shear rate $j_S$.
\item The Coulomb friction law $\Xi\in \Xi_0 \sign (j_S)$, relating the sliding velocity $j_S$ to the shear force~$\Xi$, can be considered to be the $p\to\infty$ limit of the power-law viscosity relation given a reference shear force $\Xi_0$.
\end{enumerate}
Each of these relations defines a subset of the space of all pairs $(j,\Xi)$. The Coulomb friction example shows that this relation need not be linear or bijective. The examples also illustrate that many kinetic relations have parameters that depend on the state,  as in the case of the coefficients $Dc$ and $kT^2$ in Fick's and Fourier's law. 

\medskip

Dissipation potentials define such a kinetic relation.
Given a pair $(\calR,\calR^*)$ of dissipation potentials, the corresponding kinetic  relation between $j$ and $\Xi$ is defined via the three equivalent formulations
\begin{equation}
\label{eq:KR-for-GS}
j\in \partial_2 \calR^*(\rho,\Xi)
\quad\Longleftrightarrow\quad
\Xi\in \partial_2 \calR(\rho,j)
\quad\Longleftrightarrow\quad
\calR(\rho,j) + \calR^*(\rho,\Xi) = \dual j\Xi,
\end{equation}
and indeed the formulation~\eqref{eq:flux-GS-intro} coincides with~\eqref{eq:evol-eq-KR} whenever $\KR_\rho$ is given by~\eqref{eq:KR-for-GS}. The kinetic relations defined by dissipation potentials are necessarily \emph{dissipative}: if $(j,\Xi)\in \KR_\rho$ is given by~\eqref{eq:KR-for-GS}, then
\begin{equation}
\label{ineq:dissipative}
\dual j \Xi =  \calR(\rho,j) + \calR^*(\rho,\Xi) \stackrel{\NEW{\text{Def.~\ref{def:GradSystCE}(\ref{def:GradSystCE:DP})}}} \geq 0.
\end{equation}
In port-Hamiltonian parlance such a kinetic relation is an \emph{energy-dissipating} or  \emph{resistive} element~\cite[\S2.4]{Van-Der-SchaftJeltsema14}. In the context of gradient systems the inequality~\eqref{ineq:dissipative} implies that energy decreases along solutions (see~\eqref{eq:EDP:estimate}).

\medskip
In all the examples above, the coefficients $D$, $k$, $\eta$, and $\Xi_0$ are independent of the driving forces $-\nabla \log c$, $-RT\nabla (1/T)$, and $\Xi$, which also is natural from a philosophical point of view. Note that still these coefficients might depend on the current \emph{state}; this is different from depending on the local \emph{force}.

In the case of a kinetic relation generated by a dissipation potential $\calR$ as in~\eqref{eq:KR-for-GS}, this force-independence corresponds to the statement that $\calR = \calR(\rho,j)$ is independent of the driving force $\Xi$. In writing $\calR$ as a function of $(\rho,j)$ and not as a function of $(\rho,j,\Xi)$ this non-dependence on $\Xi$ appears to be obvious; but as we shall see below, this is not always the case. This brings us to the question of how $\calR$ depends on changes in $\calE$.

\subsubsection{Tilting of gradient systems: tilt-dependence and tilt-independence}

In a gradient system driven by an energy $\calE$, it is natural to encode additional loads or effects similarly by adding a tilting functional $\calF$ to $\calE$. How about the dissipation potential?
The rest of this section is devoted to this question:
\begin{quote}
	\emph{If the driving energy $\calE$ is tilted by adding $\calF$, does the dissipation potential $\calR$ change? If so, how does it change?	}
\end{quote}
To facilitate the discussion, we augment the definition of a gradient system $(\nodes,\edges, \anabla,\calE,\calR)$ with a family $\sfF$ of admissible tilts $\calF:\calM_{\geq0}(\nodes)\to \R$, such as for instance $\calF^V(\rho) = \int V \dx\rho$ in the Fokker-Planck example. 
\begin{definition}[Gradient systems with tilting]
\label{def:GS-with-tilting}
	A tuple $(\nodes,\edges,\anabla,\calE,\sfF,\calR)$ is a \emph{tilt gradient system} if 
	\begin{enumerate}
		\item $\nodes$ and $\edges$ are topological spaces, and $\scrD_\nodes$ and $\scrD_\edges$ are topological spaces of functions on $\nodes$ and $\edges$;
		\item $\anabla$ is a linear map from  $\scrD_\nodes$ to $\scrD_\edges$,
 with negative dual $\adiv$;
		\item $\calE$ is a function $\calE:\calM_{\geq0}(\nodes)\to\R$;
		\item $\sfF$ is a set of functions $\calF :\calM_{\geq0}(\nodes)\to\R$,
		\item For each $\calF\in \sfF$, $\calR(\cdot,\cdot;\calF)$ is a dissipation potential (see Definition~\ref{def:GradSystCE}).
	\end{enumerate}
	For each $\calF\in \sfF$, the tilted gradient system generates the evolution defined by 
	\[
	\partial_t \rho + \adiv j =0 \qquad\text{and}\qquad
	j \in \partial_2 \calR^*\bigl(\rho,\mathopen -\anabla \rmD(\calE+\calF)(\rho);\,\calF\bigr).
	\]
	Tilt-\emph{independence} then is the situation that
	\[
	\calR(\cdot,\cdot;\calF_1)=\calR(\cdot,\cdot;\calF_2)
	\qquad \text{for all }\calF_1,\calF_2\in \sfF,
	\]
	i.e.\ that the dissipation $\calR(\cdot,\cdot;\calF)$ does not depend on $\calF$. 
\end{definition}

\subsubsection*{Example~\ref{ex:heat-flow} (continued). Tilt-independent gradient structure of discrete heat flow}

We revisit Example~\ref{ex:heat-flow}, and extend the cosh gradient structure $(\nodes,\edges,\gnabla,\calE,\calR)$ obtained in~\eqref{eq:ER-heat-flow-intro} with a family of tilts. 
Recall that $\pi$ is a fixed measure on $\nodes$, $\speck$ a symmetric edge function, and 
\[
\calE(\rho) := \RelEnt(\rho|\pi), 
\qquad
\calR(\rho,j) := \sum_{\sfx\sfy\in \edges} \sigma_{\sfx\sfy}(\rho) \sfC\bra*{\frac{j_{\sfx\sfy}}{\sigma_{\sfx\sfy}(\rho)}}, 
\qquad\text{with}\quad \sigma_{\sfx\sfy}(\rho) = \frac12 \speck_{\sfx\sfy} \sqrt{\rho_\sfx\rho_\sfy} .
\]


To extend the system to a tilt gradient system, we first define the set of \emph{potential tilts} $\sfF_{\mathup{Pot}} = \set*{ \calF^F_{\mathup{Pot}} | F:\nodes \to \R}$, with 
\begin{equation}
	\label{eqdef:MCs:potential-tilts}
	\calF^F_{\mathup{Pot}}:\calM_{\geq0}(\nodes) \to \R, \qquad
	\calF^F_{\mathup{Pot}}(\rho) := \sum_{\sfx\in\nodes} \rho_\sfx F_\sfx.
\end{equation}
We then define the tilt gradient structure $\bra*{\nodes,\edges,\gnabla,\calH(\cdot|\pi) ,\sfF_\Pot,\ol\calR}$ by extending $\calR$ trivially to tilts, namely by setting
\[
\ol\calR(\rho,j;\calF) := \calR(\rho,j).
\]
With this definition, this tilt gradient system is tilt-independent. 
A similar calculation as in~\eqref{eq:deriv-eq-heatflow-intro} shows that $\ol\calR$ induces the flux 
\begin{equation*}
	j^F_{\sfx\sfy} := \frac{1}{2} \bra*{ \rho_\sfx \kappa_{\sfx\sfy}^F - \rho_{\sfy}\kappa_{\sfy\sfx}^F} \qquad\text{where}\qquad  \kappa_{\sfx\sfy}^F := \kappa_{\sfx\sfy} \ee^{\frac{1}{2} ( F_{\sfx}- F_{\sfy})} . 
\end{equation*}

This example illustrates how tilt-independence selects a particular way of modifying the jump rates $\kappa$. 
We generalize this example in Section~\ref{sss:tilt-indepdent-GS-MC}, where we  provide a characterization of all families of detailed-balance jump kernels $\kappa^F$ possessing a tilt-independent gradient structure (see Corollary~\ref{cor:MC:tilt:kappa:norm}).

\subsubsection{Tilt-dependence introduced by limits}

Tilt-independence is natural in many modelling situations. For instance, in a system consisting of a drop of syrup sliding down a slope, if we increase the forcing by blowing, then we expect the viscosity of the syrup to remain the same; the kinetic relation (that characterizes  viscous flow) is independent of the external forcing. 

One might even assume that tilt-independence is natural in \emph{all} situations. In fact, this is not the case, and this is one of the main messages of this paper. Tilt-independence need not be preserved through limits: if a sequence of tilt-independent `gradient systems with tilting' (as in Definition~\ref{def:GS-with-tilting}) converges to a limit system,
	then the limit system may be tilt-\emph{dependent}. This was observed in~\cite{FrenzelLiero21,MielkeMontefuscoPeletier21}, and in that latter reference alternative convergence concepts were introduced to remedy this: under such alternative concepts of gradient-system convergence, tilt-independence is preserved.
	
However, in this paper we show that in many cases tilt-dependence is unavoidable. A prime example is the Kramers high-activation-energy limit, which we already mentioned in Section~\ref{sss:Kramers-intro}. In Section~\ref{s:Kramers} we study a more general case, in which we combine `Kramers-type reaction-diffusion' in a variable $y\in \Upsilon$ with `standard diffusion' in a variable $x\in \Omega$. We show that a sequence of gradient systems with tilting $(\nodes_\e,\edges_e\, \anabla_\e,\calE_\e,\sfF,\calR_\e)$ converges to a limit system with tilting $(\wt\nodes,\wt\edges, \wt\anabla,\wt\calE,\sfF,\wt \calR)$. Although the pre-limit dissipation potential $\calR_\e$ is tilt-independent, 
\[
\calR_\e(\rho,j; \calF) := \iint\limits_{\Omega\times \Upsilon} \biggl[\frac1{2m_\Omega} \abs*{\frac{\dx j^x}{\dx\rho}(x,y)}^2 + \frac1{2\e\tau_\e} \abs*{\frac{\dx j^y}{\dx\rho}(x,y)}^2\biggr]\, \rho(\dxx xy),
\]
the limit potential $\calR$ does depend on the tilt $\calF(\rho)$ (see~\eqref{eqdef:tildeRR*}):
\begin{equation}
\label{eqdef:R0-intro}
\wt \calR(\rho,j; \calF) :=\iint\limits_{\Omega\times \Upsilon} \frac1{2m_\Omega} \abs*{\frac{\dx j^x}{\dx\rho}}^2\dx \rho
+ \int\limits_{\Omega} \sigma(x;\rho,\calF) \sfC\bra*{\frac{\ol\jmath(x)}{\sigma(x;\rho,\calF)}} \dx x.
\end{equation}
This dependence on $\calF$ is encoded in the activity function 
\begin{equation}
\label{eq:def-sigma-intro}
\sigma(x;\rho,\calF) := \frac{\abs{m_\Upsilon}}{\abs*{\Omega}} \sqrt{\frac{\rho_a(x)\rho_b(x)}{\gamma^a \gamma^b}}\; \exp{\tfrac12 \bigl(F^\rho(x,a)+F^\rho(x,b)- 2F^\rho(x,c)\bigr)},
\end{equation}
in which $F^\rho = \rmD\calF(\rho)$ and $\rho_{a,b}(x)$ is the Lebesgue density of $\rho|_{y=a,b}$ at $x$. See Section~\ref{s:Kramers} for full details.

\subsubsection{Structure of tilt-dependence in the examples}
\label{sss:structure-of-tilt-dependence}

The form of the dependence of $\sigma$ in~\eqref{eq:def-sigma-intro} on the tilt $\calF$  turns out to be fairly general. In Sections~\ref{s:thin-membrane} and~\ref{s:ha} we study two other examples of tilt-dependence, also arising as a limit of tilt-independent gradient systems. 
Remarkably, in each of the three examples the pair of dissipation potentials has the form
\begin{align*}
\wt\calR(\rho,j;\calF) &= \int_{\edges} \sigma(\dx e;\rho,\calF)\, \sfC\bra*{\frac{\dx j}{\dx \sigma(\,\cdot\,;\rho,\calF)}(e)},\\
\wt\calR^*(\rho,\Xi;\calF) &= \int_{\edges} \sigma(\dx e;\rho,\calF)\, \sfC^*(\Xi(e)),
\end{align*}
where the tilt $\calF$ appears only in the factor~$\sigma$. 
The factor $\sigma$ has the interpretation of a parameter that determines a `global rate' or `activity' of an `edge', which scales the forward and the backward flux over the edge in the same way. In all three examples the `edge' does not exist in the pre-limit system, but emerges in the limit. 

Moreover, in all the considered examples, there is only one single edge emerging in the limit and hence the above integrals become an evaluation along this single edge. 
In detail, the emerging `activity' $\sigma$ has the following dependence on the tilt $F:=\rmD \calF(\rho)$:
\newcommand{\linespreading}{5mm}
\begin{align*}
\sigma(x;\rho,\calF) &= \alpha \sqrt{\frac{\dx\rho}{\dx\pi_0}(x,a)\frac{\dx\rho}{\dx\pi_0}(x,b)}
  \;\exp{\tfrac12 \bigl(F^\rho(x,a)+F^\rho(x,b)- 2F^\rho(x,c)\bigr)},\\
\noalign{\rule[-\linespreading]{0pt}{\linespreading} 
   \hfill\text{(Kramers limit, }\eqref{eq:char-sigma-measure-Kramers})}
\sigma(\rho,\calF) &= \sqrt{\frac{\dx\rho}{\dx \pi}(0^-)\frac{\dx\rho}{\dx\pi}(1^+)} 
  \bra*{\int_0^1 \frac1{a_*(s)}\ee^{V_*(s)}
  \exp\tfrac12 \bra[\big]{2F(s) - F(0) - F(1)}\dx s}^{-1}
  \\
\noalign{\rule[-\linespreading]{0pt}{\linespreading} 
   \hfill\text{(thin-membrane limit, }\eqref{eq:def:thin-membrane:sigma})}\\
\sigma(\rho,\calF) &= \sqrt{\frac{\rho_{\sfa} \,\rho_{\sfb}}{\pi_{\sfa}^0\, \pi_{\sfb}^0}} \capacity^F_{\sfa\sfb} \text{ with } 
\capacity^F_{\sfa\sfb} := 
\inf_{h:\nodes\to \R}\set[\bigg]{ \frac{1}{2} \sum_{\sfx\sfy\in \edges} k_{\sfx\sfy}^0 \ee^{-(F_\sfx+F_\sfy)/2} \abs*{\gnabla_{\sfx\sfy} h }^2 \,\bigg|\, h_\sfa=1, h_\sfb=0} \\
\noalign{\hfill\text{(two-terminal networks, }\eqref{eq:ha:effective-R})}
\end{align*}
Comparing these, we notice the following similarities and differences:
\begin{itemize}
\item In all three examples, the tilting $F$ enters the formula for $\sigma$ exponentially (and with a factor $\frac12$). 
\item All three are invariant under adding a constant function to the tilt, i.e.\ under replacing $\calF(\rho)$ by $\calF(\rho) + c$. This is a natural property, since adding a constant to the energy does not change the pre-limit evolution, or the pre-limit EDP functional (see Def.~\ref{def:EDP:sol}); therefore the same should hold in the limit. 
\item All three involve the \emph{average tilt} ${\tfrac12 (F_-+ F_+)}$ over the separate tilts $F_\pm$ at the two ends of the new `edge' (in the Kramers case $\tfrac12 (F^\rho(b)+F^\rho(a))$, in the membrane case $\tfrac12 (F(1)+F(0))$, and in the two-terminal network case implicitly in the definition of $\capacity_{\sfa\sfb}^F$ through $\tfrac12(F_\sfb+F_\sfa)$).
This average tilt cannot be written as a function of the \emph{difference} $F_+-F_-$. This fact shows that the limit is tilt-dependent, even if the `missing' tilt information is disregarded (see the next point). 
\item In all three cases, $\sigma$ also depends on values of the tilt at states $x\in \nodes_1\subseteq\nodes\setminus\nodes_0$ that are `missing' from the final reduced state space $\nodes_0\subset \nodes$:
in Kramers' case the state $y\in \nodes_1:=\set{c}$ is missing,  
in the thin-membrane case all values $s\in \nodes_1:=(0,1)$, 
and for the two-terminal networks all non-terminal nodes $\sfx\in \nodes_1:=\nodes\setminus\set{\sfa,\sfb}$.
These states are `missing' because the limit energy $\calE_0$ is infinite whenever $\rho$ takes non-zero values on these states. 
Therefore, no differentiation of $\calE_0+\calF$ is possible with respect to these values, implying that differentiation of $\calE_0+\calF$ can not recover the value of $F=\rmD\calF$ at these points.
This makes the tilt values `inaccessible', but since they do influence the evolution, 
they necessarily appear somewhere in~$\calR_0^*$; it turns out that they appear in~$\sigma$.
\item In all three examples, the tilt-dependence on those inaccessible values $\nodes_1\subseteq \nodes\setminus\nodes_0$ of $F$ is `exponentially harmonic': in Kramers' case it is $\exp(-F^\rho(x,c))$,
in the thin-membrane case the harmonic average $\int_0^1 \exp(-F(s))\dx{s}$,
and for the two-terminal networks the capacity $\capacity_{\sfa\sfb}^F$ depends on $\exp\bra{-(F_\sfx+F_\sfy)/2}$.
As a result, we obtain that changing the tilt solely on the inaccessible nodes $\nodes_1$ by a constant $\alpha\in \R$, that is $F\mapsto F^{1,\alpha}|_{\nodes_1}:= F|_{\nodes_1} +\alpha$, we obtain the relation $\sigma(\rho,\calF^{1,\alpha})=\ee^{-\alpha} \sigma(\rho,\calF)$, where $\calF^{1,\alpha}:= \int_\nodes F^{1,\alpha}\dx\rho$. 
\end{itemize}

\begin{remark}[`De-tilting' tilt-dependent gradient structures]\label{rem:unitlting:tilt-GS}

In each of the examples above we observed that $\sigma$ depends on tilt values at `missing' parts of the state space---parts of the state space $\nodes$ that have become inaccessible in the limit. 
In cases where this \emph{does not} happen, it may be possible to remove the tilt-dependence from the gradient structure while preserving the induced evolution equation~\eqref{eq:flux-GS-intro}. We now describe an example of this. 

Let $(\nodes,\edges,\anabla,\calE,\sfF,\calR)$ be a tilt-dependent gradient system. 
Assume that the domain of each admissible tilt is contained in the domain of the energy, that is
\begin{equation}\label{ass:untilting:domain}
	\forall \calF\in \sfF: \dom \calF \subseteq \dom \calE
\end{equation}
and that the dual dissipation potential $\calR^*$ has the structure
\begin{equation}\label{ass:untilting:F-dependence}
	\forall \calF\in \sfF: \qquad \calR^*(\rho,\Xi;\calF) = \int_{\edges} \Redge^*\bra{e,\rho,\Xi(e); -\anabla\rmD\calF(e)}\dx{e} ,
\end{equation}
for some $\Redge^*: \edges \times \calM_{\geq 0}(\nodes) \times \R \times \R \to [0,\infty)$.

Next, define $\wt\Redge^*:  \edges \times \calM_{\geq 0}(\nodes) \times \R \to [0,\infty)$  by
\begin{equation}\label{eqdef:untilted:Psis}
	\wt\Redge^*(e,\rho,\Xi) := \int_0^{\Xi} \rmD_3\Redge^*\bra*{e,\rho,\xi;\xi+\anabla\rmD\calE(e)} \dx\xi.
\end{equation}
We now make the additional assumption that the corresponding functional
\begin{equation*}
	\wt\calR^*(\rho,\Xi;\calF) := \int_{\edges} \wt\Redge^*\bra{e,\rho,\Xi(e)}\dx{e}
\end{equation*}
is a valid dissipation potential (after Definition~\ref{def:GradSystCE}).

In that case the gradient structure $(\nodes,\edges,\anabla,\calE,\sfF,\wt\calR)$ is \emph{tilt-independent}
and \emph{evolution-equivalent} to $(\nodes,\edges,\anabla,\calE,\sfF,\calR)$, by which we mean that  \emph{for every tilt $\calF\in \sfF$} the induced evolution equation coincides with that of the original system $(\nodes,\edges,\anabla,\calE,\sfF,\calR)$:
\begin{equation}\label{eqdef:evolution-equivalent}
	\forall \rho \in \calM_{\geq0}(\nodes):\quad
	  \rmD_2\calR^*\bra[\big]{\rho,-\anabla\rmD(\calE+\calF)(\rho);\calF(\rho)} = \rmD_2\wt\calR^*\bra[\big]{\rho,-\anabla\rmD(\calE+\calF)(\rho)}.
\end{equation}
See Section~\ref{sss:MC:quadratic:untilting} for an explicit situation in which the `log-mean' quadratic gradient structure from~\cite{ChowHuangLiZhou12,Maas11,Mielke11} for Example~\ref{ex:heat-flow} is `de-tilted' in the above sense. The  pair $(\wt\calR,\wt\calR^*)$ that results from this de-tilting procedure is the cosh structure~\eqref{eq:ER-heat-flow-intro}; this is to be expected, since for this system the cosh structure is the only tilt-independent one (see Section~\ref{ss:char-tilt-indep-GS}).

Note that such a `de-tilting' technique does not provide any information about the origin of the resulting tilt-independent gradient structure. At the same time, it is possible that the tilt-independent structure arises from microscopic models in multi-scale limits or from stochastic dynamics via large deviations results.
\end{remark}

\subsubsection{Consequences of tilt-dependence}

Tilt-\emph{dependence} has a number of consequences. 
As mentioned above, it complicates modelling at macroscopic level:
if tilt-dependence has to be assumed, then the driving functional and the dissipation have to be chosen in a self-consistent manner. 

One instance of this is \emph{composition} of systems. 
We described in Section~\ref{sss:tilting:energies} how separate systems defined by energies can be combined by postulating a joint energy, and how this joint energy appears to the separate subsystems to be a form of tilting.
In the context of tilt-\emph{independence} it is natural to do the same for gradient systems. 
However, if tilt-independence can not be assumed, then this practice may yield incorrect results; in Section~\ref{ss:Kramers-activity-interpretation} we revisit Examples~\ref{ex:FP}+\ref{ex:heat-flow} and~\ref{ex:FP}+\ref{ex:chem-reactions} from this point of view. 
This example illustrates the problem; how to solve this problem, i.e. how  to choose energy and dissipation together reliably, is a question for future work.


\medskip

A second consequence of tilt-dependence is that the technique of `reverse engineering' of the kinetic relation from potentials becomes unavailable.
An example of successful reverse engineering is the identification of the Wasserstein-entropy gradient structure of the Fokker-Planck equation. The authors in~\cite{JordanKinderlehrerOtto98} consider a family of Fokker-Planck equations~\eqref{eq:FP-intro}, parametrized by potentials $V:\R^d \to \R$. In particular, by identifying the term $\div \bra*{ D\rho \nabla V}$ as the advection caused by a potential energy $\calF(\rho) = \int V \dx{\rho}$, the form of the Wasserstein metric tensor is enforced, indicated by the symbol $\stackrel{!}{=}$, through the identity
\begin{equation*}
	\div \bra*{D \rho \nabla V} = - \div \bra*{D \rho \nabla (-\rmD\calF)} \stackrel{!}{=} -\div (\rmD_2\calR^*(\rho, - \nabla \rmD\calF)) .
\end{equation*}
This leads to the identification of $\calR^*(\rho,\nabla\xi)$ as $\frac{1}{2} \int \abs*{\nabla\xi}^2_D \dx{\rho}$. In this way one recovers for this example both the notion of the gradient $\anabla = \nabla$ and the kinetic relation $\KR_\rho$ encoded by the graph of $\Xi \mapsto \rmD_2 \calR(\rho,\Xi)$. In a second step the driving functional $\calE$ is identified as the entropy $\calE(\rho ) =  \int \rho \log \rho \dx{x}$. 

This type of reasoning via reverse engineering is based on an implicit assumption of `tilt-independence' of the underlying gradient structure.
In the case of the Fokker-Planck equation, 
other arguments lead to the same gradient system~\cite{AdamsDirrPeletierZimmer2011,AdamsDirrPeletierZimmer2013}, 
justifying this result by different means.
The implicit `tilt-independence` property is also used in~\cite[\S 2.1]{BrunaBurgerRanetbauerWolfram2018TR} to identify possible asymptotic gradient flow structures for nonlinear Fokker-Planck equations.

In general, however, it appears that we need to be careful with this reverse-engineering approach. 
The consequence for modelling of phenomenological models on macroscopic level is that the kinetic relation should be verified by more microscopic models, where the assumption of tilt-independence is justified. Alternatively, one could take the point of view from inverse problems and verify the kinetic relation with suitable measurements (see~\cite{BurgerPietschmannWolfram2013}, where this is done for crowd dynamics).

\subsubsection{Philosophy of tilt-dependence and tilt-independence}
\label{sss:philosopy-of-tilt-dependence}

Hidden in Definition~\ref{def:GS-with-tilting} is a philosophical point: whether a given gradient system is tilt-independent or tilt-dependent depends on more information than that contained in the gradient system itself.
This `more information' is mathematically codified by the dependence of $\calR$ on $\calF$.
These are some examples of how that dependence can be obtained:
\begin{enumerate}
\item One can {postulate} a dependence of $\calR$ on $\calF$ on the basis of modelling arguments. The example above of the drop of syrup is an instance of this, in which $\calR$ is assumed to be independent of $\calF$. In Section~\ref{ss:case-study-chem-reactions-tilt-dependent} we discuss  a postulate of non-trivial dependence: the classical theory of chemical reactions leads to a specific dependence of $\calR$ on $\calF$.
\item One can also postulate a form of dependence or independence by arguments of simplicity or symmetry. The method of `reverse engineering' mentioned above is based on an assumption of tilt-independence, and in some cases one can characterize all tilt-independent gradient systems; Mielke and Stephan~\cite{MielkeStephan2020} give a result of this type, and in Section~\ref{ss:char-tilt-indep-GS} we discuss a similar result.
\item 
One can derive the dependence of $\calR$ on $\calF$ by following the dependence through a limit; given a dependence of pre-limit objects, the dependence of the limit object follows as a consequence of the convergence. This is the philosophy that we follow in the examples of the Kramers high-activation-energy limit (Section~\ref{s:Kramers}), the thin-membrane limit (Section~\ref{s:thin-membrane}), and the fast-reaction limit in graphs (Section~\ref{s:ha}); see the next section.
\item \label{i:avoid-tilt-dependence-by-restricting-tilts}
The previous point suggests a fourth possibility: one can postulate a limited independence, among a subclass of tilts. In the Kramers example~(\ref{eqdef:R0-intro}--\ref{eq:def-sigma-intro}), for instance, one observes that the assumption
\[
F^\rho(x,y) = F^\rho(x), \qquad \text{i.e. $F^\rho(x,y)$ is independent of $y$,}
\]
is sufficient to make the dependence on $\calF$ or $F^\rho$ disappear. Since the $x$-coordinate represents spatial position of a particle and the $y$-coordinate represents some internal degree of freedom, such an assumption on $F^\rho$ has a natural interpretation:  the source of energy represented by $F^\rho$ depends on spatial position but not on internal state. 
\end{enumerate}

\begin{remark}[Tilt-dependence is unavoidable]
In~\cite{MielkeMontefuscoPeletier21} the concepts of `tilt-EDP convergence' and `contact-EDP convergence' were introduced, for which the limit system is tilt-independent by construction. In that paper it was shown by example that the same sequence of gradient systems may converge both in the `simple' EDP sense (Definition~\ref{defn:EDP-convergence} below) and in the contact-EDP sense, and that the limit systems may differ.

This raises the question whether for the examples of this paper the tilt-dependence of the limit can be `removed' by considering a different concept of convergence. 
In Section~\ref{ss:Kramers-not-contact-EDP-conv} we investigate this situation for the Kramers high-activation-energy limit, and we show that the sequence of gradient systems does not converge in the sense of either tilt- or contact-EDP convergence.  In fact, the tilt-dependence of classical chemical-reaction modelling (Section~\ref{ss:case-study-chem-reactions-tilt-dependent}) strongly suggests that the tilt-dependence is unavoidable.
\end{remark}

\subsection{Partial conclusion}
\label{ss:partial_conclusion}

In the pages above we have introduced the main messages of this paper, which we investigate in more detail in the remaining sections:

\begin{itemize}
	\item 
Cosh-type dissipations emerge through large deviations and coarse-graining, and in particular through the minimization problems that are implicit in such theories. In Section~\ref{s:Kramers} we prove the emergence of the cosh structure in the Kramers high-activation energy limit, and in Section~\ref{s:thin-membrane} we review it for a thin-membrane limit.

\item Cosh-type dissipations may be  stable under taking limits. This has been observed in for instance~\cite[\S 3.3]{LieroMielkePeletierRenger17} and~\cite[\S 5]{MielkeStephan2020}, and  we generalize this in Section~\ref{s:ha} to fast-reaction limits in general two-terminal networks.

\item Gradient systems may respond to tilting in different ways. In the simplest  case the energy is tilted and the dissipation potential is unchanged, and we called this situation \emph{tilt-independence}. We characterize the tilt-independent jump processes in Section~\ref{ss:char-tilt-indep-GS}.

\item Tilt-independence can change into tilt-\emph{dependence} when taking limits, especially as a result of `loss of state space'. We illustrate this with the Kramers limit in Section~\ref{ss:Kramers:limit-pb}, the thin-membrane example in Section~\ref{s:thin-membrane}, and the two-terminal networks in Section~\ref{ss:ha:EDP:discussion}. These examples also suggest a particular form of tilt-dependence in this type of situation, through an `activity' function $\sigma $ that depends on the tilt in a particular way (see Section~\ref{sss:structure-of-tilt-dependence}).
\end{itemize}

\subsection{Bibliographic comments}
\label{ss:history}

\emph{Kinetic relations: Quadratic, 1-homogeneous, and exponential.} 
Gradient structures in applications have often been either quadratic or one-homogeneous. Quadratic dissipations correspond to linear  kinetic relations and have a long history, going back at least to Rayleigh~\cite{Rayleigh13} and Onsager~\cite{Onsager31,OnsagerMachlup1953}. During a large part of this history the term `gradient flows' even was synonymous with the restricted class of Hilbert-space gradient systems, in which the dissipation potential is the squared norm. More recently, gradient systems with other quadratic dissipation potentials have been studied, with the Wasserstein gradient systems as most famous example~\cite{JordanKinderlehrerOtto97,JordanKinderlehrerOtto98,Otto01,AmbrosioGigliSavare08}. The quadratic structure implies that such potentials generate a metric on the state space, at least formally, and this property is the basis of De Giorgi's metric-space interpretations~\cite{DeGiorgiMarinoTosques80,MarinoSacconTosques89} and the rigorous theory of Ambrosio, Gigli, and Savar\'e~\cite{AmbrosioGigliSavare08}. 

A separate, well-studied class of dissipations is {1-homogeneous} in the flux; such potentials give rise to systems that are {rate-independent}, which means that their evolution is slaved to the evolution of external forces. For this class the formulation in terms of a dissipation potential appears to go back to Moreau~\cite{Moreau70}, and the specific nature of rate independence has given rise to a number of different solution concepts and accompanying theory~\cite{MielkeTheilLevitas02,Dal-MasoDeSimoneMora06,MielkeRossiSavare12a, MielkeRoubicek15}.


The first example of an \emph{exponential} kinetic relation, such as the cosh-type relations, appears to be in the work on chemical reactions of Marcelin~\cite[equation~(1)]{Marcelin15}. He reformulated the law of mass action, which is a product of powers of concentrations (or more precisely `activities') as an exponential function of the weighted sum of the corresponding chemical potentials. Grmela~\cite[Eq.~(A7)]{Grmela93} formulated this kinetic relation as the derivative of a cosh-type dissipation potential.

The theory of gradient structures is related to the theory of `doubly nonlinear parabolic equations' of the form $A\partial_t u + Bu \ni f$, in which $A$ and $B$ are maximal monotone operators (see e.g.~\cite{AltLuckhaus1983,ColliVisintin90}, \cite[Ch.~III]{Visintin96}, or~\cite[Ch.~11]{Roubicek13}). The trio Mielke--Rossi--Savar\'e has developed far-reaching generalizations that deal with general, i.e.\ non-quadratic, non-1-homogeneous dissipations (e.g.~\cite{MielkeRossiSavare09,MielkeRossiSavare12,MielkeRossiSavare13,Mielke16a}).

\medskip

\emph{Onsager reciprocity.}
The early work by Onsager on reciprocity relations~\cite{Onsager31} was focused on the concept of symmetry of linear operators. It is not obvious how to generalize this concept to nonlinear operators such as those arising in nonlinear kinetic relations, and there has been a large amount of discussion on this topic; see e.g. \cite{Casimir45,Ziegler58,Gyarmati70,HurleyGarrod82,Garcia-ColinRio-Correa84,GallavottiCohen95,Gallavotti96,MaesNetocny07,Seifert12,ReinaZimmer15}. 

Mielke, Renger, and one of us~\cite{MielkePeletierRenger16} proposed to re-interpret the concept of symmetry of a positive linear operator $L$ as the property that $L$ can be written as a derivative (of the non-negative quadratic functional $x\mapsto \tfrac12 x^TLx$). In such a context a nonlinear operator~$N$ can be considered to be  `generalized symmetric' if it is the derivative of a convex functional~$\Phi$. This naturally leads to a generalization of `Onsager reciprocity' as the property that the macroscopic equations are a generalized gradient flow; Onsager's original case is recovered when the dissipation potentials $\calR$ and $\calR^*$ are quadratic.

This re-interpretation meshes well with the other half of Onsager's original contribution. Onsager proved the symmetry of a macroscopic kinetic relation by considering the consequences of `microscopic reversibility', a property that reduces to stochastic reversibility (detailed balance) in the case of microscopic Markov processes. The derivation of gradient systems from large-deviation principles (Theorem~\ref{ftheorem:LDP}) directly generalizes this: stochastic reversibility of the stochastic processes implies that their macroscopic limits are described by a gradient system, and this result transparently allows for both quadratic and non-quadratic gradient systems.

\medskip
\emph{Variational Modelling, tilt-independence, and tilt-dependence.}
The modelling of systems using gradient structures is known under various different names, such as \emph{Onsager's variational principle}~\cite{Doi11}, the \emph{Energetic Variational Approach}~\cite{HyonEisenbergLiu11}, or \emph{Variational Modelling}~\cite{PeletierVarMod14TR}. In this approach, modelling choices are formulated in terms of the components of gradient structures (energies, dissipations, continuity-equation structures, and such), and equations are derived from these. It has close connections to other approaches such as \emph{Maximum Entropy Production Principle}, \emph{Minimum Entropy Production Principle}, and a variational principle formulated by Ziegler~\cite{Ziegler58}; see the review~\cite{MartyushevSeleznev06} for a discussion.

In all uses of this method that are known to us, tilt-independence is implicitly assumed, and choices are made for energies and dissipations without discussion of possible relations between them. Most applications (e.g.~\cite{Doi11,HyonEisenbergLiu11,HyonFonsecaEisenbergLiu12,HorngLinLiuEisenberg12,XuDiDoi16,ZhouDoi18,Torres-SanchezMillanArroyo19}) focus on continuum-mechanical processes without chemical reactions, for which the assumption of tilt-independence is reasonable. Arroyo and co-authors~\cite{ArroyoWalaniTorres-SanchezKaurin18} do consider chemical reactions, and use a quadratic dissipation potential of the log-mean type (Remark~\ref{rem:other-gradient-structures}). They observe that the dissipation potential necessarily depends on the `chemical potential'; this implies tilt-dependence, although the authors do not discuss this in those terms. 

It appears that~\cite{FrenzelLiero21} and~\cite{MielkeMontefuscoPeletier21} were the first to discuss tilt-dependence and tilt-independence as such. We discuss Frenzel \& Liero's example~\cite{FrenzelLiero21} in Section~\ref{s:thin-membrane} and the convergence concepts of~\cite{MielkeMontefuscoPeletier21} in Section~\ref{ss:Kramers-not-contact-EDP-conv}. Mielke and Stephan~\cite{MielkeStephan2020} showed that requiring tilt-independence may automatically lead to cosh-type gradient systems, and in Section~\ref{ss:char-tilt-indep-GS} we give a result of  similar type under weaker  assumptions on the gradient system.

\subsection*{Acknowledgments}
The authors would like to thank Giuseppe Savar\'e, Chun Yin Lam, the members of the `Wednesday morning session' at Eindhoven University of Technology, and the members of the Research Group ``Partial Differential Equations'' at WIAS for many helpful comments. 

The authors very much appreciate several comments, questions, and an extensive lists of minor typos and mistakes from the three anonymous referees.

This work is funded by the Deutsche Forschungsgemeinschaft (DFG, German Research Foundation) under Germany's Excellence Strategy EXC 2044--390685587, Mathematics M\"unster: Dynamics--Geometry--Structure.

\section{Gradient systems}
\label{s:GS}

\subsection{Basic assumptions and notation}
\label{ss:assumptions-notation}

We use both measures on $\R^d$ and their Lebesgue density; we write `$\rho(\dx x)$' for the measure and `$\rho(x)$' for the Lebesgue density. In this way a symbol such as `$\rho$' can mean either, but the context will make clear which is intended. 

We will always assume that $\nodes$ is a topological space. This includes the case of a finite set~$\nodes$ (e.g.~Example~\ref{ex:heat-flow}), in which case we equip it with the discrete topology. We also assume that~$\edges$ is a locally compact Hausdorff  space. The space $\calM_{\mathrm {loc}}(\edges)$ is the set of locally finite Borel measures, i.e.\ the Borel measures $\mu$ on $\edges$ such that each point $e\in \edges$ has a neighbourhood $U$ such that $|\mu|(U) < \infty$, or equivalently, such that $|\mu|(K)<\infty$ for each compact set $K\subset\edges$.

The \emph{wide} topology on the set of locally finite Borel measures $\calM_{\mathrm{loc}}(X)$ on a locally compact space $X$ is the weak topology generated by $C_{\mathrm c}(X)$; the \emph{narrow} topology on the set of finite Borel measures is the weak topology generated by $C_{\mathrm b}(X)$. We will denote convergence in these topologies by $\stackrel*\longrightharpoonup$; the situation will make it clear which type is intended.

The following properties of $\sfC^*$ are used many times:
\begin{alignat}2
\label{eq:C-star-logpq}
\sqrt{pq}\, \sfC^*\bra[\big]{\log p - \log q} &= 2\bra*{\sqrt p - \sqrt q}^2,&\qquad &\text{for all $p,q>0$},\\
\sqrt{pq}\,{\sfC^*}'(\log p -\log q) & = p-q,&\qquad &\text{for all $p,q>0$}. \notag
\end{alignat}
Note that the right-hand sides are well-defined for $p,q\geq0$.

\subsection{Formal and rigorous formulations of gradient systems}
\label{ss:formal-rigorous}

Many of the discussions in Section~\ref{s:intro} were non-rigorous. In order to establish properties of gradient systems rigorously, we need proper definitions of the components of a gradient system and the resulting equations. We start with the concept of a solution. 

Let $(\sfV,\sfE,\anabla,\calE,\calR)$ be a gradient system in continuity-equation format (Definition~\ref{def:GradSystCE}). In~\eqref{eq:flux-GS-intro} we gave the corresponding evolution equation as 
\begin{equation}
\label{eq:flux-GS}	
\partial_t \rho + \adiv j = 0
\qquad \text{and}\qquad
j\in  \partial_2 \calR^*\bigl(\rho;\mathopen-\anabla \rmD\calE(\rho)\bigr).
\end{equation}
This `definition' of a solution needs specification in a number of aspects, and we now discuss these one by one. 

\medskip
\subsubsection{The continuity equation}
\label{sss:ct-eq}
	We give the equation $\partial_t\rho + \adiv j = 0$ a rigorous meaning in a measure-valued framework by adopting a formulation familiar from parabolic weak-solution theory (see e.g.~\cite[\S I.3]{LSU1968}) which incorporates the initial datum into the definition. Similar definitions can be found in~\cite{DolbeaultNazaretSavare2008,Erbar2014,ErbarFathiLaschosSchlichting16,Erbar2016TR,EspositoPatacchiniSchlichtingSlepcev2021,PeletierRossiSavareTse22,PeletierSchlottke21TR}.

Recall that $\nodes$ and $\edges$ are assumed to be topological spaces.
The definition of the continuity equation below assumes that the domain $\scrD_\nodes$ of the operator~$\anabla$ contains the set $C^1_{\mathrm c}(\nodes)$; this is the case for all the examples studied in this paper.
Note that in the context of martingale optimal transport~\cite{HuesmannTrevisan2019}, second order continuity equations occur. 
\begin{definition}[Continuity equation]\label{def:continuity-equation}
	For $T>0$ a pair~$(\rho(t,\cdot),j(t,\cdot))_{t\in [0,T]}$ satisfies the continuity equation  $\partial_t\rho + \adiv j = 0$ if:
	\begin{enumerate}
		\item \label{def:CE-part1-continuity}
		For each~$t\in[0,T]$, $\rho(t,\cdot)\in\calM_{\geq0}(\nodes)$, and 
		the map~$t\mapsto \rho(t,\cdot)$ is continuous with respect to the wide topology on~$\calM_{\geq0}(\nodes)$, that is
		\begin{equation*}
		 \forall \varphi\in C_{\mathrm c}(\nodes): \quad  [0,T] \ni t \mapsto \int_\nodes \varphi(x) \rho(t,\dx x) \text{ is continuous.}
		\end{equation*}
		\item \label{def:CE-part2-j}
		For each~$t\in[0,T]$, $j(t,\cdot)\in\calM_{\mathrm{loc}}(\edges)$, and 
		the map~$t\mapsto j(t,\cdot)$ is measurable with respect to the wide topology on~$\calM_{\mathrm{loc}}(\edges)$, that is
		\[
		  \forall \varphi\in C_{\mathrm c}(\edges) :\quad  [0,T]\ni t \mapsto \int_\edges \varphi(e) j(t,\dx e) \text{ is measurable},
		\]
		and the joint measure $|j|\in \calM([0,T]\times \edges)$ characterized by 
		\begin{equation*}
		\int_{t\in A} \abs*{j(t,\cdot)}(B)\, \dd t
		\qquad \text{for }A\subset [0,T], \ B\subset \edges \text{ open,}
		\end{equation*}
		is locally finite on $[0,T]\times \edges$. 
	
		\item \label{def:CE-part3-weak-eq}
		The pair solves $\partial_t\rho + \adiv j = 0$ in the sense that  for any $\varphi\in C_{\mathrm c}^1((0,T)\times \nodes)$,
		\begin{equation}
		\label{eq:weak-form-CE}
		 \int_0^T\!\!\int_{\nodes} \partial_t \varphi(t,x) \, \rho(t,\dx x) \dx t 
		  +\int_0^T\!\!\int_{\edges} \anabla \varphi(t,e) \, j(t,\dx e)\dx t = 0.
		\end{equation}
	\end{enumerate}
We denote by~$\CE(0,T)$ the set of all pairs~$(\rho,j)$ satisfying the continuity equation.
\qed
\end{definition}

\begin{remark}[Continuity of $\rho$]
Points~\ref{def:CE-part2-j} and~\ref{def:CE-part3-weak-eq} together imply that $t\mapsto \rho(t)$ can be replaced by a curve $t\mapsto \wt\rho(t)$ that is continuous in some weak topology defined by $\anabla$, such that $\wt\rho$ coincides with $\rho$ at Lebesgue almost every $t$. 
When $\anabla$ is a bounded linear operator as in Example~\ref{ex:heat-flow} or the classical gradient as in Example~\ref{ex:FP}, for instance, we find that $\wt\rho$ is widely continuous  (see e.g.~\cite[Lemma~4.4]{PeletierRossiSavareTse22} and~\cite[Lemma 8.1.2]{AmbrosioGigliSavare08}). 

With the additional condition of wide continuity in part~\ref{def:CE-part1-continuity} above, we require that $\rho$ already is this continuous representative, and in particular that the value at time $t=0$ connects continuously to $\rho(t)$ for $t>0$. This last point is important because of the role of initial data in \emph{a priori} estimates (e.g.~\eqref{est:initial-energy-D}).
\end{remark}

\subsubsection{Singularities of \texorpdfstring{$\calE$}{E} and \texorpdfstring{$\calR$}{R}}
\label{sss:formal-rigorous-singularities}
A next step in giving a rigorous definition of~\eqref{eq:flux-GS} is the occurrence of singularities: $\rmD\calE(\rho)$ might not exist, and if it does exist, it might not generate an admissible argument to $\calR^*$, that is in general $\anabla\rmD\calE(\rho)$ will not be in $\scrD_\edges$, by either lack of regularity or possible singularities.
In the case of Example~\ref{ex:heat-flow}, for instance, $\rmD\calE(\rho)(\sfx) = \log \rho_\sfx$, which is meaningless if $\rho_\sfx=0$; as a result, the corresponding expression
\[
\calR^*(\rho,-\ona\rmD\calE(\rho)) = 
\sum_{\sfx\sfy\in \edges} \sigma_{\sfx\sfy}(\rho)\sfC^*\bra*{-\ona \rmD\calE(\rho)_{\sfx\sfy}}
= \sum_{\sfx\sfy\in \edges}\frac12 {\speck_{\sfx\sfy}}\sqrt {\rho_\sfx\rho_\sfy} \sfC^*\bra[\big]{\log\rho_\sfx - \log{\rho_\sfy}}
\]
is formally also meaningless. However, because of the identity \eqref{eq:C-star-logpq},
we can rewrite
\begin{equation}\label{eq:Rstar:Hellinger}
\calR^*\bra[\big]{\rho,-\ona\rmD\calE(\rho)} 
= \sum_{\sfx\sfy\in\edges} \speck_{\sfx\sfy} \bra*{\sqrt{\rho_\sfx} - \sqrt{\rho_\sfy}}^2 ,
\end{equation}
and the right-hand side in~\eqref{eq:Rstar:Hellinger} is well-defined for all non-negative measures~$\rho$; the right-hand side can therefore be consdidered a rigorous replacement for the left-hand side.

This phenomenon happens in many systems: the term $\calR^*(\rho,-\anabla\rmD\calE)$ is not well-defined as it stands, but it admits a rigorous reformulation. 
In the metric-space gradient-flow framework of Ambrosio, Gigli, and Savar\'e~\cite{AmbrosioGigliSavare08}, for instance, the rigorous formulation takes the form of a `strong upper gradient', and in~\cite{PeletierRossiSavareTse22} a generalization is constructed for a wide class of non-quadratic gradient systems modelling jump processes. In this section we continue with  formal expressions of the type `$\calR^*(\rho,-\anabla\rmD\calE)$', while keeping in mind that a rigorous proof should be based on such a  reformulation. 

\subsubsection{Variational formulation}
A final problem with~\eqref{eq:flux-GS} arises when we want to pass to the limit in this equation: the nonlinearities of $\partial_2\calR^*$ and $\calE$ interact in a way that complicates convergence proofs. Various concepts have been developed that facilitate this, such as those based on time-discretized approximations~\cite{AmbrosioSavareZambotti09,Braides14} or the evolutionary variational inequality~\cite{Savare07, DaneriSavare10TR, Mielke16a}. 

In this paper we focus on the solution concept known as the \emph{Energy-Dissipation Principle (EDP)}~\cite{Mielke16a} or  \emph{curves of maximal slope}~\cite{DeGiorgiMarinoTosques80,AmbrosioGigliSavare08}.
The equivalence mentioned in~\eqref{eq:subdiff-characterization},
\begin{equation*}
j\in \partial_2 \calR^*(\rho,\Xi)
\quad\Longleftrightarrow\quad
\Xi\in \partial_2 \calR(\rho,j)
\quad\Longleftrightarrow\quad
\calR(\rho,j) + \calR^*(\rho,\Xi) = \dual j\Xi,
\end{equation*}
formally implies for any pair $(\rho,j)$ solving the continuity equation~\eqref{eq:ct-eq-abstract-intro} that
\begin{equation}\label{eq:EDP:estimate}
	\int_0^T\Bigl[ \calR(\rho,j)+\calR^*(\rho,-\anabla \rmD\calE(\rho))\Bigr]\dx t
	\geq -\int_0^T \int_\edges 
	\anabla \rmD\calE(\rho)(e)\, j(t,\dx e) \dx t
	= \calE(\rho(0))-\calE(\rho(T)).
\end{equation}
Again formally, equality holds if and only if $j \in \partial_2\calR^*(\rho,-\anabla \rmD\calE(\rho))$; this implies that equality in~\eqref{eq:EDP:estimate} can be used as a definition of solutions.

\begin{definition}[Solutions in the EDP sense]\label{def:EDP:sol}
	For each $T>0$ define the dissipation functional
	\begin{equation}\label{eqdef:ED:dissipation}
	\calD^T(\rho,j) := 
	\begin{cases}
		\ds \int_0^T \Bigl[ \calR(\rho,j)+\calR^*(\rho,-\anabla \rmD\calE(\rho))\Bigr]\dx t & \text{if }(\rho,j)\in \CE(0,T),\\
		+\infty &\text{otherwise}.
	\end{cases}
	\end{equation}
	and whenever $\calE(\rho(0))<\infty$ define the \emph{Energy-Dissipation} functional 
	\begin{equation}\label{eqdef:ED:functional}	
		\calI^T(\rho,j):= \calE(\rho(T))-\calE(\rho(0)) + \calD^T(\rho,j).
	\end{equation}
	A curve $(\rho,j)\in \CE(0,T)$ is an \emph{EDP solution} of the gradient system $(\sfV,\sfE,\anabla,\calE,\calR)$ provided that
	\begin{equation}\label{eqdef:EDP}	
	 	\calI^\tau(\rho,j) \leq 0 , \qquad  \text{for all } \tau\in (0,T] .
	\end{equation}
\end{definition}

For this definition to be meaningful, 
the formal inequality~\eqref{eq:EDP:estimate} should hold, meaning that the functional $\calI^\tau(\rho,j)\geq 0$ is non-negative; we formulate this as a property of a gradient system.
\begin{property}[Chain-rule lower bound]\label{property:ChainRuleLowerBound}
The gradient system $(\sfV,\sfE,\anabla,\calE,\calR)$ satisfies the \emph{chain-rule lower bound} if for all $(\rho,j)\in \CE(0,T)$ such that $\calI^T(\rho,j) + \calE(\rho(0)) < \infty$ and all $\tau\in [0,T]$ we have 
\begin{equation}\label{eq:ChainRuleLowerBound}
	\calE(\rho(\tau)) - \calE(\rho(0)) \geq \int_0^\tau \Dual{ \anabla\pra*{\rmD \calE(\rho(t))}(e)}{\, j(t,\dx e)} \dx t 
\end{equation}
and it particular
\begin{equation*}
	\calI^\tau(\rho,j) \geq 0 .
\end{equation*}
\end{property}
Similarly to the discussion in Section~\ref{sss:formal-rigorous-singularities}, the right-hand side in~\eqref{eq:ChainRuleLowerBound} is not obviously well-defined, since  $\rmD\calE(\rho)$ might not exist in any obvious interpretation. A proof of Property~\ref{property:ChainRuleLowerBound} therefore also has to show that the control of $\calI^T$ and $\calE(\rho(0))$ allows one to give meaning to the right-hand side; we do this for instance in Lemmas~\ref{l:CRLB-Kramers-epsilon} and \ref{l:CRLB-Kramers-limit}. For Example~\ref{ex:heat-flow}, for instance, this was done in~\cite[Cor.~4.20]{PeletierRossiSavareTse22}.

If Property~\ref{property:ChainRuleLowerBound} is satisfied, we obtain that EDP solutions to a gradient system $(\sfV,\sfE,\anabla,\calE,\calR)$ are characterized as elements of the zero-locus set $\set{\calI^T=0}$.


\subsubsection{Gradient systems which arise from large deviation principles}
\label{ss:ldp-GS}

A large class of gradient structures for evolution equations arise as the result of taking a deterministic limit of a sequence stochastic processes. The abstract result below illustrates the typical case, where both the gradient structure and the EDP definition of solutions arise from large-deviation principles. See~\cite{AdamsDirrPeletierZimmer2011,AdamsDirrPeletierZimmer2013,ErbarMaasRenger2015} for further discussion on this topic.

The space $D([0,T];\sfZ)$ is the Skorokhod space of c\`adl\`ag curves in $\sfZ$ (see e.g.~\cite[\S 3.5]{EthierKurtz09}).
\begin{Ftheorem}[{\cite[Prop.~3.7]{MielkePeletierRenger14}}]
\label{ftheorem:LDP}
Suppose that $Z^n$ is a sequence of continuous-time Markov processes in a space 
$\sfZ$ that are reversible with respect to their stationary measures
$\mu^n\in \ProbMeas(\sfZ)$. Assume that the following two
large-deviation principles hold: 
\begin{enumerate}
\item The invariant measures $\mu^n$ satisfy a large-deviation
  principle with rate function $\scrE:\sfZ\to[0,\infty]$, i.e.
\begin{equation*}
\mu^n \sim \exp\bigl(-n\scrE\bigr), \qquad \text{as }n\to\infty;
\end{equation*}
\item The time courses on $[0,T]$ of $Z^n$ satisfy a large-deviation principle in
  $D([0,T];\sfZ)$ with rate function $\RateFunc:D([0,T];\sfZ)\to[0,\infty]$,
  i.e.
\begin{equation}
\label{ldp:time-courses}
\Prob\bigl(Z^n \approx z \,\big|\, 
Z^n_0 \approx z(0)\bigr) \sim \exp\bigl(-n\RateFunc(z)\bigr),
\qquad \text{as }n\to\infty.
\end{equation}
\end{enumerate}

Then $\RateFunc$ can be written as 
\begin{equation}
\label{eq:I-GGS}
\RateFunc(z) = \frac12 \scrE(z(T)) - \frac12 \scrE(z(0)) + \frac12 \int_0^T \bigl[ \scrR(z,\dot z) + \scrR^*\bigl(z,-2\rmD \tfrac12 \scrE(z)\bigr)\bigr]\, dt,
\end{equation}
for some symmetric dissipation potential $\scrR$.
\end{Ftheorem} 

\noindent
This result suggests a number of observations:
\begin{enumerate}
\item The functional $\RateFunc$ in~\eqref{eq:I-GGS}  has the same structure as the functional $\calI^T$ introduced in~\eqref{eqdef:ED:functional}, up to factors $2$ (see Remark~\ref{rem:scaling-energy-in-GS}). Therefore  $\RateFunc$ is non-negative, and the equation $\RateFunc(z)=0$ can be considered a variational formulation of the equation $\dot z \in \partial_2 \scrR^*\bra*{z,-\rmD\scrE(z)}$. 
\item If the sequence of processes $Z^n$ is tight (which typically is proved as part of establishing~\eqref{ldp:time-courses}), then~\eqref{ldp:time-courses} implies that  a sequence of realizations $Z^n$  converges 
  (along subsequences) almost surely to a curve $z$ satisfying $\RateFunc(z)=0$. This identifies the gradient structure $(\sfZ,\scrE,\scrR)$ as the characterization of the limiting behaviour of~$Z^n$. This also gives the rate functional $\scrE$ of the invariant measure an after-the-fact interpretation as the functional that drives the gradient-flow evolution. 
\item The appearance of the functional $\RateFunc$ (or $\calI^T$) as rate functional in~\eqref{ldp:time-courses} gives the definition of EDP solutions a foundation in the upscaling of more microscopic systems. 
\item The connection between gradient structure and large-deviation principles also suggests design rules for the modelling of systems using gradient systems~\cite{Doi11,PeletierVarMod14TR}.
\item A central role in the proof is played by the stochastic reversibility, which gives rise to an exact differential of the rate functional $\scrE$. The difference $\tfrac12 (\scrE(z(T))-\scrE(z(0)))$ in~\eqref{eq:I-GGS} arises once the chain rule Property~\ref{property:ChainRuleLowerBound} holds with equality. 
\end{enumerate}
\noindent 
Many of the examples of cosh-type dissipations of this paper arise in this way, formally or rigorously, from large-deviation principles.


\begin{remark}[Rigorous characterization of $\RateFunc$]
Since the Skorokhod space $D([0,T];\sfZ)$ contains elements $z$ that are not even continuous in $t$, the definition~\eqref{eq:I-GGS} does not make sense as it stands; this is another instance of the problems described earlier in this section. In practice this is solved in two different ways, depending on the situation. In the first case, one proves that~$\RateFunc$ has a superlinear dependence on $\dot z$, by which finiteness of $\RateFunc$ implies that~$z$ is absolutely continuous (see~\cite[Lem.~A.5]{MielkePeletierRenger14} for an example). In the second case, when there is only linear control of~$\dot z$, discontinuities are unavoidable, and one adapts~\eqref{eq:I-GGS} following the theory of rate-independent systems (see e.g.~\cite[Def.~3]{BonaschiPeletier16} for an example, and~\cite{MielkeRoubicek15} for the general theory).
%
\end{remark}

\subsection{Convergence of gradient systems}
\label{ss:convergence-of-GS}

\subsubsection{EDP convergence for systems in continuity-equation format}


The use of the EDP solution concept of the previous section to prove convergence of gradient systems was pioneered by Sandier and Serfaty~\cite{SandierSerfaty04}, and has been  extended by Serfaty to metric spaces~\cite{Serfaty11} and by Liero, Mielke, Peletier, and Renger to cases where energetic effects are transferred to the limiting dissipation potentials~\cite{LieroMielkePeletierRenger17}. Mielke, Montefusco, and Peletier introduced additional, stronger convergence concepts that incorporate tilting~\cite{MielkeMontefuscoPeletier21}. Peletier and Schlottke also used this approach to establish convergence of a sequence of gradient systems to a limiting system that is no longer of gradient type, but still a variational evolution~\cite{PeletierSchlottke21TR}. Mielke~\cite{Mielke16a} discusses this method in the broad context of convergence of gradient systems. 

\bigskip
Let $(\nodes,\edges,\anabla,\calE_\eps,\calR_\eps)$ for $\eps>0$ be a family of gradient systems in continuity-equation format. 
Here, we impose a fixed node and edge set $(\nodes,\edges)$ and fixed gradient operator $\anabla$ linking both for all $\eps$ (see Remark~\ref{rem:comments:EDPreduce} why this is in general not a restriction). 
We start with the concept of convergence in the fixed continuity equation induced by $(\nodes,\edges,\anabla)$, which consists of the minimal assumptions to pass to the limit in~\eqref{eq:weak-form-CE} in Def.~\ref{def:continuity-equation} of $\CE(0,T)$. 
\begin{definition}[Convergence in $\CE$]
\label{def:converge-in-CE} 
A family $(\rho_\eps,j_\eps)\in \CE(0,T)$ converges to $(\rho_0,j_0)\in \CE(0,T)$  as $\eps\to 0$ if
\begin{enumerate}
\item $\rho_\eps$ converges widely to $\rho_0$ on $[0,T]\times \nodes$;
\item $j_\eps$ converges widely to $j_0$ on $[0,T]\times \edges$.
\end{enumerate}
\end{definition}
Note that wide convergence of $\rho_\e$ and $j_\e$ does not imply that the limit $\rho_0$ has finite mass or is continuous in time; therefore the requirement $(\rho_0,j_0)\in \CE(0,T)$ in Def.~\ref{def:converge-in-CE}  above implicitly places additional restrictions on the sequence $(\rho_\e,j_\e)$.  Also, the convergence of Def.~\ref{def:converge-in-CE} does not imply $\rho_\eps(t)\to \rho_0(t)$ for all $t\in [0,T]$. For all these properties, additional compactness is required, which typically follows from bounds on $\calE_\e$, $\calR_\e$, and $\calR^*_\e$.

\medskip
The definition of EDP convergence for gradient systems of~\cite{LieroMielkePeletierRenger17} is formulated in terms of the dissipation functions~\eqref{eqdef:ED:dissipation} for the $\e$-indexed systems,

\begin{equation*}
\calD_\eps^T(\rho_\eps,j_\eps) := 
\begin{cases}
	\ds \int_0^T \Bigl[ \calR_\eps(\rho_\eps,j_\eps)+\calR^*_\eps(\rho_\eps,-\anabla \rmD\calE_\eps(\rho_\eps))\Bigr]\dx t & \text{if } (\rho_\eps,j_\eps)\in\CE(0,T),\\
	+\infty &\text{otherwise}.
\end{cases}
\end{equation*}
\begin{definition}[EDP convergence for gradient systems in continuity-equation format]\label{defn:EDP-convergence}
A sequence $(\nodes,\edges,\anabla,\calE_\eps,\calR_\eps)$ \emph{EDP-converges} to a limit gradient structure in continuity-equation format $(\nodes,\edges,\anabla,\calE_0,\calR_0)$ as $\eps\to 0$ if 
\begin{enumerate}
\item $\calE_\eps \stackrel\Gamma\longrightarrow \calE_0$ with respect to the wide topology on $\calM_{\geq0}(\nodes)$;
\item 
For any sequence  $ (\rho_\eps,j_\eps)\longrightarrow (\rho_0,j_0)$ in $\CE(0,T)$ (see Definition~\ref{def:converge-in-CE}) with 
\[
\sup_{\e>0,t\in[0,T]} \calE(\rho_\e(t))<\infty
\]
and for any $\tau\in (0,T]$, we have 
\begin{equation}\label{defn:EDP-convergence:seq:a}
	\liminf_{\eps\to 0} \calD_\eps^\tau(\rho_\eps,j_\eps) \geq \calD_0^\tau(\rho_0,j_0) ,
\end{equation}
where $\calD_0^\tau$ is given by 
\begin{equation}\label{eqdef:D0}
	\calD_0^T(\rho_0,j_0) := 
	\begin{cases}
		\ds \int_0^T \Bigl[ \calR_0(\rho_0,j_0)+\calR^*_0(\rho_0,-\anabla \rmD\calE_0(\rho_0))\Bigr]\dx t & \text{if } (\rho,j)\in\CE(0,T),\\
		+\infty &\text{otherwise}.
	\end{cases}
\end{equation}
\end{enumerate}
\end{definition}

\begin{remark}[Comments on Definition~\ref{defn:EDP-convergence}]
\begin{enumerate}
\item Definition~\ref{defn:EDP-convergence} imposes the same continuity equation for $(\rho_\eps,j_\eps)$ for all~$\eps> 0$. 
This is convenient for passing to the limit, since the continuity equation acts as a fixed linear constraint, which works well with lower semicontinuity and convexity arguments. 
\item Since the EDP solutions at $\e>0$ are defined by the condition $\calI_\e^T(\rho_\e,j_\e)\leq0$, a minimal requirement for any concept of evolutionary $\Gamma$-convergence is that along sequences $(\rho_\eps,j_\eps)\in \CE(0,T)$ with $(\rho_\eps,j_\eps)\to (\rho_0,j_0)\in \CE(0,T)$ we have 
\begin{equation}
\label{conv:liminf:calI}
	\liminf_{n\to \infty} \calI_\eps^T(\rho_\eps,j_\eps) \geq \calI^T_0(\rho_0,j_0) . 
\end{equation}
However, the convergence~\eqref{conv:liminf:calI} does not ensure that the limit functional~$\calI_0^T$ is again of the form~\eqref{eqdef:ED:functional} (see~\cite{MielkeMontefuscoPeletier21,PeletierSchlottke21TR} for examples). This explains the need for additional conditions on $\calE_\eps$ and $\calD_\eps^T$, including the requirement that $\calD_0^T$ is of the form~\eqref{eqdef:D0}. 

\item The pioneering paper~\cite{SandierSerfaty04} and its follow-up~\cite{Serfaty11} assumed $\Gamma$-$\liminf$ bounds separately for the integrals of $\calR_\eps$ and $\calR^*_\eps$, which is a stronger requirement than the $\Gamma$-$\liminf$ bound~\eqref{defn:EDP-convergence:seq:a} for the sum~$\calD_\e^T$. 
As observed in~\cite{LieroMielkePeletierRenger17,MielkeStephan2020}, for limits in which part of the energy landscape `migrates' to the dissipation, such individual $\Gamma$-$\liminf$ estimates are unable to capture this migration, resulting in a gap in the $\Gamma$-$\liminf$ bound of the joint functional $\mathcal{D}_\eps^T$. By considering the sum $\calD_\e^T$ of the two terms one can prove a sharp bound.

\item Many similar EDP-based convergence results are specified in terms of $\Gamma$-convergence of $\calD_\e^T$, which requires a recovery-sequence property in addition to the lim-inf estimate of~\eqref{defn:EDP-convergence:seq:a}; see e.g.~\cite{SandierSerfaty04,Mielke16a,LieroMielkePeletierRenger17,MielkeMontefuscoPeletier21}. As is usual when using $\Gamma$-convergence, the recovery-sequence property functions as a guarantee that the lim-inf estimate is not unnecessarily weak. 

In this paper we  take a different route, following~\cite{Serfaty11}, by interpreting the chain-rule lower bound (Property~\ref{property:ChainRuleLowerBound}) as providing this guarantee instead. For instance, since no chain-rule lower bound is known for the Kramers problem with tilting, we prove it here (Lemma~\ref{l:CRLB-Kramers-limit}).

\end{enumerate}
\end{remark}

\subsubsection{EDP convergence with contracted limit continuity equation}
\label{sss:EDP-conv-contracted}

In practice, a modification is needed of Definition~\ref{defn:EDP-convergence} above. 
For many examples, and especially in multi-scale limits, the reconstruction of the building blocks $\mathcal{R}_0$ and $\mathcal{R}_0^*$ from~$\calD_0^\tau$ as in~\eqref{eqdef:D0} can be non-explicit and not suited for further analysis.
To arrive at explicit functionals $\mathcal{R}_0$ and $\mathcal{R}_0^*$, it is necessary to use information and compactness properties obtained from the bound $\sup_\eps \calI^T_\eps <\infty$ that is implicit in~\eqref{conv:liminf:calI}. In some situations, such as multi-scale limits, one can infer from this bound a contraction of the node and edge set, since one actually obtains  $\supp \rho_0 = [0,T]\times \wt\nodes$ and $\supp j_0 \in [0,T]\times \wt\edges$ for some $\wt\nodes \subseteq \nodes$ and $\wt\edges \subseteq \edges$. Then it is possible to restrict the limit structure of the continuity equation to $(\wt\nodes,\wt\edges,\wt\anabla)$ for a suitable restricted operator $\wt\anabla$. 
This situation will be encountered in Sections~\ref{s:Kramers}, \ref{s:thin-membrane},  and~\ref{s:two-terminal-reduction}. 
In this process it can happen that the gradient operator changes its nature from local to nonlocal (this happens for instance in Definitions~\ref{defn:Kramers:CE} and~\ref{defn:Kramers:CE0}).

Hence we suggest an additional definition of EDP convergence for gradient systems that contains such a  contraction step, in which we only ask for the contracted continuity equation, denoted by $\wt\CE$, in the limiting gradient structure $(\wt\nodes,\wt\edges,\wt\anabla,\wt\calE_0,\wt\calR_0)$. 
We use for any $\rho\in\calM_{\geq 0}(\nodes)$ and a node subset $\wt\nodes\subset \nodes$ the notation $\rho\lfloor\wt\nodes \in \calM_{\geq 0}(\wt\nodes)$ to denote the  measure restricted to $\wt\nodes$.
\begin{definition}[EDP convergence with contracted limiting continuity equations]\label{defn:EDP-convergence:reduce}
A sequence $(\nodes,\edges,\anabla,\calE_\eps,\calR_\eps)$ \emph{EDP-converges} to a limit gradient structure in continuity-equation format $(\wt\nodes,\wt\edges,\wt\anabla,\wt\calE,\wt\calR)$ if 
\begin{enumerate}
\item For some $\calE_0: \calM_{\geq 0}(\nodes)\to \R$ we have $\calE_\eps \stackrel\Gamma\longrightarrow \calE_0$ with respect to the wide topology on $\calM_{\geq0}(\nodes)$;
\item \label{def:EDP-convergence:E:restriction} $\calE_0$ has an alternative representation in terms of an $\wt\calE:\calM_{\geq0}(\wt\nodes) \to \R\cup\{+\infty\}$ as 
\[
\calE_0(\rho)= \begin{cases}
\wt\calE(\rho\lfloor\wt\nodes) & \text{if }\rho(\nodes\setminus \wt\nodes) = 0\\
+\infty & \text{otherwise}.
\end{cases}
\]
\item \label{defn:EDP-convergence:reduce:sequence}
There exists a  functional $\calD_0^\tau$ on $\CE(0,T)$ 
such that 
for any sequence  $ (\rho_\eps,j_\eps)\longrightarrow (\rho_0,j_0)$ in $\CE(0,T)$ (see Definition~\ref{def:converge-in-CE}) with $\sup\limits_{\e>0,t\in[0,T]} \calE(\rho_\e(t))<\infty$  and any $\tau\in (0,T]$ we have 
\begin{equation*}
	\liminf_\eps \calD_\eps^\tau(\rho_\eps,j_\eps) \geq \calD_0^\tau(\rho_0,j_0) .
\end{equation*}
\item There exists a functional $\wt\calD^\tau$ on $\wt\CE(0,T)$ such that whenever $\calE_0(\rho_0(t))<\infty$ for all $t$, there exists a $\tilde\jmath$ such that for $\wt\rho(t) := \rho_0(t)\lfloor\wt\nodes$ we have $(\tilde\rho,\tilde\jmath)\in \wt\CE(0,T)$ and
\begin{align*}
	\calD_0^\tau(\rho_0,j_0) &= \wt\calD^\tau(\tilde\rho,\tilde\jmath).
\end{align*}
$\wt\calD^\tau$ has the structure
\[
\wt\calD^\tau(\tilde\rho,\tilde\jmath) := 
\begin{cases}
\ds\int_0^\tau \Bigl[ \wt\calR(\tilde \rho,\tilde\jmath)+\wt\calR^*(\tilde \rho,-\wt\anabla \rmD\wt\calE(\tilde\rho))\Bigr]\dx t & \text{if }(\tilde\rho,\tilde\jmath)\in \wt\CE(0,T),\\
		+\infty &\text{otherwise}.
\end{cases}
\]
\end{enumerate}
\end{definition}

\begin{remark}[Comments on Definition~\ref{defn:EDP-convergence:reduce}]\label{rem:comments:EDPreduce}\leavevmode
\begin{enumerate}
	\item Point~\ref{def:EDP-convergence:E:restriction} in Definition~\ref{defn:EDP-convergence:reduce} shows how the limiting energy functional $\calE_0$ enforces the restriction of $\rho_0$ to $\wt\nodes$; an example we will encounter below is $\calE_0(\rho) = \RelEnt(\rho|\pi_0)$, with $\supp\pi_0 \subsetneqq \nodes$. From there the restrictions from $\edges$ to $\wt\edges$ typically follow  from how $\edges$ was constructed from $\nodes$. For instance, if $\edges \subset \nodes \times \nodes$, then $\wt\edges = \edges \cap (\wt\nodes \times \wt\nodes)$ is a possible choice (see for instance Section~\ref{s:two-terminal-reduction}). In Section~\ref{s:Kramers}  we will also encounter the situation where $\nodes$ is continuous and $\wt\nodes$ is discrete, which has the consequence that the local gradient $\anabla$ morphs in to a nonlocal gradient $\wt\anabla$ in the limit.
	\item Definition~\ref{defn:EDP-convergence:reduce} is well suited to this paper because the emergence of cosh-type gradient structure is closely tied to multi-scale limits, in which a contraction from continuous to discrete or nonlocal state spaces occurs. In comparison, Definition~\ref{defn:EDP-convergence} covers well the case of discrete--to--continuum limits (see e.g.~\cite{DisserLiero2015,Schlichting2019}), mean-field limits (see e.g.~\cite{ErbarFathiLaschosSchlichting16}), and homogenization (see e.g.~\cite{DondlFrenzelMielke19,GladbackKopferMaas2020,GladbackKopferMaasPortinale2020,HraivoronskaTse2022TR}).
	In these cases the problem statement usually involves $\eps$-dependent node and edge sets $(\nodes_\eps,\edges_\eps)$ leading in general also to an $\eps$-dependent continuity equation $\CE_\eps$ given in terms of a suitable $\anabla_{\!\eps}$. Those problems are covered by Definition~\ref{defn:EDP-convergence} with the help of a suitable \emph{projection} map $\Pi_\eps: \CE_\eps(0,T)\to \CE(0,T)$, where $\CE$ is a continuity equation induced by a common structure $(\nodes,\edges,\anabla)$.
	
\item In Definition~\ref{defn:EDP-convergence:reduce} the restriction of $\rho_0$ to $\calM_{\geq0}(\wt\nodes)$ is forced by the energy alone: $\calE_0(\rho_0(t))<\infty$ is sufficient to restrict $\rho(t)$ to $\calM_{\geq0}(\wt\nodes)$.
In other cases, however, such restriction comes from the dissipation functional $\calD_0^T$ instead of from $\calE_\e$. In the examples of~\cite{MielkePeletierStephan21,Stephan21}, for instance, the fast-reaction nature enters not through $\calE_\e$ but through $\calR_\e$ and $\calR^*_\e$, and finiteness of $\calD_0^T(\rho_0,j_0)$ implies that $\rho_0(t)$ lies in a smaller set for almost all $t\in[0,T]$. EDP convergence in such situations can be defined in a similar manner. \qedhere
\end{enumerate}
\end{remark}
With this definition, the convergence of solutions to solutions becomes a question of simple verification of the necessary sequential lower semicontinuity for the involved functionals.

\begin{lemma}[Solutions converge to solutions]
\label{l:solns-converge-to-solns}
Let $(\nodes,\edges,\anabla,\calE_\eps,\calR_\eps)$ \emph{EDP converge} to $(\wt\nodes,\wt\edges,\wt\anabla,\wt\calE,\wt\calR)$ according to Definition~\ref{defn:EDP-convergence:reduce}. Let $(\rho_\eps,j_\eps)\to (\rho_0,j_0)$ as in point~\ref{defn:EDP-convergence:reduce:sequence} of Definition~\ref{defn:EDP-convergence:reduce}, and assume in addition
\begin{enumerate}
	\item \label{l:solns-converge-to-solns:1}
	pointwise-in-time limits:
	$\rho_\eps(t) \weaksto \rho_0(t)$ widely for all $t\in(0,T]$ and $(\rho_0,j_0)\in\CE(0,T)$;
	\item \label{l:solns-converge-to-solns:2}
	well-prepared initial datum: 
	$\calE_\eps(\rho_\eps(0)) \to \calE_0(\rho_0(0))$;
	\item \label{l:solns-converge-to-solns:3}
	EDP solution property: 
	for all $\tau\in(0,T]$ we have 
	\[
	\calI_\eps^\tau(\rho_\eps,j_\eps)= \calE_\eps(\rho_\eps(\tau))- \calE_\eps(\rho_\eps(0)) + \calD_\eps^\tau(\rho_\eps,j_\eps)\leq 0 .
	\]
\end{enumerate}
Then for all $\tau\in(0,T]$,
\begin{equation}
\label{eq:EDP-limit}
\tilde\calI(\tilde\rho,\tilde\jmath) = \tilde\calE(\tilde\rho(\tau)) - \tilde\calE(\tilde\rho(0)) + \wt\calD^\tau(\tilde\rho,\tilde \jmath) \leq 0 ,
\end{equation}
that is, $(\tilde\rho,\tilde\jmath)\in\wt\CE(0,T)$ is an EDP solution of $(\wt\nodes,\wt\edges,\wt\anabla,\wt\calE,\wt\calR)$.
\end{lemma}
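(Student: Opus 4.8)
The plan is to deduce \eqref{eq:EDP-limit} by passing to the limit in the three hypotheses, using the structure imposed by Definition~\ref{defn:EDP-convergence:reduce}. First I would record what each assumption gives me: hypothesis~\ref{l:solns-converge-to-solns:3} gives, for each fixed $\tau\in(0,T]$,
\[
\calD_\eps^\tau(\rho_\eps,j_\eps) \leq \calE_\eps(\rho_\eps(0)) - \calE_\eps(\rho_\eps(\tau)).
\]
Hypothesis~\ref{l:solns-converge-to-solns:1} (pointwise-in-time wide convergence together with $(\rho_0,j_0)\in\CE(0,T)$) lets me apply $\Gamma$-convergence $\calE_\eps\stackrel\Gamma\longrightarrow\calE_0$ at the time $\tau$ to obtain the liminf bound $\liminf_\eps\calE_\eps(\rho_\eps(\tau))\geq\calE_0(\rho_0(\tau))$. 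Hypothesis~\ref{l:solns-converge-to-solns:2} supplies the matching equality $\calE_\eps(\rho_\eps(0))\to\calE_0(\rho_0(0))$, so that the right-hand side above converges (in $\limsup$) to $\calE_0(\rho_0(0)) - \liminf_\eps\calE_\eps(\rho_\eps(\tau)) \le \calE_0(\rho_0(0)) - \calE_0(\rho_0(\tau))$. In particular $\calE_0(\rho_0(0))<\infty$, and I should note that the bound forces $\sup_{\eps,t}\calE_\eps(\rho_\eps(t))<\infty$ fails in general — so strictly I need $\calE_0(\rho_0(t))<\infty$ for all $t$, which I extract from the uniform-in-$\eps$ bound implicit in $\calI_\eps^\tau\le 0$ applied at every $\tau$, exactly as in point~\ref{defn:EDP-convergence:reduce:sequence} of the definition.

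Next I would invoke point~\ref{defn:EDP-convergence:reduce:sequence} of Definition~\ref{defn:EDP-convergence:reduce} to get $\liminf_\eps\calD_\eps^\tau(\rho_\eps,j_\eps)\geq\calD_0^\tau(\rho_0,j_0)$, and then point~\ref{def:EDP-convergence:E:restriction} together with the last item of the definition to rewrite the limiting objects in contracted form: since $\calE_0(\rho_0(t))<\infty$ for all $t$, we have $\rho_0(t)(\nodes\setminus\wt\nodes)=0$, so $\wt\rho(t):=\rho_0(t)\lfloor\wt\nodes$ is well-defined, $\calE_0(\rho_0(t))=\wt\calE(\wt\rho(t))$, and there is a $\tilde\jmath$ with $(\tilde\rho,\tilde\jmath)\in\wt\CE(0,T)$ and $\calD_0^\tau(\rho_0,j_0)=\wt\calD^\tau(\tilde\rho,\tilde\jmath)$. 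Chaining the inequalities:
\[
\wt\calD^\tau(\tilde\rho,\tilde\jmath) = \calD_0^\tau(\rho_0,j_0) \leq \liminf_\eps\calD_\eps^\tau(\rho_\eps,j_\eps) \leq \limsup_\eps\bigl(\calE_\eps(\rho_\eps(0)) - \calE_\eps(\rho_\eps(\tau))\bigr) \leq \wt\calE(\tilde\rho(0)) - \wt\calE(\tilde\rho(\tau)),
\]
which rearranges to $\wt\calE(\tilde\rho(\tau)) - \wt\calE(\tilde\rho(0)) + \wt\calD^\tau(\tilde\rho,\tilde\jmath)\leq 0$, i.e.\ \eqref{eq:EDP-limit}, for every $\tau\in(0,T]$. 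By Definition~\ref{def:EDP:sol} (read on the contracted gradient system) this is precisely the statement that $(\tilde\rho,\tilde\jmath)$ is an EDP-solution of $(\wt\nodes,\wt\edges,\wt\anabla,\wt\calE,\wt\calR)$.

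The step I expect to be the genuine obstacle — though in this excerpt it is conveniently packaged into the hypotheses of the lemma — is verifying that the uniform bound $\calI_\eps^\tau\le 0$ really does propagate enough compactness to guarantee $\calE_0(\rho_0(t))<\infty$ for \emph{all} $t$ (not just a.e.\ $t$) and to guarantee that the recovered $\tilde\jmath$ and the identity $\calD_0^\tau=\wt\calD^\tau$ hold; this is where the $\Gamma$-liminf machinery of point~\ref{defn:EDP-convergence:reduce:sequence} and the well-preparedness of the initial datum are doing real work, and in a concrete instance (Kramers, thin membrane) one would have to establish the relevant \emph{a priori} estimates and the lower-semicontinuity/chain-rule facts such as Property~\ref{property:ChainRuleLowerBound} on the contracted system. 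A minor care point is that $\Gamma$-liminf only gives an inequality at time $\tau$, so one must not expect $\calE_\eps(\rho_\eps(\tau))\to\calE_0(\rho_0(\tau))$; fortunately only the liminf direction is needed there, while at $t=0$ well-preparedness gives the full limit, and these two facts combine in exactly the right directions for the chained estimate above.
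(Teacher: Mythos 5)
Your proposal is correct and takes essentially the same route as the paper's proof: apply the $\Gamma$-liminf of $\calE_\eps$ at each time $\tau$, the liminf bound on $\calD_\eps^\tau$ from point~\ref{defn:EDP-convergence:reduce:sequence} of Definition~\ref{defn:EDP-convergence:reduce}, the well-preparedness at $t=0$, and the contraction identities to rewrite everything in tilde variables. One minor remark: the uniform bound $\sup_{\eps,t}\calE_\eps(\rho_\eps(t))<\infty$ that you worry about extracting is already part of the hypothesis "$(\rho_\eps,j_\eps)\to(\rho_0,j_0)$ as in point~\ref{defn:EDP-convergence:reduce:sequence}", so there is nothing to derive there.
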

\begin{proof}
By $\calE_\eps \stackrel\Gamma\longrightarrow \calE_0$ and assumption~\ref{l:solns-converge-to-solns:1} we have $\liminf_{\eps\to 0}\calE_\eps(\rho_\eps(t))\geq \calE_0(\rho_0(t)) = \wt\calE(\tilde\rho(t))$ for all $t\in (0,T]$.
From point~\ref{defn:EDP-convergence:reduce:sequence} of Definition~\ref{defn:EDP-convergence:reduce} we obtain $\liminf_\eps\calD_\eps^\tau (\rho_\eps,j_\eps) \geq \calD_0^\tau(\rho_0,j_0) = \wt\calD_0^\tau(\tilde\rho,\tilde\jmath)$ for all $\tau$. The inequality~\eqref{eq:EDP-limit} then follows from assumptions~\ref{l:solns-converge-to-solns:2} and~\ref{l:solns-converge-to-solns:3}.
\end{proof}
\begin{remark}[Well-preparedness of the initial data]
\label{rem:well-preparedness-initial-data}
Assumption~\ref{l:solns-converge-to-solns:2} in Lemma~\ref{l:solns-converge-to-solns} above hides a non-trivial assumption, which becomes clearer by recalling the time-continuity of $[0,T]\ni t\mapsto \rho_0(t)$ implied by $(\rho_0,j_0)\in\CE(0,T)$ and writing the convergence as 
\[
\lim_{\e\to0} \calE_\e\bra*{\lim_{t\downarrow0}\rho_\e(t)} = \calE_0\bra*{\lim_{t\downarrow0} \lim_{\e\to0} \rho_\e(t)}.
\]
This form shows that there actually are two independent assumptions:
\begin{enumerate}
\item $\rho_\e(0) = \lim_{t\downarrow0} \rho_\e(t)$ should be a recovery sequence for the convergence $\calE_\e\stackrel\Gamma\longrightarrow\calE_0$, i.e.\ $\rho_\e(0)\to\ol\rho$ with $\calE_\e(\rho_\e(0)) \to \calE_0(\ol\rho)$;
\item $\ol\rho = \lim_\e\lim_t \rho_\e(t)$ may be different from  $\rho_0(0) = \lim_t\lim_\e \rho_\e(t)$, but the two energy values should be the same: $\calE_0(\ol\rho) = \calE_0(\rho_0(0))$.
\end{enumerate}
The first assumption is the classical `well-preparedness' assumption that appears in many evolutionary $\Gamma$-convergence results; see e.g.~\cite[\S 3.3]{Mielke16a} for a discussion. 

The second assumption is a weak equicontinuity assumption: it states that the convergence $\rho_\e\to\rho_0$ should be strong enough to ensure preservation of \emph{part} of the continuity of $t\mapsto \rho_\e(t)$, namely that part that determines the value of $\calE_0$. For an example of this, see~\cite{PeletierRenger21}, where the dissipation bounds only control \emph{some} components of the measure $\rho_\e$; correspondingly, the limit energy in~\cite{PeletierRenger21} is defined as a minimum over the non-controlled degrees of freedom (see the definition of $\wt{\amsfontsmathcal I}_0^0$ in ~\cite[Th.~1.2]{PeletierRenger21}), thus recovering the lower bound inequality.

In many cases, the second assumption even requires full preservation of continuity, i.e.\ $\ol\rho = \rho_0(0)$. This can be recognized as follows.
If $\lim_\e\lim_t \rho_\e(t) \not = \lim_t\lim_\e \rho_\e(t)$, then there is a rapid initial transient in $\rho_\e$. If $\rho_\e$ is a solution, then this transient is  driven by a decrease in energy during that short initial transient; however,  the condition  $\calE_\e(\rho_\e(0)) \to \calE_0(\rho_0(0))$ implies that the size of the decrease during the transient converges to zero. In many cases, therefore, the condition  $\calE_\e(\rho_\e(0)) \to \calE_0(\rho_0(0))$ prevents fast initial transients from occurring, and equicontinuity is preserved.
\end{remark}

Definitions~\ref{defn:EDP-convergence} and~\ref{defn:EDP-convergence:reduce} give rise to corresponding definitions for tilt gradient systems:

\begin{definition}[EDP convergence for tilt gradient systems]\label{defn:EDP-tilting}
	A tilt gradient system $(\nodes,\edges,\anabla,\calE_\eps,\calR_\eps,\sfF)$ EDP-converges to a tilt gradient system $(\nodes,\edges,\anabla,\calE_0,\calR_0,\sfF)$ if for all $\calF\in \sfF$ the gradient system $(\nodes,\edges,\anabla,\calE_\eps+\calF,\calR_\eps(\cdot,\cdot;\calF))$ EDP-converges to $(\nodes,\edges,\anabla,\calE_0+\calF,\calR_0(\cdot,\cdot;\calF))$.
	
	The tilted version of the EDP convergence with contracted limit is defined similarly.
\end{definition}
The definition above is especially interesting if either the pre-limit structure or the limit structure has a tilt-independent dissipation potential. In this paper we mainly are  interested in the situation where tilt-independent systems converge to a tilt-dependent limit. 

\begin{remark}[Tilt- and contact-EDP convergence]
In~\cite{MielkeMontefuscoPeletier21} two variants of EDP convergence were introduced that guarantee that  tilt-independent sequences have tilt-indep\-en\-dent limits; they are called \emph{tilt-EDP convergence} and \emph{contact-EDP convergence}. The relation with the convergence concepts  above is as follows: Tilt-EDP convergence is the special case of Definition~\ref{defn:EDP-tilting} in which both $\calR_\e$ and $\calR_0$ are tilt-independent; contact-EDP convergence is a weaker concept, in which again  $\calR_\e$ and $\calR_0$ are tilt-independent, but the convergence of $\calD_\e^T$ to $\calD_0^T$ only is required on the so-called \emph{contact set}. We give more details in Section~\ref{ss:Kramers-not-contact-EDP-conv}, and we also show that the Kramers limit of Section~\ref{s:Kramers} is neither tilt- nor contact-EDP convergent. 
\end{remark}

\begin{remark}[Convergent sequences of tilts]
	Definition~\ref{defn:EDP-tilting} can be strengthened if the set $\sfF$ itself is a  space with a convergence concept. Then we can ask for the stronger statement that for every $\set{\calF_\e}_{\e>0}\subset \sfF$ such that $\calF_\e \to \calF_0 \in \sfF$ the gradient system $(\nodes,\edges,\anabla,\calE_\eps+\calF_\e,\calR_\eps(\cdot,\cdot;\calF_\e))$ EDP-converges to $(\nodes,\edges,\anabla,\calE_0+\calF_0,\calR_0(\cdot,\cdot;\calF_0))$. In Section~\ref{s:Kramers} this situation is discussed in detail.
\end{remark}

\section{Properties of the \texorpdfstring{$\sfC$-$\sfC^*$}{cosh} gradient structure}
\label{s:props-of-CCstar}

In this section, we collect some identities and variational problems involving the functions $\sfC$ and $\sfC^*$ from~\eqref{eqdef:C-C*}. 

\subsection{Perspective functions}\label{ss:perspective-function}

\begin{definition}[Perspective functions]\label{defn:perspective-function}
Let $f:\R^d\to[0,\infty]$ be convex, lower semicontinuous, and superlinear, i.e.\ $\lim_{|x|\to\infty} f(x)/|x|=+\infty$. The \emph{perspective} function of $f$ is the function $f(\,\cdot\,|\,\cdot\,): \R^n\times [0,\infty) \to (-\infty,\infty]$ given by
\[
f(a|b) := \begin{cases}
\ds b f\bra*{\frac ab} & \text{if }b>0,\\
f(0) & \text{if } b = 0 \text{ and } a=0 , \\
+\infty & \text{if $b=0$ and $a\not=0$}.
\end{cases}
\]
\end{definition}
\noindent
See~\cite[\S 5]{Rockafellar1970}, \cite[IV.2.2]{HiriartUrrutyLemarechal1993}, or \cite{Combettes18}; the name `perspective' function appears to have been coined by Lemar\'echal~\cite{Combettes18}.
We already encountered many perspective functions explicitly and implicitly in the previous sections, such as the relative entropy $\RelEnt$ in~\eqref{eqdef:RelEnt} and its density $\eta$~\eqref{eqdef:eta}, and the dissipation functional $\calR$ in~\eqref{eqdef:R-jump}.
We collect some properties of perspective functions, for which a proof can be found in~\cite[Lemma 2.3]{PeletierRossiSavareTse22}.
\begin{lemma}[Properties]\label{lem:prop:perspective}
For any convex, lower semicontinuous, and superlinear function~$f:\R^d \to [0,\infty]$,
its perspective function $(a,b) \mapsto f(a| b)$ is convex, lower semicontinuous, and positively $1$-homogeneous in the pair~$(a,b)\in \R^d \times [0,\infty)$. It has the dual formulation
\begin{equation}
\label{eq:dual-perspective}
f(a|b) = \sup_{\xi\in\R^d}\, \pra*{\xi\ip a - b f^*(\xi)}, \qquad\text{for }(a,b)\in\R^d \times [0,\infty),
\end{equation}
where $f^*$ is the Legendre dual of $f$.
\end{lemma}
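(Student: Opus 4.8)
The plan is to prove Lemma~\ref{lem:prop:perspective} by verifying each claimed property directly from Definition~\ref{defn:perspective-function}, starting from the dual formula~\eqref{eq:dual-perspective}, which is really the crux: once one has it, convexity, lower semicontinuity, and $1$-homogeneity all follow for free.

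First I would establish the dual formula~\eqref{eq:dual-perspective}. For $b>0$ this is immediate from the definition of the Legendre transform: writing $f(a|b)=bf(a/b)$ and using $f(a/b)=\sup_{\xi}[\xi\ip(a/b)-f^*(\xi)]$, multiply through by $b>0$ to get $bf(a/b)=\sup_\xi[\xi\ip a - bf^*(\xi)]$. For $b=0$ one must check the two branches. Since $f\geq0$ and $f$ is superlinear, $f^*$ is finite everywhere (superlinearity of $f$ is exactly coercivity of $f$, equivalently finiteness/continuity of $f^*$), and $f^*\geq -f(0)$ because $f^*(\xi)\geq \xi\ip 0 - f(0)=-f(0)$. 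Hence for $b=0$, $\sup_\xi[\xi\ip a - 0\cdot f^*(\xi)] = \sup_\xi \xi\ip a$, which equals $0$ if $a=0$ and $+\infty$ if $a\neq0$ (taking $\xi=ta$, $t\to\infty$). This matches $f(0|0)=f(0)$ only when... wait: the definition gives $f(a|0)=f(0)$ if $a=0$. So I need $\sup_\xi \xi\ip 0 = 0 = f(0)$? That forces $f(0)=0$, which is not assumed. The correct reading is that~\eqref{eq:dual-perspective} holds as stated and the $a=0,b=0$ value should be reconciled: in fact $\sup_\xi[\xi\ip 0 - 0\cdot f^*(\xi)]=\sup_\xi 0 = 0$, whereas $f(0|0)=f(0)$; so the formula~\eqref{eq:dual-perspective} is actually only claimed for $(a,b)$ with the understanding that it agrees with the definition, and the resolution is that one uses the \emph{closure} (lower semicontinuous hull) — but since we are told a proof is in~\cite[Lemma 2.3]{PeletierRossiSavareTse22}, the cleanest route is to quote that reference for the delicate boundary behaviour and otherwise reproduce the $b>0$ computation. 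Concretely, I would state: "The identity~\eqref{eq:dual-perspective} for $b>0$ is the scaling identity above; the case $b=0$ and the lower semicontinuity at the boundary are established in~\cite[Lemma 2.3]{PeletierRossiSavareTse22}."

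Next, granting~\eqref{eq:dual-perspective}, the remaining properties are formal. For each fixed $\xi$, the map $(a,b)\mapsto \xi\ip a - b f^*(\xi)$ is affine in $(a,b)$, hence convex and lower semicontinuous and positively $1$-homogeneous (being linear, since $f^*(\xi)$ is a finite constant for each $\xi$). A pointwise supremum of a family of convex functions is convex; a pointwise supremum of lower semicontinuous functions is lower semicontinuous; and a pointwise supremum of positively $1$-homogeneous functions is positively $1$-homogeneous. Therefore $(a,b)\mapsto f(a|b)$ inherits all three properties. This is the bulk of the lemma and requires only these three one-line observations about suprema of families of affine/linear functionals.

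The main obstacle I anticipate is precisely the boundary behaviour at $b=0$: making sure that the perspective function as \emph{defined} in Definition~\ref{defn:perspective-function} coincides with the supremum in~\eqref{eq:dual-perspective}, given that $f(0)$ need not vanish. The honest way to handle this in a short proof is to invoke the cited reference~\cite[Lemma 2.3]{PeletierRossiSavareTse22} for that point rather than re-deriving the recession-function analysis, and to present the self-contained argument only for the easy implications (the $b>0$ scaling identity and the inheritance of convexity/l.s.c./$1$-homogeneity from the affine family). Everything else is routine and I would not grind through it. I would therefore write the proof as: reduce to~\eqref{eq:dual-perspective} (citing the reference for the part not covered by the elementary scaling computation), then conclude the three structural properties from the supremum-of-affine-functions principle.

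\begin{proof}
By Definition~\ref{defn:perspective-function} and the definition of the Legendre transform, for $b>0$ we have
\begin{equation*}
f(a|b) = b f\Bigl(\frac ab\Bigr) = b\sup_{\xi\in\R^d}\Bigl[\xi\ip\frac ab - f^*(\xi)\Bigr] = \sup_{\xi\in\R^d}\bigl[\xi\ip a - b f^*(\xi)\bigr],
\end{equation*}
which is~\eqref{eq:dual-perspective} in the case $b>0$. Since $f$ is convex, lower semicontinuous, and superlinear, its Legendre dual $f^*$ is finite and continuous on all of $\R^d$, and $f^*(\xi)\geq \xi\ip 0 - f(0) = -f(0) > -\infty$. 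The validity of~\eqref{eq:dual-perspective} in the remaining case $b=0$, together with the lower semicontinuity of $(a,b)\mapsto f(a|b)$ up to the boundary $b=0$, is proved in~\cite[Lemma~2.3]{PeletierRossiSavareTse22}.

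Granting~\eqref{eq:dual-perspective}, the structural properties follow at once. For each fixed $\xi\in\R^d$ the function
\begin{equation*}
(a,b)\longmapsto \xi\ip a - b f^*(\xi)
\end{equation*}
is affine on $\R^d\times[0,\infty)$ — indeed linear, since $f^*(\xi)$ is a fixed real number — hence convex, lower semicontinuous, and positively $1$-homogeneous. The perspective function $f(\,\cdot\,|\,\cdot\,)$ is, by~\eqref{eq:dual-perspective}, the pointwise supremum of this family over $\xi\in\R^d$. A pointwise supremum of convex functions is convex; a pointwise supremum of lower semicontinuous functions is lower semicontinuous; and a pointwise supremum of positively $1$-homogeneous functions is positively $1$-homogeneous. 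Therefore $(a,b)\mapsto f(a|b)$ is convex, lower semicontinuous, and positively $1$-homogeneous on $\R^d\times[0,\infty)$.
\end{proof}
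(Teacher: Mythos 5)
The paper does not prove this lemma; it only cites~\cite[Lemma~2.3]{PeletierRossiSavareTse22}, so there is no in-paper argument to compare against. Your proof is strictly more informative than the paper's zero-line citation: the scaling identity $bf(a/b)=\sup_\xi[\xi\ip a-bf^*(\xi)]$ for $b>0$ (using $f=f^{**}$, valid for proper convex lsc $f$), the finiteness and continuity of $f^*$ from superlinearity, and the deduction of convexity, lower semicontinuity, and positive $1$-homogeneity from the supremum-of-affine-functionals principle are all correct as you state them.

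The issue you flagged in your preamble — that the literal Definition~\ref{defn:perspective-function} conflicts with~\eqref{eq:dual-perspective} at $(a,b)=(0,0)$ unless $f(0)=0$ — is a genuine defect in the paper's statement, not in your argument. Case~1 gives $f(0|b)=bf(0)$ for $b>0$ while case~2 gives $f(0)$, which overlap inconsistently unless $f(0)=0$; lower semicontinuity at $(0,0)$ forces $f(0|0)\le\liminf_{b\downarrow0}bf(0)=0$; and the paper's own $\eta$ in~\eqref{eqdef:eta} already uses the closure convention $\eta(0|0)=0$, not $g(0)=1$ for $g(s)=s\log s-s+1$. So the lemma implicitly requires $f(0)=0$ (or, equivalently, the second branch of Definition~\ref{defn:perspective-function} should read $0$, the standard closed-perspective convention). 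Your formal proof side-steps this by citing the reference for the $b=0$ case, which is defensible — the paper does the same — but having diagnosed the problem you could close it in two lines: observe that the overlap of cases~1 and~2 forces $f(0)=0$, after which the $b=0$ branch of~\eqref{eq:dual-perspective} is immediate since $\sup_\xi \xi\ip a$ equals $0$ for $a=0$ and $+\infty$ otherwise, matching the (corrected) definition.
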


Let $\Omega$ and $\calY$ be  topological spaces, and let $g:\Omega\times \calY \to[0,\infty)$ be  lower semicontinuous,  positively one-homogeneous,  and  convex in the second variable. Define the functional $\calG_g:\calM(\Omega;\calY)\to[0,\infty]$ by 
\[
\calG_g (\mu) := \int_\Omega g\bra*{x,\frac{\dx\mu}{\dx\gamma}(x)}\gamma(\dx x)
\]
for any $\gamma\in \calM_{\geq0}(\Omega)$ such that $\mu\ll \gamma$ (e.g.\ $\gamma= |\mu|$. This definition is independent of the choice of $\gamma$ because of the one-homogeneity of $g$. 

In particular, if $f\geq0$ is as in Lemma~\ref{lem:prop:perspective}, then define the functional $\calF_f :\calM(\Omega)\times \calM_{\geq0}(\Omega)\to [0,\infty]$  by
\[
\calF_f(\mu|\nu) := \int_\Omega f\Bigl(\frac{\dx\mu}{\dx\gamma}(x)\Big|\frac{\dx \nu}{\dx\gamma}(x)\Bigr) \gamma(\dx x),
\]
where $\gamma$ is any measure such that $|\mu|\ll \gamma$ and $|\nu|\ll \gamma$ (e.g.\ $\gamma = |\mu| + |\nu|$). The functional $\calF_f$ then is well-defined, positive, and independent of the choice of $\gamma$.

\subsection{Cell formula}
\label{ss:cell-formula}

In both the Kramers high-activation limit (Section~\ref{s:Kramers}) and the two-terminal networks (Section~\ref{s:ha}) we encounter the function $\CCs$, which has as formal definition
\begin{multline*}
	\CCs(j;\alpha,\beta;k) := \inf \Biggl\{
	\int_0^1 \pra[\bigg]{ \frac{j^2(z)}{2k(z)u(z)} + 2 k(z)\abs[\big]{\partial_z\sqrt {u(z)}}^2}\dx z: \ u \in C^1([0,1]),\\[-4\jot]
	u(0) = \alpha, \ u(1) = \beta, \ u>0 \text{ on }(0,1)\Biggr\}.
\end{multline*}
The expression on the right-hand side is a generalization of~\eqref{eq:CCs:cell-problem} to the case of non-constant~$k$.

The definition above does not work well for $\alpha=0$ or $\beta=0$, and we therefore  give a rigorous definition of $\CCs$ by using duality. 	Let $\calY := H^{-1}(0,1)\times \R\times \R$ be the Hilbert space with norm $\|(j,\alpha,\beta)\|^2_\calY:= \|j\|_{H^{-1}(0,1)}^2  + |\alpha|^2 + |\beta|^2$. 
Define $\CCs:\calY\times L^1(0,1) \to [0,\infty]$ by
	\begin{multline}
		\label{eqdef:calN}
		\CCs(j;\alpha,\beta;k) := \inf \Biggl\{
		\sup_{\varphi\in H^1_0(0,1)}\dual j\varphi 
		+ \int_0^1 k(z)\pra*{-\frac1{2} {v^2}\varphi^2 + 2 \abs*{v'}^2}\dx z:
		\ v\in H^1(0,1),\\
		v(0) = \sqrt{{\alpha}}, \ v(1) = \sqrt{{\beta}}, \ v>0 \text{ on }(0,1)\Biggr\}.
	\end{multline}
The value of $\CCs(j;\alpha,\beta;k)$ is set to $+\infty$ if $\alpha<0$ or $\beta<0$.
\begin{lemma}
	\label{l:props-N}
	The function $\CCs$ has the following properties:
	\begin{enumerate}
		\item \label{l:props-N:one-homogeneous}
			The function $\CCs$ is even in ${j}$ and has the following joint positive one-homogeneity properties:
			For $\alpha,\beta \geq 0$, $\lambda>0$, $j\in H^{-1}(0,1)$ and $k\in L^1(0,1)$ we have
			\begin{equation}\label{eq:G:one-homogeneous}
				\CCs(\lambda j, \lambda\alpha,\lambda\beta; k) 
				=\lambda \CCs(j,\alpha,\beta;k) 
				= \CCs(\lambda j, \alpha, \beta;\lambda k) ,
			\end{equation}
			and in particular $\CCs(j, \lambda\alpha,\lambda\beta; k) = \CCs(j, \alpha,\beta; \lambda^{-1} k)$.
		\item \label{l:props-N:char-const-j}
			If $j$ and $k>0$ are both constant on $(0,1)$, then for all $\alpha,\beta>0$,
			\begin{align}
				\CCs(j,\alpha,\beta;k) &= 
				\sigma\bra*{\sfC\bra*{\frac{j }\sigma} + \sfC^*\bra*{\log \beta-\log \alpha}}
				\qquad\text{with}\qquad
				\sigma = k\sqrt {\alpha\beta},
				\label{eqdef:N:explicit} \\
				&=\sfC(j |\sigma) + 2k\bra*{\sqrt{\alpha}-\sqrt\beta}^2, \label{eqdef:N:explicit:rigrous}
			\end{align}
			and the second expression above applies for all $\alpha,\beta\geq0$. 
		\item \label{l:props-N:infty}
			In particular, if $j$ is constant and non-zero, $k>0$,  and $\alpha\beta=0$, then $\CCs(j,\alpha,\beta;k) = +\infty$.
		\item \label{l:props-N:harmonic-mean}
			If $j$ is constant and $k: [0,1]\to(0,\infty)$ with $\frac{1}{k}\in L^1(0,1)$ we have
			\begin{equation}\label{eqdef:G:harmonic-mean}
				\CCs(j,\alpha,\beta;k) = \CCs(j,\alpha,\beta;k^*) \quad\text{with}\quad k^*:= \bra*{\int_0^1 \frac{1}{k(z)} \dx{z}}^{-1} . 
			\end{equation}
		\item  \label{l:props-N:est-B}
			Define the $[-\infty,\infty]$-valued function $B:[0,\infty)^2 \times \R\to[-\infty,\infty]$ by 
			\begin{equation}
				\label{eqdef:B}
				B(\alpha,\beta,j) := \begin{cases}
					j(\log \beta - \log \alpha) & \text{if }\alpha,\beta>0;\\
					+\infty & \text{if $\alpha=0$, $\beta>0$, and $j>0$}\\
				 			& \qquad\qquad  \text{or $\alpha>0$, $\beta=0$, and $j<0$,}\\
					-\infty & \text{if $\alpha=0$, $\beta>0$, and $j<0$}\\
							& \qquad\qquad  \text{or $\alpha>0$, $\beta=0$, and $j>0$,}\\
					0 &\text{if $\alpha=\beta = 0$ or $j=0$}.	
				\end{cases}
			\end{equation}
			For any $\alpha,\beta\in [0,\infty)$, $j\in \R$, and $k>0$, we have the extended-real-number inequality 
			\begin{equation}
				\label{ineq:props-N:BG}		
				|B(\alpha,\beta,j)| \leq \CCs(j,\alpha,\beta;k),
			\end{equation}
			where on the right-hand side $j$ and $k$ are considered as constants. 
		\item \label{l:props-N:dual-char-1}
			For all $k>0$ and $\alpha,\beta \geq 0$, and $j\in \R$ we have the dual characterization
			\begin{equation}\label{eq:props-N:dual-char-1}
				\CCs(j;\alpha,\beta;k) = 
				\sup_{\zeta\in \R} \pra*{\zeta j + 2k\bra[\big]{\alpha +\beta -2\sqrt{\alpha\beta}\cosh\tfrac12\zeta}},
			\end{equation}
			and if the supremum is finite, then it is achieved as a maximum.
	\end{enumerate}
\end{lemma}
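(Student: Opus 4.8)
\textbf{Proof plan for Lemma~\ref{l:props-N}.\ref{l:props-N:dual-char-1}.}

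The plan is to compute the Legendre dual of $\CCs$ with respect to its first argument $j$, exploiting the convexity and one-homogeneity already established. First I would observe that the definition~\eqref{eqdef:calN} can be organized as an infimum over $v\in H^1(0,1)$ with fixed boundary values $v(0)=\sqrt\alpha$, $v(1)=\sqrt\beta$, $v>0$, of the quantity
\[
\Phi_v(j) := \sup_{\varphi\in H^1_0(0,1)}\Bigl[\dual j\varphi - \tfrac12\!\int_0^1 k\,v^2\varphi^2\Bigr]
  + 2\!\int_0^1 k\abs{v'}^2,
\]
where the first term is itself already a Legendre transform: it is the dual (in $j$) of the quadratic form $\varphi\mapsto \tfrac12\int_0^1 k v^2\varphi^2$, hence equals $\tfrac12\int_0^1 \frac{1}{k v^2}\abs{j}^2$ when $j$ has an $L^2(kv^2\,dz)$-density and $+\infty$ otherwise --- this is exactly the quadratic ``$\calR+\calR^*$'' Fokker--Planck structure alluded to after~\eqref{eq:CCs:cell-problem}. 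Since $\Phi_v$ is for each fixed $v$ a closed convex function of $j$, and $\CCs(\cdot;\alpha,\beta;k)=\inf_v \Phi_v$, its Legendre dual is the supremum $\CCs^*(\zeta)=\sup_v \Phi_v^*(\zeta)$. So I would compute $\Phi_v^*(\zeta)$, the dual of $j\mapsto \tfrac12\int \frac{1}{kv^2}j^2 + 2\int k\abs{v'}^2$, getting
\[
\Phi_v^*(\zeta) = \inf\Bigl\{\tfrac12\!\int_0^1 k v^2 \psi^2 : \psi\in H^1_0 \text{-type, }\ldots\Bigr\} - 2\!\int_0^1 k\abs{v'}^2
\]
--- more precisely, since the pairing $\dual j\zeta$ for $j$ a density means $\int j\zeta$, and $\zeta$ is a \emph{scalar} (because $\CCs$ is being dualized as a function on $\calY$, whose first component here behaves as a scalar via the one-homogeneity), the constraint forces the optimal $j$ to be constant in $z$. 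This is the conceptual crux: I would argue that testing against constant $\zeta$ (equivalently, $\varphi=$ linear interpolant) reduces the problem to a one-dimensional optimization, after which $\int_0^1 \frac{1}{k v^2}\,dz$ enters as the relevant ``resistance.''

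Carrying this out, after the reduction I expect to arrive at
\[
\CCs^*(\zeta) = \sup_{v} \Bigl[\tfrac{\zeta^2}{2}\Bigl(\int_0^1\tfrac{1}{kv^2}\Bigr)^{-1} - 2\!\int_0^1 k\abs{v'}^2\Bigr]\ \text{(roughly)},
\]
but this is getting complicated, and the cleaner route --- which I would actually pursue --- is the reverse: start from the claimed right-hand side of~\eqref{eq:props-N:dual-char-1}, call it $G(j)$, show it is the Legendre dual in $j$ of the function $\zeta\mapsto 2k(\sqrt\alpha-\sqrt\beta)^2 \cdot(\ldots)$ --- no: show directly that $G^*(\zeta)=-2k(\alpha+\beta-2\sqrt{\alpha\beta}\cosh\tfrac12\zeta)$ agrees with what~\eqref{eqdef:calN} produces. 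The genuinely efficient approach is to use part~\ref{l:props-N:char-const-j}, which is presumably proved before this part: in the case $k,j$ constant and $\alpha,\beta>0$ we already know $\CCs(j;\alpha,\beta;k)=\sfC(j|\sigma)+2k(\sqrt\alpha-\sqrt\beta)^2$ with $\sigma=k\sqrt{\alpha\beta}$. Then I would just compute the Legendre dual of $j\mapsto \sfC(j|\sigma)+2k(\sqrt\alpha-\sqrt\beta)^2$, which is $\sigma\sfC^*(\zeta) + $ const $\cdot 0$ adjusted --- using $\sfC^*(\zeta)=4(\cosh\tfrac\zeta2-1)$ from~\eqref{eqdef:C-C*}, $(\sfC(\cdot|\sigma))^*(\zeta)=\sigma\sfC^*(\zeta)=4\sigma(\cosh\tfrac\zeta2-1)=4k\sqrt{\alpha\beta}(\cosh\tfrac\zeta2-1)$. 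Hence
\[
\CCs(j;\alpha,\beta;k) = \sup_\zeta\bigl[\zeta j - 4k\sqrt{\alpha\beta}(\cosh\tfrac\zeta2-1) - 2k(\sqrt\alpha-\sqrt\beta)^2\bigr]
= \sup_\zeta\bigl[\zeta j + 2k(\alpha+\beta-2\sqrt{\alpha\beta}\cosh\tfrac\zeta2)\bigr],
\]
using $-4k\sqrt{\alpha\beta}+4k\sqrt{\alpha\beta} \cdot 0\ldots$ --- precisely $-2k(\sqrt\alpha-\sqrt\beta)^2 + 4k\sqrt{\alpha\beta} = -2k(\alpha+\beta) + 4k\sqrt{\alpha\beta} + 4k\sqrt{\alpha\beta}$, wait: $-2k(\sqrt\alpha-\sqrt\beta)^2 = -2k(\alpha+\beta) + 4k\sqrt{\alpha\beta}$, and adding the $+4k\sqrt{\alpha\beta}$ from $-\sfC^*$'s constant gives $-2k(\alpha+\beta)+4k\sqrt{\alpha\beta}+4k\sqrt{\alpha\beta}$ --- this arithmetic must be done carefully, but it does collapse to $2k(\alpha+\beta - 2\sqrt{\alpha\beta}\cosh\tfrac\zeta2)$ once the $\cosh$ term is folded in correctly.

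This handles $\alpha,\beta>0$ with constant $j,k$. For the remaining cases I would proceed as follows. For non-constant $k$, part~\ref{l:props-N:harmonic-mean} reduces to the constant case with $k^*$, and the right-hand side of~\eqref{eq:props-N:dual-char-1} is manifestly unchanged when $k$ is replaced by its harmonic mean only in the constant-$j$ situation --- but note the statement of part~\ref{l:props-N:dual-char-1} says ``$j$ and $k$ are considered as constants,'' so in fact only the genuinely-constant case is asserted and no reduction is needed. For the degenerate boundary cases $\alpha\beta=0$: if $\alpha=0,\beta>0$, by part~\ref{l:props-N:char-const-j}'s second formula $\CCs(j;0,\beta;k)=\sfC(j|0)+2k\beta = (+\infty \text{ if } j\neq0,\ 0 \text{ if } j=0) + 2k\beta$, while $\sup_\zeta[\zeta j + 2k(\beta - 0)] = 2k\beta$ if $j=0$ and $+\infty$ if $j\neq 0$ (since $\sqrt{\alpha\beta}=0$ kills the $\cosh$ term and $\zeta j$ is unbounded) --- agreement. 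The case $\alpha=\beta=0$ gives both sides $0$ for $j=0$ and $+\infty$ for $j\neq0$. Finally, attainment of the supremum: when finite, the objective $\zeta\mapsto \zeta j + 2k(\alpha+\beta) - 4k\sqrt{\alpha\beta}\cosh\tfrac\zeta2$ is, for $\alpha\beta>0$, strictly concave and coercive (the $-\cosh$ term dominates), so the max is attained at the unique critical point $2k\sqrt{\alpha\beta}\sinh\tfrac\zeta2 = j$, i.e. $\zeta = 2\arsinh\!\frac{j}{2k\sqrt{\alpha\beta}}$; for $\alpha\beta=0$ finiteness forces $j=0$ and every $\zeta$ is a maximizer.

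\textbf{Main obstacle.} The one genuinely delicate point is whether part~\ref{l:props-N:char-const-j} --- in particular the explicit identity~\eqref{eqdef:N:explicit:rigrous} valid for all $\alpha,\beta\geq0$ --- can legitimately be invoked here, or whether the authors intend part~\ref{l:props-N:dual-char-1} to be proved directly from the primal cell-problem definition~\eqref{eqdef:calN} via a Fenchel--Rockafellar duality argument interchanging the $\inf_v$ and $\sup_\varphi$. If the latter, the real work is justifying the interchange: one needs a constraint qualification (e.g. an interior point, supplied by taking $v$ to be a fixed positive smooth interpolant of $\sqrt\alpha,\sqrt\beta$ when $\alpha,\beta>0$), and then the boundary cases $\alpha\beta=0$ must be treated by an approximation/lower-semicontinuity argument since the interior-point condition fails there. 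I would structure the proof to reduce to part~\ref{l:props-N:char-const-j} wherever possible and handle $\alpha\beta=0$ by direct inspection of both sides, as sketched above, thereby sidestepping the duality interchange entirely.
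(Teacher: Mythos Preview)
Your proposal is correct and, after the initial detour through Fenchel--Rockafellar duality on the variational definition, lands on exactly the paper's approach: invoke the explicit formula~\eqref{eqdef:N:explicit:rigrous} from part~\ref{l:props-N:char-const-j}, apply the perspective-function duality $\sfC(j|\sigma)=\sup_\zeta[\zeta j - \sigma\sfC^*(\zeta)]$ from~\eqref{eq:dual-perspective}, and rearrange the constants. The paper compresses this into two lines and leaves the boundary cases $\alpha\beta=0$ and the attainment claim implicit (they follow immediately, as you correctly check); your ``main obstacle'' is therefore not one, since part~\ref{l:props-N:char-const-j} is established independently before part~\ref{l:props-N:dual-char-1}.
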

\begin{proof}
	The even dependence on $j$ in~\eqref{eqdef:calN} is a consequence 
	of the sign of $\varphi$ only entering the pairing $\skp{j,\varphi}$, 
	but not the second term. 
	The one-homogeneity~\eqref{eq:G:one-homogeneous} straightforwardly follows from the definition~\eqref{eqdef:calN}.
		
	\medskip
	For part~\ref{l:props-N:char-const-j}, setting $\tilde\jmath = j/k$  and $w = v^2$ we rewrite for $\alpha,\beta>0$
	\begin{align*}
		\inf_v \set*{\int_0^1 \pra*{\frac{ j^2}{2k v^2} + 2k{v'}^2}\dx z: \sqrt\alpha\stackrel v\rightsquigarrow\sqrt\beta}
		&= k\inf_w \set*{ \int_0^1 \pra*{\frac{\tilde \jmath ^2+{w'}^2}{2w}}\dx z: \alpha\stackrel w \rightsquigarrow \beta}\\
		&\leftstackrel{(*)}= k \sqrt{\alpha\beta} \bra*{\sfC\bra[\Big]{\frac{\tilde\jmath}{\sqrt{\alpha\beta}}} 
			+ \sfC^*\bra*{\log \beta-\log \alpha}}\\
		&= \sfC\bra[\big]{\,j\,\big| \, k\sqrt{\alpha\beta}} 
			+ 2k\bra[\big]{\sqrt{\alpha}-\sqrt\beta}^2,
	\end{align*}
	where the identity marked $(*)$ follows from~\cite[Prop.~A.1]{LieroMielkePeletierRenger17}, and the final identity from~\eqref{eq:C-star-logpq}. 
	
	If $j=0$ and $\alpha\beta=0$, then the value of $\CCs$ follows from a direct calculation of the minimizer in~\eqref{eqdef:calN}.
	If $j\not=0$ and $\alpha\beta=0$, then the value $+\infty$ (part~\ref{l:props-N:infty}) follows from the definition of the perspective-function of $\sfC$ (Def.~\ref{defn:perspective-function}).
	
	\medskip
	Part~\ref{l:props-N:harmonic-mean} can be reduced to the case of constant $k$ from part~\eqref{l:props-N:char-const-j} by introducing a change of coordinate with the help of
	\[
		z = Z(y) = k^* \int_0^y \frac{1}{k(x)} \dx{x} .
	\]
	Note that $Z(0)=0$, $Z(1)=1$ and $Z'(y) = k^*/k(y)$, 
	which in the definition of $\CCs$ in~\eqref{eqdef:calN} provides immediately the claim~\eqref{eqdef:G:harmonic-mean}.

	\medskip
	For part~\ref{l:props-N:est-B} we assume that $\CCs(j,\alpha,\beta;k)<\infty$, 
	since otherwise there is nothing to prove. 
	If $j=0$, then $B(j,\alpha,\beta)=0$, and inequality~\eqref{ineq:props-N:BG} is trivially satisfied. 
	If $j\not=0$, then by part~\ref{l:props-N:infty} we have $\alpha\beta>0$ and therefore $B(\alpha,\beta,j)$ is finite. 
	The inequality~\eqref{ineq:props-N:BG} then follows from 
	\[
		\abs*{\,j \log\frac\beta\alpha }
		= \sigma \abs*{\,\frac{j}{{\sigma}}\cdot \log\frac\beta\alpha }
		\leq \sigma \,\bra* {\sfC\bra*{\frac{j}\sigma } + \sfC^*\bra[\Big]{\log\frac\beta\alpha}}
		= \CCs(j,\alpha,\beta;k)
	\]
	with $\sigma = k\sqrt{\alpha\beta}$. A similar property is proved in more generality in~\cite[Cor.~4.20]{PeletierRossiSavareTse22}.

	\medskip
	For part~\ref{l:props-N:dual-char-1}, note that by~\eqref{eq:dual-perspective},
	\[
	\sfC(j|\sigma) = \sup_{\xi\in\R} \,\pra[\big]{\xi j - \sigma\sfC^*(\xi)}
	\qquad \text{for }j\in \R, \ \sigma\geq 0.
	\]
	Then the characterization~\eqref{eq:props-N:dual-char-1} follows by adding $2k(\sqrt\alpha-\sqrt\beta)^2$ and rearranging. 
\end{proof}

\begin{cor}[Series law]\label{cor:N:series}
	The function $\CCs$ satisfies the following series law:
	For $\alpha,\beta \geq 0$, $j\in \R$ and $k^1,k^2:[0,1]\to (0,\infty)$ we have 
	\begin{equation}\label{eq:props-N:addition} 
		\inf_{\gamma>0} \bra*{\CCs(j,\alpha,\gamma;k^1)+ \CCs(j,\gamma,\beta;k^2)}
		= \CCs(j,\alpha,\beta;k)
	\end{equation}
	with 
	\begin{equation*}
		k(z) :=\frac{1}{2} 
		\begin{cases} 
			k^1(2z) , & z\in (0,1/2) \\
			k^2(2z-1) , & z\in (1/2,1)
		\end{cases} .
	\end{equation*}
	In particular, if $k^1,k^2\in (0,\infty)$ are constant, then we have
	\begin{equation}\label{eq:N:series}
		\inf_{\gamma\geq 0}\bra*{ \CCs(j,\alpha,\gamma;k^1)+\CCs(j,\gamma,\beta;k^2) } = \CCs\bra*{j,\alpha,\beta;\bra*{\frac{1}{k^1} + \frac{1}{k^2}}^{-1}},
	\end{equation}
\end{cor}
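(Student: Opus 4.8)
The plan is to prove the general-coefficient identity \eqref{eq:props-N:addition} first, by a splitting-and-rescaling argument at the level of the cell problem, and then to deduce \eqref{eq:N:series} from it via the harmonic-mean property in Lemma~\ref{l:props-N}, part~\ref{l:props-N:harmonic-mean}.

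First I would treat the non-degenerate case $\alpha,\beta>0$. There the rigorous definition~\eqref{eqdef:calN} reduces — exactly as in the computation inside the proof of Lemma~\ref{l:props-N}, part~\ref{l:props-N:char-const-j}, by carrying out the inner supremum over $\varphi\in H^1_0(0,1)$ — to the ordinary integral minimization
\[
\CCs(j,\alpha,\beta;k) = \inf\Bigl\{ \int_0^1 \Bigl[\frac{j^2}{2k(z)v(z)^2} + 2k(z)\abs{v'(z)}^2\Bigr]\dx z \;:\; v\in H^1(0,1),\ v(0)=\sqrt\alpha,\ v(1)=\sqrt\beta,\ v>0\Bigr\}
\]
(a minimizing sequence may be taken bounded away from $0$ when $\alpha,\beta>0$: for $j\ne0$ the first term forces this, for $j=0$ the minimizer of the weighted Dirichlet energy is monotone between its positive boundary values). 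The functional above is additive over the splitting $(0,1)=(0,\tfrac12)\cup(\tfrac12,1)$, so a Bellman-type decomposition applies: every admissible $v$ restricts to admissible functions on $[0,\tfrac12]$ and $[\tfrac12,1]$ sharing the value $v(\tfrac12)>0$, and conversely any two admissible functions on these half-intervals sharing a positive value at $\tfrac12$ glue to an admissible $v\in H^1(0,1)$. Hence $\CCs(j,\alpha,\beta;k)$ equals the infimum over $\gamma>0$ of the sum of the two half-interval minimizations with intermediate boundary value $\sqrt\gamma$.

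It then remains to rescale each half-interval back to the unit interval. On $[0,\tfrac12]$ one has $k(z)=\tfrac12 k^1(2z)$; the substitution $z=s/2$, $w(s):=v(s/2)$ turns $\int_0^{1/2}$ of the integrand into $\int_0^1[\tfrac{j^2}{2k^1(s)w(s)^2}+2k^1(s)\abs{w'(s)}^2]\dx s$, the factor $\tfrac12$ in the definition of $k$ being precisely what makes the Jacobian of the substitution cancel the factor coming from $v'(z)=2w'(2z)$ in the gradient term; minimizing over $w$ with $w(0)=\sqrt\alpha$, $w(1)=\sqrt\gamma$ gives $\CCs(j,\alpha,\gamma;k^1)$, and the mirror substitution on $[\tfrac12,1]$ gives $\CCs(j,\gamma,\beta;k^2)$. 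Together with the splitting this is \eqref{eq:props-N:addition} for $\alpha,\beta>0$. The degenerate endpoints $\alpha\beta=0$ are then routine: if $j\ne0$ both sides equal $+\infty$ by Lemma~\ref{l:props-N}, part~\ref{l:props-N:infty} (an intermediate $\gamma>0$ cannot repair the vanishing endpoint on one of the pieces), while if $j=0$ the identity is the elementary series law for the scalar quadratic form $\sqrt\gamma\mapsto 2k^{1,*}(\sqrt\alpha-\sqrt\gamma)^2+2k^{2,*}(\sqrt\gamma-\sqrt\beta)^2$, equivalently obtained by approximating $\alpha,\beta\downarrow0$ and using the continuity of $\CCs$ in its endpoints visible from~\eqref{eqdef:N:explicit:rigrous}.

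Finally, \eqref{eq:N:series} drops out: when $k^1,k^2$ are constant, the coefficient $k$ in the statement is positive with $1/k\in L^1(0,1)$, so Lemma~\ref{l:props-N}, part~\ref{l:props-N:harmonic-mean}, gives $\CCs(j,\alpha,\beta;k)=\CCs(j,\alpha,\beta;k^*)$ with $k^*=\bra*{\int_0^1 k(z)^{-1}\dx z}^{-1}=\bra*{\tfrac1{k^1}+\tfrac1{k^2}}^{-1}$, and enlarging $\inf_{\gamma>0}$ to $\inf_{\gamma\ge0}$ is harmless (the $\gamma=0$ contribution is $+\infty$ when $j\ne0$ and is the $\gamma\downarrow0$ limit of the others when $j=0$). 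The step I expect to require genuine care is the passage from~\eqref{eqdef:calN} to the additive $v$-minimization when $\alpha,\beta>0$: splitting the min--max form~\eqref{eqdef:calN} directly yields only one of the two inequalities, since test functions $\varphi\in H^1_0(0,1)$ need not vanish at $z=\tfrac12$, so one must first eliminate the supremum — or argue by duality/relaxation — before cutting the interval.
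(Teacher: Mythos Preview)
Your proposal is correct and follows the same split-and-rescale strategy as the paper, with the constant-coefficient case~\eqref{eq:N:series} derived from the harmonic-mean property exactly as you suggest. The one noteworthy difference concerns precisely the technical point you flag at the end: instead of first carrying out the inner supremum to reduce to a pure $v$-minimization (which then forces a separate treatment of the degenerate endpoints $\alpha\beta=0$ and requires justifying that reduction for non-constant $k$), the paper stays with the min--max form~\eqref{eqdef:calN} and simply observes that, since $j$ is constant, the supremum over $\varphi\in H^1_0(0,1)$ can equivalently be taken over $\varphi\in L^1(0,1)$. This one-line observation removes the midpoint constraint on $\varphi$, so that both $v$ and $\varphi$ can be split and glued at $z=\tfrac12$ simultaneously without leaving the rigorous definition; the argument then covers all $\alpha,\beta\geq 0$ uniformly, with no case analysis. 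Your route is sound but slightly longer; the paper's $H^1_0\to L^1$ trick is exactly the missing ingredient you anticipated needing.
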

\begin{proof}
	We rewrite the left-hand side of~\eqref{eq:props-N:addition}
	by using the variational definition~\eqref{eqdef:calN}. Note that since $j$ is constant, the supremum in~\eqref{eqdef:calN} over $\varphi\in H^1_0(0,1)$ can equivalently be taken over $\varphi\in L^1(0,1)$.	
	
	Starting with the left-hand side as an optimization over $\gamma$ and over $v^1,\varphi^1$ and $v^2,\varphi^2$, thanks to the shared boundary conditions $v^1(1)=\sqrt\gamma=v^2(0)$, 
	we can equivalently optimize over $v\in H^1(0,1)$ satisfying $v(0)=\sqrt{\alpha}$ and $v(1)=\sqrt{\beta}$; $v$ is related to $v^1,v^2$ by $v^1(z) = v(z/2)$ and $v^2(z)=v((z+1)/2)$. 
	Similarly, optimizing over $\varphi^1, \varphi^2 \in L^1(0,1)$ is equivalent to optimizing over $\varphi\in L^1(0,1)$
	with the relations $\varphi^1(z) = \varphi(z/2)/2 = \varphi \circ L(z)/2$ and
	$\varphi^2(z) = \varphi((z+1)/2)/2 = \varphi \circ R(z)/2$ for $z\in (0,1)$ 
	with $L:[0,1]\to [0,1/2]$ given by $L(z):=z/2$ and $R:[0,1]\to [1/2,1]$ given by $R(z):=(z+1)/2$. 
	Using those definitions, we obtain first by construction of $\varphi^1$ and $\varphi^2$ the identity
	\[
	\skp{j,\varphi^1}+\skp{j,\varphi^2} = \frac{1}{2}\skp{j,\varphi\circ L} + \frac{1}{2}\skp{j,\varphi\circ R} =  \skp{j,\varphi} .
	\]
	Similarly, we obtain
	\begin{align*}
		\MoveEqLeft\int_0^1 k^1(z)\pra*{-\frac1{2} {v^1(z)^2}\varphi^1(z)^2 + 2 \abs*{\partial_z v^1(z)}^2} \dx{z}
		+ \int_0^1 k^2(z)\pra*{-\frac1{2} v^2(z)^2\varphi^2(z)^2 + 2 \abs*{\partial_z v^2(z)}^2} \dx z \\
		&= \int_0^1 2 k\circ L(z) \pra*{ -\frac1{2} {v(L(z))^2} \frac{1}{4} \varphi(L(z))^2 + 2 \frac{1}{4} \abs*{v'\circ L(z)}^2} \dx{z}\\
		&\qquad + \int_0^1 	2 k\circ R(z)\pra*{-\frac1{2} v(R(z))^2 \frac{1}{4} \varphi(R(z))^2 + 2 \frac{1}{4} \abs*{v'\circ R(z)}^2} \dx z \\
		&= \int_0^1 k(z) \pra*{-\frac1{2} {v(z)^2}\varphi(z)^2 + 2 \abs*{v'(z)}} \dx{z} ,
	\end{align*}
	where we applied the push-forwards $L$ and $R$ 
	and note that $L_\sharp \dx{z}|_{[0,1]} = 2 \dx{z}|_{[0,1/2]}$ and $R_\sharp \dx{z}|_{[0,1]} = 2 \dx{z}|_{[1/2,1]}$. 
	
	We then use Property~\ref{l:props-N:harmonic-mean} from Lemma~\ref{l:props-N} to obtain 
	\[
		k^* = \bra*{ \int_0^{\frac{1}{2}}  \frac{2}{k_1} \dx{z} +  \int_{\frac{1}{2}}^1  \frac{2}{k_2} \dx{z}}^{-1} = \bra*{\frac{1}{k^1} + \frac{1}{k^2}}^{-1} .   \qedhere
	\]
\end{proof}
\begin{cor}[Parallel law]\label{cor:N:parallel}
	Let $k^i>0$ for some index set $i\in I\subseteq \N_0$ s.t. $\sum_{i\in I} k^i=: k <\infty$. Then, for any $j\in \R$ and $\alpha,\beta\geq 0$ we have
	\begin{align}\label{eq:N:parallel}
		\min_{\substack{j^i \in \R\\ \sum_{i\in I} j^i =j}} \sum_{i\in I} \CCs\bra*{j^i;\alpha,\beta;k^i}
		=  \CCs\bra*{j;\alpha,\beta;k},
	\end{align}
	with the $\min$ attained with $j^i = j \frac{k^i}{k}$ being the minimizer. 
\end{cor}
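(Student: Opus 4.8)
\textbf{Proof proposal for the Parallel law (Corollary~\ref{cor:N:parallel}).}

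The plan is to prove both inequalities using the dual characterization~\eqref{eq:props-N:dual-char-1} from Lemma~\ref{l:props-N}\ref{l:props-N:dual-char-1}, which represents $\CCs(j;\alpha,\beta;k)$ as a supremum over a single scalar $\zeta$ of an affine-in-$k$, affine-in-$j$ expression. The key observation is that the map
\[
k \longmapsto 2k\bigl(\alpha+\beta-2\sqrt{\alpha\beta}\cosh\tfrac12\zeta\bigr)
\]
is linear in $k$, and the map $j\mapsto \zeta j$ is linear in $j$, so the payoff functional behind $\CCs$ is jointly additive when we split $j=\sum_i j^i$ and $k=\sum_i k^i$ \emph{provided we use the same $\zeta$ in each summand}.

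First I would prove the easy direction ``$\geq$'': fix a test value $\zeta\in\R$. For any admissible splitting $j=\sum_{i\in I}j^i$ we have, by~\eqref{eq:props-N:dual-char-1} applied to each term with this common $\zeta$,
\[
\sum_{i\in I}\CCs(j^i;\alpha,\beta;k^i)
\;\geq\; \sum_{i\in I}\Bigl[\zeta j^i + 2k^i\bigl(\alpha+\beta-2\sqrt{\alpha\beta}\cosh\tfrac12\zeta\bigr)\Bigr]
\;=\; \zeta j + 2k\bigl(\alpha+\beta-2\sqrt{\alpha\beta}\cosh\tfrac12\zeta\bigr).
\]
Taking the infimum over splittings on the left and then the supremum over $\zeta$ on the right gives $\inf_{\sum j^i=j}\sum_i\CCs(j^i;\alpha,\beta;k^i)\geq \CCs(j;\alpha,\beta;k)$. (One should note this bound even holds termwise, so no subtlety arises when some $\CCs(j^i;\cdot)$ is $+\infty$; and when the right-hand supremum is $+\infty$, i.e.\ $\alpha\beta=0$ and $j\neq0$, the inequality is vacuous or follows from part~\ref{l:props-N:infty}.)

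For the reverse direction ``$\leq$'' I would exhibit an explicit optimal splitting. By part~\ref{l:props-N:one-homogeneous}, $\CCs(j;\alpha,\beta;k)=\CCs(\lambda j;\alpha,\beta;\lambda k)$ for $\lambda>0$; applied with $\lambda=k^i/k$ this gives $\CCs\bigl(\tfrac{k^i}{k}j;\alpha,\beta;k^i\bigr)=\CCs\bigl(j;\alpha,\beta;k\cdot\tfrac{k^i}{k}\bigr)$... more directly, set $j^i:=\tfrac{k^i}{k}j$, so that $\sum_{i\in I}j^i=j$ is admissible. Then by the scaling identity in~\eqref{eq:G:one-homogeneous} (used in the form $\CCs(\mu j,\mu\alpha,\mu\beta;k)=\mu\CCs(j,\alpha,\beta;k)=\CCs(\mu j,\alpha,\beta;\mu k)$, so that $\CCs(\tfrac{k^i}{k}j;\alpha,\beta;k^i)=\tfrac{k^i}{k}\CCs(j;\alpha,\beta;k)$ after also scaling $\alpha,\beta$ --- one must be slightly careful here since the $\alpha,\beta$ are shared, not scaled), we obtain $\sum_{i\in I}\CCs(j^i;\alpha,\beta;k^i)=\CCs(j;\alpha,\beta;k)$, which bounds the infimum from above. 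The cleanest way to get the needed homogeneity is to use the dual formula~\eqref{eq:props-N:dual-char-1} directly: with $j^i=\tfrac{k^i}{k}j$, for \emph{each} $\zeta$,
\[
\zeta j^i + 2k^i\bigl(\alpha+\beta-2\sqrt{\alpha\beta}\cosh\tfrac12\zeta\bigr)
=\frac{k^i}{k}\Bigl[\zeta j + 2k\bigl(\alpha+\beta-2\sqrt{\alpha\beta}\cosh\tfrac12\zeta\bigr)\Bigr],
\]
so $\CCs(j^i;\alpha,\beta;k^i)=\tfrac{k^i}{k}\CCs(j;\alpha,\beta;k)$, and summing over $i$ yields the upper bound. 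The main obstacle --- though a mild one --- is bookkeeping the degenerate cases $\alpha\beta=0$, $j\neq0$ (both sides $+\infty$ by part~\ref{l:props-N:infty}), $j=0$ with $\alpha\beta=0$ (both sides computed from the explicit minimizer), and the requirement $\sum_{i}k^i=k<\infty$ with $I$ possibly countably infinite, where one should check that the termwise sums converge and that the exchange of $\inf_\zeta$/$\sup_\zeta$ with $\sum_{i\in I}$ is legitimate (it is, since all terms are handled with a single common $\zeta$ and summation of the affine bound is unconditional). Combining the two inequalities gives~\eqref{eq:N:parallel}.
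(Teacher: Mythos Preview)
Your proof is correct and takes a genuinely different route from the paper's. The paper works on the primal side: it first reduces to $\alpha,\beta>0$ via Lemma~\ref{l:props-N}\ref{l:props-N:infty}, then uses the explicit representation~\eqref{eqdef:N:explicit}. The $\sfC^*$-part is trivially additive in $k$, and for the $\sfC$-part the paper writes down the first-order stationarity condition $\sfC'(j^i/\sigma^i)=\lambda$ to locate the optimal splitting $j^i\propto k^i$, then checks the value. Your argument instead exploits the dual characterization~\eqref{eq:props-N:dual-char-1}: since the payoff $\zeta j + 2k(\alpha+\beta-2\sqrt{\alpha\beta}\cosh\tfrac12\zeta)$ is jointly linear in $(j,k)$, the lower bound ``$\geq$'' follows by choosing a common $\zeta$ in all summands, and the upper bound by the proportional splitting $j^i=(k^i/k)j$, for which each summand is exactly $(k^i/k)$ times the right-hand side. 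What your approach buys is a uniform treatment of the boundary cases $\alpha\beta=0$ (the dual formula is stated for all $\alpha,\beta\geq0$) and no need to argue that the stationary point of a convex problem is the minimum; the paper's approach is more concrete in that it identifies the optimizer from first principles rather than guessing it. Your brief detour through the scaling identity~\eqref{eq:G:one-homogeneous} was indeed not the right tool here (as you noticed, $\alpha,\beta$ are shared, not scaled), but the dual computation you land on is clean and complete.
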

\begin{proof}
	If $\alpha\beta=0$ and $j=0$, then the right-hand side is $0$, in which case $j^i=0$ for $i\in I$ is the minimizer. If $\alpha\beta=0$  $j\ne 0$, then both the left-hand and right-hand side are infinite and any $j^i$ satisfying $\sum_{i\in I} j^i =j \ne 0$ is a minimizer.
	
	For the case $\alpha,\beta>0$, by the representations~\eqref{eqdef:N:explicit} and~\eqref{eqdef:N:explicit:rigrous} from Lemma~\ref{l:props-N}  
	the term involving~$\sfC^*$ trivially follows by addition.
	Using the joint convexity and one-homogeneity of the perspective function $(j,\sigma)\mapsto \sfC(j|\sigma)$ and Jensen's inequality for one-homogeneous functionals we also obtain
	\[
	  \sum_{i\in I} \sfC\bra[\big]{j^i \big| \sqrt{\alpha \beta} \, k^i} \geq \sfC\bra[\big]{ j \big| \sqrt{\alpha\beta} \, k} .
	\]
	The equality case is readily observed.
\end{proof}

\subsection{Lower semicontinuity of integrals of \texorpdfstring{$\CCs$}{C}}
\label{ss:lsc-G}

Let $A$ be a Lebesgue measurable subset of $\R^m$. For Lebesgue measurable  $j:A\to H^{-1}(0,1)$, $u,v:A\to[0,\infty)$, we can define a $\calY$-valued measure $\mu$ by 
\[
\dual \varphi \mu
:= \int_A \bra*{\dual{\varphi^1(x)}{ j(x)}
  + \varphi^2(x)u(x) 
  + \varphi^3(x)v(x) }\dx x
\]
for any $\varphi\in C_b\bra[\big]{A;H^1_0(0,1)\times \R^2}$.
Note that for fixed measurable  $k:A\to(0,\infty)$, $\CCs(\cdot,\cdot,\cdot;k)$ is non-negative, jointly one-homogeneous, convex, and lower semicontinuous in the first three variables. 
Following Section~\ref{ss:perspective-function},  the integral of $\CCs(j,u,v;k)$ can then be written as 
\[
\int_A \CCs\bra*{j(x),u(x),v(x);k(x)}\dx x
= 
\int_A \CCs\bra*{\frac{\dx\mu}{\dx|\mu|}(x); k(x)}|\mu|(\dx x).
\]

This  way of writing is useful when $\mu_n$ converges to a limit $\mu$ which might be singular with respect to the Lebesgue measure. The convexity and lower-semicontinuity properties of~$\CCs$ imply that the integral of $\CCs$ satisfies a lower bound, and in that case 
part~\ref{l:props-N:infty} of Lemma~\ref{l:props-N} gives a tool to disprove such singularity. This is the content of the following lemma.

\begin{lemma}[Lower semicontinuity of integrals of $\CCs$]
\label{l:G-lsc}
Let $A\subset \R^m$ be compact. Assume that $\mu_n$ is a sequence of $\calY$-valued measures on $A$ such that $\mu_n\weakto \mu$ in the sense that 
\[
\text{for all }\varphi\in C(A), \qquad
	\int_A \varphi(x) \mu_n(\dx x) \longrightharpoonup \int_A \varphi(x) \mu(\dx x) 
	\qquad\text{in }\calY.
\]
Assume in addition that $k_n,k\in L^1(A;L^1(0,1))$ with $k_n\geq k_0>0$ for all $n$, and $k_n\to k$ in $L^1(A;L^1(0,1))$. Then
\begin{enumerate}
\item \label{l:G-lsc:part1}
We have 
\[
\liminf_{n\to\infty} \int_A \CCs\bra*{\frac{\dx \mu_n}{\dx |\mu_n|}(x); k_n	(x)}|\mu_n|(\dx x)
\geq \int_A \CCs\bra*{\frac{\dx \mu}{\dx |\mu|}(x); k(x)}|\mu|(\dx x).
\]
\item \label{l:G-lsc:part2}If 
$\mu^{2,3}$ is Lebesgue absolutely continuous, then $\mu^1$  is an $H^{-1}(0,1)$-valued Lebesgue absolutely continuous measure, and setting $\mu =: (j,u,v)\dx x$ we have 
\begin{equation}
\label{eq:l:G-lsc:ac-equality}
\int_A \CCs\bra*{\frac{\dx \mu}{\dx |\mu|}(x); k	(x)}|\mu|(\dx x)
=
\int_A \CCs\bra*{j(x),u(x),v(x); k(x)}\dx x.
\end{equation}
\end{enumerate}

\noindent
Analogous results hold for the integral 
\begin{equation}
\label{eq:l:G-lsc:Z}\mu \mapsto \int_A \sum_{\sfx\sfy\in \edges} 
  \CCs\bra*{
    \frac{\dx\mu^1_{\sfx\sfy}}{\dx|\mu|}(x),
    \frac{\dx\mu^2_{\sfx}}{\dx|\mu|}(x),
    \frac{\dx\mu^2_{\sfy}}{\dx|\mu|}(x); k_{\sfx\sfy}}|\mu|(\dx x)
\end{equation}
in which $\mu$ is $\calZ$-valued, with 
\[
\calZ := H^{-1}(0,1)^\edges \times \R^\nodes.
\]
\end{lemma}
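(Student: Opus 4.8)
\textbf{Proof strategy for Lemma~\ref{l:G-lsc}.}

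The plan is to treat this as a standard lower-semicontinuity result for integral functionals of the form $\mu\mapsto\int_A f(x,\dx\mu/\dx|\mu|)\,\dx|\mu|$, where the integrand is convex and positively one-homogeneous in the measure variable; such results go back to Reshetnyak and Ioffe (see e.g.~\cite{AmbrosioFuscoPallara00}). The only two ingredients needed are: (i) for each fixed $x$, the map $(j,u,v)\mapsto\CCs(j,u,v;k(x))$ is convex, positively one-homogeneous, and lower semicontinuous on $H^{-1}(0,1)\times[0,\infty)^2$; and (ii) there is enough joint (in $x$ and in the measure-density) lower semicontinuity to absorb the perturbation $k_n\to k$. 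Point (i) is essentially already recorded in the excerpt: $\CCs$ is a perspective-type function built from the convex superlinear $\sfC$ together with the convex term $2k(\sqrt\alpha-\sqrt\beta)^2$ (cf.~\eqref{eqdef:N:explicit:rigrous}), and Lemma~\ref{lem:prop:perspective} gives convexity, one-homogeneity, and lower semicontinuity of perspective functions; the representation~\eqref{eq:props-N:dual-char-1} as a supremum of affine functions of $(j,\alpha,\beta)$ makes convexity and lower semicontinuity transparent and also handles the boundary cases $\alpha\beta=0$ cleanly.

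First I would establish the dual representation needed for the Ioffe-type argument. Using~\eqref{eq:props-N:dual-char-1}, for $k>0$ constant,
\[
\CCs(j,\alpha,\beta;k)=\sup_{\zeta\in\R}\Bigl[\zeta j+2k\bigl(\alpha+\beta-2\sqrt{\alpha\beta}\cosh\tfrac12\zeta\bigr)\Bigr];
\]
the inner expression is continuous in $(j,\alpha,\beta,k)$ jointly, so $\CCs$ is lower semicontinuous as a supremum, and it is convex and positively one-homogeneous in $(j,\alpha,\beta)$. Promoting this to a countable supremum over $\zeta\in\Q$ (legitimate by continuity in $\zeta$), I would write the integral functional as a supremum of integrals of the linear-in-$\mu$ test functionals $\langle\varphi^1,j\rangle+\varphi^2 u+\varphi^3 v$ against continuous coefficients, exactly the form to which weak-$*$ convergence $\mu_n\weakto\mu$ applies termwise. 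The passage $k_n\to k$ in $L^1$ is handled by first replacing $k_n$ by $k$ inside each affine functional at the cost of an error controlled by $\|k_n-k\|_{L^1}\cdot(\text{bounded terms in }\zeta)$; since the boundary values $\alpha,\beta$ and the cosh-term are fixed and $\zeta$ ranges over a fixed countable set, on each affine piece the error tends to $0$, and monotonicity of the supremum over a countable family together with a diagonal argument yields part~\ref{l:G-lsc:part1}. The uniform lower bound $k_n\ge k_0>0$ is used to keep the perspective functions proper (it rules out the degenerate $k=0$ regime and guarantees the $\sfC$-term is genuinely superlinear).

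For part~\ref{l:G-lsc:part2}, if $\mu^{2,3}=(u,v)\dx x$ is Lebesgue absolutely continuous, I would argue by contradiction using part~\ref{l:props-N:infty} of Lemma~\ref{l:props-N}: decompose $\mu^1=j\dx x+\mu^1_s$ with $\mu^1_s\perp\dx x$; on the support of the singular part the densities of $\mu^2,\mu^3$ with respect to $|\mu|$ vanish, so the integrand of the perspective functional forces the value $+\infty$ unless $\mu^1_s=0$. More precisely, evaluating $\CCs(\dx\mu/\dx|\mu|;k)$ at a point where $\dx|\mu|/\dx\dx x=\infty$ and $u=v=0$ gives $\CCs(\text{nonzero},0,0;\cdot)=+\infty$ by part~\ref{l:props-N:infty}, so finiteness of the left-hand side of~\eqref{eq:l:G-lsc:ac-equality} (which we may assume, else there is nothing to prove) forces $\mu^1_s=0$; then one-homogeneity of $\CCs$ gives $\CCs(\dx\mu/\dx|\mu|;k)\,\dx|\mu|=\CCs(j,u,v;k)\,\dx x$ pointwise, which is~\eqref{eq:l:G-lsc:ac-equality}. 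Finally the analogous statement for the sum-over-edges functional~\eqref{eq:l:G-lsc:Z} follows verbatim: the integrand is again a finite sum of convex one-homogeneous lsc functions of the $\calZ$-valued density, so the same dual-representation and Ioffe-type argument apply coordinatewise.

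\textbf{Main obstacle.} The genuinely delicate point is the interaction of the two limits: $\mu_n\weakto\mu$ in the \emph{measure} sense while $k_n\to k$ only in $L^1$ (not uniformly), so the coefficients multiplying the densities are themselves only $L^1$-convergent. The standard lower-semicontinuity theorems assume the $x$-dependence of the integrand is fixed or at least continuous; here I must absorb an $L^1$-perturbation of a coefficient that sits inside a convex integrand. The resolution is to exploit that $k$ enters $\CCs$ only through the explicit, globally Lipschitz-in-$k$ affine pieces of the dual formula~\eqref{eq:props-N:dual-char-1} (for each fixed $\zeta$, the dependence on $k$ is linear), so the perturbation error on each affine piece is a genuine $L^1$ quantity that vanishes; combined with the fact that a supremum of $\liminf$'s is $\le\liminf$ of suprema only along a countable family — which is exactly why the $\zeta\in\Q$ reduction is essential — this closes the gap. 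I would expect most of the write-up length to go into making this uniformity-over-$\zeta$ bookkeeping precise.
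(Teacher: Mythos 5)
Your proposal goes down a different route than the paper's proof, and the route has a genuine gap. The crux is that the two explicit representations of $\CCs$ you lean on — the closed form~\eqref{eqdef:N:explicit:rigrous} and the dual characterization~\eqref{eq:props-N:dual-char-1} — are both stated in Lemma~\ref{l:props-N} \emph{only for constant} $j\in\R$ \emph{and constant} $k>0$ (see the hypotheses of parts~\ref{l:props-N:char-const-j} and~\ref{l:props-N:dual-char-1}). But Lemma~\ref{l:G-lsc} is about $\calY$-valued measures with $\calY = H^{-1}(0,1)\times\R\times\R$, so the first component of the Radon--Nikodym derivative $\dx\mu/\dx|\mu|$ is a genuine element of $H^{-1}(0,1)$, and the weight $k(x)$ is a genuine element of $L^1(0,1)$. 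In that generality the map $(j,\alpha,\beta)\mapsto\CCs(j,\alpha,\beta;k)$ is \emph{not} a supremum over a single real parameter $\zeta$ of affine functionals; the duality that underlies it is the full inf-sup in~\eqref{eqdef:calN}, with $v$ ranging over $H^1(0,1)$ and $\varphi$ over $H^1_0(0,1)$. Your ``$\zeta\in\Q$ countable supremum of affine functionals, then Ioffe'' step therefore does not get off the ground for the object the lemma is actually about, and the $L^1$-error estimate you sketch for $k_n\to k$ (which relies on $k$ entering linearly in a finite-dimensional affine piece) similarly does not apply.

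The paper's proof avoids the closed-form expressions altogether. It cites a general Reshetnyak-type result for Hilbert-space-valued measures (Lemma~\ref{l:Reshetnyak}) and then verifies the single pointwise hypothesis~\eqref{l:Reshetnyak:ass-lsc} by working directly with the variational definition~\eqref{eqdef:calN}. Concretely: given $\xi_n=(j_n,\alpha_n,\beta_n)\rightharpoonup\xi$ with $\CCs(\xi_n;k_n)$ bounded, the lower bound $k_n\geq k_0>0$ controls $\|v_n'\|_{L^2}$ so that near-minimizers $v_n$ can be extracted with $v_n\rightharpoonup v$ in $H^1$ and $v_n\to v$ in $C([0,1])$; the boundary values then converge by trace compactness. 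The key device for passing $k_n\to k$ in $L^1$ through the quadratic term is the elementary linearization $2k_n|v_n'|^2 \geq 4v_n'\psi - 2\psi^2/k_n$ for any fixed $\psi\in L^2$, which turns the problematic product into terms where each factor converges in a compatible topology. This plays the role you wanted the ``affine pieces'' to play, but it works for non-constant $j$ and $k$. Your instinct that the difficulty is the mismatch between weak-$*$ convergence of $\mu_n$ and $L^1$ convergence of $k_n$ is correct; the resolution, however, must stay at the level of~\eqref{eqdef:calN}.

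Your Part~\ref{l:G-lsc:part2} argument — split $|\mu|$ into Lebesgue-regular and singular parts, observe that the singular part of $\mu^{2,3}$ vanishes by hypothesis, and use part~\ref{l:props-N:infty} of Lemma~\ref{l:props-N} to conclude that the singular part of $\mu^1$ must also vanish — is essentially the paper's argument, and is fine (both accounts gloss over the same point that part~\ref{l:props-N:infty} is stated for constant $j$, but in the applications $\mu^1$ does have this form).
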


\begin{proof}
We prove Part~\ref{l:G-lsc:part1} by applying the general lower-semicontinuity result Lemma~\ref{l:Reshetnyak} in Appendix~\ref{ss:lower-bound}. For this we set
\[
f_n,f: A\times \calY \to [0,\infty], 
\qquad
f_n(x,\xi) := \CCs(\xi;k_n(x)),
\quad
f(x,\xi) := \CCs(\xi;k(x)).
\]
and we now show that $f_n$ and $f$ have the lower-semicontinuity property~\eqref{l:Reshetnyak:ass-lsc}.

We consider a sequence $\xi_n = (j_n,\alpha_n,\beta_n)$ in $\calY$, 
converging weakly to $(j,\alpha,\beta)$, such that  $\CCs(j_n,\alpha_n,\beta_n;k_n)$ is bounded from above. 
Because $k_n\geq k_0>0$, $\CCs$ bounds $\|v'\|^2_{L^2}$ from above, 
and consequently the infimum in~\eqref{eqdef:calN} is achieved by a $v_n\in H^1(0,1)$ for each $n$. 
By the same bound we can extract a subsequence (without changing notation) such that 
$v_n\rightharpoonup v$ in $H^1$ and $v_n\to v$ in $C_{\mathrm b}([0,1])$; 
by the compactness of the trace operator the traces of the limit~$v$ coincide with the limits $\alpha$ and $\beta$ of the traces of $v_n$.

We then calculate for any $\varphi\in H^1_0(0,1)$ and any $\psi\in L^2(0,1)$, 
\begin{align*}
	\liminf_{n\to\infty} \CCs(j_n,\alpha_n,\beta_n;k_n)
	&\geq \liminf_{n\to\infty}\ \dual {j_n}\varphi 
	+ \int_0^1 k_n\pra*{-\frac1{2} {v_n^2}\varphi^2 + 2 \abs*{v_n'}^2}\dx z\\
	&\geq \liminf_{n\to\infty}\ \dual {j_n}\varphi 
	+ \int_0^1\pra*{-\frac{k_n}{2} {v_n^2}\varphi^2 + 4v_n'\psi - 2\frac{\psi^2}{k_n}}\dx z\\
	&= \dual {j}\varphi 
	+ \int_0^1 \pra*{-\frac{k}{2} {v^2}\varphi^2 + 4v'\psi - 2\frac{\psi^2}k}\dx z.
\end{align*}
Taking the supremum over $\varphi$ and $\psi$ we regain
\begin{align*}
	\liminf_{n\to\infty} \CCs(j_n,\alpha_n,\beta_n;k_n)
	&\geq \sup_{\varphi\in H^1_0(0,1)}\dual {j}\varphi 
	+ \int_0^1 k\pra*{-\frac{1}{2} {v^2}\varphi^2 + 2\abs{v'}^2}\dx z
	\geq \CCs(j,\alpha,\beta;k).
\end{align*}
With this lower semicontinuity property,  part~\ref{l:G-lsc:part1} follows from Lemma~\ref{l:Reshetnyak}.

\medskip
To prove part~\ref{l:G-lsc:part2}, we decompose $|\mu|$ and $\mu$ into Lebesgue absolutely continuous and singular parts:
\[
|\mu|(\dx x)  =: |\mu|^{\mathrm{ac}}(x)  \dx x + |\mu|^\perp(\dx x), 
\quad
\mu^{\mathrm{ac}}(x) := \frac{\dx \mu}{\dx|\mu|} (x) |\mu|^{\mathrm{ac}}(x), 
\quad
\mu^\perp(\dx x) := \frac{\dx \mu}{\dx|\mu|} (x)|\mu|^\perp(\dx x).
\]
Separating the regular and singular part in the integral we have
\begin{align*}
\MoveEqLeft  \int_A \CCs\bra*{\frac{\dx{\mu}}{\dx{|\mu|}}(x);k(x)} |\mu|(\dx x)
	\\
&=\int_{A}  \CCs\bra*{\frac{\dx{\mu}}{\dx{|\mu|}}(x);k(x)} |\mu|^{\mathrm{ac}}(x)\dx x 
	+\int_A  \CCs\bra*{\frac{\dx{\mu}}{\dx{|\mu|}}(x);k(x)} |\mu|^\perp(\dx x).
\end{align*}
Since the second and third components of $\mu^\perp$ vanish by assumption, 
part~\ref{l:props-N:infty} of Lemma~\ref{l:props-N} implies that the first component of $\mu^\perp$ also vanishes, leading to the expression~\eqref{eq:l:G-lsc:ac-equality}.

The generalisation to~\eqref{eq:l:G-lsc:Z} is straightforward and we omit the details.
\end{proof}

\section{Kramers' high-activation-energy limit}
\label{s:Kramers}

In a seminal paper in 1940, Hendrik A.~Kramers~\cite{Kramers1940} described various modelling approaches for the calculation of chemical reaction rates. He proposed the use of a Brownian particle in a chemical potential landscape, where the reaction event corresponds to the escape of the particle from one energy well into another. 

When the energy barrier between the wells (the `activation energy') is large with respect to the noise, the behaviour of the particle simplifies; the particle spends long times in one well before rapidly jumping to a different well. This behaviour has been termed \emph{metastable}, and is observed in many processes. Metastability is the subject of the monograph by Bovier and Den Hollander~\cite{BovierDenHollander16}, and the review by Berglund, which focuses in particular on Kramers' problem~\cite{Berglund13}. 

Mathematically, this metastability can be characterized in  a number of different ways, in terms of the well-to-well transition time (see~\cite[Ch.~4]{FreidlinWentzell98} or~\cite{BovierEckhoffGayrardKlein04}), the spectrum of the generator~\cite{HelfferKleinNier04,HelfferNier06}, or the convergence of the Fokker-Planck equations~\cite{EvansTabrizian16,SeoTabrizian20}. 

In this section we revisit this system, taking the viewpoint of the gradient system that describes the evolution of the law of the particle, as in~\cite{PeletierSavareVeneroni10, ArnrichMielkePeletierSavareVeneroni12,  LieroMielkePeletierRenger17}. Our main result is a rigorous derivation of the $\e\to0$ limit of the gradient systems that includes tilting. This allows us to follow the impact of the tilt through the limit.

\subsection{Setting}
\label{ss:Kramers-setting}



\paragraph{Geometry.}
Consider a compact $d$-dimensional rectangle\footnote{The only place we use the rectangular geometry of $\Omega$ is in proving the chain-rule inequality (Lemmas~\ref{l:CRLB-Kramers-epsilon} and~\ref{l:CRLB-Kramers-limit}).} $\Omega\subset \R^d$ and a compact interval $\Upsilon\subset \R$ . The particle is assumed to have a spatial coordinate $X_t\in \Omega$ and a `chemical' coordinate $Y_t\in \Upsilon$; the total state space is $\nodes := \Omega\times \Upsilon$. For any set $A$, we often write $A_T:= [0,T]\times A$; therefore $\Omega_T := [0,T]\times \Omega$ and $\nodes_T := [0,T]\times \nodes$. Note that by the compactness of $\nodes$, $\Omega$, and $\Upsilon$, the wide and narrow convergences coincide; we write $\stackrel*\longrightharpoonup$ for both.

\paragraph{Energy.}

At $\e>0$ the `chemical' potential landscape is described by a function $H:\Upsilon\to\R$ with the following properties.
\begin{assumption}
\label{ass:H}
$H\in C^2(\Upsilon;\R)$ satisfies
\begin{enumerate}
\item $H$ has exactly two minima at $y=a$ and $y=b>a$, both at value zero;
\item $a,b\in \Int\Upsilon$;
\item $H$ has a global maximum at $y=c$, $a<c<b$;
\item $H$ is strictly less than $H(c)$ on all sets bounded away from $y=c$.
\label{ass:H:global-bound}
\end{enumerate}
\end{assumption}
\begin{figure}[ht]
	\centering
	\setlength{\unitlength}{0.5\textwidth}
	\begin{picture}(1,0.52092241)%
		\put(0,0){\includegraphics[width=\unitlength]{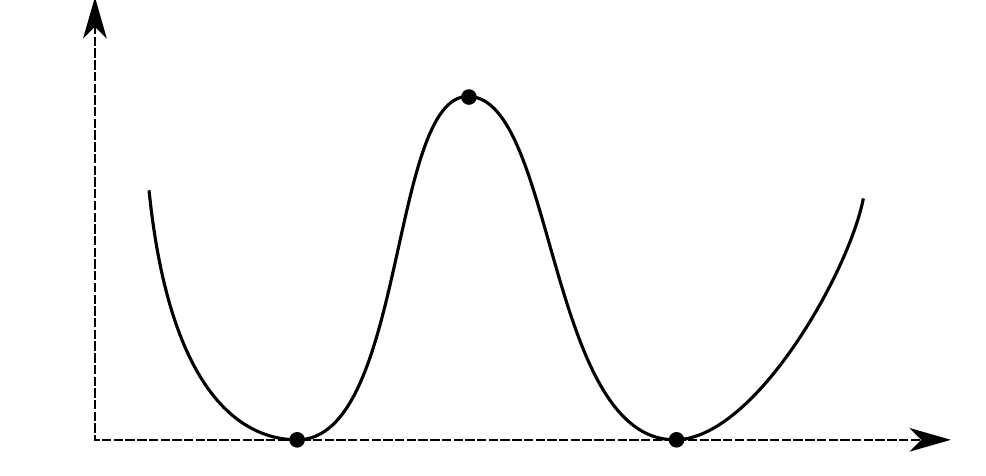}}%
		\put(0.29,-0.02){{$a$}}%
		\put(0.48,-0.02){{$c$}}%
		\put(0.68,-0.02){{$b$}}%
		\put(1,0.02){{$x$}}%
		\put(-0.04,0.4){{$H(x)$}}%
	\end{picture}%
	\caption{Illustration of the potential $H$.}
	\label{fig:DoubleWell}
\end{figure}
The potential $H$ is scaled by a factor $1/\e$; this scaling encodes the high-activation-energy limit.
Hence, the potential $H/\e$ defines the stationary measure
\begin{equation*}
	\pi_\eps(\dxx xy) = \frac{1}{\PartSum_\eps} \ee^{-\frac{H(y)}{\eps}}\dxx xy \qquad\text{with}\qquad \PartSum_\eps := \int_\nodes   \ee^{-\frac{H(y)}{\eps}} \dxx xy .
\end{equation*}
As $\e\to0$, $\pi_\e$ converges to the singular measure 
\begin{equation}
\label{eqdef:pi_0}
\pi_\e \longweaksto \pi_0(\dxx xy) = \weight^a \dx x \, \delta_a(\dx y) + \weight^b \dx x \, \delta_b(\dx y), \qquad\text{for }\weight^a+\weight^b=\frac1{|\Omega|}.
\end{equation}
By Watson's Lemma we have the explicit expression
\begin{equation*}
	\weight^a = \frac1{|\Omega|}\; \frac{H''(a)^{-\tfrac{1}{2}}}{H''(a)^{-\tfrac{1}{2}}+H''(b)^{-\tfrac{1}{2}}}
	\qquad\text{and}\qquad
	\weight^b = \frac1{|\Omega|}\; \frac{H''(b)^{-\tfrac{1}{2}}}{H''(a)^{-\tfrac{1}{2}}+H''(b)^{-\tfrac{1}{2}}}
\end{equation*}
We often consider the $y$-measure separately and write
\[
\pi_\e^y(\dx y) := \pi_\e^y(y)\dx y := \frac1{\PartSum_\e} \ee^{-H(y)/\e}\dx y
\longweaksto \weight^a \delta_a(\dx y) + \weight^b \delta_b(\dx y).
\]
With this notation  $\pi_\e = \pi_\e^y(\dx y) \dx x$. Note that the measure $\pi_\e$ is normalized, but $\pi_\e^y$ is not: $|\pi_\e^y| = |\Omega|^{-1}$.

As in Example~\ref{ex:FP}, the driving functional is 
\begin{equation*}
\calE_\e(\rho) := \RelEnt(\rho|\pi_\e).
\end{equation*}
Because of the convergence~\eqref{eqdef:pi_0}, $\calE_\e$ $\Gamma$-converges to the limiting functional (see e.g.~\cite[Lemma~6.2]{AmbrosioSavareZambotti09})
\[
\calE_0(\rho) := \RelEnt(\rho|\pi_0).
\]

\paragraph{Continuity equation.}

Following the Fokker-Planck Example~\ref{ex:FP}, the edge space at $\e>0$ is the union of `edges in $x$' and `edges in $y$',
\[
\edges := \edges^x \sqcup \edges^y   \qquad\text{with} \quad \edges^x := \nodes \times \{1,\dots,d\} 
\quad\text{ and }\quad \edges^y := \nodes ,
\]
and the gradient is the usual gradient in $\R^{d+1}$, which we also split into parts:
\[
\nabla = (\nabla_x,\partial_y).
\]
Definition~\ref{def:continuity-equation} of the continuity equation coincides with the well-known distributional one; we incorporate the no-flux boundary condition into this definition.

\begin{definition}[Continuity Equation for $(\nodes,\edges,\nabla)$]\label{defn:Kramers:CE}

%
Let $(\rho(t))_{t\in[0,T]}\subset \calM_{\geq0}(\nodes)$ and $j=(j^x,j^y)$ with $(j^x(t))_{t\in[0,T]}\subset \calM(\nodes;\R^d)$ and $(j^y(t))_{t\in [0,T]}\subset \calM(\nodes)$.
The pair~$(\rho,j)$ solves the continuity equation $\partial_t \rho + \div j=0$, denoted with $(\rho,j)\in \CE(0,T)$, if 
\begin{enumerate}
\item $t\mapsto \rho(t)$ is narrowly continuous;
\item The map~$t\mapsto j(t)$ is measurable with respect to the narrow topology on~$\calM(\nodes)$;
\item 
For any  $\varphi\in C^1_{\mathrm c}((0,T)\times\R^d\times \R)$ we have
\begin{equation}
\label{eq:Kramers:weak-form-CE}
\hspace{-2em}\int\limits_{[0,T]\times\R^{d+1}} \Bigl[\partial_t \varphi(t,x,y) \, \rho(t,\dxx xy)\dx t
  + \bra*{\nabla_x \varphi(t,x,y) , \partial_y \varphi(t,x,y)}\, \bra*{j^x,j^y}(t,\dxx xy)\dx t \Bigr] = 0 .
\end{equation}
\end{enumerate}
\end{definition}
Note that the test functions $\varphi$ in~\eqref{eq:Kramers:weak-form-CE} are defined on $\R^{d+1}$, and therefore $j$ satisfies a weak version of the no-flux boundary condition $j\ip n = 0$ on $[0,T]\times\partial \nodes$. By choosing $\varphi= \varphi(t)$ on~$\nodes_T$ we find that mass is conserved: $\rho(t,\nodes) = \rho(0,\nodes)$ for all $t$. We therefore restrict ourselves in the following to normalized initial  measures $\rho(t=0) \in \ProbMeas(\nodes)$.

\paragraph{Contracted limiting continuity equation.}
Since $\pi_0$ is supported on $\Omega\times \{a,b\}$,  measures~$\rho$ with finite {limiting} energy $\calE_0(\rho)<\infty$ also are supported on $\Omega\times \{a,b\}$, and these can be written as 
\begin{equation}
\label{char:rho0}
\rho(\dxx xy) = \rho(\dx x,a) \delta_a(\dx y) + \rho(\dx x,b) \delta_b(\dx y)
\quad \text{for some }\rho(\cdot,a), \rho(\cdot,b)\in \calM_{\geq0}(\Omega).
\end{equation}
This is an example of the general phenomenon described in Section~\ref{sss:EDP-conv-contracted}: finiteness of the limiting energy $\calE_0(\rho)$ implies that $\rho$ is supported on a strict subset 
\begin{equation}
\label{eqdef:Kramers:contracted-nodes}
\wt\nodes := \Omega\times \{a,b\}\;\subset \;\Omega\times \Upsilon =: \nodes,
\end{equation}
and as in Definition~\ref{defn:EDP-convergence:reduce} we can trivially write
\begin{equation}
\label{eqdef:Kramers:wt-calE}
\calE_0(\rho) = \begin{cases}
 \wt\calE\bra*{\rho\lfloor \wt\nodes} := \RelEnt\bra*{\rho\lfloor \wt\nodes \,\big|\,\pi_0\lfloor\wt\nodes } &\text{if } \rho(\nodes\setminus \wt\nodes) = 0,\\
 +\infty &\text{otherwise},
\end{cases}
\end{equation}
The restriction of $\supp\rho$ to $\wt\nodes$  also implies that the $y$-flux $j^y$ has a special form (see Lemma~\ref{lem:Kramers:limit:jy}): If $(\rho,j)\in \CE(0,T)$ is such that $\rho$  has the structure $\eqref{char:rho0}$ at each time $t$ and $j^x\ll \rho$, then we have  
\begin{equation}
	\label{char:j0}
	j^y(t,\dxx xy) = \overline \jmath (t,\dx x) \bONE_{[a,b]}(y)\dx y
	\qquad \text{for some $(\overline \jmath(t))_t\subset \calM(\Omega)$.}
\end{equation}	
For pairs $(\rho,j)\in \CE(0,T)$ satisfying~(\ref{char:rho0}-\ref{char:j0}), Definition~\ref{defn:Kramers:CE} reduces to the following concept of
contracted continuity equation.
\begin{definition}[Contracted continuity equation]\label{defn:Kramers:CE0}
Let $(\rho(t))_{t\in[0,T]}\subset \calM_{\geq0}(\nodes)$ and $j=(j^x,\ol\jmath)$ with $(j^x(t))_{t\in[0,T]}\subset \calM(\nodes;\R^d)$ and $(\ol\jmath(t))_{t\in [0,T]}\subset \calM(\Omega)$.
The pair~$(\rho,j)$ solves the continuity equation on $\wt\nodes_T$ if for any  $\varphi\in C_{\mathrm c}^1((0,T)\times\R^d\times \{a,b\})$ we have 
\begin{multline}
\label{eq:weak-form-CE0}
\int_0^T \int_{\R^d} \sum_{y=a,b}\bra[\big]{\partial_t \varphi(t,x,y)\, \rho(t,\dx x,y)
  + \nabla_x \varphi(t,x,y)\,j^x(t,\dx x,y) } \dd t \\
+ \int_0^T \int_{\R^d} \bigl(\varphi(t,x,b)-\varphi(t,x,a)\bigr)\overline\jmath(t,\dx x) \dx t = 0.
\end{multline}
We write $(\rho,j)\in \wt\CE(0,T)$.
\end{definition}
\noindent 
Indeed, the following lemma is simple to check. 
\begin{lemma}
For pairs  $(\rho,j)$ satisfying~(\ref{char:rho0}-\ref{char:j0}), the two continuity equations are equivalent: $(\rho,j)\in \CE(0,T)\iff (\rho,j)\in \wt\CE(0,T)$.
\end{lemma}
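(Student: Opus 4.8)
The statement is an equivalence between two distributional formulations of the same continuity equation, under the structural hypotheses~(\ref{char:rho0}--\ref{char:j0}) on $(\rho,j)$. The natural strategy is to show that each of the two weak formulations is obtained from the other by an appropriate restriction (respectively extension) of the class of test functions, exploiting that the measures $\rho$ and $j^y$ are supported on, or built from, the contracted structure. First I would fix a pair $(\rho,j)$ satisfying~(\ref{char:rho0}--\ref{char:j0}) and substitute these explicit forms into the general weak form~\eqref{eq:Kramers:weak-form-CE}. Because $\rho(t,\cdot)$ charges only $\Omega\times\{a,b\}$, the term $\int \partial_t\varphi\,\rho$ collapses to $\sum_{y=a,b}\int \partial_t\varphi(t,x,y)\,\rho(t,\dx x,y)\dx t$, which already matches the first term of~\eqref{eq:weak-form-CE0}. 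The same happens for the $j^x$-term since $j^x\ll\rho$. For the $j^y$-term, inserting $j^y(t,\dxx xy)=\ol\jmath(t,\dx x)\bONE_{[a,b]}(y)\dx y$ and using the fundamental theorem of calculus in $y$,
\[
\int_\Upsilon \partial_y\varphi(t,x,y)\,\bONE_{[a,b]}(y)\dx y = \varphi(t,x,b)-\varphi(t,x,a),
\]
turns the $y$-flux contribution into exactly the last term of~\eqref{eq:weak-form-CE0}. This shows that~\eqref{eq:Kramers:weak-form-CE} \emph{evaluated on all} $\varphi\in C^1_{\mathrm c}((0,T)\times\R^{d+1})$ implies~\eqref{eq:weak-form-CE0} for all $\varphi\in C^1_{\mathrm c}((0,T)\times\R^d\times\{a,b\})$, since any such discrete-in-$y$ test function extends to a smooth compactly supported $\tilde\varphi$ on $\R^{d+1}$ with $\tilde\varphi(t,\cdot,a)=\varphi(t,\cdot,a)$ and $\tilde\varphi(t,\cdot,b)=\varphi(t,\cdot,b)$ (interpolate linearly in $y$ near $[a,b]$, cut off elsewhere); the identity above is insensitive to the choice of extension.

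For the converse direction I would argue that the left-hand side of~\eqref{eq:Kramers:weak-form-CE}, viewed as a functional of $\varphi\in C^1_{\mathrm c}((0,T)\times\R^{d+1})$, depends only on the trace data $\bigl(\varphi(\cdot,\cdot,a),\partial_y\text{-average of }\varphi\text{ on }[a,b],\varphi(\cdot,\cdot,b)\bigr)$ — more precisely, by the computation above it equals
\[
\sum_{y=a,b}\int\!\!\int\bigl(\partial_t\varphi\,\rho(t,\dx x,y)+\nabla_x\varphi\,j^x(t,\dx x,y)\bigr)\dx t
+\int\!\!\int\bigl(\varphi(t,x,b)-\varphi(t,x,a)\bigr)\ol\jmath(t,\dx x)\dx t,
\]
and the right-hand side here is precisely the left-hand side of~\eqref{eq:weak-form-CE0} with $\varphi$ replaced by its restriction to $\{a,b\}$. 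Hence if~\eqref{eq:weak-form-CE0} holds for all $\varphi\in C^1_{\mathrm c}((0,T)\times\R^d\times\{a,b\})$, then the displayed expression vanishes for every $\varphi\in C^1_{\mathrm c}((0,T)\times\R^{d+1})$, which is exactly~\eqref{eq:Kramers:weak-form-CE}. Thus the two formulations are equivalent. The remaining bookkeeping is to check the matching of the regularity and measurability clauses (narrow continuity of $t\mapsto\rho(t)$, measurability of $t\mapsto j(t)$): these are identical in the two definitions once one notes that, under~(\ref{char:rho0}--\ref{char:j0}), narrow continuity/measurability of the $\nodes$-valued objects is equivalent to that of the $\wt\nodes$-valued components $\rho(\cdot,a),\rho(\cdot,b),\ol\jmath$, because the map extracting these components (integrating against $\bONE_{\{y=a\}}$, $\bONE_{\{y=a,b\}}$, etc.) and its inverse are both continuous for the narrow topologies.

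The only genuinely delicate point is the rigorous justification that $j^y$ indeed has the form~\eqref{char:j0} — but this is \emph{not} part of the present statement; it is established separately in Lemma~\ref{lem:Kramers:limit:jy}, and here I simply assume~(\ref{char:rho0}--\ref{char:j0}) as hypotheses. Within the present lemma, the main (very minor) obstacle is purely a test-function bookkeeping issue: one must verify that the linear interpolation/extension of a discrete-in-$y$ test function does not affect any term, which is immediate since $\rho$ and $j^x$ are supported in $y\in\{a,b\}$ and the $y$-integral of $\partial_y\tilde\varphi$ over $[a,b]$ telescopes regardless of the interior values of $\tilde\varphi$. Because the paper explicitly says ``the following lemma is simple to check,'' I would present the argument compactly, emphasizing the substitution of~(\ref{char:rho0}--\ref{char:j0}) into~\eqref{eq:Kramers:weak-form-CE} and the telescoping identity $\int_{[a,b]}\partial_y\varphi\,\dx y=\varphi(\cdot,b)-\varphi(\cdot,a)$ as the two load-bearing steps, and leaving the continuity/measurability matching as a one-line remark.
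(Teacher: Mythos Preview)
Your proposal is correct and is exactly the kind of direct verification the paper has in mind: the paper gives no proof at all, simply remarking that ``the following lemma is simple to check.'' Your two load-bearing steps---collapsing the $\rho$- and $j^x$-integrals onto $y\in\{a,b\}$ via the support hypothesis (and $j^x\ll\rho$), and telescoping $\int_a^b\partial_y\varphi\,\dx y=\varphi(\cdot,b)-\varphi(\cdot,a)$ for the $j^y$-term---are precisely what is needed, and your treatment of the test-function extension/restriction and of the regularity clauses is sound.
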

Note how the third term in~\eqref{eq:weak-form-CE0} has become an integral against the discrete gradient $(\gnabla\varphi)_{ab} := \varphi_b-\varphi_a$.
In Definition~\ref{defn:Kramers:CE0} we therefore recognize the continuity equation generated by $(\wt\nodes, \wt\edges,\wt\anabla)$ where
\begin{subequations}
\label{eqdef:Kramers:contracted-edges-anabla}
\begin{align}
\wt\edges &:= \underbrace{\Omega\times \{a,b\}\times \{1,\dots,d\}}_{\wt\edges^x,\text{ continuous edges}} 
  \quad \sqcup \underbrace{\Omega\times \{ab\}}_{\wt \edges^y, \text{ discrete edges}}\\
\wt\anabla \varphi(e) &:= \begin{cases}
	\partial_{x_i} \varphi(x,a) & \text{if }e = (x,a,i)\in \wt\edges^x\\
	\partial_{x_i} \varphi(x,b) & \text{if }e = (x,b,i)\in \wt\edges^x\\
	\varphi(x,b)-\varphi(x,a) & \text{if }e = (x,ab)\in \wt\edges^y
\end{cases}
\end{align}
\end{subequations}
This setup is the same as~\eqref{defn:Ex:B+A:edges} of Example~\ref{ex:FP}+\ref{ex:heat-flow}. 
These definitions fix the notions of $(\nodes,\edges,\anabla)$ and $(\wt\nodes,\wt\edges,\wt\anabla)$ that underlie the contracted EDP convergence (after Definition~\ref{defn:EDP-convergence:reduce}) that we prove below.

\paragraph{Dissipation potentials.}
We next specify the dissipation potentials for the gradient structure, which are the ones~\eqref{eqdef:R-FP} from the Fokker-Planck equation in Example~\ref{ex:FP} adapted to the current setting. For $\rho\in \calM_{\geq0}(\nodes)$, $j = (j^x,j^y)\in \calM(\nodes;\R^d\times \R)$,  and  $\Xi = (\Xi^x,\Xi^y)\in C(\nodes;\R^d\times \R)$, we set 
\begin{align*}
\calR_\e(\rho,j) 
&=  \frac{1}{2} \int_\nodes \biggl[\frac1{m_\Omega} \abs*{\frac{\dx j^x}{\dx\rho}(x,y)}^2 + \frac1{\tau_\e} \abs*{\frac{\dx j^y}{\dx\rho}(x,y)}^2\biggr]\, \rho(\dxx xy)\\
\calR_\e^*(\rho,\Xi) &=  \frac{1}{2} \int_\nodes \pra*{ m_\Omega |\Xi^x (x,y)|^2 +  \tau_\e \abs*{ \Xi^y (x,y)}^2 }\,\rho(\dxx xy).
\end{align*}
We implicitly set $\calR_\e$ to $+\infty$ if  $j\not\ll \rho$.

Note how the two terms in $\calR^*$ are scaled by two parameters $m_\Omega$ and $\tau_\e$. The parameter~$m_\Omega$ is fixed; the parameter $\tau_\e$ is chosen to scale as the typical time of transition between the two wells of $H$,
\begin{equation}\label{eq:def:Kramers:tau}
\tau_\e := \frac{m_\Upsilon \PartSum_\e}{|\Omega|} \int_a^b \ee^{H(y)/\e}\dx y \quad \xrightarrow{\e\to 0}\infty,
\end{equation}
where $m_\Upsilon$ can be chosen freely and hence will return in the formulation of the limit. 
From Watson's Lemma we deduce that 
\[
\tau_\e \sim C\, \e \, \ee^{H(c)/\e}, \qquad\text{with}\qquad C := \frac{m_\Upsilon}{\abs{\Omega}}\, 2\pi \bra*{H''(a)^{-\frac12} + H''(b)^{-\frac12}}
  \abs*{H''(c)}^{-\frac12} .
\]

\bigskip
\paragraph{Induced evolution equations.}
With these definitions, similar manipulations to those in~\eqref{eq:deriv-eq-FP} from Example~\ref{ex:FP} lead to the Fokker-Planck equation with no-flux boundary conditions
\begin{subequations}
\label{eq:FP-Kramerslimit}
\begin{align}
\partial_t \rho - \div_x (m_\Omega \nabla_x \rho )
  - \tau_\e \div_y  \bra*{\nabla_y \rho + \frac1\e\rho \nabla_y H} &=0
  \qquad \text{in } \nodes,\\
\nabla_x\rho \cdot n &= 0 \qquad \text{on }\partial\Omega\times \Upsilon,\\
\bra*{\nabla_y\rho + \frac1\e\rho\nabla_y H}\cdot n &= 0 \qquad \text{on }\Omega\times \partial\Upsilon. 
\end{align}
\end{subequations}

\paragraph{Interpretation as  Fokker-Planck equation of a diffusion process.}
The equation~\eqref{eq:FP-Kramerslimit} also has an interpretation as the Fokker-Planck equation or Forward-Kolmogorov equation for a stochastic differential equation. If $(X_0,Y_0)$ is distributed according to some $\rho^\circ\in \ProbMeas(\nodes)$, then the corresponding solution $\rho(t)$ at time $t$ is the law of a stochastic process $(X_t,Y_t)$. This process is a diffusion in $\nodes$ with drift given by $\e^{-1}\partial_y H(y)$, and with reflecting boundary conditions on $\partial \nodes$. The (square) diffusivity of the diffusion in $X_t$ is the constant $2m_\Omega$, while the diffusivity of the diffusion in $Y_t$ is  $2\tau_\e\to\infty$.

\paragraph{Adding tilting.}
In order to understand the role of tilting,  we consider the $\e$-indexed sequence together with a set of possible tilts, as described in Section~\ref{s:ModellingTilting}. In the context of a singular limit such as the one at hand, the philosophy is that the singularity of the limit is characterized by the sequence of energies $\calE_\e$ and dissipation potentials $\calR_\e$, and that any applied tilt `should not interfere with the singular limit'. This translates into choosing a class of additional potentials (`tilts') $\calF$ on the space $\ProbMeas(\nodes)$, which are differentiable with respect to the total-variation norm on $\ProbMeas(\nodes)$, that is,
\begin{equation}\label{eq:Kramers:tilts}
	\sfF := \set[\Big]{\;\calF \in C^1(\ProbMeas(\nodes)): \;\rmD\calF(\rho)\in C_{\mathrm b}(\nodes)\quad \text{with } \sup_{\rho\in\ProbMeas(\nodes)} \norm*{\rmD\calF(\rho)}_{C_{\mathrm b}(\nodes)}<\infty\;}.
\end{equation}
In addition, we equip $\sfF$ with a topology ensuring convergence within $\sfF$ and a uniform modulus of continuity in the $y$-variable (see Assumption~\ref{ass:F} below).
Typical examples are families of potential energies $\calF(\rho) = \int V(x,y) \rho(\dxx{x}{y})$ with $V\in C_{\mathrm b}^1(\nodes)$ or interaction energies $\calF(\rho) =\frac{1}{2} \iint W(x_1-x_2,y_1-y_2) \rho(\dxx{x_1}{y_1})\rho(\dxx{x_2}{y_2})$ with $W\in  C^1_{\mathrm b}(\R^{d+1}\times \R^{d+1})$.

Given a sequence $(\calF_\e)_\e\subset \sfF$, the components of the tilted gradient system are
\begin{align*}
	\calE_\e^\calF(\rho) &:= (\calE_\e+\calF_\e)(\rho),
\\
\calD_\e^T(\rho,j;\calF_\e) &:= 
\begin{cases}
  \ds\int_0^T \bigl[ \calR_\e(\rho,j) 
    + \underbrace{\calR_\e^*(\rho,-\nabla \rmD\calE_\e^\calF(\rho))}_{\mathclap{\text{formal}}} \,\bigr]\, \dx t
		      & \text{if }(\rho,j)\in \CE(0,T)\\
	+\infty & \text{otherwise.}
			\end{cases}
\end{align*}
As discussed in Section~\ref{ss:formal-rigorous}, the second term in $\calD_\e^T$ above is only formal, since $\dx\rho/{\dx {\pi_\e}}$ may vanish, and therefore $\rmD\calE_\e^\calF(\rho) = \log (\dx\rho/{\dx {\pi_\e}}) + \rmD\calF_\e(\rho)$ may not be well defined; in addition this expression may not have sufficient regularity for the operator $\nabla$ to be applied. We therefore replace the term by exploiting the formal chain rule $u|\nabla (\log u + f)|^2  = 4\ee^{-f}|\nabla \sqrt {u\ee^{f}}|^2$,  to obtain the following rigorous definition:
\begin{subequations}
\label{eq:FP-Kramerslimit:D:tilted}
\begin{align}
\calD_\e^T(\rho,j;\calF_\e) &:= \calD_\e^{T,x}(\rho,j;\calF_\e)
                                  +\calD_\e^{T,y}(\rho,j;\calF_\e)                                   \notag \\
&:= \int_0^T \int_\nodes \set[\bigg]{ \frac1{2m_\Omega} \abs*{\frac{\dx j^x}{\dx\rho}}^2  \rho(t,\dxx xy) 
  +  2m_\Omega \ee^{-F_\e^\rho}\abs*{\nabla_x \sqrt {u^\calF}}^2\pi_\e(\dxx xy)}\dx t
\label{eqdef:D-part1}\\
&\qquad +  \int_0^T \int_\nodes \set[\bigg]{ \frac1{2\tau_\e} \abs*{\frac{\dx j^y}{\dx\rho}}^2  \rho(t,\dxx xy)
  +  2\tau_\e \ee^{-F_\e^\rho} \abs*{\partial_y \sqrt {u^\calF}}^2\pi_\e(\dxx xy ) }\dx t,
\label{eqdef:D-part2}
\end{align}
\end{subequations}
where we write $u := \dx\rho/\dx{\pi_\e}$, $F_\e^\rho := \rmD\calF_\e(\rho_\e)$, and $u^\calF := u \ee^{F_\e^\rho}$, and we set $\calD^T_\e=+\infty$ whenever $\rho\not\ll \pi_\e$, $j\not\ll \rho$, or $u^\calF$ is not weakly differentiable.

The system that we have now constructed satisfies the lower-bound Property~\ref{property:ChainRuleLowerBound}, and in fact the stronger statement
\begin{lemma}[Chain rule]
\label{l:CRLB-Kramers-epsilon}
Let $\calF\in \sfF$.
For all $(\rho,j)\in \CE(0,T)$ with $\calE_\e(\rho(0))<\infty$, 
\[
\abs*{\calE_\e^\calF(\rho(T)) - \calE_\e^\calF(\rho(0))}  \leq \calD_\e^T(\rho,j;\calF) ,
\]
and in particular,
\begin{equation*}
\calI_\e(\rho,j;\calF) := \calE_\e^\calF(\rho(T)) - \calE_\e^\calF(\rho(0)) + \calD_\e^T(\rho,j;\calF) \geq 0.
\end{equation*}
\end{lemma}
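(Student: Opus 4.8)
The statement is a chain-rule inequality for the tilted functional $\calE_\e^\calF = \calE_\e + \calF$, where $\calF\in\sfF$ is differentiable with bounded derivative on $\ProbMeas(\nodes)$. The plan is to reduce the inequality to the standard chain rule for the relative entropy $\RelEnt(\cdot|\pi_\e)$ along solutions of the continuity equation, controlled by a Fisher-information--type quantity, and then absorb the tilt contribution into the same framework using the reformulated dissipation $\calD_\e^T$ in~\eqref{eq:FP-Kramerslimit:D:tilted}. The key observation is that $\calD_\e^T(\rho,j;\calF)$ was deliberately written (via the formal identity $u|\nabla(\log u + f)|^2 = 4\ee^{-f}|\nabla\sqrt{u\ee^f}|^2$) so that the ``slope'' term is exactly $2\int |\sqrt{\rho}$-gradient of $u^\calF|^2$ weighted by $\ee^{-F_\e^\rho}\pi_\e$, which is the right object to pair with $\partial_t\rho$.

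First I would establish a regularization/approximation step: for $(\rho,j)\in\CE(0,T)$ with $\calD_\e^T(\rho,j;\calF)<\infty$ (otherwise the inequality is trivial) and $\calE_\e(\rho(0))<\infty$, the finiteness of $\calD_\e^T$ forces $\rho(t)\ll\pi_\e$ with $u^\calF(t)=\dx\rho(t)/\dx\pi_\e\cdot\ee^{F_\e^\rho(t)}$ weakly differentiable in space, $j\ll\rho$, and the spatial/internal flux densities $\dx j/\dx\rho$ in $L^2(\rho)$. I would mollify $\rho$ in the $x$- and $y$-variables (using the rectangular geometry of $\Omega$ and compact interval $\Upsilon$, extended by reflection so as to respect the no-flux boundary) to obtain smooth, strictly positive approximants $\rho^\delta$, with $j^\delta$ the correspondingly mollified flux; these still solve the continuity equation with test functions, and by convexity and lower semicontinuity of $\calR_\e$, $\calR_\e^*$ (perspective-function structure, Lemma~\ref{lem:prop:perspective}) one has $\limsup_{\delta\to0}\calD_\e^T(\rho^\delta,j^\delta;\calF)\le\calD_\e^T(\rho,j;\calF)$ along a suitable subsequence; meanwhile $\calE_\e^\calF(\rho^\delta(t))\to\calE_\e^\calF(\rho(t))$ for a.e.\ $t$ using continuity of $\calF$ in total variation and lower semicontinuity/continuity of relative entropy under mollification. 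On smooth positive $\rho^\delta$ the chain rule is classical: $\frac{\dd}{\dd t}\calE_\e^\calF(\rho^\delta(t)) = \int_\nodes (\log u^\delta + F_\e^{\rho^\delta})\,\partial_t\rho^\delta = -\int_\nodes \nabla(\log u^\delta + F_\e^{\rho^\delta})\cdot j^\delta$, where the last equality uses the weak form of the continuity equation with $\varphi = \log u^\delta + F_\e^{\rho^\delta}$ (admissible after mollification and cutoff, since everything is smooth and bounded). Then Young's inequality applied termwise — splitting into the $x$-part and the $y$-part with weights $m_\Omega$ and $\tau_\e$ — gives $|\frac{\dd}{\dd t}\calE_\e^\calF(\rho^\delta(t))| \le \calR_\e(\rho^\delta,j^\delta) + \calR_\e^*(\rho^\delta,-\nabla\rmD\calE_\e^\calF(\rho^\delta))$, and the right side equals the integrand of $\calD_\e^T(\rho^\delta,j^\delta;\calF)$ after using the chain-rule rewriting $u^\delta|\nabla(\log u^\delta + F)|^2 = 4\ee^{-F}|\nabla\sqrt{u^\delta\ee^F}|^2$. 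Integrating in $t$ over $[0,T]$ and passing $\delta\to0$ yields $|\calE_\e^\calF(\rho(T)) - \calE_\e^\calF(\rho(0))| \le \calD_\e^T(\rho,j;\calF)$, and hence $\calI_\e\ge0$.

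The main obstacle is the approximation argument at the two ``singular'' places: (i) giving rigorous meaning to the term $\calR_\e^*(\rho,-\nabla\rmD\calE_\e^\calF(\rho))$ when $u$ vanishes or is merely weakly differentiable, which is precisely why one must work with $\sqrt{u^\calF}$ and its weak gradient rather than with $\log u$; and (ii) ensuring that the mollification commutes well enough with the no-flux boundary condition on the rectangle $\Omega\times\Upsilon$ — this is where the rectangular geometry is used, via even reflection across each face, so that the mollified $\rho^\delta$ and $j^\delta$ still satisfy the weak continuity equation with $C^1_{\mathrm c}(\R^{d+1})$ test functions. A secondary technical point is controlling the tilt term $F_\e^\rho = \rmD\calF_\e(\rho)$: since $\calF\in\sfF$ has $\rmD\calF(\rho)\in C_{\mathrm b}(\nodes)$ uniformly bounded, the factors $\ee^{\pm F_\e^\rho}$ are bounded above and below, so they do not destroy integrability or lower semicontinuity, and the pairing $\int\nabla F_\e^\rho\cdot j^\delta$ is controlled by Young's inequality exactly as the entropy term. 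I expect the measure-theoretic bookkeeping in the limit $\delta\to0$ — particularly lower semicontinuity of the Fisher-type term $\int\ee^{-F}|\nabla\sqrt{u^\calF}|^2\,\pi_\e$ against possibly singular limits — to be the part requiring the most care, though it follows the same perspective-function lower-semicontinuity pattern already used in Lemma~\ref{l:G-lsc}.
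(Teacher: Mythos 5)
Your plan is essentially the paper's own: the paper does not spell out a proof of this lemma but refers to the proof of the limit version (Lemma~\ref{l:CRLB-Kramers-limit} / Lemma~\ref{l:Kramers-CR} in Appendix~\ref{app:CRLB}), which follows exactly your skeleton — reflect across the rectangular boundary, mollify, differentiate the entropy along the smooth curve, apply the Young/duality pairing $\dual j\Xi\leq\calR_\e+\calR^*_\e$, and pass to the limit via convexity (Jensen) and dominated convergence.

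Two organizational points where the paper's argument is cleaner and would strengthen yours. First, before mollifying, the paper interpolates $\rho^\lambda:=\lambda\rho+(1-\lambda)\pi_\e$, which gives a \emph{uniform} lower bound $u^\lambda\geq 1-\lambda$; mollification with a strictly positive kernel gives $u^\delta>0$ at fixed $\delta$ but no control on $\inf u^\delta$ as $\delta\to0$, so the $\lambda$-step is genuinely useful for the monotone/dominated convergence arguments at the end. Second, the paper proves the chain-rule \emph{identity} for $\calE_\e$ alone (no tilt) on the regularized objects, and then obtains the $\calE_\e^\calF$ version by the $C^1$-smoothness of $\calF$ in a separate, easy step. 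This sidesteps the subtlety that $\calD_\e^T(\rho^\delta,j^\delta;\calF)$ involves $F_\e^{\rho^\delta}=\rmD\calF(\rho^\delta)$ rather than $F_\e^\rho$, so the Jensen estimate you invoke for $\limsup_\delta\calD_\e^T(\rho^\delta,j^\delta;\calF)\leq\calD_\e^T(\rho,j;\calF)$ does not apply directly; your route works but requires an extra uniform-convergence argument ($F_\e^{\rho^\delta}\to F_\e^\rho$ in $C_\mathrm{b}(\nodes)$, using that $\rho^\delta\to\rho$ narrowly and $\calF\in\sfF$) that should be made explicit. Everything else — the role of $\sqrt{u^\calF}$, the reflection trick, the lower semicontinuity of the Fisher terms — matches the paper.
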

\noindent
The proof is with minor changes similar to the proof of Lemma~\ref{l:CRLB-Kramers-limit} below (see Appendix~\ref{app:CRLB}) and we omit it.
If $\calI_\e(\rho,j;\calF)=0$, then $\rho$ is a weak solution of 
	\begin{align*}
		\partial_t \rho - m_\Omega\div_x ( \nabla_x \rho + \rho \nabla_x F_\e^\rho)
		- \tau_\e \div_y  \bra*{\nabla_y \rho +\rho \nabla_y \bra*{ \frac1\e H + F_\e^\rho}} &=0
		\quad \text{in } \nodes,\\
		( \nabla_x \rho + \rho \nabla_x F_\e^\rho)\cdot n &= 0 \quad \text{on }\partial\Omega\times \Upsilon,\\
		\bra*{\nabla_y \rho +\rho \nabla_y \bra*{ \frac1\e H + F_\e^\rho}} \cdot n &= 0 \quad \text{on }\Omega\times \partial\Upsilon. 
	\end{align*}
Note that in comparison to~\eqref{eq:FP-Kramerslimit}, the system may no longer be linear, because $F_\e^\rho$ may depend on~$\rho$. 

\paragraph{Rescaling.}
Following~\cite{ArnrichMielkePeletierSavareVeneroni12,LieroMielkePeletierRenger17,PeletierSchlottke21TR} we introduce a rescaling in the $y$-direction, which desingularizes the system in the limit $\e \to 0$.
Define the map $z_\e: \Upsilon \to \hUpsilone := z_\e(\Upsilon)$ by 
\[
z_\e(y) := \frac{m_\Upsilon \PartSum_\e}{\tau_\e |\Omega|} \int_a^y \ee^{H( y')/\e}\dx y'
\;\stackrel{\eqref{eq:def:Kramers:tau}}=\; \frac{\int_a^y \ee^{H( y')/\e}\dx y'}{\int_a^b \ee^{H( y')/\e}\dx y'} .
\]
The following properties are straightforward to verify.
\begin{lemma}
\label{l:asymp-behaviour-ze}
The function $z_\e$ has the following properties:
\begin{enumerate}
\item $z_\e$ maps the wells at $y=a,b$ to $z=0,1$;
\item $z_\e(y)$ is uniformly bounded in $y$ and $\e$, and converges to $\bONE_\{y>c\}$, uniformly on any set $\{y:|y-c|>\delta\}$;
\item $z_\e^{-1}(z)$ converges to $c$ for $0<z<1$;
\item Because of part~\ref{ass:H:global-bound} of Assumption~\ref{ass:H}, $\hUpsilone = z_\e(\Upsilon)$ is bounded in $\e$ and such that $\forall \e >0: \hUpsilone \subseteq \hUpsilon$ for some bounded interval $\hUpsilon$;
\item We have the identity (in terms of the Lebesgue density $\pi_\e(x,y)$ of the measure $\pi_\e$)
\begin{equation}
\label{eq:z'tau}
\tau_\e z_\e'(y) \pi_\e(x,y) = \frac{m_\Upsilon}{|\Omega|} .
\end{equation}

\end{enumerate}
\end{lemma}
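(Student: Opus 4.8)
The statement is Lemma~\ref{l:asymp-behaviour-ze}, whose five parts are, as the text says, straightforward verifications; so the plan is simply to unwind the definition of $z_\e$ and apply Watson's Lemma (Laplace's method) on the interval $[a,b]$ together with Assumption~\ref{ass:H}. First I would record the defining identity
\[
z_\e(y) = \frac{\int_a^y \ee^{H(y')/\e}\dx{y'}}{\int_a^b \ee^{H(y')/\e}\dx{y'}},
\]
from which parts~1 is immediate ($z_\e(a)=0$, $z_\e(b)=1$ by inspection), and part~5 follows by differentiating: $z_\e'(y) = \bigl(\int_a^b \ee^{H/\e}\bigr)^{-1}\ee^{H(y)/\e}$, while by definition $\pi_\e(x,y) = \PartSum_\e^{-1}\ee^{-H(y)/\e}$ and $\tau_\e = m_\Upsilon\PartSum_\e|\Omega|^{-1}\int_a^b\ee^{H/\e}\dx{y}$; multiplying these three gives $\tau_\e z_\e'(y)\pi_\e(x,y) = m_\Upsilon/|\Omega|$ with all $\e$-dependence cancelling. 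Parts~1 and~5 therefore require no asymptotics at all.

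For parts~2 and~3, I would use that $H$ attains its maximum on $[a,b]$ uniquely at $c$ (Assumption~\ref{ass:H}.3--4): fix $\delta>0$ and split $\int_a^y\ee^{H/\e}$ according to whether $y\le c-\delta$ or $y\ge c+\delta$. On $\{y\le c-\delta\}$ the integrand is bounded by $\ee^{(H(c)-\eta_\delta)/\e}$ for some $\eta_\delta>0$, which is exponentially small relative to the denominator $\int_a^b\ee^{H/\e}\ge \ee^{H(c)/\e}\cdot(\text{const}\,\sqrt\e)$ by the lower bound in Laplace's method around $c$; hence $z_\e(y)\to 0$ uniformly on $\{y\le c-\delta\}$. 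Symmetrically, on $\{y\ge c+\delta\}$ one writes $\int_a^y = \int_a^b - \int_y^b$ and the same estimate gives $z_\e(y)\to 1$ uniformly there. This proves the uniform convergence $z_\e\to\bONE\{y>c\}$ away from $c$, and the uniform boundedness $0\le z_\e\le 1$ is clear from monotonicity of $y\mapsto\int_a^y\ee^{H/\e}$. Part~3 is the ``inverse'' statement: for fixed $z\in(0,1)$, $z_\e^{-1}(z)$ is the unique $y$ with $z_\e(y)=z$; if a subsequence had $z_\e^{-1}(z)\to y_*\ne c$, then by part~2 (applied with $\delta = |y_*-c|/2$) we would get $z_\e(z_\e^{-1}(z))\to 0$ or $1$, contradicting $z_\e(z_\e^{-1}(z))\equiv z\in(0,1)$; monotonicity of $z_\e$ ensures $z_\e^{-1}$ is well-defined on $(0,1)$ for each $\e$.

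For part~4, I would note $\hUpsilone = z_\e(\Upsilon) = [z_\e(\min\Upsilon),\,z_\e(\max\Upsilon)]$ by monotonicity, so it suffices to bound $z_\e$ on all of $\Upsilon$, not just $[a,b]$. For $y>b$ (resp.\ $y<a$) one has $z_\e(y) = 1 + \bigl(\int_a^b\ee^{H/\e}\bigr)^{-1}\int_b^y\ee^{H/\e}$; using part~\ref{ass:H:global-bound} of Assumption~\ref{ass:H}, on the compact set $\Upsilon\setminus(c-\delta,c+\delta)$ we have $H\le H(c)-\eta_\delta$, so $\int_b^y\ee^{H/\e}\le|\Upsilon|\ee^{(H(c)-\eta_\delta)/\e}$, which divided by $\int_a^b\ee^{H/\e}\gtrsim\sqrt\e\,\ee^{H(c)/\e}$ tends to $0$; hence $z_\e(y)\le 1+o(1)$ uniformly, and similarly $z_\e(y)\ge -o(1)$, so $\hUpsilone\subseteq[-1,2]=:\hUpsilon$ for $\e$ small, and one enlarges $\hUpsilon$ to a fixed interval covering also the finitely-many remaining values of $\e$. \textbf{The main obstacle} is essentially bookkeeping rather than conceptual: one must be careful that the lower bound $\int_a^b\ee^{H/\e}\ge c_0\sqrt\e\,\ee^{H(c)/\e}$ from Laplace's method is legitimate given only $H\in C^2$ with a genuine interior maximum at $c$ (so $H''(c)<0$ by Assumption~\ref{ass:H} implicitly, via the ``strictly less than $H(c)$ away from $c$'' condition plus $C^2$), and that all the ``uniformly on $\{|y-c|>\delta\}$'' claims are quantified with $\delta$ fixed first and $\e\to0$ second; none of this is hard, but it is where an incautious write-up could slip.
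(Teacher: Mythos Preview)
Your proof is correct, and the paper itself omits the proof entirely (it states only that the properties ``are straightforward to verify''), so there is no alternative argument to compare against. One small remark: for parts~2--4 you do not actually need the sharp Laplace lower bound $\int_a^b \ee^{H/\e}\gtrsim \sqrt\e\,\ee^{H(c)/\e}$ (and hence do not need $H''(c)<0$); the cruder bound $\int_a^b \ee^{H/\e}\ge 2\delta'\,\ee^{(H(c)-\eta_\delta/2)/\e}$, obtained by restricting to a neighbourhood of $c$ where $H\ge H(c)-\eta_\delta/2$, already gives exponential decay of the ratio. Also, your claim ``$0\le z_\e\le 1$ by monotonicity'' in part~2 holds only on $[a,b]$; the full uniform bound on $\Upsilon$ follows from your part~4 argument, so you may want to reorder or cross-reference accordingly.
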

 Define the corresponding 
transformation $\Psi_\e:\nodes\to \wh\nodes_\eps := \Omega\times \hUpsilone$ by
\[
\Psi_\e(x,y) := (x,z_\e(y)) \qquad\text{with Jacobian}\quad \rmD\Psi_\e(x,y) = \diag(\mathrm I_d,z_\e'(y)).
\]
We transform the measure $\pi_\e$ and a pair $(\rho,j)\in \CE(0,T)$ on $\nodes$ to new objects on $\wh\nodes_\eps$ by
\begin{subequations} 
\label{eqdef:hat-transformed-objects}
\begin{align}
\hpi_\e &:= (\Psi_\e)_\# \pi_\e,\\
\hrho &:= (\Psi_\e)_\# \rho,\\
\hj &:=\bigl( \rmD\Psi_\e \circ \Psi_\e^{-1}\bigr)(\Psi_\e)_\# j,\\
&\quad\text{or equivalently }\hj^x = (\Psi_\e)_\# j^x \text{ and } \hj^y = (z_\e'\circ z_\e^{-1})\;  (\Psi_\e)_\# j^y,\notag \\
&\quad\text{or in terms of densities:}\quad \hj^x(x,z_\e(y))z_\e'(y) = j^x(x,y) \text{ and } \hj^y(x,z_\e(y)) = j^y(x,y). \notag
\end{align}
\end{subequations}
We extend these measures by zero to the $\eps$-independent domain  $\hnodes := \Omega\times \hUpsilon\supseteq \hnodes_\e$.
With this transformation, $(\hrho,\hj)$ satisfies the continuity equation both on $\hnodes_{\e T}:= [0,T] \times \hnodes_\e$ and on $\hnodesT:= [0,T]\times \hnodes$ (see Appendix~\ref{ss:Kramers:append:rescaling:CE}).

The functions $u$ and $F_\e^\rho$ transform as 
\[
\hu (x,z_\e(y)) := \frac{\dx \hrho}{\dx \hpi_\e} (x,z_\e(y)) = u(x,y), 
\qquad\text{and}\qquad
\hF_\e^\rho(x,z_\e(y)) := F_\e^\rho(x,y),
\]
which implies that the transformation $y\rightsquigarrow z$ and the multiplication by $\ee^{F_\e^\rho}$ commute:
\[
\hu^\calF := \widehat{u^\calF }= \hu\, \ee^{\hF_\e^\rho}.
\]
We now rewrite $\calD^T_\e$ in~\eqref{eq:FP-Kramerslimit:D:tilted} as function of the transformed variables $\hrho$ and $\hj$.
By following the details in Appendix~\ref{ss:Kramers:append:rescaling:D}, we obtain the rescaled expression
\begin{subequations}
\begin{align}
\calD_\e^T (\rho,j;\calF_\e) &= \wh \calD_\e^T(\hrho,\hj;\calF_\e) := \wh\calD_\e^{T,x}(\hrho,\hj;\calF_\e)
+ \wh\calD_\e^{T,y}(\hrho,\hj;\calF_\e) := \notag\\
&:= \int_{\hnodes_{\e T}} \set[\bigg]{\frac1{2m_\Omega} \abs*{\frac{\dx \hj^x}{\dx\hrho}}^2 \hrho(t,\dxx xz)
  + 2m_\Omega \ee^{-\hF_\e^\rho}\abs*{\nabla_x \sqrt {\hu^\calF}}^2\hpi_\e(\dxx xz)}\dx t
  \label{eqdef:hD-part1}\\
&\qquad {}+  \int_{\hnodes_{\e T}} 
\set[\bigg]{\frac{|\Omega|}{2m_\Upsilon} \ee^{\hF_\e^\rho} \frac{|\hj^y|^2}{\hu^\calF}  
  + \frac{2m_\Upsilon}{|\Omega|} \ee^{-\hF_\e^\rho}\abs*{\partial_z \sqrt{\hu^\calF}}^2 }\dxxx xz t.
  \label{eqdef:hD-part2}
\end{align}
\end{subequations}

\subsection{Main results and discussion}
\label{ss:Kramers:main-results}

Since the setup is general enough to allow for an $\e$-dependent family of tilts $(\calF_\e)_\e\subset \sfF$ from~\eqref{eq:Kramers:tilts}, we  specify a convergence concept $\calF_\e \longrightarrow \calF_0$ for passing to the limit in $\e\to 0$.
\begin{assumption}[Assumption on the tilts]
	\label{ass:F}
	Let $\calF_\e,\calF_0\in \sfF$ satisfy $\calF_\e\longrightarrow\calF_0$ 
	in the following sense:
	\[
	\text{for all }\rho_\e\weaksto\rho_0, \qquad
	\abs*{\calF_\e(\rho_\e) - \calF_0(\rho_0)} + 
	\norm*{\rmD\calF_\e(\rho_\e)
		- \rmD\calF_0(\rho_0)}_{C_{\mathrm b}(\nodes)} \longrightarrow 0.
	\]
	In addition, $\rmD\calF_\e$ is assumed to have a uniform $y$-modulus of continuity, i.e.\ there exists a continuous function $\moc:[0,\infty)\to[0,\infty)$ with $\moc(0)=0$  such that 
	\begin{equation}
		\label{eqdef:moc}
		|\rmD\calF_\e(\rho)(x,y)- \rmD\calF_\e(\rho)(x,y')|\leq \moc(|y-y'|)
		\qquad\text{for all $\e$, $\rho$, and $x$, and all $y,y'\in \Upsilon$.}
	\end{equation}
\end{assumption}
From now on, we fix a sequence of tilts~$(\calF_\e)_\e \subset \sfF$ satisfying Assumption~\ref{ass:F}. 
Then, we find from the uniform convergence of $\rmD\calF_\e(\rho_\e) = F_\e^\rho$ and Lemma~\ref{l:asymp-behaviour-ze} that 
\begin{equation*}
\forall z\in [0,1]: \qquad 
\hF_\e^\rho(t,x,z) = F_\e^\rho(t,x,z_\e^{-1}(y)) \longrightarrow 
\begin{cases}
	F^\rho(t,x,a) & \text{if }z=0,\\
	F^\rho(t,x,c) & \text{if }0<z<1,\\
	F^\rho(t,x,b) & \text{if }z = 1.
\end{cases}
\end{equation*}

The first main result is a compactness theorem that we state here in simplified form (see Theorem~\ref{t:compactness}):
\begin{theorem}[Compactness]
	Let $\e_n$ be a sequence that converges to zero.
	Let $(\calF_{\e_n})_n$ satisfy Assumption~\ref{ass:F}, and let the sequence $(\rho_{\e_n},j_{\e_n})_n\subset \CE(0,T)$ satisfy the uniform dissipation bound
\begin{equation}
	\label{est:initial-energy-D}
	\sup_{n} \calE_{\e_n}^\calF(\rho_{\e_n}(t=0)) + \calD^T_{\e_n}(\rho_{\e_n},j_{\e_n};\calF_{\e_n}) < \infty.
\end{equation}	
Then there exists $\rho\in \calM_{\geq0}(\wt\nodes_T)$, and $j=(j^x,\overline\jmath)$ with $j^x\in \calM(\wt\nodes_T;\R^d)$, $\overline \jmath\in \calM(\Omega_T)$,  such that along a subsequence $(\rho_{\e_n},j_{\e_n})$ converges in some suitable topology to a limit $(\rho,j)\in \wt\CE(0,T)$.
\end{theorem}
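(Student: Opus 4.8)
The plan is to extract compactness from the dissipation bound~\eqref{est:initial-energy-D} term by term, working in the rescaled variables $(\hrho_{\e_n},\hj_{\e_n})$ on the $\e$-independent domain $\hnodes_T$. First I would observe that the energy bound $\sup_n \calE_{\e_n}^\calF(\rho_{\e_n}(0))<\infty$ together with $\calF_{\e_n}\in\sfF$ (so $\rmD\calF_{\e_n}$ is uniformly bounded) gives $\sup_n\calE_{\e_n}(\rho_{\e_n}(0))<\infty$, and then the chain-rule Lemma~\ref{l:CRLB-Kramers-epsilon} propagates this to a uniform energy bound $\sup_{n,t}\calE_{\e_n}(\rho_{\e_n}(t))<\infty$; in particular each $\rho_{\e_n}(t)$ is a probability measure on the compact set $\nodes$, so by Prokhorov the marginals are tight. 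From the $\sfC$-part of $\wh\calD$ in~\eqref{eqdef:hD-part2} one reads off, using~\eqref{eq:C-star-logpq}-type identities and the uniform bound on $\hF_\e^\rho$, an $L^2$-in-time bound on $\partial_z\sqrt{\hu_{\e_n}^\calF}$ and on the flux $\hj^y_{\e_n}$; from~\eqref{eqdef:hD-part1} a corresponding bound on $\nabla_x\sqrt{\hu_{\e_n}^\calF}$ and on $\hj^x_{\e_n}$.

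Next I would pass to the limit. The measures $\hrho_{\e_n}$, being probability measures on the compact $\hnodes$ for each $t$ with a uniform-in-time modulus of time-continuity coming from the continuity equation tested against fixed $C^1_{\mathrm c}$ functions (whose gradients are controlled by the flux bounds), converge along a subsequence in $C([0,T];\calM_{\geq0}(\hnodes))$ with the narrow topology, by an Arzel\`a--Ascoli / Aldous-type argument; call the limit $\hrho$. The fluxes $\hj_{\e_n}$ are bounded in $\calM([0,T]\times\hnodes;\R^{d+1})$ by the dissipation bound (the $\sfC$-terms dominate $|\hj|$ up to the activity prefactor $\sqrt{\hu^\calF}$, which is controlled because $\hrho$ has unit mass), so they converge weakly-$*$ along a further subsequence to some $\hj$. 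Linearity of the continuity equation in $(\rho,j)$ then yields $(\hrho,\hj)\in\widehat\CE(0,T)$ on $\hnodes_T$. The energy bound forces $\supp\hrho(t)\subseteq\Omega\times\{0,1\}$ in the limit (since $\hpi_{\e_n}$ concentrates there and relative entropy is lower semicontinuous), which is exactly the structure~\eqref{char:rho0}; and the $\partial_z$-bound together with Lemma~\ref{lem:Kramers:limit:jy} forces $\hj^y$ to have the form~\eqref{char:j0}, i.e.\ $\hj^y = \ol\jmath(t,\dx x)\bONE_{[0,1]}(z)\dx z$. Transferring back via the (trivial, in the limit) identification $\hnodes\cap(\Omega\times\{0,1\}) \leftrightarrow \wt\nodes = \Omega\times\{a,b\}$, the pair $(\rho,j)=(\hrho,(\hj^x,\ol\jmath))$ lies in $\wt\CE(0,T)$ by the equivalence of Definitions~\ref{defn:Kramers:CE} and~\ref{defn:Kramers:CE0}.

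The main obstacle I anticipate is controlling the $y$-flux (equivalently $z$-flux) near the barrier and establishing that it collapses onto the single emergent edge with the correct spatial profile $\bONE_{[0,1]}$. The difficulty is that $\hj^y_{\e_n}$ is only bounded as a measure, not in any better space, and it lives on the $\e$-dependent domain $\hnodes_{\e_n}$ whose $z$-extent shrinks to $[0,1]$; one must show no mass of $\ol\jmath$ escapes to $z\notin[0,1]$ and that the weak limit really is constant in $z$ on $(0,1)$. This is where the rescaling is essential: the key identity~\eqref{eq:z'tau}, $\tau_\e z_\e'\pi_\e = m_\Upsilon/|\Omega|$, turns the $y$-dissipation term into a well-behaved expression on the fixed domain, and combined with the $L^2$ bound on $\partial_z\sqrt{\hu^\calF}$ (which, since $\hu^\calF$ must vanish in the limit on $0<z<1$ unless it is matched by finite dissipation, pins the profile) one deduces the structure~\eqref{char:j0}. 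I would handle this by a cut-off argument in $z$ away from $\{0,1\}$, using part~\ref{l:props-N:infty} of Lemma~\ref{l:props-N} (or directly the perspective-function structure) to rule out a singular contribution, exactly as in the lower-semicontinuity Lemma~\ref{l:G-lsc}. The remaining bookkeeping — measurability in $t$, the no-flux boundary behaviour, and the extension-by-zero to $\hnodes$ — is routine and follows the cited companion results.
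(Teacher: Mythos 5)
Your proposal has the right skeleton (rescale, extract $L^2$ bounds from the dissipation, pass to the limit in $\hrho$ and $\hj$, identify the support and flux structure, contract), but there is a genuine quantitative gap in how you bound the $y$-flux, and it is exactly the hard part of this theorem. You write that $\hj_{\e_n}$ is bounded in $\calM([0,T]\times\hnodes)$ because ``the $\sfC$-terms dominate $|\hj|$ up to the activity prefactor $\sqrt{\hu^\calF}$, which is controlled because $\hrho$ has unit mass.'' Two things go wrong here. First, the pre-limit dissipation~\eqref{eqdef:hD-part2} is \emph{quadratic}, not of cosh type; the cosh bound $|j\log(\beta/\alpha)|\leq \CCs$ becomes available only after the cell-problem contraction~\eqref{eq:Kramers:lb:Dy}, which concerns the $z$-averaged flux, not the pointwise $\hj^y$. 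At fixed $\e$ you must use Cauchy--Schwarz: $\int|\hj_\e^y|\leq(\int|\hj_\e^y|^2/\hu_\e)^{1/2}(\int\hu_\e\,\dx x\,\dx z\,\dx t)^{1/2}$. Second, and more importantly, the factor $\int\hu_\e\,\dx x\,\dx z\,\dx t$ is \emph{not} controlled by unit mass: unit mass gives $\int\hu_\e\,\dx\hpi_\e=1$, but $\hpi_\e$ concentrates onto $z\in\{0,1\}$ as $\e\to0$, so this inequality says nothing about $\int\hu_\e\,\dx z$ on $(0,1)$. If $\hu_\e$ were large near $z=1/2$, the $\hpi_\e$-integral would still be bounded while the Lebesgue integral would blow up.

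What is actually needed — and what the paper devotes a separate step to — is a trace/oscillation estimate: combine the $L^2$ bound on $\partial_z\sqrt{\hu_\e^\calF}$ with the Poincar\'e-type inequality
\[
\sup_{z\in\hUpsilon_\e}\sqrt{\hu_\e^\calF(t,x,z)} \;\leq\; \frac{1}{\hpi_\e^y(\hUpsilon_\e)}\int_{\hUpsilon_\e}\sqrt{\hu_\e^\calF(t,x,z')}\,\hpi_\e^y(\dx z') + |\hUpsilon_\e|^{1/2}\Bigl(\int_{\hUpsilon_\e}|\partial_z\sqrt{\hu_\e^\calF}|^2\dx z\Bigr)^{1/2},
\]
and then the first term \emph{is} controlled by the mass constraint (because it is a $\hpi_\e^y$-average) while the second is controlled by the dissipation. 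Squaring and integrating yields $\int_{\Omega_T}\sup_z\hu_\e\,\dxx tx\leq C$, which is what makes the Cauchy--Schwarz bound on $\int|\hj_\e^y|$ close up. Without this step your flux-compactness claim, and hence the Helly/Aldous argument for pointwise-in-time convergence of $\hrho_\e$, is unsupported. The rest of your outline (using Lemma~\ref{lem:Kramers:limit:jy} to get the $z$-constant profile, the cut-off/one-homogeneity argument à la Lemma~\ref{l:G-lsc} to exclude singular flux, identification of $\wt\CE$) agrees with the paper's route once the trace estimate is in place.
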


The proof of this theorem is a generalization of that of~\cite[Th.~3.2]{ArnrichMielkePeletierSavareVeneroni12}, to allow for sequences of tilts $\calF_\e$ and the additional dependence on $x$. 

\medskip

The next step is to obtain suitable $\Gamma$-$\liminf$ estimates on the dissipation potentials in~\eqref{eqdef:hD-part1} and~\eqref{eqdef:hD-part2}. For the first dissipation term $\wh \calD_\e^{T,x}(\hrho,\hj;\calF_\e)$, the  compactness and convergence properties established by  Theorem~\ref{t:compactness} are sufficient for a straightforward passing to the limit. 

It is in the second dissipation term $\wh\calD_\e^{T,y}(\rho,j;\calF_\e)$  that the singularity can be observed. 
By truncating the domain from $\wh\nodes$ to $\Omega\times [0,1]$, the integral  in~\eqref{eqdef:D-part2} can be bounded from below by using the functional $\CCs$ from Lemma~\ref{l:props-N} as follows:
\begin{align}
	\calD_\e^{T,y}(\rho_\e,j_\e;\calF_\e) 
	&= \int_{\Omega_T}\int_{\hUpsilon_\e} 
	\set*{\frac{|\Omega|}{2m_\Upsilon} \ee^{\hF_\e^\rho} \frac{|\hj^y_\e|^2}{\hu_\e^\calF}  
		+ \frac{2m_\Upsilon}{|\Omega|} \ee^{-\hF_\e^\rho}\abs*{\partial_z \sqrt{\hu_\e^\calF}}^2 } \dx z \dxx xt \notag \\
	&\geq\int_{\Omega_T}\int_0^1 
	\set*{\frac{|\Omega|}{2m_\Upsilon} \ee^{\hF_\e^\rho} \frac{|\hj^y_\e|^2}{\hu_\e^\calF}  
		+ \frac{2m_\Upsilon}{|\Omega|} \ee^{-\hF_\e^\rho}\abs*{\partial_z \sqrt{\hu_\e^\calF}}^2 }\dx z \dxx xt \notag \\
	& \geq\int_{\Omega_T} \CCs\bra*{\hj^y_\e(t,x,\cdot)\big|_{[0,1]},\hu_\e^\calF(t,x,a), \hu_\e^\calF(t,x,b); \frac{m_\Upsilon}{|\Omega|} \ee^{-\hF_\e^\rho(t,x,\cdot)}}\dxx xt. \label{eq:Kramers:lb:Dy}
\end{align}
In this form, we can use the lower-semicontinuity properties of the function~$\CCs$ of~Lemma~\ref{l:props-N} and  its representation~\eqref{eqdef:N:explicit} to conclude the lower bound and arrive heuristically at
\begin{equation}
\label{ineq:Kramers-lower-bound-formal}
	\liminf_{\eps\to 0} \calD_\e^{T,y}(\rho_\e,j_\e;\calF_\e) \geq  \int_{\Omega_T} \sigma(t,x;\rho,\calF) \pra*{\sfC\bra*{\frac{\ol\jmath(t,x)}{\sigma(t,x;\rho,\calF)}} 
		+ \sfC^*\bra*{\log \frac{u^{\calF}(t,x,b)}{u^{\calF}(t,x,a)}}}\dxx x t. 
\end{equation}
Since it is not  guaranteed that $\sigma>0$ or $u^\calF>0$, we use the formulation~\eqref{eqdef:N:explicit:rigrous} of $\CCs$ to arrive at a rigorous result, which is part of the following theorem.

\begin{theorem}[Lower bound]
	\label{t:lower-bound}
	Assume that the sequence $(\rho_\e,j_\e)$ satisfies the convergence properties of Theorem~\ref{t:compactness}, and let $\calF_\e,\calF_0$ satisfy Assumption~\ref{ass:F}.
	Then the functional $\calD^T_\e$ defined in~\eqref{eq:FP-Kramerslimit:D:tilted} satisfies the $\Gamma$-lower bound
	\[
	\liminf_{\e\to0} \calD^T_\e(\rho_\e,j_\e;\calF_\e) \geq \calD^T_0(\rho_0,j_0;\calF_0),
	\]
	where 
	\begin{align}	\label{eqdef:D0T-Kramers}
		\MoveEqLeft\calD_0^T(\rho,j;\calF) :=  \int_{\nodes_T} \frac1{2m_\Omega} \abs*{\frac{\dx j^x}{\dx\rho}}^2\dx \rho\dx t   +  \int_{\nodes_T} 2{m_\Omega} \ee^{-F^\rho}\abs*{\nabla_x \sqrt {u^\calF} }^2\pi_0(\dxx xy)\dx  t 
		 \\
		&+ \int_{\Omega_T} \pra*{\sfC\bra*{\ol\jmath(t,x)\, \big| \, \sigma(t,x;\rho,\calF)} 
		+ 2\frac{m_\Upsilon}{|\Omega|} \ee^{-F^\rho(t,x,c)}\bra[\big]{\sqrt{u^{\calF}(t,x,b)}-\sqrt{u^{\calF}(t,x,a)}}^2}\dxx x t. \notag
	\end{align}
	Here  $\calD_0^T(\rho,j;\calF) = +\infty$ unless $(\rho,j)\in\wt\CE(0,T)$  with $\rho(t) \ll \pi_0$ for almost all $t\in[0,T]$. 	In these expressions
	\begin{subequations}
	\label{eqdef:sigma:Kramers}
	\begin{align}
	    \notag
		u(t,x,y) &:= \frac{\dx\rho}{\dx{\pi_0}}(t,x,y),\quad u^\calF = u \ee^{F^\rho}, 
		\qquad F^\rho = (\delta\calF/\delta\rho)(\rho),
		\qquad \qquad\text{and}\\
	\sigma(t,x;\rho,\calF)
	&:= \frac{{m_\Upsilon}}{\abs*{\Omega}} \ee^{-F^\rho(t,x,c)} \sqrt{ u^\calF(t,x,a) u^\calF(t,x,b)} \label{eqdef:sigma:Kramers-V1}\\
	\label{eq:char-sigma-measure-Kramers}
	 &=\frac{m_\Upsilon}{|\Omega|} \sqrt{\frac{\dx\rho}{\dx\pi_0}(x,a)\frac{\dx\rho}{\dx\pi_0}(x,b)}
  		\;\exp{\tfrac12 \bigl(F^\rho(x,a)+F^\rho(x,b)- 2F^\rho(x,c)\bigr)}.
	\end{align}
	\end{subequations}
\end{theorem}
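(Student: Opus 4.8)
\textbf{Proof plan for Theorem~\ref{t:lower-bound}.}
The plan is to split the dissipation functional as $\calD_\e^T = \wh\calD_\e^{T,x} + \wh\calD_\e^{T,y}$ (after the rescaling $\Psi_\e$, using that $\calD_\e^T(\rho,j;\calF_\e)=\wh\calD_\e^T(\hrho,\hj;\calF_\e)$) and to treat the two terms by entirely different mechanisms. First I would dispose of the easy part: the $x$-term $\wh\calD_\e^{T,x}(\hrho,\hj;\calF_\e)$ in~\eqref{eqdef:hD-part1} is a sum of two nonnegative integrals, each of which is jointly convex and lower semicontinuous in its measure-valued arguments $(\hj^x,\hrho)$ and $(\sqrt{\hu^\calF},\hpi_\e)$, respectively. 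Using the compactness and convergence from Theorem~\ref{t:compactness}---in particular $\hrho_\e\weaksto\rho$, $\hj^x_\e\weaksto j^x$, the fact that $\hpi_\e \to \pi_0$ and that $\sqrt{\hu^\calF_\e}\to\sqrt{u^\calF}$ in the appropriate weighted $H^1$-sense on compact subsets of $\{0<z<1\}$, together with $\hF_\e^\rho\to F^\rho$ uniformly---a standard Ioffe/Reshetnyak lower-semicontinuity argument (the same one packaged in Lemma~\ref{l:Reshetnyak}) gives
\[
\liminf_{\e\to0}\wh\calD_\e^{T,x}(\hrho_\e,\hj_\e;\calF_\e)
\;\ge\; \int_{\nodes_T}\frac1{2m_\Omega}\abs*{\frac{\dx j^x}{\dx\rho}}^2\dx\rho\dx t
 + \int_{\nodes_T} 2m_\Omega\,\ee^{-F^\rho}\abs*{\nabla_x\sqrt{u^\calF}}^2\,\pi_0(\dxx xy)\dx t,
\]
which is exactly the first line of~\eqref{eqdef:D0T-Kramers}; here one uses that $\pi_0$ is supported on $\Omega\times\{a,b\}$ so only the traces at $z=0,1$ survive, and that finiteness of the liminf forces $\rho(t)\ll\pi_0$ and $j^x\ll\rho$.

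The heart of the proof is the $y$-term. I would follow the chain~\eqref{eq:Kramers:lb:Dy}: truncate the $z$-integration from $\hUpsilon_\e$ down to $[0,1]$ (legitimate since the integrand is nonnegative, and $z=0,1$ are the images of the wells $y=a,b$ by Lemma~\ref{l:asymp-behaviour-ze}(1)), then recognize that for each fixed $(t,x)$ the truncated integral
\[
\int_0^1\set*{\frac{|\Omega|}{2m_\Upsilon}\ee^{\hF_\e^\rho}\frac{|\hj_\e^y|^2}{\hu_\e^\calF} + \frac{2m_\Upsilon}{|\Omega|}\ee^{-\hF_\e^\rho}\abs*{\partial_z\sqrt{\hu_\e^\calF}}^2}\dx z
\]
is bounded below by $\CCs\bigl(\hj_\e^y(t,x,\cdot)|_{[0,1]};\hu_\e^\calF(t,x,0),\hu_\e^\calF(t,x,1);\tfrac{m_\Upsilon}{|\Omega|}\ee^{-\hF_\e^\rho(t,x,\cdot)}\bigr)$, directly from the variational definition~\eqref{eqdef:calN} of $\CCs$ (the constraint $v(0)=\sqrt{\hu_\e^\calF(t,x,0)}$, $v(1)=\sqrt{\hu_\e^\calF(t,x,1)}$ is met by $v=\sqrt{\hu_\e^\calF(t,x,\cdot)}$, and the $\frac12 v^2\varphi^2$ term comes from completing the square against $\hj_\e^y$). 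Integrating in $(t,x)$ yields~\eqref{eq:Kramers:lb:Dy}. Now I pass to the limit using Lemma~\ref{l:G-lsc}: the family of $\calY$-valued measures $\mu_\e := (\hj_\e^y(t,x,\cdot)|_{[0,1]},\,\hu_\e^\calF(t,x,0),\,\hu_\e^\calF(t,x,1))\dx x\dx t$ converges weakly (from Theorem~\ref{t:compactness}, noting $\hj_\e^y\weaksto \ol\jmath\otimes\bONE_{[0,1]}$ via~\eqref{char:j0} and $\hu_\e^\calF$-traces converging to $u^\calF$-traces), and the kernels $k_\e(t,x,\cdot):=\tfrac{m_\Upsilon}{|\Omega|}\ee^{-\hF_\e^\rho(t,x,\cdot)}$ satisfy $k_\e\ge k_0>0$ (by the uniform bound $\|\rmD\calF_\e\|_{C_{\mathrm b}}\le C$ in~\eqref{eq:Kramers:tilts}) and $k_\e\to k_0:=\tfrac{m_\Upsilon}{|\Omega|}\ee^{-F^\rho(t,x,c)}\bONE_{(0,1)}$ in $L^1$ (from Assumption~\ref{ass:F}, the uniform convergence $\hF_\e^\rho\to F^\rho$, and the fact that $z_\e^{-1}(z)\to c$ pointwise on $(0,1)$ so $\hF_\e^\rho(t,x,z)\to F^\rho(t,x,c)$; the uniform $y$-modulus of continuity $\moc$ in~\eqref{eqdef:moc} gives the needed equi-integrability). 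Lemma~\ref{l:G-lsc}(1) then delivers
\[
\liminf_{\e\to0}\int_{\Omega_T}\CCs\bigl(\mu_\e\text{-density};k_\e\bigr)\dxx xt
\;\ge\;\int_{\Omega_T}\CCs\bigl(\ol\jmath(t,x),u^\calF(t,x,a),u^\calF(t,x,b);\tfrac{m_\Upsilon}{|\Omega|}\ee^{-F^\rho(t,x,c)}\bigr)\dxx xt,
\]
and finally I invoke the explicit formula~\eqref{eqdef:N:explicit:rigrous} with $k=\tfrac{m_\Upsilon}{|\Omega|}\ee^{-F^\rho(t,x,c)}$, $\alpha=u^\calF(t,x,a)$, $\beta=u^\calF(t,x,b)$, for which $\sigma=k\sqrt{\alpha\beta}$ is precisely $\sigma(t,x;\rho,\calF)$ of~\eqref{eqdef:sigma:Kramers-V1}, to rewrite $\CCs(\ol\jmath,\alpha,\beta;k)=\sfC(\ol\jmath|\sigma)+2k(\sqrt\alpha-\sqrt\beta)^2$, matching the second line of~\eqref{eqdef:D0T-Kramers}. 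The equivalence of~\eqref{eqdef:sigma:Kramers-V1} and~\eqref{eq:char-sigma-measure-Kramers} is the algebraic identity $u^\calF = u\,\ee^{F^\rho}$ and $\sqrt{u^\calF(a)u^\calF(b)}=\sqrt{u(a)u(b)}\,\exp\tfrac12(F^\rho(a)+F^\rho(b))$.

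The main obstacle, and the step requiring the most care, is the joint limit in the $y$-term: one must simultaneously control (i) the weak convergence of the flux $\hj_\e^y$ to $\ol\jmath$ concentrated on the interior edge---this relies on the structural Lemma giving~\eqref{char:j0}, which in turn uses the energy bound forcing $\supp\rho\subset\wt\nodes$; (ii) the convergence of the tilt fields $\hF_\e^\rho$ through the degenerating coordinate change $z_\e$, where the key subtlety is that on the open interval $(0,1)$ the preimages $z_\e^{-1}(z)$ all collapse to the saddle $c$, so the $L^1$-convergence $k_\e\to k_0$ genuinely needs Assumption~\ref{ass:F}; and (iii) the possibility that $\sigma$ or $u^\calF$ vanish, which is why one cannot work with the form~\eqref{eqdef:N:explicit} involving $\sfC^*(\log(\beta/\alpha))$ and must instead use the everywhere-defined perspective-function form~\eqref{eqdef:N:explicit:rigrous}; part~\ref{l:props-N:infty} of Lemma~\ref{l:props-N} is then what guarantees that the singular part of the limiting flux measure vanishes, i.e.\ that $\ol\jmath\ll\dx x\dx t$ wherever $u^\calF(a)u^\calF(b)=0$. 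Handling the truncation error (the discarded part $\hUpsilon_\e\setminus[0,1]$) contributes nothing to the lower bound since the integrand is nonnegative, so no matching upper estimate is needed there. Assembling the two lower bounds and noting $\calD_0^T=+\infty$ off $\wt\CE(0,T)\cap\{\rho(t)\ll\pi_0\}$ completes the argument.
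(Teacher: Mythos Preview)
Your proposal is correct and follows the same approach as the paper: the $x$-part via standard joint-convexity lower semicontinuity (recorded in the paper as \eqref{est:lsc-jex} and \eqref{est:lsc-FIx} inside the compactness proof), and the $y$-part via the chain \eqref{eq:Kramers:lb:Dy}, Lemma~\ref{l:G-lsc}, and the explicit formula~\eqref{eqdef:N:explicit:rigrous}, including the identification $\ol\jmath^\perp=0$ from part~\ref{l:G-lsc:part2}. One minor correction: for the $x$-Fisher term the mechanism is a duality lower-semicontinuity argument against the narrowly convergent $\rho_\e$ and uniformly convergent $F^\rho_\e$, not $H^1$-type convergence of $\sqrt{\hu^\calF_\e}$ on compact subsets of $\{0<z<1\}$---that interior region is irrelevant since $\pi_0$ is supported at the endpoints, as you yourself correctly note.
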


To complement the liminf inequality above we verify that the limiting objects $\calE_0$ and $\calD_0^T$ satisfy the chain-rule lower bound Property~\ref{property:ChainRuleLowerBound}.
\begin{lemma}[Chain rule]
\label{l:CRLB-Kramers-limit}
Let $\calF\in \sfF$.
For all $(\rho,j)\in \wt\CE(0,T)$, 
\[
\abs*{ \calE_0^\calF(\rho(T)) - \calE_0^\calF(\rho(0)) }\leq  \calD_0^T(\rho,j;\calF),
\]
and in particular
\[
\calI_0(\rho,j;\calF) := \calE_0^\calF(\rho(T)) - \calE_0^\calF(\rho(0)) + \calD_0^T(\rho,j;\calF) \geq 0.
\]
\end{lemma}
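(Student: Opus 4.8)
\textbf{Proof plan for Lemma~\ref{l:CRLB-Kramers-limit}.}

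The plan is to establish the chain-rule identity for the limiting functional $\calE_0^\calF$ along curves with finite dissipation $\calD_0^T(\rho,j;\calF)<\infty$, by splitting the energy into its $x$- and $y$-contributions exactly as $\calD_0^T$ in~\eqref{eqdef:D0T-Kramers} splits. First I would note that finiteness of $\calD_0^T(\rho,j;\calF)$ forces $(\rho,j)\in\wt\CE(0,T)$ with $\rho(t)\ll\pi_0$ for a.e.\ $t$, so the limiting state is supported on $\wt\nodes = \Omega\times\{a,b\}$ and can be written as in~\eqref{char:rho0} with $y$-flux of the form~\eqref{char:j0}; correspondingly $u(t,x,y)=\dx\rho/\dx\pi_0$ and $u^\calF=u\,\ee^{F^\rho}$ are well-defined. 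The relative entropy then decomposes as
\[
\calE_0^\calF(\rho(t)) = \int_\Omega\sum_{y=a,b}\eta\bigl(\rho(t,x,y)\,\big|\,\pi_0(x,y)\bigr)\dx x + \calF(\rho(t)),
\]
and differentiating formally in $t$ produces, via the continuity equation~\eqref{eq:weak-form-CE0}, a bulk term in $x$ tested against $j^x$ and a discrete-gradient term in $y$ tested against $\ol\jmath$, plus the tilt contribution $\langle\rmD\calF(\rho(t)),\partial_t\rho(t)\rangle$. Rearranging the tilt into the two pieces (spatial and reaction), the resulting expression to be bounded is
\[
\abs*{\calE_0^\calF(\rho(T))-\calE_0^\calF(\rho(0))} \le \int_0^T\!\!\int_\nodes \skp{\nabla_x\bigl(\log u^\calF\bigr)}{j^x}\dx t + \int_0^T\!\!\int_\Omega \bigl(\log u^\calF(t,x,b)-\log u^\calF(t,x,a)\bigr)\ol\jmath(t,x)\dx x\dx t,
\]
which must then be dominated by the two corresponding terms of $\calD_0^T$.

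The spatial term is handled by the classical Fokker-Planck estimate: pointwise Young's inequality gives
\[
\skp{\nabla_x\log u^\calF}{\tfrac{\dx j^x}{\dx\rho}}\,\rho \le \frac1{2m_\Omega}\abs*{\tfrac{\dx j^x}{\dx\rho}}^2\rho + \frac{m_\Omega}{2}\abs*{\nabla_x\log u^\calF}^2\rho,
\]
and the chain rule $u\abs{\nabla_x\log u^\calF}^2 = 4\,\ee^{-F^\rho}\abs{\nabla_x\sqrt{u^\calF}}^2$ (written with respect to $\pi_0$) converts the second summand into $2m_\Omega\,\ee^{-F^\rho}\abs{\nabla_x\sqrt{u^\calF}}^2\pi_0$, matching~\eqref{eqdef:D0T-Kramers}. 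For the reaction (discrete-edge) term, the key point is Part~\ref{l:props-N:est-B} of Lemma~\ref{l:props-N}: with $\alpha=u^\calF(t,x,a)$, $\beta=u^\calF(t,x,b)$, $j=\ol\jmath(t,x)$ and $k=\tfrac{m_\Upsilon}{|\Omega|}\ee^{-F^\rho(t,x,c)}$, the inequality~\eqref{ineq:props-N:BG} reads $|B(\alpha,\beta,\ol\jmath)| \le \CCs(\ol\jmath,\alpha,\beta;k)$, and by the explicit formula~\eqref{eqdef:N:explicit:rigrous} together with the definition~\eqref{eqdef:sigma:Kramers} of $\sigma$ this is precisely
\[
\abs*{\ol\jmath(t,x)\,\log\frac{u^\calF(t,x,b)}{u^\calF(t,x,a)}} \le \sfC\bigl(\ol\jmath(t,x)\,\big|\,\sigma(t,x;\rho,\calF)\bigr) + 2\frac{m_\Upsilon}{|\Omega|}\ee^{-F^\rho(t,x,c)}\bigl(\sqrt{u^\calF(t,x,b)}-\sqrt{u^\calF(t,x,a)}\bigr)^2.
\]
Integrating over $\Omega_T$ gives the reaction half of the bound, and summing the two halves yields $\abs{\calE_0^\calF(\rho(T))-\calE_0^\calF(\rho(0))} \le \calD_0^T(\rho,j;\calF)$; non-negativity of $\calI_0$ is then immediate.

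The main obstacle is making the formal differentiation of $t\mapsto\calE_0^\calF(\rho(t))$ rigorous when $u^\calF$ may vanish or lack regularity — exactly the singularity issue flagged in Section~\ref{sss:formal-rigorous-singularities}. I would address this the usual way: regularize by replacing $\rho(t)$ with $\rho(t)+\delta\pi_0$ (so the log is bounded) and/or use a time mollification of $\rho$, derive the inequality for the regularized curve where the chain rule for the relative entropy is justified (using that $\calD_0^T$ controls $\sqrt{u^\calF}\in L^2_tH^1_x$ and, via the $\CCs$-bound, controls $\sqrt{u^\calF}$ at the two $y$-levels), and then pass $\delta\to0$ using lower semicontinuity of the right-hand side (which holds because $\sfC(\cdot|\cdot)$ and the Hellinger-type term are convex and lsc, cf.\ Lemma~\ref{l:G-lsc}) and continuity of $t\mapsto\rho(t)$. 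The compactness of $\Omega$ (a rectangle, per the footnote in Section~\ref{ss:Kramers-setting}) is what allows the $x$-integration by parts without boundary contributions, since the limiting no-flux condition is inherited from~\eqref{eq:weak-form-CE0}. The remaining bookkeeping — verifying that the tilt term $\langle\rmD\calF(\rho),\partial_t\rho\rangle$ splits cleanly into an $x$-part absorbed into $\nabla_x\log u^\calF$ and a discrete-$y$-part absorbed into $\log(u^\calF(b)/u^\calF(a))$, using $u^\calF = u\,\ee^{F^\rho}$ — is routine given $F^\rho\in C_{\mathrm b}(\nodes)$ from~\eqref{eq:Kramers:tilts}.
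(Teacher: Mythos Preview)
Your plan is correct and essentially matches the paper's proof: the same split into $x$- and $y$-contributions, Young's inequality for the spatial part, the estimate $|B|\le\CCs$ from Lemma~\ref{l:props-N}\ref{l:props-N:est-B} for the reaction part, and the affine regularization $\rho\rightsquigarrow\lambda\rho+(1-\lambda)\pi_0$ to bound $u$ away from zero. The one point where the paper is more specific than your outline is the second regularization step: rather than a time mollification, the paper exploits the rectangular geometry of $\Omega$ by extending $(\rho,j)$ by reflection across $\partial\Omega$ and convolving in $x$ with a smooth kernel $\gamma_\delta$, so that the chain-rule identity~\eqref{eq:Kramers-CR-no-F} holds for the spatially mollified pair; convergence of each term as $\delta\to0$ is then obtained via a dominated-convergence argument (their Lemma~\ref{l:extended-DCT}), using that $|j^{x,\delta}|^2/u^\delta\le\gamma_\delta*(|j^x|^2/u)$ by Jensen and similarly for the $B$-term.
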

\noindent
The proof is given in Appendix~\ref{app:CRLB}.

\subsection{Gradient-system convergence and the limiting problem}
\label{ss:Kramers:limit-pb}

The lower-bound Theorem~\ref{t:lower-bound} can be interpreted as a convergence of gradient systems to a contracted limiting system, as described by Definition~\ref{defn:EDP-convergence:reduce}. Recall that the contracted continuity-equation triple $(\wt\nodes,\wt\edges,\wt\anabla)$ was defined in~(\ref{eqdef:Kramers:contracted-nodes}--\ref{eqdef:Kramers:contracted-edges-anabla}) and the contracted energy $\wt\calE$ in~\eqref{eqdef:Kramers:wt-calE}.

\begin{cor}\label{cor:Kramers-tilt-EDP-conv}
Fix a modulus of continuity $\moc$ and define the subset of tilts
\[
\sfF^\moc := \set[\Big]{\;\calF \in C^1(\ProbMeas(\nodes)): \;\rmD\calF(\rho)\in C_{\mathrm b}^\moc(\nodes)\quad \text{such that } \sup_{\rho\in\ProbMeas(\nodes)} \norm*{\rmD\calF(\rho)}_{C_{\mathrm b}(\nodes)}<\infty\;},
\]
where $C^\moc_{\mathrm b}(\nodes)$ is defined as 
\[
C^\moc_{\mathrm b}(\nodes):= \set[\big]{ F\in C_{\mathrm b}(\nodes) : \;\abs{F(x,y)-F(x,y')} \leq \moc(|y-y'|) \text{ for all } x\in \Omega \text{ and }y,y'\in \Upsilon}
\]
Then the tilt gradient system $(\nodes,\edges,\nabla, \calE_\e, \sfF^\moc, \calR_\e)$ converges in the sense of Definition~\ref{defn:EDP-tilting} to the contracted system $(\wt\nodes,\wt\edges,\wt\anabla,\wt\calE,\sfF^\moc,\wt\calR)$ defined by the dissipation potentials
\begin{subequations}
\label{eqdef:tildeRR*}
\begin{align}
\label{eqdef:tildeR}	
\wt\calR\bra[\big]{\rho,j;\calF} 
  &:= \frac1{2m_\Omega}\int_\Omega 
    \bra[\bigg]{\abs*{\frac{\dx j^x_a}{\dx \rho_a }}^2\dx \rho_a 
          + \abs*{\frac{\dx j^x_b}{\dx \rho_b }}^2\dx\rho_b}
      + \int_\Omega \sfC\bra*{\ol \jmath(x)|\sigma(x;\rho,\calF)}\dx x\\
\wt\calR^*\bra[\big]{\rho,\Xi;\calF} 
  &:= \frac{m_\Omega}{2}\int_\Omega 
    \bra*{\abs*{\Xi^x_a}^2\dx \rho_a 
          + \abs*{\Xi^x_b}^2\dx\rho_b}
      + \int_\Omega \sigma(x;\rho,\calF)\sfC^*(\Xi^y(x))\dx x.
\end{align}
\end{subequations}
In these expressions we write $\rho_a$ and $\rho_b$ for the restrictions of $\rho$ to $\Omega\times\{a\}$ and $\Omega\times\{b\}$. We similarly write $j=(j^x_a, j^x_b,\ol\jmath)$ and $\Xi=(\Xi^x_a, \Xi^x_b,\Xi^y)$.
\end{cor}
\begin{proof}
The only condition of Definitions~\ref{defn:EDP-convergence:reduce} and~\ref{defn:EDP-tilting} that remains to be verified is the characterization of $\calD_0^T(\rho,j)$ in~\eqref{eqdef:D0T-Kramers} in terms of the contracted dissipations $\wt\calR$ and $\wt\calR^*$.  

For $(\rho,j)\in \wt\CE(0,T)$ the measure $\rho$ is of the form~\eqref{char:rho0} for almost all $t$; for such measures~$\rho$ we calculate that
\[
\rmD\wt\calE(\rho)(x,y) = \log \frac{\dx \rho}{\dx \pi_0}(x,y)  = 
\begin{cases}
\ds u_a(x) := \frac1 {\weight^a }\frac{\dx \rho_a}{\dx x}(x)	 & \text{for }y=a\\[2\jot]
\ds u_b(x) := \frac1{\weight^b} \frac{\dx \rho_b}{\dx x}(x)	 & \text{for }y=b\\[2\jot]
\text{undefined} & \text{for } y\not=a,b.
\end{cases}
\]
For given tilt $\calF$, writing  $F^\rho := \rmD\calF(\rho)$ and $u^\calF := u\ee^{F^\rho}$ as in Sections~\ref{ss:Kramers-setting} and~\ref{ss:Kramers:main-results},  we set 
%
\begin{align*}
\Xi^x (x,y) &:= -\nabla_x \rmD\bra*{\wt\calE(\rho)+\calF(\rho)}(x,y) = -\nabla_x \log u^\calF_y(x) ,\qquad\text{for }y=a,b,\\
\Xi^y (x,ab) &:= -\ona \rmD\bra*{\wt\calE(\rho)+\calF(\rho)}(x)= \log u_a^\calF(x) - \log u_b^\calF(x) .
\end{align*}
We then have
\begin{align*}
\wt\calR^*\bra[\big]{\rho,\Xi;\calF} (e)
= \frac{m_\Omega}{2} &\int_\Omega 
    \bra*{\abs*{\nabla_x \log u_a^\calF}^2\dx \rho_a 
          + \abs*{\nabla_x \log u_b^\calF }^2\dx\rho_b}\\
+ &\int_\Omega \sigma(x;\rho,\calF)\sfC^*\bra[\big]{\log u_a^\calF(x) - \log u_b^\calF(x) }\dx x.
\end{align*}
As in Section~\ref{ss:Kramers-setting} we observe that the first integral above is  a formal version of the second  integral in~\eqref{eqdef:D0T-Kramers}. The second integral above similarly is a formal version of the fourth integral in~\eqref{eqdef:D0T-Kramers} by the identity $\sqrt{pq}\sfC^*(\log p/q) = 2(p-q)^2$.

The first and third integral in~\eqref{eqdef:D0T-Kramers} are equal to the expression of $\wt\calR$ in~\eqref{eqdef:tildeR}.
\end{proof}
\begin{remark}[The limiting system from Corollay~\ref{cor:Kramers-tilt-EDP-conv} is tilt-dependent]
	As discussed already in Section~\ref{sss:structure-of-tilt-dependence}, the structure of $\sigma$ and $\wt\calR^*$ means that the limiting gradient system is tilt-dependent. 
\end{remark}

We next give a formal derivation of the equation induced by the limiting gradient system $(\wt\nodes,\wt\edges,\wt\anabla,\wt\calE,\sfF^\moc,\wt\calR)$. 
We calculate 
\begin{align*}
\rmD_2\wt\calR^*\bra[\big]{\rho,\Xi;\calF} (e)
&= \begin{cases}
m_\Omega \Xi^x_a\rho_a 	 & \text{for }e=(x,a)\\
m_\Omega \Xi^x_b\rho_b 	 & \text{for }e=(x,b)\\
\ds\sigma(x;\rho,\calF)(\sfC^*)'(\Xi^y(x)) & \text{for }e=(x,ab).
\end{cases}
\end{align*}
and in the case $\Xi = -\wt\anabla \rmD(\wt\calE+\calF)(\rho) = -\log u^\calF$ for $\rho = u^{\calF} \pi_0 = u \ee^{F^\rho}\pi_0$ as above we deduce
\begin{align*}
\rmD_2\wt\calR^*\bra[\big]{\rho,\Xi;\calF} (e)
&= \begin{cases}
 -m_\Omega \weight^a \bra*{\nabla_x u_a + u_a \nabla_x F^\rho_a}	 & \text{for }e=(x,a)\\
 -m_\Omega \weight^b \bra*{\nabla_x u_b + u_b \nabla_x F^\rho_b}	 & \text{for }e=(x,b)\\
 \sigma(x;\rho,\calF)(\sfC^*)'\bra[\Big]{\log \frac{u_a^\calF}{u_b^\calF}(x)} & \text{for }e=(x,ab).
\end{cases}
\end{align*}

Using $\sqrt{pq}\,{\sfC^*}'(\log p -\log q) = p-q$ and~\eqref{eqdef:sigma:Kramers-V1} this last expression can be written as 
\[
\rmD_2\wt\calR^*\bra[\big]{\rho,\Xi;\calF} (x,ab) 
= r(x) := \frac{m_\Upsilon}{|\Omega|} \ee^{-F^\rho(x,c)}\; \bra*{u^\calF_a(x) - u_b^\calF(x)}.
\]

The limiting equation therefore is
\begin{align*}
\partial_t \rho_a &= m_\Omega \weight^a\div_x  \bra[\big]{\nabla_x u_a + u_a \nabla_x F^\rho_a}
  - r,\\
\partial_t \rho_b &= m_\Omega \weight^a\div_x \bra[\big]{\nabla_x u_b + u_b \nabla_x F^\rho_b}
+r,
\end{align*}
which can also be written in terms of the measures  $\rho_{a,b}$ as 
\begin{subequations}\label{eq:Kramers-limit-equation-measures}
\begin{align}
\partial_t \rho_a &= m_\Omega \div_x\bra[\big]{ \nabla_x \rho_a + \rho_a \nabla (V_a + F_a^\rho)} 
  - \frac{m_\Upsilon}{|\Omega|} \bra*{\frac{1}{\weight^a} \ee^{V_a + F^\rho_a-F^\rho_c}\rho_a - \frac{1}{\weight^b} \ee^{V_b + F^\rho_b-F^\rho_c}\rho_b},\\
\partial_t \rho_b &= m_\Omega \div_x \bra[\big]{ \nabla_x \rho_b + \rho_b \nabla (V_b + F_b^\rho)}
+\frac{m_\Upsilon}{|\Omega|} \bra*{\frac{1}{\weight^a} \ee^{V_a + F^\rho_a-F^\rho_c}\rho_a - \frac{1}{\weight^b} \ee^{V_b + F^\rho_b-F^\rho_c}\rho_b}.
\end{align}
\end{subequations}

\subsection{Interpretation of \texorpdfstring{$\sigma$}{sigma} as `activity'}
\label{ss:Kramers-activity-interpretation}

This pair of equations~\eqref{eq:Kramers-limit-equation-measures} above  has the same structure as~\eqref{eq:RD-intro}--\eqref{eq:flux-ex-B+A} of Example~\ref{ex:FP}+\ref{ex:heat-flow}, and this allows us to compare the jump rates in the two sets of equations:
\begin{subequations}
\begin{alignat}3
\label{eq:kappas-Kramers-and-ExBA-1}
\text{Equations~\eqref{eq:Kramers-limit-equation-measures}}: &&\quad 
\kappa_{ab} &= \frac1{\weight^a}\ee^{V_a + F^\rho_a-F^\rho_c} &\quad\text{and}\quad
\kappa_{ba} &= \frac1{\weight^b}\ee^{V_b + F^\rho_b-F^\rho_c},\\
\text{Eqns~\eqref{eq:RD-intro}--\eqref{eq:flux-ex-B+A}}: &&\quad 
\kappa_{\sfa\sfb} &=  k_{\sfa\sfb} \ee^{\frac12(V_\sfa+F^\rho_\sfa-V_\sfb-F^\rho_\sfb)}&\quad\text{and}\quad
\kappa_{\sfb\sfa} &=  k_{\sfa\sfb} \ee^{\frac12(V_\sfb+F^\rho_\sfb-V_\sfa-F^\rho_\sfa)}
\label{eq:kappas-Kramers-and-ExBA-2}
\end{alignat}
\end{subequations}
The equations~\eqref{eq:RD-intro}--\eqref{eq:flux-ex-B+A} do not contain any tilting, and therefore in the version~\eqref{eq:kappas-Kramers-and-ExBA-2} of~\eqref{eq:RD-intro}--\eqref{eq:flux-ex-B+A}   we conjectured the dependence on $F^\rho$ by replacing $V$ by $V+F^\rho$. We also omitted the unimportant global rate constant $m_\Upsilon/|\Omega|$.

\medskip

The differences between the two sets of jump rates are notable in two ways:
\begin{enumerate}
\item The value of the tilt $F^\rho$ in $y=c$ influences the rates~\eqref{eq:kappas-Kramers-and-ExBA-1} but not~\eqref{eq:kappas-Kramers-and-ExBA-2};
\item In~\eqref{eq:kappas-Kramers-and-ExBA-1} the jump rate from $a$ to $b$  depends on $F^\rho_a$ (and $F^\rho_c$), but not on $F^\rho_b$, while in~\eqref{eq:kappas-Kramers-and-ExBA-2} the rate depends on $F^\rho_\sfa$ and $F^\rho_\sfb$.
\end{enumerate}
If we set $\scrV := V + F^\rho$, and disregard the lack of a value of $V$ at $y=c$, then we can formulate these relations as
\begin{subequations}
\begin{alignat}2
\label{eq:kappas-Kramers-and-ExBA-bis-1}
\text{Equations~\eqref{eq:Kramers-limit-equation-measures}}: &&\qquad 
\kappa_{ab} &\sim \ee^{\scrV_a - \scrV_c}\\
\text{Equations~\eqref{eq:RD-intro}}: &&\qquad 
\kappa_{\sfa\sfb} &\sim \ee^{\frac12\bra*{\scrV_\sfa - \scrV_\sfb}}
\end{alignat}
\end{subequations}
The  Kramers description explains the dependence in~\eqref{eq:kappas-Kramers-and-ExBA-bis-1}: $\scrV_a-\scrV_c$ can be interpreted as the energy barrier for a particle to leave the well at $a$, and the $a\rightsquigarrow b$ rate depends on this difference (and not on the value of~$\scrV$ at~$b$). 

 We can also recognize in the dependence on $\scrV$ in~\eqref{eq:kappas-Kramers-and-ExBA-bis-1} and~\eqref{eq:kappas-Kramers-and-ExBA-1} the classical modelling of rates of chemical reactions (see Example~\ref{ex:chem-reactions} and Section~\ref{ss:case-study-chem-reactions-tilt-dependent}). Again, this is natural considering the understanding of chemical reactions as transitions between wells in potential-energy landscapes, corresponding to the Kramers limit.

\bigskip

The differences between~\eqref{eq:kappas-Kramers-and-ExBA-1} and~\eqref{eq:kappas-Kramers-and-ExBA-2} illustrate a challenge in modelling with gradient systems. 
The rates~\eqref{eq:kappas-Kramers-and-ExBA-1} and~\eqref{eq:kappas-Kramers-and-ExBA-2} arise from two different sets of assumptions:  
In constructing Example~\ref{ex:FP}+\ref{ex:heat-flow} out of Examples~\ref{ex:FP} and~\ref{ex:heat-flow} (leading to~\eqref{eq:kappas-Kramers-and-ExBA-2}) we {assumed} that the `correct' expression for the prefactor~$\sigma$ in~$\calR_J^*$ is the geometric mean of the measures
\[
\sigma(\dx x) = \sqrt{\rho_\sfa\rho_\sfb}(\dx x)\qquad\text{(modulo constants)}.
\]
The expressions~\eqref{eqdef:sigma:Kramers}, however, suggest to view the prefactor as 
\begin{equation*}
\sigma(\dx x) = \sqrt{\sigma_{\sfa\sfb}(x)\sigma_{\sfb\sfa}(x)} \dx x,
\end{equation*}
where `forward' and `backward' \emph{activities} $\sigma_{\sfa\sfb}$ and $\sigma_{\sfb\sfa}$ are defined as 
\[
\sigma_{\sfa\sfb}(x) := C_{\text{saddle}}(x)\frac{\dx\rho}{\dx\pi^\calF}(x,\sfa)
\qquad\text{and}\qquad
\sigma_{\sfb\sfa}(x) := C_{\text{saddle}}(x)\frac{\dx\rho}{\dx\pi^\calF}(x,\sfb),
\]
where $\dx \pi^\calF = \ee^{-F^\rho}\dx \pi \sim \ee^{-F^\rho-V}\dx x$ is the tilted invariant measure.
Indeed, such a choice leads to reaction rates  (again using $\sqrt{pq}\,{\sfC^*}'(\log p-\log q) = p-q$)
\begin{align*}
\sqrt{\sigma_{\sfa\sfb}\sigma_{\sfb\sfa}} {\sfC^*}'\bra[\Big]{-\ona\log \frac{\dx\rho}{\dx\pi^\calF}}
&= C_{\text{saddle}}(x) \bra[\Big]{\frac{\dx\rho}{\dx\pi^\calF}(x,\sfa) - \frac{\dx\rho}{\dx\pi^\calF}(x,\sfb)}\\
&\sim C_{\text{saddle}}(x) \bra[\Big]{\frac1{\gamma^\sfa}\rho_\sfa\ee^{V_\sfa+F^\rho_\sfa} - \frac1{\gamma^\sfb}\rho_\sfb\ee^{V_\sfb+F^\rho_\sfb}}(x)
\end{align*}
This reproduces the dependence  of~\eqref{eq:kappas-Kramers-and-ExBA-1} on $V$ and $F^\rho$ at $\sfa$ and $\sfb$. It also reproduces the dependence of~\eqref{eq:kappas-Kramers-and-ExBA-1} on~$F^\rho_c$, if we interpret the factor $C_{\text{saddle}}$ as modulating the height of the saddle by
\[
C_{\text{saddle}}(x) := C \ee^{-F^\rho(x,c)}.
\]
The importance of  activities such as $\sigma_{\sfa\sfb}$ and $\sigma_{\sfb\sfa}$ is well known in the chemical-reaction literature (see e.g.~\cite[p.~8]{Pekar05} or~\cite[\S 2.1]{Peters17}). 

\medskip
 Summarizing, the discussion above suggests that the prefactor $\sigma$ should be considered as a geometric mean of \emph{activities} instead of a geometric mean of \emph{concentrations} or \emph{measures}.
 As we observed above, a consequence of modelling $\sigma$ as geometric mean of activities is that the ensuing gradient structure is not tilt-independent, and it also follows that $\sigma$ may depend on $\rho$ through the tilting function $F^\rho$.

\subsection{The Kramers sequence  is not contact-EDP convergent}
\label{ss:Kramers-not-contact-EDP-conv}

In~\cite{MielkeMontefuscoPeletier21} two alternative convergence concepts were introduced for sequences of gradient systems which are specifically aimed at preserving tilt-independence. If a sequence of tilt-independent gradient systems converges in either of these senses (called \emph{tilt-EDP convergence} and \emph{contact-EDP convergence}), then the limit also is tilt-independent. 

In this section we show that the sequence of Kramers $\e$-systems discussed above does not converge in either of these senses; this is a confirmation that the tilt-dependence in the limit is a true property of the behaviour of these systems, and not something that can be remedied by changing the convergence concept. 

The definition of contact-EDP convergence in~\cite{MielkeMontefuscoPeletier21} is based on the setup of gradient systems that was mentioned in Remark~\ref{rem:non-ct-eq-GS}, in which the dissipation potentials $\scrR$ and $\scrR^*$ are functions of rate of change $\dot \rho$ and variational derivative $\xi = -\rmD\calE(\rho)$ rather than of flux~$j$ and of the gradient $\Xi = \anabla \xi$.  
In order to connect these setups we convert the functional $\calD_\e^T = \calD_\e^T(\rho,j)$ into a function of the time-course of $\rho$ only,
\[
\scrD_\e^T(\rho;\calF) := \inf_{j: (\rho,j)\in\CE} \calD_\e^T (\rho,j;\calF).
\]

In~\cite{MielkeMontefuscoPeletier21}, tilt-EDP and contact-EDP convergence are defined rigorously for systems on finite-dimensional manifolds; in more general situations,  such as those of this paper, case-dependent adaptations need to be made to make statements rigorous. For this reason we give only a formal definition:
\begin{definition}[Tilt-EDP and contact-EDP convergence]
\label{def:tilt-contact-EDP-convergence}
A sequence $(\calE_\e,\scrR_\e)$ converges to a limit $(\calE_0,\scrR_0)$ in either tilt- or contact-EDP sense if:
\begin{enumerate}
\item $\calE_\e\stackrel\Gamma\longrightarrow \calE_0$;
\item For each $\calF$, $\scrD_\e(\,\cdot\, ;\calF) \stackrel\Gamma \longrightarrow \scrD_0(\,\cdot\, ;\calF)$ for some limit functional $\scrD_0$, which can be written as 
\begin{equation}
\label{eq:tilt-contact-conv-N}
\scrD_0(\rho;\calF) = \int_0^T \calN_0\bra[\big]{\rho(t),\dot\rho(t), -\rmD\calF(\rho(t))}\dx t,
\end{equation}
for some functional $\calN_0 = \calN_0(\rho,\dot\rho,\eta)$.  
\item There exists a functional $\calM_0$ such that 
\[
\calM_0(\rho,v,\xi) = \calN_0(\rho,v,\xi+\rmD\calE_0(\rho)),
\]
for some class of functions $\xi:\nodes\to\R$.

\end{enumerate}

\noindent
For tilt-EDP convergence:
\begin{enumerate}[resume,label=\arabic*a.]
	\item $\calM_0$ can be written as
\begin{equation}
\label{eqdef:tilt-EDP-limit-condition}
\calM_0(\rho,v,\xi) = \scrR_0(\rho,v) + \scrR_0^*\bra*{\rho,\xi}
\qquad\text{for all }\rho,v,\xi.
\end{equation}
\end{enumerate}

\noindent
For contact-EDP convergence we define the `contact sets'
\begin{align}
\label{eqdef:contact-set}	
\Contact_{\calM_0}(\rho) &= 
\set[\big]{\,(v,\xi):\  \calM_0(\rho,v,\xi) = \dual v \xi\,},\\
\Contact_{\scrR_0\oplus \scrR_0^*}(\rho) &= 
\set[\big]{\,(v,\xi):\  \scrR_0(\rho,v) + \scrR_0^*\bra*{\rho,\xi}  = \dual v \xi\,}. \notag
\end{align}
For contact-EDP convergence we then require
\begin{enumerate}[resume,label=\arabic*b.]
\addtocounter{enumi}{-1}
\item We have 
\begin{equation}
\label{eqdef:contact-EDP-limit-condition}
\Contact_{\calM_0}(\rho) = \Contact_{\scrR_0\oplus \scrR_0^*}(\rho)\qquad\text{for all }\rho.
\end{equation}
\end{enumerate}
\end{definition}

\begin{remark}[Differences with the EDP-convergence concepts of Section~\ref{ss:convergence-of-GS}]
Both the definitions of tilt-EDP and contact-EDP convergence above and the EDP convergence concepts of Section~\ref{s:GS} build upon the EDP concept of a solution (Definition~\ref{def:EDP:sol}). They also both include dependence on tilting. 

There are important differences, however:
\begin{enumerate}
	\item The tilt-EDP and contact-EDP convergence concepts are constructed with the aim of strengthening the convergence concept to the point that the corresponding limit systems are tilt-independent by construction. This is reflected by the requirement that $\scrR_0$ in~\eqref{eqdef:tilt-EDP-limit-condition} and~\eqref{eqdef:contact-EDP-limit-condition} can not depend on the tilt. 
	\item By contrast, the versions of EDP convergence in Section~\ref{ss:convergence-of-GS} (Definitions~\ref{defn:EDP-convergence} and~\ref{defn:EDP-convergence:reduce}) explicitly allow for dependence of the limit potential $\calR_0$ on the tilt. 
\end{enumerate}
\end{remark}

\medskip

In the remainder of this section we show that the sequence of Kramers gradient systems can not converge in the contact-EDP convergence sense; since  tilt-EDP convergence implies contact-EDP convergence, this disproves both convergence concepts at the same time. 

\bigskip

To start with, assume that the Kramers gradient systems converges in the contact-EDP sense above. We then identify the function $\calN_0$ in~\eqref{eq:tilt-contact-conv-N} by matching  $F^\rho$ to $-\eta$ in~\eqref{eqdef:D0T-Kramers} (see also the formulation~\eqref{ineq:Kramers-lower-bound-formal}), by which we obtain
\begin{multline*}
\calN_0(\rho,v,\eta)  
=  
\inf_{\substack{j = (j^x,\ol \jmath):\\ \adiv(j^x,\ol \jmath) = -v}}\Biggl\{
 \int_{\nodes} \frac1{2m_\Omega} \abs*{\frac{\dx j^x}{\dx\rho}}^2\dx \rho   +  \int_{\nodes} 2{m_\Omega} \ee^{\eta}\abs*{\nabla_x \sqrt {u\ee^{-\eta}} }^2\pi_0(\dxx xy) \\
		\quad\quad\quad\quad\quad\quad  + \int_{\Omega} \ol\sigma(x;\rho,\eta) \pra*{\sfC\bra*{\frac{\ol\jmath(x)}{\ol\sigma(x;\rho,\eta)}} 
		+ \sfC^*\bra*{\log \frac{u(x,b)}{u(x,a)} - \eta(x,b) + \eta(x,a)}}\dx x  \Biggr\},
\end{multline*}
with correspondingly modified rate parameter $\ol\sigma$,  
\[
\ol\sigma(x;\rho,\eta) = 
\frac{{m_\Upsilon}}{\abs*{\Omega}} \sqrt{{u(x,a)u(x,b)}}\; \exp{\tfrac12 \bigl(2\eta(x,c) - \eta(x,a)-\eta(x,b)\bigr)}. 
\]
Here $u = \dx \rho/\dx \pi_0$. This expression now presents two problems:
\paragraph{Problem 1.}
The next step in the verification of contact-EDP convergence is the assumption that we can set 
\begin{equation*}
\calM_0(\rho,v,\xi) := \calN_0(\rho,v,\xi+\rmD\calE_0(\rho)),
\end{equation*}
for an appropriate class of functions $\xi$. 

The problem here is that the value of $\sigma(x;\rho,\eta)$ and therefore $\calN_0(\rho,v,\eta)$ depends on the value of~$\eta$ at $y=c$, and the argument $\eta = \xi+\rmD\calE_0(\rho)$ can not provide this. This is because~$\rho$ is supported on $\Omega\times\{a,b\}$, and therefore $\rmD\calE_0(\rho)$ has no meaning outside of $\Omega\times\{a,b\}$.

%
%
%
%

\paragraph{Problem 2.}
Even if we restrict ourselves to tilts $\calF$ such that $\rmD\calF(\rho)$ vanishes at $y=c$, and therefore $\eta(x,c)=0$ for all $x$,  a problem remains. After making the transformation above, we find
\begin{align*}
\calM_0(\rho,v,\xi)  
&=  
\inf_{j} \set[\Big] {\,\wt\calM_0(\rho,j,\xi): \adiv j = -v\,}, \\
\wt\calM_0(\rho,(j^x,\ol j),\xi) 
&= 
 \int_{\nodes} \frac1{2m_\Omega} \abs*{\frac{\dx j^x}{\dx\rho}}^2\dx \rho   
 +  \int_{\nodes} \frac{m_\Omega}2 \abs*{\nabla_x \xi }^2\pi_0(\dxx xy) \\
&		\quad\quad\quad\quad\quad\quad  + \int_{\Omega} \wt\sigma(x;\xi) \pra*{\sfC\bra*{\frac{\ol\jmath(x)}{\wt\sigma(x;\xi)}} 
		+ \sfC^*\bra*{\ona\xi(x)}}\dx x ,
\end{align*}
with
\[
\wt\sigma(x;\xi) = 
\frac{{m_\Upsilon}}{\abs*{\Omega}} \exp\pra*{-\tfrac12 \bigl( \xi(x,a)+\xi(x,b)\bigr)}. 
\]

The concept of contact-EDP convergence is based on the contact set $\Contact_{\calM_0}$ in~\eqref{eqdef:contact-set}. Using the expression above for $\calM_0$ we can characterize this set more explicitly: 
\begin{alignat*}2
&& \calM_0(\rho,v,\xi) &= \dual v \xi\\
&\iff&\qquad  \inf_{\adiv j = -v} \wt\calM_0(\rho,j,\xi)  &= \dual v \xi\\
&\iff& \wt\calM_0(\rho,j,\xi)  &= \dual j {\anabla\xi} \qquad\text{(and } v = -\adiv j)\\
&\iff& j^x &= \nabla_x \xi \quad\text{and}\quad 
\ol\jmath (x) = \wt\sigma(x;\xi) \,(\sfC^*)'\bra*{\ona \xi(x)},
%
\end{alignat*}
so that 
\begin{equation}
\label{eq:char1-CM0}
\Contact_{\calM_0} = \set[\big]{\,(v,\xi): v = -\div_x j^x - \odiv \ol\jmath, 
\ j^x = \nabla_x \xi, \text{ and }
\ol\jmath (x) = \wt\sigma(x;\xi) \,(\sfC^*)'\bra*{\ona \xi(x)}\,}
\end{equation}
By Definition~\ref{def:tilt-contact-EDP-convergence} the sequence of gradient systems converges in the contact-EDP sense if and only if there exists a dissipation potential $\scrR_0$ such that the contact set $\Contact_{\calM_0}$ of $\calM_0$ coincides with the contact set of $\scrR_0$, i.e. such that $\Contact_{\calM_0}$ can be written as 
\begin{align}
\notag	
\Contact_{\calM_0} (\rho)
&= 
\set[\big]{\,(v,\xi):\  \scrR_0(\rho,v) + \scrR_0^*(\rho,\xi) = \dual v \xi\,}\\
&=\set[\big]{\,(v,\xi):\  v\in \partial_\xi \scrR_0^*(\rho,\xi) \,}.
\label{eq:char2-CM0}
\end{align}
Assume, to force a contradiction, that such an $\scrR_0$ exists. Since $\calM_0(\rho,v,\xi)=+\infty$ unless $v$ can be written as $v = -\adiv j$, Lemma~\ref{l:char-phi-contact-on-subspace} below implies that similarly $\scrR_0(\rho,v) = +\infty$ unless $v$ is a divergence. It then follows that $\scrR_0^*$ depends only on $\anabla \xi$: 
\begin{align*}
\scrR_0^*(\rho,\xi) &= \sup_{v} \dual v\xi - \scrR_0(\rho,v)
= \sup_{j} \dual j{\anabla\xi} - \scrR_0(\rho,v)=: \calR_0^*(\rho,\anabla \xi).
\end{align*}

Since therefore $\xi \mapsto \scrR_0^*(\rho,\cdot)$ depends only on $\anabla \xi$, also the subdifferential $\partial_\xi\scrR_0(\rho,\cdot )$  depends only on $\anabla \xi$. 
However, comparing~\eqref{eq:char2-CM0} with~\eqref{eq:char1-CM0} contradicts this: the prefactor~$\wt\sigma$ depends on $\xi(\cdot,a) + \xi(\cdot,b)$, which is not a function of $\nabla_x \xi$ and $\ona\xi = \xi(\cdot,b) - \xi(\cdot,a)$.
This shows how the particular dependence of the prefactor $\wt\sigma$ on $\xi$ prevents the Kramers limit from converging in the contact-EDP sense. 

This concludes the discussion of tilt- and contact-EDP convergence; it remains to state and prove the following lemma.

\begin{lemma}
\label{l:char-phi-contact-on-subspace}
Let $X$ be a Banach space,  $\varphi:X\to\R\cup \{+\infty\}$ a proper lower semicontinuous convex function, and $\varphi^*:X^*\to\R\cup\{+\infty\}$ its convex  dual. Let $A\subset X$ be a closed linear subspace with the property that
\begin{equation*}
\varphi(x) + \varphi^*(\xi) = \dual x\xi\quad \Longrightarrow \quad x\in A.
\end{equation*}
Then $\varphi(x) = +\infty$ for any $x\in X\setminus A$.
\end{lemma}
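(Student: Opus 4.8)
The statement is a clean convex-analysis fact: if a proper lsc convex function $\varphi$ has the property that \emph{every} Fenchel equality pair $(x,\xi)$ (i.e.\ every point $x$ in the subdifferential of $\varphi^*$) forces $x$ into a fixed linear subspace $A$, then $\varphi$ must be $+\infty$ off $A$. The plan is to argue by contradiction: suppose $x_0\in X\setminus A$ with $\varphi(x_0)<\infty$, and produce a Fenchel equality pair with first component not in $A$.

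First I would recall the basic duality dictionary: $\varphi(x)+\varphi^*(\xi)=\dual x\xi$ holds if and only if $\xi\in\partial\varphi(x)$, if and only if $x\in\partial\varphi^*(\xi)$. So the hypothesis says precisely that $\dom(\partial\varphi)\subseteq A$, i.e.\ $\varphi$ is nowhere subdifferentiable outside $A$. The goal is then to show $\dom\varphi\subseteq A$. The natural tool is that a proper lsc convex function on a Banach space is subdifferentiable on a dense subset of its domain — more precisely, by the Br\o ndsted--Rockafellar theorem the set of points where $\partial\varphi\neq\emptyset$ is dense in $\dom\varphi$ (in norm). Since $A$ is a \emph{closed} linear subspace (being the kernel or range-type subspace coming from $\adiv$, and in any case we may take its closure; note if $A$ is not closed the statement as used still goes through after replacing $A$ by $\overline A$, since the hypothesis then holds a fortiori), density of the subdifferentiability set inside $\dom\varphi$ together with $\{x:\partial\varphi(x)\neq\emptyset\}\subseteq A$ gives $\dom\varphi\subseteq\overline A=A$. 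Hence for $x_0\notin A$ we get $\varphi(x_0)=+\infty$, the desired contradiction.

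The one subtlety to handle carefully is whether $A$ should be assumed closed. In the application (the paragraph preceding the lemma) $A$ is the set of $v$ of the form $v=-\adiv j$, which is a linear subspace but a priori need not be norm-closed; however, the conclusion $\varphi(x)=+\infty$ for $x\notin A$ is exactly what is invoked, and it follows from $\dom\varphi\subseteq\overline A$ provided $X\setminus A$ is interpreted appropriately. To keep the lemma self-contained I would either (i) add the hypothesis that $A$ is closed, or (ii) prove $\dom\varphi\subseteq\overline A$ and then note that in the application $A$ is closed (or that only $\dom\varphi\subseteq\overline A$ is needed). I will take route (i)–(ii) hybrid: prove $\dom\varphi\subseteq\overline A$ in general, which is all that is used.

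The \textbf{main obstacle} — really the only non-routine point — is justifying the density of the subdifferentiability set in $\dom\varphi$ for a general Banach space, i.e.\ invoking Br\o ndsted--Rockafellar correctly (it requires $X$ Banach and $\varphi$ proper lsc convex, which we have). If one prefers to avoid Br\o ndsted--Rockafellar, an alternative is: for $x_0\in\dom\varphi$ and any $\varepsilon>0$, consider the perturbed function $x\mapsto\varphi(x)+\varepsilon\|x-x_0\|$; this is proper lsc convex and coercive-ish enough that, after noting $\varphi$ is bounded below on a neighbourhood by an affine function, one extracts a point $x_\varepsilon$ near $x_0$ at which $\varphi$ is subdifferentiable, hence $x_\varepsilon\in A$, and let $\varepsilon\to0$. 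Either way the argument is short; I would write it as follows.

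\begin{proof}
Replacing $A$ by its closure if necessary, we may assume $A$ is a closed linear subspace; this only weakens the hypothesis and the conclusion $\varphi(x)=+\infty$ for $x\notin A$ becomes $\varphi(x)=+\infty$ for $x\notin\overline A$, which is what is needed.

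Recall that for a proper lower semicontinuous convex $\varphi$ on a Banach space, the Fenchel--Young equality
\[
\varphi(x) + \varphi^*(\xi) = \dual x\xi
\]
holds if and only if $\xi\in\partial\varphi(x)$. Hence the assumption states that
\begin{equation}\label{eq:dom-subdiff-in-A}
	\set[\big]{x\in X: \partial\varphi(x)\neq\emptyset} \subseteq A.
\end{equation}

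By the Br\o ndsted--Rockafellar theorem, the set on the left-hand side of~\eqref{eq:dom-subdiff-in-A} is norm-dense in $\dom\varphi$. Indeed, given $x_0\in\dom\varphi$ and $\varepsilon>0$, since $\varphi$ is proper lsc convex it admits a continuous affine minorant, so there is $\xi_0\in X^*$ with $\varphi(x_0)\le \dual{x_0}{\xi_0} - \varphi^*(\xi_0) + \varepsilon^2$, i.e.\ $\xi_0$ is an $\varepsilon^2$-subgradient of $\varphi$ at $x_0$; Br\o ndsted--Rockafellar then yields $x_\varepsilon\in X$ with $\|x_\varepsilon - x_0\|\le\varepsilon$ and $\partial\varphi(x_\varepsilon)\neq\emptyset$.

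Combining this density with~\eqref{eq:dom-subdiff-in-A} and the closedness of $A$, we obtain
\[
\dom\varphi \subseteq \overline{\set[\big]{x: \partial\varphi(x)\neq\emptyset}} \subseteq \overline A = A.
\]
Therefore $\varphi(x) = +\infty$ for every $x\in X\setminus A$.
\end{proof}
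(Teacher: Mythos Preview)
Your proof is correct, and it takes a genuinely different route from the paper's. The paper does \emph{not} use Br\o ndsted--Rockafellar. Instead it defines the auxiliary function
\[
\psi(x):=\begin{cases}\varphi(x)&x\in A,\\+\infty&x\notin A,\end{cases}
\]
shows that the contact sets (graphs of the subdifferentials) of $\varphi$ and $\psi$ coincide, and then invokes Rockafellar's theorem that a proper lsc convex function on a Banach space is determined up to an additive constant by its subdifferential, giving $\varphi=\psi+c$.

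Your argument via density of $\dom(\partial\varphi)$ in $\dom\varphi$ is more direct and arguably more elementary: one application of Br\o ndsted--Rockafellar and you are done. The paper's approach, by contrast, stays closer to the ``contact set'' language used in the surrounding discussion and leans on the maximal cyclic monotonicity picture. Both ultimately use the Banach-space structure in a similar way (Rockafellar's determination theorem also needs completeness).

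You also flag a point the paper glosses over: if $A$ is not closed, then $\psi$ need not be lower semicontinuous, so the paper's proof tacitly assumes $A$ closed as well. Your remedy---prove $\dom\varphi\subseteq\overline A$ and note that this is all that is used---is the right one.
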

The point of this lemma is that whenever a contact set is restricted to a linear subspace, the generating convex function necessarily is equal to $+\infty$ outside of that subspace.

\begin{proof}
Define the convex lower semicontinuous function
\[
\psi(x) := \begin{cases}
  \varphi(x) & \text{if }x\in A\\
  +\infty & \text{otherwise}	.
 \end{cases}
\]
The function $\psi$ satisfies $\psi\geq \varphi$ and therefore $\psi^*\leq \varphi^*$.  If $(x,\xi)$ is in contact for~$\varphi$, then $x\in A$, and therefore
\[
\psi^*(\xi)\leq \varphi^*(\xi) = \dual x\xi - \varphi(x)
= \dual x\xi -\psi(x) \leq \psi^*(\xi),
\]
implying that $(x,\xi)$ also is in contact for $\psi$.
Writing $\Contact_\varphi=\set*{(x,\xi): \varphi(x)+\varphi^*(\xi)=\skp{x,\xi}}$ and similar for $\Contact_\psi$ for the two contact sets, this shows that $\Contact_\varphi\subseteq \Contact_\psi$. Since both contact sets are the graphs of the corresponding subdifferentials, which are maximal cyclical monotone by convexity~\cite{Rockafellar66}, we find $\Contact_\varphi = \Contact_\psi$.

On Banach spaces, the subdifferential of a convex function uniquely characterizes the function itself, up to an additive constant~\cite{Rockafellar66}; therefore $\varphi =\psi+c$ for some $c\in \R$, which implies the assertion.
\end{proof}

\newpage 

\section{A thin-membrane limit}
\label{s:thin-membrane}

Singular limits naturally arise in systems describing materials with strongly contrasting properties. A classic example is the `sandwich structure', in which a thin layer of weakly conducting material is placed between layers of material that conducts well. 
A particular scaling of the properties of the layer causes the thin layer to reduce to a zero-thickness barrier (a `membrane'), and in the limit certain \emph{transmission conditions} arise that connect the flux across this membrane with properties on both sides. This example has been studied in various forms in recent years; see e.g. \cite{Neuss-RaduJager07, GahnNeuss-RaduKnabner16, GahnNeuss-RaduKnabner17, GahnNeuss-RaduKnabner18} for approaches based on convergence of the partial differential equation, and~\cite{LieroMielkePeletierRenger17, FrenzelLiero21, MielkeMontefuscoPeletier21, FrenzelMielke21TA} for approaches based on EDP convergence. 

Here we briefly describe a slightly modified version of the result of~\cite{LieroMielkePeletierRenger17,FrenzelLiero21} and its consequences for tilt-dependence and tilt-independence. The setup is as follows: a layer of thickness $\e$ at $0<\hat x<\e$ is placed between layers $[-1,0]$ and $[\e,1+\e]$. The material properties are defined by a potential $\wh V_\e$ and a mobility $\hat a_\e$, which are assumed to be given by
\begin{equation*}
\hat a_\e(\hat x)=\left\{\begin{array}{cl}
  a_-(\hat x)&\text{for }\hat x<0,\\
  \e a_*(\hat x/\e)& \text{for }\hat x\in [0,\e],\\
  a_+(\hat x{-}\e{+}1) &\text{for }\hat x>\e,
  \end{array} \right. \quad 
\wh V_\e(\hat x)=\left\{\begin{array}{cl}
  V(\hat x)&\text{for }\hat x<0,\\
  V(\hat x/\e)& \text{for }\hat x\in [0,\e],\\
  V (\hat x{-}\e{+}1) &\text{for }\hat x>\e,
\end{array} \right.
\end{equation*}
for some smooth $V:[-1,2]\to\R$.
\begin{figure}[ht]
\centering
	\setlength{\unitlength}{0.5\textwidth}
	\begin{picture}(1,0.352092241)%
		\put(0,0){\includegraphics[width=\unitlength]{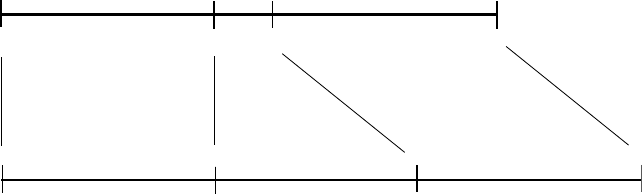}}%
		\put(-0.03,-0.06){{$-1$}}%
		\put(0.325,-0.06){{$0$}}%
		\put(-0.03,0.32){{$-1$}}%
		\put(0.325,0.32){{$0$}}%
		\put(0.64,-0.06){{$1$}}%
		\put(0.413,0.32){{$\e$}}%
		\put(0.7,0.32){{$1+\e$}}%
		\put(0.99,-0.06){{$2$}}%
		\put(-0.1,0.27){{$\hat x$}}%
		\put(-0.1,0.01){{$x$}}%
		\put(0.39,-0.03){\small membrane}
		\put(0.12,-0.03){\small bulk}
		\put(0.78,-0.03){\small bulk}
	\end{picture}%
	\vskip1.5em
	\caption{The geometry of the thin membrane in physical coordinates $\hat x$ (top) and rescaled coordinates $x$ (bottom).}
\end{figure}

The equation describing the evolution is
\begin{equation*}
 \begin{aligned}
 & \partial_t \hat \rho = \partial_{\hat x}\Bigl( \hat a_\e(\hat x) \bigl( \partial_{\hat x} \hat \rho + \hat \rho \wh V_\eps'(\hat x) \bigr) \Bigr)
   \qquad & &\text{in }(-1,1+\e),\\
 & \partial_{\hat x} \hat \rho(t,\hat x) + \hat \rho(t,\hat x) \wh V'_\e(\hat x)=0 \qquad &&\text{at }\hat x=-1,1+\e.
 \end{aligned}
\end{equation*}
The scaling $O(\e)$ of the mobility $\hat a_\e$ inside the membrane will combine with the thickness $\e$ of the membrane to produce a barrier of zero thickness but finite permeability, as we shall see below. The potential $\wh V_\e$ determines the energetic `cost' of being in different parts of the sandwich structure. Note that  the particular choices $\hat a_\e(\hat  x) = \e a_*(x/\e)$ and $\wh V_\e(\hat x) = V(\hat x/\e)$ inside the membrane imply that these parameters can vary across the thickness of the membrane. 

One interpretation for this equation is as follows. The unknown $\hat\rho$ represents the density of some  species of particles that diffuses through the material by hopping from site to site. The parameter $\hat a_\e$ characterizes the hopping rate, and the potential $\wh V_\e$ characterizes the energy levels of the individual hopping sites. The linearity of the prefactor $\hat \rho$ of $\wh V_\e'$  is consistent with a low-density assumption in which there are many more hopping sites than particles. In a higher-density regime, as in the example of the Simple Symmetric Exclusion Process, a prefactor such as $\hat \rho(K-\hat \rho)$ would be expected (see e.g.~\cite{CarrilloLisiniSavareSlepcev10} for a discussion). 

\medskip

To obtain an $\e$-independent geometry we rescale  the membrane to thickness $1$ by setting 
\begin{equation}
\label{eqdef:trans-x-xhat}
x(\hat x) := \left\{
\begin{array}{cl}
  \hat x&\text{for }\hat x<0,\\
  \hat x/\e & \text{for }\hat x\in [0,\e],\\
  \hat x{-}\e{+}1 &\text{for }\hat x>\e,
  \end{array} \right. 
\qquad
a_\e( x)=\left\{\begin{array}{cl}
  a_-( x)&\text{for } x<0,\\
  \e^{-1}a_*( x)& \text{for } x\in [0,1],\\
  a_+( x) &\text{for } x>1,
\end{array} \right. \quad 
\end{equation}
leading to the rescaled equations
\begin{subequations}\label{eq:thin-membrane}
\begin{align}
 & \partial_t \rho = \partial_x\Bigl(  a_\e(x) \bigl( \partial_x \rho + \rho  V'(x) \bigr) \Bigr)\qquad &&\text{in }(-1,2),
\\
 & \partial_{ x}  \rho(t, x) +  \rho(t, x) V'( x)=0 \qquad&&\text{at }x=-1,2.
 \label{eq:thin-membrane-BCs}
\end{align}
\end{subequations}
This equation is a gradient flow in continuity-equation structure for the choices
\begin{alignat*}2
\nodes &:= \edges := [-1,2], &\qquad \calR_\e^*(\rho,\Xi) 
&:= \frac12\int_{-1}^2 a_\e(x) \abs{\Xi(x)}^2\rho(\dx x).
\\
\calE_\e(\rho) &:= \RelEnt(\rho|\pi_\e), &\qquad  \pi_\e(\dx x) &= \frac1{\PartSum_\e} \ee^{-V(x)}
\left\{
  \begin{array}{ll} 
    1 & -1\leq x \leq 0\\
    \e & \phantom- 0<x<1 \\
    1 & \phantom- 1\leq x \leq 2
  \end{array}
  \right\}\dx x.
\end{alignat*}
Note that the low-density assumption is embodied in the choice of Boltzmann entropy and the linear dependence on $\rho$ in $\calR_\e^*$.

\medskip

In the limit $\e\to0$, the structure of the system~\eqref{eq:thin-membrane} changes. To start with, since $\pi_\e$ vanishes on the rescaled membrane $(0,1)$, any sequence of measures $\rho_\e$ with bounded energy $\calE_\e(\rho_\e)$ also converges to zero on $(0,1)$; in modelling terms, the zero-thickness membrane can not contain a positive amount of mass. As a consequence, the flux $j_\e$ in a pair $(\rho_\e,j_\e)\in\CE(0,T)$ satisfies $\partial_x j_\e\to0$ in the sense of distributions on $(0,1)\times (0,T)$. It follows that on the membrane set $(0,1)\times (0,T)$ the limit flux is a function of time only (called $j_\sfm$ below). Similarly to the discussion in Section~\ref{ss:Kramers-setting} this generates a natural limiting combination of node set, edge set, gradient, and divergence:
\begin{align*}
\wt\nodes &= [-1,0]\cup [1,2]  \\
\wt\edges & = [-1,0]\sqcup \{\sfm\}\sqcup [1,2]  \qquad \text{($\sfm$ for `membrane')}\\
\wt\anabla f(x) &= \bra[\big]{f'|_{[-1,0]}, \,f(1)-f(0),\, f'|_{[1,2]}}=: (\nabla_- f ,\,\ona f,\,\nabla_+ f) \\
\wt\adiv \wt\jmath & = (\partial_x j_-)\bONE_{[-1,0]} 
+  (j_-(0) - j_\sfm)\delta_0  + (j_\sfm-j_+(1))\delta_1
+ (\partial_x j_+)\bONE_{[1,2]} \\
&\qquad\qquad \text{for }\wt\jmath = (j_-,j_\sfm, j_+).
\end{align*}
The connection between original flux $j$ and contracted flux $\wt\jmath$ is given by
\begin{equation}
\label{eq:thin-membrane:relation-j-jtilde}
j(t,\dx x) = \begin{cases}
  j_-(t,\dx x)	& \text{if } {-1}\leq x \leq 0\\
  j_\sfm(t)\dx x  & \text{if } \phantom{-} 0< x< 1\\
  j_+(t,\dx x)  & \text{if } \phantom{-} 1\leq x \leq 2,
\end{cases}
\qquad \text{for }\wt\jmath = (j_-,j_\sfm,j_+).
\end{equation}
The corresponding weak form of the continuity equation (including the boundary conditions~\eqref{eq:thin-membrane-BCs}) becomes
\begin{multline}
0 = \int_0^T \!\!\!\!\!\!\int\limits_{\strut(-1,0)\cup (1,2)} \partial_t \varphi(t,x) \rho(t,\dx x)\dx t + \int_0^T \!\!\int_{-1}^0\partial_x \varphi(t,x) j_-(t,\dx x)\dx t
+ \int_0^T \!\!\int_1^2\partial_x \varphi(t,x) j_+(t,\dx x)\dx t \\[-4\jot]
+ \int_0^T \bra*{\varphi(t,1)-\varphi(t,0)}j_\sfm(\dx t) 
\qquad\qquad
\text{for all }
\varphi\in C^1_{\mathrm c}((0,T)\times [-1,2]).
\label{eq:ct-eq-thin-membrane-weak-form}
\end{multline}

By the same techniques as in~\cite{LieroMielkePeletierRenger17,FrenzelLiero21} one can prove the following result, which implies convergence to a contracted gradient system given by $(\wt\nodes,\wt\edges,\wt\anabla,\wt\calE,\sfF,\wt\calR)$ as in Definitions~\ref{defn:EDP-convergence:reduce} and~\ref{defn:EDP-tilting}. We call it a `formal theorem' because the specification of $\wt\calD^T$ is non-rigorous (see the discussion in Section~\ref{ss:formal-rigorous}) and because we do not provide a proof. 

\begin{Ftheorem}
Fix\/ $\calF(\rho) := \int_\nodes F(x)\rho(\dx x)$ for $F\in C_{\mathrm b}(\nodes)$. We have
\[
\calE_\e \stackrel\Gamma\longrightarrow \calE_0(\rho) := \RelEnt(\rho|\pi_0)
\qquad \text{and}\qquad
\calD_\e^T(\cdot;\calF) \stackrel{\Gamma} \longrightarrow
\calD_0^T(\cdot;\calF),
\]
where 
\[
\pi_0(\dx x) := \frac1{\PartSum_0} \ee^{-V(x)} \bONE_{\tilde\nodes}(x)\dx x.
\]
Defining $\wt\calE := \calE_0$, we have the inequality
\[
\calD_0^T(\rho_0,j_0;\calF) 
\geq 
\begin{cases}
\wt\calD^T(\wt\rho,\wt\jmath;\calF)  & \text{if $j_0$ and $\wt\jmath$ are related by~\eqref{eq:thin-membrane:relation-j-jtilde} and $\wt\rho := \rho_0 \lfloor\wt\nodes$
},\\
+\infty &\text{otherwise}.
\end{cases}
\]
Here $\wt\calD^T$ is defined as 
\begin{align}
\wt\calD^T(\rho,j;\calF) 
&= \int_0^T \pra*{\wt\calR(\rho_t,j_t;\calF) + \wt\calR^*\bra[\big]{\rho_t,-\anabla( \rmD\wt\calE(\rho_t)+ F);\calF}}\dx t,\notag\\
\noalign{\noindent where for $\Xi = (\Xi_-, \Xi_\sfm,\Xi_+)\in \bra[\big]{C_{\mathrm b}((-1,0)),\R, C_{\mathrm b}((1,2))}$,}
\wt\calR^*(\rho,\Xi;\calF) 
&= \frac12 \int_{-1}^0 a_-(x)\abs{\Xi_-(x)}^2 \rho(\dx x) 
   + \frac12 \int_1^2 a_+(x)\abs{\Xi_+(x)}^2 \rho(\dx x) 
   + \sigma(\rho,\calF) \sfC^*\bra*{\Xi_\sfm},\notag \\
\sigma(\rho,\calF) &= \bra*{\int_0^1 \frac1{a_*(s)}\ee^{V_*(s)}
  \exp\tfrac12 \bra[\big]{2F(s) - F(0) - F(1)}\dx s}^{-1}
  \sqrt{\frac{\dx\rho}{\dx \pi}(0^-)\frac{\dx\rho}{\dx\pi}(1^+)}\ .
  \label{eq:def:thin-membrane:sigma}
\end{align}
\end{Ftheorem}

The first two terms in $\wt\calR^*$ are the natural limits of the corresponding terms in~$\calR_\e^*$; the final term in~$\wt\calR^*$ represents the transmission across the membrane. 

\begin{remark}[Tilt-dependence]	
The dependence of $\sigma$ above on~$\calF$ implies that the limit system is tilt-dependent; see Section~\ref{sss:structure-of-tilt-dependence} for a discussion of the form of $\sigma$ and its consequences.

Note that there is a choice as to \emph{when} to tilt the system: this could be either before or after applying the transformation~\eqref{eqdef:trans-x-xhat}. Above we applied the tilt to the system \emph{after} transformation, which allows the tilt to depend on the `microscopic' variable in the interval $[0,1]$. On the other hand, when applying the tilt \emph{before} the transformation, a condition of continuity on the tilt at $\hat x=0$ implies that the tilt function is asymptotically constant on the $x$--interval $[0,1]$ after transformation. 
\end{remark}

\begin{remark}[Well-definedness of traces of $\rho$]
The definition of $\sigma$ in~\eqref{eq:def:thin-membrane:sigma} requires the traces of $\dx\rho/\dx\pi$ at $x=0,1$, which need not be well defined for an arbitrary measure $\rho$. In the context of the functional $\wt\calD^T$, however, additional regularity is provided by the term involving~$\wt\calR^*$; this term contains for instance the integral
\[
2\int_0^T \int_{-1}^0 a_-(x) \abs*{\nabla_x \sqrt{\frac{\dx\rho}{\dx\pi}(x)}}^2\dxx xt,
\]
and which leads to a well-defined trace at $x=0$.
\end{remark}

\bigskip

For completeness we give  the limit gradient evolution for $\calF=0$. This evolution is characterized by the equations
\[
\partial_t \wt\rho + \wt\adiv \wt\jmath = 0 ,
\qquad
\wt \jmath \in \partial_2 \wt\calR^*\bra[\big]{\wt\rho,-\wt\anabla\rmD \wt\calE(\wt\rho);0}.
\]
The second condition above reduces to
\begin{align*}
\wt\jmath (t,\dx x) = \begin{cases}
- a_-(x) \pi(\dx x) \partial_x u(t,x) & \text{if }x\in [-1,0]\\
\sigma_0\, (u(t,0) - u(t,1)) & \text{if }x = \sfm\\
- a_+(x) \pi(\dx x) \partial_x u(t,x) & \text{if }x\in [1,2]\\
\end{cases}	
\end{align*}
where $u = \dx\wt\rho/{\dx \pi_0} $ and $\sigma_0 := \bra*{\int_0^1 \frac1{a_*(s)}\ee^{V(s)}\dx s}^{-1}$. This leads to a limiting equation defined by the weak form of the continuity equation~\eqref{eq:ct-eq-thin-membrane-weak-form}, or in strong form for $u$,
\begin{align*}
 & \pi_0\partial_t  u = \partial_{ x}\bra*{a_\pm\pi_0\, \partial_{ x} u }
   \ &&\text{for }x\in [-1,0] \cup [1,2],\\
 & \partial_{ x}  u =0 \qquad&&\text{at } x=-1,2,\\
 & a_-(0)\partial_x u(t,0)=  \sigma_0\, (u(t,1) - u(t,0)), \\
 & a_+(1)\partial_x u(t,1)=  \sigma_0\, (u(t,1) - u(t,0)). \\ 
 \end{align*}

\section{Tilting}
\label{s:tilting}

\emph{Tilting} can mean at least four different things, all of which are relevant to this paper.

\paragraph{A.~Tilting of energies.} 
We described in Section~\ref{sss:tilting:energies} the tilting of an energy by adding another energy: 
$\calE \xmapsto{\text{tilt by }\calF} \calE+\calF$.

\paragraph{A$\!^+$.~Tilting of gradient systems.} 
A particular instance of this is when $\calE$ drives a gradient system; 
then $(\calE,\calF) \xmapsto{\text{tilt by }\calF} (\calE+\calF,\calR(\cdot,\cdot;\calF))$, 
as in the `gradient systems with tilting' of Definition~\ref{def:GS-with-tilting}. 
Here the  dependence of $\calR(\cdot,\cdot;\calF)$ on $\calF$ needs to be specified.
	
\paragraph{B.~Tilting of a Markov process.} 
Markov processes can also be tilted; we discuss this in Section~\ref{ss:tilting:MCs} below.

\paragraph{C.~Tilting of \emph{sequences} of random variables or Markov processes with a large-deviation principle.} 
We discuss this in Section~\ref{ss:tilting-LDPs} below.

\bigskip\noindent
There are various connections between these concepts of tilting, and we now discuss these. 
As mentioned above, there is a choice in how tilting affects the dynamics, and we characterize for finite Markov chains 
all possible choices of detailed-balance jump kernels in Section~\ref{ss:mod-jump-rates}.
As an illustration, we bring the tilt-dependence of chemical reactions
from the Kramers limit problem of Section~\ref{s:Kramers} into this perspective in Section~\ref{ss:case-study-chem-reactions-tilt-dependent}. 
Finally, in Section~\ref{ss:char-tilt-indep-GS}, we characterize tilt-independent gradient structures, and describe how one can sometimes convert a tilt-dependent structure into a tilt-independent one (`de-tilting'). Finally we also connect these gradient structures with various finite-volume discretization schemes. 

%

\subsection{Tilting of Markov processes}\label{ss:tilting:MCs}

We discuss interpretation \textbf B above, the tilting of Markov processes.
Let $X_t$ be a  Markov process on a state space $\nodes$ that is reversible with respect to a stationary measure $\pi$. 
For each $t>0$ we write the law of $X$ on $[0,t]$ as $\bbP_{[0,t]}$, which is a measure on an appropriate set of curves. 
Let $\calF = (\calF_t)_t$ be the filtration generated by~$X$, and write $\Generator$ for the generator of~$X$, 
with domain~$\dom \Generator$; we assume that~$X$ solves the martingale problem for $\Generator$. The reversibility of $X$ with respect to $\pi$ is equivalent to the property that $\Generator$ is self-adjoint in $L^2(\nodes,\pi)$.

A common way of modifying (`tilting') the process $X$ is to modify the law $\bbP_{[0,t]}$ by setting
\[
\wt \bbP_{[0,t]} := M_t \bbP_{[0,t]} \qquad\text{for each }t>0,
\]
where $M$ is a mean-one positive Markovian martingale with respect to $\calF$ and $\bbP_{[0,t]}$. 
The fact that $M$ has mean one is necessary for $\wt\bbP_{[0,t]}$ to be a probability measure, 
the fact that $M$ is adapted makes the modified process $\wt X$ adapted, 
and the fact that $M$ is Markovian ensures that also  $\wt X$ is a Markov process. 
Such a martingale $M$ can be built from functions $F:\nodes\to \R$ with $\ee^{-F/2}\in \dom \Generator$ by setting
\begin{equation}
\label{eqdef:M}
M_t^F := \exp\Bigl\{ -\tfrac 12 F(X_t) + \tfrac12 F(X_0) - \int_0^t \sfH(X_s,F)\dx s\Bigr\},
\end{equation}
where $\sfH$ is the \emph{Fleming-Sheu logarithmic transform}~\cite{Fleming82,Sheu85}
\[
\sfH(x,F) := \ee^{\frac12 F(x)}(\Generator \ee^{-\frac12 F})(x).
\]
If $F$ and $\sfH(\cdot,F)$ are bounded, then this process $M^F$ is a  martingale. 
In that case we can define $X^F$ to be the process with law $M_t^F \bbP_{[0,t]}$ for each $t$. 

The form of $M$ gives an indication of how this tilting changes $X$: 
multiplication with $M$ gives states $x$ with high value of $F(x)$ a lower probability of being visited. 
The following theorem states some properties of the process $X^F$; 
see~\cite{Feng99,ChetriteTouchette15} for similar results. 
\begin{theorem}
Consider the process $X$ described above. 
Assume that  $M^F$ is  a martingale, and  define $X^F$ to be the process with law $M_t^F \bbP_{[0,t]}$ for each $t$. The process $X^F$ then has the following properties.
\begin{enumerate}
\item The process $X^F$  is reversible with respect to the stationary measure $\ee^{-F}\pi$.
\item Assume that $\dom \Generator$ is closed under multiplication.
	The generator~$\Generator^F$ of~$X^F$ then satisfies $\dom \Generator^F = \dom \Generator$ and for any $\varphi\in \dom \Generator$ we have
\begin{equation}\label{eqdef:FlemingSheu}
\Generator^F \varphi = \ee^{F/2}\Generator(\ee^{-F/2} \varphi) - \ee^{F/2} \varphi \Generator\ee^{-F/2}.
\end{equation}
\end{enumerate}
\end{theorem}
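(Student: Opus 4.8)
The statement is a standard Doob $h$-transform result packaged with the Fleming--Sheu logarithmic transform, so the plan is to verify the two claims by direct manipulation of the tilted law and the martingale problem. First I would establish part (1), reversibility of $X^F$ with respect to $\ee^{-F}\pi$. The natural approach is to check the detailed-balance condition at the level of two-time distributions: for bounded $\varphi,\psi:\nodes\to\R$ and $t>0$, compute $\Expectation^F[\varphi(X^F_0)\psi(X^F_t)]$ by unfolding the definition $\wt\bbP_{[0,t]} = M_t\bbP_{[0,t]}$, so that this equals $\Expectation[M_t\,\varphi(X_0)\psi(X_t)]$. Substituting the explicit form~\eqref{eqdef:M} of $M_t$, the factors $\ee^{-F(X_t)/2}$ and $\ee^{F(X_0)/2}$ split off, and the exponential-integral factor $\exp(-\int_0^t \sfH(X_s,F)\dx s)$ is symmetric under time reversal $s\mapsto t-s$ because $\sfH(\cdot,F)$ is a fixed function of the state; combined with the $\pi$-reversibility of $X$ (which exchanges the roles of $X_0$ and $X_t$ together with the integral factor), one reads off that $\Expectation^F[\varphi(X^F_0)\psi(X^F_t)]$ is symmetric in $\varphi\leftrightarrow\psi$ once the reference measure is taken to be $\ee^{-F}\pi$ rather than $\pi$ (the two stray half-powers of $\ee^{-F}$ recombine with $\pi$ to give exactly $\ee^{-F}\pi$ on each end). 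The stationarity of $\ee^{-F}\pi$ then follows by taking $\psi\equiv 1$.

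Next, for part (2) I would identify the generator. The cleanest route is via the martingale problem: $X$ solves the martingale problem for $\Generator$, meaning $\varphi(X_t) - \varphi(X_0) - \int_0^t (\Generator\varphi)(X_s)\dx s$ is a $\bbP$-martingale for $\varphi\in\dom\Generator$. Under a Girsanov-type change of measure with density $M_t$, a process $N_t$ is a $\wt\bbP$-martingale if and only if $N_t M_t$ is a $\bbP$-martingale. So I would take a candidate $\varphi\in\dom\Generator$ (using that $\dom\Generator$ is closed under multiplication, so that $\ee^{-F/2}\varphi \in \dom\Generator$ whenever $\ee^{-F/2}\in\dom\Generator$), and show that $\bigl(\varphi(X_t) - \varphi(X_0) - \int_0^t (\Generator^F\varphi)(X_s)\dx s\bigr)M_t$ is a $\bbP$-martingale, where $\Generator^F$ is the operator in~\eqref{eqdef:FlemingSheu}. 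Expanding the product using the product rule for the (finite-variation plus martingale) decompositions of $\varphi(X_t)$ and of $M_t$ — here $M_t$ itself has the multiplicative form $\ee^{-F(X_t)/2+F(X_0)/2}$ times a finite-variation factor, and $\ee^{-F/2}\in\dom\Generator$ lets me write the semimartingale decomposition of $\ee^{-F(X_t)/2}$ — the drift terms collect into $\ee^{F/2}\Generator(\ee^{-F/2}\varphi) - \ee^{F/2}\varphi\,\Generator\ee^{-F/2}$ evaluated along the path, which is exactly $\Generator^F\varphi$, and by construction the factor $\sfH(X_s,F) = \ee^{F/2}\Generator\ee^{-F/2}$ inside $M$ cancels the residual drift coming from $M$ itself. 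This shows $X^F$ solves the martingale problem for $\Generator^F$, hence $\Generator^F$ is its generator; the domain equality $\dom\Generator^F = \dom\Generator$ follows because the map $\varphi\mapsto \ee^{-F/2}\varphi$ is a bijection of $\dom\Generator$ (using boundedness of $F$ and closure under multiplication) and conversely.

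\textbf{Main obstacle.} The routine algebra in the generator computation is not the difficulty; the delicate point is the bookkeeping of the product-rule / integration-by-parts step at the level of the martingale problem when $\Generator$ is only an abstract generator (not, say, a jump operator with an explicit kernel as in Example~\ref{ex:heat-flow}). One must justify that the cross-variation term between $M$ and $\varphi(X)$ is accounted for correctly — for a pure-jump $\Generator$ this produces the "second-order" term that is exactly responsible for the non-derivation structure of $\Generator^F$ in~\eqref{eqdef:FlemingSheu} (the subtraction of $\ee^{F/2}\varphi\,\Generator\ee^{-F/2}$), and making this rigorous in the abstract setting requires either assuming enough regularity of $\Generator$ to run Itô's formula, or citing the Fleming--Sheu transform results directly (\cite{Fleming82,Sheu85}, cf.~\cite{Feng99,ChetriteTouchette15}). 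I would handle this by invoking those references for the general martingale-problem statement and then simply verifying that the abstract assumptions (closure of $\dom\Generator$ under products, $\ee^{-F/2}\in\dom\Generator$, boundedness of $F$ and $\sfH(\cdot,F)$) are exactly what is needed to apply them, rather than re-deriving the transform from scratch.
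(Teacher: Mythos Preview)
Your plan is correct, but the paper takes a shorter path that sidesteps exactly the obstacle you identify. The paper reverses the order: it proves the generator formula~(2) first, then uses it to obtain reversibility~(1). For~(2), instead of the martingale-problem/Girsanov route, the paper computes the semigroup derivative directly:
\[
(\Generator^F\varphi)(x) = \frac{\dx{}}{\dx t}\Big|_{t=0}\Expectation_x[M_t\varphi(X_t)]
= \frac{\dx{}}{\dx t}\Big|_{t=0}\Expectation_x\bigl[\ee^{-F(X_t)/2}\ee^{Y_t}\varphi(X_t)\bigr],
\qquad Y_t = \tfrac12 F(x) - \int_0^t\sfH(X_s,F)\dx s.
\]
Since $Y$ is pathwise differentiable with \emph{deterministic} value and derivative at $t=0$, an ordinary product rule at $t=0$ splits this into $\ee^{Y_0}\cdot\Generator(\ee^{-F/2}\varphi)(x)$ plus $\ee^{-F(x)/2}\varphi(x)\cdot\tfrac{\dx{}}{\dx t}\ee^{Y_t}\big|_{t=0}$, yielding~\eqref{eqdef:FlemingSheu} in two lines---no cross-variation bookkeeping, no It\^o calculus, no appeal to \cite{Fleming82,Sheu85}. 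For~(1), the paper then simply checks that $(\varphi,\Generator^F\psi)_{\ee^{-F}\pi}$ is symmetric in $\varphi,\psi$, which is a one-line algebraic verification using~\eqref{eqdef:FlemingSheu} and the $\pi$-symmetry of $\Generator$. Your path-level time-reversal argument for~(1) is also valid and has the conceptual merit of establishing reversibility independently of the generator, but the derivative-at-zero trick for~(2) is the real economy: it makes the ``main obstacle'' you flag disappear entirely.
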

\begin{proof}
To calculate the generator of $X^F$, we set 
\[
Y_t := \tfrac12 F(X_0) - \int_0^t \sfH(X_s,F)\dx s
\]
and note that $Y$ is differentiable in time. 
Fix $\varphi\in \dom\Generator$ and use the product rule for differentiation to expand
\begin{align*}
(\Generator^F\varphi)(x) 
&= \frac {\dx{}}{\dx t}\Big|_{t=0} \Expectation_x \varphi(X^F_t)
=\frac {\dx{}}{\dx t}\Big|_{t=0} \Expectation_x\pra*{\ee^{- F(X_t)/2}\ee^{Y_t} \varphi(X_t)}\\
&=\ee^{Y_0} \frac {\dx{}}{\dx t}\Big|_{t=0}\Expectation_x\pra*{\ee^{- F(X_t)/2} \varphi(X_t)}
+ \ee^{-F(x)/2} \varphi(x)\frac {\dx{}}{\dx t}\Big|_{t=0} \Expectation_x{\ee^{Y_t}}\\
&= \ee^{F(x)/2} (\Generator \ee^{-F/2}\varphi)(x) 
  - \ee^{-F(x)/2} \varphi(x) \ee^{Y_0}\sfH(x,F),
\end{align*}
which coincides with~\eqref{eqdef:FlemingSheu}.

\NEW{By the symmetry of $\Generator$ in $L^2(\nodes,\pi)$,} we similarly calculate for $\varphi,\psi\in\dom \Generator$
\begin{align*}
(\varphi,\Generator^F\psi)_{\ee^{-F}\pi} 
&= \int_\nodes \varphi \pra*{\ee^{F/2}\Generator(\ee^{-F/2} \psi) - \ee^{F/2} \psi \Generator\ee^{-F/2}}\ee^{-F}\pi\\
&= \int_\nodes \pra*{\ee^{-F/2}\varphi \Generator(\ee^{-F/2} \psi) - \ee^{F/2} \varphi \psi \Generator\ee^{-F/2}}\pi,
\end{align*}
which is symmetric in $\varphi$ and $\psi$, implying that $\Generator^F$ is reversible with respect to $\ee^{-F}\pi$.
\end{proof}

\noindent 
\textbf{Example~\ref{ex:heat-flow} (continued).} 	
The application of the tilting~\eqref{eqdef:M} 
to a jump process $X$ on a finite set $\nodes$
with jump rates~$\kappa$ and invariant measure~$\pi$
leads to a tilted process $X^F$ with jump rates 
\begin{equation}
\label{eq:tilting-MCs-transformed-kappa-F}
\kappa^F_{\sfx\sfy} := \ee^{\frac12(F_\sfx-F_\sfy)} \kappa_{\sfx\sfy},
\end{equation}
and invariant measure
\[
\pi^F := \frac1 {\PartSum^F}\ee^{-F} \pi, \qquad \PartSum^F := \sum_{\sfx\in \nodes} \ee^{-F_\sfx}\pi_\sfx.
\]
This implies that if equation~\eqref{eq:heat-flow-intro} is generated 
by a gradient system $(\nodes, \edges,\ona, \calE,\calR)$ as in Example~\ref{ex:heat-flow}, 
then the act of tilting replaces $\kappa$ in~\eqref{eq:heat-flow-intro} 
by $\kappa^F$ in~\eqref{eq:tilting-MCs-transformed-kappa-F}, 
and this tilted equation is generated by a gradient system $(\nodes, \edges,\ona, \calE+\calF,\calR)$ 
where $\calF(\rho) := \dual\rho F$. 

In this case, the dissipation potential $\calR$ is the same in the original and in the tilted systems; 
therefore this is an example of a tilt-independent gradient system, 
in the sense of Definition~\ref{def:GS-with-tilting}. 

However, the transformation \eqref{eq:tilting-MCs-transformed-kappa-F} is not the only way 
in which one can change the jump rates $\kappa$ in response to tilting, 
even if one aims to preserve detailed balance and the form $\ee^{-F}\pi$ of the stationary measure. 
We explore this freedom of choice in Section~\ref{ss:mod-jump-rates} below.

\subsection{Tilting of sequences of random variables}
\label{ss:tilting-LDPs}

We now continue with interpretation \textbf C of tilting, the tilting of \emph{sequences} of random variables and processes. 

\paragraph{Without dynamics: Energies from large deviations of sequences.} 
To start with, we disregard dynamics, and we let $X^n$ be a sequence of random variables in $\nodes$ with law $\mathbb P^n\in \ProbMeas(\nodes)$. 
We assume that~$\calE$ characterizes the large deviations of $X^n$ in the limit $n\to\infty$, i.e. 
\[
\Prob (X^n \approx x) \sim \ee^{-n\calE(x)}\qquad\text{as }n\to\infty.
\]
We now tilt $X^n$ by $n\calF$, i.e.\ we define the tilted  sequence $X^{n,\calF}$ by
\[
\Prob(X^{n,\calF} \in A) = \frac1{\PartSum^{n,\calF}} \int_A \ee^{-n\calF(x)}\, \mathbb P_n(dx),
\]
or equivalently $\law X^{n,\calF}  = (\PartSum^{n,\calF})^{-1} \ee^{-n\calF} \law X^n$. 
By Varadhan's and Bryc's lemmas (see e.g.~\cite[Th.~III.17]{DenHollander00} or~\cite[\S 4.3--4.4]{DemboZeitouni98}) 
the tilted sequence~$X^{n,\calF}$ then satisfies a large-deviation principle with rate function $\calE+\calF +\text{constant}$, i.e.
\begin{equation}
	\label{ldp:XF}	
	\Prob (X^{n,\calF} \approx x) \sim \ee^{-n(\calE(x)+\calF(x) + c)}\qquad\text{as }n\to\infty.
\end{equation}
This shows that the exponential tilting with $\ee^{-n\calF}$ leads to a modification of $\calE$ to $\calE+\calF$. 
In other words, interpretations \textbf{C} (tilting of LDPs) and \textbf{A} (tilting of energies) agree with each other, 
if one interprets the large-deviation rate function as an energy. 


\paragraph{With dynamics: Gradient systems from large deviations of Markov processes.}
We now turn to sequences of reversible Markov processes with a large-deviation principle and
how these generate gradient systems.
We now describe a formal proof of this type of result, 
using the example in Section~\ref{sss:cosh-from-ldp-intro} as an illustration.

\medskip
\emph{General structure.} 
Let $Z^n\in \sfZ$ be the Markov process in Theorem~\ref{ftheorem:LDP} for which we want to calculate the large deviations. 
The large deviations of~$Z^n$ can formally be derived in the following way~\cite{FengKurtz06}. 
Let $\Generator^n$ be the generator of $Z^n$.  
For a function $f = f(z)$, define the `nonlinear generator' $\scrH^n$ by\footnote{\NEW{This operator is the generator in the sense of nonlinear semigroup theory~\cite{Miyadera1992} of the nonlinear semigroup 
\[
(V^n_tf)(z) := \frac1n \log \Expectation \pra*{\ee^{nf(Z^n_t)}\Big| Z^n_0 = z}.
\]
As explained by Feng and Kurtz~\cite{FengKurtz06}, classical large-deviation results by Varadhan and Bryc relate the large deviations of a random variable $X^n$ to expressions of the form $\frac1n \log \Expectation \pra*{\ee^{nf(X^n)}}$. The  semigroup above is a generalisation  to the case of Markov processes. In both cases the expectation under tilting at speed~$n$ allows one to extract the large-deviation behaviour.
}}
\begin{equation}
\label{eqdef:Hamiltonian-from-generator}
(\scrH^n f)(z) := \frac1n \ee^{-nf(z)} (\Generator^n \ee^{nf})(z)
\end{equation}
Note how this is a nonlinear, $n$-scaled exponential form of tilting. 
A crucial step then is to show that
\begin{equation}
\label{conv:Hamiltonians}
\scrH^n \xrightarrow{n\to\infty}  \scrH \qquad\text{in an appropriate manner (see below)}.
\end{equation}
If in addition the limit $\scrH$ has the particular structure
\[
(\scrH f)(z) = H(z, \rmD f(z)) \qquad\text{for some }H,
\]
then $Z^n$ satisfies a large-deviation principle with rate function $\scrI$ characterized by $H$, 
\[
\scrI(z) := \sup_{\xi} \set[\Big]{\int_0^T \pra*{\Dual {\dot \xi} z - H(z,\xi)}\dx t\;\Big|\; \xi\in C^1([0,T]\times \sfZ)}.
\]

\medskip
\emph{Illustration.} 
For the example of Section~\ref{sss:cosh-from-ldp-intro}, 
$Z^n = \rho^n$ is the empirical measure of $n$ independent copies 
of a sequence of reversible Markov processes $X^n$ on a state space~$\nodes$. 
For this example we can characterize the generators $\Generator^n$ and $\scrH^n$ 
for the functions $f$ of the form  $f(\rho) := \dual F\rho$, for $F:\nodes\to\R$, by noting that $\rmD f = F$ as follows:
\begin{align}
(\Generator^n f)(\rho) &= \sum_{\sfx\sfy\in \edges} \rho_\sfx \kappa_{\sfx\sfy} \bra*{F_\sfy - F_\sfx},\notag\\
(\scrH^n f)(\rho) &= \sum_{\sfx\sfy\in \edges} \rho_\sfx \kappa_{\sfx\sfy} \bra*{\ee^{F_\sfy - F_\sfx}-1},
\label{eqdef:scrHn-ex-ldp-Sanov}
\end{align}
The convergence~\eqref{conv:Hamiltonians} is trivial for this example, since  $\scrH^n$ is independent of $n$.

For this example, and in fact for all the examples of this paper, 
the continuity-equation structure~\eqref{eq:ct-eq-abstract-intro} leads to a setup,
where $\scrH^n$ and the limit $\scrH$ have slightly more structure in comparison to the abstract formulation above, by
containing an additional gradient:
\[
(\scrH f)(\rho) = H(\rho, \anabla \rmD f(\rho)), \qquad\text{with }
H(\rho,\Xi) = \sum_{\sfx\sfy\in \edges} \rho_\sfx \kappa_{\sfx\sfy} \bra*{\ee^{\Xi_{\sfx\sfy}}-1} .
\]
In addition, $\rho^n$ satisfies a large-deviation principle with rate function $\scrI$ given by
\begin{align*}
\scrI(\rho) &:= \inf_j \set[\Big] {\RateFunc(\rho,j)\;\Big| \; \partial_t \rho + \adiv j = 0}
\qquad \text{with}\\
\RateFunc(\rho,j) &= \sup_\Xi \set[\Big] {\int_0^T \bigl[\Dual {j_t}{\Xi_t}  - H(\rho_t,\Xi_t)\bigr]\, dt \;\Big|\; \Xi:[0,T]\times \edges \to \R}.
\end{align*}
See~\cite[\S 3]{Renger18} for details of this large-deviation principle, 
including the ``level-2.5 large deviations'' for which $\RateFunc$ is the rate function.

\medskip
\emph{Construction of the gradient system.} 
From the Hamiltonian $H$ a gradient system can be derived as follows~\cite{MielkePeletierRenger14} (see also Section~\ref{ss:ldp-GS}). 
The stochastic reversibility implies that $\xi\mapsto H(z,\xi)$ is minimized at $\xi = \tfrac12 \rmD \scrE(z)$, 
where $\scrE$ is the rate function of the invariant measure. 
For the continuity-equation case we have that $\Xi\mapsto H(\rho,\Xi)$ is minimized at $\Xi = \tfrac12 \anabla \rmD \scrE(\rho)$. 
The gradient system then is driven by the functional $\tfrac12 \scrE$, 
and the dissipation potential $\scrR^*$ or $\calR^*$ can be constructed from $H$ and $\tfrac12\scrE$ by
\begin{subequations}
\label{eqdef:Rstar-from-H-general}
\begin{align}
\scrR^*(\rho,\xi) &=  2\bra*{ H\bra[\big]{\rho,\tfrac12(\xi+ \rmD \scrE(\rho))}
     -  H\bra[\big]{\rho,\tfrac12\rmD \scrE(\rho)}} &&\text{(general case)},
     \\
\label{eqdef:Rstar-from-H}	
\calR^*(\rho,\Xi) &=  2\bra*{  H\bra[\big]{\rho,\tfrac12(\Xi + \anabla \rmD \scrE(\rho))}
     -  H\bra[\big]{\rho,\tfrac12\anabla \rmD \scrE(\rho)}} &&\text{(continuity-equation case)}.
\end{align}
\end{subequations}
We already commented on the relation between $\scrR^*$ and $\calR^*$ in Remark~\ref{rem:non-ct-eq-GS}.

For the continuity-equation case it can formally be verified that the functional $\RateFunc(\rho,j)$ is of the form 
\[
\RateFunc(\rho,j)
= \frac12 \scrE(\rho(T))-\frac12 \scrE(\rho(0)) 
+ \frac12 \int_0^T \pra*{\calR(\rho,j) + \calR^*\bra*{\rho,-2\anabla \rmD\tfrac12\scrE(\rho)}}\dx t.
\]
This identifies $\RateFunc$ as the EDP functional $\calI^T$ 
that defines the gradient structure $(\nodes,\edges,\anabla, \scrE, \calR)$ (see Definition~\ref{def:EDP:sol},  and see Remark~\ref{rem:scaling-energy-in-GS} for the factors $2$).

\begin{remark}[The cosh structure and exponential tilting]
The construction of $\calR^*$ from $H$ in~\eqref{eqdef:Rstar-from-H-general} 
gives yet another way of understanding the \emph{exponential} nature of the cosh structure: 
it arises from the exponential tilting~\eqref{eqdef:Hamiltonian-from-generator} 
that underlies the theory of large deviations, 
and which is evident in formulas for the Hamiltonian $H$ such as~\eqref{eqdef:scrHn-ex-ldp-Sanov}.
\end{remark}

\paragraph{Interplay with tilting.}
In some cases the induced gradient system automatically is tilt-independent.
To illustrate this, we repeat the construction above for a tilted version of the process. 
Fix a tilt $\calF:\sfZ\to\R$, and define as in Section~\ref{ss:tilting:MCs} the process $Z^{n,\calF}$ 
via the Fleming-Sheu transformation of the generator $\Generator^n$ in~\eqref{eqdef:FlemingSheu},
\[
\Generator^{n,\calF} f(z) := \ee^{n\calF(z)/2} \Generator^{n} \bra*{ \ee^{-n\calF/2}f}(z)
- \ee^{n\calF(z)/2} f(z) (\Generator^{n} \ee^{-n\calF/2})(z) .
\]
The calculation
\begin{align}
(\scrH^{n,\calF} f)(z) &:= \frac1n \ee^{-nf(z)} (\Generator^{n,\calF} \ee^{nf})(z)\notag\\
&= \frac1n \ee^{-nf(z)+n\calF(z)/2} (\Generator^{n} \ee^{nf-n\calF/2})(z)
  - \frac1n \ee^{n\calF(z)/2} (\Generator^{n} \ee^{-n\calF/2})(z)\notag\\
&= \scrH^n\bra*{f-\tfrac12\calF}(z)
   -\scrH^n\bra*{-\tfrac12\calF}(z)\label{conv:tilted-Hamiltonians-prelimit}\\
&\longrightarrow \scrH\bra*{f-\tfrac12\calF}(z)
   -\scrH\bra*{-\tfrac12\calF}(z)
   \label{conv:tilted-Hamiltonians}\\
&= {H\bra*{z, \rmD f(z)-\tfrac12 \rmD \calF(z)}}
- {H\bra*{z,-\tfrac12 \rmD\calF(z)}},\notag
\end{align}
leads to the definition of a tilted Hamiltonian given by
\[
H^\calF(z,\xi) := H\bra*{z,\xi-\tfrac12\rmD\calF(z)}
- H\bra*{z,-\tfrac12\rmD\calF(z)}.
\]
By~\eqref{ldp:XF}, the tilted invariant measures for $Z^{n,\calF}$ satisfy a large-deviation principle 
with rate function $\scrE^\calF := \scrE + \calF + \text{constant}$, 
and therefore the Hamiltonian $H^\calF$ is minimized at $\tfrac12 \rmD \scrE^\calF(z)$.
Following the prescription~\eqref{eqdef:Rstar-from-H} we then calculate
\begin{align*}
\frac12 \calR^*(z,\xi; \calF) 
&= H^\calF\bra*{z, \tfrac12 \bra*{\xi + \rmD \scrE^\calF(z)}}
    - H^\calF\bra*{z,\tfrac12  \rmD \scrE^\calF(z)} \\
&= H\bra*{z, \tfrac12 \bra*{\xi + \rmD \scrE^\calF(z)} - \tfrac12\rmD\calF(z)}
     - H\bra*{z,- \tfrac12  \rmD\calF(z)}\\
&\qquad - H\bra*{z,\tfrac12  \rmD \scrE^\calF(z) - \tfrac12 \rmD\calF(z)} 
        + H\bra*{z,- \tfrac12  \rmD\calF(z)} \\
&=  H\bra*{z, \tfrac12 \bra*{\xi + \rmD \scrE(z)}}
     - H\bra*{z,\tfrac12  \rmD \scrE(z)} \\
&= \frac12 \calR^*(z,\xi; 0) ,
\end{align*}
which shows, formally, that $\calR^*$ is independent of $\calF$.

\bigskip

However, many large-deviation rate functions of this type are \emph{not} tilt-independent; 
the Kramers limit of Theorem~\ref{t:lower-bound} again is an example. 
This apparent contradiction comes from the assumption of the convergence~\eqref{conv:tilted-Hamiltonians}, 
which in turn is an instance of the convergence in~\eqref{conv:Hamiltonians}.

To understand the problem, note that the definition of this convergence ``$\scrH^n\to\scrH$'' is 
that for each $f\in \dom\scrH$ there exists a sequence $f_n\in \dom\scrH^n$ such that (see e.g.~\cite[Ch.~6]{FengKurtz06})
\[
f_n \to f \qquad \text{and}\qquad \scrH^nf_n \to \scrH f.
\]
In order to deduce from $\scrH^n\to\scrH$ that $\scrH^{n,\calF}\to\scrH^\calF$, we can proceed as follows. 
Assuming for simplicity that $\dom\scrH^\calF = \dom\scrH$, fix $f\in \dom\scrH$, 
and choose $f_n\to f$ such that $\scrH^n f_n \to \scrH f$. 
If the function $\calF$ is sufficiently `nice', 
it is not unreasonable that then also $\scrH^n (f_n -\tfrac12  \calF)\to \scrH (f-\tfrac12  \calF)$.

However, in order to deduce $\scrH^{n,\calF}\to\scrH^\calF$, i.e.\ the convergence~\eqref{conv:tilted-Hamiltonians}, 
we also need that the second term in~\eqref{conv:tilted-Hamiltonians-prelimit} converges, 
\[
\scrH^n\bra*{-\tfrac12 \calF} \to \scrH\bra*{-\tfrac12 \calF}.
\]
This property requires that $\scrH^n$ converges \emph{pointwise} at $-\calF/2$, which is a far  stronger requirement.
Indeed, especially for situations in which small-scale oscillations play a role, such as in homogenization, 
pointwise convergence does not hold, even at functions $-\calF/2$ that are very smooth.

\NEW{As an example, consider the Kramers case of Section~\ref{s:Kramers}. Here the Hamiltonian operator~$\scrH^\e$ has the form (see~\cite[(1.49)]{FengKurtz06} for the calculation)
\begin{multline*}
\scrH^\e \calF(\rho) =\int_\nodes \pra*{
  m_\Omega \Delta_x F^\rho + \tau_\e \bra*{ \Delta_y F^\rho - \frac1\e \nabla_y F^\rho \nabla_y H + \frac12 |\nabla_y F^\rho |^2}} \rho(\dxx xy), \\
\qquad\text{with }
F^\rho(x,y) = \rmD\calF(\rho)(x,y).
\end{multline*}
Since $\tau_\e\to \infty$, this expression diverges for any function $F^\rho$ with non-constant dependence on $y$.  This explains  how in the Kramers case the limit system only is tilt-independent on tilts that are constant in $y$; see also the discussion in Section~\ref{sss:philosopy-of-tilt-dependence}.
}

In conclusion, for `benign' situations in which Hamiltonians converge pointwise, 
the resulting large-deviation rate functions generate tilt-independent gradient systems. 
Without pointwise convergence, however, this may fail, as for instance in the examples of Sections~\ref{s:Kramers},~\ref{s:thin-membrane} and~\ref{s:two-terminal-reduction}.

\subsection{Characterization of detailed-balance jump kernels}
\label{ss:mod-jump-rates}

In Section~\ref{ss:tilting:MCs}, we observed that tilting a Markov process results 
in the particular modification~\eqref{eq:tilting-MCs-transformed-kappa-F} of the jump rates $\kappa$, 
but that also other modifications of $\kappa$ exist that preserve the detailed-balance property of $\kappa$. 
In this section we fully characterize this freedom of choice with Propositions~\ref{prop:Jump:tilt} and~\ref{prop:activity-tilt}.

To fix notation, choose a finite set $\nodes$, a fixed kernel $\kappa$, 
and a fixed measure $\pi$ satisfying the detailed-balance condition~\eqref{eq:def:graph:DBC},
\begin{equation}\label{eq:Jump:DBC}
	\pi_\sfx \kappa_{\sfx\sfy} = \pi_\sfy \kappa_{\sfy\sfx} . 
\end{equation}
To avoid degeneracies we assume that $\pi_\sfx>0$ for all $\sfx\in \nodes$. 
We also choose $\edges$ to be the support of $\kappa$,
\begin{equation*}
	\edges:=\set{\sfx\sfy\in \sfV\times \sfV: \kappa_{\sfx\sfy}>0},
\end{equation*}
which is symmetric ($\sfx\sfy\in \edges \Longleftrightarrow \sfy\sfx\in \edges$) by~\eqref{eq:Jump:DBC} and the assumption $\pi>0$.
Finally, again to avoid degeneracies we assume that $(\nodes,\edges)$ is a connected graph; 
then the measure $\pi$ is the unique equilibrium for the evolution~\eqref{eq:heat-flow-intro} from Example~\ref{ex:heat-flow}.

\medskip

We consider tiltings of both the equilibrium $\pi$ and the jump rate $\kappa$ that are described by a function $f:\nodes\to\R$. 
We say that $f$ induces an admissible tilt of the pair $(\pi,\kappa)$ leading to a new pair $(\pi^f$, $\kappa^f)$ 
provided the tilting satisfies the following two requirements:
\begin{subequations} 
\label{cond:DB-kernels-tilting}
\begin{alignat}2
\pi^f_\sfx  &:= \frac1{\PartSum^f} \ee^{-f_\sfx} \pi_\sfx, 
\qquad \PartSum^f := \sum_{\sfx\in \nodes} \ee^{-f_\sfx} \pi_\sfx 
	&\qquad& \text{(definition of $f$),}\label{eq:Jump:tilt:pi}\\
\pi^f_\sfx \kappa^f_{\sfx\sfy} &= \pi^f_\sfy \kappa^f_{\sfy\sfx}  
	  &\quad & \text{(detailed balance).}
\label{eq:Jump:F:DBC}
\end{alignat}
\end{subequations}
The first requirement above specifies the relationship between $f$ and $\pi$. 
In the case of `Boltzmann statistics' the function~$f$ has an interpretation as a potential energy (see Remark~\ref{rem:MC:general_tilts} below). 
The Markov-process tilting of Section~\ref{ss:tilting:MCs} also is of this form, 
but instead of fixing the form of $\kappa^f$ by choosing the Fleming-Sheu transform that leads to~\eqref{eq:tilting-MCs-transformed-kappa-F}, 
we now allow for any choice of $\kappa^f$ that satisfies~\eqref{cond:DB-kernels-tilting}. 
The second requirement states that the tilted system should also satisfy detailed balance. 

\medskip
The following proposition characterizes \emph{all} positive jump kernels $\kappa^f: \edges\to (0,\infty)$ satisfying these conditions.
\begin{prop}[Tilted jump kernels]\label{prop:Jump:tilt}
The pair $\pi^f\in \calM_{>0}(\nodes)$ and $\kappa^f:\edges \to (0,\infty)$ satisfies~\eqref{cond:DB-kernels-tilting}
if and only if there exists $\theta:\edges \times \R^2 \to (0,\infty)$, jointly symmetric, that is
\begin{equation}\label{eq:Jump:Phi:symmetry}
	\theta_{\sfx\sfy}(a,b)=\theta_{\sfy\sfx}(b,a) \qquad\text{for all } \sfx\sfy\in \edges \text{ and } (a,b)\in \R^2 ,
\end{equation}
such that 
\begin{equation}\label{e:Jump:Phi}
	\kappa^f_{\sfx\sfy} = \kappa_{\sfx\sfy} \ee^{f_\sfx} \theta_{\sfx\sfy}\bra*{\ee^{-f_\sfx},\ee^{-f_\sfy}}
	\qquad\text{for all } \sfx\sfy\in \edges . 
\end{equation}
\end{prop}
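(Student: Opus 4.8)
The plan is to prove the two directions separately, with the ``if'' direction being a routine verification and the ``only if'' direction requiring the construction of $\theta$ from given data $(\pi^f,\kappa^f)$.

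First I would prove that any $\kappa^f$ of the form~\eqref{e:Jump:Phi} with jointly symmetric $\theta$ satisfies detailed balance~\eqref{eq:Jump:F:DBC}. Using~\eqref{eq:Jump:tilt:pi} and the original detailed-balance relation $\pi_\sfx\kappa_{\sfx\sfy} = \pi_\sfy\kappa_{\sfy\sfx}$, I compute
\[
\pi^f_\sfx \kappa^f_{\sfx\sfy}
= \frac{1}{\PartSum^f}\ee^{-f_\sfx}\pi_\sfx \cdot \kappa_{\sfx\sfy}\ee^{f_\sfx}\theta_{\sfx\sfy}\bra*{\ee^{-f_\sfx},\ee^{-f_\sfy}}
= \frac{1}{\PartSum^f}\pi_\sfx\kappa_{\sfx\sfy}\,\theta_{\sfx\sfy}\bra*{\ee^{-f_\sfx},\ee^{-f_\sfy}},
\]
and similarly $\pi^f_\sfy\kappa^f_{\sfy\sfx} = \tfrac1{\PartSum^f}\pi_\sfy\kappa_{\sfy\sfx}\,\theta_{\sfy\sfx}\bra*{\ee^{-f_\sfy},\ee^{-f_\sfx}}$. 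Since $\pi_\sfx\kappa_{\sfx\sfy} = \pi_\sfy\kappa_{\sfy\sfx}$ and $\theta_{\sfx\sfy}(a,b) = \theta_{\sfy\sfx}(b,a)$ with $(a,b) = (\ee^{-f_\sfx},\ee^{-f_\sfy})$, the two expressions coincide, giving~\eqref{eq:Jump:F:DBC}. Positivity of $\kappa^f$ follows from positivity of $\kappa$, of the exponential, and of $\theta$.

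For the ``only if'' direction, suppose $(\pi^f,\kappa^f)$ satisfies~\eqref{cond:DB-kernels-tilting}. Given the fixed data $(\pi,\kappa)$ and the function $f$, I need to exhibit a jointly symmetric $\theta:\edges\times\R^2\to(0,\infty)$ realizing~\eqref{e:Jump:Phi}. For each edge $\sfx\sfy\in\edges$, formula~\eqref{e:Jump:Phi} forces the value of $\theta_{\sfx\sfy}$ \emph{at the single point} $(a,b) = (\ee^{-f_\sfx},\ee^{-f_\sfy})$ to be
\[
\theta_{\sfx\sfy}\bra*{\ee^{-f_\sfx},\ee^{-f_\sfy}} := \frac{\kappa^f_{\sfx\sfy}}{\kappa_{\sfx\sfy}\ee^{f_\sfx}} = \frac{\kappa^f_{\sfx\sfy}\,\ee^{-f_\sfx}}{\kappa_{\sfx\sfy}},
\]
which is a well-defined positive number since $\kappa_{\sfx\sfy}>0$ on $\edges$. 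The key consistency check is the joint symmetry~\eqref{eq:Jump:Phi:symmetry} at this point, i.e.\ that $\theta_{\sfx\sfy}(\ee^{-f_\sfx},\ee^{-f_\sfy})$ and $\theta_{\sfy\sfx}(\ee^{-f_\sfy},\ee^{-f_\sfx})$ agree; this is precisely equivalent to
\[
\frac{\kappa^f_{\sfx\sfy}\ee^{-f_\sfx}}{\kappa_{\sfx\sfy}} = \frac{\kappa^f_{\sfy\sfx}\ee^{-f_\sfy}}{\kappa_{\sfy\sfx}}
\quad\Longleftrightarrow\quad
\kappa^f_{\sfx\sfy}\ee^{-f_\sfx}\pi_\sfx = \kappa^f_{\sfy\sfx}\ee^{-f_\sfy}\pi_\sfy,
\]
using $\pi_\sfx\kappa_{\sfx\sfy} = \pi_\sfy\kappa_{\sfy\sfx}$; and the right-hand identity is exactly the detailed-balance assumption~\eqref{eq:Jump:F:DBC} combined with the definition~\eqref{eq:Jump:tilt:pi} of $\pi^f$ (both sides equal $\PartSum^f\,\pi^f_\sfx\kappa^f_{\sfx\sfy} = \PartSum^f\,\pi^f_\sfy\kappa^f_{\sfy\sfx}$). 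Thus the prescribed values are consistent with joint symmetry. It then remains to extend $\theta_{\sfx\sfy}$ from the single point $(\ee^{-f_\sfx},\ee^{-f_\sfy})$ (and $\theta_{\sfy\sfx}$ from the mirrored point) to a jointly symmetric positive function on all of $\edges\times\R^2$: for each unordered pair $\{\sfx\sfy,\sfy\sfx\}$ one may simply take $\theta_{\sfx\sfy}$ to be the constant equal to the prescribed value and define $\theta_{\sfy\sfx}$ as its mirror, which trivially satisfies~\eqref{eq:Jump:Phi:symmetry}; any continuous positive interpolant works equally well.

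The main (and only genuine) obstacle is the consistency verification in the ``only if'' direction — checking that the pointwise-forced values of $\theta$ respect the symmetry constraint~\eqref{eq:Jump:Phi:symmetry}. As shown above this reduces, after using the \emph{original} detailed-balance relation for $(\pi,\kappa)$, to the \emph{tilted} detailed-balance relation for $(\pi^f,\kappa^f)$, so no extra hypothesis is needed; the extension step afterwards carries no content. I would also remark that this proposition makes transparent why the Markov-process tilt~\eqref{eq:tilting-MCs-transformed-kappa-F} is the special case $\theta_{\sfx\sfy}(a,b) = \sqrt{ab}$ (which is manifestly jointly symmetric and positive), and Proposition~\ref{prop:activity-tilt} presumably treats a complementary parametrization.
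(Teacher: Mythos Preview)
Your proof is correct and follows essentially the same approach as the paper, with only cosmetic differences: you treat the ``if'' direction first while the paper starts with ``only if'', and you are more explicit about the extension step (the paper simply writes ``we can assume that $\kappa^f$ is of the form~\eqref{e:Jump:Phi} for some $\theta$ not necessarily satisfying the symmetry relation'' and then derives the symmetry, which is the same content packaged slightly differently). Your closing remark identifying $\theta_{\sfx\sfy}(a,b)=\sqrt{ab}$ with the Fleming--Sheu tilt is also correct and matches Example~\ref{ex:particular-DB-parametrisations}.\ref{ex:tilt:sym}.
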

\begin{proof}
	First assume that $(\pi^f,\kappa^f)$ satisfy~\eqref{cond:DB-kernels-tilting}. 
	By positivity of $\pi^f$, we can assume that $\kappa^f$ is of the form~\eqref{e:Jump:Phi}
	for some $\theta:\edges \times \R^d\to \R$ not necessarily satisfying the symmetry relation~\eqref{eq:Jump:Phi:symmetry}. 
	Since $(\pi,\kappa)$ also satisfies the detailed balance condition~\eqref{eq:Jump:DBC}, we arrive at the identity
	\begin{align*}
		\pi_\sfx \kappa_{\sfx\sfy} \, \theta(\sfx\sfy;\ee^{-f_\sfx},\ee^{-f_\sfy})
		&= \PartSum^f \pi^f_\sfx \kappa_{\sfx\sfy} \, \theta(\sfx\sfy;\ee^{-f_\sfx},\ee^{-f_\sfy}) \ee^{f_\sfx} \\
		&= \PartSum^f \pi^f_\sfx \kappa^f_{\sfx\sfy} \\
		&= \PartSum^f \pi^f_\sfy \kappa^f_{\sfy\sfx} \\
		&= \PartSum^f \pi^f_\sfy \kappa_{\sfy\sfx} \, \theta(\sfy\sfx;\ee^{-f_\sfy},\ee^{-f_\sfx}) \ee^{f_\sfy} \\
		&= \pi_\sfy \kappa_{\sfy\sfx} \, \theta(\sfy\sfx;\ee^{-f_\sfy},\ee^{-f_\sfx}),
	\end{align*}
	which together with~\eqref{eq:Jump:DBC} shows that $\theta$ satisfies the symmetry condition~\eqref{eq:Jump:Phi:symmetry}.
	
	Similarly, if $\theta$ has the symmetry~\eqref{eq:Jump:Phi:symmetry} and $\kappa^f $ is defined by~\eqref{e:Jump:Phi},
	then we get by again using the detailed balance~\eqref{eq:Jump:DBC} of $(\pi,\kappa)$ the identity
	\begin{equation*}
		 \ee^{-f_\sfx}\pi_\sfx \,\kappa^f_{\sfx\sfy} 
		= \pi_\sfx \kappa_{\sfx\sfy} \, \theta(\sfx\sfy;f_\sfx,f_\sfy)
		=  \pi_\sfy \kappa_{\sfy\sfx} \, \theta(\sfy\sfx;f_\sfy,f_\sfx)
		= \ee^{-f_\sfy}\pi_\sfy\,\kappa^f_{\sfy\sfx} ,
	\end{equation*}
	concluding that $\kappa^f$ satisfies the detailed balance condition~\eqref{cond:DB-kernels-tilting} 
	with stationary measure $\pi^f := (\PartSum^f)^{-1} \ee^{-f}\pi $.
\end{proof}
This Proposition shows that for each pair of edges $\{\sfx\sfy,\sfy\sfx\}\subset \edges$ 
there is the freedom of a choice of a symmetric map $\R^2\to \R$. 
We now characterize this freedom in another way,
by using a discrete Hodge-like decomposition of the rates $\kappa: \edges \to (0,\infty)$ as 
\begin{equation}\label{eqdef:activity-entropy}
	\kappa_{\sfx\sfy} = a_{\sfx\sfy} \ee^{{s_{\sfx\sfy}}/{2}} \quad\text{with}\quad a_{\sfx\sfy} =a_{\sfy\sfx}:=\sqrt{\kappa_{\sfx\sfy}\kappa_{\sfy\sfx}} \quad\text{ and }\quad s_{\sfx\sfy} = - s_{\sfy\sfx}:=\log \frac{\kappa_{\sfx\sfy}}{\kappa_{\sfy\sfx}} .
\end{equation}
It is readily checked that $\kappa$ satisfies the detailed balance condition~\eqref{eq:Jump:DBC} 
if and only if $s_{\sfx\sfy} = V_\sfx-V_\sfy$ for some $V:\nodes \to \R$. 
In this case, we can also write
\begin{equation*}
	 s_{\sfx\sfy} = \log \frac{\pi_\sfx \kappa_{\sfx\sfy}\ \pi_{\sfy}}{\pi_\sfy \kappa_{\sfy\sfx}\ \pi_\sfx} 
	 = \log \frac{\pi_{\sfy}}{\pi_\sfx} = \log \pi_{\sfy} - \log \pi_{\sfx}.
\end{equation*}
Hence, the potential $V$ is given by $V_\sfx =-\log \pi_{\sfx} + \text{constant}$. 
In particular, the equilibrium state of a detailed-balance Markov chain is independent of the choice $a:\edges \to (0,\infty)$. 
From this observation the coefficient $a:\edges \to (0,\infty)$ is also called~\emph{dynamical activity} or \emph{time-symmetric traffic}; 
see the recent review by Maes~\cite{Maes2020} for discussion of this concept in non-equilibrium statistical mechanics. 

Next, we investigate the interplay of the tilting~\eqref{eq:Jump:tilt:pi} obtained 
in Proposition~\ref{prop:Jump:tilt} with the splitting~\eqref{eqdef:activity-entropy}. 
We summarize the statement in the following Proposition, which can be verified by direct calculations.
\begin{prop}[Tilting of dynamical activitiy]\label{prop:activity-tilt}
The kernel $\kappa^f:\edges \to (0,\infty)$ satisfying~\eqref{eq:Jump:F:DBC} and given by~\eqref{e:Jump:Phi} 
has the splitting $\kappa^f_{\sfx\sfy} = a^f_{\sfx\sfy} \ee^{{s^f_{\sfx\sfy}}/{2}}$ 
with $a^f:\edges \to (0,\infty)$ symmetric and $s^f:\edges \to (0,\infty)$ skew-symmetric given by
\begin{equation*}
	a^f_{\sfx\sfy} = a_{\sfx\sfy} \,\theta(\sfx\sfy;\ee^{-f_\sfx},\ee^{-f_\sfy}) \ee^{\frac{f_\sfx+f_\sfy}{2}}
	\qquad\text{and}\qquad s^f_{\sfx\sfy}  = s_{\sfx\sfy} + f_\sfx - f_\sfy . 
\end{equation*}
In particular, the only tilting of the jump kernel that leaves the activity invariant, i.e. $a^f=a$, is given by 
\begin{equation*}
	\theta(\sfx\sfy;\ee^{-f_\sfx},\ee^{-f_\sfy}) = \ee^{-\frac{f_\sfx+f_\sfy}{2}} . 
\end{equation*}
\end{prop}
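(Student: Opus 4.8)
The plan is to verify Proposition~\ref{prop:activity-tilt} by a direct computation starting from the explicit formula~\eqref{e:Jump:Phi} for $\kappa^f$ and the splitting~\eqref{eqdef:activity-entropy}, which defines the activity and entropy parts of any positive kernel via geometric and logarithmic means, respectively. The key observation is that both $a^f$ and $s^f$ can be read off from $\kappa^f$ by the formulas
\[
a^f_{\sfx\sfy} = \sqrt{\kappa^f_{\sfx\sfy}\kappa^f_{\sfy\sfx}},
\qquad
s^f_{\sfx\sfy} = \log\frac{\kappa^f_{\sfx\sfy}}{\kappa^f_{\sfy\sfx}},
\]
so it suffices to substitute~\eqref{e:Jump:Phi} into these expressions and simplify using the joint symmetry~\eqref{eq:Jump:Phi:symmetry} of $\theta$.

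First I would compute $a^f_{\sfx\sfy}$. Using~\eqref{e:Jump:Phi} we get
\[
\kappa^f_{\sfx\sfy}\kappa^f_{\sfy\sfx}
= \kappa_{\sfx\sfy}\kappa_{\sfy\sfx}\, \ee^{f_\sfx+f_\sfy}\,\theta_{\sfx\sfy}(\ee^{-f_\sfx},\ee^{-f_\sfy})\,\theta_{\sfy\sfx}(\ee^{-f_\sfy},\ee^{-f_\sfx}).
\]
By~\eqref{eq:Jump:Phi:symmetry} the two $\theta$-factors are equal, so taking the square root and recalling $a_{\sfx\sfy}=\sqrt{\kappa_{\sfx\sfy}\kappa_{\sfy\sfx}}$ yields
\[
a^f_{\sfx\sfy} = a_{\sfx\sfy}\,\theta_{\sfx\sfy}(\ee^{-f_\sfx},\ee^{-f_\sfy})\,\ee^{(f_\sfx+f_\sfy)/2},
\]
which is exactly the claimed formula; the symmetry $a^f_{\sfx\sfy}=a^f_{\sfy\sfx}$ follows from that of $a$ and $\theta$. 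Next I would compute $s^f_{\sfx\sfy}$ by taking the ratio: again using~\eqref{eq:Jump:Phi:symmetry} the $\theta$-factors cancel, leaving
\[
\frac{\kappa^f_{\sfx\sfy}}{\kappa^f_{\sfy\sfx}}
= \frac{\kappa_{\sfx\sfy}}{\kappa_{\sfy\sfx}}\,\ee^{f_\sfx - f_\sfy},
\]
so that $s^f_{\sfx\sfy} = s_{\sfx\sfy} + f_\sfx - f_\sfy$, and skew-symmetry is inherited from that of $s$ and of $f_\sfx-f_\sfy$. Finally, for the last assertion one simply sets $a^f=a$ in the formula just derived, which forces $\theta_{\sfx\sfy}(\ee^{-f_\sfx},\ee^{-f_\sfy})\,\ee^{(f_\sfx+f_\sfy)/2}=1$, i.e.\ $\theta_{\sfx\sfy}(\ee^{-f_\sfx},\ee^{-f_\sfy}) = \ee^{-(f_\sfx+f_\sfy)/2}$; note that this particular choice does satisfy~\eqref{eq:Jump:Phi:symmetry}, so it is admissible, and comparing with~\eqref{eq:tilting-MCs-transformed-kappa-F} one recognizes the Fleming--Sheu tilt of Section~\ref{ss:tilting:MCs}.

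There is essentially no obstacle here: the statement is a bookkeeping identity and the only point requiring care is the consistent use of the joint symmetry~\eqref{eq:Jump:Phi:symmetry} when pairing up $\theta_{\sfx\sfy}(\ee^{-f_\sfx},\ee^{-f_\sfy})$ with $\theta_{\sfy\sfx}(\ee^{-f_\sfy},\ee^{-f_\sfx})$ in both the product and the ratio computations. I would present the proof as the two short displayed calculations above followed by the one-line consequence, exactly as the paper already suggests by calling it ``verified by direct calculations.''
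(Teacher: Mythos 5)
Your proposal is correct and is precisely the ``direct calculation'' the paper alludes to without spelling out: you read off $a^f$ and $s^f$ from the defining formulas~\eqref{eqdef:activity-entropy}, substitute~\eqref{e:Jump:Phi}, and use the joint symmetry~\eqref{eq:Jump:Phi:symmetry} to collapse the two $\theta$-factors in the product (giving the extra factor $\theta\,\ee^{(f_\sfx+f_\sfy)/2}$ in the activity) and to cancel them in the ratio (leaving the additive shift $f_\sfx-f_\sfy$ in the entropy part). The computation is complete and matches the claimed formulas; the only point worth keeping explicit in a final write-up is the invocation of~\eqref{eq:Jump:Phi:symmetry} in both places, which you already do.
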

For later reference, we give some examples of choice for $\theta$ in~\eqref{e:Jump:Phi}.
\begin{example}
\label{ex:particular-DB-parametrisations}
We restrict ourselves to the product setting, where $\theta(\sfx\sfy;a,b) := \omega_{\sfx\sfy}\vartheta(a,b)$ for some symmetric $\omega:\edges\to (0,\infty)$, which acts as a relative conductivity change of the edges.
In the following, we only specify the symmetric function $\vartheta: \R_{\geq 0}^2 \to \R_{\geq 0}$.
\begin{enumerate}
	\item \label{ex:tilt:chem} $\vartheta(a,b) \equiv 1$  leads to the rate kernel
	\begin{align*}
		\kappa^f_{\sfx\sfy} = \omega_{\sfx\sfy}\kappa_{\sfx\sfy} \ee^{f_\sfx} 
		&\qquad\text{and}\qquad a^f_{\sfx\sfy} = \omega_{\sfx\sfy} a_{\sfx\sfy} \ee^{\frac{f_\sfx+f_\sfy}{2}} . 		
	\end{align*}
	\item \label{ex:tilt:sym} $\vartheta(a,b) =  \sqrt{a b}$ leads to the kernel
	\begin{align*}
		\kappa^f_{\sfx\sfy} = \omega_{\sfx\sfy}\kappa_{\sfx\sfy} \ee^{\frac{1}{2}\bra*{f_\sfx-f_\sfy}}  &\qquad\text{and}\qquad a^f_{\sfx\sfy} = \omega_{\sfx\sfy} a_{\sfx\sfy} . 
	\end{align*}
	\item $\vartheta(a,b) = ab$ gives 
	\begin{align*}
		\kappa^f_{\sfx\sfy} = \omega_{\sfx\sfy}\kappa_{\sfx\sfy} \ee^{-f_\sfy} 
		&\qquad\text{and}\qquad a^f_{\sfx\sfy} = \omega_{\sfx\sfy} a_{\sfx\sfy} \ee^{-\frac{f_\sfx+f_\sfy}{2}} . 
	\end{align*}
	\item \label{ex:tilt:Metropolis}
	 The choice $\vartheta(a,b) = \min\set*{a,b}$ leads to the Metropolis-Hastings acceptance and rejection rates~\cite{Metropolis1953,Hastings1970,BinderLandau2015} given by
	\begin{align*}
		\kappa^f_{\sfx\sfy} = \omega_{\sfx\sfy}\kappa_{\sfx\sfy} \ee^{-(f_\sfy - f_\sfx)_+}
		&\qquad\text{and}\qquad
		a^f_{\sfx\sfy} =  \omega_{\sfx\sfy} a_{\sfx\sfy} \ee^{-\frac{\abs{f_\sfx - f_\sfy}}{2}} .
	\end{align*}
	\item 
	Examples~\ref{ex:tilt:sym} and~\ref{ex:tilt:Metropolis} are given in terms of a 
	symmetric positively one-homogeneous function $\vartheta:\R^2_{\geq 0}\to [0,\infty)$, for which we in general get
	\begin{align*}
		\kappa^f_{\sfx\sfy} = \omega_{\sfx\sfy}\kappa_{\sfx\sfy} \,\vartheta\bra*{1,\ee^{f_\sfx-f_\sfy}}
		&\qquad\text{and}\qquad
		a^f_{\sfx\sfy} =  \omega_{\sfx\sfy} a_{\sfx\sfy} \,\vartheta\bra*{\ee^{\frac{f_\sfy - f_\sfx}{2}},\ee^{\frac{f_\sfx - f_\sfy}{2}}} .
	\end{align*}
	It follows immediately from this structure that $\kappa^f$ and $a^f$ are invariant under adding a constant to the tilt: if $\alpha\in \R$ is a constant, then  $\kappa^{f+\alpha}=\kappa^f$ and $a^{f+\alpha}= a^f$.
\end{enumerate}	
These examples highlight the possible dependence of $\kappa^f$ and also the activity $a^f$ on the tilt~$f$.
Example~\ref{ex:tilt:sym} is the form of tilting implicitly selected by the Fleming-Sheu transform of Section~\ref{ss:tilting:MCs}.
		
Example~\ref{ex:tilt:chem} above has the property that the tilt $f$ 
only influences the rate of a jump $\sfx\to\sfy$ through the value of $f$ at the starting point $\sfx$. 
This is a natural property in the context of chemical reactions, 
because rates of elementary, single-direction reactions are assumed to depend on concentrations of the reacting species,
not on the concentrations of the resulting products. 
However, this asymmetric influence changes the overall traffic intensity $a$. 
In the next section we show how chemical-reaction modelling leads to a tilt-dependence of this type.
\end{example}

\subsection{Case study: Tilt-dependence in chemical reactions}
\label{ss:case-study-chem-reactions-tilt-dependent}

Chemical reactions are an example in which natural modelling assumptions lead to gradient systems that are tilt-dependent;
in this section we explore this.

For the single monomolecular reaction
\newcommand\rateab{\vec\jmath^{}_{\sfa\sfb}}
\newcommand\rateba{\vec\jmath^{}_{\sfb\sfa}}
\[
A\xrightleftharpoons[\rateba]{\rateab} B
\]
the general kinetic theory as summarized in Example~\ref{ex:chem-reactions} in Section~\ref{ss:flux-GS-intro} gives 
\begin{align*}
\rateab &= 	D\ee^{-\upbeta (E^{\mathrm{act}} - E_\sfa)} \rho_\sfa, \qquad
\rateba = 	D\ee^{-\upbeta (E^{\mathrm{act}} - E_\sfb)} \rho_\sfb,
\end{align*}
where $D$ and $E^{\mathrm{act}}$ are the rate constant $D_{\alpha\beta}$ 
and the activation energy $E^{\mathrm{act}}_{\alpha\beta}$ of Example~\ref{ex:chem-reactions}, 
and $E_\sfa$ and $E_\sfb$ are the energies of $A$ and $B$. 
In the simplest interpretation, $E^{\mathrm{act}}$ coincides with the energy level of the saddle, which we for now write as~$E_\sfc$.
Hence, in the notation of the previous section,
but using a single directed edge setup $(\nodes=\set{\sfa,\sfb},\edges=\set{\sfa\sfb}$; see Remark~\ref{rem:edges-conversion}) 
we have $\pi_{\sfa} = \ee^{-\upbeta E_{\sfa}}$, $\pi_{\sfb} = \ee^{-\upbeta E_{\sfb}}$, and
\begin{equation}\label{eqdef:kappa:chemical}
	\kappa_{\sfa\sfb} = D\ee^{-\upbeta (E_\sfc - E_\sfa)} \rho_\sfa = a_{\sfa\sfb} \ee^{{s_{\sfa\sfb}}/{2}}
	\quad\text{with}\quad 
	a_{\sfa\sfb} = D \ee^{\frac\upbeta2\bra*{E_{\sfa}+E_{\sfb}-2E_\sfc}}
	\quad\text{and}\quad
	s_{\sfa\sfb} = \upbeta\bra*{E_\sfa - E_\sfb} . 
\end{equation}
The corresponding equation for $\rho_\sfa$, the number of particles of species $A$, 
can then be rewritten as 
\begin{align*}
\partial_t \rho_a 
&= \rateba - \rateab
= D \ee^{-\upbeta E_\sfc}\bra*{\ee^{\upbeta E_\sfb}\rho_\sfb - \ee^{\upbeta E_\sfa}\rho_\sfa}\\
&= D \sqrt{\rho_\sfa\rho_\sfb} \ee^{\tfrac\upbeta2 (E_\sfa+E_\sfb-2E_\sfc)}
  \cdot 2\sinh\frac12 \bra*{\log \frac{\rho_\sfb}{\rho_\sfa} + \upbeta(E_\sfb-E_\sfa)}\\
&= D \sqrt{\frac{\rho_\sfa\rho_\sfb}{\pi_\sfa\pi_\sfb}} \ee^{-\upbeta E_\sfc}
  \cdot 2\sinh\frac12 \bra*{\log \frac{\rho_\sfb\pi_\sfa}{\pi_\sfb\rho_\sfa} }.
\end{align*}
From the chemical kinetics theory~\cite{Laidler1987,Connors1990,Coker2001}, 
it is natural to let tilting act on those energy levels $E_\sfa, E_\sfb$, and $E_\sfc$. 
In this case tilting by some $F:\set*{\sfa,\sfb,\sfc}\to \R$ corresponds to replacing
\[
E_\sfa \rightsquigarrow E_\sfa + F_\sfa, \qquad
E_\sfb \rightsquigarrow E_\sfb + F_\sfb, \qquad\text{and}\qquad
E_\sfc \rightsquigarrow E_\sfc + F_\sfc,
\]
where $F_\sfa$ and $F_\sfb$ are the energy perturbations (tilts) at the wells $\sfa$ and $\sfb$, and $F_\sfc$ is the perturbation at the saddle. 
The corresponding exponents are given by $f_\sfx =\upbeta F_\sfx$ for $\sfx = \sfa,\sfb,\sfc$.
This leads to the tilted equation
\begin{align*}
\partial_t \rho_a 
&= D \sqrt{\rho_\sfa\rho_\sfb} \ee^{\frac\upbeta2 (E_\sfa+E_\sfb-2E_\sfc)}
  \ee^{\frac\upbeta2 \bra*{F_\sfa+F_\sfb-2F_\sfc}}
  \cdot 2\sinh\frac12 \bra*{\log \frac{\rho_\sfb}{\rho_\sfa} + \upbeta(E_\sfb + F_\sfb -E_\sfa -F_\sfa)}\\
&= D \sqrt{\frac{\rho_\sfa\rho_\sfb}{\pi_\sfa\pi_\sfb}} \ee^{-\upbeta E_\sfc}
  \ \ee^{\frac\upbeta2 \bra*{F_\sfa+F_\sfb-2F_\sfc}}
  \cdot 2\sinh\frac12 \bra*{\log \frac{\rho_\sfb\pi_\sfa}{\pi_\sfb\rho_\sfa}+ \upbeta( F_\sfb -F_\sfa) }.
\end{align*}
This expression is the evolution equation for the gradient--system--with--tilting given by
\begin{align*}
\calE(\rho) &= \rho_\sfa\log \frac{\rho_\sfa}{\pi_\sfa} + \rho_\sfb\log \frac{\rho_\sfb}{\pi_\sfb}, 
\qquad \sfF := \set[\Big]{\calF(\rho) =  \upbeta(\rho_\sfa F_\sfa + \rho_\sfb F_\sfb)},\\
\calR^*(\rho,\Xi\,;\,\calF) &= \sigma(\rho,\calF)\, \sfC^*(\Xi)
\qquad \text{in which } 
\begin{aligned}[t]
\sigma(\rho,\calF) &:= D \sqrt{\rho_\sfa\rho_\sfb} \ee^{\frac\upbeta2 (E_\sfa+E_\sfb-2E_\sfc)}
  \ee^{\frac\upbeta2 \bra*{F_\sfa+F_\sfb-2F_\sfc}}
 \end{aligned}
\end{align*}
This exact form of dependence on the tilts $F$ is also seen in the Kramers high-activation-energy limit (see Theorem~\ref{t:lower-bound}). 
The change on the level of the activity $a\rightsquigarrow a^F$ corresponds to case~\ref{ex:tilt:chem} in Example~\ref{ex:particular-DB-parametrisations}
with the choice $\omega_{\sfa\sfb} = \ee^{-\upbeta F_c}$, and we also have by comparison with~\eqref{eqdef:kappa:chemical} the identities
\begin{equation*}
	\sigma(\rho,\calF) = \sqrt{\rho_\sfa \rho_\sfb} \, a^{F}_{\sfa\sfb} = \sqrt{\rho_\sfa \rho_\sfb} \,  a_{\sfa\sfb} \ee^{\frac\upbeta2 \bra*{F_\sfa+F_\sfb-2F_\sfc}}.
\end{equation*}

\subsection{Characterization of tilt-independent gradient structures}
\label{ss:char-tilt-indep-GS}

Certain evolution equations are known to have many different gradient structures, such as reaction-diffusion  equations of the form of  Example~\ref{ex:FP}+\ref{ex:chem-reactions} (see e.g.~\cite{Mielke11}) or the simple heat flow on a graph of Example~\ref{ex:heat-flow}~\cite{Mielke11,ErbarMaas2014,MielkeStephan2020}.

However, if we also require the gradient structure to be tilt-independent, then the range of gradient systems for a given evolution equation reduces drastically, and in some cases this leads to a unique characterization of such tilt-independent systems. 
This was first observed by Mielke and Stephan~\cite{MielkeStephan2020}, and we review their result in Section~\ref{sss:MS}.

In Section~\ref{sss:tilt-indepdent-GS-MC} we prove a related result (Theorem~\ref{th:MC:gradient:tilt}), using the  additive definition of tilting of this paper and allowing a more general class of dissipation potentials. 
This leads to a characterization of tilt-independent gradient structures that is significantly broader than just the ones that arise from tilting of Markov processes and large deviations in Sections~\ref{ss:tilting:MCs} and~\ref{ss:tilting-LDPs}.

In Section~\ref{sss:MC:quadratic:untilting} we revisit the `de-tilting' procedure of Remark~\ref{rem:unitlting:tilt-GS}, and in Section~\ref{sss:MC:tilt:SG-upwind} we connect the findings to finite-volume schemes.

\subsubsection{Tilt-independent gradient structures for finite Markov chains}\label{sss:tilt-indepdent-GS-MC}

We focus on Example~\ref{ex:heat-flow}; let $\nodes$ be a finite set, and let given jump rates $\kappa:\edges:=\nodes\times\nodes\to [0,\infty)$ satisfy the detailed-balance condition~\eqref{eq:def:graph:DBC}.  
The aim is to show that in a broad class of  gradient systems that all generate the equation~\eqref{eq:heat-flow-intro}, the requirement of tilt-independence singles out a single one.

\bigskip
\emph{Energies and their tilting.}
A natural choice for the class of driving functionals is the class of entropies of the form 
\begin{equation}\label{eqdef:H:Phi}
	\calH_\GenEnt(\rho | \pi) := \sum_{\sfx\in\nodes} \pi_x \GenEnt\bra*{\frac{\rho_\sfx}{\pi_\sfx}} , 
\end{equation}
where $\GenEnt$ satisfies
\begin{equation}\label{ass:MC:H}
\GenEnt\in C\bra*{[0,\infty);[0,\infty)}\cap C^2((0,\infty)), \text{ strictly convex and bounded from below.}
\end{equation}
Note that $\calH_\GenEnt$ is the `perspective version' of the convex energy $\calE_\GenEnt(\rho) := \sum_{\sfx\in \nodes} \GenEnt(\rho_\sfx)$ following Definition~\ref{defn:perspective-function}.  The standard relative entropy $\calH$ corresponds to $\sfH(s) = \eta(s|1) = s\log s - s + 1$.

We consider tilting by \emph{potential tilts} $\sfF_{\Pot}$ defined in~\eqref{eqdef:MCs:potential-tilts}, that is $\calF^F_{\Pot}\in \sfF_{\Pot}$ is of the form
\begin{equation*}
  	\calF^F_{\mathup{Pot}}(\rho) := \sum_{\sfx\in\nodes} \rho_\sfx F_\sfx \qquad\text{for some } F:\nodes \to \R. 
\end{equation*}
Following the energetic interpretation of tilting of Section~\ref{sss:tilting:energies}, we make the choice  that a tilt~$\calF$ changes the functional $\RelEnt_\GenEnt$ by \emph{addition}, i.e.
\begin{equation}
\label{eq:MS-type-result:def-of-tilting}
\calE := \RelEnt_\GenEnt(\cdot|\pi) \ \xmapsto{\text{tilt by }\calF}\  \calE^\calF := \RelEnt_\GenEnt(\cdot|\pi)+\calF.
\end{equation}
(See Section~\ref{sss:MS} for an alternative choice.) 
It follows that minimizers $\pi^F$ of $\calE^\calF$ are for a given potential-tilt $\calF^F_\Pot\in \sfF_{\Pot}$ of the form $v\pi$, where $v$ is characterized by 
\begin{equation}
\label{eq:MS-type-result:char-v-Phi}
\GenEnt'(v_\sfx) + F_\sfx = \lambda, \qquad\text{and $\lambda$ is a normalization constant.}
\end{equation}
In the Boltzmannian case $\GenEnt(s) = s\log s $ we recover the exponential characterization $v_\sfx = c\, \ee^{-F_\sfx}$, but other choices of $\GenEnt$ lead to different relations between $v$ and $F$. 

\bigskip

\emph{Dynamics.} 
We now turn to tilt-dependence and tilt-independence. Condition~\eqref{eq:MS-type-result:char-v-Phi} characterizes how a potential tilt $\calF(\cdot) = \dual F \cdot $ changes the stationary measure~$\pi$ to a new measure~$\pi^F$; Proposition~\ref{prop:Jump:tilt} shows that there remains a significant freedom in choosing jump rates $\kappa^F$ that fix $\pi^F$ and still satisfy detailed balance. 

By contrast, the following theorem shows that the only gradient systems in this class that both (a) are tilt-independent and (b) yield a flux that is linear in $\rho$ are the Boltzmann-cosh combinations described in Example~\ref{ex:heat-flow} in Section~\ref{ss:flux-GS-intro}.

\begin{theorem}
\label{th:MC:gradient:tilt}
Let the gradient--system--with--tilting $(\nodes,\edges,\ona,\calE,\sfF_{\mathrm{pot}},\calR)$ satisfy
\begin{enumerate}
	\item $\calE(\rho) = \RelEnt_\GenEnt(\rho|\pi)$ defined in~\eqref{eqdef:H:Phi} with fixed $\GenEnt$ satisfying~\eqref{ass:MC:H} and $\pi\in \ProbMeas^+(\nodes)$;
	\item\label{l:th:MC:convex} $\calR(\rho,j;\calF) = \calR_{\Redge}(\rho,j)$ is tilt-independent and has the form 
	\begin{equation}\label{eqdef:R:psi}
			\calR_{\Redge}(\rho,j) := \sum_{\sfx\sfy\in \edges} \Redge_{\sfx\sfy}\bra*{\rho;j_{\sfx\sfy}},
	\end{equation}
	where $\Redge_{\sfx\sfy}: \calM_{\geq0}(\nodes) \times \R \to [0,\infty)$ satisfies
	\begin{enumerate}
	\item 	$ \Redge_{\sfx\sfy}(\rho;\cdot)$ is differentiable and convex;
	\item  $\Redge_{\sfx\sfy}(\rho;0)=0$ for all $\rho\in \calM_{\geq0}(V)$ and $\sfx\sfy\in\edges$;
	\item $\Redge_{\sfx\sfy}^*(\rho;\cdot)$ is differentiable.
	\end{enumerate}
	\item For each $\calF^F_\Pot = \dual F \cdot \in \sfF_{\mathrm{pot}}$, 
	the induced flux $j_{\sfx\sfy}^F$,
	\begin{equation*}
		j_{\sfx\sfy}^F :=\rmD_2 \Redge_{\sfx\sfy}\bra*{\rho; -\ona_{\sfx\sfy} \rm D \bra*{ \calE + \calF^F_\Pot}},
	\end{equation*}
	can be expressed as 
	\begin{equation}\label{eq:MC:gradient:tilt:flux}
		j_{\sfx\sfy}^F = \frac{1}{2}(\rho_\sfx \kappa_{\sfx\sfy}^F - \rho_{\sfy}\kappa_{\sfy\sfx}^F),
	\end{equation}
	where the tilted rates  $\kappa^F$ satisfy detailed balance with respect to some stationary measure $\pi^F\in \ProbMeas^+(\nodes)$.
	
	Assume that $\{\sfx\sfy: \kappa^F_{\sfx\sfy}>0\}$ does not depend on $F$; we set $\kappa:= \kappa^0$ (i.e., $\kappa^F$ for $F=0$).
\end{enumerate}
Then there exist $\gamma>0$, $c_1,c_2\in \R$
and a jointly symmetric function $\theta: \edges \times \R_{\geq0}^2 \to \R_{\geq0}$ (that is~\eqref{eq:Jump:Phi:symmetry}) 
with $\theta_{\sfx\sfy}(\cdot,\cdot)$ being non-decreasing and one-homogeneous,
such that for every $s\geq0$, $\Xi\in \R$, and $\sfx\sfy\in \edges$,
\begin{equation}
\label{eq:th:MS-type-result}
\GenEnt(s) = \gamma s\log s + c_1 s + c_2,
\qquad
\rmD_2 \Redge^*_{\sfx\sfy}(\rho;\Xi) =  \frac{\pi_\sfx\kappa_{\sfx\sfy}}2 \theta_{\sfx\sfy}\bra*{\frac{\rho_\sfx}{\pi_\sfx} \ee^{-\Xi/2\gamma},\frac{\rho_\sfy}{\pi_\sfy} \ee^{\Xi/2\gamma}} \,(\sfC^*)'\bra*{\frac{\Xi}{\gamma}}.
\end{equation}
\end{theorem}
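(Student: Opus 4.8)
The plan is to extract pointwise consequences of tilt-independence, edge by edge, and then recognize the resulting functional equations as characterizing precisely the Boltzmann entropy and the cosh dissipation (up to the stated free function $\theta$). First I would fix a single pair of reciprocal edges $\{\sfx\sfy,\sfy\sfx\}$ and exploit assumption~\ref{l:th:MC:convex} together with~\eqref{eq:MC:gradient:tilt:flux}: the flux $j^F_{\sfx\sfy}=\rmD_2\Redge_{\sfx\sfy}(\rho;-\ona_{\sfx\sfy}\rmD(\calE+\calF^F_\Pot))$ must, for \emph{every} admissible potential tilt $F$, equal the detailed-balance expression $\tfrac12(\rho_\sfx\kappa^F_{\sfx\sfy}-\rho_\sfy\kappa^F_{\sfy\sfx})$. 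By Proposition~\ref{prop:Jump:tilt} the tilted rates are forced to have the form $\kappa^F_{\sfx\sfy}=\kappa_{\sfx\sfy}\ee^{f_\sfx}\theta_{\sfx\sfy}(\ee^{-f_\sfx},\ee^{-f_\sfy})$ with $\theta$ jointly symmetric, where $f$ is determined by $F$ through~\eqref{eq:MS-type-result:char-v-Phi}, i.e.\ $\GenEnt'(v_\sfx)+F_\sfx=\lambda$ and $\pi^F\propto v\pi$. Substituting, the identity to be satisfied for all $\rho$ and all $F$ reads
\[
\rmD_2\Redge_{\sfx\sfy}\bigl(\rho;\ona_{\sfx\sfy}\rmD\calE(\rho)-(F_\sfy-F_\sfx)\bigr)=\tfrac12\kappa_{\sfx\sfy}\Bigl(\rho_\sfx\ee^{f_\sfx}\theta_{\sfx\sfy}(\ee^{-f_\sfx},\ee^{-f_\sfy})-\rho_\sfy\ee^{f_\sfy}\theta_{\sfx\sfy}(\ee^{-f_\sfy},\ee^{-f_\sfx})\Bigr).
\]
The left-hand side depends on $F$ only through the single scalar $F_\sfy-F_\sfx$ (since $\ona_{\sfx\sfy}\calF^F_\Pot=F_\sfy-F_\sfx$ and $\Redge$ is tilt-independent), so the entire $F$-dependence of the right-hand side must collapse to a dependence on $F_\sfy-F_\sfx$ as well. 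This is the structural constraint that does all the work.

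Next I would turn this collapse into a functional equation for $\GenEnt$. Writing $g(s):=\GenEnt'(s)$, the map $F_\sfx\mapsto f_\sfx$ is, up to the global constant $\lambda$, the inverse of $g$; the requirement that $\rho_\sfx\ee^{f_\sfx}\theta(\ee^{-f_\sfx},\ee^{-f_\sfy})-\rho_\sfy\ee^{f_\sfy}\theta(\ee^{-f_\sfy},\ee^{-f_\sfx})$ be a function of $f_\sfy-f_\sfx$ alone (after substituting $f_\sfx=g^{-1}(\lambda-F_\sfx)$, $f_\sfy=g^{-1}(\lambda-F_\sfy)$ and using that only $F_\sfy-F_\sfx$ survives) forces a relation between $g^{-1}$ and the additive structure. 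Concretely, varying $F_\sfx$ and $F_\sfy$ while keeping $F_\sfy-F_\sfx$ fixed must not change the right-hand side; differentiating this invariance in the common shift yields a separable ODE whose only solutions (given strict convexity of $\GenEnt$ and the normalization freedom $c_1,c_2$, and the scaling freedom $\gamma$) are $g^{-1}(t)=\text{const}\cdot\ee^{-t/\gamma}$, i.e.\ $\GenEnt'(s)=\gamma\log s+\text{const}$, hence $\GenEnt(s)=\gamma s\log s+c_1 s+c_2$. This is the first assertion of~\eqref{eq:th:MS-type-result}; I would also need to check one-homogeneity and monotonicity of $\theta$ are consistent (one-homogeneity is natural because $\Redge_{\sfx\sfy}$ as in~\eqref{eqdef:R:psi} evaluated on a rank-one-type scaling of $\rho$ must scale correctly, monotonicity follows from convexity of $\Redge^*_{\sfx\sfy}$).

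With $\GenEnt$ pinned down, $\ona_{\sfx\sfy}\rmD\calE(\rho)=\gamma(\log(\rho_\sfy/\pi_\sfy)-\log(\rho_\sfx/\pi_\sfx))$, so the force is $\Xi=-\ona_{\sfx\sfy}\rmD(\calE+\calF^F_\Pot)$ and the equation becomes, for all $\Xi$,
\[
\rmD_2\Redge^*_{\sfx\sfy}(\rho;\Xi)=\tfrac12\kappa_{\sfx\sfy}\Bigl(\rho_\sfx\ee^{f_\sfx}\theta_{\sfx\sfy}(\ee^{-f_\sfx},\ee^{-f_\sfy})-\rho_\sfy\ee^{f_\sfy}\theta_{\sfx\sfy}(\ee^{-f_\sfy},\ee^{-f_\sfx})\Bigr),
\]
where now $f_\sfy-f_\sfx=\Xi/\gamma$ by the logarithmic form of $\GenEnt'$. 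Using joint symmetry and one-homogeneity of $\theta$, pull out $\ee^{-(f_\sfx+f_\sfy)/2}$ from both terms: the bracket equals $\theta_{\sfx\sfy}(\rho_\sfx/\pi_\sfx\cdot\ee^{-\Xi/2\gamma},\rho_\sfy/\pi_\sfy\cdot\ee^{\Xi/2\gamma})$ times $\pi_\sfx(\ee^{\Xi/2\gamma}-\ee^{-\Xi/2\gamma})$ after using detailed balance $\pi_\sfx\kappa_{\sfx\sfy}=\pi_\sfy\kappa_{\sfy\sfx}$ to symmetrize; since $(\sfC^*)'(t)=2\sinh(t/2)$ we recognize $\ee^{\Xi/2\gamma}-\ee^{-\Xi/2\gamma}=(\sfC^*)'(\Xi/\gamma)$, giving exactly the second formula in~\eqref{eq:th:MS-type-result}. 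Finally I would verify that $\Redge^*_{\sfx\sfy}$ reconstructed from this derivative (with $\Redge^*_{\sfx\sfy}(\rho;0)=0$) is indeed convex in $\Xi$ — this needs the monotonicity of $\theta$ — and that the reciprocal edge $\sfy\sfx$ gives the consistent relation, which it does by the joint symmetry~\eqref{eq:Jump:Phi:symmetry}.

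The main obstacle I anticipate is the rigidity step: showing that the ``depends on $F_\sfy-F_\sfx$ only'' constraint really forces $\GenEnt$ to be $\gamma s\log s$ and not merely some larger family. The delicate point is that $\theta$ is itself an unknown function, so the collapse could in principle be achieved by a conspiracy between a non-logarithmic $\GenEnt$ and a cleverly chosen $\theta$; ruling this out requires using the \emph{full} freedom in $\rho$ (so that $\rho_\sfx,\rho_\sfy$ are independent free parameters) to separate variables cleanly, and then invoking that $\theta_{\sfx\sfy}(a,b)$ cannot depend on the global shift variable while $g^{-1}$ does — this forces $g^{-1}$ to be an exponential. I would handle it by fixing $\rho$, treating the equation as an identity in the two variables $f_\sfx,f_\sfy$, and differentiating along the direction $(1,1)$ (common shift, which leaves $F_\sfy-F_\sfx$ and hence the left side unchanged) to obtain a first-order PDE for $\theta$ whose compatibility with one-homogeneity forces the exponential form of the $f\mapsto F$ map. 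The remaining manipulations (symmetrization via detailed balance, pulling out the geometric-mean factor, identifying $2\sinh$) are routine and I would not belabor them.
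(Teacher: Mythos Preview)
Your overall architecture is right, and the second half of the plan (once $\GenEnt$ is pinned down, symmetrizing via detailed balance, pulling out the geometric mean, and recognizing $2\sinh(\Xi/2\gamma)=(\sfC^*)'(\Xi/\gamma)$) matches what the paper does. The rigidity step for $\GenEnt$, however, has a circularity in the way you describe it, and the paper handles it by a much simpler device.

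You claim that differentiating along the $(1,1)$ direction in $(f_\sfx,f_\sfy)$ ``leaves $F_\sfy-F_\sfx$ and hence the left side unchanged''. But $F_\sfy-F_\sfx=\GenEnt'(v_\sfx)-\GenEnt'(v_\sfy)=\GenEnt'(\ee^{-f_\sfx})-\GenEnt'(\ee^{-f_\sfy})$, and a common shift $f_\sfx\mapsto f_\sfx+c$, $f_\sfy\mapsto f_\sfy+c$ sends $v_\sfx\mapsto v_\sfx\ee^{-c}$, $v_\sfy\mapsto v_\sfy\ee^{-c}$; this preserves $\GenEnt'(v_\sfx)-\GenEnt'(v_\sfy)$ for all $v_\sfx,v_\sfy,c$ precisely when $s\GenEnt''(s)$ is constant, i.e.\ when $\GenEnt$ is already Boltzmann. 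So the step assumes its own conclusion. Your fallback (``compatibility with one-homogeneity'') is also circular: one-homogeneity of $\theta$ is part of the theorem's conclusion and in the paper is derived only \emph{after} $\GenEnt$ has been fixed. The invariance idea can be rescued --- the correct variable to shift is $\lambda-F_\sfx$, and using that the right-hand side is \emph{linear} in $(u_\sfx,u_\sfy)$ one can split the resulting PDE into two equations that combine to yield $p\GenEnt''(p)=q\GenEnt''(q)$ --- but this is not the argument you wrote.

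The paper avoids the entire ``conspiracy'' worry with a one-line observation you overlooked: the two expressions for the flux must \emph{vanish simultaneously}. Writing $u_\sfx=\rho_\sfx/\pi_\sfx$, $v_\sfx=\ee^{-f_\sfx}$, the gradient-system flux $\redge^*_{\sfx\sfy}(\rho;\Xi)$ with $\Xi=\GenEnt'(u_\sfx)-\GenEnt'(u_\sfy)-\GenEnt'(v_\sfx)+\GenEnt'(v_\sfy)$ vanishes iff $\Xi=0$ (unique zero of $\redge^*$), while the detailed-balance flux $\tfrac12 k_{\sfx\sfy}\theta_{\sfx\sfy}(v_\sfx,v_\sfy)\bigl(\tfrac{u_\sfx}{v_\sfx}-\tfrac{u_\sfy}{v_\sfy}\bigr)$ vanishes iff $u_\sfx/v_\sfx=u_\sfy/v_\sfy$ (since $\theta>0$). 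At the zeros the unknown $\theta$ drops out completely, so there is no conspiracy to rule out. Equating the zero-sets and differentiating $\GenEnt'(ta)-\GenEnt'(tb)$ in $t$ at $t=1$ gives $a\GenEnt''(a)=b\GenEnt''(b)$ for all $a,b>0$, hence $\GenEnt(s)=\gamma s\log s+c_1s+c_2$. Only then does the paper return to the full identity: one-homogeneity of $\theta$ follows because the right-hand side is one-homogeneous in $u$, and the explicit form of $\rmD_2\Redge^*_{\sfx\sfy}$ comes from the substitution $\mu=u_\sfx/u_\sfy$, $\nu=v_\sfx/v_\sfy$. Monotonicity of $\theta$ from convexity of $\Redge^*$ is the last step, and that part you identified correctly.
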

The properties~\eqref{eq:th:MS-type-result} represent a family of gradient systems indexed by parameters $\gamma, c_1$, and $c_2$. 
The affine component $s\mapsto c_1s+c_2$ in $\GenEnt$ is an affine transformation of $\calE$, and does not affect the evolution.
The parameter $\gamma$ characterizes the standard rescaling freedom in gradient systems discussed in Remark~\ref{rem:scaling-energy-in-GS}, and can be also interpreted as a viscosity parameter (see Section~\ref{sss:MC:tilt:SG-upwind}).

\begin{proof}
Proposition~\ref{prop:Jump:tilt} characterizes~$\pi^F$ and~$\kappa^F$ in terms of a function $f:\nodes\to\R$ and a jointly symmetric function $\theta:\edges\times \R\times \R\to\R$ as 
\begin{equation*}
\pi^F_\sfx  := \ee^{-f_\sfx}\pi_\sfx
\qquad\text{and}\qquad
\kappa^F_{\sfx\sfy} = \kappa_{\sfx\sfy} \ee^{f_\sfx} \theta_{\sfx\sfy}\bra*{\ee^{-f_\sfx},\ee^{-f_\sfy}} . 
\end{equation*}
Consequently the flux $j$ in~\eqref{eq:MC:gradient:tilt:flux} with jump rates $\kappa^F$ can be written as
\begin{equation}
\label{eq:MS-type-result:char-j-1}
j_{\sfx\sfy} =  \frac12 \theta_{\sfx\sfy}\bra*{\ee^{-f_\sfx},\ee^{-f_\sfy}}
  \pra*{\ee^{f_\sfx}u_\sfx \pi_\sfx \kappa_{\sfx\sfy}- \ee^{f_\sfy}u_\sfy\pi_\sfy \kappa_{\sfy\sfx}}
  = \frac{k_{\sfx\sfy}}2	\theta_{\sfx\sfy}(v_\sfx,v_\sfy)  \pra*{\frac{u_\sfx}{v_\sfx} - \frac{u_\sfy}{v_\sfy}},
\end{equation}
where we write
\[
v_\sfx := \ee^{-f_\sfx}
\qquad\text{and}\qquad 
k_{\sfx\sfy} = k_{\sfy\sfx} := \pi_\sfx\kappa_{\sfx\sfy}.
\]
We already mentioned that the function $v:\nodes\to\R$ is characterized by~\eqref{eq:MS-type-result:char-v-Phi}; this also fixes the relation between $F$ and $f$.

We introduce the notation $\redge_{\sfx\sfy}^*(\rho;\Xi) := \rmD_2 \Redge^*_{\sfx\sfy}(\rho;\Xi)$; the assumptions on $\Redge_{\sfx\sfy}$ imply that $\Xi\mapsto \redge_{\sfx\sfy}^*(\rho;\Xi)$ has a single zero at zero.
For given $\rho\in \calM_{\geq0}(\nodes)$, the functionals $\calE$, $\calR$, and $\calF$ generate the flux
\begin{align}
  j_{\sfx\sfy} &\ =\ \redge_{\sfx\sfy}^*\bra[\big]{\rho;\GenEnt'(u_\sfx) - \GenEnt'(u_\sfy) + F_\sfx - F_\sfy} \nonumber\\
  &\leftstackrel{\mathclap{\eqref{eq:MS-type-result:char-v-Phi}}}{\ =\ } \redge_{\sfx\sfy}^*\bra[\big]{\rho;\GenEnt'(u_\sfx) - \GenEnt'(u_\sfy) -\GenEnt'(v_\sfx) + \GenEnt'(v_\sfy)}.
\label{eq:MS-type-result:char-j-2}
\end{align}
Since by assumption this expression should coincide with~\eqref{eq:MS-type-result:char-j-1}, we deduce that both vanish simultaneously, i.e.
\[
\GenEnt'(u_\sfx) - \GenEnt'(u_\sfy) -\GenEnt'(v_\sfx) + \GenEnt'(v_\sfy)
= 0 
\qquad\iff\qquad
\frac{u_\sfx}{v_\sfx} = \frac{u_\sfy}{v_\sfy}.
\]
This implies that for all $a,b>0$,
\[
a \GenEnt''(a) - b \GenEnt''(b) = \frac{\rmd}{\dx t}\bra*{\GenEnt'(ta)-\GenEnt'(tb)}\Big|_{t=1} =0.
\]
It follows that there exist constants $\gamma,c_1,c_2\in \R$ such that
\[
\GenEnt(s) = \gamma\, s \log s + c_1 s + c_2,
\]
and since $\GenEnt$ is strictly convex we find $\gamma>0$. We then also have  $f_\sfx = (F_\sfx-\lambda)/\gamma$.

\medskip
With this expression for $\GenEnt$ we rewrite the equality between~\eqref{eq:MS-type-result:char-j-1} and~\eqref{eq:MS-type-result:char-j-2} as 
\begin{align*}
\redge_{\sfx\sfy}^*\bra*{\rho;\gamma \log\frac{u_\sfx v_\sfy}{u_\sfy v_\sfx}}
&= 	\frac{k_{\sfx\sfy}}{2} \theta_{\sfx\sfy}(v_\sfx,v_\sfy)
  \pra*{\frac{u_\sfx}{v_\sfx} - \frac{u_\sfy}{v_\sfy}}
\end{align*}
Since the right-hand side does not depend on other values of $\rho$ than $\rho_\sfx = u_\sfx \pi_\sfx$ and $\rho_\sfy = u_\sfy \pi_\sfy$, $\redge^*_{\sfx\sfy}$ is a function only of $u_\sfx$ and $u_\sfy$, and we write with slight abuse of notation $\redge^*_{\sfx\sfy}(u_\sfx,u_\sfy;\cdot)$ accordingly. 

In addition, note that the right-hand side is one-homogeneous in $u$ and hence also $\redge^*_{\sfx\sfy}$ is one-homogeneous in $\rho$, 
which implies that $(a,b)\mapsto \redge^*_{\sfx\sfy}(a,b;\cdot)$ is jointly one-homogeneous. 
This leads to the identity
\[
	 \redge_{\sfx\sfy}^*\bra*{u_\sfx,u_\sfy;\gamma \log\frac{u_\sfx v_\sfy}{u_\sfy v_\sfx}}
 = 	\frac{k_{\sfx\sfy}}{2} \theta_{\sfx\sfy}(v_\sfx,v_\sfy)
\pra*{\frac{u_\sfx}{v_\sfx} - \frac{u_\sfy}{v_\sfy}}.
\]
Dividing by $\sqrt{u_\sfx u_\sfy}$ and substituting
\[
	\mu := \frac{u_\sfx}{u_\sfy}\qquad\text{and}\qquad \nu := \frac{v_\sfx}{v_\sfy} ,
\]
we arrive at
\[
\redge_{\sfx\sfy}^*\bra*{\sqrt{\mu},\frac 1{\sqrt{\mu}} ; \gamma \log\frac{\mu}{\nu}}
=
\frac{k_{\sfx\sfy}}{2} \frac{\theta_{\sfx\sfy}(v_\sfx,v_\sfy)}{\sqrt{v_\sfx v_\sfy}}
\pra*{\sqrt{\frac{\mu}{\nu}} - \sqrt{\frac{\nu}{\mu}}}.
\]
By recalling from~\eqref{eq:C'} the elementary identity
\[
(\sfC^*)'\bra*{\log \frac{\mu}{\nu}}= 2\sinh\bra*{\frac{1}{2}\log \frac{\mu}{\nu}}= \sqrt{\frac{\mu}{\nu}} - \sqrt{\frac{\nu}{\mu}} ,
\]
we define $\redge^*_{\sfx\sfy}(a,b;s) = \frac{k_{\sfx\sfy}}{2} (\sfC^*)'(s/\gamma) h_{\sfx\sfy}(a,b;s/\gamma)$ for some $h_{\sfx\sfy}:\R_{\geq0}\times \R_{\geq0} \times \R$. 
Note that~$h$ inherits the one-homogeneity  in $(a,b)$ from $\redge^*$. By this Ansatz, we then arrive at
\[
  h_{\sfx\sfy}\bra*{\sqrt{\mu},\frac1{\sqrt{\mu}}; \log \frac\mu\nu } = \frac{\theta_{\sfx\sfy}(v_\sfx,v_\sfy)}{\sqrt{v_\sfx v_\sfy}} .
\]
Since the left-hand side is jointly zero-homogeneous in $(v_\sfx,v_\sfy)$, the same holds for the right-hand side, which therefore only depends on $\nu = v_\sfx/v_\sfy$.
It follows that $\theta_{\sfx\sfy}$ has to be jointly one-homogeneous and the following identity is satisfied
\[
   h_{\sfx\sfy}\bra*{\sqrt{\mu},\frac1{\sqrt{\mu}}; \log \frac\mu\nu } = \frac{\theta_{\sfx\sfy}(v_\sfx,v_\sfy)}{\sqrt{v_\sfx v_\sfy}} = \theta_{\sfx\sfy}\bra*{\sqrt{\nu},\frac1{\sqrt{\nu}}} . 
\]
Applying this identity, with $\mu=\sqrt{a/b}$ and $\nu=\ee^{-t} \sqrt{a/b}$ for any $a,b> 0$ and $s\in \R$, we find that $h$ satisfies
\[
  h_{\sfx\sfy}(a,b;t) = \sqrt{a b} \, h_{\sfx\sfy}\bra*{\sqrt{\mu},\frac1{\sqrt{\mu}}; \log \frac\mu\nu }
  = \sqrt{a b} \, \theta_{\sfx\sfy}\bra*{ \sqrt{\frac{a}{b}} \ee^{-t/2}, \sqrt{\frac{b}{a}} \ee^{t/2}} 
  = \theta_{\sfx\sfy}\bra*{ a \ee^{-t/2}, b \ee^{t/2}} . 
\]
By recalling all the definitions, we obtain~\eqref{eq:MS-type-result:char-j-1}.

It is left to show that $\theta_{\sfx\sfy}(\cdot,\cdot)$ is non-decreasing. Assumption~\ref{l:th:MC:convex} implies that $\theta$ satisfies
\begin{equation}\label{eq:th:theta:mon}
	\R \ni \xi \mapsto \theta_{\sfx\sfy}\bra*{a \ee^{-\xi},b \ee^\xi} \sinh(\xi) \text{ is non-decreasing for all } \sfx\sfy\in \edges \text{ and } a,b\geq 0 .
\end{equation}
We first show that $\theta_{\sfx\sfy}(a,\cdot)$ is non-decreasing for all $a\in [0,\infty)$.
By the joint one-homogeneity of $\theta_{\sfx\sfy}(\cdot,\cdot)$, we can equivalently rewrite~\eqref{eq:th:theta:mon} by expanding $2\sinh(\xi)$ as 
\begin{equation*}
	\theta_{\sfx\sfy}\bra*{a \ee^{-\xi},b \ee^\xi} 2\sinh(\xi) 
	=
	\theta_{\sfx\sfy}\bra*{a , b\ee^{2\xi} } - \theta_{\sfx\sfy}\bra*{a \ee^{-2\xi}, b }
	=
	\theta_{\sfx\sfy}\bra*{a, b x} \bra*{ 1- x^{-1}},
\end{equation*}
where we introduced the monotone change of variable $x:= \ee^{2\xi}\in (0,\infty)$.
Then, the monotonicity condition~\eqref{eq:th:theta:mon} implies for all $a,b\in [0,\infty)$ the monotonicity
\begin{equation}\label{eq:th:t:mon}
	\theta_{\sfx\sfy}(a, b x_0) \bra*{1-\tfrac{1}{x_0}} \leq \theta_{\sfx\sfy}(a, b x_1) \bra*{1-\tfrac{1}{x_1}} \qquad\text{for all } 0<x_0\leq x_1 .
\end{equation}
By contradiction, we assume that there exists $\sfx\sfy\in \edges$, $a\in [0,\infty)$ and $0< y_0 < y_1$ such that $\theta_{\sfx\sfy}(a,y_0)> \theta_{\sfx\sfy}(a,y_1)$. 
Now, we can choose for any $b>0$, $x_{i,b}:=b^{-1} y_i$ for $i=0,1$ in~\eqref{eq:th:t:mon} and obtain for all $b>0$ the inequality
\begin{equation*}
	\theta_{\sfx\sfy}(a,y_0)\bra*{1-\tfrac{b}{y_0}} \leq \theta_{\sfx\sfy}(a,y_1) \bra*{1-\tfrac{b}{y_1}} ,
\end{equation*}
which contradicts for $b$ sufficiently small the assertion $\theta_{\sfx\sfy}(y^0)> \theta_{\sfx\sfy}(y^1)$, hence a contradiction.
So $\theta_{\sfx\sfy}(a,\cdot)$ is non-decreasing for all $a\in [0,\infty)$. 
A similar argument shows that also  $\theta_{\sfx\sfy}(\cdot,b)$ is non-decreasing for all $b\in[0,\infty)$, completing the proof.
%
%
\end{proof}

Let us connect Theorem~\ref{th:MC:gradient:tilt} above with the characterization of detailed balance Markov jump kernels in Proposition~\ref{prop:Jump:tilt}.
For an admissible tilting $(\pi^f,\kappa^f)$ satisfying~\eqref{cond:DB-kernels-tilting},
we obtain from Proposition~\ref{prop:Jump:tilt}
that $\kappa^f$ is of the form~\eqref{e:Jump:Phi}
\begin{equation}\label{e:Jump:Phi:p0}
	\kappa^f_{\sfx\sfy} = \kappa_{\sfx\sfy} \ee^{f_\sfx}\theta_{\sfx\sfy}\bra*{\ee^{-f_\sfx},\ee^{-f_\sfy}} 
\end{equation}
for some $\theta:\edges \times \R^2 \to (0,\infty)$, jointly symmetric in the sense of~\eqref{eq:Jump:Phi:symmetry}.

We obtain in the following corollary sufficient conditions on the detailed balance jump kernels $\kappa^f$ ensuring that the induced evolution equation possess a tilt-independent gradient structure. 
\begin{cor}\label{cor:MC:tilt:kappa:norm}
For a family of admissible tilted pairs $\set[\big]{(\pi^f,\kappa^f)}_{f:\nodes \to \R}$ satisfying~\eqref{cond:DB-kernels-tilting} the following are equivalent:
\begin{enumerate}
	\item
	$\kappa^f_{\sfx\sfy}$ satisfies for all $\sfx\sfy\in\edges$
	\begin{equation}\label{eq:Jump:tilt:rate:normalization}
		\kappa_{\sfx\sfy}^f = \kappa_{\sfx\sfy}^{f+\alpha}, \qquad\text{for all } \alpha\in \R;
	\end{equation}
	and
	\begin{equation}\label{eq:Jump:tilt:rate:monotone}
		\R^2\ni (f_{\sfx},f_{\sfy})\mapsto \ee^{-f_{\sfx}}\kappa^f_{\sfx\sfy}  \ \text{ is jointly non-increasing.}
	\end{equation}
	\item
	The flux $j^f_{\sfx\sfy}=\frac{1}{2}(\rho_\sfx \kappa_{\sfx\sfy}^f - \rho_{\sfy}\kappa_{\sfy\sfx}^f)$ is induced by a gradient system $(\nodes,\edges,\ona,\calH(\cdot|\pi), \sfF_{\mathrm{Pot}},\calR_{\Redge})$
	with potential tilts $\sfF_{\mathrm{Pot}}$ given in~\eqref{eqdef:MCs:potential-tilts},
	tilt-independent $\calR_{\Redge}$ given in~\eqref{eqdef:R:psi},
	and $\set*{\Redge_{\sfx\sfy}}_{\sfx\sfy\in\edges}$ given through its dual in~\eqref{eq:th:MS-type-result} for some  
	jointly symmetric function $\theta: \edges \times \R_{\geq0}^2 \to (0,\infty)$ (that is~\eqref{eq:Jump:Phi:symmetry}) 
	with $\theta_{\sfx\sfy}(\cdot,\cdot)$ being non-decreasing and one-homogeneous.
\end{enumerate}
\end{cor}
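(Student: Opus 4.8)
The plan is to prove the two implications of Corollary~\ref{cor:MC:tilt:kappa:norm} by leveraging Proposition~\ref{prop:Jump:tilt} and Theorem~\ref{th:MC:gradient:tilt} essentially verbatim, with the small remaining work being the translation of conditions~\eqref{eq:Jump:tilt:rate:normalization}--\eqref{eq:Jump:tilt:rate:monotone} into the monotonicity and one-homogeneity properties of~$\theta$, and back. Throughout we write, as in Proposition~\ref{prop:Jump:tilt}, $\kappa^f_{\sfx\sfy} = \kappa_{\sfx\sfy}\ee^{f_\sfx}\theta_{\sfx\sfy}(\ee^{-f_\sfx},\ee^{-f_\sfy})$ for the (necessarily jointly symmetric) function $\theta$ guaranteed by that proposition; note that $\theta$ is uniquely determined by $\kappa^f$ on the range of values $(\ee^{-f_\sfx},\ee^{-f_\sfy})$, which is all of $(0,\infty)^2$ as $f$ ranges over all functions $\nodes\to\R$.

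First I would prove (1)$\,\Rightarrow\,$(2). Assume~\eqref{eq:Jump:tilt:rate:normalization} and~\eqref{eq:Jump:tilt:rate:monotone}. Writing $v_\sfx := \ee^{-f_\sfx}$, the invariance~\eqref{eq:Jump:tilt:rate:normalization} under $f\mapsto f+\alpha$ reads $v_\sfx\theta_{\sfx\sfy}(v_\sfx,v_\sfy) = (\lambda v_\sfx)\,\theta_{\sfx\sfy}(\lambda v_\sfx,\lambda v_\sfy)^{-1}$—more precisely, $\ee^{f_\sfx}\theta_{\sfx\sfy}(\ee^{-f_\sfx},\ee^{-f_\sfy})$ is unchanged when $f$ is shifted by $\alpha$, which says exactly that $(a,b)\mapsto a^{-1}\theta_{\sfx\sfy}(a,b)$ is invariant under $(a,b)\mapsto(\lambda^{-1}a,\lambda^{-1}b)$, i.e.\ $\theta_{\sfx\sfy}$ is positively one-homogeneous. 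Next, $\ee^{-f_\sfx}\kappa^f_{\sfx\sfy} = \kappa_{\sfx\sfy}\theta_{\sfx\sfy}(\ee^{-f_\sfx},\ee^{-f_\sfy})$, so~\eqref{eq:Jump:tilt:rate:monotone} — that $(f_\sfx,f_\sfy)\mapsto \ee^{-f_\sfx}\kappa^f_{\sfx\sfy}$ is jointly non-increasing — is equivalent, after the monotone change of variables $(f_\sfx,f_\sfy)\mapsto(\ee^{-f_\sfx},\ee^{-f_\sfy})$ which reverses order in each coordinate, to $(a,b)\mapsto \theta_{\sfx\sfy}(a,b)$ being jointly non-decreasing. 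Thus $\theta$ is jointly symmetric, one-homogeneous and non-decreasing; defining $\Redge_{\sfx\sfy}$ through its dual by the formula~\eqref{eq:th:MS-type-result} with $\gamma=1$, $\calE := \calH(\cdot|\pi)$ (i.e.\ $\GenEnt(s)=s\log s - s + 1$) and $\sfF_{\mathrm{Pot}}$ as in~\eqref{eqdef:MCs:potential-tilts}, one verifies directly — this is a routine computation using $\sqrt{pq}\,(\sfC^*)'(\log p - \log q) = p - q$ and the relation $f_\sfx = F_\sfx - \lambda$ from~\eqref{eq:MS-type-result:char-v-Phi} in the Boltzmannian case — that the induced flux is $j^f_{\sfx\sfy} = \tfrac12(\rho_\sfx\kappa^f_{\sfx\sfy} - \rho_\sfy\kappa^f_{\sfy\sfx})$, and that $\Redge_{\sfx\sfy}(\rho;\cdot)$ is convex (this is where non-decreasingness of $\theta$ is needed — it gives monotonicity of $\Xi\mapsto\rmD_2\Redge^*_{\sfx\sfy}(\rho;\Xi)$, equivalently convexity of $\Redge^*_{\sfx\sfy}$, equivalently of $\Redge_{\sfx\sfy}$), lower semicontinuous, and vanishes only at the origin. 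Tilt-independence of $\calR_{\Redge}$ is built into the construction since $\Redge_{\sfx\sfy}$ does not depend on $\calF$.

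Conversely, (2)$\,\Rightarrow\,$(1) is almost immediate: if the flux arises from a gradient system of the stated form, then the hypotheses of Theorem~\ref{th:MC:gradient:tilt} are satisfied with $\GenEnt(s)=s\log s-s+1$, so the conclusion~\eqref{eq:th:MS-type-result} holds with some jointly symmetric, one-homogeneous, non-decreasing $\theta$ (and some $\gamma>0$, which we may absorb). From $\rmD_2\Redge^*_{\sfx\sfy}(\rho;\Xi)$ of the form~\eqref{eq:th:MS-type-result} one reads off, via $j^F_{\sfx\sfy}=\rmD_2\Redge^*_{\sfx\sfy}(\rho;-\ona_{\sfx\sfy}\rmD(\calE+\calF^F_\Pot))$ and the identity $\sqrt{pq}\,(\sfC^*)'(\log p - \log q)=p-q$ again, that the tilted rates are $\kappa^F_{\sfx\sfy} = \kappa_{\sfx\sfy}\ee^{f_\sfx}\theta_{\sfx\sfy}(\ee^{-f_\sfx},\ee^{-f_\sfy})$ with $f = F/\gamma$ up to an additive constant; then one-homogeneity of $\theta$ gives~\eqref{eq:Jump:tilt:rate:normalization} and non-decreasingness of $\theta$ gives~\eqref{eq:Jump:tilt:rate:monotone}, reversing the two equivalences established in the first half.

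\textbf{Main obstacle.} The only genuinely delicate point is bookkeeping around what is ``intrinsic'': the function $\theta$ in Proposition~\ref{prop:Jump:tilt} is a priori determined only pointwise along the curves $(\ee^{-f_\sfx},\ee^{-f_\sfy})$, so to phrase properties like one-homogeneity and monotonicity of $\theta$ as genuine properties of the \emph{family} $\{\kappa^f\}_f$ one must be careful that the family is rich enough (all $f:\nodes\to\R$) to pin $\theta$ down on all of $(0,\infty)^2$ on each edge, and that the two monotone changes of variable $(f_\sfx,f_\sfy)\leftrightarrow(\ee^{-f_\sfx},\ee^{-f_\sfy})$ — which reverse the order — are handled consistently (``non-increasing in $f$'' becoming ``non-decreasing in $a,b$''). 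A secondary, but purely mechanical, obstacle is checking that the $\Redge_{\sfx\sfy}$ produced in (1)$\Rightarrow$(2) really satisfies all the structural hypotheses of Theorem~\ref{th:MC:gradient:tilt}/Definition~\ref{def:GradSystCE} (differentiability of $\Redge_{\sfx\sfy}(\rho;\cdot)$ and of its dual, convexity, the normalization $\Redge_{\sfx\sfy}(\rho;0)=0$), which follows from the explicit cosh-type formula together with non-decreasingness and one-homogeneity of $\theta$, but should be stated.
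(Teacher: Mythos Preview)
Your proposal is correct and follows essentially the same approach as the paper: use the representation $\kappa^f_{\sfx\sfy}=\kappa_{\sfx\sfy}\ee^{f_\sfx}\theta_{\sfx\sfy}(\ee^{-f_\sfx},\ee^{-f_\sfy})$ from Proposition~\ref{prop:Jump:tilt} to translate~\eqref{eq:Jump:tilt:rate:normalization} into one-homogeneity of $\theta$ and~\eqref{eq:Jump:tilt:rate:monotone} into monotonicity of $\theta$ (via $\ee^{-f_\sfx}\kappa^f_{\sfx\sfy}=\kappa_{\sfx\sfy}\theta_{\sfx\sfy}(\ee^{-f_\sfx},\ee^{-f_\sfy})$), and then invoke Theorem~\ref{th:MC:gradient:tilt}. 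The paper's proof is more terse and simply states that Theorem~\ref{th:MC:gradient:tilt} reduces the corollary to these two equivalences, without the extra bookkeeping you flag; your observation that the verification of the dissipation-potential axioms for~$\Redge$ in the direction (1)$\Rightarrow$(2) is needed but routine is a fair point that the paper leaves implicit.
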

\begin{proof}
By Theorem~\ref{th:MC:gradient:tilt}, it is enough to show that \eqref{eq:Jump:tilt:rate:normalization} and~\eqref{eq:Jump:tilt:rate:monotone} is equivalent to $\theta_{\sfx\sfy}(\cdot,\cdot)$ being one-homogeneous and non-decreasing.

First, let $\kappa^f$ satisfy~\eqref{eq:Jump:tilt:rate:normalization}, then we get for $\alpha\in \R$ the condition
\begin{equation*}
	\kappa_{\sfx\sfy} \ee^{f_\sfx} \theta_{\sfx\sfy}\bra*{\ee^{-f_\sfx},\ee^{-f_\sfy}} = \kappa_{\sfx\sfy}^f = \kappa_{\sfx\sfy}^{f+\alpha} = \kappa_{\sfx_\sfy} \ee^{f_\sfx + \alpha} \theta_{\sfx\sfy}\bra*{\ee^{-\alpha} \ee^{-f_\sfx},  \ee^{-\alpha}  \ee^{-f_\sfy}} ,
\end{equation*}
showing that $\theta_{\sfx\sfy}$ is positively one-homogeneous. 

Conversely, if $\theta_{\sfx\sfy}(\cdot,\cdot)$ is one-homogeneous, then~\eqref{eq:Jump:tilt:rate:normalization} is immediate from the form of $\kappa^f$ in~\eqref{e:Jump:Phi:p0}. Hence, \eqref{eq:Jump:tilt:rate:normalization} is equivalent to the one-homogeneity of $\theta_{\sfx\sfy}(\cdot,\cdot)$. 

For the second equivalence, we note that by~\eqref{e:Jump:Phi:p0} we have  $\ee^{-f_\sfx}\kappa^f_{\sfx\sfy}=\kappa_{\sfx\sfy} \theta(\ee^{-f_\sfx},\ee^{-f_\sfy})$, which makes the monotonicity relationship between the condition~\eqref{eq:Jump:tilt:rate:monotone} and~$\theta_{\sfx\sfy}(\cdot,\cdot)$ apparent. 
\end{proof}
\begin{remark}[On the condition~\eqref{eq:Jump:tilt:rate:normalization} in Corollary~\eqref{cor:MC:tilt:kappa:norm}]
On the one hand, the normalization~\eqref{eq:Jump:tilt:rate:normalization} can  similarly be compared to the normalization of $\pi^f$ in~\eqref{eq:Jump:tilt:pi} as a probability measure. 
Indeed, the latter immediately entails that shifts $f \mapsto f + \alpha$ for some $\alpha\in \R$ leave $\pi^f = \pi^{f+\alpha}$ invariant. 
On the other hand, note that the chemical reaction rates as discussed in Section~\ref{ss:case-study-chem-reactions-tilt-dependent} do not satisfy the normalization condition~\eqref{eq:Jump:tilt:rate:normalization}, which already shows that those cannot come from a tilt-independent gradient structure.
In a physical sense, the condition~\eqref{eq:Jump:tilt:rate:normalization} can be understood as stating that the rates only depend on relative energy levels. 
For chemical reaction rates in Section~\ref{ss:case-study-chem-reactions-tilt-dependent}, we observe that energy levels at nodes need to be compared with energies of edges (see for instance~\eqref{eqdef:kappa:chemical}), which forces the use of the same reference energy and hence property~\eqref{eq:Jump:tilt:rate:normalization} can not generically be satisfied.
\end{remark}

\begin{remark}[Symmetries of the kinetic relation defined through~\eqref{eq:th:MS-type-result}]\label{rem:MS-type:symmetries}\leavevmode
\begin{enumerate}
	\item 	
	Since the fluxes in the assumption~\eqref{eq:MC:gradient:tilt:flux} are assumed to be skew-symmetric, 
	the kinetic relation defined by $\set[\big]{\rmD_2 \Redge_{\sfx\sfy}^*(\rho;\Xi)}_{\sfx\sfy\in\edges}$ in~\eqref{eq:th:MS-type-result} by construction also satisfies the skew-symmetry property
	\begin{equation*}
	\rmD_2 \Redge_{\sfx\sfy}^*(\rho;\Xi) = -\rmD_2 \Redge_{\sfy\sfx}^*(\rho;-\Xi) \qquad\text{for all } \rho\in \ProbMeas(\nodes) \text{ and }  \Xi\in \R. 
	\end{equation*}
	Equivalently, $\Redge_{\sfx\sfy}(\rho;j)=\Redge_{\sfy\sfx}(\rho;-j)$, i.e. the frictional dissipation potential of a flux $j$ along the forward edge $\sfx\sfy$ is equal to the  dissipation potential of $-j$ along the backward edge $\sfy\sfx$.
	\item\label{rem:MS-type:symmetries:symmetric}
	If we suppose, in addition, that $\Redge_{\sfx\sfy}$ in~\eqref{eqdef:R:psi} is symmetric on every $\sfx\sfy\in \edges$, that is
\begin{equation*}
	\Redge_{\sfx\sfy}(\rho; -s) = \Redge_{\sfx\sfy}(\rho; s) \qquad\text{for all $s\in \R$},
\end{equation*}
then the induced kinetic relation $\rmD_2 \Redge^*_{\sfx\sfy}$ is skew-symmetric on every edge $\sfx\sfy\in\edges$,
\[
 \rmD_2 \Redge^*_{\sfx\sfy}(\rho;\Xi)=-\rmD_2 \Redge^*_{\sfx\sfy}(\rho;-\Xi) \qquad\text{for all } \Xi\in \R .
\]
It follows from~\eqref{eq:th:MS-type-result} that
\begin{equation*}
	\theta_{\sfx\sfy}\bra*{a,b x}=\theta_{\sfx\sfy}\bra*{a x,b } \qquad\text{for all } a,b\in [0,\infty), x\in (0,\infty). 
\end{equation*}
By choosing $b=x$ and $a=1$, we get $\theta_{\sfx\sfy}\bra*{1,x^2}=x \theta_{\sfx\sfy}\bra*{1,1}$ for all $x>0$, implying that $\theta_{\sfx\sfy}\bra*{1,x}= \omega_{\sfx\sfy} \sqrt{x}$ with $\omega_{\sfx\sfy}:=\sqrt{\theta_{\sfx\sfy}\bra*{1,1}}$. 
Hence, we obtain by the one-homogeneity
\begin{equation*}
\theta_{\sfx\sfy}\bra*{a,b} = a \theta_{\sfx\sfy}\bra*{1,a^{-1}b} = \omega_{\sfx\sfy} \sqrt{a\, b} ,
\end{equation*}
which is exactly the choice leaving the activity invariant; see Proposition~\ref{prop:activity-tilt} and Example~\ref{ex:particular-DB-parametrisations}.\ref{ex:tilt:sym}.
\item 
In Section~\ref{sss:cosh-from-ldp-intro} we obtained a variational description of uni-directional fluxes in terms of the large deviation functional~\eqref{eqdef:J}, and the behaviour under tilting is discussed in Sections~\ref{ss:tilting:MCs} and~\ref{ss:tilting-LDPs}.
The resulting tilt-independent gradient structure is the one for which $\theta_{\sfx\sfy}(a,b)=\omega_{\sfx\sfy}\sqrt{a b}$ for some $\omega_{\sfx\sfy}\in (0,\infty)$.

We are not aware of an immediate stochastic interpretation as large deviation rate functional for all the other possibilities for $\theta$ in~Theorem~\ref{th:MC:gradient:tilt}; also variational characterizations, as a contraction of unidirectional fluxes similar to~\eqref{eq:formula-1}, are left for future research. 
Nevertheless, we show that specific other choices for $\theta$ emerge in numerical schemes and comment on it in Section~\ref{sss:MC:tilt:SG-upwind}.
\end{enumerate}
\end{remark}
We want to point out that the discussion of tilting is not restricted to linear functionals of potential type $\sfF_{\mathrm{Pot}}$.
\begin{remark}[More general tilts]\label{rem:MC:general_tilts}
	In the discussion of Theorem~\ref{th:MC:gradient:tilt}, we have restricted ourselves to tilts characterized by functions $F:\nodes\to\R$, corresponding to potential energies~\eqref{eqdef:MCs:potential-tilts}. For this class the derivative $F = \rmD\calF(\rho)$ is independent of $\rho$.
	
	It is natural to consider more general tilts $\calF$; in~\eqref{eq:Kramers:tilts} in Section~\ref{s:Kramers}, for instance, we consider the class of admissible tilts
	\begin{equation}\label{eq:MCs:tilts}
		\sfF := \set[\Big]{\;\calF \in C^1(\ProbMeas(\nodes)): \sup_{\rho\in\ProbMeas(\nodes)} \norm[\big]{\rmD\calF(\rho)}_{L^\infty(\nodes)}<\infty\;} .
	\end{equation}
	A general class of tilts in the class $\sfF$ in~\eqref{eq:MCs:tilts} are introduced in~\cite{BDFR15a,BDFR15b}. Besides simple `potential' energies of the form~\eqref{eqdef:MCs:potential-tilts}, this definition allows also for `interaction' energies of the form
	\begin{equation*}
		\calF_2(\rho) = \frac{1}{2} \sum_{\sfx\in \nodes}\sum_{\sfy\in \nodes} G_{\sfx\sfy} \rho_\sfx \rho_\sfy ,
	\end{equation*}
	for some symmetric interaction kernel $G:\nodes \times \nodes \to \R$. 
	
	Even more general energies in the class~\eqref{eq:MCs:tilts} are given for some $K\in C^2(\nodes\times \calM_{\geq0}(\nodes))$ in the form
	\begin{equation*}
		\calF_K(\rho) = \sum_{\sfx\in \nodes} K_{\sfx}(\rho) \rho_\sfx .
	\end{equation*}
	These define a family of local equilibrium states for the energy $\calE+ \calF_K$ of the form
	\begin{equation}\label{eq:MC:tilt:local-equilibrium}
		\pi_\sfx^\rho = \frac{\pi_\sfx}{\PartSum^\rho} \ee^{-H_\sfx(\rho)} 
		\quad\text{with}\quad 
		H_\sfx(\rho) := \frac{\partial}{\partial\rho_\sfx} \calF_K(\rho) 
		\quad\text{and}\quad 
		\PartSum^\rho :=  \sum_{\sfx\in \nodes} \pi_\sfx \ee^{-H_\sfx(\rho)} .
	\end{equation} 
	In this sense, the tilt of energy from $\calE$ to $\calE+\calF_K$ gives rise to the change of the $\rho$-independent equilibrium to the family of $\rho$-dependent local equilibrium defined in~\eqref{eq:MC:tilt:local-equilibrium}.
	In \cite{ErbarFathiLaschosSchlichting16,ErbarFathiSchlichting2020} a gradient flow structure and curvature notions for the free energy $\calE + \calF_K$ are investigated based on quadratic dissipation potentials.
\end{remark}

\subsubsection{Alternative effect of tilting: changing the reference measure}\label{sss:MS}
In~\cite{MielkeStephan2020}, Mielke and Stephan consider a slightly more restricted class in comparison to~\eqref{eqdef:R:psi} of gradient systems $(\nodes,\edges,\gnabla,\calH_\GenEnt(\cdot|\pi),\calR_{\alpha,\Redge})$ with dissipation potential given by
\begin{equation}\label{eqdef:R:psi-MS}
	\calR_{\alpha,\Redge}^*(\rho,\Xi) := \sum_{\sfx\sfy\in \edges} \alpha_{\sfx\sfy}(\rho)\Redge^*_{\sfx\sfy}\bra*{\Xi_{\sfx\sfy}},
\end{equation}
where $\Redge^*_{\sfx\sfy}\in C^2(\R,\R_{\geq0})$ satisfies  $\Redge_{\sfx\sfy}^*(0)=0$ and $(\Redge_{\sfx\sfy}^*)''(0)>0$, and $\alpha$ satisfies $\alpha_{\sfx\sfy}\in C^1(\calM_{\geq 0}(\nodes);\R_{\geq0})$ for ${\sfx\sfy\in\edges}$. We assume that $\alpha$ and $\Redge$ also satisfy the symmetry relations 
\[
\alpha_{\sfx\sfy}(\rho) = \alpha_{\sfy\sfx} (\rho)
\qquad\text{and}\qquad
\Redge_{\sfx\sfy}(s) = \Redge_{\sfy\sfx}(s) = \Redge_{\sfx\sfy}(-s)= \Redge_{\sfy\sfx}(-s).
\]
Note that the structural assumption~\eqref{eqdef:R:psi-MS} with $\alpha$ symmetric leads to the setting discussed in Remark~\ref{rem:MS-type:symmetries}.\ref{rem:MS-type:symmetries:symmetric}.
The requirement that the gradient structure $(\nodes,\edges,\gnabla,\calH_\GenEnt(\cdot|\pi),\calR_{\alpha,\Redge})$ induces the evolution equation~\eqref{eq:heat-flow-intro} translates into the requirement
\begin{equation}
	\label{eq:cond:a-Phi-Psi}
	\alpha_{\sfx\sfy}(\rho) = \frac{\sqrt{\kappa_{\sfx\sfy}\pi_\sfx \kappa_{\sfy\sfx}\pi_\sfy}\bigl(\frac{\rho_\sfx}{\pi_\sfx}-\frac{\rho_\sfy}{\pi_\sfy}\bigr)}
	{(\Redge_{\sfx\sfy}^*)'\bigl(\GenEnt'(\frac{\rho_\sfx}{\pi_\sfx})-\GenEnt'(\frac{\rho_\sfy}{\pi_\sfy})\bigr)}
	\qquad \text{for }\frac{\rho_\sfx}{\pi_\sfx}\not=\frac{\rho_\sfy}{\pi_\sfy}, 
\end{equation}
with the value of $\alpha_{\sfx\sfy}(\rho)$ for $\rho_\sfx/\pi_\sfx = \rho_\sfy/\pi_\sfy$ being defined by continuity and $\Redge_{\sfx\sfy}^*$ denoting the Legendre dual of $\Redge_{\sfx\sfy}$. Since there are $2|\edges|+1$  functions to be chosen, and~\eqref{eq:cond:a-Phi-Psi} represents only $|\edges|$ conditions on these, a wide range of gradient systems can be constructed that all induce the same equation~\eqref{eq:heat-flow-intro}. 

Instead of considering~$\calF$ as an addition to $\RelEnt_\GenEnt(\cdot|\pi)$ as in~\eqref{eq:MS-type-result:def-of-tilting}, the authors of~\cite{MielkeStephan2020} assume  that tilting preserves the entropic structure of $\calE$ and only changes the reference measure $\pi$, i.e.
\begin{equation}
\label{eq:MS-type-result:def-of-tilting-MS}
\calE := \RelEnt_\GenEnt(\cdot|\pi) \ \xmapsto{\text{tilt by }\calF}\  \calE^\calF := \RelEnt_\GenEnt(\cdot|\pi^F), 
\end{equation}
where $\pi^F$ depends in an unspecified way on $\calF$. This leads to a characterization that is very similar to Theorem~\ref{th:MC:gradient:tilt} above:
\begin{theorem}[{\cite[Prop.~4.1]{MielkeStephan2020}}]
	Let $\alpha$, $\Redge$, and $\GenEnt$ satisfy~\eqref{eq:cond:a-Phi-Psi}. If $\alpha$ is independent of~$\pi$, then there exist $c_1,c_2\in \R$ and $\omega_{\sfx\sfy},\gamma>0$ for $\sfx\sfy\in\edges$ such that 
\begin{equation*}
\GenEnt(s) = \gamma s\log s + c_1 s + c_2,
\quad
\Redge^*_{\sfx\sfy}(t) = \gamma \omega_{\sfx\sfy} \,\sfC^*(t/\gamma),
\quad\text{and}
\quad
\alpha_{\sfx\sfy}(\rho) = \frac1{2\omega_{\sfx\sfy}}  \sqrt{\rho_\sfx \rho_\sfy \kappa_{\sfx\sfy} \kappa_{\sfy\sfx}}.
\end{equation*}
\end{theorem}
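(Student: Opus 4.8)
The plan is to imitate the proof of Theorem~\ref{th:MC:gradient:tilt} above, with the modifications forced by the different notion of tilting~\eqref{eq:MS-type-result:def-of-tilting-MS}. First I would write out, for a generic edge $\sfx\sfy$, the flux induced by the gradient system $(\nodes,\edges,\gnabla,\calH_\GenEnt(\cdot|\pi^F),\calR_{\alpha,\Redge})$. Since $\calE^\calF = \RelEnt_\GenEnt(\cdot|\pi^F)$, we have $(\gnabla\rmD\calE^\calF(\rho))_{\sfx\sfy} = \GenEnt'(\rho_\sfy/\pi^F_\sfy) - \GenEnt'(\rho_\sfx/\pi^F_\sfx)$, and hence the induced flux is
\[
j_{\sfx\sfy}^F = \alpha_{\sfx\sfy}(\rho)\,(\Redge^*_{\sfx\sfy})'\!\Bigl(\GenEnt'\bigl(\tfrac{\rho_\sfx}{\pi^F_\sfx}\bigr)-\GenEnt'\bigl(\tfrac{\rho_\sfy}{\pi^F_\sfy}\bigr)\Bigr).
\]
The hypothesis that this $j^F$ agrees with $\tfrac12(\rho_\sfx\kappa^F_{\sfx\sfy}-\rho_\sfy\kappa^F_{\sfy\sfx})$ for \emph{all} admissible tilts (equivalently, for all reference measures $\pi^F$ in a suitable class) is precisely the constraint~\eqref{eq:cond:a-Phi-Psi} with $\pi$ replaced by $\pi^F$; since $\alpha$ is assumed $\pi$-independent, the left-hand side is the same for $\pi$ and for $\pi^F$, which forces the right-hand side of~\eqref{eq:cond:a-Phi-Psi} to be invariant under $\pi\rightsquigarrow\pi^F$.

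Next I would exploit that invariance. Fix two positive numbers and vary the local ratios $u_\sfx := \rho_\sfx/\pi_\sfx$, $u_\sfy := \rho_\sfy/\pi_\sfy$ and simultaneously the tilted ratios $v_\sfx := \pi_\sfx/\pi^F_\sfx$, $v_\sfy := \pi_\sfy/\pi^F_\sfy$ (these play the role of $v$ in the proof of Theorem~\ref{th:MC:gradient:tilt}). The equality of the two expressions for $\alpha_{\sfx\sfy}(\rho)$ — one written via $\pi$ and one via $\pi^F$ — together with the known vanishing locus of each side, yields the implication
\[
\GenEnt'(u_\sfx v_\sfx) - \GenEnt'(u_\sfy v_\sfy) = 0 \quad\Longleftrightarrow\quad \GenEnt'(u_\sfx) - \GenEnt'(u_\sfy) = 0,
\]
i.e.\ (by strict monotonicity of $\GenEnt'$) $u_\sfx v_\sfx = u_\sfy v_\sfy \Leftrightarrow u_\sfx = u_\sfy$. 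Exactly as in the proof of Theorem~\ref{th:MC:gradient:tilt}, differentiating the relation $\GenEnt'(tu_\sfx)-\GenEnt'(tu_\sfy)$ at $t=1$ gives $a\GenEnt''(a) = b\GenEnt''(b)$ for all $a,b>0$, hence $\GenEnt''(s) = \gamma/s$ and $\GenEnt(s) = \gamma s\log s + c_1 s + c_2$ with $\gamma>0$ by strict convexity. Substituting this form of $\GenEnt$ back into~\eqref{eq:cond:a-Phi-Psi}, using $\GenEnt'(p)-\GenEnt'(q) = \gamma\log(p/q)$, and recalling the elementary identity $(\sfC^*)'(\log p - \log q) = \sqrt{p/q}-\sqrt{q/p}$ from~\eqref{eq:C'}, I would solve for $(\Redge^*_{\sfx\sfy})'$: the condition becomes
\[
\alpha_{\sfx\sfy}(\rho)\,(\Redge^*_{\sfx\sfy})'\!\bigl(\gamma\log\tfrac{u_\sfx}{u_\sfy}\bigr) = \tfrac12 k_{\sfx\sfy}\sqrt{u_\sfx u_\sfy}\Bigl(\sqrt{\tfrac{u_\sfx}{u_\sfy}}-\sqrt{\tfrac{u_\sfy}{u_\sfx}}\Bigr),
\]
with $k_{\sfx\sfy} = \sqrt{\kappa_{\sfx\sfy}\pi_\sfx\,\kappa_{\sfy\sfx}\pi_\sfy}$. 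Because in this more restrictive class $\alpha_{\sfx\sfy}(\rho)$ depends on $\rho$ only through $\rho_\sfx,\rho_\sfy$ and the left-hand factor $(\Redge^*_{\sfx\sfy})'(\gamma\log(u_\sfx/u_\sfy))$ depends only on $u_\sfx/u_\sfy$, separation of variables forces $(\Redge^*_{\sfx\sfy})'(t) = \gamma\omega_{\sfx\sfy}(\sfC^*)'(t/\gamma)$ for a constant $\omega_{\sfx\sfy}>0$ (using the normalizations $\Redge^*_{\sfx\sfy}(0)=0$, $(\Redge^*_{\sfx\sfy})''(0)>0$ to fix it), hence $\Redge^*_{\sfx\sfy}(t) = \gamma\omega_{\sfx\sfy}\sfC^*(t/\gamma)$; and then $\alpha_{\sfx\sfy}(\rho) = \tfrac1{2\omega_{\sfx\sfy}}\sqrt{\rho_\sfx\rho_\sfy\kappa_{\sfx\sfy}\kappa_{\sfy\sfx}}$ drops out of the remaining algebra.

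The main obstacle I anticipate is making precise the step "invariance under $\pi\rightsquigarrow\pi^F$ over a sufficiently rich family of tilts": one must argue that the admissible tilts $\calF$ generate \emph{enough} reference measures $\pi^F$ (equivalently, enough independent pairs $(v_\sfx,v_\sfy)$ at each edge) to pin down the functional equation for $\GenEnt$. For potential tilts this is immediate since $v$ can be prescribed freely via~\eqref{eq:MS-type-result:char-v-Phi}, but the cleaner route — and the one actually used in~\cite{MielkeStephan2020} — is to phrase the hypothesis directly as "$\alpha$ is independent of $\pi$" in the constraint~\eqref{eq:cond:a-Phi-Psi}, which bypasses any parametrization of tilts and reduces the whole statement to the separation-of-variables argument above. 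A minor secondary point is continuity of $\alpha_{\sfx\sfy}(\rho)$ across the diagonal $\rho_\sfx/\pi_\sfx = \rho_\sfy/\pi_\sfy$, which is handled automatically by the explicit formula obtained, using $(\sfC^*)'(s)\sim s$ near $s=0$.
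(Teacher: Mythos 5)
The paper does not give its own proof of this result---it is quoted from \cite{MielkeStephan2020} as a companion to Theorem~\ref{th:MC:gradient:tilt}---so I am assessing your argument against its internal logic and against the template from Theorem~\ref{th:MC:gradient:tilt} that you announce you are imitating. The high-level plan is sound, but the vanishing-locus step is not, and it is precisely the step that does the work. The claimed equivalence $u_\sfx v_\sfx = u_\sfy v_\sfy \iff u_\sfx = u_\sfy$ is false whenever $v_\sfx\ne v_\sfy$ (take $u_\sfx = u_\sfy$; then $u_\sfx v_\sfx\ne u_\sfy v_\sfy$). Moreover, it cannot be extracted from the hypotheses: in the MS tilting convention~\eqref{eq:MS-type-result:def-of-tilting-MS} the kernel flux and the gradient-system flux are both written relative to the \emph{same} tilted reference $\pi^F$, so for each fixed $F$ the numerator and denominator of~\eqref{eq:cond:a-Phi-Psi} vanish \emph{simultaneously} on $\{u^F_\sfx=u^F_\sfy\}$; equating the two $\alpha$-formulas therefore yields no vanishing-locus constraint. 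This is the structural difference with Theorem~\ref{th:MC:gradient:tilt}, where additive tilting~\eqref{eq:MS-type-result:def-of-tilting} shifts the force but leaves the kernel factored as $k_{\sfx\sfy}(u_\sfx/v_\sfx - u_\sfy/v_\sfy)$, so the two a priori distinct vanishing loci must be reconciled. The subsequent differentiation ``at $t=1$'' therefore has nothing to differentiate.

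The rest of your sketch can be repaired by deriving the functional equation directly, which is the route you correctly anticipate in your last paragraph. Using $(\sfC^*)'(\log p-\log q)=\sqrt{p/q}-\sqrt{q/p}$ and detailed balance, the numerator of~\eqref{eq:cond:a-Phi-Psi} equals $\rho_\sfx\kappa_{\sfx\sfy}-\rho_\sfy\kappa_{\sfy\sfx}=\sqrt{\kappa_{\sfx\sfy}\kappa_{\sfy\sfx}\rho_\sfx\rho_\sfy}\,(\sfC^*)'\bigl(\log(u_\sfx/u_\sfy)\bigr)$, so
\[
\alpha_{\sfx\sfy}(\rho)=\sqrt{\kappa_{\sfx\sfy}\kappa_{\sfy\sfx}\rho_\sfx\rho_\sfy}\cdot\frac{(\sfC^*)'\bigl(\log(u_\sfx/u_\sfy)\bigr)}{(\Redge^*_{\sfx\sfy})'\bigl(\GenEnt'(u_\sfx)-\GenEnt'(u_\sfy)\bigr)}.
\]
The prefactor is independent of $\pi$ (the dynamical activity $\sqrt{\kappa_{\sfx\sfy}\kappa_{\sfy\sfx}}$ is held fixed, cf.\ Proposition~\ref{prop:activity-tilt}), and the remaining factor depends on $\rho$ only through $(u_\sfx,u_\sfy)$. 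Since at fixed $\rho_\sfx,\rho_\sfy$ the pair $(u_\sfx,u_\sfy)$ can be varied freely by varying $\pi$, the $\pi$-independence of $\alpha$ forces this factor to be a constant $1/\omega_{\sfx\sfy}>0$; equivalently $(\Redge^*_{\sfx\sfy})'\bigl(\GenEnt'(p)-\GenEnt'(q)\bigr)=\omega_{\sfx\sfy}(\sfC^*)'\bigl(\log(p/q)\bigr)$ for all $p,q>0$. The left-hand side depends on $(p,q)$ only through $\GenEnt'(p)-\GenEnt'(q)$ and the right only through $p/q$, and both $(\Redge^*_{\sfx\sfy})'$ and $(\sfC^*)'$ are injective; hence $\GenEnt'(p)-\GenEnt'(q)=h(p/q)$ for some function $h$, which satisfies the Cauchy relation $h(ab)=h(a)+h(b)$ and so $h(s)=\gamma\log s$, giving the Boltzmann form of $\GenEnt$. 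Substituting back yields $(\Redge^*_{\sfx\sfy})'(\gamma s)=\omega_{\sfx\sfy}(\sfC^*)'(s)$, hence $\Redge^*_{\sfx\sfy}(t)=\gamma\omega_{\sfx\sfy}\sfC^*(t/\gamma)$ (using $\Redge^*_{\sfx\sfy}(0)=0$), and finally $\alpha_{\sfx\sfy}(\rho)=\tfrac1{\omega_{\sfx\sfy}}\sqrt{\kappa_{\sfx\sfy}\kappa_{\sfy\sfx}\rho_\sfx\rho_\sfy}$; the factor $\tfrac12$ in the statement is a bookkeeping artifact of summing over double directed edges. No vanishing-locus comparison is needed.
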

To better understand the difference in assumptions about the form of tilting, first note that when $\GenEnt(s) = s\log s$ and $\pi^F = (\PartSum^F)^{-1} \ee^{-F}\pi$ the two definitions coincide:
\begin{equation*}
	\calH(\rho|\pi^F)  = \calH(\rho | \pi) + \skp*{\log \frac{\pi}{\pi^F}, \rho} = \calH(\rho | \pi) + \skp*{ F + \PartSum^F, \rho} .
\end{equation*}
Hence, the exponential tilting of the equilibrium $\pi\mapsto \pi^F = (\PartSum^F)^{-1} \ee^{-F}\pi$  is equivalent to the additive tilt of the energy $\calH(\rho | \pi) \mapsto \calH(\rho | \pi) + \calF^F_\Pot(\rho)$ with 
\begin{equation*}
	\calF^F_\Pot(\rho) = \sum_{x\in \nodes} \bra*{F_x + \PartSum^F} \rho_{\sfx}.
\end{equation*}
However, this equivalence is specific to the Boltzmannian case $\GenEnt(s) = s\log s$, as the lemma below shows: if $\GenEnt$ is such that `tilting  by addition' is equivalent to `tilting by modifying $\pi$', then $\GenEnt$ is Boltzmannian. It even is sufficient to assume that the two forms of tilting only are equivalent on the set of \emph{normalized} measures.
\begin{lemma}
Let  $\pi\in\calM_{>0}(\nodes)$ be fixed. Assume that there exists a map
\[
F\mapsto \pi^F
\]
that defines how $\pi$ is modified by the tilting $F:\nodes\to\R$, with $0\mapsto \pi$, and assume that in addition we have the identity
\begin{equation}\label{eq:RelEnt:linear:tilt}
	  \RelEnt_\GenEnt(\rho | \pi^F) = 	  \RelEnt_\GenEnt(\rho | \pi) + \skp{F,\rho}
	  + c_F,\qquad\text{for all }\rho\in \ProbMeas(\nodes),
\end{equation}
where for each $F$, $c_F$ is a constant. 
Then $\GenEnt(s) =\gamma \,s \log s + c_1 s+ c_2$ for some constants $\gamma>0$ and $c_1,c_2\in \R$.
\end{lemma}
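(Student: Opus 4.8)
The plan is to exploit the freedom in choosing $F$ and, crucially, the requirement that \eqref{eq:RelEnt:linear:tilt} holds for \emph{all} $\rho\in \ProbMeas(\nodes)$, which gives pointwise (in $\rho$) constraints that can be differentiated. First I would rewrite the hypothesis. Writing $\pi = (\pi_\sfx)$, $\pi^F = (\pi^F_\sfx)$, and using the definition \eqref{eqdef:H:Phi} of $\RelEnt_\GenEnt$, identity \eqref{eq:RelEnt:linear:tilt} reads
\[
\sum_{\sfx\in\nodes} \pi^F_\sfx\, \GenEnt\!\left(\frac{\rho_\sfx}{\pi^F_\sfx}\right)
= \sum_{\sfx\in\nodes} \pi_\sfx\, \GenEnt\!\left(\frac{\rho_\sfx}{\pi_\sfx}\right)
+ \sum_{\sfx\in\nodes} F_\sfx \rho_\sfx + c_F,
\]
valid for every probability vector $\rho$. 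Since $\ProbMeas(\nodes)$ is a full-dimensional simplex, I would first extend this to all $\rho$ in a neighbourhood of the simplex in the affine hyperplane $\{\sum_\sfx \rho_\sfx = 1\}$ by analyticity/continuity of both sides where $\GenEnt$ is $C^2$, and then differentiate in $\rho$ along directions tangent to that hyperplane (i.e. $\partial_{\rho_\sfx} - \partial_{\rho_\sfy}$ for $\sfx\ne\sfy$).

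The key step is the following. Define $\phi := \GenEnt'$ on $(0,\infty)$; it is continuous and strictly increasing since $\GenEnt$ is strictly convex. Differentiating the displayed identity with respect to $\rho_\sfx$ (treating $\rho_\sfy$ for some fixed reference node $\sfy$ as the dependent variable, $\rho_\sfy = 1 - \sum_{\sfz\ne\sfy}\rho_\sfz$) yields
\[
\phi\!\left(\frac{\rho_\sfx}{\pi^F_\sfx}\right) - \phi\!\left(\frac{\rho_\sfy}{\pi^F_\sfy}\right)
= \phi\!\left(\frac{\rho_\sfx}{\pi_\sfx}\right) - \phi\!\left(\frac{\rho_\sfy}{\pi_\sfy}\right) + F_\sfx - F_\sfy
\]
for all admissible $\rho$ and all $\sfx$. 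Now fix any two nodes $\sfx,\sfy$ and choose $\rho$ so that $\rho_\sfy/\pi^F_\sfy = \rho_\sfy/\pi_\sfy =: t$ is held fixed while $\rho_\sfx$ varies over an interval (possible by the extension above, since only two coordinates are involved and the others can be chosen to make everything positive). Setting $a := \rho_\sfx/\pi^F_\sfx$ and $b := \rho_\sfx/\pi_\sfx$, so that $a = \lambda_\sfx b$ with $\lambda_\sfx := \pi_\sfx/\pi^F_\sfx$ a constant independent of $\rho$, the identity becomes
\[
\phi(\lambda_\sfx b) - \phi(b) = \text{const} \qquad \text{for all } b>0 \text{ in an interval},
\]
and then, by a continuation/connectedness argument along $b$, for all $b>0$. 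This is the functional equation $\phi(\lambda b) = \phi(b) + c(\lambda)$; writing $\psi(u) := \phi(e^u)$ it becomes Cauchy's equation $\psi(u + \log\lambda_\sfx) = \psi(u) + c(\lambda_\sfx)$. If $\lambda_\sfx \ne 1$ for \emph{some} $\sfx$ (the generic case, otherwise $\pi^F = \pi$ and there is nothing forcing anything, but then we simply pick a different tilt $F$ — the hypothesis quantifies over all $F$, and as long as the map $F\mapsto \pi^F$ is non-constant near $0$ we get a one-parameter family of $\lambda$'s), then $\psi$ satisfies Cauchy's equation on a set of $\log\lambda$'s that, varying $F$, covers an interval; combined with monotonicity of $\psi$ (inherited from strict monotonicity of $\phi$) this forces $\psi$ to be affine: $\psi(u) = \gamma u + c_1$, hence $\phi(s) = \GenEnt'(s) = \gamma \log s + c_1$. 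Integrating, $\GenEnt(s) = \gamma s\log s - \gamma s + c_1 s + c_2 = \gamma s \log s + c_1' s + c_2$, and $\gamma>0$ because $\GenEnt$ is strictly convex ($\GenEnt''(s) = \gamma/s$).

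The main obstacle I anticipate is the bookkeeping around the domain of validity: \eqref{eq:RelEnt:linear:tilt} is only assumed on the closed simplex $\ProbMeas(\nodes)$, whose interior is open in the hyperplane $\{\sum \rho_\sfx = 1\}$ but not in $\R^\nodes$, so differentiation in $\rho$ must be done along tangent directions, and one must be careful that the two coordinates $\rho_\sfx,\rho_\sfy$ one wants to vary can be moved over a genuine interval while keeping \emph{all} coordinates strictly positive and the sum equal to one — this needs $|\nodes|\ge 2$ and a little care near the boundary, but is routine. A secondary point is ensuring the map $F\mapsto\pi^F$ is not constant (so that the $\lambda_\sfx$ genuinely range over an interval as $F$ varies), which I would address by noting that if $\pi^F\equiv\pi$ then \eqref{eq:RelEnt:linear:tilt} forces $\skp{F,\rho}+c_F\equiv 0$ on the simplex for every $F$, i.e. $F$ constant for every admissible $F$ — so either the tilt class is trivial (and the statement is vacuous in that degenerate reading) or some $F$ yields $\pi^F\ne\pi$, giving the non-trivial $\lambda_\sfx$ we need. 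Everything else — the functional-equation solving, the strict-convexity conclusion $\gamma>0$ — is standard once the pointwise identity $\GenEnt'(\lambda s) - \GenEnt'(s) = \mathrm{const}$ is in hand.
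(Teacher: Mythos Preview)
Your overall strategy matches the paper's: differentiate \eqref{eq:RelEnt:linear:tilt} along tangent directions to the simplex to obtain a relation among values of $\GenEnt'$, then extract a functional equation that forces $\GenEnt'(s)=\gamma\log s + c_1$. The paper's execution differs only in detail: after the first differentiation it evaluates the resulting identity at the special point $\rho=\pi^F$ to eliminate $F$ via $F_\sfx-F_\sfy=\GenEnt'(v_\sfy)-\GenEnt'(v_\sfx)$ (with $v:=\pi^F/\pi$), substitutes this back, and differentiates once more in $u_\sfx$ to obtain $\GenEnt''(u_\sfx/v_\sfx)=v_\sfx\,\GenEnt''(u_\sfx)$, from which the conclusion is immediate without passing through Cauchy's equation or needing to argue that the $\lambda_\sfx$ fill out an interval as $F$ varies.

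There is one slip in your argument. You write ``choose $\rho$ so that $\rho_\sfy/\pi^F_\sfy = \rho_\sfy/\pi_\sfy =: t$'', but for $\rho_\sfy>0$ this equality forces $\pi^F_\sfy=\pi_\sfy$, which you have no right to expect for a nontrivial tilt. What you actually need is only that $\rho_\sfy$ be held \emph{fixed}; then $\phi(\rho_\sfy/\pi^F_\sfy)$ and $\phi(\rho_\sfy/\pi_\sfy)$ are two (generally different) constants, both absorbed into the right-hand side, and you still obtain $\phi(\lambda_\sfx b)-\phi(b)=\mathrm{const}$ as a function of $b$. Note also that holding $\rho_\sfy$ fixed while varying $\rho_\sfx$ inside the simplex requires a third coordinate to soak up the mass change, so your argument as written needs $|\nodes|\ge 3$, not $\ge 2$; the paper's second-derivative route faces the same implicit constraint.
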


\begin{proof}
By differentiating~\eqref{eq:RelEnt:linear:tilt} with respect to $\rho$ while preserving the mass of $\rho$ we find
\[
\GenEnt'\bra*{\frac{\rho_\sfx}{\pi^F_\sfx}} 
 - \GenEnt'\bra*{\frac{\rho_\sfy}{\pi^F_\sfy}} = 
\GenEnt'\bra*{\frac{\rho_\sfx}{\pi_\sfx}} - \GenEnt'\bra*{\frac{\rho_\sfy}{\pi_\sfy}} + F_\sfx - F_\sfy.
\]
Setting $v = \pi^F/\pi$ and choosing $\rho = \pi^F = v\pi $ we find that $\GenEnt'(v_\sfx) + F_\sfx = \lambda$, similarly to~\eqref{eq:MS-type-result:char-v-Phi}, and therefore $F_\sfx-F_\sfy = \GenEnt'(v_\sfy) - \GenEnt'(v_\sfx)$.
Setting $u = \rho/\pi$ we find
\[
\GenEnt'\bra*{\frac{u_\sfx}{v_\sfx}} 
 - \GenEnt'\bra*{\frac{u_\sfy}{v_\sfy}} 
= \GenEnt'(u_\sfx) - \GenEnt'(u_\sfy) - \GenEnt'(v_\sfx) + \GenEnt'(v_\sfy).
\]
Differentiating this expression with respect to $u_\sfx$ we find
\[
\GenEnt''\bra*{\frac{u_\sfx}{v_\sfx}}  = v_\sfx \GenEnt''(u_\sfx) 
\qquad\text{for all }u_\sfx, v_\sfx >0,
\]
from which we deduce,  as in the proof of Theorem~\ref{th:MC:gradient:tilt}, that $\GenEnt(s) = \gamma s\log s + c_1 s + c_2$ for some $\gamma>0$ and $c_1,c_2\in\R$.
\end{proof}

To conclude, for the Boltzmann case $\GenEnt(s) = \gamma s\log s + c_1 s = c_2$, tilting by addition (as in~\eqref{eq:MS-type-result:def-of-tilting}) and tilting by modifying the reference measure (as in~\eqref{eq:MS-type-result:def-of-tilting-MS}) are equivalent, but for any other entropy function (such as Tsallis entropies) the two lead to different concepts of tilting.

\subsubsection{`De-tilting' of tilt-dependent gradient structures for finite Markov chains}\label{sss:MC:quadratic:untilting}

In this section, we apply the construction of `de-tilting' described in Remark~\ref{rem:unitlting:tilt-GS} to the quadratic gradient structure of~\cite{ChowHuangLiZhou12,Maas11,Mielke11}.

We first show how the quadratic gradient system is tilt-dependent. 
For doing so, a pair~$(\pi,\kappa)$ satisfying the detailed-balance condition~\eqref{eq:def:graph:DBC} is fixed.
We consider potential tilts~\eqref{eqdef:MCs:potential-tilts}, that is, elements of $\sfF_{\mathup{Pot}} = \set*{ \calF^F_\Pot(\rho) = \sum_{\sfx\in\nodes}F_\sfx\rho_\sfx \,|\, F:\nodes \to \R}$, and we assume that the  tilted jump rates are given by $\kappa_{\sfx\sfy}^F := \kappa_{\sfx\sfy} \ee^{-\tfrac{1}{2}\gnabla F_{\sfx\sfy}}$,
which is the form obtained for the tilting of Markov processes in~\eqref{eq:tilting-MCs-transformed-kappa-F} in Section~\ref{ss:tilting:MCs}.
Based on~\cite{ChowHuangLiZhou12,Maas11,Mielke11}, we postulate that for $\calF^F_\Pot\in\sfF_\Pot$ the 
quadratic dissipation potential is given by
\begin{equation}\label{eqdef:MC:quad-GS}
	\calR_2^*(\rho,\Xi;\calF^F_\Pot):=  \sum_{\sfx\sfy\in\edges} \Redge^*_2\bra*{\sfx\sfy;\rho_\sfx,\rho_\sfy;\Xi_{\sfx\sfy};-\gnabla F_{\sfx\sfy}} ,
\end{equation}
where the dependence of $\Redge_2$ in the parameters $\sfx\sfy$, $(\rho_\sfx,\rho_\sfy)$, and $\Xi_{\sfx\sfy}$ is assumed given by~\cite{ChowHuangLiZhou12,Maas11,Mielke11}.
The dependence on the tilt $-\ona F_{\sfx\sfy}$ is still to be determined, and we will find this dependence by requiring that the induced flux satisfies 
\[
j^F_{\sfx\sfy}=\frac{1}{2}\bra*{\rho_\sfx \kappa_{\sfx\sfy}^F- \rho_\sfy \kappa_{\sfy\sfx}^F},
\]
corresponding to the induced equation~\eqref{eq:heat-flow-intro} and an assumption of skew-symmetry of $j^F$.
Combining these requirements we find that $\Redge_2$ has the form
\begin{align*}
 \Redge^*_2\bra*{\sfx\sfy;\rho_\sfx,\rho_\sfy;\Xi_{\sfx\sfy};-\gnabla F_{\sfx\sfy}}
  &:= \frac{1}{4} \Lambda\bra*{\rho_{\sfx}\kappa_{\sfx\sfy}^F , \rho_{\sfy}\kappa_{\sfy\sfx}^F} \abs*{\Xi_{\sfx\sfy}}^2 \\
  &=
  \frac{\pi_\sfx \kappa_{\sfx\sfy}}{4} \Lambda\bra*{\frac{\rho_\sfx}{\pi_{\sfx}} \ee^{-\tfrac{1}{2}\gnabla F_{\sfx\sfy}},\frac{\rho_\sfy}{\pi_{\sfy}}\ee^{\tfrac{1}{2}\gnabla F_{\sfx\sfy}}} \abs*{\Xi_{\sfx\sfy}}^2
\end{align*}
where $\Lambda:\R_{\geq 0}^2 \to \R_{\geq 0}$ is the logarithmic mean~\eqref{eqdef:log-mean} defined in Remark~\ref{rem:other-gradient-structures}.
Indeed, in this case the induced flux $j^F$ is given by
\begin{align*}
	j^F_{\sfx\sfy} &= {\rmD_2 \calR_2^*\bra*{\rho,-\gnabla \bra[\big]{\log\tfrac{\rho}{\pi}+F};\calF^F_\Pot}}_{\sfx\sfy}\\
	&= \rmD_3 \Redge^*_2\bra*{\sfx\sfy;\rho_\sfx,\rho_\sfy;-\gnabla_{\sfx\sfy} \bra[\big]{\log\tfrac{\rho}{\pi}+F};-\gnabla F_{\sfx\sfy}}\\
	&= \frac{1}{2} \bra*{ \rho_\sfx \kappa^F_{\sfx\sfy} - \rho_{\sfy}\kappa_{\sfy\sfx}^F} 
\end{align*}

We now construct the `de-tilted' gradient structure as described in Remark~\ref{rem:unitlting:tilt-GS}.
For $\pi\in \ProbMeas^+(\nodes)$, we observe that $\dom \calH(\cdot|\pi)=\ProbMeas(\nodes)$ and hence the assumption~\eqref{ass:untilting:domain} of Remark~\ref{rem:unitlting:tilt-GS} is satisfied. 
By the construction~\eqref{eqdef:MC:quad-GS}, $\calR_2^*$ also satisfies the structural assumption~\eqref{ass:untilting:F-dependence}.
Hence, we can define the \emph{evolution-equivalent} (in the sense of~\eqref{eqdef:evolution-equivalent}) gradient structure by following~\eqref{eqdef:untilted:Psis} and setting
\begin{align*}
	\Redge^*\bra*{\sfx\sfy;\rho_\sfx,\rho_\sfy;\Xi_{\sfx\sfy}}
	&=\int_0^{\Xi_{\sfx\sfy}} \rmD_3\Redge^*_2\bra*{\sfx\sfy;\rho_\sfx,\rho_\sfy;\xi;\xi+\gnabla_{\sfx\sfy}\log \tfrac{\rho}{\pi}} \dx{\xi} \\
	&= \frac{\pi_\sfx\kappa_{\sfx\sfy}}{2} \int_0^{\Xi_{\sfx\sfy}} \Lambda\bra*{\frac{\rho_\sfx}{\pi_{\sfx}} \ee^{-\tfrac{1}{2}\xi-\tfrac{1}{2}\gnabla_{\sfx\sfy}\log \tfrac{\rho}{\pi}},\frac{\rho_\sfy}{\pi_{\sfy}}\ee^{\tfrac{1}{2}\xi+\tfrac{1}{2}\gnabla_{\sfx\sfy}\log \tfrac{\rho}{\pi}}}\, \xi \dx{\xi}\\
	&=\frac{\pi_\sfx\kappa_{\sfx\sfy}}{2} \sqrt{\frac{\rho_\sfx\rho_\sfy}{\pi_{\sfx}\pi_{\sfy}}} \int_0^{\Xi_{\sfx\sfy}} \Lambda\bra*{ \ee^{-\tfrac{1}2\xi},\ee^{\tfrac12 \xi}}\xi \dx\xi \\
	&=\frac12\sqrt{\kappa_{\sfx\sfy}\kappa_{\sfy\sfx}\rho_\sfx\rho_\sfy} \int_0^{\Xi_{\sfx\sfy}} (\sfC^*)'(\xi) \dx{\xi}
	= \frac12\sqrt{\kappa_{\sfx\sfy}\kappa_{\sfy\sfx}\rho_\sfx\rho_\sfy}  \,\sfC^*\bra*{\Xi_{\sfx\sfy}}.
\end{align*}
Hence $\calR(\rho,\Xi)=\sum_{\sfx\sfy} \Redge^*\bra*{\sfx\sfy;\rho_\sfx,\rho_\sfy;\Xi_{\sfx\sfy}}$ is exactly the cosh structure for the detailed-balance heat flow defined in~\eqref{eq:ER-heat-flow-intro} for Example~\ref{ex:heat-flow}.

\subsubsection{Finite-volume schemes as generalized gradient structure}\label{sss:MC:tilt:SG-upwind}
There is a recent interest in bringing numerical finite-volume schemes into the framework of gradient flows; see for instance~\cite{ChowHuangLiZhou12,DisserLiero2015,CancesGallouetTodeschi2019,HeidaKantnerStephan2021,EspositoPatacchiniSchlichtingSlepcev2021,SchlichtingSeis2021,HraivoronskaTse2022TR}.
For finite-volume schemes, the individual nodes $\nodes$ correspond to  cells in a tessellation of the physical domain $\Omega\subset \R^d$.
Two cells $\sfx$ and $\sfy$ are called neighbours if they have a common interface, in which case we write $\sfx\sfy\in\edges$. 
Every pair $\sfx\sfy\in\edges$ of neighbours has a symmetric transmission coefficient $\tau_{\sfx\sfy}\in(0,\infty)$, which usually is taken proportional to the area of the interface; we assume that $\tau_{\sfx\sfy}=\tau_{\sfy\sfx}$.
With this interpretation $(\nodes,\edges)$ is a weighted graph.

To fix a pair $(\pi,\kappa)$ on $(\nodes,\edges)$, we choose $\pi_{\sfx}=\frac{\vol_\sfx}{\vol(\Omega)}$, where $\vol(\sfx)$ is the $d$-dimensional volume of the cell $\sfx$. 
We define rates $\kappa_{\sfx\sfy}:={\tau_{\sfx\sfy}}/{\pi_{\sfx}}$ such that $\pi_\sfx \kappa_{\sfx\sfy}=\tau_{\sfx\sfy}=\tau_{\sfy\sfx}=\pi_\sfy \kappa_{\sfy\sfx}$ and hence $(\pi,\kappa)$ defines a detailed-balance Markov chain on the weighted graph $(\nodes,\edges)$. 
For a measure $\rho\in \ProbMeas(\nodes)$, usually obtained by discretizing a probability measure on the physical domain $\Omega$, we define the relative cell densities $u_{\sfx}=\frac{\rho_{\sfx}}{\pi_\sfx}$.
	
A finite-volume scheme specifies the flux $j_{\sfx\sfy}$ resulting from a generalized forcing $\Xi_{\sfx\sfy}$ across the interface $\sfx\sfy$, for given values $u_\sfx$ and $u_\sfy$ on the ends of the edge;
this is exactly a specification of a kinetic relation.
For drift-diffusion equation such as  the Fokker-Planck equation~\eqref{eq:FP-intro}, a popular choice is the Scharfetter-Gummel flux interpolation~\cite{SG1969} (see also~\cite{Farrell_etal2017,Cances_etal2019}).
For consistency with the result in Theorem~\ref{th:MC:gradient:tilt}, we describe this choice in the double directed edge setup (see Remark~\ref{rem:edges-conversion}).

In the Scharfetter-Gummel scheme the flux is defined as follows. Fix a `viscosity' parameter $\gamma>0$. Writing $\xi$ for $\Xi_{\sfx\sfy}$, let the pair $(u,j_{\sfx\sfy})$ with $u:[0,1]\to \R$ solve the two-point boundary-value problem
\begin{align*}
	\frac{2 j_{\sfx\sfy}}{\tau_{\sfx\sfy}} &= - \gamma u(x) \bra*{ \partial_x \log u(x)+\log \frac{u_{\sfx}}{u_{\sfy}}}  + \xi u(x) , \qquad\text{on } (0,1) ,\\
	&\qquad u(0)=u_\sfx \quad\text{and}\quad u(1)=u_\sfy .
\end{align*}
As a boundary-value problem for $u$ this problem is overdetermined, and $j_{\sfx\sfy}=j_{\sfx\sfy}[u_\sfx,u_\sfy;\xi]\in\R$ can be interpreted as a Lagrange multiplier that ensures unique solvability.
Hereby, the term $\gamma u(x)\log \tfrac{u_\sfx}{u_\sfy}$ encodes Fick's law and ensures the normalization property $j_{\sfx\sfy}[u_\sfx,u_\sfy;0]=0$.

By noting that $\ee^{-\xi x/\gamma}$ is an integrating factor, the Lagrange multiplier $j_{\sfx\sfy}$ can be characterized as
\begin{align}\label{eq:SG:cell-problem}
	j_{\sfx\sfy}[u_\sfx,u_\sfy;\xi]
	&= \frac{\tau_{\sfx\sfy}}{2} \gamma \Lambda_{-1}\bra*{ u_{\sfx} \ee^{-\frac{\xi}{2\gamma}}, u_{\sfy} \ee^{\frac{\xi}{2\gamma}}} \,2\sinh\bra*{\frac{\xi}{2\gamma}} \\
	&= \frac{\pi_{\sfx}\kappa_{\sfx\sfy}}{2} \gamma \Lambda_{-1}\bra*{ \frac{\rho_{\sfx}}{\pi_\sfx} \ee^{-\frac{\xi}{2\gamma}},  \frac{\rho_{\sfy}}{\pi_\sfy} \ee^{\frac{\xi}{2\gamma}}} \,(\sfC^*)'\bra*{\frac{\xi}{\gamma}} , \notag 
\end{align}
where $\Lambda_{-1}:\R_{\geq0}^2 \to \R_{\geq0}$ is the harmonic-logarithmic mean given by
\begin{equation}
\label{eqdef:HarmLogMean}
	\Lambda_{-1}(a,b):= 
	\begin{cases}
	\,\dfrac{ab(\log b-\log a)}{b-a} = 
	\Lambda(a^{-1},b^{-1})^{-1} ,& \text{for } ab>0 \\
	\,0 , & \text{for } ab=0 .
	\end{cases}
\end{equation}

Hence, we recover a tilt-independent gradient structure as in~\eqref{eq:th:MS-type-result} from Theorem~\ref{th:MC:gradient:tilt}, 
with the dual dissipation potential given by $\calR_{\SG,\gamma}^*(\rho;\Xi) = \sum_{\sfx\sfy\in\edges} \Redge^*_{\SG,\gamma}(\sfx\sfy;\rho;\Xi_{\sfx\sfy})$ with
\begin{equation*}
	\Redge^*_{\SG,\gamma}(\sfx\sfy;\rho;\Xi_{\sfx\sfy}) := 
	\frac{\pi_{\sfx}\kappa_{\sfx\sfy}}{2} \int_0^{\Xi_{\sfx\sfy}} \gamma\Lambda_{-1}\bra*{ \frac{\rho_{\sfx}}{\pi_\sfx} \ee^{-\frac{\xi}{2\gamma}},  \frac{\rho_{\sfy}}{\pi_\sfy} \ee^{\frac{\xi}{2\gamma}}} \,(\sfC^*)'\bra*{\frac{\xi}{\gamma}} \dx{\xi}. 
\end{equation*}
Hence, for any viscosity $\gamma>0$, the numerical Scharfetter-Gummel scheme has the gradient structure $(\nodes,\edges,\ona,\gamma \calH(\cdot|\pi),\sfF_\Pot,\calR_{\SG,\gamma})$. The cell-problem~\eqref{eq:SG:cell-problem} is generalized in~\cite{EymardFuhrmannGaertner06} to different mobilities and internal energies providing flux interpolations for nonlinear diffusions. 
We expect a close connection of the so obtained kinetic relations to the tilt-independent gradient structures of the form~\eqref{eq:th:MS-type-result}.

In the zero-viscosity limit $\gamma\to 0$ a non-trivial limiting gradient structure $(\nodes,\edges,\ona,0,\sfF_\Pot$, $\calR_{\Upwind})$ is formally obtained by observing that
\begin{equation*}
	 \gamma \Lambda_{-1}\bra*{ u_{\sfx} \ee^{-\frac{\Xi}{2\gamma}}, u_{\sfy} \ee^{\frac{\Xi}{2\gamma}}} \,2\sinh\bra*{\frac{\Xi}{2\gamma}}
	 \to
	 u_{\sfx}\Xi_+ - u_{\sfy} \Xi_-\qquad\text{as } \gamma\to 0,
\end{equation*}
where $\Xi_+ := \max\set*{0, \Xi}$ and $\Xi_-:=\max\set*{0,-\Xi}$ denote the positive and negative part, respectively.
This defines the well-known \emph{upwind} flux interpolation and it is readily checked that the associated dual dissipation potential is given by
\begin{equation*}
	\calR_{\Upwind}^*(\rho;\Xi) := \frac{1}{4}\sum_{\sfx\sfy\in\edges} \pi_{\sfx}\kappa_{\sfx\sfy} \bra*{ u_{\sfx}\bra*{\Xi_+}^2+u_{\sfx}\bra*{\Xi_-}^2} .
\end{equation*}
Gradient structures based on this dissipation potential are studied from the numerical point of view in~\cite{CancesGallouetTodeschi2019} and from an analytic point of view in~\cite{EspositoPatacchiniSchlichtingSlepcev2021}.
	
The choice of $\theta^\gamma(a,b):=\sqrt{ab}$ in $\Redge^*$ in~\eqref{eq:th:MS-type-result} gives in the zero-viscosity limit $\gamma\to 0$ the kinetic relation
\[
	\rmD_2\Redge^*_{\sfx\sfy}(\rho;\Xi) = \frac{\pi_\sfx \kappa_{\sfx\sfy}}{2} \sqrt{u_{\sfx} u_{\sfy}} \; \Xi ,
\]
which defines a quadratic gradient structure. However, for numerical applications it has the undesirable property that the support of the densities is constant in time and cannot expand. Another choice, resembling some features of the upwind scheme in the zero-viscosity limit, is the Metropolis-Hastings interpolation given in Example~\ref{ex:particular-DB-parametrisations}.\ref{ex:tilt:Metropolis} by $\theta(a,b):=\min\set*{a,b}$, leading the kinetic relation
\[
  \rmD_2\Redge^*_{\sfx\sfy}(\rho;\Xi) = \frac{\pi_\sfx \kappa_{\sfx\sfy}}{2}\bra*{ u_\sfx \, \bONE_{(0,\infty)}(\Xi) - u_\sfy \, \bONE_{(0,\infty)}(-\Xi)} .
\]
We leave a more systematic study of these and further choices to future work.

\newpage

\section{Reduction of two-terminal networks}\label{s:two-terminal-reduction}
\label{s:ha}

\subsection{Setting and definition of gradient system}
\label{ss:ha:setup}

The purpose of this section is to investigate networks of the type of Example~\ref{ex:heat-flow}, in a limit in which the network `collapses' because  nearly all of the jump rates converge to $\infty$. Only two sets of jump rates remain bounded, which are those connected to two `terminals'. In the limit the topology of the network reduces to a very simple chain involving only the two terminals. The challenges are to prove the limit and to characterize the properties of the limit chain. 

\medskip

We recall the setup of a detailed-balance chain on the finite state space $\nodes$ from Example~\ref{ex:heat-flow} and Section~\ref{ss:mod-jump-rates}. 
The stationary measure $\pi\in \ProbMeas^+(\nodes)$ is assumed to be strictly positive, and the jump rates $\kappa:\nodes \times \nodes \to \R_{\geq0}$ are assumed to satisfy the detailed-balance condition~\eqref{eq:Jump:DBC}. We consider the edge set $\edges$ to be implicitly defined by $\edges :=\{\sfx\sfy\in \nodes\times\nodes: \kappa_{\sfx\sfy}>0\}$, and we assume that the resulting graph $(\nodes ,\edges)$ is connected. 

We mark two nodes in $\nodes$ as `terminal nodes' and call them~$\sfa$ and~$\sfb$, 
leading to the disjoint splitting $\nodes = \nodes_{\fast}\dot\cup \nodes_{\term}$ 
with $\nodes_{\term}:=\set{\sfa,\sfb}$ and $\sfV_\fast := \sfV \setminus \sfV_\term$.
We rescale the dynamics of the network in such a way 
that the jump rate from a non-terminal node to any other node is fast, by setting
\begin{equation}\label{eqdef:ha:kappa_eps}
	\kappa^\e_{\sfx \sfy} := \begin{cases}
		\e^{-1} \kappa_{\sfx \sfy}, & \sfx \in \nodes_{\fast},\  \sfy \in \nodes ; \\
		\kappa_{\sfx \sfy},  & \sfx \in \nodes_{\term} , \ \sfy\in \nodes  .
	\end{cases} 
\end{equation}
With this rescaling $\kappa^\eps$ satisfies 
the detailed balance condition with respect to the stationary measure $\pi^\eps$ defined by
\begin{equation}\label{eq:ha:pi_eps}
 \pi^\e_\sfx = \frac{\eps \pi_\sfx}{\PartSum^\e}\text{ for }\sfx\in \nodes_{\fast}, \quad \pi^\e_\sfa= \frac{\pi_\sfa}{\PartSum^\eps},  
 \quad \pi^\e_\sfb= \frac{\pi_\sfb}{\PartSum^\eps}, 
 \quad\text{with}\quad 
 	\PartSum^\eps :=  \pi_\sfa + \pi_\sfb + \eps \!\!\sum_{\sfx \in \nodes_{\fast}}\!\! \pi_\sfx  . 
\end{equation}
The rescaling implies the convergence $\pi^\eps\to \pi^0\in \ProbMeas(\nodes)$ with
\begin{equation*}
	 \pi^0_\sfx = 0 \quad \text{for }\sfx\in\nodes_{\fast}; \qquad \pi^0_\sfa = \frac{\pi_\sfa}{\pi_\sfa+ \pi_\sfb} \quad\text{and}\quad \pi^0_\sfb = \frac{\pi_\sfb}{\pi_\sfa+ \pi_\sfb} . 
\end{equation*}
From this convergence of $\pi^\e$ we expect that the evolution
\begin{equation}\label{eq:ha:master:eps}
	\partial_t \rho_\sfx^\e(t) = \sum_{\sfy\in \nodes} \bra*{\kappa_{\sfy\sfx}^\eps \rho_\sfy^\eps(t)  - \kappa_{\sfx\sfy}^\eps\rho_\sfx^\e(t)}, \qquad \sfx \in \nodes ,
\end{equation}
 converges to a limiting dynamics on the two-node set $\nodes_\term=\set{\sfa,\sfb}$ satisfying
\begin{equation*}
	\partial_t \rho_{\sfa}^0(t) = \kappa_{\sfa\sfb}^0 \rho_\sfb^0(t) - \kappa_{\sfb\sfa}^0 \rho_\sfa^0(t) = - \partial_t \rho_{\sfb}^0(t) ,
\end{equation*}
for some effective rates $\kappa_{\sfa\sfb}^0, \kappa_{\sfb\sfa}^0$ that 
automatically satisfy the detailed balance condition with respect to~$\pi^0$.

\medskip
Note that by~\eqref{eqdef:ha:kappa_eps} and~\eqref{eq:ha:pi_eps} we have 
\begin{equation*}
   \pi^\e_\sfx \kappa^\e_{\sfx\sfy} =: k_{\sfx\sfy}^\e = k_{\sfy\sfx}^\e> 0 \qquad\text{for } \sfx\sfy\in \edges, 
\end{equation*}
and
\begin{equation}\label{eqdef:ha:k0}
	 k_{\sfx\sfy}^\e = \frac{\pi_{\sfx} \kappa_{\sfx\sfy}}{\PartSum^\e} \longrightarrow  \frac{\pi_{\sfx} \kappa_{\sfx\sfy}}{\PartSum^0} = \frac{\pi_\sfx \kappa_{\sfx\sfy}}{\pi_{\sfa} + \pi_{\sfb}} =: k^0_{\sfx\sfy}>0 \qquad\text{as } \eps \to 0 \qquad\text{for } \sfx\sfy\in \edges.
\end{equation}
Hence, the effective unidirectional equilibrium fluxes are of order $O(1)$ in the limit $\eps\to 0$, which  explains their occurrence below in the representation formula for the effective conductivity of the remaining edge $\sfa\sfb$.

\begin{example}[A linear chain]\label{ex:N-chain:illustration}
	Fix $N\geq3$. We consider the setup of an \emph{$N$-chain}, a linear chain of $N$ nodes with nearest-neighbour connections, as depicted in Figure~\ref{fig:MarkvN-2}; this is a generalization of the 3-chain example of~\cite[\S 3.3]{LieroMielkePeletierRenger17}. The terminals are the end points $\nodes_{\term}:=\set{1,N}$, and the remaining nodes $\nodes_{\fast} := \{2,\dots,N\mathord-1\}$ are fast. The only nonzero rates are the nearest-neighbour rates $\kappa_{i,i+1} = \kappa_{i+1,i}>0$. 
	
	Before rescaling, the stationary measure is chosen to be $\pi_i=1/{N}$ for $i\in \nodes$. 
	After the rescaling~\eqref{eqdef:ha:kappa_eps} we then have 
	\[
	\pi_1^\eps=\frac{1}{2+\eps(N-2)}=\pi_N^\eps \qquad\text{and}\qquad \pi_i^\eps = \frac{\eps}{2+\eps(N-2)}\quad\text{for } i=2,\dots, N\mathord-1.
	\]
	Hence, in the limit $\eps\to 0$ we obtain $\pi_1^0={1}/{2}=\pi_N^0$ and $\pi_i^0=0$ for $i=2,\dots,N-1$.
	\let\k\kappa
	\begin{figure}[ht]
		\centering
		\begin{tikzpicture}[scale=0.8]
			\tikzstyle{every node}=[draw,shape=circle,fill=black!15];
			\node (v1) at (2,0) {$\quad1\quad$};
			\node (v2) at (4,0) {$2$};
			\node (v3) at (6,0) {$3$};
			\node (vN-1) at (10,0){\footnotesize{$\!\!\!N\!\!-\!\!1\!\!\!$}};
			\node (vN) at (12,0) {$\quad N\quad$};
			\node (vv1) at (16,0) {$\quad1\quad$};
			\node (vvN) at (20,0) {$\quad N\quad$};
			\tikzstyle{every node}=[];
			
			\node (vdot) at (8,0) {$\cdots$};
			
			\draw[->, very thick] (v1) to [out=30, in=135] (v2);
			\draw[->, very thick] (v2) to [out=45, in=135] (v3);
			\draw[->, very thick] (v2) to [out=-135, in=-30] (v1);
			\draw[->, very thick] (v3) to [out=45, in=135] (vdot);
			\draw[->, very thick] (v3) to [out=-135, in=-45] (v2);
			\draw[->, very thick] (vdot) to [out=-135, in=-45] (v3);
			\draw[->, very thick] (vN-1) to [out=-135, in=-45] (vdot);
			\draw[->, very thick] (vdot) to [out=45, in=135] (vN-1);
			\draw[->, very thick] (vN-1) to [out=45, in=150] (vN);
			\draw[->, very thick] (vN) to [out=-150, in=-45] (vN-1);
			
			\draw[->, very thick] (vv1) to [out=30, in=150] (vvN);
			\draw[->, very thick] (vvN) to [out=210, in=-30] (vv1);
			
			\draw[shorten >=0.3cm, shorten <=0.3cm,->, double, very thick] (vN) to [out=0, in=180] (vv1);
			
			\node[above] at (3,.7) {$\k_{12}$};
			\node[below] at (3,-.7) {$\frac{\k_{12}}\eps$};
			\node[above] at (5,.7) {$\frac{\k_{23}}\eps$};
			\node[below] at (5,-.7) {$\frac{\k_{23}}\eps$};
			\node[above] at (7,.7) {$\frac{\k_{34}}\eps$};
			\node[below] at (7,-.7) {$\frac{\k_{34}}\eps$};
			\node[above] at (9,.7) {$\frac{\k_{N-2,N-1}}\eps$};
			\node[below] at (9,-.7) {$\frac{\k_{N-2,N-1}}\eps$};
			\node[above] at (11,.7) {$\frac{\k_{N-1,N}}\eps$};
			\node[below] at (11,-.7) {$\k_{N-1,N}$};
			
			\node[above] at (18,0.7) {$\kappa_{1,N}^0$};
			\node[below] at (18,-0.7) {$\kappa_{N,1}^0$};

			\node[above] at (14,0.25) {$\eps\to 0$};
		\end{tikzpicture}
		\caption{\emph{Left:} $N$-state Markov process with high rate of leaving states
			$2, \dots, N\mathord-1$. \\
			\emph{Right:} The limit for $\eps\to 0$ gives a two-state Markov process.}
		\label{fig:MarkvN-2}
	\end{figure}%
	
\indent
We return to this example after proving the main Theorem~\ref{thm:ha:capacity} below.
\end{example}



We take as gradient system for~\eqref{eq:ha:master:eps} that of Example~\ref{ex:heat-flow} with a specific choice of tilting, which we now describe. We consider the  class of `potential' tilts $\sfF_{\mathrm{Pot}} = \{\calF_\Pot^F(\rho) = \sum_{\sfx\in\nodes} \rho_\sfx F_\sfx\}$ as defined in~\eqref{eqdef:MCs:potential-tilts}, so that the tilted energy can be written as 
\begin{equation*}
	\calE_{\e,F}(\rho) := \calE_\e(\rho) + \calF^F_\Pot(\rho)  = \calH(\rho \,|\, \pi^{\e,F}) + \log \frac{\PartSum^\e}{\PartSum^{\e,F}} ,
\end{equation*}
where the tilted stationary measures satisfy
\begin{equation*}
 \pi^{\e,F}_\sfx = \frac1{\PartSum^{\eps,F}}{\pi^{\e}_\sfx \ee^{-F_\sfx}}\quad\text{for }\sfx\in \nodes ,\qquad\text{with}\quad \PartSum^{\e,F} := \sum_{\sfx\in \nodes} \pi^\e_\sfx \ee^{-F_\sfx} . 
\end{equation*}
As discussed in Section~\ref{ss:mod-jump-rates}, there exist various choices of how to tilt the jump rates while remaining compatible with detailed-balance stationarity of $\pi^{\e,F}$; here we follow the tilt from Example~\ref{ex:particular-DB-parametrisations}.\ref{ex:tilt:sym}, 
which by Proposition~\ref{prop:activity-tilt} is the only one that leaves the activity invariant:
\begin{equation*}
	\kappa^{\e,F}_{\sfx\sfy} := \kappa^{\e}_{\sfx\sfy} \ee^{\frac12 (F_\sfx-F_\sfy)} .
\end{equation*}
We leave the other possibilities of tilting as identified in Proposition~\ref{prop:Jump:tilt} to be discussed elsewhere.
The tilted evolution of~\eqref{eq:ha:master:eps} is then given by
\begin{equation}\label{eq:ha:master:eps:F}
	\partial_t \rho_\sfx^{\e,F}(t) = \sum_{\sfx\in \nodes} \bra*{\kappa_{\sfy\sfx}^{\eps,F} \rho_\sfy^{\eps,F}(t)  - \kappa_{\sfx\sfy}^{\eps,F} \rho_\sfx^{\e,F}(t)}, \qquad \sfx \in \nodes .
\end{equation}
By the characterization of tilt-independence of Theorem~\ref{th:MC:gradient:tilt}, 
the only gradient structure for~\eqref{eq:ha:master:eps:F} that is independent of the tilt $F\in\sfF_{\mathrm{Pot}}$ is given by the dissipation potential
\begin{equation*}
	\calR_\e(\rho,j) =\sum_{\sfx\sfy\in \edges}\sfC\bra*{j_{\sfx\sfy} \,\middle|\, \sigma_{\sfx\sfy}^\eps(\rho)}
	\qquad\text{with}\qquad
	\sigma_{\sfx\sfy}^\eps(\rho) =  \frac{1}{2}\sqrt{\kappa^\e_{\sfx\sfy}\kappa^\e_{\sfy\sfx}\rho_\sfx  \rho_\sfy} . 
\end{equation*}
This tilt-independence also is obvious from the observation that indeed
$\sqrt{\kappa^{\e,F}_{\sfx\sfy} \kappa^{\e,F}_{\sfy\sfx} } = \sqrt{\kappa^\e_{\sfx\sfy}\kappa^\e_{\sfy\sfx}}$.
In summary, at finite $\e>0$ we fix the gradient structure with tilts $(\nodes,\edges,\gnabla,\calE_\e,\sfF_{\mathrm{Pot}},\calR_\e)$ given above.

\subsection{EDP convergence}

The basis of EDP convergence for tilt gradient systems is the pair of functions $\calI_{\e,F}^T$ and $\calD_{\e,F}^T$ (see Section~\ref{ss:convergence-of-GS}).
The EDP functional $\calI_{\e,F}^T$ is given by 
\begin{equation*}
	\calI_{\e,F}^T(\rho,j) := \calE_{\e,F}(\rho(T)) - \calE_{\e,F}(\rho(0)) + \calD_{\e,F}^T(\rho,j),
\end{equation*}
and the dissipation functional $\calD_{\e,F}^T$, which has the formal definition 
\begin{equation*}
	\calD_{\e,F}^T(\rho,j) := \int_0^T \pra*{ \calR_{\e}(\rho,j) + \calR_{\e}^*(\rho, - \gnabla \rmD \calE_{\e,F})} \dx{t}.
\end{equation*}
can be defined rigorously (see~\eqref{eq:Rstar:Hellinger}) as
\begin{multline*}
	\calD_{\e,F}^T(\rho^\e,j^\e) :=  \int_0^T \sum_{\sfx\sfy\in \edges} 
\set*{\sfC\bra*{j^\eps_{\sfx\sfy} \middle| 
        \tfrac12 k^{\e,F}_{\sfx\sfy}\sqrt{u^{\e,F}_\sfx u^{\e,F}_\sfy}} 
      + k^{\e,F}_{\sfx\sfy}\bra*{ \sqrt{u^{\e,F}_\sfx} - \sqrt{u^{\e,F}_\sfy}}^2} \dx{t},\\
      \text{provided }(\rho^\e,j^\e)\in \CE(0,T),
\end{multline*}
with the notation
\begin{equation*}
	u^\e := \pderiv{\rho}{{\pi^\e}}, \qquad u^{\e,F} := u^\e \ee^{F},
	  \qquad\text{and}\qquad k^{\e,F}_{\sfx\sfy} := k^\e_{\sfx\sfy} \ee^{-\frac{F_\sfx+F_\sfy}{2}}.
\end{equation*}
Note that in view of~\eqref{eqdef:N:explicit:rigrous} we also have 
\begin{equation}
\label{eq:ha:D-G}
\calD_{\e,F}^T(\rho^\e,j^\e) :=  \int_0^T \sum_{\sfx\sfy\in \edges} 
\CCs\bra*{j^\e_{\sfx\sfy}, u_{\sfx}^{\e,F}, u_\sfy^{\e,F}; \tfrac12 k^{\e,F}_{\sfx\sfy}} \dx{t},\qquad 
      \text{provided }(\rho^\e,j^\e)\in \CE(0,T).
\end{equation}
Whenever $\calD_{\e,F}^T(\rho,j)<\infty$, the superlinearity of $\sfC$ implies that $j\in L^1(0,T;\edges)$ and therefore $t\mapsto \rho(t)$ is absolutely continuous.

\medskip

\noindent
We show in the sections below that 
\begin{enumerate}
\item Sequences $(\rho^\e,j^\e)$ along which  $\calI_{\e,F}^T$ and $\calE_{\e,F}(\rho^\e(0))$ are  bounded are compact in an appropriate sense;
\item The energies $\calE_\e$ $\Gamma$-converge, and in the limit the pair $(\rho,j)$ satisfies a contracted continuity equation;
\item The dissipation functionals $\calD_{\e,F}^T$ satisfy a lower bound.
\end{enumerate}
Together these establish EDP convergence of gradient systems with tilting, as given by Definitions~\ref{defn:EDP-convergence} and~\ref{defn:EDP-convergence:reduce}.

\subsubsection{Compactness}

\begin{lemma}[Compactness]
\label{l:ha:compactness}
Fix $F\in \sfF_{\mathrm{Pot}}$, let $\e_n\to0$, and let $(\rho^{\eps_n},j^{\eps_n})\in \CE(0,T)$ be a sequence such that 
\begin{equation}
\label{est:ha-compactness}
\sup_{n} \calE_{\e_n,F}(\rho^{\e_n}(0)) + \calD_{\e_n,F}^T(\rho^{\eps_n},j^{\eps_n})\;=: C \;<\;\infty.
\end{equation}
Then there is a subsequence (not relabeled) along which $(\rho^{\e_n},j^{\e_n})$ converges in $\CE(0,T)$ to a limit $(\rho,j)\in \CE(0,T)$ (see Def.~\ref{def:converge-in-CE}). More precisely,
\begin{enumerate}
\item \label{l:ha:compactness:j}
	For each $\sfx\sfy\in \edges$, $j_{\sfx\sfy}^{\e_n}$ converges narrowly to $j_{\sfx\sfy}\in \calM([0,T])$, i.e.
\[
\int_0^T\varphi(t) j^{\e_n}_{\sfx\sfy}(t) \dx t \longrightarrow
\int_0^T \varphi(t) j_{\sfx\sfy}(\dx t)
\qquad\text{ for all $\varphi\in C([0,T])$.}
\]
The limit $j$ satisfies $(\odiv j)(\sfx,A)=0$ for all $\sfx\in\nodes_\fast$ and $A\subset [0,T]$.
\item \label{l:ha:compactness:uab} 
For $\sfx\in\nodes_\term$, $u_\sfx^{\e_n,F}$ converges in $C([0,T])$ to a limit $u_\sfx^F$.
\item \label{l:ha:compactness:u-rest}
For each $\sfx\in \nodes_\fast$, $u_\sfx^{\e_n,F}$ converges narrowly to a finite measure on $[0,T]$, and $\rho^{\e_n}_\sfx$ converges narrowly to zero.
\end{enumerate}
\end{lemma}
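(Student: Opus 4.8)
The plan is to prove the three convergences by combining the uniform dissipation bound~\eqref{est:ha-compactness} with the superlinearity of $\sfC$ and with the properties of $\CCs$ collected in Lemma~\ref{l:props-N}, then to identify the structure of the limit by passing to the limit in the weak form of the continuity equation. First I would extract from~\eqref{est:ha-compactness} a uniform bound on $\calE_{\e_n,F}(\rho^{\e_n}(t))$ for all $t\in[0,T]$: by the chain rule (which here is the elementary estimate $|B(\alpha,\beta,j)|\le \CCs(j,\alpha,\beta;k)$ of Lemma~\ref{l:props-N}.\ref{l:props-N:est-B}, integrated in time and summed over edges, exactly as in the proof of the corollary following Theorem~\ref{t:ldp-intro}) one controls $\calE_{\e_n,F}(\rho^{\e_n}(T))-\calE_{\e_n,F}(\rho^{\e_n}(0))$, hence $\sup_{n,t}\calE_{\e_n,F}(\rho^{\e_n}(t))<\infty$. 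Since $\calE_{\e_n,F}(\rho) = \calH(\rho|\pi^{\e_n,F}) + \mathrm{const}$ and $\pi^{\e_n,F}_\sfx\to 0$ for $\sfx\in\nodes_\fast$ while $\pi^{\e_n,F}_\sfx\to\pi^{0,F}_\sfx>0$ for $\sfx\in\nodes_\term$, the bound on relative entropy forces $\rho^{\e_n}_\sfx(t)\to 0$ for $\sfx\in\nodes_\fast$ (this gives the last claim in part~\ref{l:ha:compactness:u-rest}), and it keeps $u^{\e_n,F}_\sfx(t) = \rho^{\e_n}_\sfx(t)/\pi^{\e_n,F}_\sfx$ bounded in $L^1$ for the fast nodes and bounded in $L^\infty$-in-some-weak-sense for the terminal nodes.

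Next I would prove the flux convergence, part~\ref{l:ha:compactness:j}. Because $k^{\e_n,F}_{\sfx\sfy}\to k^{0,F}_{\sfx\sfy}>0$ (by~\eqref{eqdef:ha:k0}) and $\calD^T_{\e_n,F}$ is, by~\eqref{eq:ha:D-G}, the time integral of $\sum_{\sfx\sfy}\CCs(j^{\e_n}_{\sfx\sfy},u^{\e_n,F}_\sfx,u^{\e_n,F}_\sfy;\tfrac12 k^{\e_n,F}_{\sfx\sfy})$, and since $\CCs$ controls $\sfC(j|\sigma)$ which is superlinear in $j$ for bounded $\sigma$, and $\sigma^{\e_n}_{\sfx\sfy}$ is bounded in $L^1(0,T)$ by the energy bound, a de la Vallée–Poussin / Dunford–Pettis argument gives equi-integrability of $j^{\e_n}_{\sfx\sfy}$ in $L^1(0,T)$, hence (up to a subsequence) narrow convergence of $j^{\e_n}_{\sfx\sfy}\dx t$ to some $j_{\sfx\sfy}\in\calM([0,T])$. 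The statement $(\odiv j)(\sfx,A)=0$ for $\sfx\in\nodes_\fast$ then follows by passing to the limit in the weak continuity equation~\eqref{eq:weak-form-CE} with test functions supported near a fast node: the left-hand ($\partial_t\rho$) term vanishes because $\rho^{\e_n}_\sfx\to 0$, while the flux term converges by the just-established narrow convergence. For part~\ref{l:ha:compactness:uab}, the terminal densities $\rho^{\e_n}_\sfx$ ($\sfx=\sfa,\sfb$) solve a continuity equation with right-hand side $-\odiv j^{\e_n}$ which is equi-integrable in time; combined with the uniform bound on $\rho^{\e_n}_\sfx(t)$ this gives equicontinuity in $t$, and Arzelà–Ascoli yields convergence in $C([0,T])$; since $\pi^{\e_n,F}_\sfx\to\pi^{0,F}_\sfx>0$, the quotient $u^{\e_n,F}_\sfx$ also converges in $C([0,T])$. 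Finally, for the fast nodes in part~\ref{l:ha:compactness:u-rest}, the $L^1(0,T)$ bound on $u^{\e_n,F}_\sfx = \rho^{\e_n}_\sfx/\pi^{\e_n,F}_\sfx$ (here $\rho^{\e_n}_\sfx\to 0$ but $\pi^{\e_n,F}_\sfx\to 0$ at the same rate $\e_n$, so the quotient stays $O(1)$ in $L^1$) gives weak-$*$ compactness in $\calM([0,T])$, hence narrow convergence along a further subsequence.

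The main obstacle, as I see it, is handling the fast-node densities $u^{\e_n,F}_\sfx$ for $\sfx\in\nodes_\fast$: these are $0/0$ quotients in the limit, the limit energy gives no control on them pointwise in $t$, and yet they appear as arguments of $\CCs$ in~\eqref{eq:ha:D-G}, so their behaviour is exactly what must be understood to later prove the lower bound and the cell-problem / capacity formula. Here one must use that $\calD^T_{\e_n,F}$ controls, via the `Hellinger' part $k^{\e_n,F}_{\sfx\sfy}(\sqrt{u^{\e_n,F}_\sfx}-\sqrt{u^{\e_n,F}_\sfy})^2$, the differences of the square roots of the $u$'s along edges; since $k^{\e_n,F}_{\sfx\sfy}\to k^{0,F}_{\sfx\sfy}>0$ for \emph{every} edge including those internal to $\nodes_\fast$, this forces, in the limit, a kind of `spatial constancy' of $\sqrt{u}$ on the fast part — but only in an integrated-in-time sense, which is why the statement is only narrow convergence of a \emph{measure} rather than a function. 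I would therefore be careful to claim no more than narrow convergence in $\calM([0,T])$ for the fast nodes, deferring the finer structure (the identification via the effective capacity $\capacity^F_{\sfa\sfb}$) to the lower-bound argument that follows this lemma. The remaining verifications — that the limit $(\rho,j)$ lies in $\CE(0,T)$, i.e. satisfies~\eqref{eq:weak-form-CE} with the correct initial datum — are routine: narrow convergence of $\rho^{\e_n}$ and $j^{\e_n}$ passes to the limit in the linear weak form, and the time-continuity of $t\mapsto\rho(t)$ follows from the $L^1$-in-time bound on $j$ together with the uniform-in-$n$ control at $t=0$ provided by~\eqref{est:ha-compactness}.
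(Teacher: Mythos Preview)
Your overall structure matches the paper's, and you correctly identify the Hellinger term as central --- but you deploy it in the wrong place, and this creates a genuine gap. The claim that the uniform energy bound ``keeps $u^{\e_n,F}_\sfx$ bounded in $L^1$ for the fast nodes'' is not justified: since $\pi^{\e_n,F}_\sfx\sim\e_n$ on $\nodes_\fast$, the entropy estimate only gives $\eta(u^{\e_n,F}_\sfx(t))\le C/\e_n$, which allows $u^{\e_n,F}_\sfx$ to blow up. The heuristic ``both go to zero at the same rate, so the quotient is $O(1)$'' is precisely what must be proved. The paper's route is: the energy bound (or simply mass conservation together with $\pi^{\e,F}_{\sfa,\sfb}\ge c>0$) yields $u^{\e,F}_{\sfa,\sfb}\in L^\infty(0,T)$; then the \emph{Hellinger part of the dissipation} propagates this via
\[
\int_0^T u^{\e,F}_\sfx\,\dx t\le 2\int_0^T u^{\e,F}_\sfa\,\dx t+2\int_0^T\bigl(\sqrt{u^{\e,F}_\sfa}-\sqrt{u^{\e,F}_\sfx}\bigr)^2\dx t
\]
to neighbours of $\sfa$, and connectedness of the graph carries the bound through all of $\nodes_\fast$. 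So the Hellinger term is essential already for compactness, not only for the later lower bound.

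A second gap sits in the equi-integrability claim: de~la~Vall\'ee--Poussin would need $\sigma^{\e_n}_{\sfx\sfy}\in L^\infty$, not merely $L^1$. With $\sigma\in L^1$ you only get $\int|j|\le\int\sfC(j|\sigma)+\sfC^*(1)\int\sigma$, which yields narrow compactness in $\calM([0,T])$ --- enough for part~\ref{l:ha:compactness:j} --- but not equi-integrability. For the Arzel\`a--Ascoli step at terminal nodes you need equi-integrability of the fluxes touching $\sfa,\sfb$; the paper obtains this through a sharper Orlicz-space estimate that exploits $u^{\e,F}_\sfa\in L^\infty$ in combination with $u^{\e,F}_\sfx\in L^1$ (splitting $\sigma\,\sfC^*(\xi)$ by Young's inequality to land in $L^\Psi$ with $\Psi^*=(\sfC^*)^2$), and for the reverse fluxes $j^\e_{\sfx\sfa}$, $j^\e_{\sfy\sfb}$ uses the continuity equation for the total fast-node mass together with a modified Arzel\`a--Ascoli theorem.
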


\begin{proof}
The proof follows the arguments of~\cite[Cor.~3.10]{PeletierRenger21}.
We suppress the subscript~$n$ from $\e_n$ for simplicity. 

At the expense of doubling the constant $C$ in~\eqref{est:ha-compactness} we can assume that all $j^\e_{\sfx\sfy}$ are non-negative. Since $\pi^{\e,F}_{\sfa,\sfb}$ are bounded away from zero, $u^{\e,F}_{\sfa,\sfb}$ are bounded in $L^\infty(0,T)$ independently in $\e$.
For a neighbour~$\sfx$ of $\sfa$ we then use~\eqref{est:ha-compactness} and the inequality
\[
\int_0^T u^{\e,F}_{\sfx}(t)\dx t \leq 
2\int_0^T u^{\e,F}_{\sfa}(t)\dx t
+ 2\int_0^T \bra*{\sqrt{u^{\e,F}_\sfa} - \sqrt{u^{\e,F}_\sfx}}^2\dx t
\]
to bound $u^{\e,F}_\sfx$ in $L^1(0,T)$. By the connectedness of the graph we can extend this boundedness to all $\nodes_\fast$ and find
\begin{equation}
\label{est:ha-compactness:u-Vfast}
\sup_{\e>0,\sfy\in \nodes_\fast} \int_0^T u^{\e,F}_\sfy\dx t < \infty.
\end{equation}
Along a subsequence we can assume that for all $\sfy\in \nodes_\fast$, $u^{\e,F}_\sfy$  converges narrowly to a finite measure, and since $\pi^\e$ converges to zero on $\nodes_\fast$, $\rho^\e$ converges narrowly to zero on $\nodes_\fast$. This proves part~\ref{l:ha:compactness:u-rest}.

From the inequality $|\xi j|\leq \sfC(j|a) + a\sfC^*(\xi)$ that holds for all $j,\xi\in\R$ and $a\geq 0$ we deduce that 
\[
\int_0^T |j^\e_{\sfx\sfy}| \leq \int_0^T \pra*{
  \sfC\bra*{j^\e_{\sfx\sfy}|\sigma^\e_{\sfx\sfy} } 
+ \sigma^\e_{\sfx\sfy} \sfC^*(1)} \dx t
\stackrel{\eqref{est:ha-compactness}}\leq C , 
\qquad \sigma^\e_{\sfx\sfy} = \tfrac12 k^{\e,F}_{\sfx\sfy}\sqrt{u^{\e,F}_\sfx u^{\e,F}_\sfy},
\]
so that all fluxes $j^\e_{\sfx\sfy}$ are bounded in $L^1(0,T)$, and can be assumed to converge narrowly in $\calM([0,T])$. This proves the convergence of part~\ref{l:ha:compactness:j}; the vanishing of the divergence on $\nodes_\fast$ follows from the continuity equation, because $\rho$ is zero on $\nodes_\fast$.

For the terminal-origin fluxes $j^\e_{\sfa\sfx}$, $j^\e_{\sfx\sfa}$, $j^\e_{\sfb\sfy}$, and $j^\e_{\sfy\sfb}$ we derive a stronger bound. First we note that for any $\sfx\in\nodes_\fast$ and any $\xi\in C_b([0,T])$,
\begin{align*}
\int_0^T \xi(t)j^\e_{\sfa\sfx}(t) \dx t
&\leq \int_0^T \sfC\bra*{j^\e_{\sfa\sfx}\big| \sigma^\e_{\sfa\sfx}}\dx t
+ \int_0^T \sigma^\e_{\sfa\sfx}{\sfC^*(\xi(t))} \dx t\\
&\leftstackrel{\eqref{est:ha-compactness}} \leq C 
+ \frac14 \bra*{k_{\sfa\sfx}^{\e,F}}^2\norm{u^{\e,F}_\sfa}_\infty  \int_0^T u^{\e,F}_\sfx\dx t 
+ \frac14 \int_0^T \sfC^*(\xi(t))^2 \dx t.
\end{align*}
The second term on the right-hand side is bounded by~\eqref{est:ha-compactness:u-Vfast} and~\eqref{eqdef:ha:k0}, and it then follows (see e.g.~\cite[(2.8)]{PeletierRenger21}) that $j^\e_{\sfa\sfx}$ is bounded in the Orlicz space $L^\Psi(0,T)$, where $\Psi:\R\to\R$ is defined through its Legendre dual $\Psi^*(\xi) := \sfC^*(\xi)^2$.
The superlinearity of $\Psi$ implies that therefore $j^\e_{\sfa\sfx}$ is equi-integrable  for all $\sfx\in \nodes$, and similarly for $j^\e_{\sfb\sfy}$. 

To bound $j^\e_{\sfx\sfa}$ and $j^\e_{\sfy\sfb}$ we remark that by the continuity equation, 
\begin{align*}
\sum_{\sfx\in\nodes_\fast}{\partial_t \rho^\e_\sfx}
&= \sum_{\substack{\sfx\in \nodes_\fast\\ \sfy\in \nodes}} 
  \bra*{j^\e_{\sfy\sfx}-j^\e_{\sfx\sfy}}
= \sum_{\substack{\sfx\in \nodes_\fast\\ \sfy\in \nodes_\term}} 
  \bra*{j^\e_{\sfy\sfx}-j^\e_{\sfx\sfy}}
= \sum_{\sfx\in\nodes_\fast}\bra*{j^\e_{\sfa\sfx} + j^\e_{\sfb\sfx}}
- \sum_{\sfx\in\nodes_\fast}\bra*{j^\e_{\sfx\sfa} + j^\e_{\sfx\sfb}}.
\end{align*}
The left-hand side converges to zero weakly as $\e\to0$, and the first term on the right-hand side is equi-integrable as we just observed. Therefore there exists a modulus of continuity $\omega:[0,\infty)\to[0,\infty)$ such that 
\begin{equation}
\label{est:ha:compactness:approx-equicontinuity}
\limsup_{\e\to0} \sup_{\substack{0\leq t_0< t_1\leq T:\\|t_1-t_0|<\delta}}\;
\int_{t_0}^{t_1}\sum_{\sfx\in \nodes_\fast}\bra*{ j^\e_{\sfx\sfa} + j^\e_{\sfx\sfb}}\dx t 
\leq \omega(\delta).
\end{equation}
Combining~\eqref{est:ha:compactness:approx-equicontinuity} with the equi-integrability of $j^\e_{\sfa\sfx}$ and the continuity equation $\pi^\e_\sfa \partial_t u^\e_\sfa = -\odiv j^{\e}(\sfa)$, we conclude by a modified Arzela-Ascoli theorem~\cite[Th.~A.1]{PeletierRenger21} that $u^\e_\sfa$  converges (along a subsequence)  in $C([0,T])$. This proves part~\ref{l:ha:compactness:uab} of the Lemma.

Note that $t\mapsto \rho_\sfx(t)$ is continuous: for $\sfx\in\nodes_\fast$ this follows because $\rho_\sfx(t)=0$, and for $\sfx=\sfa,\sfb$ this follows from part~\ref{l:ha:compactness:uab}. Therefore the convergence is in the sense of $\CE(0,T)$.
\end{proof}

\subsubsection{Convergence of energy and contracted continuity equation}
\label{ss:ha:contracted-CE}

The convergence of $\pi^\e \to \pi^0$ implies the $\Gamma$-convergence of $\calE_{\e}$ in $\ProbMeas(\nodes)$ (see e.g.~\cite[Lemma~6.2]{AmbrosioSavareZambotti09})
\begin{equation*}
	\calE_\e = \calH(\cdot | \pi^\e) \xrightarrow{\Gamma} \calH(\cdot | \pi^0) =: \calE_0.
\end{equation*}
Consequently we also get for any $F\in \sfF_{\mathrm{Pot}}$ the convergence
\[
	\calE_{\e,F} = \calE_\e + \calF^F_\Pot \xrightarrow{\Gamma} \calE_0 + \calF^F_\Pot  =: \calE_{0,F}.
\]

Note that $\supp \pi^0 = \nodes_{\term}$ and hence for any $\rho$ with $\calE_0(\rho)<\infty$ we also have $\supp \rho\subseteq \nodes_{\term}$.
It follows that  if $(\rho^\eps,j^\eps)\in \CE(0,T)$ converges to $(\rho,j)\in\CE(0,T)$ and $\sup_{\eps,t} \calE_\eps(\rho^\eps(t))<\infty$, then in the limit $\rho_\sfx\equiv0$ for any $\sfx\in\nodes_\fast$. We therefore define
\begin{subequations}
\label{eqdef:ha:contracted-CE}
\begin{alignat}3
\edges_\term &:= \{\sfa\sfb\}, &\qquad 
\wt\rho &:= \rho|_{\nodes_\term}, 	&
\wt\calE_\e(\wt\rho\,) := \calE_\e(\rho),
\\
(\ona f)_{\sfa\sfb} &:= f_\sfb-f_\sfa, &
\wt\jmath_{\sfa\sfb} &:= (\odiv j)(\sfa),&
\end{alignat}
\end{subequations} 
Then $(\wt\rho,\wt\jmath\,)$ satisfies the contracted continuity equation associated with $(\nodes_\term,\edges_\term, \gnabla)$, since 
\[
\partial_t \wt\rho_\sfa = -\partial_t \wt\rho_\sfb = \partial_t \rho_\sfa = -(\odiv j)(\sfa)
= 
-\wt\jmath_{\sfa\sfb} = -(\odiv \wt\jmath\,)(\sfa).
\]

\subsubsection{Lower bound on the dissipation function}

The main argument in the proof of the lower bound on $\calD_{\e,F}^T$ is the following. By applying a simple minimization argument to~\eqref{eq:ha:D-G} we find
\begin{align}\label{ineq:ha:D-to-cell-pb}
	\MoveEqLeft\calD_{\e,F}^T(\rho^\eps,j^\eps) 
	=\int_0^T \sum_{\sfx\sfy\in\edges} 
		\CCs\bra*{j_{\sfx\sfy}^\e;u^{\e,F}_\sfx,u^{\e,F}_\sfy; \tfrac{1}{2} k_{\sfx\sfy}^{\e,F} } \dx{t} \\
	&\geq \int_0^T \inf_{\substack{ u\in\calM_{\geq 0}(\nodes)\\ j\in \calM(\edges)}}\set[\bigg]{  \sum_{\sfx\sfy\in\edges} \CCs\bra*{j_{\sfx\sfy}; u_\sfx, u_\sfy; \tfrac{1}{2} k_{\sfx\sfy}^{\e,F}} \dx{t}:  u|_{\nodes_{\term}} = u^{\e,F}|_{\nodes_{\term}},\  \gdiv j=\gdiv j^\eps}  \dx t .
\notag
\end{align}
In the limit $\e\to0$ the divergence $\odiv j^\e$ concentrates onto $\nodes_{\term} = \{\sfa,\sfb\}$, and the infimum above takes the form (see Def.~\ref{defn:ha:cell-formula} below)
\begin{align}
\label{eq:ha:cell-problem}
&\scrN[\nodes,\nodes_\term,k]\bra*{u, s} = \\
&\inf_{\substack{ \ol u\in\calM_{\geq 0}(\nodes)\\ j\in \calM(\edges)}}
\set[\bigg]{ \sum_{\sfx\sfy\in\edges} \CCs\bra*{j_{\sfx\sfy}; \ol u_\sfx, \ol u_\sfy; \tfrac{1}{2} k_{\sfx\sfy}^{F}} \dx{t}:  
    \ol u|_{\nodes_{\term}} \!= u^{F}|_{\nodes_{\term}},  \ 
    \gdiv j|_{\nodes_\term} \!= (s,-s) , \  
    \gdiv j|_{\nodes_\fast} \!= 0 }. \notag
\end{align}

\medskip

A truly remarkable fact is that this `cell problem'  can again be characterized by a $\sfC$-$\sfC^*$ structure, with a modified activity $k$ given by a capacity:

\begin{prop}[Effective dissipation density]\label{prop:ha:effectiveN}
The infimum in~\eqref{eq:ha:cell-problem}
is equal to 
\begin{equation*}
\scrN[\nodes,\nodes_\term,k]\bra*{u, s} 
= \CCs\bra*{s,u_\sfa,u_\sfb;\capacity^F_{\sfa\sfb}} ,
\end{equation*}
	with
	\begin{equation}\label{eqdef:effective:capacity}
		\capacity^F_{\sfa\sfb} := 
		\inf\set[\bigg]{ \frac{1}{2} \sum_{\sfx\sfy\in \edges} k_{\sfx\sfy}^F 	\abs*{\gnabla_{\sfx\sfy} h }^2 \,\bigg|\, 
			h:\nodes \to \R, h_\sfa=1, h_\sfb=0} ,
	\end{equation}
	where $k_{\sfx\sfy}^F = k_{\sfx\sfy}^0 \ee^{-(F_\sfx+F_\sfy)/2}$ and 
	$k_{\sfx\sfy}^0=\frac{\pi_\sfx \kappa_{\sfx\sfy}}{\pi_{\sfa} + \pi_{\sfb}}$ as in~\eqref{eqdef:ha:k0}.
\end{prop}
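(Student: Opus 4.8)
\textbf{Proof plan for Proposition~\ref{prop:ha:effectiveN}.}
The strategy is to reduce the multi-edge infimum~\eqref{eq:ha:cell-problem} to a single-edge $\CCs$ by exploiting the variational representation~\eqref{eqdef:calN} of $\CCs$ together with the quadratic (Dirichlet-form) structure visible on the right-hand side of~\eqref{eq:CCs:cell-problem}. First I would use the dual characterization~\eqref{eq:props-N:dual-char-1} of $\CCs$, namely
\[
\CCs\bra*{j_{\sfx\sfy}; \ol u_\sfx, \ol u_\sfy; \tfrac12 k_{\sfx\sfy}^F}
= \sup_{\zeta_{\sfx\sfy}\in\R}\pra*{ \zeta_{\sfx\sfy} j_{\sfx\sfy} + k_{\sfx\sfy}^F\bra[\big]{\ol u_\sfx + \ol u_\sfy - 2\sqrt{\ol u_\sfx \ol u_\sfy}\cosh\tfrac12\zeta_{\sfx\sfy}}},
\]
so that the objective in~\eqref{eq:ha:cell-problem} becomes, after summing over edges, a sup over the edge potential $\zeta$ of an expression that is \emph{jointly concave} in the $\sqrt{\ol u_\sfx}$ and linear in $j$. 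The constraints $\gnabla j|_{\nodes_\term}=(s,-s)$, $\gnabla j|_{\nodes_\fast}=0$ and $\ol u|_{\nodes_\term}=u^F|_{\nodes_\term}$ are linear, so a minimax exchange (justified by, e.g., Sion's theorem, using the superlinearity of $\sfC$ in $j$ to get coercivity/compactness on the flux side and the fact that the $\ol u$-minimization can be restricted to a compact set once the value is finite) turns the problem into $\sup_\zeta \inf_{j,\ol u}$. The inner infimum over $j$ subject to the divergence constraints is a standard Lagrange computation: writing $\gnabla j (\sfx)=\sum_{\sfy} j_{\sfx\sfy}$ against a multiplier $h:\nodes\to\R$ with $h_\sfa, h_\sfb$ free and $h|_{\nodes_\fast}$ the genuine multiplier, stationarity forces $\zeta_{\sfx\sfy}=\gnabla_{\sfx\sfy}h = h_\sfy-h_\sfx$ up to the Lagrange contribution, and one sees that only the boundary values $h_\sfa, h_\sfb$ survive in the $s$-coupling.

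The key step is then to recognise that, after the inner minimisations, the remaining problem in the variable $h$ (normalised to $h_\sfa=1$, $h_\sfb=0$ by the one-homogeneity and the constant-shift invariance already used repeatedly in Section~\ref{sss:structure-of-tilt-dependence}) separates into exactly the Dirichlet energy $\tfrac12\sum_{\sfx\sfy\in\edges} k^F_{\sfx\sfy}|\gnabla_{\sfx\sfy}h|^2$ plus the single-edge $\sqrt{u_\sfa}$–$\sqrt{u_\sfb}$ contribution; minimising the Dirichlet energy over $h$ with the remaining free interior values $h|_{\nodes_\fast}$ gives precisely the effective capacity $\capacity^F_{\sfa\sfb}$ of~\eqref{eqdef:effective:capacity}, while the boundary term reassembles into $\CCs(s,u_\sfa,u_\sfb;\capacity^F_{\sfa\sfb})$ via the representation~\eqref{eqdef:N:explicit:rigrous}. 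Concretely, I expect the cleanest route to be: (i) observe that when $u_\sfa, u_\sfb>0$ the representation~\eqref{eqdef:calN} with $v=\sqrt{\ol u}$ and a piecewise-affine interpolation along each edge shows that the multi-edge infimum is bounded below by the continuous cell problem whose `conductance profile' on each edge is $k^F_{\sfx\sfy}$, and then (ii) use the series and parallel laws (Corollaries~\ref{cor:N:series} and~\ref{cor:N:parallel}) repeatedly to collapse the graph: parallel edges combine by adding their $k$'s, series edges combine by the harmonic rule, and a generic graph reduces by a star-mesh (Kennelly/$Y$–$\Delta$) transform, the net effect of which on the scalar `conductance' is exactly the variational capacity~\eqref{eqdef:effective:capacity}. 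This is the classical equivalence between effective conductance of a resistor network and the Dirichlet principle, and the whole point of writing $\CCs$ in the form~\eqref{eq:CCs:cell-problem} is that $\CCs$ transports through these network reductions with its `rate' argument behaving exactly like a conductance.

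For the matching upper bound I would take an arbitrary minimiser (or near-minimiser) $h^\star$ of~\eqref{eqdef:effective:capacity}, build a competitor $(\ol u, j)$ for~\eqref{eq:ha:cell-problem} by setting $\zeta_{\sfx\sfy}=s^{-1}\,(\text{appropriate multiple of})\,\gnabla_{\sfx\sfy}h^\star$ along the optimal $h^\star$-harmonic flux, and choosing $\ol u$ on $\nodes_\fast$ by the geometric-mean interpolation that makes each edge's $\CCs$ attain its explicit value~\eqref{eqdef:N:explicit}; a direct computation using $\sqrt{pq}\,\sfC^*(\log p-\log q)=2(\sqrt p-\sqrt q)^2$ then shows the sum equals $\CCs(s,u_\sfa,u_\sfb;\capacity^F_{\sfa\sfb})$. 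Finally the degenerate cases $u_\sfa u_\sfb=0$ need a separate (easy) check: by part~\ref{l:props-N:infty} of Lemma~\ref{l:props-N} both sides are $+\infty$ when $s\neq0$, and when $s=0$ both sides reduce to the capacity times $(\sqrt{u_\sfa}-\sqrt{u_\sfb})^2$ which is handled directly from~\eqref{eqdef:N:explicit:rigrous}. \textbf{The main obstacle} I anticipate is making the minimax exchange and the graph-reduction rigorous on the space of measures $\ol u\in\calM_{\geq0}(\nodes)$ and $j\in\calM(\edges)$ — in particular verifying the coercivity/lower-semicontinuity needed to know minimisers exist and that the value is unchanged when one restricts to `nice' (e.g. piecewise-affine-in-$z$) competitors, and checking that the star-mesh reduction does not create spurious extra conductance; here the superlinearity of $\sfC$ in its flux argument and the lower-semicontinuity Lemma~\ref{l:G-lsc} are the tools I would lean on, but the bookkeeping across an arbitrary connected graph is where the real work lies.
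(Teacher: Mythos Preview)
Your option (ii) --- iterated star-mesh reduction --- is exactly the paper's route, and is the part of your proposal that will go through. The paper proves a single star-mesh identity (their Lemma~6.8): for a star graph with centre $\sfw$ and leaves $\sfN_\sfw$, the cell functional $\scrN$ with $u_\sfw$ minimised out equals the cell functional on the complete graph on $\sfN_\sfw$ with conductances $\tilde k_{\sfx\sfy} = k_{\sfx\sfw}k_{\sfy\sfw}/\sum_\sfz k_{\sfz\sfw}$. This is proved using the dual characterisation~\eqref{eq:props-N:dual-char-1} you cite, but applied \emph{locally} with only the single scalar $u_\sfw$ as inner variable; the resulting one-dimensional stationarity condition is what makes the algebra close. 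One then iterates node-by-node (their Proposition~6.9 combines the star-mesh step with the parallel law to absorb the new $\tilde k$ into any existing $k$), and observes that the conductance update at each step is precisely a Gauss elimination / Schur complement, so after eliminating all of $\nodes_\fast$ the remaining scalar is $\capacity^F_{\sfa\sfb}$. Because each step is an \emph{equality}, no separate upper-bound competitor is needed.

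Your primary sketch via a single global minimax has a real gap: the assertion that the objective is ``jointly concave in the $\sqrt{\ol u_\sfx}$'' is false. Writing $p_\sfx = \sqrt{\ol u_\sfx}$, each edge contributes $k^F_{\sfx\sfy}(p_\sfx^2 + p_\sfy^2 - 2p_\sfx p_\sfy \cosh\tfrac12\zeta_{\sfx\sfy})$, which is an \emph{indefinite} quadratic form (eigenvalues $1\pm\cosh\tfrac12\zeta$) whenever $\zeta\neq 0$. Summed over a general graph this is neither convex nor concave in the interior $p_\sfx$, so Sion's theorem does not apply directly and the inner infimum over $\ol u|_{\nodes_\fast}$ can be $-\infty$ for bad $\zeta$. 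The paper sidesteps this by never optimising over more than one interior density at a time; you should do the same and drop the global minimax in favour of the iterative reduction you already outline.
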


\noindent The proof of Proposition~\ref{prop:ha:effectiveN} is given in Section~\ref{ss:ha:cell-formula} below. 

\bigskip

The following lemma makes the transition from~\eqref{ineq:ha:D-to-cell-pb} to~\eqref{eq:ha:cell-problem} rigorous. 

\begin{lemma}[Lower bound]
Fix $F\in \sfF_{\mathrm{Pot}}$. Let $\e_n\to0$,  let $(\rho^{\e_n},j^{\e_n})\in \CE(0,T)$ converge to $(\rho,j)\in \CE(0,T)$ in the sense of Lemma~\ref{l:ha:compactness}, and let $(\wt\rho,\wt\jmath\,)$ be associated to $(\rho,j)$ as in~\eqref{eqdef:ha:contracted-CE}. 
Then 
\[
\liminf_{n\to\infty} \calD_{\e_n,F}^T(\rho^{\eps_n},j^{\eps_n})
\geq 
\wt\calD_{0,F}^T(\wt\rho,\wt\jmath\,).
\]	
The limit functional $\wt\calD_{0,F}^T$ is defined by 
\begin{equation*}
\wt\calD_{0,F}^T(\wt\rho,\wt\jmath\,):= 
\begin{cases}
\ds\int_0^T \CCs\bra*{\wt\jmath_{\sfa\sfb}, u_\sfa^F,u_\sfb^F; \capacity_{\sfa\sfb}^F}\dx t,
&\text{provided $(\wt\rho,\wt\jmath\,)\in {\CE_{\term}(0,T)}$},\\
+\infty & \text{otherwise}.
\end{cases}
\end{equation*}
In this definition $u^{\e}_\sfx = (\dx\rho/\dx\pi^0)(\sfx) = (\dx\wt\rho/\dx\pi^0)(\sfx)$ and $u^{\e,F}_\sfx = u^\e_\sfx \ee^{F_\sfx}$ for $\sfx\in \nodes_\term$.
\end{lemma}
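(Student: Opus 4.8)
The plan is to derive the lower bound in three stages, combining the compactness from Lemma~\ref{l:ha:compactness}, the algebraic rewriting~\eqref{eq:ha:D-G}, and the variational identity of Proposition~\ref{prop:ha:effectiveN}. First I would note that if $\liminf_n \calD_{\e_n,F}^T(\rho^{\e_n},j^{\e_n})=+\infty$ there is nothing to prove, so I may pass to a subsequence along which the $\liminf$ is a finite limit, and in particular~\eqref{est:ha-compactness} holds; hence Lemma~\ref{l:ha:compactness} applies and gives narrow convergence of each $j^{\e_n}_{\sfx\sfy}$ to $j_{\sfx\sfy}$, uniform convergence of $u^{\e_n,F}_\sfx$ on $[0,T]$ for $\sfx\in\nodes_\term$, and narrow convergence of $u^{\e_n,F}_\sfx$ to some finite measure for $\sfx\in\nodes_\fast$, together with $(\odiv j)(\sfx,\cdot)=0$ on $\nodes_\fast$. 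This also identifies $(\wt\rho,\wt\jmath\,)$ via~\eqref{eqdef:ha:contracted-CE} and guarantees $(\wt\rho,\wt\jmath\,)\in\CE_\term(0,T)$, so the definition of $\wt\calD^T_{0,F}$ is in force with its finite branch.

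Second, I would carry out the pointwise-in-time minimization~\eqref{ineq:ha:D-to-cell-pb}: for almost every $t$, the configuration $\bigl(u^{\e_n,F}(t),j^{\e_n}(t)\bigr)$ is admissible in the infimum defining the $\e_n$-level cell problem with activities $\tfrac12 k^{\e_n,F}_{\sfx\sfy}$, prescribed boundary data $u^{\e_n,F}|_{\nodes_\term}$, and prescribed divergence $\odiv j^{\e_n}$. Using~\eqref{eqdef:ha:k0} we have $k^{\e_n,F}_{\sfx\sfy}\to k^F_{\sfx\sfy}$, and by Lemma~\ref{l:ha:compactness} the boundary traces converge uniformly while the divergences $\odiv j^{\e_n}(\sfa,\cdot),\odiv j^{\e_n}(\sfb,\cdot)$ converge narrowly to $\pm\wt\jmath_{\sfa\sfb}$ and $\odiv j^{\e_n}(\nodes_\fast,\cdot)\to 0$. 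The next step is a $\Gamma$-liminf statement: the map sending $(j,u)$-data and activities to the value of the cell problem $\scrN[\nodes,\nodes_\term,k]$ is jointly lower semicontinuous under these convergences. This is where I would invoke Lemma~\ref{l:G-lsc}: writing the integrand via $\CCs$ as in~\eqref{eq:ha:D-G} and viewing the time-integral as an integral of $\CCs$ against a $\calZ$-valued measure with the convergence $k^{\e_n,F}\to k^F$ in $L^1$, the lower-semicontinuity part~\ref{l:G-lsc:part1} of Lemma~\ref{l:G-lsc} (in the multi-edge form~\eqref{eq:l:G-lsc:Z}) delivers
\[
\liminf_{n\to\infty}\calD^T_{\e_n,F}(\rho^{\e_n},j^{\e_n})
\geq \int_0^T \scrN[\nodes,\nodes_\term,k^F]\bigl(u^F(t),\wt\jmath_{\sfa\sfb}(t)\bigr)\dx t,
\]
after discarding the (nonnegative, by convexity and the constraint structure) contribution of the fast nodes and using that the limiting divergence on $\nodes_\fast$ vanishes; the boundedness of $j^{\e_n}_{\sfx\sfy}$ in $L^1$ from Lemma~\ref{l:ha:compactness}\ref{l:ha:compactness:j} ensures the limit fluxes are genuine $L^1$ functions so that $\wt\jmath$ is absolutely continuous in time.

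Third, I would apply Proposition~\ref{prop:ha:effectiveN} to identify $\scrN[\nodes,\nodes_\term,k^F](u^F,s)=\CCs(s,u^F_\sfa,u^F_\sfb;\capacity^F_{\sfa\sfb})$ pointwise, which converts the right-hand side into $\int_0^T\CCs(\wt\jmath_{\sfa\sfb},u^F_\sfa,u^F_\sfb;\capacity^F_{\sfa\sfb})\dx t = \wt\calD^T_{0,F}(\wt\rho,\wt\jmath\,)$, completing the proof; one should also check via part~\ref{l:props-N:infty} of Lemma~\ref{l:props-N} that if the limit $\wt\jmath_{\sfa\sfb}$ failed to be absolutely continuous or if the traces $u^F_{\sfa},u^F_{\sfb}$ vanished where $\wt\jmath_{\sfa\sfb}\neq 0$ the left side would already be $+\infty$, so no spurious finiteness is introduced. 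The main obstacle I expect is the rigorous reduction from the frozen-$t$ cell problem to the integral lower bound while tracking the $\e_n$-dependent activities and the concentration of $\odiv j^{\e_n}$ onto $\nodes_\term$: one must justify that the infimum over $\nodes_\fast$-configurations can be passed through the $\liminf$ uniformly in $t$, which is exactly the content encoded in the measure-theoretic lower-semicontinuity of Lemma~\ref{l:G-lsc} applied to the full $\calZ$-valued measure rather than to each edge separately, and care is needed because the optimal interior profile $\ol u$ in~\eqref{eq:ha:cell-problem} need not converge — only its value does, which is precisely why the $\CCs$-based (hence convex, one-homogeneous, lsc) formulation is essential here.
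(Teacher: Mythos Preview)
Your approach is essentially the paper's: apply Lemma~\ref{l:G-lsc} in its multi-edge form~\eqref{eq:l:G-lsc:Z} to the expression~\eqref{eq:ha:D-G} (i.e.\ pass to the limit \emph{before} minimizing), verify that the limit satisfies $\odiv j=0$ on $\nodes_\fast$, and then apply Proposition~\ref{prop:ha:effectiveN} pointwise at the limit level. One claim is wrong, however: boundedness of $j^{\e_n}_{\sfx\sfy}$ in $L^1$ does \emph{not} by itself ensure that the narrow limits are $L^1$ functions---the limits are a priori only measures. The paper obtains the absolute continuity of $\wt\jmath_{\sfa\sfb}$ differently: since $u^{\e_n,F}_\sfa,u^{\e_n,F}_\sfb$ converge in $C([0,T])$ (Lemma~\ref{l:ha:compactness}\ref{l:ha:compactness:uab}), the second and third components of the contracted $\calY$-valued measure $\nu$ are Lebesgue absolutely continuous, and then part~\ref{l:G-lsc:part2} of Lemma~\ref{l:G-lsc} (which encodes exactly the observation you make at the end about part~\ref{l:props-N:infty} of Lemma~\ref{l:props-N}) forces the flux component to be absolutely continuous as well. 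So your closing remark is the correct mechanism; the earlier $L^1$-boundedness justification should be dropped.
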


\begin{proof}
We can assume without loss of generality that $\sup_n \calD_{\e_n,F}^T(\rho^{\e_n},j^{\e_n})<\infty$. By the compactness of Lemma~\ref{l:ha:compactness} we can take a subsequence along which $j^{\e_n}$, $\rho^{\e_n}$,  and $u^{\e_n,F}$ converge in the sense given by the Lemma. The limit $j$ is an $\R^\edges$-valued measure on $[0,T]$, the limit $\rho$ is a measure concentrated on $[0,T]\times \nodes_\term$; for the tilted density $u^F$ we will only be using the values on $\nodes_\term=\{\sfa,\sfb\}$, given by $u^F_\sfx(t) = \ee^{F_\sfx} \dx\rho/\dx\pi^0(t,\sfx)$ for $\sfx\in\{\sfa,\sfb\}$. Again we suppress the subscript $n$ from $\e_n$.
As discussed in Section~\ref{ss:ha:contracted-CE}, by defining $(\wt\rho,\wt\jmath\,)$ as in~\eqref{eqdef:ha:contracted-CE} we have $(\wt\rho,\wt\jmath\,)\in \CE_\term(0,T)$.

Following the discussion in Section~\ref{ss:lsc-G} we define the Hilbert space $\calZ := H^{-1}(0,1)^\edges \times \R^\nodes $ and the $\calZ$-valued measure $\mu^\e$ on $[0,T]$ given by
\[
\dual \varphi {\mu^\e}
:= \int_0^T \pra[\bigg]{\;
\sum_{\sfx\sfy\in\edges} \dual{\varphi_{\sfx\sfy}^1(t)}\bONE j^\e_{\sfx\sfy}(t)
  + \sum_{\sfx\in\nodes} \varphi^2_{\sfx}(t)u^{\e,F}_{\sfx}(t) 
}\dx t
\]
for all $\varphi \in C_b\bra*{[0,T];H^1_0(0,1)^\edges \times \R^\nodes}$.
The {compactness} given in Lemma~\ref{l:ha:compactness} implies that $\mu^\e$ converges narrowly to a limit $\mu\in \calM([0,T];\calZ)$. Note that while $\mu^\e$ is Lebesgue absolutely continuous in $t$, the limit $\mu$ need not be.

For each $\sfx\sfy\in\edges$ we have $k_{\sfx\sfy}^{\e,F}\to k_{\sfx\sfy}^F$. 
By applying Lemma~\ref{l:G-lsc} with $A=[0,T]$  we find that 
\begin{equation}
\label{ineq:ha:compactness:1}
\liminf_{\e\to0} \calD_{\e,F}^T (\rho^\e,j^\e)
\geq
\int\limits_{[0,T]} \sum_{\sfx\sfy\in \edges} 
\CCs\bra*{\frac{\dx\mu}{\dx |\mu|}(t); \tfrac{1}{2}k^{F}_{\sfx\sfy}} |\mu|(\dx{t}).
\end{equation}
Note that we explicitly include the boundary points $0$ and $T$ in the integral above, because we have narrow convergence on $[0,T]$, not necessarily on $(0,T)$; for instance, initial boundary layers may lead to concentration of $|\mu|$ at $t=0$ (see Remark~\ref{rem:well-preparedness-initial-data}).

We next show that we can contract the sum above using Proposition~\ref{prop:ha:effectiveN}.
Fix $\psi\in H^1_0(0,1)$ with $\int_0^1 \psi = 1$.
Given a $\varphi\in C^1_c((0,T);\R^\nodes)$, define $\eta_{\sfx}\in C_c^1((0,T);H^1_0(0,1))$ by $\eta_{\sfx}(t) := \psi \varphi_{\sfx}(t)$ for each $\sfx\in \nodes$. We then calculate
\begin{align*}
\int_0^T \sum_{\sfx\sfy\in\edges} 
\dual{\ona_{\sfx\sfy} \eta(t)}{\mu^1_{\sfx\sfy}(\dx t)}
&= \lim_{\e\to0} 
\int_0^T \sum_{\sfx\sfy\in\edges} 
\dual{\ona_{\sfx\sfy} \eta(t)}{\bONE}j^\e_{\sfx\sfy}(t)\dx t
= \lim_{\e\to0} 
\int_0^T \sum_{\sfx\sfy\in\edges} 
 \ona_{\sfx\sfy} \varphi(t)j^\e_{\sfx\sfy}(t)\dx t\\
&= -\lim_{\e\to0} 
\int_0^T \sum_{\sfx\in\nodes} 
 \partial_t\varphi_\sfx(t)\rho^\e_{\sfx}(t)\dx t
= \int_0^T \sum_{\sfx\in \nodes} 
 \partial_t\varphi_\sfx(t)\rho_{\sfx}(t)\dx t\\
 &= \int_0^T \sum_{\sfx\in \nodes_\term} 
 \partial_t\varphi_\sfx(t)\rho_{\sfx}(t)\dx t.
\end{align*}
Therefore the measure $\odiv \mu^1$ vanishes on $(0,T)\times \nodes_\fast$ and equals $-\partial_t \rho$ in the sense of distributions on $(0,T)\times \nodes$. Define the $\calY$-valued measure $\nu$ on $[0,T]$ by
\[
\nu(\dx t) :=  \bra*{  \odiv\bra*{\frac{\dx\mu^1}{\dx|\mu|}}(t,\sfa), 
\frac{\dx\mu^2_{\sfa}}{\dx|\mu|}(t), \frac{\dx\mu_{\sfb}^2}{\dx|\mu|}(t)}|\mu|(\dx t).
\]
We then calculate
\begin{align*}
\liminf_{\e\to0} \calD_{\e,F}^T (\rho^\e,j^\e)\quad
&\leftstackrel{\eqref{ineq:ha:compactness:1}} \geq	 \int\limits_{[0,T]} \sum_{\sfx\sfy\in \edges} 
\CCs\bra*{\frac{\dx\mu}{\dx |\mu|}(t); \tfrac12 k^{F}_{\sfx\sfy}} |\mu|(\dx{t})\\
&\geq 
 \int\limits_{(0,T)} \sum_{\sfx\sfy\in \edges} 
\CCs\bra*{\frac{\dx\mu}{\dx |\mu|}(t);  \tfrac12 k^{F}_{\sfx\sfy}} |\mu|(\dx{t})\\
&\leftstackrel{\text{Prop.~\ref{prop:ha:effectiveN}}}= 
\int\limits_{(0,T)} \CCs\bra*{\frac{\dx\nu^1}{\dx|\nu|}(t), 
\frac{\dx\nu^2}{\dx|\nu|}(t), \frac{\dx\nu^3}{\dx|\nu|}(t); \capacity_{\sfa\sfb}^F}|\nu|(\dx t).
\end{align*}
Finally, the convergence of $u^{\e,F}_{\sfa,\sfb}$ in $C([0,T])$ implies that $\mu^2_{\sfa}(\dx t) = \nu^2(\dx t) = u^{F}_{\sfa}(t)\dx t$, and similarly for $\nu^3$. Therefore $\nu^{2,3}$ are Lebesgue absolutely continuous on $[0,T]$, and by part~\ref{l:G-lsc:part2} of Lemma~\ref{l:G-lsc} the same is true for $\nu^1$. Writing $\nu^1 (\dx t)=: \wt\jmath_{\sfa\sfb}\dx t$ 
as in~\eqref{eqdef:ha:contracted-CE} we then have 
\[
\liminf_{\e\to0} \calD_{\e,F}^T (\rho^\e,j^\e)
\geq
\int\limits_{(0,T)}\CCs\bra*{\wt\jmath_{\sfa\sfb}(t), 
u^F_\sfa(t), u^F_\sfb(t); \capacity_{\sfa\sfb}^F}\dx t
= \calD_{0,F}^T(\wt\rho,\wt\jmath\,).
\qedhere
\]
\end{proof}

\subsection{Main result and discussion}\label{ss:ha:EDP:discussion}

With this, we can conclude the EDP convergence statement. 

\begin{theorem}[EDP convergence for two-terminal networks]\label{thm:ha:capacity}
The gradient system with tilting $(\nodes,\edges,\gnabla,\calE_\e,\sfF_{\mathrm{Pot}},\calR_\e)$ 
as described in Section~\ref{ss:ha:setup} EDP-converges to the contracted gradient system with tilting
$(\nodes_{\term},\edges_{\term},\gnabla,\wt\calE,\sfF_{\mathrm{Pot}},\wt\calR)$, where for $F\in \sfF_{\mathrm{Pot}}$ the dissipation potential is defined as 
\begin{equation}\label{eq:ha:effective-R}
	\wt \calR(\wt\rho,\wt\jmath;F) =  
	\begin{cases}
	\sfC\bra*{\wt\jmath_{\sfa\sfb} \middle| \,\capacity^F_{\sfa\sfb} \sqrt{u^F_{\sfa} u^F_{\sfb}}} & \text{if }(\wt\rho,\wt\jmath)\in \CE_\term(0,T),\\
	+\infty &\text{otherwise}.
	\end{cases}
\end{equation}
\end{theorem}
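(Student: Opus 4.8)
The plan is to follow the established template for EDP-convergence with contracted limiting continuity equation (Definition~\ref{defn:EDP-convergence:reduce}) and its tilted version (Definition~\ref{defn:EDP-tilting}), assembling the three ingredients that have already been proved in this section. First I would verify the energy part: $\calE_\e = \calH(\cdot|\pi^\e)$ $\Gamma$-converges to $\calE_0 = \calH(\cdot|\pi^0)$ with respect to the narrow topology on $\ProbMeas(\nodes)$, which follows from $\pi^\e \to \pi^0$ (as recorded in Section~\ref{ss:ha:contracted-CE}), and since $\supp\pi^0 = \nodes_\term$ the limiting energy has the required restricted representation $\calE_0(\rho) = \wt\calE(\rho|_{\nodes_\term})$ if $\rho(\nodes_\fast) = 0$ and $+\infty$ otherwise. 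Adding the potential tilt $\calF^F_\Pot$ preserves $\Gamma$-convergence since $\calF^F_\Pot$ is continuous, so $\calE_{\e,F} \xrightarrow{\Gamma} \calE_{0,F}$ for every $F\in\sfF_{\mathrm{Pot}}$.

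Next I would assemble the compactness and lower-bound statements: Lemma~\ref{l:ha:compactness} gives that any sequence $(\rho^{\e_n},j^{\e_n})\in\CE(0,T)$ with bounded $\calE_{\e_n,F}(\rho^{\e_n}(0)) + \calD^T_{\e_n,F}(\rho^{\e_n},j^{\e_n})$ has a subsequence converging in $\CE(0,T)$ to a limit $(\rho,j)$ with $\rho$ supported on $\nodes_\term$, $\odiv j$ vanishing on $\nodes_\fast$, and $u^{\e_n,F}_{\sfa,\sfb}\to u^F_{\sfa,\sfb}$ in $C([0,T])$. The associated contracted pair $(\wt\rho,\wt\jmath)$ defined through~\eqref{eqdef:ha:contracted-CE} then lies in $\CE_\term(0,T)$. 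The lower-bound lemma, together with Proposition~\ref{prop:ha:effectiveN}, gives $\liminf_n \calD^T_{\e_n,F}(\rho^{\e_n},j^{\e_n}) \geq \wt\calD^T_{0,F}(\wt\rho,\wt\jmath)$, where
\[
\wt\calD^T_{0,F}(\wt\rho,\wt\jmath) = \int_0^T \CCs\bra*{\wt\jmath_{\sfa\sfb}, u^F_\sfa, u^F_\sfb; \capacity^F_{\sfa\sfb}}\dx t
\]
whenever $(\wt\rho,\wt\jmath)\in\CE_\term(0,T)$. This verifies points~\ref{defn:EDP-convergence:reduce:sequence} of Definition~\ref{defn:EDP-convergence:reduce} with $\calD^T_0(\rho,j) := \wt\calD^T_{0,F}(\wt\rho,\wt\jmath)$.

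The remaining task — and the only genuinely new computation — is to identify $\wt\calD^T_{0,F}$ with the dissipation functional built from the claimed $\wt\calR$ in~\eqref{eq:ha:effective-R} and its dual, i.e.\ to show
\[
\int_0^T\CCs\bra*{\wt\jmath_{\sfa\sfb}, u^F_\sfa, u^F_\sfb; \capacity^F_{\sfa\sfb}}\dx t
= \int_0^T\Bigl[\wt\calR(\wt\rho,\wt\jmath;F) + \wt\calR^*\bigl(\wt\rho, -\gnabla\rmD\wt\calE(\wt\rho); F\bigr)\Bigr]\dx t.
\]
Here I would invoke part~\ref{l:props-N:char-const-j} of Lemma~\ref{l:props-N}: since $\capacity^F_{\sfa\sfb}$ is a positive constant, the identity~\eqref{eqdef:N:explicit} gives $\CCs(\wt\jmath_{\sfa\sfb}, u^F_\sfa, u^F_\sfb; \capacity^F_{\sfa\sfb}) = \sigma\,[\sfC(\wt\jmath_{\sfa\sfb}/\sigma) + \sfC^*(\log(u^F_\sfb/u^F_\sfa))]$ with $\sigma = \capacity^F_{\sfa\sfb}\sqrt{u^F_\sfa u^F_\sfb}$. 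The first term is exactly $\sfC(\wt\jmath_{\sfa\sfb}\,|\,\capacity^F_{\sfa\sfb}\sqrt{u^F_\sfa u^F_\sfb}) = \wt\calR(\wt\rho,\wt\jmath;F)$. For the second term one computes $\gnabla\rmD\wt\calE(\wt\rho)_{\sfa\sfb} = \log u_\sfb - \log u_\sfa$, and after adding the tilt $\calF^F_\Pot$ (so that $\rmD(\wt\calE+\calF^F_\Pot)$ shifts $\log u$ to $\log u^F = \log u + F$) one gets $-\gnabla\rmD(\wt\calE+\calF^F_\Pot)(\wt\rho)_{\sfa\sfb} = \log(u^F_\sfa/u^F_\sfb)$, whence $\sigma\,\sfC^*(\log(u^F_\sfb/u^F_\sfa)) = \wt\calR^*(\wt\rho, -\gnabla\rmD(\wt\calE+\calF^F_\Pot)(\wt\rho); F)$ for the natural choice $\wt\calR^*(\wt\rho,\Xi;F) = \capacity^F_{\sfa\sfb}\sqrt{u^F_\sfa u^F_\sfb}\,\sfC^*(\Xi)$. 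The case where $u^F_\sfa$ or $u^F_\sfb$ vanishes is handled by the alternative representation~\eqref{eqdef:N:explicit:rigrous}, as in the Kramers argument, keeping all expressions rigorous. I expect the main obstacle to be bookkeeping rather than conceptual: ensuring that the formal expression $\gnabla\rmD\wt\calE$ is consistently replaced by the rigorous Hellinger-type reformulation on the set where densities vanish, and that the well-preparedness of the initial data (Remark~\ref{rem:well-preparedness-initial-data}) is correctly transferred so that the full EDP-solution-to-EDP-solution conclusion of Lemma~\ref{l:solns-converge-to-solns} applies; both are routine given the machinery already set up, but require care.
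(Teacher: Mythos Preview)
Your proposal is correct and follows essentially the same approach as the paper: the theorem is stated as the conclusion of the assembled ingredients (energy $\Gamma$-convergence in Section~\ref{ss:ha:contracted-CE}, compactness in Lemma~\ref{l:ha:compactness}, and the lower bound via Proposition~\ref{prop:ha:effectiveN}), with the identification of $\wt\calD^T_{0,F}$ as an $\wt\calR+\wt\calR^*$ structure left implicit in the paper but made explicit by you via Lemma~\ref{l:props-N}. The only minor remark is that your bookkeeping concern about well-preparedness and Lemma~\ref{l:solns-converge-to-solns} goes beyond what is needed for the bare EDP-convergence statement of Definition~\ref{defn:EDP-convergence:reduce}, which is what the theorem asserts.
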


This theorem shows that gradient structures with dissipation potentials based on the functions $\sfC$-$\sfC^*$ 
are stable under minimization over the values at intermediate nodes; effectively this theorem gives a far-reaching generalization of  the simple series and parallel laws of Corollary~\ref{cor:N:series} and~\ref{cor:N:parallel}.

The use of potential theory (capacities) and the use of electrical networks 
to analyze and represent Markov chains on graphs is a classical topic, see for instance 
\cite{DoyleSnell1984}, \cite[\S 9.4]{LevinPeres2017}, \cite[\S 2.3]{LyonsPeres2016} and \cite[\S 1.3]{Grimmett2018}. 
This observation gives fundamental interconnections among Markov chains, linear algebra, graph theory and physics, and has a long history~\cite{Ohm1827,Kennelly1899,Kron1939,Truemper1992,Ziegler1995,CurtisIngermanMorrow1998};
see also~\cite{DorflerBullo2013} for more recent applications in electrical engineering.

The specific role of the capacity as defined in~\eqref{eqdef:effective:capacity} emerges 
also as the effective conductivity
in the investigation of metastability in discrete stochastic systems~\cite{BovierEckhoffGayrardKlein2002,LandimMisturiniTsunoda2015,BovierDenHollander16,SchlichtingSlowik2019}.

The possible connection of the capacity as defined in~\eqref{eqdef:effective:capacity} and the infimum~\eqref{eq:ha:cell-problem} becomes more apparent
by considering the Euler-Lagrange equation for $h$, 
which in this case is the weighted graph-Laplacian associated to the conductivities $\set{k_{\sfx\sfy}^F}_{\sfx\sfy\in \edges}$ defined by 
\begin{align}\label{eq:ha:harmonic}
	- \gdiv (k^F\gnabla h)(\sfx) &= 0 \quad \text{ for } \sfx \in \nodes\setminus \nodes_\term ,
	\quad\text{and}\quad h_\sfa=1, h_{\sfb}=0 .
\end{align}
In the electrical-network interpretation, the edges in the network are resistors, and the solution~$h$ is the voltage at points of  $\nodes$ 
driven through an applied voltage difference of one volt at the terminal nodes $\nodes_{\term}=\set{\sfa,\sfb}$. 
The resulting flux, or  electric current, is $j^F_{\sfx\sfy} = k^F_{\sfx\sfy}\gnabla h_{\sfx\sfy}$ 
and by construction it is divergence-free in $\nodes\setminus \nodes_{\term}$. 
Hence $\capacity^F(\sfa,\sfb)$ can be also interpreted as ($1/2$ times)  the rate of dissipation of  energy of the driven system.

The infimum $\scrN$ in~\eqref{eq:ha:cell-problem} has a similar interpretation.
The values $u|_{\nodes_\term} = \set{u_\sfa,u_\sfb}$ act as a Dirichlet boundary condition of the system.
The connection is immediate for $s=0$, where the minimum in $j$ is obtained for $j\equiv 0$ and we obtain by recalling~\eqref{eqdef:N:explicit:rigrous} and \eqref{eqdef:effective:capacity} that
\begin{align*}
	\scrN[\nodes,\nodes_{\term},k^F]\bra*{(u_\sfa,u_\sfb),(0,0)} 
	&= \inf_{u\in \calM(\nodes\setminus\nodes_{\term})}
		 {\sum_{\sfx\sfy\in \edges} k_{\sfx\sfy}^F  \bra*{\sqrt{u_\sfx}-\sqrt{u_\sfy}}^2 } \\
	&= \inf\set[\bigg]{\sum_{\sfx\sfy\in \edges} k_{\sfx\sfy}^F \abs{\gnabla h_{\sfx\sfy}}^2
		\bigg|  h: \nodes \to \R, h_\sfa=\sqrt{u_{\sfa}} , h_\sfb=\sqrt{u_{\sfb}}}\\
	&= 2\bra*{\sqrt{u_\sfa}-\sqrt{u_\sfb}}^2 \capacity^F\bra*{\sfa,\sfb} ,
\end{align*}
However, as a crucial difference, the cell formula~\eqref{eq:ha:cell-problem} additionally forces  an in- and outflow on the system by the divergence constraint
$\gdiv j|_{\nodes_\term}=s$. 
Hence the functional $\scrN$ measures a dynamic dissipation of the system 
simultaneously forced by Dirichlet and flux boundary conditions.
In particular, the joint minimizers $(u^*,j^*)$ in~\eqref{eqdef:ha:cell-formula} are in general 
not coupled through an identity such as~$j^*_{\sfx\sfy} = k^F_{\sfx\sfy} \bra{\gnabla\sqrt{u^*}}_{\sfx\sfy}$, 
as one could conjecture from the solution to~\eqref{eq:ha:harmonic}. 
Instead, by inspection of the associated Euler-Lagrange equation for the minimization problem~\eqref{eqdef:ha:cell-formula},
the pair $(u^*,j^*)$ satisfy a coupled system involving nonlinear graph Laplacians. 
We leave the study of the resulting nonlinear potential theory for the future, and limit ourselves to the proof of the `replacement lemma' Proposition~\ref{prop:ha:effectiveN}.

\bigskip

As a final conclusion, we consider as an application of Theorem~\ref{thm:ha:capacity} to Example~\ref{ex:N-chain:illustration}. 

\begin{example}[Effective conductance of an $N$-chain]
From the identity~\eqref{eqdef:ha:k0}, we obtain that $k_{i,i+1}^0=\frac{\kappa_{i,i+1}}{2}$. 
Hence, we obtain for $F\in\sfF_{\mathrm{Pot}}$ the effective conductance
\begin{equation}\label{eqdef:kChain}
	k_{\Chain}^N := 2\bra*{\sum_{i=1}^{N-1} \frac{1}{\kappa_{i,i+1} \exp\bra*{-(F_i+F_{i+1})/2}}}^{-1} .
\end{equation}
This formula resembles a discretization of the effective conductivity~\eqref{eq:def:thin-membrane:sigma} 
obtained for the thin membrane limit in Section~\ref{s:thin-membrane}. 
Indeed, let $a\in C([0,1],(0,\infty))$ and $\ol F\in C([0,1])$ be given.
For any $N\in \N$, let $\kappa_{i,i+1}^N:= a(i/N)/(N-1)$ for $i=1,\dots, N-1$ and $F_i^N := \ol F(i/N)$ for $i=1,\dots,N$.
Then, we can pass to the limit in~\eqref{eqdef:kChain} and obtain 
\begin{equation*}
	k_{\Chain}^N \to 2\bra[\bigg]{\int_0^1 \frac{\ee^{\ol F(x)}}{a(x)} \dx{x} }^{-1} . 
\end{equation*}
Hence, the fast terminal $N$-chain and thin-membrane limit in Section~\ref{s:thin-membrane} show a very similar tilt-dependence.
\end{example}

\subsection{Proof of the capacitary cell formula}
\label{ss:ha:cell-formula}

In this section we prove Proposition~\ref{prop:ha:effectiveN}.
We first introduce a more general version of the cell formula in~\eqref{eq:ha:cell-problem}.
\begin{definition}\label{defn:ha:cell-formula}
	For an irreducible graph $(\nodes,\edges)$ with symmetric edges set $\edges$, given symmetric $k: \edges \to (0,\infty)$ and any $\nodes_{0}\subseteq \nodes$, define for $u\in \calM_{\geq 0}(\nodes_0)$ and $s\in \calM(\nodes_0)$ the 
	functional $\scrN[\nodes,\nodes_0,k]: \calM_{\geq 0}(\nodes_0)\times \calM(\nodes_0)\to \R$ by
	\begin{equation}\label{eqdef:ha:cell-formula}
		\scrN[\nodes,\nodes_0,k]\bra*{u, s} := 
		\inf_{\substack{ u\in\calM_{\geq 0}(\nodes \setminus\nodes_0)\\ j\in \calM(\edges)}}
		\set[\bigg]{\sum_{\sfx\sfy\in\edges} \CCs\bra*{j_{\sfx\sfy}; u_\sfx, u_\sfy;  \tfrac12 k_{\sfx\sfy}} \dx{t} : \gdiv j|_{\nodes_0} = s ,  \gdiv j|_{\nodes\setminus\nodes_0} = 0 }.
	\end{equation}
	Note the dual role of $u$ in the minimization above: the values of $u$ on $\nodes_0$ are given through the argument of $\scrN$, and the values of $u$ on the complement $\nodes\setminus \nodes_0$ are minimized in the infimum.
\end{definition}
At consequence of Definition~\ref{defn:ha:cell-formula} is the following localization property of the cell formula.
\begin{lemma}\label{l:ha:cell-formula:localization}
	In the setting of Definition~\ref{defn:ha:cell-formula}, if $\nodes_0 \subseteq \nodes_1 \subseteq \nodes$, then we have
	\begin{equation}\label{eq:ha:cell-formula:localization}
		\scrN[\nodes,\nodes_0,k]\bra*{u, s} = \inf_{ u\in\calM_{\geq 0}(\nodes_1\setminus \nodes_0)} \set*{ \scrN[\nodes,\nodes_1,k]\bra*{ u, s} },
	\end{equation}
	where $s$ is implicity extended by $0$ to $\nodes_1$.
\end{lemma}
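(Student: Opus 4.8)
The plan is to prove the localization identity \eqref{eq:ha:cell-formula:localization} directly from the definition \eqref{eqdef:ha:cell-formula}, by splitting the minimization over the intermediate vertices into an "inner" minimization over $\nodes_1 \setminus \nodes_0$ and an "outer" minimization over $\nodes \setminus \nodes_1$, and showing the divergence constraints are compatible with this splitting. The key observation is that in the definition of $\scrN[\nodes,\nodes_0,k](u,s)$ the infimum runs over all $u \in \calM_{\geq0}(\nodes \setminus \nodes_0)$ and all $j \in \calM(\edges)$ subject to $\gdiv j|_{\nodes_0} = s$ and $\gdiv j|_{\nodes \setminus \nodes_0} = 0$. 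Since $\nodes_0 \subseteq \nodes_1 \subseteq \nodes$, we can write $\nodes \setminus \nodes_0 = (\nodes_1 \setminus \nodes_0) \sqcup (\nodes \setminus \nodes_1)$, and correspondingly decompose the constraint $\gdiv j|_{\nodes \setminus \nodes_0} = 0$ into the pair $\gdiv j|_{\nodes_1 \setminus \nodes_0} = 0$ and $\gdiv j|_{\nodes \setminus \nodes_1} = 0$.

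First I would fix $(u_0, s)$ with $u_0 \in \calM_{\geq0}(\nodes_0)$ and $s \in \calM(\nodes_0)$, and extend $s$ by zero to a measure $\tilde s$ on $\nodes_1$; note $\tilde s|_{\nodes_1 \setminus \nodes_0} = 0$ by construction. For the "$\leq$" direction: given any admissible competitor $(u_1, j)$ for the right-hand side — i.e.\ a choice of values $u$ on $\nodes_1 \setminus \nodes_0$ together with an inner-minimizing pair defining $\scrN[\nodes,\nodes_1,k](u, \tilde s)$ — the pair $(u|_{\nodes \setminus \nodes_0}, j)$ obtained by concatenating the prescribed values on $\nodes_1 \setminus \nodes_0$ with the minimizer's values on $\nodes \setminus \nodes_1$ is admissible for the left-hand side, since $\gdiv j|_{\nodes_0} = \tilde s|_{\nodes_0} = s$ and $\gdiv j|_{\nodes \setminus \nodes_0}$ vanishes (being zero on both $\nodes_1 \setminus \nodes_0$ and $\nodes \setminus \nodes_1$). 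Taking infimum gives one inequality. For the "$\geq$" direction: any admissible pair $(u, j)$ for the left-hand side automatically satisfies, for \emph{any} fixed restriction $u|_{\nodes_1 \setminus \nodes_0}$, the constraints defining $\scrN[\nodes,\nodes_1,k](u|_{\nodes_0 \cup (\nodes_1\setminus\nodes_0)}, \tilde s)$ with the \emph{same} $j$ and $u|_{\nodes \setminus \nodes_1}$; hence its energy is bounded below by that inner infimum, and then by the outer infimum over $u|_{\nodes_1 \setminus \nodes_0}$. Taking the infimum over $(u,j)$ on the left then yields the reverse inequality. The two together give \eqref{eq:ha:cell-formula:localization}.

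The only point requiring a little care is that the objective functional $\sum_{\sfx\sfy\in\edges} \CCs(j_{\sfx\sfy}; u_\sfx, u_\sfy; \tfrac12 k_{\sfx\sfy})$ depends on $u$ only through its values at the two endpoints of each edge, so the "inner" and "outer" minimizations genuinely decouple in the sense that fixing $u|_{\nodes_1 \setminus \nodes_0}$ leaves a well-posed minimization over $u|_{\nodes \setminus \nodes_1}$ and $j$; this is immediate from the edge-wise structure of \eqref{eqdef:ha:cell-formula}. I expect no substantive obstacle here — the statement is essentially a bookkeeping identity for nested constrained infima — so the main (minor) task is simply to be careful about how the constraint $\gdiv j|_{\nodes \setminus \nodes_0} = 0$ splits across the disjoint decomposition $\nodes \setminus \nodes_0 = (\nodes_1 \setminus \nodes_0) \sqcup (\nodes \setminus \nodes_1)$, and to confirm that the zero-extension of $s$ is exactly what makes the constraint $\gdiv j|_{\nodes_1 \setminus \nodes_0} = 0$ appear as part of $\tilde s$. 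Finally, Proposition~\ref{prop:ha:effectiveN} will follow by applying this lemma with $\nodes_0 = \nodes_\term$ and $\nodes_1 = \nodes$ together with the explicit evaluation of $\scrN$ in terms of the capacity (proved separately in this section).
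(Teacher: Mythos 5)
Your proof is correct and matches the (unwritten) proof the paper implicitly intends; the lemma is stated without proof precisely because it is, as you say, a bookkeeping identity for nested constrained infima, and the only substantive points are the ones you flag (the constraint $\gdiv j|_{\nodes\setminus\nodes_0}=0$ splitting cleanly across $\nodes\setminus\nodes_0 = (\nodes_1\setminus\nodes_0)\sqcup(\nodes\setminus\nodes_1)$, the zero-extension of $s$ supplying the interior divergence constraint on $\nodes_1\setminus\nodes_0$, and the edge-wise structure of the objective so that the minimizations genuinely nest). One small wording nit in your ``$\geq$'' direction: you write ``for \emph{any} fixed restriction $u|_{\nodes_1\setminus\nodes_0}$,'' but you mean the specific restriction inherited from the given competitor $(u,j)$ --- that particular choice is admissible for the inner infimum, which then dominates the outer infimum; the argument is otherwise sound.
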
The localization formula~\eqref{eq:ha:cell-formula:localization} makes it feasible to calculate $\scrN$ by iterated minimization, such that in each iteration we calculate the effective conductivity for a cell problem with just one degree of freedom.
Before stating the general case, let us illustrate this situation for the contraction of a series law, which reduces to the formula~\eqref{eq:N:series}.

\begin{figure}[ht]
	\centering
	{\small
		\begin{tikzpicture}[scale=0.8, shorten >=3pt, shorten <=3pt]
			\tikzstyle{every node}=[draw,shape=circle,fill=black!15];
			\node (vv1) at (6,0) {$\quad\sfa\quad$};
			\node (vv2) at (10,0) {$\ \sfw\ $};
			\node (vv3) at (14,0) {$\quad\sfb\quad$};
			\tikzstyle{every node}=[];

			\draw[->, very thick] (vv1) to (vv2);
			\draw[->, very thick] (vv2) to (vv3);

			\node[above] at (8,0.1) {$k_{\sfa\sfw}$};
			\node[above] at (12,0.1) {$k_{\sfw\sfb}$};
			
			\tikzstyle{every node}=[draw,shape=circle,fill=black!15];
			\node (zv1) at (20,0) {$\quad\sfa\quad$};
			\node (zv3) at (24,0) {$\quad\sfb\quad$};
			\tikzstyle{every node}=[];
			
			\draw[->, very thick] (zv1) to (zv3);
			
			\node[above] at (22,0.1) {$\hat k_{\sfa\sfb}$};
			
			\draw[shorten >=0.7cm, shorten <=0.7cm,->, double, very thick] 
				(vv3) -- (zv1) node[midway,shift={(-0.1,0.5)}] {\text{\footnotesize contraction}};

		\end{tikzpicture}
	}
	\caption{The setup of the contraction of a series law}
	\label{fig:series-law-contraction}
\end{figure}
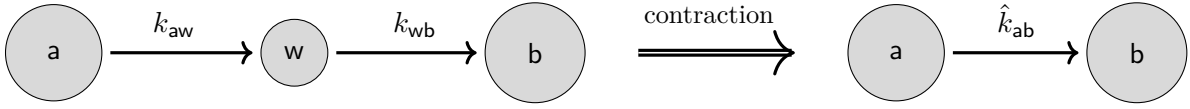
For this, we consider the graph given by $\nodes_{\Series} = \nodes_{\term} \cup \set{\sfw}$ with single directed edges $\edges_{\Series}=\set*{\sfa\sfw,\sfb\sfw}$ as in  Figure~\ref{fig:series-law-contraction}.
In this setup, the divergence constraints $\gdiv j(\sfw)=0$ and $\gdiv j(\sfa)=s=-\gdiv j(\sfb)$ enforce $j_{\sfa\sfw}=j_{\sfw\sfb}=s$. Hence, the contraction is a direct consequence of Corollary~\ref{cor:N:series} and takes the form
\begin{equation}\label{eq:ha:series}
	\inf_{u_{\sfw} \geq 0} 
\bra[\big]{ \CCs\bra*{s; u_\sfa,u_\sfw;  k_{\sfa\sfw}} 
	+ \CCs\bra*{s; u_\sfw,u_\sfb;   k_{\sfw\sfb}}} \\
= \CCs\bra*{s; u_{\sfa},u_{\sfb};  \hat k_{\sfa\sfb}} ,
\end{equation}
with
\begin{equation*}
	\hat k_{\sfa\sfb} := \bra*{\frac{1}{k_{\sfa\sfc}}+\frac{1}{k_{\sfb\sfc}}}^{-1} . 
\end{equation*}
We note that $\hat k_{\sfa\sfb}$ is exactly the effective conductance of a electrical network 
consisting of two conductors with conductances $k_{\sfa\sfw}$ and $k_{\sfb\sfw}$ in series, and in addition one can directly check that 
\begin{equation*}
	\hat k_{\sfa\sfb} = \capacity_{\Series}(\sfa,\sfb) 
	= \inf\set*{ k_{\sfa\sfw} \abs*{ \nabla h_{\sfa\sfw}}^2 +k_{\sfw\sfb} \abs*{ \nabla h_{\sfw\sfb}}^2 
		\,\middle|\, h:\nodes \to \R , h_{\sfa}=1, h_{\sfb}=0 } .
\end{equation*}
See also \cite{DoyleSnell1984}, \cite[\S 9.4]{LevinPeres2017}, \cite[\S 2.3]{LyonsPeres2016} and \cite[\S 1.3]{Grimmett2018}.
Hence, the statement~\eqref{eq:ha:series} can be read as follows:
The effective dynamic dissipation of the network $(\nodes_{\Series},\edges_{\Series},k)$ upon removing the node $\sfw$
is the same as the dynamic dissipation of the two-point network $(\nodes_{\term},\edges_{\term},\hat k)$. 

To generalize this to more complex networks than a linear chain, 
we need to obtain similar statements for more general transformations. 
The series reduction, going back to Ohm~\cite[pp.~19--20]{Ohm1827}, 
was later generalized to the $\rmY$-$\Delta$ transformation by Kennelly~\cite{Kennelly1899}: 
Three edges with a common node $\set*{\sfx\sfw, \sfy\sfw, \sfz\sfw}$ 
are replaced by a triangle consisting of the edges $\set*{\sfx\sfy, \sfy\sfz , \sfz\sfx}$. 
This process removes the node $\sfw$ and introduces new edges, possibly in parallel to existing ones.
Those emerging multi-edges can be contracted to a single edge 
by the parallel law reduction in~\eqref{eq:N:parallel} from Corollary~\ref{cor:N:parallel}.
Hence, by graph-theoretic arguments~\cite{Epifanov1966,Truemper1989}, 
it follows that the effective conductivity of any two-terminal planar two-connected graph 
can be obtained by a sequence of series, $\rmY$-$\Delta$ and parallel reduction steps. 

Instead of providing the result for the $\rmY$-$\Delta$ transformation, 
we will directly consider the general \emph{star-mesh}~\cite{Kron1939,Shew1947} or \emph{star-clique}~\cite[\S 2.11 Ex.~(2.69)]{LyonsPeres2016}, transformation:
A node $\sfw \in \nodes\setminus \nodes_{\term}$ with neighbours $\sfN_{\sfw}\subseteq \nodes$ 
is removed and between each pair of edges in $\sfN_{\sfw}$ an edge with a specific conductance is introduced. 
 
We show this statement on the level of the dynamic dissipation functional $\scrN$ 
from Definition~\ref{defn:ha:cell-formula} in the following Lemma~\ref{l:ha:star-mesh:local}, 
which results in a general reduction step in Proposition~\ref{prop:ha:EffCond:step}.

To formalize the setting, 
let $\sfN_\sfw = \set{\sfx \in \nodes: k_{\sfx\sfw} >0}$ be the neighbours of $\sfw$ 
and also write $\overline \sfN_\sfw := \sfN_\sfw \cup  \sfw$. 
We are going to restrict the optimization problem to the subgraph induced by~$\sfN_{\sfw}$. 
For the general case, we work with double directed edges (cf.~Remark~\ref{rem:edges-single-double}) and 
use the decomposition of the edge set induced by $\kappa$, 
\begin{equation}\label{eqdef:ha:cell-formula:edge-decomp}
	\edges_\sfw := \set*{ \sfx\sfy\in \overline\sfN_\sfw \times \overline\sfN_{\sfw}: \sfx\ne\sfy} 
	= \sfM_\sfw \mathop{\dot\cup} \sfS_\sfw 
\end{equation}
with the \emph{mesh} and \emph{star} edges given by
\begin{equation*}
\sfM_\sfw  := \set*{ \sfx\sfy\in \sfN_\sfw \times \sfN_{\sfw}: \sfx\ne\sfy} 
\qquad\text{and}\qquad
\sfS_\sfw  := \sfN_\sfw \times \sfw  \cup \sfw \times \sfN_\sfw . 
\end{equation*}
Moreover, we denote with 
\begin{equation*}
	k|_{\sfS_\sfw}(\sfx\sfy) :=
	\begin{cases}
		k_{\sfx\sfy} , & \sfx\sfy \in \sfS_\sfw ;\\
		0 , & \sfx\sfy \not\in \sfS_\sfw
	\end{cases}
\end{equation*}
the restriction of $k$ to the edges of the star $\sfS_\sfw$. 
With this preliminary notation, we obtain the star-mesh transformation as follows.
\begin{lemma}[Star-mesh identity]\label{l:ha:star-mesh:local}
  For any $u\in \calM_{\geq0}(\sfN_\sfw)$ and $s\in \calM(\sfN_\sfw)$ we have
  \begin{equation}\label{eq:ha:star-mesh:local}
  	\begin{split}
  		\scrN[\overline\sfN_\sfw,\sfN_\sfw,k|_{\sfS_\sfw}]\bra*{u,s}
  		= \scrN[\sfN_\sfw,\sfN_\sfw, \tilde k](u, s) ,
  	\end{split}
  \end{equation}
  where for $\sfx\sfy \in \sfM_\sfw$ the conductivity $\tilde k$ is given by
  \begin{equation*}
  	\tilde k_{\sfx\sfy} :=  \frac{k_{\sfx\sfw} k_{\sfy\sfw}}{\sum_{\sfz\in \nodes\setminus \sfw} k_{\sfz\sfw}} .
  \end{equation*}
\end{lemma}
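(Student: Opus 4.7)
My plan is to reduce the left-hand side of~\eqref{eq:ha:star-mesh:local} to a scalar minimization over the internal density $u_\sfw$ and then match it to the mesh dissipation via a hyperbolic Schur-complement (Kron reduction) identity. First, on the star graph $(\overline\sfN_\sfw,\sfS_\sfw)$ the combined divergence constraints $\gdiv j(\sfw) = 0$ and $\gdiv j|_{\sfN_\sfw} = s$ uniquely determine the skew-symmetric flux $j_{\sfx\sfw} = -s_\sfx$, so that
\[
\scrN[\overline\sfN_\sfw, \sfN_\sfw, k|_{\sfS_\sfw}](u,s) = 2 \inf_{u_\sfw \ge 0}\, \sum_{\sfx \in \sfN_\sfw} \CCs\bigl(s_\sfx;\, u_\sfx,\, u_\sfw;\, \tfrac12 k_{\sfx\sfw}\bigr),
\]
with the factor~$2$ reflecting the double-directed edge convention.

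I would next apply the dual representation from Lemma~\ref{l:props-N}.\ref{l:props-N:dual-char-1}, exchanging $\inf_{u_\sfw}$ with $\sup_\zeta$ via minimax (justified by convexity of $\CCs$ in $(\alpha,\beta)$ and concavity in $\zeta$). The inner minimization in $\sqrt{u_\sfw}$ becomes an explicit quadratic problem whose optimum is the weighted mean $\sqrt{u_\sfw^*} = K^{-1}\sum_\sfx k_{\sfx\sfw}\sqrt{u_\sfx}\cosh(\zeta_\sfx/2)$ (with $K = \sum_\sfz k_{\sfz\sfw}$), giving
\[
\scrN[\overline\sfN_\sfw,\sfN_\sfw,k|_{\sfS_\sfw}](u,s) = 2\sup_\zeta\!\left[-\sum_\sfx \zeta_\sfx s_\sfx + \sum_\sfx k_{\sfx\sfw} u_\sfx - \tfrac{1}{K}\Bigl({\textstyle\sum_\sfx}\, k_{\sfx\sfw}\sqrt{u_\sfx}\cosh(\zeta_\sfx/2)\Bigr)^{2}\right].
\]
An analogous treatment of the mesh side, using Lagrange multipliers $\lambda_\sfx$ for the divergence constraint and optimality in $\tilde j$ to force $\zeta_{\sfx\sfy} = \lambda_\sfy - \lambda_\sfx$, yields
\[
\scrN[\sfN_\sfw, \sfN_\sfw, \tilde k](u,s) = \sup_\lambda\!\left[-\sum_\sfx \lambda_\sfx s_\sfx + \sum_{\sfx\ne\sfy} \tilde k_{\sfx\sfy}\bigl(u_\sfx + u_\sfy - 2\sqrt{u_\sfx u_\sfy}\cosh\bigl(\tfrac{\lambda_\sfx - \lambda_\sfy}{2}\bigr)\bigr)\right].
\]

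The crux is to verify that, under the parametrization $\zeta_\sfx = \lambda_\sfw^* - \lambda_\sfx$ with $\lambda_\sfw^*$ the optimal internal potential, these two dual expressions coincide at the specific choice $\tilde k_{\sfx\sfy} = k_{\sfx\sfw} k_{\sfy\sfw}/K$. The key tools are the hyperbolic addition formula $\cosh(\tfrac{\lambda_\sfx - \lambda_\sfy}{2}) = \cosh(\tfrac{\lambda_\sfw^* - \lambda_\sfx}{2})\cosh(\tfrac{\lambda_\sfw^* - \lambda_\sfy}{2}) - \sinh(\tfrac{\lambda_\sfw^* - \lambda_\sfx}{2})\sinh(\tfrac{\lambda_\sfw^* - \lambda_\sfy}{2})$ and the separable structure $\tilde k_{\sfx\sfy} = k_{\sfx\sfw}k_{\sfy\sfw}/K$, which together expand the mesh quadratic form into a rank-one contraction matching the star expression. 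A convenient sanity check is the case $\lambda \equiv 0$, which reduces to the classical Dirichlet-form star--mesh identity $\sum_{\sfx \ne \sfy}\tilde k_{\sfx\sfy}(\sqrt{u_\sfx} - \sqrt{u_\sfy})^2 = 2\bigl(\sum_\sfx k_{\sfx\sfw}u_\sfx - K^{-1}\bigl(\sum_\sfx k_{\sfx\sfw}\sqrt{u_\sfx}\bigr)^2\bigr)$ and which I verified directly.

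The main obstacle is that $\tilde k_{\sfx\sfy}$ arises as a genuine Kron reduction (Schur complement) of the star Laplacian and not as a combination of series and parallel reductions, so the series and parallel laws (Corollaries~\ref{cor:N:series} and~\ref{cor:N:parallel}) alone do not yield the identity---one must verify the full hyperbolic Kron-reduction statement algebraically. I expect this algebraic matching, which requires careful bookkeeping of the cross terms $\sqrt{u_\sfx u_\sfy}$ that emerge from expanding the squared sum on the star side, to be the most delicate step of the proof.
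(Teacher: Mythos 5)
Your proposal follows essentially the same route as the paper's proof: reduce both sides to a scalar optimization (over the internal density $u_\sfw$ on the star side, over a potential on the mesh side), pass each through the dual representation of Lemma~\ref{l:props-N}.\ref{l:props-N:dual-char-1}, and match the two dual expressions by exploiting stationarity in $u_\sfw$. The step you flag as delicate does in fact go through: optimality in $u_\sfw$ gives $\sum_\sfx k_{\sfx\sfw}\sqrt{u_\sfx}\sinh(\zeta_\sfx/2)=0$, and expanding $\bigl(\sum_\sfx k_{\sfx\sfw}\sqrt{u_\sfx}\cosh(\zeta_\sfx/2)\bigr)^2$ via $\cosh A\cosh B=\cosh(A-B)+\sinh A\sinh B$ then makes the $\sinh$-square term drop out and reproduces exactly the mesh quadratic form $\sum_{\sfx\neq\sfy}\tilde k_{\sfx\sfy}\bigl(u_\sfx+u_\sfy-2\sqrt{u_\sfx u_\sfy}\cosh(\tfrac{\lambda_\sfx-\lambda_\sfy}{2})\bigr)$ with $\tilde k_{\sfx\sfy}=k_{\sfx\sfw}k_{\sfy\sfw}/K$, $K=\sum_\sfz k_{\sfz\sfw}$. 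The paper sidesteps this hyperbolic bookkeeping by first substituting $\xi_\sfx\mapsto\xi_\sfx+\log u_\sfx$, which turns $2\sqrt{u_\sfx u_\sfy}\cosh(\cdot)$ into a sum of pure exponentials; the $u_\sfw$-stationarity then reads $\ee^{\xi_\sfw/2}=\sum_\sfy(k_{\sfy\sfw}/K)\ee^{\xi_\sfy/2}$, and the required matching follows from the one-line linear identity $1-\ee^{\ona\xi_{\sfx\sfw}/2}=\sum_\sfy(k_{\sfy\sfw}/K)\bigl(1-\ee^{\ona\xi_{\sfx\sfy}/2}\bigr)$. So the two proofs coincide up to this change of variables, which replaces your hyperbolic addition formula plus rank-one cancellation by a linear identity. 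Two small bookkeeping corrections: the divergence constraints with skew-symmetry give $j_{\sfx\sfw}=s_\sfx/2$ (not $-s_\sfx$), and after combining the $\sfx\sfw$ and $\sfw\sfx$ contributions the reduced star problem is $\inf_{u_\sfw}\sum_\sfx\CCs(s_\sfx;u_\sfx,u_\sfw;k_{\sfx\sfw})$ with no overall factor of~$2$; also, ``uniquely determine the skew-symmetric flux'' glosses over the step of minimizing over the symmetric part of the flux, which vanishes by convexity and evenness of $\sfC$.
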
	
By making use of the localization property~\eqref{eq:ha:cell-formula:localization} from Lemma~\ref{l:ha:cell-formula:localization} 
and  possibly the contraction of emerging parallel edges during the star-mesh reduction, 
we arrive at the general result for removing the vertex $\sfw\in \nodes \setminus \nodes_{\term}$. 
\begin{prop}[Reduction step towards effective conductance]\label{prop:ha:EffCond:step}
	We have for all $(u,s)\in\calM_{\geq0}(\nodes_\term)\times \calM(\nodes_\term)$ 
	the identity
	\begin{equation}\label{eq:ha:cell-formula:1d:reduce}
		\inf_{u_\sfw\geq 0} \scrN[\nodes,\nodes_\term\cup \sfw,k]\bra*{u, s}
		=  \scrN[\nodes\setminus\sfw,\nodes_\term,\hat k]\bra*{u, s} ,
	\end{equation}
	where
	\begin{equation*}
		\hat k_{\sfx\sfy} := k_{\sfx\sfy}+ \frac{k_{\sfx\sfw} k_{\sfy\sfw}}{\sum_{\sfz\in \nodes\setminus \sfw} k_{\sfz\sfw}} 
		\qquad\text{for } \sfx, \sfy\in \nodes \setminus \sfw : \sfx\ne\sfy , 
	\end{equation*}
	and $s_\sfw$ is defined to be zero.
\end{prop}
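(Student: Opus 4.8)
The plan is to deduce the identity from the three structural facts already established: the localization property (Lemma~\ref{l:ha:cell-formula:localization}), the star-mesh identity (Lemma~\ref{l:ha:star-mesh:local}), and the parallel law (Corollary~\ref{cor:N:parallel}). First I would apply Lemma~\ref{l:ha:cell-formula:localization} with $\nodes_0=\nodes_\term$ and $\nodes_1=\nodes_\term\cup\{\sfw\}$ to rewrite the left-hand side of~\eqref{eq:ha:cell-formula:1d:reduce} simply as $\scrN[\nodes,\nodes_\term,k]\bra*{u,s}$. It then remains to show
\[
\scrN[\nodes,\nodes_\term,k]\bra*{u,s}=\scrN[\nodes\setminus\sfw,\nodes_\term,\hat k]\bra*{u,s},
\]
i.e.\ that removing the single interior node $\sfw$ and replacing $k$ by $\hat k$ does not change the cell-problem value. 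Throughout I would use the double-directed-edge convention of Lemma~\ref{l:ha:star-mesh:local} and the splitting of the edges touching $\sfw$ into $\sfS_\sfw$, with the remaining edges $\edges\setminus\sfS_\sfw$.

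The core step is to isolate the "star subproblem" inside $\scrN[\nodes,\nodes_\term,k]$. Since the objective in Definition~\ref{defn:ha:cell-formula} is a sum over edges and all constraints are linear, I would interchange infima and minimize first over $u_\sfw$ and the star fluxes $j|_{\sfS_\sfw}$, holding $u$ on $\sfN_\sfw$ and the non-star fluxes $j|_{\edges\setminus\sfS_\sfw}$ fixed. The constraint $\gnabla$-divergence$=0$ at $\sfw$ is internal to the star, while at each neighbour $\sfx\in\sfN_\sfw$ the divergence constraint separates into a star contribution and the (now fixed) non-star contribution; writing $s^\star_\sfx$ for the prescribed value at $\sfx$ minus the non-star contribution, conservation of flow on the star subgraph forces $\sum_{\sfx\in\sfN_\sfw}s^\star_\sfx=0$ precisely when the configuration is feasible (and both sides are $+\infty$ otherwise). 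The inner minimization is then exactly $\scrN[\overline\sfN_\sfw,\sfN_\sfw,k|_{\sfS_\sfw}]\bra*{u|_{\sfN_\sfw},s^\star}$, so Lemma~\ref{l:ha:star-mesh:local} replaces it by $\scrN[\sfN_\sfw,\sfN_\sfw,\tilde k]\bra*{u|_{\sfN_\sfw},s^\star}$, i.e.\ a minimization over mesh fluxes $j^{\sfM}$ on $\sfM_\sfw$ of $\sum_{\sfx\sfy\in\sfM_\sfw}\CCs\bra*{j^{\sfM}_{\sfx\sfy};u_\sfx,u_\sfy;\tfrac12\tilde k_{\sfx\sfy}}$ subject to the star-part divergences equalling $s^\star$.

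Substituting this back into $\scrN[\nodes,\nodes_\term,k]$ turns it into a cell-problem functional on the reduced graph $\nodes\setminus\sfw$: each edge $\sfx\sfy$ with $\sfx,\sfy\in\sfN_\sfw$ now carries both the original conductance $k_{\sfx\sfy}$ (when positive) and the new mesh conductance $\tilde k_{\sfx\sfy}$, i.e.\ two parallel edges whose fluxes are coupled only through their sum by the divergence constraint. Applying Corollary~\ref{cor:N:parallel} edge by edge merges each such pair into one edge of conductance $k_{\sfx\sfy}+\tilde k_{\sfx\sfy}$; since $\tilde k_{\sfx\sfy}=k_{\sfx\sfw}k_{\sfy\sfw}\big/\sum_{\sfz\ne\sfw}k_{\sfz\sfw}$ vanishes unless both $\sfx,\sfy$ are neighbours of $\sfw$, the resulting conductances coincide with $\hat k$ on all of $(\edges\setminus\sfS_\sfw)\cup\sfM_\sfw$. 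This identifies the value with $\scrN[\nodes\setminus\sfw,\nodes_\term,\hat k]\bra*{u,s}$ and finishes the proof.

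The main obstacle I expect is the bookkeeping in the interchange-of-infima step: making fully precise that the divergence constraint at the nodes of $\sfN_\sfw$ genuinely decouples into a star part (absorbed into the inner problem and handled by Lemma~\ref{l:ha:star-mesh:local}) and a non-star part (carried into the outer problem), and that the compatibility condition $\sum_{\sfx\in\sfN_\sfw}s^\star_\sfx=0$ required to invoke Lemma~\ref{l:ha:star-mesh:local} is automatically inherited by feasible configurations. A secondary point to handle with care is the degenerate regime, where some $u_\sfx=0$ or where a mesh edge $\sfx\sfy$ has $k_{\sfx\sfy}=0$ but $\tilde k_{\sfx\sfy}>0$: there $\CCs$ may equal $+\infty$ or be defined via its perspective-function extension, and one must check using parts~\ref{l:props-N:char-const-j} and~\ref{l:props-N:infty} of Lemma~\ref{l:props-N} that all the identities above, including in the value $+\infty$, remain valid.
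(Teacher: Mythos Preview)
Your proposal is correct and follows essentially the same route as the paper: isolate the star subproblem around $\sfw$ by freezing the surrounding data, apply the star-mesh identity (Lemma~\ref{l:ha:star-mesh:local}) to convert the star into a mesh with conductances $\tilde k$, and then merge the new mesh edges with the existing edges via the parallel law (Corollary~\ref{cor:N:parallel}). The only organizational difference is that the paper first localizes to the full neighbourhood subgraph $\overline\sfN_\sfw$ (with edge set $\edges_\sfw=\sfM_\sfw\dot\cup\sfS_\sfw$), proves the reduction $\scrN[\overline\sfN_\sfw,\sfN_\sfw,k]=\scrN[\sfN_\sfw,\sfN_\sfw,\hat k]$ there, and only then decomposes into star and mesh parts, whereas you split directly into $\sfS_\sfw$ versus $\edges\setminus\sfS_\sfw$ on the full graph; these are equivalent orderings of the same nested minimization. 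The bookkeeping issues you flag---the decoupling of the divergence constraint at nodes of $\sfN_\sfw$ and the automatic compatibility $\sum_{\sfx\in\sfN_\sfw}s^\star_\sfx=0$---are precisely those the paper handles (the latter explicitly at the start of the proof of Lemma~\ref{l:ha:star-mesh:local}).
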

Based on Proposition~\ref{prop:ha:EffCond:step}, 
we can reduce the cell formula $\scrN[\nodes,\nodes_{\term},k]$ step by step and conclude the proof of Proposition~\ref{prop:ha:effectiveN}. 
\begin{proof}[Proof of Proposition~\ref{prop:ha:effectiveN}]
	We fix a labeling of nodes $\nodes_\fast = \set{\sfw^1,\dots,\sfw^n}$. 
	Then, we iteratively apply~\eqref{eq:ha:cell-formula:1d:reduce} from Proposition~\ref{prop:ha:EffCond:step} to obtain
	\begin{align*}
		\scrN\pra*{\nodes,\nodes_\term,k^F}
		&= \inf_{u_{\sfw^n}\geq 0} \cdots \inf_{u_{\sfw^2}\geq 0}\inf_{u_{\sfw^1}\geq 0}
		\scrN\pra*{\nodes,\nodes_\term\cup \set{\sfw^1,\dots,\sfw^n},k^F} \\
		&= \inf_{u_{\sfw^n}\geq 0} \cdots \inf_{u_{\sfw^2}\geq 0}
		\scrN\pra*{\nodes\setminus\set{\sfw^1},\nodes_\term\cup \set{\sfw^2,\dots,\sfw^n},\hat k^1} \\
		&\vdotswithin{=} \\
		&= \inf_{u_{\sfw^n}\geq 0}  \scrN\pra*{\nodes\setminus\set{\sfw^1,\dots,\sfw^{n-1}},\nodes_\term\cup \set{\sfw^n},\hat k^{n-1}} \\
		&= \scrN\pra*{\nodes\setminus\set{\sfw^1,\dots,\sfw^n},\nodes_\term,\hat k^n},
	\end{align*}
	where the conductivities $\hat k^i : \bra*{\nodes \setminus\set{\sfw^1,\dots,\sfw^i}}^2 \to (0,\infty)$ for $i=1,\dots, n$ are defined iteratively by 
	\begin{equation}\label{eq:ha:Gauss_elimination}
		\hat k^{i}_{\sfx\sfy} := \hat k^{i-1}_{\sfx\sfy} 
		+ {\hat k^{i-1}_{\sfx\sfw^{i}}\hat k^{i-1}_{\sfy\sfw^{i}}}
		\bra*{\sum_{\sfz \in \nodes\setminus\set*{\sfw^1,\dots,\sfw^{i}}} \hat k^{i-1}_{\sfz\sfw^{i}}} ^{-1}
		\qquad\text{and}\qquad \hat k^0 := k^F .
	\end{equation}
	Hence, we obtain by inspecting Definition~\ref{defn:ha:cell-formula} for any $u_\sfa,u_\sfb\geq 0$ and $s\in \R$ the identity
	\begin{equation*}
		\scrN[\nodes,\nodes_{\term},k]\bra*{(u_\sfa,u_\sfb),(s,-s)}
		= \CCs\bra*{\tfrac{1}{2}s;u_\sfa,u_\sfb;\tfrac{1}{2}\hat k^n} + \CCs\bra*{-\tfrac{1}{2}s;u_\sfb,u_\sfa;\tfrac{1}{2}\hat k^n} = \CCs\bra*{s;u_\sfa,u_\sfb; \hat k^n} ,
	\end{equation*}
	where we used the symmetry of $\CCs$ in its second and third argument and the one-homogeneity in the first and fourth argument from Lemma~\ref{l:props-N}.\ref{l:props-N:one-homogeneous}.
	It is left to show that $\hat k^n \overset{!}= k^F_{\sfa\sfb}= \capacity^F(\sfa,\sfb)$. 
	First, we note that by using the unique solution of~\eqref{eq:ha:harmonic} in~\eqref{eqdef:effective:capacity} 
	and summation by parts, we obtain the identity
	\begin{equation}\label{eq:ha:weightedLap:capacity}
		- \gdiv (k^F\gnabla h)(\sfa) = \capacity^F(\sfa,\sfb). 
	\end{equation}
	It is a classical observation by Kron~\cite[\S X]{Kron1939} in electrical engineering 
	that each of the transformations~\eqref{eq:ha:Gauss_elimination} corresponds to a Gauss elimination step
	in the partial inversion of the equation~\eqref{eq:ha:harmonic} for the row 
	corresponding to the terminal nodes $\sfa\in \nodes_{\term}$. 
	In this way, the Schur complement of the system~\eqref{eq:ha:harmonic} 
	with respect to the terminal nodes $\nodes_{\term}$ is calculated, 
	which provides the identity $\hat k^n = \capacity^F(\sfa,\sfb)$ thanks to~\eqref{eq:ha:weightedLap:capacity},
	independently of the labeling of the nodes in $\nodes\setminus\nodes_{\term}$.
\end{proof}
We still need to prove the single reduction step, Proposition~\ref{prop:ha:EffCond:step}
\begin{proof}[Proof of Proposition~\ref{prop:ha:EffCond:step}]
	We will decompose several divergences and introduce 
	the restricted divergence for $j\in \calM(\edges)$ and any $\hat\sfE \subseteq \edges$ by 
	\begin{equation*}
		\gdiv_{\hat\edges} j (x) :=  \sum_{\sfx\sfy\in \hat\sfE} j_{\sfx\sfy} - \sum_{\sfy\sfx\in \hat\sfE} j_{\sfy\sfx} .
	\end{equation*}
	From the constraint in the optimization problem in the left-hand side of~\eqref{eq:ha:cell-formula:1d:reduce}, 
	we get for $\sfx\in \sfN_{\sfw}$ the decomposition
	\begin{equation*}
		\hat s_\sfx := \gdiv_{\edges_\sfw} j(\sfx) =  s_\sfx - \gdiv_{\edges\setminus \edges_\sfw} j(x) .
	\end{equation*}
	and by definition of $\sfS_\sfw$ and since $j \ll k$, we have 
	$\gdiv_{\sfS_\sfw} j(\sfw) =  \gdiv j(\sfw)  = 0$ 
	from the constraint in the optimization problem~\eqref{eq:ha:cell-formula:1d:reduce}. Hence, we set $\hat s_\sfw := 0$.

	By comparison of the definitions of $\scrN$ in the left and right-hand side of~\eqref{eq:ha:cell-formula:1d:reduce}, 
	it is left to show that
	\begin{equation}\label{eq:ha:cell-formula:sub-localization}
		\begin{split}
			\MoveEqLeft\scrN[\overline \sfN_\sfw,\sfN_\sfw,k](u,\hat s) = 
			\inf_{\substack{ u_\sfw \geq 0 \\ j \in \calM(\edges_\sfw)}} 
			\set[\bigg]{  \sum_{\sfx\sfy\in\edges_\sfw} \CCs\bra*{j_{\sfx\sfy};u_\sfx,u_\sfy; \tfrac12 k_{\sfx\sfy}} :
				\gdiv_{\edges_\sfw} j = \hat s} \\
			&\stackrel{!}{=} 
			\inf_{\hat\jmath \in \calM(\sfM_\sfw)}
			\set[\bigg]{  \sum_{\sfx\sfy\in\sfM_\sfw} \CCs\bra*{\hat \jmath_{\sfx\sfy}; u_\sfx, u_\sfy; \tfrac12 \hat k_{\sfx\sfy}} 
				: \gdiv_{\sfM_\sfw} \hat\jmath = \hat s|_{\sfN_\sfw}} 
			= \scrN[\sfN_\sfw,\sfN_\sfw, \hat k](u,\hat s) . 
		\end{split}
	\end{equation}
	We use the disjoint splitting~\eqref{eqdef:ha:cell-formula:edge-decomp}
	to decompose for fixed $u_\sfw$ and $j$ the sum on the left-hand side of~\eqref{eq:ha:cell-formula:sub-localization},
	\begin{align*}
		\MoveEqLeft \sum_{\sfx\sfy\in\edges_\sfw} \CCs\bra*{j_{\sfx\sfy}; u_\sfx, u_\sfy; \tfrac12 k_{\sfx\sfy}} \\
		&=
		\sum_{\sfx\sfy\in\sfM_\sfw}
		\CCs\bra*{j_{\sfx\sfy}; u_\sfx, u_\sfy; \tfrac12 k_{\sfx\sfy}}
		+ \sum_{\sfx \in \sfN_\sfw} 
		\CCs\bra*{j_{\sfx\sfw};u_\sfx,u_\sfw; \tfrac12 k_{\sfx\sfw}}
		+
		\sum_{\sfy \in \sfN_\sfw} 
		\CCs\bra*{j_{\sfw\sfy};u_\sfw,u_\sfy; \tfrac12 k_{\sfw\sfy}} .
	\end{align*}
	By also decomposing the divergence constraint further into
	\begin{equation*}
		\hat s = \gdiv_{\edges_\sfw} j = \gdiv_{\sfM_\sfw} j + \gdiv_{\sfS_\sfw} j ,
	\end{equation*}
	we arrive at the following nested optimization problem for the left-hand side of~\eqref{eq:ha:cell-formula:sub-localization},
	\begin{equation}\label{eq:ha:cell-formula:star-mesh-decomposition}\begin{split}
			\scrN[\overline \sfN_\sfw,\sfN_\sfw,k](u,\hat s) 
			=
			\inf_{j\in \calM(\sfM_\sfw)}
			\biggl\{
				\sum_{\sfx\sfy\in\sfM_\sfw}
				\CCs\bra*{j_{\sfx\sfy};u_\sfx,	u_\sfy; \tfrac12 k_{\sfx\sfy}}
				+ 
				\scrN[\overline\sfN_\sfw,\sfN_\sfw,k|_{\sfS_\sfw}]\bra*{u,\hat s-\gdiv_{\sfM_\sfw} j} 
			\biggr\}.
	\end{split}\end{equation}
	The inner optimization problem is supported only on the star part $\sfS_\sfw$
	and we can apply~\eqref{eq:ha:star-mesh:local} in the inner optimization problem of~\eqref{eq:ha:cell-formula:star-mesh-decomposition}.
	Using Lemma~\ref{l:ha:star-mesh:local} to replace $\scrN[\overline\sfN_\sfw,\sfN_\sfw,k|_{\sfS_\sfw}]\bra*{u,\hat s-\gdiv_{\sfM_\sfw} j}$ by $\scrN[\sfN_\sfw,\sfN_\sfw, \tilde k](u, \hat s-\gdiv_{\sfM_\sfw} j)$, and expanding the definition of the latter object, we arrive at an identity to which the parallel reduction formula~\eqref{eq:N:parallel} from Corollary~\ref{cor:N:parallel} is applicable:
	\begin{equation*}\begin{split}
			\scrN[\overline \sfN_\sfw,\sfN_\sfw,k](u,\hat s) 
			&= \begin{multlined}[t]
				\inf_{j,\hat\jmath\in \calM(\sfM_\sfw)}
				\biggl\{
				\sum_{\sfx\sfy\in\sfM_\sfw}
				\bra[\Big]{
					\CCs\bra*{j_{\sfx\sfy};u_\sfx,	u_\sfy; \tfrac12 k_{\sfx\sfy}}
					+ \CCs\bra*{\hat \jmath_{\sfx\sfy}; u_\sfx, u_\sfy; \tfrac12 \tilde k_{\sfx\sfy}}} : \\
				\gdiv_{\sfM_\sfw} j + \gdiv_{\sfM_\sfw} \hat\jmath = \hat s |_{\sfN_\sfw}
				\biggr\} 
			\end{multlined}\\
			&= \inf_{\hat \jmath \in \calM(\sfM_\sfw)} 
			\set[\bigg]{ \sum_{\sfx\sfy\in\sfM_\sfw}
				\CCs\bra*{\hat \jmath_{\sfx\sfy}; u_\sfx, u_\sfy; \tfrac12 k_{\sfx\sfy} + \tfrac12 \tilde k_{\sfx\sfy}}
				: \gdiv_{\sfM_\sfw} \hat\jmath = \hat s |_{\sfN_\sfw}} . \qedhere
	\end{split}\end{equation*}
\end{proof}
It is left to prove the localized star-mesh identity~\eqref{eq:ha:star-mesh:local}.
\begin{proof}[Proof of Lemma~\ref{l:ha:star-mesh:local}]	
	We first note that if $\sum_{\sfx\in \sfN_\sfw} s_\sfx \ne 0$ 
	then the divergence constraint on both sides of~\eqref{eq:ha:star-mesh:local} cannot be satisfied by any flux 
	and hence both sides are an infimum over an empty set, which is $+\infty$ by convention.
	
	Hence, from now on, we fix $s \in \calM(\calN_\sfw)$ such that $\sum_{\sfx\in \sfN_\sfw} s_\sfx = 0$ 
	and first expand the right-hand side of~\eqref{eq:ha:star-mesh:local} as
	\begin{equation}\label{eq:ha:cell-formula:star-optimization}
		\scrN[\overline\sfN_\sfw,\sfN_\sfw,k|_{\sfS_\sfw}]\bra*{u, s}=
		\begin{multlined}[t]
			\inf_{\substack{ u_\sfw \geq 0 \\ j \in \calM(\sfS_\sfw)}}
			\biggl\{
			\sum_{\sfx \in \sfN_\sfw} 
			\CCs\bra*{j_{\sfx\sfw};u_\sfx,u_\sfw; \tfrac12 k_{\sfx\sfw}}
			+
			\sum_{\sfy \in \sfN_\sfw} 
			\CCs\bra*{j_{\sfw\sfy};u_\sfw,u_\sfy; \tfrac12 k_{\sfw\sfy}} : \\
			\gdiv_{\sfS_\sfw} j (\sfx) = j_{\sfx\sfw} - j_{\sfw\sfx}  = s_\sfx ,\  \sfx \in \sfN_\sfw; \ \gdiv_{\sfS_\sfw} j(\sfw) = 0
			\biggr\} .
		\end{multlined}
	\end{equation}
	The divergence constraint in~\eqref{eq:ha:cell-formula:star-optimization} is satisfied by setting
	$j_{\sfx\sfw} = \tfrac12{s_\sfx} + r_{\sfx}$ and $j_{\sfw\sfx} = -\tfrac12{s_\sfx}  + r_{\sfx}$, 
	where $r_{\sfx}\in \R$ is arbitrary. 
	Hence, we arrive at the individual optimization problems for each $\sfx\in\sfN_\sfw$,
	\begin{align*}
		\MoveEqLeft\inf_{\hat\jmath_{\sfx\sfw}-\hat\jmath_{\sfw\sfx}= s_\sfx}
		\set[\big]{ \CCs\bra*{j_{\sfx\sfw};u_\sfx,u_\sfw;  \tfrac12 k_{\sfx\sfw}}
			+
			\CCs\bra*{j_{\sfw\sfx};u_\sfw,u_\sfx; \tfrac12 k_{\sfw\sfx}}} \\
		&= \inf_{r_{\sfx}\in \R}  \set[\big]{ \CCs\bra*{\tfrac12{s_\sfx}  + r_{\sfx};u_\sfx,u_\sfw; \tfrac12 k_{\sfx\sfw}}
			+\CCs\bra*{-\tfrac12{s_\sfx}  + r_{\sfx};u_\sfw,u_\sfx; \tfrac12 k_{\sfw\sfx}} } \\
		&=  \CCs\bra*{\tfrac12{s_\sfx};u_\sfx,u_\sfw; \tfrac12 k_{\sfx\sfw}}
			+\CCs\bra*{\tfrac12{s_\sfx};u_\sfx,u_\sfw; \tfrac12 k_{\sfx\sfw}} 
		= \CCs\bra*{s_\sfx;u_\sfx,u_\sfw; k_{\sfx\sfw}} ,
	\end{align*}
	where we used the symmetry and convexity properties of $\CCs$.
	This shows that the left-hand side of~\eqref{eq:ha:star-mesh:local} reduces to
	\begin{equation*}
		\scrN[\overline \sfN_\sfw,\sfN_\sfw,k](u,s) = 
		\inf_{ u_\sfw \geq 0 } 
		\sum_{\sfx\in \sfN_\sfw } \CCs\bra*{s_\sfx; u_\sfx,u_\sfw; k_{\sfx\sfw}} 
	\end{equation*}
	We similarly rewrite the divergence constraint on the right-hand side of~\eqref{eq:ha:star-mesh:local} by setting 
	\begin{equation*}
		\hat \jmath_{\sfx\sfy} =  \frac{s_\sfx - s_\sfy}{2d} + r_{\sfx\sfy} ,
	\end{equation*}
	where $d=\abs{\sfN_\sfw}$ and $r \in \calM(\sfM_\sfw)$ satisfies $\gdiv_{\sfM_\sfw} r=0$.
	Indeed, for this choice we have 
	\[
		\gdiv_{\sfM_\sfw}\hat \jmath(\sfx) = 2\sum_{\sfy\in\sfN_\sfw\setminus \sfx}\frac{s_\sfx - s_\sfy}{2d}
		 = \frac{1}{d}\sum_{\sfy\in \sfN_\sfw} \bra*{s_\sfx - s_\sfy} = s_{\sfx},
	\]
	where we used the fact that $\sum_{\sfy\in \sfN_\sfw} s_\sfy=0$. 
	In this way, we arrive at the following reformulation of the right-hand side of~\eqref{eq:ha:star-mesh:local}:
	\begin{equation}\label{eq:ha:dual-char-2}
		\scrN[\sfN_\sfw,\sfN_\sfw, \tilde k](u, s) 
		=
		\inf_{r \in \calM(\sfM_\sfw)}
		\set[\bigg]{ \sum_{\sfx\sfy\in\sfM_\sfw} 
				\CCs\bra*{\frac{s_\sfx - s_\sfy}{2d} + r_{\sfx\sfy}; u_\sfx, u_\sfy; \tfrac12 \tilde k_{\sfx\sfy}} 
			: \gdiv_{\sfM_\sfw} r =0} .
	\end{equation}
	By using~\eqref{eq:props-N:dual-char-1} from Lemma~\ref{l:props-N} and setting $f_\sfx := -{s_\sfx}/{2d}$ for $\sfx\in\sfN_\sfw$,
	we can further rewrite this as
	\begin{equation}
		\label{eq:l:ha:dual-char-2:intermediate}	
		\inf_{r\in \calM(\sfM_\sfw)} \max_{\zeta :\sfM_\sfw\to \R} \set[\bigg]{\sum_{\sfx\sfy\in\sfM_\sfw} 
			\pra*{\zeta_{\sfx\sfy} \bra[\big]{\ona f_{\sfx\sfy} +r_{\sfx\sfy}}
				+ \tilde k_{\sfx\sfy} \bra*{ u_\sfx + u_\sfy - 2 \sqrt{u_\sfx u_\sfy} \cosh\tfrac12{\zeta_{\sfx\sfy}}}}
			\ :\ \odiv_{\sfM_\sfw} r = 0
		} .
	\end{equation}
	The saddle point is achieved; for $\zeta$ this is given by~\eqref{eq:props-N:dual-char-1}, 
	and for $r$ this follows in the left-hand side of~\eqref{eq:ha:dual-char-2} from the coercivity of $\CCs$ in its first variable. 
	Since the minimization in $r$ is only constrained by the divergence condition, 
	we find by differentiating that 
	\[
		0 = \sum_{\sfx\sfy\in\sfM_\sfw} \zeta_{\sfx\sfy} \tilde r_{\sfx\sfy} 
		\qquad\text{for all $\tilde r$ such that $\odiv_{\sfM_\sfw} \tilde r=0$.}
	\]
	Using the Helmholtz decomposition of edge functions (e.g.~\cite[(3.6)]{Lim20}) 
	it follows that $\zeta$ can be written as $\zeta_{\sfx\sfy} = \ona \xi_{\sfx\sfy} + c$ 
	for a function $\xi:\sfN_\sfw\to\R$ and a constant $c\in \R$. 
	Without loss of generality we can set the constant to zero, 
	and we find that the expression in braces in~\eqref{eq:l:ha:dual-char-2:intermediate} is independent of $r$. It follows that~\eqref{eq:l:ha:dual-char-2:intermediate} equals the right-hand side of~\eqref{eq:ha:dual-char-2}. 

	In this way the statement to be proven  becomes 
	\begin{equation}
		\label{eq:l:ha:alt-char}
		\inf_{u_\sfw \geq 0} \sum_{\sfx\in \sfN_\sfw } \CCs\bra*{s_\sfx ; u_\sfx,u_\sfw; k_{\sfx\sfw}}  
		\overset{!}{=}
		\sup_{\xi: \sfN_\sfw\to \R} \sum_{\sfx\sfy\in\sfM_\sfw} \pra*{
				\frac{s_\sfx - s_\sfy}{2d} \ona \xi_{\sfx\sfy}
			+ \tilde k_{\sfx\sfy} \bra*{ u_\sfx + u_\sfy - 2 \sqrt{u_\sfx u_\sfy} \cosh\tfrac12{\ona\xi_{\sfx\sfy}}}}.
	\end{equation}
	For the proof of~\eqref{eq:l:ha:alt-char}, we first consider the case that the infimum over $u_\sfw$ is achieved at $u_\sfw=0$. 
	By part~\ref{l:props-N:est-B} of Lemma~\ref{l:props-N} we then find that $s_\sfx=0$ for all $\sfx\in \sfN_\sfw$. 
	By inspecting the resulting formula one finds that $u_\sfw=0$ only is minimal if all $u_\sfx$ are zero as well, 
	in which case both sides of~\eqref{eq:l:ha:alt-char} evaluate to zero. 
	
	We continue under the assumption that $s$ is not identically zero, 
	so that the infimum is not achieved at $u_\sfw=0$; 
	by~\eqref{eqdef:N:explicit:rigrous} we find that the infimum also is not achieved as $u_\sfw\to\infty$, 
	and therefore the infimum is achieved at a $u_\sfw\in(0,\infty)$.
	
	Applying once more part~\ref{l:props-N:est-B} of Lemma~\ref{l:props-N} and using that now $s$ is not identically equal to zero, we obtain that both sides in~\eqref{eq:l:ha:alt-char} are finite if and only if $u_{\sfx}>0$ for all $\sfx\in \sfN_\sfw$.
	
	By~\eqref{eq:props-N:dual-char-1} from Lemma~\ref{l:props-N} the left-hand side of~\eqref{eq:l:ha:alt-char} can be written as
	\begin{align}
		\notag
		&\inf_{u_\sfw\geq0} \sup_{\xi\colon \ol\sfN_\sfw \to \R}
		\sum_{\sfx\in\sfN_\sfw} \pra*{
				\ona\xi_{\sfx\sfw} s_\sfx
				+ 2 k_{\sfx\sfw} \bra*{ u_\sfx + u_\sfw - 2 \sqrt{u_\sfx u_\sfw} \cosh\tfrac12{\ona \xi_{\sfx\sfy}}}}\\
		&= \inf_{u_\sfw\geq0} \sup_{\xi\colon \ol\sfN_\sfw \to \R}
		{\sum_{\sfx\in\sfN_\sfw} \pra*{
				\bra*{\ona\xi_{\sfx\sfw}+\ona_{\sfx\sfw}\log u} s_\sfx
				+ 2 u_\sfx k_{\sfx\sfw} (1-\ee^{\ona\xi_{\sfx\sfw}/2})
				+ 2 u_\sfw k_{\sfx\sfw} (1-\ee^{-\ona\xi_{\sfx\sfw}/2}) } } ,\notag \\
		&= \inf_{u_\sfw\geq0} \sup_{\xi\colon \ol\sfN_\sfw \to \R}
			{\sum_{\sfx\in\sfN_\sfw} \pra*{
					\bra*{\xi_{\sfx}+\log u_{\sfx}} s_\sfx
					+ 2 u_\sfx k_{\sfx\sfw} (1-\ee^{\ona\xi_{\sfx\sfw}/2})
					+ 2 u_\sfw k_{\sfx\sfw} (1-\ee^{-\ona\xi_{\sfx\sfw}/2}) } } ,
		\label{eq:l:ha:alt-char:intermediate}
	\end{align}
	where we replaced $\xi_{\sfx} \mapsto \xi_{\sfx} + \log u_{\sfx}$ for $\sfx\in\ol \sfN_\sfw$ in the first step using the fact that $u>0$ on~$\ol\sfN_\sfw$, the identity $2\sqrt{ab}\cosh\bra*{\tfrac12 x+\tfrac{1}{2}\log\tfrac{a}{b}}=a \ee^{x/2} + b \ee^{-x/2}$, and also $\sum_{\sfx\in \sfN_\sfw} s_\sfx=0$ in the second step.
	
	Since the infimum is achieved at $u_\sfw\in (0,\infty)$, we find by differentiating the right-hand side of~\eqref{eq:l:ha:alt-char:intermediate} in $u_\sfw$ that 
	\[
		0 = \sum_{\sfy\in\sfN_\sfw} k_{\sfy\sfw} (1-\ee^{-\ona \xi_{\sfy\sfw}/2})
		\qquad\text{or equivalently}\qquad
		\ee^{\xi_\sfw/2}  = \sum_{\sfy\in\sfN_\sfw} \frac{k_{\sfy\sfw}}{\ol k}\ee^{\xi_\sfy/2}
		\qquad \text{with }\ol k := \sum_{\sfy\in\sfN_\sfw} k_{\sfy\sfw}.
	\]
	We then write for any $\sfx\in \sfN_\sfw$
	\begin{align*}
		1 - \ee^{\ona \xi_{\sfx\sfw}/2} 
		= 	1 - \ee^{\xi_\sfw/2}\ee^{-\xi_{\sfx}/2} 
		= 1 - \sum_{\sfy\in\sfN_\sfw} \frac{k_{\sfy\sfw}}{\ol k}\ee^{(\xi_\sfy-\xi_\sfx)/2}
		= \sum_{\sfy\in\sfN_\sfw} \frac{k_{\sfy\sfw}}{\ol k}(1-\ee^{\ona \xi_{\sfx\sfy}/2}).
	\end{align*}
	With this identity we rewrite~\eqref{eq:l:ha:alt-char:intermediate}, using the shorthand $\xi^u_\sfx:=\xi_{\sfx}+\tfrac{1}{2}\log u_{\sfx}$, as
	\begin{align*}
		\MoveEqLeft\sup_\xi 
		\set[\bigg]{\sum_{\sfx\in\sfN_\sfw} \pra[\Big]{
				\xi_{\sfx}^u s_\sfx
				+   2 u_\sfx  k_{\sfx\sfw}\sum_{\sfy\in\sfN_\sfw} \frac{k_{\sfy\sfw}}{\ol k}(1-\ee^{\ona \xi_{\sfx\sfy}/2})}
			\ : \ 
			\xi\colon \ol\sfN_\sfw \to \R}\\
		&= \sup_\xi 
		\set[\bigg]{\sum_{\sfx\sfy\in\sfM_\sfw} \pra[\Big]{
				\xi_{\sfx}^u \frac{s_\sfx}{d}
				+   2 u_\sfx  \tilde k_{\sfx\sfy}(1-\ee^{\ona \xi_{\sfx\sfy}/2})}
			\ : \ 
			\xi\colon \ol\sfN_\sfw \to \R}\\
		&= \sup_\xi 
		\set[\bigg]{\sum_{\sfx\sfy\in\sfM_\sfw} \pra[\Big]{
				\xi_{\sfx}^u \frac{s_\sfx}{d}
				+   u_\sfx  \tilde k_{\sfx\sfy}(1-\ee^{\ona \xi_{\sfx\sfy}/2})
				+   u_\sfy  \tilde k_{\sfx\sfy}(1-\ee^{-\ona \xi_{\sfx\sfy}/2})}
			\ : \ 
			\xi\colon \ol\sfN_\sfw \to \R} \\
		&= \sup_\xi 
		\set[\bigg]{\sum_{\sfx\sfy\in\sfM_\sfw} \pra[\Big]{
				\ona\xi_{\sfx\sfy}^u \frac{s_\sfx}{d}
				+   u_\sfx  \tilde k_{\sfx\sfy}(1-\ee^{\ona \xi_{\sfx\sfy}/2})
				+   u_\sfy  \tilde k_{\sfx\sfy}(1-\ee^{-\ona \xi_{\sfx\sfy}/2})}
			\ : \ 
			\xi\colon \ol\sfN_\sfw \to \R}.
	\end{align*}
	Finally, we symmetrize the first term in the sum above to
	\[
		\sum_{\sfx\sfy\in\sfM_\sfw} 
		\ona\xi^u_{\sfx\sfy} \frac{s_\sfx}{d}
		= - \sum_{\sfx,\sfy\in\sfN_\sfw} 
		(\xi^u_{\sfx}-\xi^u_\sfy) \frac{s_\sfx}{d}
		= \sum_{\sfx,\sfy\in\sfN_\sfw} 
		\gnabla \xi^u_{\sfx\sfy} \frac{s_\sfx - s_\sfy}{2d}.
	\]
	By recalling that $\xi^u_\sfx:=\xi_{\sfx}+\log u_{\sfx}$ and undoing this shift by setting $\xi_{\sfx}\mapsto \xi_{\sfx} - \log u_{\sfx}$ for $\sfx\in \sfN_\sfw$, 
	we find that ~\eqref{eq:l:ha:alt-char} indeed holds.
\end{proof}

\appendix

\addtocontents{toc}{\protect\setcounter{tocdepth}{0}}%

\newpage 

\section{Proofs for Kramers' high activation limit}

\subsection{Constant \texorpdfstring{$y$}{y}-flux for limiting measure}

\begin{lemma}[Limiting measures $\rho$ have constant $y$-flux]\label{lem:Kramers:limit:jy}
	\label{l:char:j0}
	Let $(\rho,j)= (\rho,(j^x,j^y))\in \CE(0,T)$, and assume that $\rho$ and $j^x$ are supported on $[0,T]\times \Omega\times \{a,b\}$. 
	Then $j^y$ is of the form~\eqref{char:j0}, i.e.
	\[
		j^y(t,\dxx xy) = \overline \jmath (t,\dx x) \bONE_{[a,b]}(y)\dx y
	\]
	for some $(\overline \jmath(t,\cdot))_t\in \calM(\Omega)$.
\end{lemma}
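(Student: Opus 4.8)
The plan is to use the continuity equation~\eqref{eq:Kramers:weak-form-CE} together with the structural assumption that $\rho$ and $j^x$ are supported on $[0,T]\times\Omega\times\{a,b\}$ in order to extract information about $j^y$ on the slab $\Omega\times(a,b)$ and on the two slabs $\Omega\times(\inf\Upsilon,a)$ and $\Omega\times(b,\sup\Upsilon)$. First I would test~\eqref{eq:Kramers:weak-form-CE} against functions of the special product form $\varphi(t,x,y) = \chi(t)\psi(x)\theta(y)$ with $\chi\in C_{\mathrm c}^1((0,T))$, $\psi\in C_{\mathrm c}^1(\R^d)$, and $\theta\in C_{\mathrm c}^1(\R)$. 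Since $\rho$ and $j^x$ charge only $y\in\{a,b\}$, all the $\rho$-terms and the $j^x$-terms in~\eqref{eq:Kramers:weak-form-CE} reduce to expressions involving only $\theta(a)$, $\theta(b)$, $\theta'$ being irrelevant there; only the $j^y$-term still probes $\theta'(y)$ for general $y$.

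The key step is then the following. Fix first $\theta$ supported in the open interval $(a,b)$, so that $\theta(a)=\theta(b)=0$ and hence $\psi$ is only seen by $\rho$ and $j^x$ through the values at $a,b$, which vanish; what remains of~\eqref{eq:Kramers:weak-form-CE} is
\[
\int_0^T\!\!\int_{\R^d}\!\!\int_a^b \chi(t)\psi(x)\theta'(y)\, j^y(t,\dxx xy)\dx t = 0
\]
for all such $\chi,\psi,\theta$. By a standard density argument this forces, for a.e.\ $t$ and every $\psi$, the distributional $y$-derivative of $y\mapsto \int_{\R^d}\psi(x) j^y(t,\dx x,y)$ to vanish on $(a,b)$; equivalently $j^y(t,\cdot)$ restricted to $\Omega\times(a,b)$ is of the form $\ol\jmath(t,\dx x)\otimes (\text{Lebesgue on }(a,b))$ for some $\ol\jmath(t)\in\calM(\Omega)$. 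Here one should be a little careful: the statement ``$\partial_y \mu = 0$ on an interval implies $\mu$ is a constant (in $y$) measure there'' is the measure-valued version of the elementary fact that a distribution with zero derivative is constant, and it holds for the slices $\mu_\psi(t,\cdot)$ for each test function $\psi$; one then upgrades this to a statement about the $\calM(\Omega)$-valued curve $y\mapsto j^y(t,\cdot,y)$ by the usual argument that a vector measure whose pairing with every fixed continuous $\psi$ is $y$-independent is itself $y$-independent. The time-measurability of $\ol\jmath(t)$ follows from the measurability of $j^y$ required in Definition~\ref{defn:Kramers:CE}.

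Next I would repeat the same test-function argument with $\theta$ supported in $(\inf\Upsilon,a)$ (and then in $(b,\sup\Upsilon)$). On these intervals $\rho$ and $j^x$ see neither $\theta(a)$ nor $\theta(b)$ — actually $\theta$ vanishes near $a$ and $b$ — so again only the $j^y$-term survives, and the same reasoning shows $j^y$ is $y$-independent on each of these slabs. But the no-flux boundary condition built into Definition~\ref{defn:Kramers:CE} (test functions $\varphi$ are allowed to be nonzero on $\partial\nodes$) forces the $y$-flux to vanish at $y=\inf\Upsilon$ and $y=\sup\Upsilon$; combined with $y$-independence this gives $j^y\equiv 0$ on $\Omega\times(\inf\Upsilon,a)$ and on $\Omega\times(b,\sup\Upsilon)$. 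Putting the three slabs together — and noting that the single points $y=a$ and $y=b$ carry no Lebesgue mass, so $j^y$ can charge them only via an atomic part, which I would rule out by testing with $\theta$ that is a narrow bump around $a$ and using that the $\rho$- and $j^x$-terms stay bounded while the $j^y$-term would blow up unless that atomic part vanishes — yields exactly the claimed form
\[
j^y(t,\dxx xy) = \ol\jmath(t,\dx x)\,\bONE_{[a,b]}(y)\dx y .
\]
The main obstacle is the careful measure-theoretic bookkeeping in the step ``zero $y$-derivative on an interval $\Rightarrow$ product structure'': one must handle the $\calM(\Omega)$-valued (rather than scalar) nature of the slices, confirm the joint measurability in $(t,y)$ so that $\ol\jmath$ is a genuine measurable curve in $\calM(\Omega)$, and exclude concentration of $j^y$ on the null sets $\{y=a\},\{y=b\}$; everything else is a routine unwinding of the weak formulation.
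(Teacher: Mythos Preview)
Your approach is essentially the paper's: exploit the weak continuity equation together with the support hypothesis on $\rho$ and $j^x$ to determine $j^y$ slab by slab. The paper packages this more efficiently by testing~\eqref{eq:Kramers:weak-form-CE} with the $y$-antiderivative $\Phi(t,x,y):=\int_{\inf\Upsilon}^y\varphi(t,x,y')\dx{y'}$ (extended smoothly to $C^1_{\mathrm c}$), obtaining the identity $\int\varphi\,j^y=-\int\bigl[\partial_t\Phi\,\rho+\nabla_x\Phi\cdot j^x\bigr]$, whose right-hand side depends on $\varphi$ only through $\int_{\inf\Upsilon}^a\varphi\dx y$ and $\int_{\inf\Upsilon}^b\varphi\dx y$; this gives the Lebesgue structure of $j^y$ and the vanishing outside $[a,b]$ in one stroke, without a separate atom discussion.

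Your ``narrow bump / blow-up'' argument for excluding atoms at $y=a$ is not right as stated: for a symmetric bump centred at $a$ one has $\theta'(a)=0$, and nothing diverges. The correct mechanism is simply that $\theta'(a)$ can be varied independently of $\theta(a)$ and $\theta(b)$: choose $\theta$ with $\theta(a)=\theta(b)=0$ and $\theta'(a)=1$, $\theta'(b)=0$, so that the $\rho$- and $j^x$-terms drop out while a putative atom $\alpha\,\delta_a$ contributes $\alpha$ to the $j^y$-term, forcing $\alpha=0$. With this fix your argument goes through.
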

\begin{proof}
	Write $\Upsilon = [c,d]\subset \R$. 
	Fix $\varphi\in C_{\mathrm c}^1((0,T)\times \nodes)$ and set $\Phi(t,x,y) := \int_c^y\varphi(t,x,y')\dx y'$; we extend $\Phi$ smoothly outside of $\nodes_T$ to a function in $C_{\mathrm c}^1((0,T)\times \R^{d+1})$. Then by~\eqref{eq:Kramers:weak-form-CE},
\begin{multline*}	
	\int_{\nodes_T}  \varphi(t,x,y) j^y(t,\dxx xy)\dx t
	= 
	\int_{\Omega_T} \int_\R \partial_y \Phi(t,x,y) j^y(t,\dxx xy) \dx t\\
	= - \int_{\Omega_T}\int_\R  \Bigl[ \nabla_x \Phi(t,x,y) j^x(t,\dxx xy)\dx t
	+  \partial_t \Phi(t,x,y)\rho(t,\dxx xy) \dx t\Bigr].
\end{multline*}
	
	Since $j^x$ and $\rho$ in the right-hand side above are concentrated on $\Omega_T\times \{a,b\}$, we find using $\varphi$ with $\varphi=0$ for $y\geq a$ that $j^y$ is zero for $y<a$. Similarly $j^y=0$ for $y>b$, and therefore $j^y$ is supported on $\Omega_T\times [a,b]$. From the second equality above  we also find that $\partial _y j^y$ is zero for $a<y<b$ in the sense of distributions. This establishes the structure~\eqref{char:j0}.
\end{proof}

\subsection{Rescaling}

In this section we describe the rescaling~\eqref{eqdef:hat-transformed-objects} of the continuity equation and the dissipation function $\calD_0^T$ in~\eqref{eq:FP-Kramerslimit:D:tilted}.

\subsubsection{Rescaling of the continuity equation}\label{ss:Kramers:append:rescaling:CE}

We claimed in Section~\ref{ss:Kramers-setting} that for $(\rho,j)\in \CE(0,T)$, the rescaled pair $(\hrho,\hj)$ also is an element of $\CE(0,T)$, both on $\hnodes_{\e T}$ and on $\hnodes_T$. Here we provide the details. 

The definition of $\hrho$ and $\hj$ in~\eqref{eqdef:hat-transformed-objects} implies that the two first requirements of Definition~\ref{defn:Kramers:CE} follow from those of $\rho$ and $j$. To show the third condition, first note that we have for any $\varphi\in C^1_{\mathrm c}((0,T)\times\R^d\times \R)$
\[
\int_{\hnodesT}\partial_t \varphi(t,x,z) \, \hrho(t,\dxx xz)\dx t
= 
\int_{\hnodes_{\e T}} \partial_t \varphi(t,x,z) \,\hrho(t,\dxx xz)\dx t
= 
\int_{\nodes_T} \partial_t \varphi(t,x,z_\e(y))\, \rho(t,\dx xy)\dx t,
\]
where the first identity follows since $\hrho$ is supported on $\hnodes_{\e T}$. Similarly we have
\[
\begin{split}
\int_{\hnodesT}\nabla_x \varphi(t,x,z)  \,\hj^x(t,\dxx xz)\dx t
&=\int_{\hnodes_{\e T}} \nabla_x \varphi(t,x,z) \, \hj^x(t,\dxx xz) \dx t \\
&=\int_{\nodes_T}  \nabla_x \varphi(t,x,z_\e(y)) \,j^x(t,\dxx xy) \dx t.
\end{split}
\]
For the transformation of $j^y$ we write
\begin{multline*}
	\int_{\hnodesT} \partial_z \varphi(t,x,z) \, \hj^y(t,\dxx xz)\dx t
	= \int_{\hnodes_{\e T}}\partial_z \varphi(t,x,z) \, \hj^y(t,\dxx xz)\dx t\\
	= \int_{\nodes_T} z_\e'(y)\partial_z \varphi(t,x,z_\e(y))\,j^y(t,\dxx xy) \dx t
	= \int_{\nodes_T} \partial_y \varphi(t,x,z_\e(y))\, j^y(t,\dxx xy) \dx t.
\end{multline*}

\subsubsection{Rescaling of the dissipation functional}\label{ss:Kramers:append:rescaling:D}

Since $(\dx \hj^x/{\dx \hrho}) \circ \Psi_\e = \dx j^x/{\dx \rho}$ we find 
\begin{align*}
	\int_\nodes \frac1{2m_\Omega} \abs*{\frac{\dx j^x}{\dx\rho}}^2 \dx \rho
	&= \int_{\hnodes_\e} \frac1{2m_\Omega} \abs*{\frac{\dx \hj^x}{\dx\hrho}}^2 \dx \hrho,
\end{align*}
while for the $y$-term we find, writing in terms of Lebesgue densities,
\begin{align*}
	\int_\nodes \frac{|j^y(x,y)|^2}{2\tau_\e\rho(x,y)}  \dxx xy
	&= \int_\nodes \frac{|j^y(x,y)|^2}{2\tau_\e\pi_\e(x,y)u_\e(x,y)}  \dxx xy\\
	&\leftstackrel{\eqref{eq:z'tau}} = \frac{|\Omega|}{2m_\Upsilon} \int_\nodes \frac{z_\e'(y)}{u_\e(x,y)} |j^y(x,y)|^2 \dxx xy
	= \frac{|\Omega|}{2m_\Upsilon} \int_{\hnodes_\e} \frac1{\hu(x,z)} |\hj^y(x,z)|^2 \dxx xz.
\end{align*}

\medskip
Turning to the $\calR^*$-term in $\calD^T_\e$, we have
\begin{align*}
	\int_\nodes 2m_\Omega \ee^{-F_\e^\rho}\abs*{\nabla_x \sqrt {u^\calF}}^2\pi_\e(\dxx xy)
	&= \int_{\hnodes_\e} 2m_\Omega \ee^{-\hF_\e^\rho}\abs*{\nabla_x \sqrt {\hu^\calF}}^2\hpi_\e(\dxx xz).
\end{align*}
For the other term we calculate
\begin{align*}
	\int_\nodes 2\tau_\e \ee^{-F_\e^\rho} \abs*{\partial_y \sqrt {u^\calF}}^2\pi_\e(\dxx xy )\ \ 
	&\leftstackrel{\eqref{eq:z'tau}} = \frac{2m_\Upsilon}{|\Omega|} \int_\nodes \exp\bigl(-F_\e^\rho(t,x,y)\bigr) \bigl|\partial_z \sqrt{ \hu^\calF }\bigr|^2(t,x,z_\e(y)) \,z'_\e(y)\dxx xy\\
	&= \frac{2m_\Upsilon}{|\Omega|} \int_{\hnodes_\e} {\exp\bigl(-\hF_\e^\rho(t,x,z)\bigr)} \bigl|\partial_z\sqrt{ \hu^\calF}\bigr|^2(t,x,z)\dxx xz.
\end{align*}

\subsection{Compactness: Proof of Theorem~\ref{t:compactness}}

The theorem below is the main compactness result of Section~\ref{s:Kramers}. The technique is taken from~\cite[Th.~3.1-2]{ArnrichMielkePeletierSavareVeneroni12}, with some modifications to deal with the additional variable $x$ and the tilting function $\calF$. In addition, we use a slightly different method to prove the lower bound of Theorem~\ref{t:lower-bound} in Appendix~\ref{ss:lower-bound} below, which allows us to establish weaker forms of compactness for the traces.

\begin{theorem}[Compactness]
	\label{t:compactness}
	Let $\e_n$ be a sequence that converges to zero, and let $(\calF_{\e_n})_n$ satisfy Assumption~\ref{ass:F}. Let the sequence $(\rho_{\e_n},j_{\e_n})_n\subset \CE(0,T)$ satisfy the uniform dissipation bound~\eqref{est:initial-energy-D}.
	Then there exists a subsequence (for which we use the same notation) along which 
	\begin{enumerate}
	\item For the untransformed variables $(\rho_{\e_n}, j_{\e_n})$:
	\begin{enumerate}[ref={(\alph*)}]
		\item\label{t:compactness:rho_e}
		For each $t\in[0,T]$, $\rho_{\e_n}(t) \weaksto \rho_0(t)$ in $\calM_{\geq0}(\nodes)$, and 
		\begin{equation}
			\label{char:rho_e-limit}
			\rho_0(t,\dxx xy) = u^-(t,x) \,\gamma^a \delta_a(\dx y)\dx x  +u^+(t,x)\,\gamma^b \delta_b(\dx y)\dx x,
		\end{equation}
		for some functions $u^\pm\in L^1(\Omega_T)$.
		\item \label{t:compactness:j_e^x}
		$j_{{\e_n}}^x \weaksto j_{0}^x$ in $\calM(\nodes_T)$ and
		\begin{equation}
		\label{char:jx0}
		j_{0}^x(\dxxx txy) = \mathrm j_{0}^{x,-}(t,x)\dxx t x\,\delta_a(\dx y) 
		+ \mathrm j_{0}^{x,+}( t,x)\dxx t x\, \delta_b(\dx y) ,
		\end{equation}
		for some\/ $\mathrm j^{x,\pm}_{0}\in L^1(\Omega_T)$.
		\item \label{t:compactness:j_e^y}$j_{{\e_n}}^y\weaksto j_{0}^y$ in duality with $C_{\mathrm c}^1((0,T)\times \nodes)$, and 
		\begin{equation}
			\label{char:jy0}
			j_{0}^y(\dxxx txy) = 
			\ol {\textup\j}(t,x) \bONE_{[a,b]}(y)\dxxx tx y,
		\end{equation}
		for some $\ol{\textup\j} \in L^1(\Omega_T)$.

		\item \label{t:compactness:traces}
		The traces $u_{{\e_n}}^-(t,x) :=  u_{\e_n}(t,x,a)$ and $u_{{\e_n}}^+(t,x) := u_{\e_n}(t,x,b)$ of $u_\e := \dx{\rho_\e}/{\dx{\pi_\e}}$ are well defined and bounded in $L^1(\nodes_T)$, and converge in duality with $C_{\mathrm b}(\Omega_T)$ to the functions $u^\pm$ in~\eqref{char:rho_e-limit}.
	\end{enumerate}
	\item For the transformed variables $(\hrho_{\e_n}, \hj_{\e_n})$:
	\begin{enumerate}[resume,ref={(\alph*)}]
		\item \label{t:compactness:hrhoe} For each $t\in[0,T]$, $\hrho_{\e_n}(t) \weaksto \hrho_0(t)$ in $\calM_{\geq0}(\hnodesT)$ and 
			\[
			\hrho_0(t,\dxx xz) = u^-(t,x) \,\gamma^- \delta_0(\dx z)\dx x  +u^+(t,x)\,\gamma^+ \delta_1(\dx z)\dx x.
			\]
			
		\item \label{t:compactness:hj_e^x}
		$\hj_{{\e_n}}^x \weaksto \hj_{0}^x$ in $\calM(\hnodesT)$ and
			\[
			\hj_{0}^x(\dxxx txz) 
			= \mathrm j_{0}^{x,-}(t,x)\dxx t x\,\delta_0(\dx z) 
         		+ \mathrm j_{0}^{x,+}( t,x)\dxx t x\, \delta_1(\dx z) ,
			\]
			
		\item \label{t:compactness:hjey}
		$\hj_{{\e_n}}^y \weaksto \hj_{0}^y$ in $\calM(\hnodesT)$ and
		\begin{equation*}
			\hj_{0}^y(\dxxx txz) = 
			\ol {\textup\j}(t,x) \bONE_{[0,1]}(z)\dxxx tx z.
		\end{equation*}
	\end{enumerate}
	Note that the functions $u^\pm$, $\mathrm j_0^{x,\pm}$, and $\ol{\textup\j}$ above are the same as those in the characterizations~\eqref{char:rho_e-limit}, \eqref{char:jx0}, and~\eqref{char:jy0}.
\item Finally, \label{t:compactness:F}
		for each time $t$, the derivatives $F_{\e_n}^\rho(t) = \rmD\calF_{\e_n}(\rho_{\e_n}(t))$ converge uniformly on~$\nodes$ to the limit $F_0^\rho(t) = \rmD\calF_0(\rho_0(t))$. The transformed derivatives $\hF_{\e_n}^\rho(t,x,z)$ converge pointwise to $F_0^\rho(t,x,c)$ for all $0<z<1$ and all $(t,x)\in \Omega_T$.
\end{enumerate}
\end{theorem}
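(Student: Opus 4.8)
The statement to prove is the compactness Theorem~\ref{t:compactness}. The strategy is to derive a priori bounds from the uniform dissipation bound~\eqref{est:initial-energy-D}, extract weak-* limits, identify the structure of those limits forced by the limiting energy, and finally handle the tilts using Assumption~\ref{ass:F}. I would carry out the argument in the order: (1) energy bound $\Rightarrow$ concentration of $\rho$ on $\wt\nodes$; (2) dissipation bound $\Rightarrow$ bounds on fluxes and on spatial derivatives of $\sqrt{\hu^\calF}$; (3) extraction of limits and identification of structure; (4) trace convergence; (5) transfer to the rescaled variables; (6) convergence of the tilts.

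\textbf{Steps (1)--(2): a priori bounds.} First I would note that $\sup_n \calE_{\e_n}(\rho_{\e_n}(0))<\infty$ together with the chain-rule Lemma~\ref{l:CRLB-Kramers-epsilon} (which gives $\calI_{\e_n}\geq0$, hence $\calE_{\e_n}(\rho_{\e_n}(t))\leq \calE_{\e_n}(\rho_{\e_n}(0)) + \calD_{\e_n}^T$) yields a uniform bound $\sup_{n,t}\calE_{\e_n}(\rho_{\e_n}(t))<\infty$. Since $\pi_{\e_n}\longweaksto\pi_0$ with $\supp\pi_0 = \wt\nodes = \Omega\times\{a,b\}$ and relative entropy is jointly lower semicontinuous, any weak-* limit of $\rho_{\e_n}(t)$ must be absolutely continuous with respect to $\pi_0$, hence of the form~\eqref{char:rho_e-limit}. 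The superlinearity of $s\mapsto s\log s$ and the mass constraint $\rho_{\e_n}(t,\nodes)=1$ give equi-tightness in time via the total mass. For the fluxes: from the $\CCs$-representation~\eqref{eqdef:hD-part2} and the elementary inequality $|\xi j|\leq \sfC(j|\sigma)+\sigma\sfC^*(\xi)$ (used also in Lemma~\ref{l:ha:compactness}) applied with $\xi=1$, together with the $L^\infty$-bound on $u_{\e_n}^{\pm}$ coming from the energy bound (since $\pi_{\e_n}^y$ has mass bounded below near $a,b$), I obtain $L^1(\nodes_T)$ bounds on $j^x_{\e_n}$ and on $\ol\jmath_{\e_n}$. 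The term $\int 2\tau_{\e_n}\ee^{-F}|\partial_y\sqrt{u^\calF}|^2\pi_{\e_n}$ (rescaled: $\tfrac{2m_\Upsilon}{|\Omega|}\int\ee^{-\hF}|\partial_z\sqrt{\hu^\calF}|^2$) bounds $\|\partial_z\sqrt{\hu^\calF_{\e_n}}\|_{L^2}$, which by the $\e$-uniform lower bound on $\ee^{-\hF^\rho_{\e_n}}$ (from $\sup_\rho\|\rmD\calF_\e(\rho)\|_{C_{\mathrm b}}<\infty$ in~\eqref{eq:Kramers:tilts}) gives $H^1$-control in $z$ on $[0,1]$ of $\sqrt{\hu^\calF_{\e_n}}$. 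Similarly $\|\nabla_x\sqrt{\hu^\calF_{\e_n}}\|_{L^2}$ is controlled.

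\textbf{Steps (3)--(5): extraction, structure, rescaling.} With these bounds I extract a subsequence along which $\rho_{\e_n}(t)\weaksto\rho_0(t)$ for all $t$ (via a Helly-type / Arzelà--Ascoli argument on the bounded-variation-in-time family, exactly as in~\cite[Th.~3.1]{ArnrichMielkePeletierSavareVeneroni12}, adapted to the extra $x$-variable), and $j^x_{\e_n}\weaksto j^x_0$, $j^y_{\e_n}\weaksto j^y_0$ in the respective weak topologies. The structure~\eqref{char:jx0} of $j^x_0$ follows because $j^x_{\e_n}\ll\rho_{\e_n}$ (finiteness of $\calR_{\e_n}$) and $\rho_{\e_n}\weaksto\rho_0$ supported on $\Omega\times\{a,b\}$; the structure~\eqref{char:jy0} of $j^y_0$ follows from Lemma~\ref{l:char:j0} (constant $y$-flux on $[a,b]$). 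Trace convergence (item~\ref{t:compactness:traces}) uses the $H^1$-in-$z$ control of $\sqrt{\hu^\calF_{\e_n}}$ on $[0,1]$: the traces at $z=0,1$ are the values at the wells $y=a,b$, and compactness of the trace operator $H^1(0,1)\to\R^2$ (after passing through the rescaling map $z_\e$, using Lemma~\ref{l:asymp-behaviour-ze}) gives convergence of $u_{\e_n}^\pm$ in duality with $C_{\mathrm b}(\Omega_T)$. The transfer to the hat variables (items~\ref{t:compactness:hrhoe}--\ref{t:compactness:hjey}) is then a change of variables using~\eqref{eqdef:hat-transformed-objects} and the fact that $z_{\e_n}$ pushes the wells to $0,1$ and collapses the interior onto the single point $c$ (Lemma~\ref{l:asymp-behaviour-ze}), so that $\hrho_{\e_n}$ and $\hj^x_{\e_n}$ concentrate on $z=0,1$ while $\hj^y_{\e_n}$ stays spread on $[0,1]$ with the constant-in-$z$ density $\ol\jmath$.

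\textbf{Step (6) and the main obstacle: convergence of the tilts.} For item~\ref{t:compactness:F}, the uniform convergence $F_{\e_n}^\rho(t)=\rmD\calF_{\e_n}(\rho_{\e_n}(t))\to\rmD\calF_0(\rho_0(t))=F_0^\rho(t)$ on $\nodes$ follows directly from Assumption~\ref{ass:F} applied to the sequence $\rho_{\e_n}(t)\weaksto\rho_0(t)$ (which we established pointwise in $t$). The pointwise convergence of the rescaled $\hF_{\e_n}^\rho(t,x,z)=F_{\e_n}^\rho(t,x,z_{\e_n}^{-1}(z))$ to $F_0^\rho(t,x,c)$ for $0<z<1$ combines two ingredients: the uniform convergence just proved, and the fact that $z_{\e_n}^{-1}(z)\to c$ for every fixed $z\in(0,1)$ (Lemma~\ref{l:asymp-behaviour-ze}), together with the uniform $y$-modulus of continuity~\eqref{eqdef:moc} of $\rmD\calF_{\e_n}$, which is exactly what makes the two limits interchangeable:
\[
|\hF_{\e_n}^\rho(t,x,z) - F_0^\rho(t,x,c)|
\leq \|F_{\e_n}^\rho(t) - F_0^\rho(t)\|_{C_{\mathrm b}(\nodes)}
  + \moc\bigl(|z_{\e_n}^{-1}(z) - c|\bigr),
\]
and both terms vanish. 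The genuine difficulty in the whole theorem is not this last step but rather the extraction of \emph{pointwise-in-$t$} convergence of $\rho_{\e_n}(t)$ together with the correct identification of the limit flux $j^y_0$: one must rule out that $\hj^y_{\e_n}$ develops concentrations (e.g. at the endpoints $z=0,1$ or in time) that would spoil the constant-in-$z$ structure~\eqref{char:jy0}; this is handled by the $L^1$ flux bound plus the continuity-equation constraint, which pins $\partial_z\hj^y_0=0$ on $(0,1)$ in the distributional sense, but making the time-regularity compatible with the weak-* topology requires the Arzelà--Ascoli refinement (cf.~\cite[Th.~A.1]{PeletierRenger21}) rather than a naive diagonal argument. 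I would treat that as the technical heart of the proof and refer to the cited compactness machinery for the details.
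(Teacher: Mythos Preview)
Your overall plan matches the paper's architecture, and your Step~(6) on the tilts is essentially identical to the paper's argument. However, there is a genuine gap in Step~(2): the energy bound $\sup_{n,t}\RelEnt(\rho_{\e_n}(t)|\pi_{\e_n})<\infty$ does \emph{not} yield an $L^\infty$ bound on the traces $u_{\e_n}^\pm$; it only gives $L\log L$ control of $u_{\e_n}$ against $\pi_{\e_n}$, which says nothing about pointwise values at $y=a,b$. In the paper the trace bound (its Step~3) comes instead from the $H^1$-in-$z$ control of $\sqrt{\hu_\e^\calF}$ that you do derive, combined with the mass constraint: one writes $\sup_{z}\hu_\e(t,x,z)\leq C\bigl(\hrho_\e(t,x,\hUpsilone)+\htheta_\e(t,x)^2\bigr)$ with $\htheta_\e$ the $z$-oscillation of $\sqrt{\hu_\e^\calF}$, and integrates over $\Omega_T$. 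This matters downstream: your $L^1$ bound on $\hj_\e^y$ via Cauchy--Schwarz needs $\int_{\hnodes_\e}\hu_\e\dxx xz$ (Lebesgue, not $\hpi_\e$-integral) to be bounded, and that is precisely what the trace estimate supplies. Note also that the pre-limit dissipation~\eqref{eqdef:hD-part2} is quadratic, so the tool for the flux bounds is Cauchy--Schwarz, not the $\sfC$--$\sfC^*$ Young inequality you borrow from Lemma~\ref{l:ha:compactness}.

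A second point you gloss over is the \emph{identification} of the trace limits with the densities $u^\pm$ of $\rho_0$. Trace compactness gives convergence of $u_{\e_n}^\pm$ to \emph{some} limits $\mu^\pm$ in duality with $C_{\mathrm b}(\Omega_T)$, but showing $\gamma^{a,b}\mu^\pm=\rho_0^\pm$ requires an additional oscillation estimate (the paper's Step~4): one proves $\int_{\Omega_T}\sup_{y,y'\in\Upsilon_1}|u_\e(t,x,y)-u_\e(t,x,y')|\dxx tx\to 0$ for intervals $\Upsilon_1$ around each well, so that the projection $\int z_\e(y)\rho_\e(t,\dxx xy)$ (which converges to $\rho_0^+$) can be replaced by $u_\e^+\int z_\e\dx\pi_\e^y$ up to a vanishing error. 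This step uses the uniform $y$-modulus of continuity~\eqref{eqdef:moc} of $\rmD\calF_\e$ in an essential way, not only in Step~(6). Without it, part~\ref{t:compactness:traces} of the theorem---that the trace limits are the \emph{same} $u^\pm$ as in~\eqref{char:rho_e-limit}---remains open.
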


\begin{remark}[Convergence in $\CE$]
This compactness result implies that the transformed pair $(\hrho_{\e_n},\hj_{\e_n})$ converges in $\CE(0,T)$ (on the transformed domain $[0,T]\times \hnodes$). For the original pair $(\rho_{\e_n},j_{\e_n})$ the convergence of $j^y_{\e_n}$ is a little weaker than that required in Definition~\ref{def:converge-in-CE}, but strong enough to pass to the limit in the continuity equation.
\end{remark}

\begin{proof}
	In this proof we will write $\e$ instead of $\e_n$ or a subsequence of $\e_n$ to simplify the notation. This also means that we extract subsequences without this being reflected in the notation. We will use constants $C$ that vary from line to line, but which only depend on the supremum in~\eqref{est:initial-energy-D} and the bound
	\begin{equation}
		\label{est:F}
		A := \sup_{\e} \|F_\e^\rho\|_\infty <\infty.
	\end{equation}
	As remarked in Section~\ref{ss:Kramers-setting} we can assume without loss of generality that $\rho_\e(t)$ has mass one for each $t$.
	
	\medskip
	
	\refstepcounter{step}
	\textbf{Step \thestep. Basic estimates.}
	From the bounds~\eqref{est:initial-energy-D} and~\eqref{est:F} we deduce the following basic estimates:
	\begin{alignat}2
		\label{est:basic1}
		&\int_{\nodes_T} \abs*{\frac{\dx j_\e^x}{\dx\rho_\e}}^2\dx\rho_\e\dx t\leq C, &\qquad 
		&\int_{\nodes_T}\bra*{\,\abs*{\nabla_x \sqrt {u^\calF_\e}}^2 + \abs*{\partial_y \sqrt {u^\calF_\e}}^2\,}\,\pi_\e(\dxx xy )\dx t\leq C,
		\\
		&\int_{\hnodes_{\e T}}  \frac{\abs*{\hj^x_\e}^2}{\hu_\e}\dxxx xz t \leq C,
		    &\qquad 
		&\int_{\hnodes_{\e T}} \abs*{\partial_z \sqrt{\hu^\calF_\e}}^2 \dxxx xzt \leq C,
		\label{est:basic2}
	\end{alignat}

	\refstepcounter{step}
	\textbf{Step \thestep. First convergence results.}
	\label{step:basic-estimates}
	By the chain rule, Lemma~\ref{l:Kramers-CR}, we have for each~$t$ the estimate
	\begin{equation}
		\label{est:basic}
		\sup_{\e,t} \RelEnt(\rho_\e(t)|\pi_\e) 
		\leq  \calE_\e^\calF(\rho_\e(t=0)) + |\calF_\e(\rho_\e(t))| + \calD_\e^T(\rho_\e,j_\e;\calF_\e) \leq C,
	\end{equation}
	which implies that we can extract a subsequence along which $\rho_\e$ converges narrowly on $\nodes_T$ to a limit $\rho_0$ of the form
	\begin{align*}
		\rho_0(\dxxx txy) = \rho_{0}^-(\dxx tx) \delta_a(\dx y) + \rho_{0}^+(\dxx tx) \delta_b(\dx y).
	\end{align*}
	Since  $\rho_\e(\dx t, \nodes) = \dx t$, the narrow convergence implies that also $\rho_0(\dx t, \nodes)= \dx t$, and we can therefore disintegrate $\rho_0$ and $\rho_0^\pm$ as  $\rho_0(\dxxx txy) = \rho_0(t,\dxx xy) \dx t$ and $\rho_0^\pm(\dxx tx) = \rho_0^\pm(t,\dx x)\dx t$.

	In addition we estimate
	\begin{align*}
		& \int_{\nodes_T} |j_\e^x | \leq \bra*{\int_{\nodes_T} \abs*{\frac{\dx j_\e^x}{\dx\rho_\e}}^2\dx\rho_\e\dx t}^{1/2} 
		\bra*{ \int_{\nodes_T} \dx \rho_\e\dx t}^{1/2} \stackrel{\eqref{est:basic1}}\leq C ,
	\end{align*}
	from which it follows that $j_\e^x \stackrel *\longrightharpoonup j_0^x$ in $\calM(\nodes_T)$ for some $j_0^x\in \calM(\nodes_T)$. By lower semicontinuity~\cite[Th.~2.38]{AmbrosioFuscoPallara00} we have
	\begin{equation}
		\label{est:lsc-jex}
		\int_{\nodes_T} \abs*{\frac{\dx j_0^x}{\dx\rho_0}}^2\dx\rho_0\dx t
		\leq \liminf_{n\to\infty} \int_{\nodes_T}\abs*{\frac{\dx j_\e^x}{\dx\rho_\e}}^2\dx\rho_\e\dx t ,
	\end{equation}
	which implies that $j_0^x\ll \rho_0$, so that we can write 
	\begin{equation}
		\label{eq:char-j0}
		j_{0}^x(t,\dxx xy) = j_{0}^{x,-}(t,\dx x)\,\delta_a(\dx y) 
		+ j_{0}^{x,+}(t,\dx x)\,\delta_b(\dx y) .
	\end{equation}
	From the narrow convergence of $j^x_\e$ and $\rho_\e$ and the equation $\partial_t \rho_\e + \div_x j^x_\e + \div_y j^y_\e=0$ we deduce that
	\[
	j_\e^y \stackrel*\longrightharpoonup j^y_0\qquad\text{in duality with }C^1_{\mathrm c}((0,T)\times \nodes),
	\]
	and by Lemma~\ref{l:char:j0} we find that $j_0^y$  has the structure~\eqref{char:jy0}
	\begin{equation}
	\label{char:j0-app}
	j^y(t,\dxx xy) = \overline \jmath (t,\dx x) \bONE_{[a,b]}(y)\dx y
	\qquad \text{for some $(\overline \jmath(t))_t\subset \calM(\Omega)$.}
\end{equation}
It follows that $(\rho_0,(j_0^x,j_0^y))$ satisfies~\eqref{eq:Kramers:weak-form-CE} and $(\rho_0,(j_0^x,\ol\jmath))$ satisfies~\eqref{eq:weak-form-CE0}.
	
	\medskip
	\refstepcounter{step}
	\textbf{Step \thestep. Estimate on $y$-traces.} \label{step:y-traces}
	We next prove the following estimate. 
	\begin{equation*}
		\label{est:y-trace}
		\int_{\Omega_T}\sup_{y\in \Upsilon} u_\e(t,x,y)\dxx tx 
		= \int_{\Omega_T} \sup_{z\in \hUpsilone}\hu_\e(t,x,z)\dxx tx \leq C.
	\end{equation*}
	
	Set $\hp_\e := \sqrt{\hu_\e^\calF}$. Since $\partial_z \hp_\e$ has finite  $L^2(\Omega_T\times \hUpsilone)$-norm by~\eqref{est:basic2}, the trace of $\sqrt{\hu_\e^\calF}$ at any $z\in \hUpsilone$ is well defined as element of $L^2(\Omega_T)$; consequently, the trace of $\hu_\e^\calF$ is well defined as element of $L^1(\Omega_T)$, and by the continuity of $\hF_\e^\rho$ in $z$ also the trace of $\hu_\e$ is well defined in $L^1(\Omega_T)$. By transformation the same holds for $u_\e$. 
	
	We also have 
	\[
	\htheta_\e(t,x) := \sup_{z,z'\in \hUpsilone} |\hp_\e(t,x,z)-\hp_e(t,x,z')|
	\leq |\hUpsilone|^{1/2} \bra*{\int_{\hUpsilone} {\partial_z\hp_\e(t,x,z)}^2 \dx z}^{1/2},
	\]
	so that using \eqref{est:basic2} and the {boundedness} of $\hUpsilone$ we find
	\[
	\int_{\Omega_T} \htheta_\e(t,x)^2 \dxx tx \leq C.
	\]
	It follows that for any $(t,x)\in\Omega_T$ and $z\in \hUpsilone$, 
	\begin{align*}
		\hp_\e(t,x,z) &\leq 
		|\Omega| \int_{\hUpsilone} \hp_\e(t,x,z')\,\hpi_\e^y(\dx z')
		+ \htheta_\e(t,x)\\
		&\leq C \bra*{\int_{\hUpsilone} u_\e^\calF(t,x,z')\,\hpi_\e^y(\dx z')}^{1/2}
		+ \htheta_\e(t,x)\\
		&\leq C \ee^{A/2} \sqrt{\hrho_\e(t,x,\hUpsilone)} + \htheta_\e(t,x).
	\end{align*}
	Then
	\begin{align*}
		\int_{\Omega_T} \sup_{y\in \Upsilon}u_\e(t,x,y)\dxx tx
		&= \int_{\Omega_T}  \sup_{z\in \hUpsilone}\hu_\e(t,x,z)\dxx tx
		\leq \ee^A   \int_{\Omega_T} \sup_{z\in \hUpsilone} \hp_\e(t,x,z)^2\dxx tx\\
		&\leq C\pra*{ \int_{\Omega_T} \hrho_\e(t,x,\hUpsilone)\dxx tx + \int_{\Omega_T} \htheta(t,x)^2\dxx tx}
		\leq C.
	\end{align*}
	\medskip
	
	\refstepcounter{step}
	\textbf{Step \thestep. Bounded oscillation of the density $u_\e$ away from the saddle.}
	We now establish the following property: for any open interval $\Upsilon_1\subset \Upsilon$ such that (a) $\sup_{\Upsilon_1} H < H(c)$ and (b) $a\in \Upsilon_1$ or $b\in \Upsilon_1$, 
	\begin{equation}
		\label{est:u-constant-away-from-saddle}
		\limsup_{\e\to0} \int_{\Omega_T} \sup_{y,y'\in \Upsilon_1} \abs*{u_\e(t,x,y)-u_\e(t,x,y')}\dxx tx\leq C\moc(|\Upsilon_1|),
	\end{equation}
	where the modulus of continuity $\moc$ is introduced in~\eqref{eqdef:moc}.

	Set $p_\e := \sqrt{u^\calF_\e}$ and define
	\[
	\theta^{\Upsilon_1}_\e(t,x) := 
	\sup_{y,y'\in \Upsilon_1} |p_\e(t,x,y)-p_\e(t,x,y')|
	\leq \bra*{\int_{\Upsilon_1} (\partial_y p_\e)^2(t,x,y)\pi_\e^y(y)\dx y}^{1/2}
	\bra*{\int_{\Upsilon_1} \frac{\dx y}{\pi_\e^y(y)}}^{1/2},
	\]
	for which we estimate
	\begin{align*}
		\int_{\Omega_T} {\theta^{\Upsilon_1}_\e}(t,x)^2 \dxx tx
		&\stackrel{\eqref{est:basic1}} \leq \frac{C}{\e\tau_\e} \, {{\PartSum_\e}} \int_{\Upsilon_1} \ee^{H(y)/\e}\dx y  
		\longrightarrow 0\qquad\text{as }\e\to0
	\end{align*}
	where the convergence to zero follows  from observing that $\PartSum_\e =O(\sqrt\e)$, $\tau_\e \sim \e \ee^{H(c)/\e}$, and $\sup_{\Upsilon_1} H < H(c)$.
	
	Similarly to Step~\ref{step:y-traces} we then derive that 
	\[
	\sup_{y\in \Upsilon_1} p_\e(t,x,y)
	\leq (\pi_\e^y(\Upsilon_1))^{-1}\int_{\Upsilon_1} p_\e(t,x,y')\pi_\e^y(\dx y') + \theta^{\Upsilon_1}_\e(t,x).
	\]
	Note that since $a$ or $b$ is an element of $\Upsilon_1$, we have $\liminf_{\e\to0} \pi_\e^y(\Upsilon_1)>0$, and again similarly to Step~\ref{step:y-traces} we find that 
	\begin{align}
		\notag	
		\MoveEqLeft \int_{\Omega_T} \sup_{y,y'\in \Upsilon_1}
		\abs*{u_\e^\calF(t,x,y) - u_\e^\calF(t,x,y')}\dxx tx\\
		\notag
		&\leq \int_{\Omega_T} \theta^{\Upsilon_1}_\e(t,x)
		\pra*{(\pi_\e^y(\Upsilon_1))^{-1/2}\bra*{\int_{\Upsilon_1} u_\e^\calF(t,x,y')\pi_\e^y(\dx y')} + \theta^{\Upsilon_1}_\e(t,x)}\dxx tx\\
		&\longrightarrow 0 \qquad\text{as } \e\to0.
		\label{est:diff-u-F-to-zero}
	\end{align}
	
	Finally, to derive~\eqref{est:u-constant-away-from-saddle} we note that 
	\begin{align*}
		\MoveEqLeft\abs*{u_\e(t,x,y) - u_\e(t,x,y')}\\
		&\leq \ee^A \abs*{u_\e^\calF(t,x,y) - u_\e^\calF(t,x,y')} + u_\e(t,x,y) \abs*{1-\exp\bigl(F_\e^\rho(t,x,y)-F_\e^\rho(t,x,y')\bigr)}\\
		&\leq \ee^A \abs*{u_\e^\calF(t,x,y) - u_\e^\calF(t,x,y')}
		+ \ee^A u_\e(t,x,y) \abs*{F_\e^\rho(t,x,y)-F_\e^\rho(t,x,y')}
	\end{align*}
	Then
	\begin{align*}
		\MoveEqLeft\int_{\Omega_T} \sup_{y,y'\in \Upsilon_1} \abs*{u_\e(t,x,y) - u_\e(t,x,y')}
		\leq C \underbrace{\int_{\Omega_T} \sup_{y,y'\in \Upsilon_1} \abs*{u^\calF_\e(t,x,y) - u^\calF_\e(t,x,y')}}_{\to 0\text{ by }\eqref{est:diff-u-F-to-zero}}\\
		&+ C
		\underbrace{\sup_{(t,x)\in \Omega_T} \sup_{y',y''\in \Upsilon_1} \abs*{F_\e^\rho(t,x,y')-F_\e^\rho(t,x,y'')}}_{\leq \moc(|\Upsilon_1|)}\;
		\underbrace{
			\sup_{y\in \Upsilon_1} \int_{\Omega_T} u_\e(t,x,y) \dxx tx
	}_{(*)}.
\end{align*}
The term marked $(*)$ is bounded as $\e\to0$ by~\eqref{est:y-trace}, and 
the estimate~\eqref{est:u-constant-away-from-saddle}  follows.

\medskip

\refstepcounter{step}
\label{step:conv-of-traces}
\textbf{Step \thestep. Narrow convergence of the traces.}
By~\eqref{est:y-trace},  the traces
\[
u_{\e}^-(t,x) := u_\e(t,x,a) \qquad\text{and}\qquad
u_{\e}^+(t,x) := u_\e(t,x,b)
\]
are bounded in $L^1(\Omega_T)$. Along a subsequence they therefore converge narrowly, in duality with $C_{\mathrm b}(\Omega_T)$, to measure limits $\mu^\pm\in \calM_{\geq0}(\Omega_T)$. In this step we connect these limits to  $\rho_0^\pm$:
\[
\rho_0^-(t,\dx x) = \gamma^a\mu^-(t,\dx x) 
\qquad\text{and}\qquad
\rho_0^+(t,\dx x) = \gamma^b\mu^+(t,\dx x).
\]
We give the details for $\mu^+$ and $\rho_0^+$, and the other case follows from analogous arguments. 

Define the following `projection' of $\rho_\e$ onto $y=b$, 
\[
\rho_{\e}^+(t, \dx x) := \int_{y\in\Upsilon} z_\e(y)\rho_\e(t, \dxx xy).
\]
This sequence converges narrowly on $\Omega_T$ to $\rho_0^+$, since $\rho_\e z_\e(y)$ concentrates onto $y=b$. 

We then calculate for $\varphi\in C_{\mathrm b}(\Omega_T)$
\begin{align*}
\int_{\Omega_T} \varphi \rho_0^+ 
&= \lim_{\e\to0} \int_{\Omega_T} \varphi \rho_\e^+ 
= \lim_{\e\to0} \int_{\nodes_T} \varphi(t,x) z_\e(y) \rho_\e(t,\dxx xy)\dx t\\
&= \lim_{\e\to0} \int_{\nodes_T} \varphi(t,x) z_\e(y) u_\e(t,x,y)\, \pi_\e(\dxx xy)\dx t\\
&= \lim_{\e\to0} \int_{\nodes_T} \varphi(t,x) z_\e(y) u_\e^+(t,x)\, \pi_\e(\dxx xy)\dx t
+ \lim_{\e\to0} R_\e,
\end{align*}
with 
\[
R_\e = \int_{\nodes_T} \varphi(t,x) z_\e(y) \bra[\big]{u_\e(t,x,y)-u_\e(t,x,b)}\, \pi_\e(\dxx xy)\dx t.
\]
We now show that $R_\e$ converges to zero, using the previous two steps. We split the integral into two parts following $\Upsilon = \Upsilon_1\dot \cup \Upsilon_2$, where $\Upsilon_1$ is an open neighbourhood of $b$ such that $\sup_{\Upsilon_1}H < H(c)$. To estimate the integral over $\Upsilon_1$, we write
\begin{align*}
	\MoveEqLeft\abs*{\int_{\Omega_T\times \Upsilon_1}\varphi(t,x)\bra[\big]{u_\e(t,x,y)-u_\e(t,x,b)}z_\e(y) \pi_\e(\dxx xy)\dx t\,}\\
	&\leq\  \norm\varphi_\infty \int_{\Omega_T} 
	{\sup_{y\in\Upsilon_1} \abs*{u_\e(t,x,y)-u_\e(t,x,b)}}\dxx tx 
	\;\int_{\Upsilon_1} \pi_\e^y(\dx y),
\end{align*}
and by~\eqref{est:u-constant-away-from-saddle} this expression can be made as small as required. We estimate the remaining integral by 
\begin{align*}
	\MoveEqLeft\abs*{ \int_{\Omega_T\times \Upsilon_2}\varphi(t,x)\bra[\big]{u_\e(t,x,y)-u_\e(t,x,b)}
	\,z_\e(y) \,\pi_\e(\dxx xy)\dx t\,}\\
	&\leq \|\varphi\|_\infty 
	\int_{\Omega_T} {\sup_{y\in \Upsilon_2} u_\e(t,x,y) }\dxx xt 
	\;\int_{\Upsilon_2} \abs{z_\e(y)}\, \pi_\e^y(\dx y)
	\longrightarrow 0,
\end{align*}
where the final integral vanishes because $|z_\e|\,\pi_\e^y$ concentrates onto $y=b\not\in \Upsilon_2$. 

We therefore have 
\begin{align*}
\int_{\Omega_T} \varphi(t,x) \rho_0^+(t,\dx x)\dx t 
&= \lim_{\e\to0} \int_{\nodes_T} \varphi(t,x) z_\e(y) u_\e^+(t,x)\, \pi_\e(\dxx xy)\dx t\\
&= \lim_{\e\to0} \int_{\Omega_T} \varphi(t,x)  u_\e^+(t,x)\dxx xt 
\; \underbrace{\int_\Upsilon z_\e(y) \pi_\e^y(\dx y)}_{\to \;\gamma^b}\\[-2\jot]
&= \gamma^b \int_{\Omega_T} \varphi(t,x) \mu^+(t,x)\dxx xt, 
\end{align*}
which establishes the identification $\rho_0^+(t,\dx x) = \gamma^b \mu^+(t,\dx x) $. 

\medskip
\refstepcounter{step}
\textbf{Step~\thestep. Convergence of $\hrho_\e$, $\hj_\e^x$, and $\hj_\e^y$, and characterization of the limits.}
\label{step:hat-convergence}
We next show that, in the sense of narrow convergence on $\nodes_T$,
\begin{subequations}
\begin{align}
	\label{eq:char-hrho0}
	\hrho_\e &\stackrel *\longrightharpoonup \hrho_0, \quad \text{with}
	&& \hrho_{0}(\dxx txz) = \hrho_{0}^-(t,\dx x)\delta_0(\dx z) \dx t+
	\hrho_{0}^+(t,\dx x)\delta_1(\dx z)\dx t,
	\\
	\hj_\e^x &\stackrel *\longrightharpoonup \hj_0^x,  \quad\text{with}
	&& \hj_0^x(\dxxx txz) = j^{x,-}_{0}(t,\dx x)\delta_0(\dx z)\dx t +
	j^{x,+}_{0}(t,\dx x)\delta_1(\dx z)\dx t ,
	\label{eq:char-hj0x}
	\\
	\hj_\e^y &\stackrel *\longrightharpoonup \hj_0^y,  \quad \text{with}
	&&\hj_{0}^y(\dxxx txz) = \ol\jmath(t,\dx x) \bONE_{[0,1]}(z)\dxx zt.
	\label{eq:char-hj0y}
\end{align}
\end{subequations}
Note that the restricted measures $j^{x,\pm}_0$ and $\ol\jmath$ in~\eqref{eq:char-hj0x} and~\eqref{eq:char-hj0y} above are the same as in the limits~\eqref{eq:char-j0} and~\eqref{char:j0-app} of the untransformed fluxes $j_\e$. We show in Step~\ref{step:convergence-rho-each-time} below that also $\hrho_0^{\pm} = \rho_0^\pm$. 

The convergence of $\hrho_\e$ and $\hj_\e^x$ follows along the same lines as in Step~\ref{step:basic-estimates}; in particular we find a similar estimate
\begin{equation}
	\label{est:hjex}
	\int_{\hnodesT} |\hj^x_\e| \leq C,
\end{equation}
and the convergence
\[	
\hj_\e^x \stackrel *\longrightharpoonup \hj_0^x,  \quad\text{with}\quad 
	 \hj_0^x(\dxxx txz) = \hj^{x,-}_{0}(t,\dx x)\delta_0(\dx z)\dx t +
	\hj^{x,+}_{0}(t,\dx x)\delta_1(\dx z)\dx t ,
\]
To show that $\hj_0^{x,\pm} = j_0^{x,\pm}$ we pick $\varphi\in C_{\mathrm c}^1((0,T)\times \Omega;\R^d)$ and calculate
\begin{align*}
\MoveEqLeft\int_{\Omega_T}\varphi(t,x) j_0^{x,+}(t,\dx x)\dx t
= \int_{\nodes_T}\varphi(t,x) \,\bONE\{y>c\} \, j_0^{x}(t,\dx x)\dxx y t \\
&\leftstackrel{(1)}= \lim_{\e\to0}  \int_{\nodes_T}\varphi(t,x)\, z_\e(y)\,  j_\e^{x}(t,\dx x)\dxx y t 
\stackrel{(2)}= \lim_{\e\to0}  \int_{\hnodes_T}\varphi(t,x) \,z \, \hj_\e^{x}(t,\dx x)\dxx z t \\
&=  \int_{\hnodes_T}\varphi(t,x) \,z \, \hj_0^{x}(t,\dx x)\dxx z t 
= \int_{\Omega_T} \varphi(t,x) \, \hj_0^{x,+}(t,\dx x) \dx t.
\end{align*}
The identity $(1)$ above follows because $z_\e(y)$ converges to $\bONE\{y>c\}$, uniformly away from $y=c$, and $j_0^x$ does not charge $\{y=c\}$. Identity $(2)$ is the definition~\eqref{eqdef:hat-transformed-objects} of $\hj_\e$. This chain of identities implies that $\hj_0^{x,+} = j_0^{x,+}$, and similarly we have  $\hj_0^{x,-} = j_0^{x,-}$.

For $\hj_\e^y$  we estimate 
\begin{align}
	\notag
	\int_{\hnodesT} |\hj_\e^y | 
	=\int_{\hnodes_{\e T}} |\hj_\e^y | 
	&\leq \bra*{\int_{\hnodes_{\e T}}  \frac{\abs*{\hj^x_\e}^2}{\hu_\e}\dxxx xz t}^{1/2} 
	\bra*{\int_{\hnodes_{\e T}} \hu_\e\dxxx txz}^{1/2}\\
	&\leftstackrel{\eqref{est:basic2}} \leq  C
	+ C \int_{\Omega_T} \sup_{z\in\hUpsilone} \hu_\e(t,x,z)\dxx tx
	\stackrel{\eqref{est:y-trace}}\leq C.
	\label{est:hjey}
\end{align}
We find that we can extract a subsequence such that 
\[
\hj_\e^y \stackrel*\longrightharpoonup \hj^y_0
\qquad\text{on } \hnodesT
\]
for some finite measure $\hj^y_0$ supported on $\hnodesT$.

The continuity equation $\partial_t \hrho_\e +\div_x \hj_\e^x + \div_y \hj_\e^y = 0$ passes to the limit under narrow convergence. By Lemma~\ref{l:char:j0} we find that $\hj_0^y$  has the structure
\begin{equation}
\label{eq:compactness-structure-hjy0}
\hj_{0}^y(\dxxx txz) = \hat{\ol\jmath}(t,\dx x) \bONE_{[0,1]}(z)\dxx zt.
\end{equation}
To show that $\hat{\ol\jmath} = \ol\jmath$, we pick $\varphi\in C_{\mathrm c}^1((0,T)\times \Omega)$ and set 
\begin{gather*}
\psi(t,x,y) := \frac{y-a}{b-a} \varphi(t,x),\\
\varphi_\e(t,x,y) := z_\e(y) \varphi(t,x), 
\qquad
\varphi_0(t,x,y) := \varphi(t,x)\bONE\{y>c\}	.
\end{gather*}
We then similarly calculate
\begin{align*}
-\int_{\Omega_T} \ol\jmath \,\varphi 
&= -\int_{\nodes_T} \ol\jmath \,\partial_y \psi \, \bONE[a,b](y) 
\stackrel{\eqref{char:j0-app}} = -\int_{\nodes_T} j_0^y\,\partial_y \psi
\stackrel{\eqref{eq:Kramers:weak-form-CE}} = \int_{\nodes_T} \bigl[ \rho_0 \partial_t\psi + j_0^x \nabla_x \psi\bigr]\\
&
=\int_{\Omega_T} \bigl[ \rho_0^+ \partial_t\varphi + j_0^{x,+} \nabla_x \varphi\bigr] 
= \int_{\nodes_T} \bigl[ \rho_0 \partial_t\varphi_0 + j_0^x \nabla_x \varphi_0\bigr]
= \lim_{\e\to0} \int_{\nodes_T} \bigl[ \rho_\e \partial_t\varphi_\e + j_\e^x \nabla_x \varphi_\e\bigr]\\
&
= \lim_{\e\to0} \int_{\hnodes_T} \bigl[ \hrho_\e \, z\, \partial_t\varphi + \hj_\e^x \, z \, \nabla_x \varphi\bigr]
=  -\lim_{\e\to0} \int_{\hnodes_T} \hj^y_\e \, \varphi 
\stackrel{\eqref{eq:compactness-structure-hjy0}} = - \int_{\Omega_T} \hat{\ol\jmath} \, \varphi .
\end{align*}
It follows that $\hat{\ol\jmath} = {\ol\jmath}$.

\medskip
\refstepcounter{step}\label{step:convergence-rho-each-time}
\textbf{Step~\thestep. Convergence of $\hrho_\e$ and $\rho_\e$ at each time $t$, and equality of $\rho_0^\pm$ with $\hrho_0^\pm$.}
The continuity equation implies that for all $\varphi\in C_{\mathrm b}^1(\hnodes)$ and all $0\leq t_0<t_1\leq T$,
\[
\int_{\hnodes} \varphi\, \bra[\big]{\dx \hrho_\e(t_1) - {\dx\hrho_\e(t_0)}}
= - \int_{t_0}^{t_1}\!\! \int_{\hnodes} \,\bra[\big]{ \hj^x_\e \nabla_x \varphi + \hj^y_\e \nabla_y \varphi}\dxxx txz.
\]
We then estimate the $L^1$-Wasserstein distance $W_1$~\cite[\S 7.1]{AmbrosioGigliSavare08}  by
\begin{align*}
	W_1(\hrho_\e(t_1),\hrho_\e(t_0)) 
	&:= \sup \set*{\int_{\hnodes} \varphi\, \bigl({\dx \hrho_\e(t_1)} - {\dx\hrho_\e(t_0)}\bigr)
		: \varphi\in C_{\mathrm b}^1(\hnodes), \ \|\nabla\varphi\|_\infty \leq 1}\\
	&\leq \int_{t_0}^{t_1} \!\!\int_{\hnodes}\bigl(|\hj^x_\e| + |\hj^y_\e|\bigr) \stackrel{\eqref{est:hjex},\eqref{est:hjey}} \leq C.
\end{align*}
This implies that the sequence of functions $t\mapsto \hrho_\e(t)$ has bounded variation in $W_1$, and by Helly's compactness theorem we can extract a subsequence that converges at every~$t$. Since the supports of $\hrho_\e$ are uniformly bounded, convergence in $W_1$ coincides with narrow convergence. Since the narrow limit of $\hrho_\e$ on $\nodes_T$ is characterized by~\eqref{eq:char-hrho0}, we identify the limit of  $\hrho_\e(t)$ as $\hrho_0^-(t,\dx x)\delta_0(\dx z)+ \hrho_0^+(t,\dx x) \delta_1(\dx z)$.

\smallskip

We next show that also $\rho_\e(t)$ converges for each $t$. Fix $t\in [0,T]$; momentarily using explicit sequences $\e_n$ again, use the bound~\eqref{est:basic} to extract a subsequence $\e'_n$ such that $\rho_{\e'_n}(t)$ converges narrowly on $\nodes$ to a limit $\wt\rho^-(\dx x)\delta_a(\dx y) + \wt\rho^+(\dx x)\delta_b(\dx y)$ for some $\wt\rho^\pm\in \calM_{\geq0} (\Omega)$. If we show that $\wt\rho^\pm$ equal $\hrho_0^\pm$ then the convergence holds for the whole sequence $\e_n$, and consequently for all $t$. 

Fix a function $\varphi\in C_{\mathrm b}(\Omega\times\R)$ with $\varphi(x,1) = 0$ for all $x$,  and define $\psi_\e(x,y) := \varphi(x,z_\e(y))$. Note that $\psi_\e$ converges pointwise  to the function $\psi_0$, with $\psi_0(x,y)$ equal to $\varphi(x,0)$ for $y<c$ and equal to  $0$ for $y>c$, and the convergence is uniform away from $y=c$. We then calculate
\begin{align*}
	\int_\Omega \wt\rho^-(t,\dx x)\varphi(x,0)
	&=\lim_{n\to\infty} \int_\nodes \rho_{\e'_n} (t,\dxx xy) \psi_{\e'_n}(x,y)
	= \lim_{n\to\infty} \int_\nodes \hrho_{\e'_n} (t,\dxx xz) \varphi(x,z)\\
	&= \int_\Omega \hrho_0^-(t,\dx x)\varphi(x,0) .
\end{align*}
This proves that $\wt\rho^- = \hrho_{0}^-(t)$, and by a similar argument $\wt\rho^+ = \hrho_{0}^+(t)$. Therefore $\rho_\e(t)$ converges narrowly at each $t$ to $\hrho_0^-(t,\dx x)\delta_a(\dx y)+ \hrho_0^+(t,\dx x) \delta_b(\dx y)$. From now on we no longer distinguish between $\rho_0^\pm$ and $\hrho_0^\pm$.

\medskip
\refstepcounter{step}
\textbf{Step \thestep. Uniform convergence of $F_\e^\rho(t)$ on $\nodes$ to $F_0^\rho(t)$ for  each $t$, and pointwise convergence of $\hF_\e^\rho(t,x,z)$ to $F_0^\rho(t,x,c)$ for each $(t,x)\in \Omega_T$ and $0<z<1$.}
The narrow convergence of $\rho_\e(t)$ to $\rho_0(t)$ at each time $t$ implies with Assumption~\ref{ass:F} that $F_\e^\rho(t)$ converges uniformly on $\nodes$ to $F_0^\rho(t) := \rmD\calF_0(\rho_0(t))$ at each time $t$. Since $z_\e^{-1}(z) \to c$ for all $0<z<1$, the transformed versions $\hF_\e^\rho(t,x,z) = F_\e^\rho(t,x,z_\e^{-1}(z))$ converge to $F_0^\rho(t,x,c)$ for all $(t,x)\in \Omega_T$ and $z\in (0,1)$. This concludes part~\ref{t:compactness:F} of Theorem~\ref{t:compactness}.

\medskip
\refstepcounter{step}
\textbf{Step~\thestep. Lower semicontinuity of the $x$-Fisher-information and absolute continuity of the measures $\rho_{0}^\pm$.}
Using the duality characterization for $\mu\in\ProbMeas(\Omega)$ and $f\in C^1_b(\Omega)$
\begin{equation*}
	\sup_{\varphi\in C^1_{\mathrm c}(\Omega;\R^d)} \int_{\Omega} \mu \Bigl[\ee^{f}\div_x \varphi - \frac12 \ee^{2f}\abs*{\varphi}^2 \Bigr]
	= 
	\begin{cases}
		\ds
		2\int_{\Omega}\ee^{-f} \abs*{\nabla_x \sqrt {u\ee^f}}^2\dx x 
		& \text{if }\ds\mu(\dx x) = u(x)\dx x, \\
		+\infty & \text{otherwise,}  
	\end{cases}
\end{equation*}
we find that the limit $\rho_0$ satisfies
\[
\sup_{\varphi\in C^1_{\mathrm c}(\nodes_T;\R^d)} \int_{\nodes_T} \rho_0 \Bigl[\ee^{F_0^\rho}\div_x \varphi - \frac12 \ee^{2F_0^\rho}\abs*{\varphi}^2 \Bigr] 
\leq M := \liminf_{\e\to0} 2\int_{\nodes_T}\ee^{-F_\e^\rho} \abs*{\nabla_x \sqrt {u^\calF_\e}}^2\pi_\e(\dxx xy)\dx t.
\]
Rewriting this expression as 
\[
\sup_{\varphi\in C_{\mathrm c}^1(\nodes_T;\R^d)}\sum_{y=a,b}\int_0^T\!\!\int_{\Omega} \rho_0(t,\dx x,y) \Bigl[\ee^{F_0^\rho}\div_x \varphi-\frac12 \ee^{2F_0^\rho}|\varphi|^2\Bigr](t,x,y)\dx t\leq M,
\]
we conclude that $\rho_{0}(t,\cdot,a),\rho_{0}(t,\cdot,b)\in \calM_{\geq0}(\Omega)$ are Lebesgue absolutely continuous for almost all $t$. Writing $\rho_0(t,\dx x,\cdot) = u^\pm(t,x) \gamma^{a,b} \dx x$ this provides the characterization~\eqref{char:rho0} of part~\ref{t:compactness:rho_e} of Theorem~\ref{t:compactness}, and since $\hrho_0^\pm = \rho_0^\pm$ also part~\ref{t:compactness:hrhoe}. In addition, in combination with Step~\ref{step:conv-of-traces} this also implies part~\ref{t:compactness:traces}. 

With an additional rewrite we also deduce the following estimate, which will be useful in the proof of Theorem~\ref{t:lower-bound} below,
\begin{equation}
	\label{est:lsc-FIx}
	2\int_{\nodes_T} \ee^{-F_0^\rho} \abs*{\nabla _x\sqrt{u_0^\calF}}^2  \pi_0(\dxx xy)\dx t
	\leq M,
\end{equation}
where we set 
$u_0(t,x,a) := u^-(t,x)$, $u_0(t,x,b) := u^+(t,x)$, and
and $u_0^\calF := u_0 \ee^{F_0^\rho}$. 

\medskip
\refstepcounter{step}
\textbf{Step~\thestep. Absolute continuity of $j_0^x$, $\hj_0^x$, and $\ol\jmath$.}
In Steps~\ref{step:basic-estimates} and~\ref{step:hat-convergence} we found that $j_0^x\ll \rho_0$ and $\hj_0^x\ll \hrho_0$, which by the previous step implies that 
\[
j_0^{x,\pm}(t,\dx x) = \hj_0^{x,\pm}(t,\dx x) = {\mathrm{j}}_0^{x,\pm}(t,x)\dx x,
\]
for some $\mathrm{j}_0^{x,\pm}\in L^1(\Omega_T)$. This concludes the proof of parts~\ref{t:compactness:j_e^x} and~\ref{t:compactness:hj_e^x} of Theorem~\ref{t:compactness}.

In the course of the proof of Theorem~\ref{t:lower-bound}, in the next section, we prove that the uniform dissipation bound~\eqref{est:initial-energy-D} guarantees that $\ol\jmath$ is Lebesgue absolutely continuous (see~\eqref{eq:j-perp-is-zero}). This concludes the proof of parts~\ref{t:compactness:j_e^y} and~\ref{t:compactness:hjey}, and thereby also the proof of Theorem~\ref{t:compactness}. 
\end{proof}

\subsection{Lower bound: Proof of Theorem~\ref{t:lower-bound}}
\label{ss:lower-bound}

The inequalities for the first integral $\calD_\e^{T,x}(\rho_\e,j_\e;\calF_\e)$ in~\eqref{eqdef:D-part1} have already been established in the proof of Theorem~\ref{t:compactness}, in~\eqref{est:lsc-jex} and~\eqref{est:lsc-FIx}.

\medskip

The starting point of the proof for the lower semicontinuity of the second integral $\calD_\e^{T,y}$ in~\eqref{eqdef:D-part2} is the rewriting~\eqref{eq:Kramers:lb:Dy}, which we reproduce for the convenience of the reader
\[
\calD_\e^{T,y}(\rho_\e,j_\e;\calF_\e) \geq\int_{\Omega_T} \CCs\bra*{\hj^y_\e(t,x,\cdot)\big|_{[0,1]};\hu_\e^\calF(t,x,a), \hu_\e^\calF(t,x,b); k_\e(t,x,\cdot)}\dxx tx ,
\]
where $k_\e(t,x,z) := m_\Upsilon |\Omega|^{-1} \ee^{-\hF^\rho_\e(t,x,z)}$.
Here the functional $\CCs:\calY \times L^1(0,1)\to [0,\infty]$ with the space $\calY := H^{-1}(0,1)\times \R^2$ defined in~\eqref{eqdef:calN}.


\medskip
Define the sequence of $\calY$-valued measures $\mu_\e$ on $\Omega_T$ to represent the argument of $\CCs$ in the expression above:
\begin{multline}
	\label{eqdef:mu_e}
	\dual {\mu_\e}\varphi := \int_{\Omega_T}
	\pra*{\int_0^1\hj^y_\e(\dxxx txz) \varphi_1(t,x)(z)
		+ \pra*{u^\calF_\e(t,x,a)\varphi_2(t,x) + u^\calF_\e(t,x,b)\varphi_3(t,x) }\dxx tx},
\end{multline}
for any $\varphi\in C_{\mathrm b}(\Omega_T; H^1_0(0,1)\times \R\times\R)$.

Since the sequence  $\mu_\e$ has bounded \emph{semivariation}~\cite[p.~72]{MarzShortt94}
\begin{align*}
	\|\mu_\e\|(\Omega_T) &\leq \int_{\Omega_T} \Bigl[
	\|\hj_\e^y(\dxx tx, \cdot)\|_{H^{-1}(0,1)} 
	+ \abs*{u^\calF_\e(t,x,a)} + \abs*{u^\calF_\e(t,x,b)}
	\Bigr]\\
	&\leftstackrel{\substack{\text{Th.~\ref{t:compactness},}\\\text{part~\ref{t:compactness:traces}}}}
	\leq C + C\int_{\Omega_T} \|\hj_\e^y(\dxx tx, \cdot)\|_{L^1(0,1)} \stackrel{\text{part~\ref{t:compactness:hjey}}} \leq C,
\end{align*}
we can extract a subsequence that converges weakly-\textasteriskcentered\ to a limit $\mu_0$~\cite[Cor.~1.4]{MarzShortt94}.
We can characterize this limit in terms of the limit objects given by Theorem~\ref{t:compactness}:
	for any $\varphi\in C_{\mathrm b}(\Omega_T; H^1_0(0,1)\times \R\times\R)$, we have 
	\begin{multline}
		\label{eqdef:m0}
		\dual {\mu_0}\varphi = 
		\int_{\Omega_T} \pra*{\, \ol \jmath(\dxx tx)\int_0^1\varphi_1(t,x)(z)\dx z
			+  \pra*{u^\calF_0(t,x,a)\varphi_2(t,x) + u^\calF_0(t,x,b)\varphi_3(t,x) }
			\dxx tx}
	\end{multline}
Indeed, since $\hj^y_\e|_{z\in[0,1]}$ converges narrowly in $\calM(\Omega_T\times [0,1])$ to a limit of the form $\ol\jmath(\dxx tx)\dx z$, the first term in~\eqref{eqdef:mu_e} converges to the first term in~\eqref{eqdef:m0}. By the narrow convergence of the traces $u_\e^\calF$ at $y=a,b$ (part~\ref{t:compactness:traces} of Theorem~\ref{t:compactness}) the corresponding integrals in~\eqref{eqdef:mu_e} converge as well. 

When $(t_\e,x_\e)\to(t,x)$, by part~\ref{t:compactness:F} of Theorem~\ref{t:compactness},
\[
k_\e(t_\e,x_\e)\to k(t,x):=\frac{m_\Upsilon}{|\Omega|}\ee^{-F_0^\rho(t,x,c)}  \quad\text{in }L^1(0,1)\text{ and with lower bound }k_\e\geq k_0>0.
\]
It follows from part~\ref{l:G-lsc:part1} of Lemma~\ref{l:G-lsc}
\[
\liminf_{\e\to0}\int_{\Omega_T}  \CCs\bra*{\frac{\dx{\mu_\e}}{\dx{|\mu_\e|}}(t,x);k_\e(t,x)} |\mu_\e|(\dxx tx)
\geq 
\int_{\Omega_T}  \CCs\bra*{\frac{\dx{\mu_0}}{\dx{|\mu_0|}}(t,x); k(t,x)} |\mu_0|(\dxx tx).
\]	
We  decompose  $|\mu_0| = |\mu_0|^{(r)}\dxx tx + |\mu_0|^\perp$ into a Lebesgue regular and a singular part, and we apply the same decomposition to $\mu_0$ by 
\[
\mu_0^{(r)}(t,x)\dxx tx := \frac{\dx{\mu_0}}{\dx{|\mu_0|}}(t,x)|\mu_0|^{(r)}(\dxx tx)
\qquad\text{and}\qquad
\mu_0^\perp :=  \frac{\dx{\mu_0}}{\dx{|\mu_0|}} |\mu_0^\perp|.
\]
Using the characterization~\eqref{eqdef:m0} we can write this decomposition as 
\begin{align*}
	\mu_0^{(r)}(t,x) &=: \bigl(\ol\jmath(t,x)\bONE_{[0,1]} ,  u_0^{\calF,a}(t,x), u_0^{\calF,b}(t,x) \bigr),\\
	\mu_0^\perp(\dx tx) &=: \bigl(\ol\jmath^\perp(\dxx tx), 0,0\bigr),
\end{align*}
in terms of an analogous decomposition $\ol\jmath(\dxx tx) = \ol\jmath(t,x)\dxx tx + \ol\jmath^\perp(\dxx tx)$.
Part~\ref{l:G-lsc:part2} of Lemma~\ref{l:G-lsc} then implies that 
\begin{equation}
\label{eq:j-perp-is-zero}
	\ol\jmath^\perp =0
\end{equation}
and therefore $\mu_0^\perp=0$.

By part~\ref{l:props-N:char-const-j} of  Lemma~\ref{l:props-N}  we then find
\begin{multline*}
	\liminf_{\e\to0}\int_{\Omega_T}  \CCs\bra*{\frac{\dx{\mu_\e}}{\dx{|\mu_\e|}}(t,x);k_\e(t,x)} |\mu_\e|(\dxx tx)\\
	\geq \int_{\Omega_T}\biggl[ \, \sfC\Bigl({\ol\jmath(t,x)}\Big|{\sigma(t,x;\rho,\calF_0)}\Bigr)
	+ \frac{2m_\Upsilon}{|\Omega|}\ee^{-F_0^\rho(t,x,c)}\bra*{\sqrt{u_0^{\calF,a}(t,x)} - 
		\sqrt{u_0^{\calF,b}(t,x)} }^2\,\biggr]\dxx tx
\end{multline*}
with
\begin{align*}
	\sigma(t,x;\rho,\calF_0) &:= \frac{m_\Upsilon}{|\Omega|}\ee^{-F_0^\rho(t,x,c)}\sqrt{u_0^{\calF,a}u_0^{\calF,b}}(t,x)\\
	&= \frac{m_\Upsilon}{|\Omega|}\exp\Bigl(-F_0^\rho(t,x,c) + \tfrac12 (F_0^\rho(t,x,a)+F_0^\rho(t,x,b)\Bigr)
	\sqrt{u_0^{a}u_0^{b}}(t,x).
\end{align*}
This concludes the proof of Theorem~\ref{t:lower-bound}.
		
\subsection{The chain rule: Proof of Lemma~\ref{l:CRLB-Kramers-limit}}
\label{app:CRLB}
\newcommand{\ld}{\lambda,\delta}

\begin{lemma}[Chain rule bound]
\label{l:Kramers-CR}
Let $\calF\in \sfF$ (see~\eqref{eq:Kramers:tilts}) and let $(\rho,j)\in \wt\CE(0,T)$ (see Definition~\ref{defn:Kramers:CE0}).  If $\calE^\calF_0(\rho(0))<\infty$ then 
\begin{equation}
\label{ineq:Kramers-CR}
\abs*{\calE^\calF_0(\rho(T)) - \calE^\calF_0(\rho(0))} \leq \calD_0^T(\rho,j; \calF).
\end{equation}
\end{lemma}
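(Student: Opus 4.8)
The statement to prove is the chain-rule inequality~\eqref{ineq:Kramers-CR} for the \emph{limiting} gradient system: for $(\rho,j)\in\wt\CE(0,T)$ with $\calE_0^\calF(\rho(0))<\infty$ one has $|\calE_0^\calF(\rho(T))-\calE_0^\calF(\rho(0))|\le\calD_0^T(\rho,j;\calF)$. The overall strategy is the standard one: establish just enough regularity of $u=\dx\rho/\dx\pi_0$ (and $u^\calF=u\ee^{F^\rho}$) from the finiteness of $\calD_0^T$, then differentiate $t\mapsto\calE_0^\calF(\rho(t))$ and use the continuity equation to rewrite the derivative as a pairing $\langle -\wt\anabla\rmD(\wt\calE+\calF)(\rho),j\rangle$, and finally bound that pairing by Young's inequality $\langle j,\Xi\rangle\le\wt\calR(\rho,j)+\wt\calR^*(\rho,\Xi)$ term by term. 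Since $\calD_0^T$ in~\eqref{eqdef:D0T-Kramers} splits into an $x$-part (two Fokker--Planck-type terms on $\rho_a,\rho_b$) and a $y$-part (a $\sfC$-term in $\ol\jmath$ plus a Hellinger-type term $2\tfrac{m_\Upsilon}{|\Omega|}\ee^{-F^\rho(\cdot,c)}(\sqrt{u^\calF(\cdot,b)}-\sqrt{u^\calF(\cdot,a)})^2$), I would handle the two blocks separately and add.

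\textbf{Step 1: regularity from finite dissipation.} Assume $\calD_0^T(\rho,j;\calF)<\infty$ (otherwise there is nothing to prove). This forces $\rho(t)\ll\pi_0$ for a.e.\ $t$, so $\rho(t)=u(t)\pi_0$ with $u(t,\cdot,a),u(t,\cdot,b)\in L^1(\Omega)$. The two $x$-Fisher-information terms $\int 2m_\Omega\ee^{-F^\rho}|\nabla_x\sqrt{u^\calF}|^2\,\dx\pi_0\,\dx t$ give $\nabla_x\sqrt{u^\calF_a},\nabla_x\sqrt{u^\calF_b}\in L^2$, hence (with $F^\rho$ bounded and Lipschitz-in-$\rho$) $\sqrt{u_a},\sqrt{u_b}\in L^2(0,T;H^1(\Omega))$. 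The superlinearity of $\sfC$ in the $\sfC(\ol\jmath\,|\sigma)$ term, together with the $L^1$-boundedness of $\sigma$, gives $\ol\jmath\in L^1(\Omega_T)$, so $t\mapsto\rho(t)$ is absolutely continuous in an appropriate weak sense; and the Hellinger term controls $\ee^{-F^\rho(\cdot,c)/2}(\sqrt{u^\calF_b}-\sqrt{u^\calF_a})\in L^2(\Omega_T)$, which is exactly the quantity that will appear as the $y$-component of $\wt\calR^*$. A standard truncation/regularization argument (mollify in $x$, truncate $u$ away from $0$, pass to the limit) is needed to justify that $t\mapsto\calE_0^\calF(\rho(t))$ is absolutely continuous with the expected derivative; here I would lean on the analogous argument for Lemma~\ref{l:CRLB-Kramers-epsilon} and on~\cite[\S 10]{AmbrosioGigliSavare08}-type chain rules for entropy functionals along solutions of continuity equations. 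The rectangular geometry of $\Omega$ (flagged in the footnote in Section~\ref{ss:Kramers-setting}) enters exactly here, to make the $x$-mollification compatible with the no-flux boundary condition.

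\textbf{Step 2: differentiate and pair.} With the regularity in hand, for a.e.\ $t$,
\begin{equation*}
\frac{\dx}{\dx t}\calE_0^\calF(\rho(t)) = \sum_{y=a,b}\int_\Omega \bigl(\log u^\calF_y(t,x)\bigr)\,\partial_t\rho_y(t,\dx x)
= -\langle \wt\anabla\rmD(\wt\calE+\calF)(\rho(t)),\,j(t)\rangle_{\wt\edges},
\end{equation*}
using $(\rho,j)\in\wt\CE(0,T)$ and the structure~\eqref{eqdef:Kramers:contracted-edges-anabla} of $\wt\anabla$: the $x$-components of $\wt\anabla\log u^\calF$ are $\nabla_x\log u^\calF_a,\nabla_x\log u^\calF_b$ and the $y$-component is $\log u^\calF_a-\log u^\calF_b$. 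Writing $\Xi=-\wt\anabla\log u^\calF$, Young's inequality applied in each edge gives $\langle j,\Xi\rangle\le\wt\calR(\rho,j;\calF)+\wt\calR^*(\rho,\Xi;\calF)$ with $\wt\calR,\wt\calR^*$ from~\eqref{eqdef:tildeRR*}. For the $x$-part this is the elementary quadratic Young inequality $j^x_y\cdot\Xi^x_y\le\tfrac1{2m_\Omega}|\dx j^x_y/\dx\rho_y|^2\rho_y+\tfrac{m_\Omega}2|\Xi^x_y|^2\rho_y$; for the $y$-part it is $\ol\jmath\cdot\Xi^y\le\sfC(\ol\jmath\,|\sigma)+\sigma\sfC^*(\Xi^y)$, i.e.\ the defining Legendre inequality of the perspective function of $\sfC$. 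Integrating in $t$ from $0$ to $T$ and using the identities $\sqrt{pq}\,\sfC^*(\log p-\log q)=2(\sqrt p-\sqrt q)^2$ (rewriting $\sigma\sfC^*(\log u^\calF_a-\log u^\calF_b)=2\tfrac{m_\Upsilon}{|\Omega|}\ee^{-F^\rho(\cdot,c)}(\sqrt{u^\calF_a}-\sqrt{u^\calF_b})^2$) and $u|\nabla\log u^\calF|^2=4\ee^{-F^\rho}|\nabla\sqrt{u^\calF}|^2\ee^{F^\rho}$ (to match the Fisher-information form in~\eqref{eqdef:D0T-Kramers}) recovers exactly $\calD_0^T(\rho,j;\calF)$ on the right. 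Both signs follow since the inequality holds for $(\rho,j)$ and for $(\rho,-j)$ simultaneously, and $\calD_0^T$ is even in the flux while $\calE_0^\calF(\rho(T))-\calE_0^\calF(\rho(0))$ changes sign; this yields the absolute value.

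\textbf{Main obstacle.} The genuine work is Step 1: making the formal differentiation in Step 2 rigorous when $u$ may vanish (so $\log u^\calF$ is not integrable a priori) and when $\rho(t)$ is only a measure with a weak time-continuity. The resolution is to never write $\log u^\calF$ in isolation but always the combinations $\sqrt{u^\calF}\,{\sfC^*}'(\cdots)$ and $|\nabla\sqrt{u^\calF}|^2$, which are finite by the dissipation bound, exactly as in the discussion in Section~\ref{sss:formal-rigorous-singularities}; and to approximate $\GenEnt(s)=s\log s-s+1$ from below by $C^1$ functions with bounded derivative, apply the chain rule to the approximations along the (mollified) continuity equation, and pass to the limit using lower semicontinuity on the energy side and the controlled regularity of $\sqrt{u^\calF}$ on the dissipation side. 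The $y$-direction singularity (the weight $\ee^{-F^\rho(\cdot,c)}$ at the saddle) causes no trouble here because $F^\rho$ is bounded and continuous on all of $\nodes$ by $\calF\in\sfF$; it is only a nuisance in the $\e$-limit, not in the limiting chain rule itself.
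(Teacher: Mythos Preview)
Your proposal is correct and follows essentially the same route as the paper: regularize $(\rho,j)$ (mollification in $x$ using the rectangular geometry of $\Omega$ via reflection, plus a lower bound on $u$), establish the chain rule for the regularized system, pass to the limit, and bound the resulting pairing by Young's inequality edge by edge. Two tactical points where the paper's argument differs from your sketch: (i) instead of truncating $u$ or approximating $\eta$ from below, the paper uses the convex interpolation $\rho^\lambda=\lambda\rho+(1-\lambda)\pi_0$, $j^\lambda=\lambda j$, which preserves the continuity equation exactly and gives $u^\lambda\ge 1-\lambda$ for free; (ii) to pass to the limit in the $y$-term $\int_\Omega\ol\jmath\,(\log u_b-\log u_a)$, the paper invokes the pointwise estimate $|B(u_a,u_b,\ol\jmath)|\le\CCs(\ol\jmath;u_a,u_b;k)$ from part~\ref{l:props-N:est-B} of Lemma~\ref{l:props-N}, combined with Jensen's inequality for the convolved perspective function, which yields a clean $L^1$ domination for the mollified integrand.
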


\begin{proof}
Fix a pair $(\rho,j)\in \wt\CE(0,T)$ as in the Lemma; by assumption we have $\calE^\calF_0(\rho(0))<\infty$, and without loss of generality we can assume that $\calD_0^T(\rho,j;\calF) < \infty$. Note that this implies that $j^x, \, u, \, |j^x|^2/u,\, \nabla_x\sqrt{u}\in L^1(0,T;L^1(\nodes))$ and $\ol\jmath\in L^1(0,T;L^1(\Omega))$.
	
\medskip
	
First note that it is sufficient to prove~\eqref{ineq:Kramers-CR} under the assumption that $u = \dx\rho/\dx\pi_0$ is bounded away from zero. This follows from noting that a regularization of the form 
\[
\rho^\lambda := \lambda \rho + (1-\lambda) \pi_0, \qquad j^\lambda = \lambda j,
\qquad \lambda\in (0,1)
\]
achieves this lower bound and preserves the continuity equation; in the required inequality each of the terms passes to the limit in $\lambda$ by either the Monotone Convergence Theorem or the Dominated Convergence Theorem.

\medskip

Proceeding under the assumption that $u$ is bounded away from zero, we next claim that it is sufficient to show that 
\begin{equation}
\label{eq:Kramers-CR-no-F}
-\calE_0(\rho)\Big|_0^T = \int_0^T \set*{ 2\int_\nodes \frac{j^x}{\sqrt u} \nabla_x \sqrt u
+ \int_\Omega B(u_a,u_b,\ol \jmath)}.
\end{equation}
Here $B(u_a,u_b,\ol\jmath)$ is the extension of $\ol\jmath \log (u_b/u_a)$ given in~\eqref{eqdef:B}.
This can be seen as follows:
\begin{enumerate}
\item If one proves~\eqref{eq:Kramers-CR-no-F}, then from the smoothness of $\calF$ one directly obtains the corresponding identity with $\calF$, 
\begin{equation}
\label{eq:Kramers-CR-with-F}
-\calE^\calF_0(\rho)\Big|_0^T = \int_0^T \set*{ 2\int_\nodes \frac{j^x}{\sqrt u^\calF} \nabla_x \sqrt {u^\calF}
+ \int_\Omega B\bra*{u_a^\calF,u_b^\calF,\ol \jmath}},
\end{equation}
where $u^\calF := u \ee^{F^\rho}$.
\item The required inequality then follows by applying the estimates described in Section~\ref{ss:Kramers-setting}.
\end{enumerate}

To prove~\eqref{eq:Kramers-CR-no-F} we use the fact that $\Omega$ is a $d$-dimensional rectangle.  We  interpret $\Omega$ as a single rectangular patch in a rectangular grid, and extend $\rho$ and $j$ outside of $\Omega$ by reflection:
\begin{itemize}
\item $\rho$ and $\ol\jmath$ are reflected symmetrically across $\partial\Omega$;
\item $j^x$ is reflected symmetrically across $\partial\Omega$,  with an additional sign flip applied to the component normal to $\partial\Omega$.
\end{itemize}
We then convolve $\rho$ and $j$ with a strictly positive regularizing sequence of the form $\gamma_\delta(x) := \delta^{-d} \gamma(x/\delta)$:
\[
\rho^{\delta} := \gamma_\delta {*} \rho,
\quad
j^{\delta}  := (j^{x,\delta},\ol\jmath^\delta) := (\gamma_\delta {*} j^x, \gamma_\delta{*}\ol\jmath), 
\qquad \text{for } \delta>0.
\]
Note that $\pi_0$ (after extension) is invariant under convolution with $\gamma_\delta$. 

By the structure of the reflection, the pair $(\rho^\delta,j^\delta)$ solves the continuity equation on the periodic cell $\Omega$, i.e. for any  $\varphi\in C^1([0,T)\times\Omega\times \{a,b\})$ we have
\begin{multline*}
\int_0^T\!\!\int\limits_{\Omega} \sum_{y=a,b}\bra[\big]{\partial_t \varphi(t,x,y)\, \rho^\delta(\dxx tx,y)
  + \nabla_x \varphi(t,x,y)\,j^{x,\delta}(\dxx tx,y) } \dd t \\
+ \int_0^T\!\!\int\limits _{\Omega} \bigl(\varphi(t,x,b)-\varphi(t,x,a)\bigr)\overline\jmath^\delta(\dxx tx)
  + \int\limits_{\Omega} \sum_{y=a,b} \varphi(0,x,y) \rho^\circ(\dx x,y) = 0.
\end{multline*}
The pair $(\rho^\delta,j^\delta)$ has sufficient  regularity for the identity~\eqref{eq:Kramers-CR-no-F} to hold. We next pass to the limit in each of the terms.

Using $j,\,u,\,|j^x|^2/u\in L^1$ and Jensen's inequality we note that
\[
\frac{\abs{j^{x,\delta}}^2}{u^\delta} \leq \gamma_\delta * \frac{\abs{j^{x}}^2}{u},
\]
so that by Lemma~\ref{l:extended-DCT} below the sequence $\abs{j^{x,\delta}}^2/u^\delta$ converges in $L^1(\Omega)$, and therefore ${j^{x,\delta}}/\sqrt{u^\delta}$ converges in $L^2$. By the same argument $\nabla_x \sqrt{u^\delta} = \nabla_x u^\delta /2\sqrt{u^\delta}$ also converges in~$L^2$. It follows that the first integral on the right-hand side in~\eqref{eq:Kramers-CR-with-F} converges. 

For the second integral, note that 
\[
|B(u_a,u_b,\ol\jmath) |\stackrel{\eqref{ineq:props-N:BG}} \leq 
\CCs\bra*{\ol\jmath;u_a,u_b;\frac{m_\Upsilon}{|\Omega|}}        
\stackrel{\eqref{eqdef:N:explicit:rigrous}} = \sfC(\ol\jmath|\sigma) + \frac{2m_\Upsilon}{|\Omega|}\bra*{\sqrt{u_b}-\sqrt{u_a}}^2, 
\qquad \sigma = \frac{m_\Upsilon}{|\Omega|}\sqrt{u_b u_a}.
\]
Similarly, 
\begin{align*}
|B(u_a^\delta,u_b^\delta,\ol\jmath^\delta) |
&\leq \sfC(\ol\jmath^\delta|\sigma^\delta) + \frac{2m_\Upsilon}{|\Omega|}\bra*{\sqrt{u_b^\delta}-\sqrt{u_a^\delta}}^2, 
\qquad \sigma^\delta = \frac{m_\Upsilon}{|\Omega|}\sqrt{u_b^\delta u_a^\delta}\\
&\leq \gamma_\delta * \bra*{\sfC(\ol\jmath|\sigma) + \frac{2m_\Upsilon}{|\Omega|}\bra*{\sqrt{u_b}-\sqrt{u_a}}^2}.
\end{align*}
The function $B$ is continuous except at  $(u_a,u_b,\ol\jmath)$ with $u_au_b=0$. Since $\sfC(\ol\jmath|\sigma)$ is finite a.e., it follows that the set of points where $\ol\jmath\not=0$ and  $u_a =0$, $u_b=0$, or both, is a null set. From Lemma~\ref{l:extended-DCT} it again follows that  $B(u_a^\delta,u_b^\delta,\ol\jmath^\delta)$ converges in $L^1$.
Therefore the right-hand side in~\eqref{eq:Kramers-CR-no-F} passes to the limit, and the limit is finite. 

\medskip
Turning to the left-hand side, the finiteness of $\calE_0(\rho(0))$ implies (again by Lemma~\ref{l:extended-DCT}) that we have $\calE_0(\rho^\delta(0))\to \calE_0(\rho(0))$. This implies that $\calE_0(\rho^\delta(T))$ converges as $\delta\to0$, and in particular is bounded by some constant $C$. 

It then follows that $\calE_0(\rho(T))\leq C$, by the following argument: for any $M>1$ and $a\geq0$ we have $\eta(a\wedge M) \leq \eta(a)$, so that, writing $\rho = u\pi_0$ and $\rho^\delta =u^\delta\pi_0$, 
\[
\int_{\Omega}\eta\bra*{u^\delta(T)\wedge M}\dx \pi_0
\leq 
\int_{\Omega}\eta\bra*{u^\delta(T)}\dx \pi_0
=\calE_0(\rho^\delta(T))
 \leq C.
\]
Passing to the limit in $\delta$ using the Dominated Convergence theorem  we find for all $M>1$
\[
\int_{\Omega}\eta\bra*{u(T)\wedge M}\dx \pi_0 \leq C.
\]
From the Monotone Convergence theorem we then obtain
\[
\calE_0(\rho(T)) = \int_{\Omega}\eta\bra*{u(T)}\dx \pi_0 \leq C,
\]
which establishes the finiteness of $\calE_0(\rho(T))$. It then follows by the same argument that $\calE_0(\rho^\delta(T))\to \calE_0(\rho(T))$, which concludes the proof. 
\end{proof}

The following lemma is probably well known, but we could not find a good reference.
\begin{lemma}
\label{l:extended-DCT}
Let $\Omega$ be a $d$-dimensional rectangle and let $v\in L^1(\Omega;\R^m)$. Extend $v$ to $\R^d\setminus \Omega$ by reflection (either symmetric or antisymmetric), as above. 

Assume that $f:\R^m\to\R$ is continuous except on a set $N\subset \R^m$ such that $\{x\in\Omega: v(x)\in N\}$ is a Lebesgue null set. 

Assume that there exists $g\in L^1(\Omega)$ such that 
\[
|f(\gamma_\delta{*}v)| \leq \gamma_\delta{*} g \qquad\text{a.e. in }\Omega.
\]
Then $f(\gamma_\delta{*}v)\to f(v)$ in $L^1(\Omega)$ as $\delta\to0$.
\end{lemma}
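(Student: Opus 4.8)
The statement is a form of the dominated convergence theorem in which the dominating function $\gamma_\delta*g$ varies with the parameter $\delta$; the plan is therefore to reduce it to the classical subsequence principle combined with a Pratt-type (generalized) dominated convergence argument. First I would dispose of the boundary issues created by the reflection: extend $v$ by the prescribed reflection across the faces of $\Omega$ to a strictly larger rectangle $\Omega'$, and then by zero, obtaining $\tilde v\in L^1(\R^d;\R^m)$, and similarly extend $g$ to $\tilde g\in L^1(\R^d)$. For $\delta<\dist(\overline\Omega,\partial\Omega')$ the restriction to $\Omega$ of $\gamma_\delta*\tilde v$ equals $\gamma_\delta$ convolved with the reflected function, so on $\Omega$ we may freely replace $v$ and $g$ by $\tilde v$ and $\tilde g$. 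By the standard mollification fact, $\gamma_\delta*\tilde v\to\tilde v$ and $\gamma_\delta*\tilde g\to\tilde g$ in $L^1(\R^d)$, hence in $L^1(\Omega)$, where $\tilde v=v$ and $\tilde g=g$. It then suffices to show that every sequence $\delta_k\downarrow0$ admits a subsequence along which $f(\gamma_{\delta_k}*v)\to f(v)$ in $L^1(\Omega)$, after which the full convergence follows by the usual subsequence argument.

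Given $\delta_k\downarrow0$, I would use the $L^1(\Omega)$-convergence just noted to pass to a (non-relabelled) subsequence along which $\gamma_{\delta_k}*v\to v$ and $\gamma_{\delta_k}*g\to g$ both Lebesgue-a.e.\ on $\Omega$. Since by hypothesis $\{x\in\Omega:v(x)\in N\}$ is null and $f$ is continuous at each point of $\R^m\setminus N$, for a.e.\ $x\in\Omega$ one has simultaneously $v(x)\notin N$ and $\gamma_{\delta_k}*v(x)\to v(x)$, whence $f(\gamma_{\delta_k}*v)(x)\to f(v(x))$; thus $f(\gamma_{\delta_k}*v)\to f(v)$ a.e.\ on $\Omega$. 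Writing $G_k:=\gamma_{\delta_k}*g\ge0$, the hypothesis gives $|f(\gamma_{\delta_k}*v)|\le G_k$ a.e., while $G_k\to g$ a.e.\ on $\Omega$ and $\int_\Omega G_k\to\int_\Omega g<\infty$ (because $G_k\to g$ in $L^1(\Omega)$). Passing to the limit in the pointwise bound and using Fatou's lemma gives $|f(v)|\le g$ a.e., so in particular $f(v)\in L^1(\Omega)$.

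To conclude I would run the Pratt-type argument: the functions $G_k+g-\bigl|f(\gamma_{\delta_k}*v)-f(v)\bigr|$ are non-negative, since $\bigl|f(\gamma_{\delta_k}*v)-f(v)\bigr|\le G_k+g$ a.e., and they converge a.e.\ on $\Omega$ to $2g$, so Fatou's lemma yields
\[
\int_\Omega 2g\;\le\;\liminf_{k\to\infty}\int_\Omega\Bigl(G_k+g-\bigl|f(\gamma_{\delta_k}*v)-f(v)\bigr|\Bigr)\;=\;\int_\Omega 2g-\limsup_{k\to\infty}\int_\Omega\bigl|f(\gamma_{\delta_k}*v)-f(v)\bigr|,
\]
where the equality uses $\int_\Omega G_k\to\int_\Omega g$. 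Hence $\int_\Omega\bigl|f(\gamma_{\delta_k}*v)-f(v)\bigr|\to0$, which is exactly the sought subsequential convergence. The one point that genuinely needs care is that the dominating function $G_k$ moves with $k$ rather than being fixed, and this is precisely what the last Fatou step (a generalized dominated convergence theorem) is designed to handle; the reflection at $\partial\Omega$ plays no essential role, entering only through the routine first-paragraph reduction to $L^1$ functions on all of $\R^d$.
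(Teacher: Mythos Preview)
Your proof is correct. The overall architecture matches the paper's: reduce to a subsequence along which $\gamma_{\delta_k}*v\to v$ and $\gamma_{\delta_k}*g\to g$ almost everywhere, deduce pointwise convergence of $f(\gamma_{\delta_k}*v)$ from the continuity assumption on $f$, and then upgrade to $L^1$ convergence. The one genuine difference is in this last step. The paper invokes the ``inverse dominated convergence theorem'' (Brezis, Th.~4.9): from $\gamma_{\delta}*g\to g$ in $L^1$ one extracts a further subsequence along which $\gamma_{\delta_n}*g\le h$ for a \emph{fixed} $h\in L^1$, and then the classical DCT applies directly with dominator $h$. You instead keep the moving dominators $G_k=\gamma_{\delta_k}*g$ and run Pratt's generalized DCT via the explicit Fatou argument on $G_k+g-|f(\gamma_{\delta_k}*v)-f(v)|$, using only that $\int_\Omega G_k\to\int_\Omega g$. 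Both routes are standard; the paper's is a one-line citation, yours is self-contained. Your more careful treatment of the reflection at $\partial\Omega$ is fine but, as you note, inessential---the paper simply asserts the $L^1$ convergence of the mollifications.
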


\begin{proof}
Note that $\gamma_\delta{*}v \to v$ in $L^1(\Omega;\R^m)$ and $\gamma_\delta{*} g\to g$ in $L^1(\Omega)$. By the Inverse Dominated Convergence Theorem (e.g.~\cite[Th.~4.9]{Brezis11}) there exists a subsequence $\delta_n\to 0$ such that $\gamma_{\delta_n}{*}v\to v$ almost everywhere and $0\leq \gamma_{\delta_n}{*}g\leq h$ for some $h\in L^1$. 

From the a.e.\ convergence $\gamma_{\delta_n}{*}v \to v$ and the continuity properties of $f$ it follows that $f(\gamma_\delta{*}v)\to f(v)$ a.e.\ on $\Omega$.
Since $|f(\gamma_\delta{*}v)|$ is dominated by $h$, it follows that $f(\gamma_{\delta_n}{*}v)\to f(v)$ in $L^1$, and since the limit is unique the convergence holds as $\delta\to0$.
\end{proof}

\subsection{Lower semicontinuity of integrals of one-homogeneous convex functions}
\begin{lemma}[Reshetnyak lower semicontinuity]
	\label{l:Reshetnyak}
	Let $\calY$ be a separable Hilbert space, and let $\Omega$ be a compact subset of $\R^d$. Let $f_n, f:\Omega\times \calY\to [0,\infty]$ be  convex and $1$-homogeneous in the second variable. Assume that they satisfy 
	\begin{equation*}
		\liminf_{n\to\infty} f_n(x_n,\xi_n)\geq f(x,\xi) \qquad\text{for any $x_n\to x$ and any weakly converging sequence }\xi_n\weakto \xi.
	\end{equation*}
	Let $\mu_n$ be a sequence of\/ $\calY$-valued Borel measures on $\Omega$ that converges weakly to a measure~$\mu$, i.e.
	\begin{equation}
	\label{l:Reshetnyak:ass-lsc}
	\text{for all }\varphi\in C(\Omega), \qquad
	\int_\Omega \varphi(x) \mu_n(\dx x) \longrightharpoonup \int_\Omega \varphi(x) \mu(\dx x) 
	\qquad\text{in }\calY.
	\end{equation}
	Then
	\[
	\liminf_{n\to\infty} \int_\Omega f_n\bra*{x, \frac{\dx \mu_n}{\dx {|\mu_n|}}(x)}  |\mu_n|(\dx x)
	\geq 
	\int_\Omega f\bra*{x, \frac{\dx \mu}{\dx {|\mu|}}(x)}  |\mu|(\dx x).
	\]
\end{lemma}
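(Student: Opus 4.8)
\textbf{Proof plan for Lemma~\ref{l:Reshetnyak} (Reshetnyak lower semicontinuity).}

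The plan is to reduce the statement to the classical Reshetnyak-type lower semicontinuity theorem for $1$-homogeneous convex integrands by exploiting the scalar-valued structure behind $|\mu_n|$. First I would recall the duality characterization of a $1$-homogeneous convex function: for each fixed $x$, since $f_n(x,\cdot)$ is convex, lower semicontinuous (as a consequence of the joint lower-semicontinuity hypothesis applied with $x_n\equiv x$), and positively $1$-homogeneous, one has the representation
\begin{equation*}
  f_n(x,\xi) = \sup_{\eta\in D_n(x)} \langle \eta,\xi\rangle,
  \qquad
  D_n(x) := \{\eta\in\calY : \langle\eta,\zeta\rangle \le f_n(x,\zeta)\ \text{for all}\ \zeta\in\calY\},
\end{equation*}
and similarly for $f$. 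The joint lower-semicontinuity hypothesis will translate into an upper-semicontinuity/closedness property of the set-valued maps $x\mapsto D_n(x)$ and a Kuratowski-type convergence $D_n\to D$; more precisely, if $\eta\in D(x)$ then for every $x_n\to x$ there exist $\eta_n\in D_n(x_n)$ with $\eta_n\to\eta$ — this is the dual reformulation of the liminf hypothesis, and I would verify it by a standard separation argument.

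Next I would write the integral functional in its dual (sup) form and pass to the limit. Fix a countable dense family $\{\eta^{(k)}\}$ of continuous selections of $D$ (using separability of $\calY$ and a measurable-selection / Castaing-representation argument for the closed-convex-valued map $D$), and for each finite partition-of-unity argument observe that
\begin{equation*}
  \int_\Omega f_n\Bigl(x,\tfrac{\dx\mu_n}{\dx|\mu_n|}(x)\Bigr)\,|\mu_n|(\dx x)
  \ \ge\ \int_\Omega \bigl\langle \eta_n(x), \tfrac{\dx\mu_n}{\dx|\mu_n|}(x)\bigr\rangle\,|\mu_n|(\dx x)
  = \int_\Omega \langle \eta_n(x),\,\mu_n(\dx x)\rangle,
\end{equation*}
where $\eta_n$ is a continuous approximation of a fixed continuous selection $\eta$ of $D$, chosen via the Kuratowski convergence $D_n\to D$ so that $\eta_n\to\eta$ uniformly on the compact set $\Omega$. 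The right-hand side converges to $\int_\Omega\langle\eta(x),\mu(\dx x)\rangle$ by the weak convergence hypothesis~\eqref{l:Reshetnyak:ass-lsc} (testing against the vector-valued continuous function $\eta$, which is legitimate since $\calY$ is separable Hilbert and $\eta$ is continuous and bounded on the compact $\Omega$). Taking the supremum over all continuous selections $\eta$ of $D$ and invoking the dual representation of $f$ together with a measurable-selection argument to recover the pointwise supremum, I obtain
\begin{equation*}
  \liminf_{n\to\infty}\int_\Omega f_n\Bigl(x,\tfrac{\dx\mu_n}{\dx|\mu_n|}(x)\Bigr)|\mu_n|(\dx x)
  \ \ge\ \int_\Omega \sup_{\eta\in D(x)}\langle\eta(x),\mu(\dx x)\rangle
  \ =\ \int_\Omega f\Bigl(x,\tfrac{\dx\mu}{\dx|\mu|}(x)\Bigr)|\mu|(\dx x).
\end{equation*}

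The main obstacle I expect is the passage from the liminf hypothesis on $f_n$ to the Kuratowski convergence of the dual sets $D_n$ with continuous (not merely measurable) selections that converge uniformly — this is where the compactness of $\Omega$ and the separability of $\calY$ are essential, and it requires a careful combination of measurable-selection theorems (Kuratowski--Ryll-Nardzewski), Lusin-type approximation to upgrade measurable selections to continuous ones off a small set, and an $\varepsilon$-exhaustion of $\Omega$. An alternative route, perhaps cleaner, is to cite the abstract Reshetnyak lower-semicontinuity theorem in the form given for $1$-homogeneous integrands on vector measures (e.g.\ \cite[Thm.~2.38]{AmbrosioFuscoPallara00} in the finite-dimensional case, extended to separable Hilbert targets by approximating $\calY$ by finite-dimensional subspaces $\calY_m$, projecting $\mu_n$ and $\mu$, applying the known result on $\calY_m$, and letting $m\to\infty$ using that $f_n,f$ restricted to $\Omega\times\calY_m$ still satisfy the hypotheses and that $|P_m\mu_n|\le|\mu_n|$). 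I would present the proof along this second route, since it isolates the only genuinely new ingredient — the joint lower-semicontinuity of $f_n$ together with $n$-dependence — and reduces everything else to a textbook statement; the $n$-dependence is handled exactly as above, by the uniform approximation $\eta_n\to\eta$ of dual selections.
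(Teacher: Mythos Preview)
Your two routes both differ from the paper's argument. The paper adapts the Young-measure proof of \cite[Th.~2.38]{AmbrosioFuscoPallara00} directly to the Hilbert-valued target: writing $\mu_n=g_n|\mu_n|$ with $g_n$ valued in the weakly compact closed unit ball $B\subset\calY$, it lifts to Young measures $\nu_n:=|\mu_n|\otimes\delta_{g_n(x)}$ on $\Omega\times B$, extracts a narrow limit $\nu$, disintegrates $\nu=\lambda\otimes\nu_x$, identifies the barycenter $\int_B y\,\nu_x(\dx y)=g(x)\tfrac{\dx|\mu|}{\dx\lambda}(x)$, and concludes by Jensen's inequality for the convex function $f(x,\cdot)$ together with a liminf lemma of Fathi--Simon for the $n$-dependent integrand against the narrowly converging $\nu_n$. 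No dual selections and no finite-dimensional projections appear; the infinite dimensionality is absorbed entirely by the weak compactness of $B$ and the Radon--Nikodym property of~$\calY$.

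Your first route may be viable, but the obstacle you flag is real and unresolved: the pointwise Kuratowski inclusion $D(x)\subset\liminf D_n(x_n)$ does follow from separation plus weak compactness of the unit ball, but lifting this to \emph{continuous} selections of $D_n$ that converge \emph{uniformly} on $\Omega$ to a prescribed continuous selection of $D$ requires substantially more than you sketch, and the Lusin/partition-of-unity mechanism does not obviously interact well with the $n$-dependence. Your second route has a concrete gap: projection onto $\calY_m$ need not decrease a general convex $1$-homogeneous function---for $f(\xi_1,\xi_2)=(\xi_1+\xi_2)^+$ on $\R^2$ and $P_1(\xi_1,\xi_2)=(\xi_1,0)$ one has $f(P_1(1,-1))=1>0=f(1,-1)$---so $\int f_n(x,P_m g_n)\,|\mu_n|(\dx x)$ is not dominated by $\int f_n(x,g_n)\,|\mu_n|(\dx x)$, and the finite-dimensional liminf inequality does not transfer back to $\calY$.
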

Since Hilbert spaces possess the Radon-Nikodym property~\cite[Th.~1.3.21]{HytonenVanNeervenVeraarWeis16-I}, the derivatives $\dx \mu_n/\dx \abs*{\mu_n}$ and $\dx \mu/\abs*{\mu}$ are well-defined.
\begin{proof}
	We  extend the proof of~\cite[Th.~2.38]{AmbrosioFuscoPallara00} to Hilbert-valued functions.	
	Set $B := \{\xi\in \calY: \|\xi\|\leq 1\}$. 
	Since $\calY$ satisfies the 	Radon-Nikodym property, there exist Borel measurable $B$-valued functions $g_n$ and $g$ such that $\mu_n =g_n |\mu_n|$ and $\mu = g|\mu|$. Set 
	\[
	\nu_n := |\mu_n|\otimes \delta_{g_n(x)}, \qquad\text{i.e.}\qquad
	\int_{\Omega\times \calY}\phi(x,y)\nu_n(\dxx xy) = \int_\Omega \phi(x,g_n(x))|\mu_n|(\dx x).
	\]
	Since $\Omega\times B$ is a weakly compact subset of $\R^d\times \calY$ and $|\nu_n|(\Omega\times B) = |\mu_n|(\Omega)$  is bounded, there exists a  subsequence (for which we do not change notation) $\nu_n\weakto \nu$ in the narrow sense (against~$C_{\mathrm b}$). 
	
	If $\pi:\Omega\times \calY\mapsto \Omega$ is the projection onto the first variable, then it follows that $|\mu_n| = \pi_\# \nu_n$ converges narrowly to a limit $\lambda = \pi_\#\nu$. 
	By the lower semicontinuity of the variation~\cite[Prop.~1.62(b)]{AmbrosioFuscoPallara00} we have $|\mu|\leq \lambda$.
	
	We now disintegrate the measure $\nu$ \cite[Th.~5.3.1]{AmbrosioGigliSavare08}: there exists a Borel map $x\mapsto \nu_x\in \ProbMeas(B)$ such that $\nu = \lambda \otimes \nu_x$. By the same argument as in~\cite[Th.~2.38]{AmbrosioFuscoPallara00} we find that 
	\[
	\int_{B} y \nu_x (\dx y) = g(x) \frac{\dx {|\mu|}}{\dx \lambda}(x)
	\qquad \text{for $\lambda$-a.e. $x$.}
	\]
	We then calculate
	\begin{align*}
		\liminf_{n\to\infty} &\int_{\Omega} f_n\bra*{x,g_n(x)}\, |\mu_n|(\dx x)
		= \liminf_{n\to\infty} \int_{\Omega\times B} f_n\bra*{x,y} \,\nu_n(\dxx xy)\\
		&\leftstackrel{(*)}\geq \int_{\Omega\times B} f\bra*{x,y} \,\nu(\dxx xy)
		= \int_\Omega \bra*{\int_{B}f(x,y)\, \nu_x(\dx y)}\lambda(\dx x)\\
		&\leftstackrel{\text{Jensen}}\geq \int_{\Omega}f\bra*{x,\int_{B}y\, \nu_x(\dx y)}\lambda (\dx x)
		= \int_\Omega f(x,g(x))\, |\mu(\dx x)|.
	\end{align*}
	The inequality marked $(*)$ can be found e.g.\ in~\cite[Lemma~3.2]{FathiSimon16}. Since the limiting inequality is independent of the subsequence, it holds for the whole sequence. 
\end{proof}

\footnotesize
\bibliography{coshref}
\bibliographystyle{alphainitialsdoi}

\end{document}